\newcommand{\Z}{\ensuremath{\mathbb{Z}}}
\newcommand{\Q}{\ensuremath{\mathbb{Q}}}
\newcommand{\R}{\ensuremath{\mathbb{R}}}
\newcommand{\C}{\ensuremath{\mathbb{C}}}
\newcommand{\A}{\ensuremath{\mathbb{A}}}
\newcommand{\Tr}{\ensuremath{\mathrm{tr}\,}}
\newcommand{\Qform}[1]{\ensuremath{\langle #1 \rangle}}
\newcommand{\Resprod}{\ensuremath{{\prod}'}}
\newcommand{\dd}{\ensuremath{\,\mathrm{d}}}
\newcommand{\angles}[1]{\ensuremath{\langle #1 \rangle}}
\newcommand{\mes}{\ensuremath{\mathrm{mes}}}
\newcommand{\sgn}{\ensuremath{\mathrm{sgn}}}
\newcommand{\identity}{\ensuremath{\mathrm{id}}}
\newcommand{\Hom}{\ensuremath{\mathrm{Hom}}}
\newcommand{\End}{\ensuremath{\mathrm{End}}}
\newcommand{\rightiso}{\ensuremath{\stackrel{\sim}{\rightarrow}}}
\newcommand{\Ker}{\ensuremath{\mathrm{Ker}\,}}
\newcommand{\Image}{\ensuremath{\mathrm{Im}\,}}
\newcommand{\Ad}{\ensuremath{\mathrm{Ad}\,}}
\newcommand{\ad}{\ensuremath{\mathrm{ad}\,}}
\newcommand{\Gm}{\ensuremath{\mathbb{G}_\mathrm{m}}}
\newcommand{\Res}{\ensuremath{\mathrm{Res}}}
\newcommand{\GL}{\ensuremath{\mathrm{GL}}}
\newcommand{\SO}{\ensuremath{\mathrm{SO}}}
\newcommand{\Spin}{\ensuremath{\mathrm{Spin}}}
\newcommand{\U}{\ensuremath{\mathrm{U}}}
\newcommand{\PGL}{\ensuremath{\mathrm{PGL}}}
\newcommand{\so}{\ensuremath{\mathfrak{so}}}
\newcommand{\spin}{\ensuremath{\mathfrak{spin}}}
\newcommand{\syp}{\ensuremath{\mathfrak{sp}}}
\newcommand{\Sp}{\ensuremath{\mathrm{Sp}}}
\newcommand{\Spt}{\ensuremath{\overline{\mathrm{Sp}}}}
\newcommand{\Mp}{\ensuremath{\widetilde{\mathrm{Sp}}}}
\newcommand{\MMp}[1]{\ensuremath{\widetilde{\mathrm{Sp}}^{(#1)}}}
\newcommand{\inv}{\ensuremath{\mathrm{inv}}}
\theoremstyle{plain}
\newtheorem{proposition}{Proposition}[section]
\newtheorem{lemma}[proposition]{Lemme}
\newtheorem{theorem}[proposition]{Théorème}
\newtheorem{corollary}[proposition]{Corollaire}
\theoremstyle{definition}
\newtheorem{definition}[proposition]{Définition}
\newtheorem{definition-theorem}[proposition]{Définition-Théorème}
\newtheorem{definition-proposition}[proposition]{Définition-Proposition}
\newtheorem{hypothesis}[proposition]{Hypothèse}
\newtheorem{remark}[proposition]{Remarque}
\newtheorem{notation}[proposition]{Notation}
\newcommand{\bmu}{\ensuremath{\bbmu}}
\newcommand{\noyau}{\ensuremath{\boldsymbol{\varepsilon}}} 
\newcommand{\Css}{\ensuremath{\mathscr{C}_\text{ss}}} 
\newcommand{\Cssst}{\ensuremath{\mathscr{C}_\text{ss}^\text{st}}} 
\newcommand{\Cssgeo}{\ensuremath{\mathscr{C}_\text{ss}^\text{géo}}} 
\newcommand{\CVP}{\ensuremath{\stackrel{\mathrm{VP}}{\longleftrightarrow}}} 
\newcommand{\Lag}{\ensuremath{\mathrm{Lagr}}} 
\newcommand{\rev}{\ensuremath{\mathbf{p}}} 
\newcommand{\asp}{\ensuremath{\dashrule[.7ex]{2 2 2 2}{.4}}} 
\title{Transfert d'intégrales orbitales pour le groupe métaplectique}
\author{Wen-Wei Li}
\date{}
\begin{document}



\maketitle

\begin{abstract}
  We set up a formalism of endoscopy for metaplectic groups. By defining a suitable transfer factor, we prove an analogue of the Langlands-Shelstad transfer conjecture of orbital integrals over any local field of characteristic zero, as well as the fundamental lemma for units of the Hecke algebra in the unramified case. This generalizes prior works of Adams and Renard in the real case and serves as a first step to study the Arthur-Selberg trace formula for metaplectic groups.
\end{abstract}

\tableofcontents

\section{Introduction}
La formule des traces d'Arthur-Selberg est l'un des outils les plus puissants pour la théorie moderne des formes automorphes. Cette approche est surtout féconde lorsque l'on compare les formules des traces de deux groupes réductifs. Pour ce faire, il faut mettre la formule des traces sous une forme ``stable''. La théorie de l'endoscopie, inventée par Langlands et ses collaborateurs, donne un plan pour résoudre ce problème pour les groupe réductifs.

D'autre part, il existe une famille de revêtements non linéaires $\Mp(2n,F)$ des groupes symplectiques $\Sp(2n,F)$ sur un corps local $F$, qui s'appellent les groupes métaplectiques. À un caractère additif non trivial $\psi: F \to \mathbb{S}^1$ est associée une représentation admissible $\omega_\psi$ de $\Mp(2n,F)$, qui s'appelle la représentation de Weil. Bien que le revêtement métaplectique soit traditionnellement un revêtement à deux feuillets, pour des raisons techniques nous ferons agrandir le revêtement métaplectique de sorte que $\rev: \Mp(2n,F) \to \Sp(2n,F)$ est un revêtement à huit feuillets. Autrement dit, $\Ker(\rev) = \bmu_8 := \{z \in \C^\times: z^8 = 1 \}$. Cela n'affecte pas les résultats que l'on cherche.

Si l'on envisage d'établir et puis de stabiliser la formule des traces pour $\Mp(2n,F)$, le premier pas est d'étudier le transfert local des intégrales orbitales (\ref{prop:transfert}). Cependant, on ne peut pas adapter littéralement la théorie de l'endoscopie car $\Mp(2n,F)$ n'est pas un groupe linéaire algébrique; en particulier il n'a pas de L-groupe. L'un des objets de cet article est de mettre en place un tel formalisme.

Les représentations de $\Mp(2n,F)$ qui nous intéressent sont celles telles que la multiplication par chaque $\noyau \in \Ker(\rev) = \bmu_8$ agit par $\noyau \cdot \identity$ ; ces représentations sont dites spécifiques. Par exemple, la représentation de Weil $\omega_\psi$ est spécifique. Pour l'étude des représentations spécifiques, il suffit de considérer les fonctions telles que $f(\noyau \tilde{x})=\noyau^{-1} f(\tilde{x})$ pour tout $\noyau \in \bmu_8$; ces fonctions sont dites anti-spécifiques. Ces notions se généralisent à tout revêtement. La distinction entre objets spécifiques et anti-spécifiques est superficielle pour $\Mp(2n,F)$ (voir \ref{prop:passage-specifique}).

Notre approche se modèle sur l'endoscopie pour les groupes réductifs. Notons $G := \Sp(2n)$, $\tilde{G} := \Mp(2n,F)$. Tout d'abord il faut trouver des bonnes définitions pour:
\begin{enumerate}
  \item les groupes endoscopiques elliptiques $H$ de $\tilde{G}$,
  \item la correspondance de classes de conjugaison semi-simples entre $H$ et $G$,
  \item une notion de conjugaison stable sur $\tilde{G}$,
  \item le facteur de transfert $\Delta$. 
\end{enumerate}
Une fois que ceci sera fait, on pourra définir l'intégrale orbitale endoscopique
\begin{gather}\label{eqn:integrale-endoscopique-intro}
  J_{H,\tilde{G}}(\gamma, f) = \sum_\delta \Delta(\gamma,\tilde{\delta}) J_{\tilde{G}}(\tilde{\delta}, f)
\end{gather}
d'une fonction anti-spécifique $f$; les notations sont analogues à celles pour l'endoscopie des groupes réductifs et on les expliquera dans \S\ref{sec:conj-transfert}. Par la suite, on peut formuler le transfert de fonctions $f \mapsto f^H$ qui fait concorder $J_{H,\tilde{G}}(\cdot,f)$ et l'intégrale orbitale stable $J_H^\text{st}(\cdot, f^H)$ sur $H$. Comme pour l'endoscopie pour les groupes réductifs, le transfert doit être explicite pour les fonctions sphériques dans le cas non ramifié (\ref{hyp:non-ramifie}). De tels énoncés sont connus sous le nom de ``lemme fondamental''.

Esquissons nos réponses aux questions ci-dessus.
\begin{enumerate}
  \item Soit $F$ une extension finie de $\Q_p$, $p>2$. Selon un résultat de Savin \cite{Sa88}, l'algèbre d'Iwahori-Hecke spécifique (ou anti-spécifique) de $\tilde{G}$ est isomorphe à l'algèbre d'Iwahori-Hecke de $\SO(2n+1)$, le groupe orthogonal impair déployé. Cela suggère que l'on doit regarder $\Sp(2n,\C)$ comme le groupe dual de $\tilde{G}$; de telles évidences existent aussi pour le cas $F=\R$ \cite{ABPTV07}. En poursuivant cette philosophie, on définit une donnée endoscopique elliptique de $\Mp(2n)$ comme une paire $(n',n'') \in \Z^2_{\geq 0}$ telle que $n'+n''=n$; le groupe endoscopique associé est $H_{n',n''} := \SO(2n'+1) \times \SO(2n''+1)$. Contrairement à l'endoscopie pour $\SO(2n+1)$, on distingue les données $(n',n'')$ et $(n'',n')$.

  \item Soit $\gamma=(\gamma',\gamma'') \in H_{n',n''}(F)$ semi-simple ayant valeurs propres
$$ \underbrace{a'_1, \ldots, a'_{n'}, 1, (a'_{n'})^{-1}, \ldots, (a'_{1})^{-1}}_{\text{provenant de } \gamma'}, \underbrace{a''_1, \ldots, a''_{n''}, 1, (a''_{n''})^{-1}, \ldots, (a''_{1})^{-1}}_{\text{provenant de } \gamma''}. $$
On dit que $\delta \in G(F)$ correspond à $\gamma$ s'il est semi-simple avec valeurs propres
$$ a'_1, \ldots, a'_{n'}, (a'_{n'})^{-1}, \ldots, (a'_{1})^{-1}, -a''_1, \ldots, -a''_{n''}, -(a''_{n''})^{-1}, \ldots, -(a''_{1})^{-1}. $$
Cela induit une application entre classes de conjugaison semi-simples géométriques.

  \item Il y a aussi une définition \textit{ad hoc} de stabilité: deux éléments semi-simples réguliers dans $\tilde{G}$ sont stablement conjugués si leurs images dans $G(F)$ sont stablement conjugués et si $\Tr \omega_\psi^+ - \Tr \omega_\psi^-$ prend la même valeur, où $\omega_\psi^\pm$ sont les deux morceaux irréductibles de la représentation de Weil. C'est aussi la voie poursuivie dans \cite{Ad98, Ho07}.

  \item Le facteur de transfert est plus subtil. Lorsque $F=\R$ et $n''=0$, Adams a défini un facteur de transfert $\Delta$ sur l'ensemble des éléments semi-simples réguliers dans $\tilde{G}$ et il est égal à $\Tr \omega_\psi^+ - \Tr \omega_\psi^-$. Plus généralement, pour un groupe endoscopique $H$ quelconque, le facteur de transfert est défini dans cet article comme un produit $\Delta = \Delta'\Delta''\Delta_0$, où $\Delta', \Delta''$ sont fabriqués à partir des caractères $\Tr \omega_\psi^\pm$ et $\Delta_0$ est un terme relativement simple qui est stablement invariant. Le facteur $\Delta_0$ coïncide avec le facteur défini par Renard \cite{Re99}. Il n'y a pas de facteur $\Delta_{IV}$ comme en \cite{LS1}, car nous avons normalisé les intégrales orbitales.
\end{enumerate}

Le facteur de transfert satisfait aux propriétés suivantes.
\begin{itemize}
  \item[\bfseries Spécificité] (\ref{prop:facteur-specificite}): on exige cette propriété de sorte que l'intégrale orbitale endoscopique \eqref{eqn:integrale-endoscopique-intro} est bien définie.
  \item[\bfseries Propriété de cocycle] (\ref{prop:propriete-cocycle}): c'est une condition naturelle pour l'endoscopie, qui affecte des signes aux classes de conjugaison dans une classe de conjugaison semi-simple régulière stable.
  \item[\bfseries Descente parabolique] (\ref{prop:descente-parabolique}): cela réduit le calcul du facteur de transfert aux éléments elliptiques; c'est aussi la principale raison pour laquelle on travaille sur le revêtement à huit feuillets.
  \item[\bfseries Normalisation] (\ref{prop:normalisation-waldspurger}): dans le cas non ramifié, le facteur vaut $1$ pour les éléments à réductions régulières qui se correspondent.
  \item[\bfseries Symétrie] (\ref{prop:symetrie}): qui relie les facteurs de transfert pour $H_{n',n''}$  et $H_{n'',n'}$ . Cette symétrie est réalisée par la multiplication par une image réciproque canonique dans $\tilde{G}$ de $-1 \in G(F)$, ce que l'on désigne encore par $-1$. L'usage d'un tel élément est loisible car on travaille avec le revêtement à huit feuillets.
  \item[\bfseries Formule du produit] (\ref{prop:Delta-forumule-produit}): elle servira à stabiliser les termes elliptiques réguliers dans la formule des traces.
\end{itemize}
Les quatre premières propriétés et la descente semi-simple caractérisent le facteur de transfert dans le cas non ramifié (cf. \cite{Ha93}).

Dans le cas $F=\R$, J. Adams \cite{Ad98} a établi le relèvement de caractères entre $\tilde{G}$ et $\SO(2n+1)$ et D. Renard \cite{Re98} a démontré le transfert d'intégrales orbitales. Pour les groupes endoscopiques $H_{n',n''}$ en général, le transfert d'intégrales orbitales est établi par Renard dans le cas réel; son formalisme paraît différent, mais il est équivalent au nôtre, pour l'essentiel. Pour $F$ non archimédien, $n'=1$ et $n''=0$, J. Schultz a établi le relèvement de caractères entre $\tilde{G}$ et $\SO(3)$ dans sa thèse \cite{Sc98}.

Indiquons brièvement notre approche. À la suite de Langlands, Shelstad \cite{LS2} et Waldspurger, on applique la méthode de descente semi-simple de Harish-Chandra pour réduire le transfert à l'algèbre de Lie. Le revêtement disparaît et on se ramène à des situations composées de l'endoscopie pour les groupes unitaires et symplectiques, ainsi qu'une situation ``non standard'' étudiée dans \cite{Wa08}, à savoir le transfert entre les algèbres de Lie de $\Sp(2n)$ et $\SO(2n+1)$. Grâce aux travaux de Ngô Bao Châu \cite{Ng08}, le transfert est maintenant établi dans chaque situation ci-dessus. Le noyau technique de cet article est donc de prouver que notre facteur $\Delta$ se descend en les bons facteurs aux algèbres de Lie. On demande de plus que les facteurs ainsi descendus soient normalisés dans le cas non ramifié.

Récapitulons la structure de cet article.
\begin{itemize}
  \item Dans \S 2, on recueille les définitions et propriétés de base du groupe métaplectique. L'usage de cocycles est minimaliste. Il y a aussi des discussions de revêtements non linéaires en général.
  \item Dans \S 3, on paramètre explicitement les classes de conjugaison dans les groupes classiques. La classification est bien connue. Remarquons que notre convention diffère que celle de \cite{Wa01} (voir \ref{rem:c-normalisation}). Faute d'avoir une référence complète, on y reproduit toutes les démonstrations.
  \item Dans \S 4, on rappelle les formules du caractère de la représentation de Weil dues à Maktouf \cite{Mak99} en suivant l'approche de T. Thomas \cite{Th08}. Leurs approches reposent sur le modèle de Schrödinger du groupe métaplectique. Ces formules servent aussi à caractériser le scindage au-dessus d'un parabolique de Siegel (\ref{prop:caractere-Levi}). Pour traiter le cas non ramifié, il faudra aussi étudier le caractère via le modèle latticiel.
  \item Dans \S 5, les fondations de l'endoscopie sont mises en place. On établit aussi des résultats utiles pour les articles qui feront suite.
  \item La section \S 6 traite le transfert archimédien. On réconcilie le formalisme de Renard avec le nôtre. On prouve que nos facteurs de transfert coïncident et le transfert archimédien en résulte.
  \item La section \S 7 est consacrée à la descente semi-simple. La descente des termes $\Delta'$, $\Delta''$ repose sur des formules de Maktouf et le calcul de l'indice de Weil de la forme de Cayley (\ref{prop:calcul-qX}). D'autre part, la descente du terme $\Delta_0$ est une manipulation des symboles locaux et des formes quadratiques.
  \item La section \S 8 reprend les arguments pour l'endoscopie des groupes réductifs (cf. \cite{Wa08}); on établit le transfert non archimédien et le lemme fondamental pour les unités par la méthode de descente.
\end{itemize}
Dans \S 6-\S 7, on travaillera avec le revêtement métaplectique à $\mathbf{f}$ feuillets avec $8|\mathbf{f}$. Dans \S 8, on supposera $\mathbf{f}=8$.

Enfin, signalons un autre formalisme de l'endoscopie pour $\Mp(2n)$ proposé par Renard dans \cite{Re99} pour le cas $F=\R$. Grosso modo, les données endoscopiques elliptiques sont toujours en bijection avec les paires $(n',n'') \in \Z_{\geq 0}^2$ telles que $n'+n''=n$, mais les groupes endoscopiques sont $\Mp(2n') \times \Mp(2n'')$. Le facteur de transfert est $\Delta_0$. Nous étudierons une variante de ce formalisme en détail dans \S\ref{sec:trans-arch}, dont les définitions marchent dans le cas non archimédien sans modification. En particulier, on peut parler du transfert d'intégrales orbitales à la Renard.

Remarquons que l'on peut déduire le transfert à l'aide du transfert à la Renard composé avec le transfert de $\Mp(2k)$ vers $\SO(2k+1)$ (associé à la paire $(k,0)$) pour $k = n'$ et $k = n''$. Vu la définition du facteur de transfert $\Delta = \Delta' \Delta'' \Delta_0$, cette approche paraît raisonnable. En effet, le transfert pour $F=\R$ sera démontré de cette façon dans \S\ref{sec:trans-arch}. Réciproquement, si l'on peut montrer que chaque intégrale orbitale stable sur $\SO(2m+1)$ provienne de $\Mp(2m)$ via transfert, alors le transfert à la Renard résulte de notre formalisme. On espère revenir un jour sur cette question.

\paragraph{Remerciements}
J'exprime ma forte gratitude à Jean-Loup Waldspurger pour m'avoir proposé ce sujet et pour ses nombreuses remarques cruciales pendant ce travail. Je remercie également David Renard pour des discussions sur le cas réel. Enfin, je remercie le referee pour d’utiles remarques.

\subsection*{Conventions}
On note $\mathbb{S}^1$ le groupe $\{z \in \C^\times : |z|=1 \}$. Si $\mathbf{f} \in \Z$, on note $\bmu_\mathbf{f}$\index{$\bmu_\mathbf{f}$} le groupe $\{z \in \C^\times : z^\mathbf{f}=1 \}$. Si $A$ est une algèbre centrale semi-simple de dimension finie sur un corps $F$, on note la trace et la norme réduite par $\Tr_{A/F}$ et $N_{A/F}$ respectivement.

\paragraph{Corps locaux}
Soit $F$ un corps local non archimédien, on note $\mathfrak{o}_F$ l'anneau des entiers, $\mathfrak{p}_F$ l'idéal maximal de $\mathfrak{o}_F$ et $\varpi_F$ une uniformisante choisie. On prend toujours la valuation $v$ sur $F$ telle que $v(\varpi_F)=1$. Le conducteur d'un caractère additif\index{$\psi$} $\psi: F \to \mathbb{S}^1$ non-trivial est le plus grand sous $\mathfrak{o}_F$-module $\mathfrak{a}$ de $F$ tel que $\psi|_\mathfrak{a}=1$. Le symbole de Hilbert quadratique pour le corps local $F$ est noté par $(\cdot,\cdot)_F$. Le groupe de Galois absolu de $F$ est noté $\Gamma_F$.

\paragraph{Groupes réductifs}
Soit $F$ un corps et $M$ un $F$-groupe réductif. La composante connexe de $M$ est notée $M^0$. Soit $R$ une $F$-algèbre commutative, on note l'ensemble de $R$-points de $M$ par $M(R)$. Lorsque $M$ est un groupe classique, on confond systématiquement $M(F)$ et $M$.

Supposons que $M$ agit algébriquement sur une $F$-variété $X$. Pour $x \in X(F)$, on note $M^x \subset M$ son fixateur et $M_x := (M^x)^0$. Par exemple, $M$ agit sur lui-même par conjugaison et on obtient ainsi les commutants.

Supposons $M$ connexe et soit $m \in M(F)$. La classe de conjugaison contenant $m$ est notée $\mathcal{O}(m)$. On dira que $m_1, m_2 \in M(F)$ sont géométriquement conjugués s'ils sont conjugués par $M(\bar{F})$. La classe de conjugaison géométrique contenant $m \in M(F)$ est notée par $\mathcal{O}^\text{geo}(m)$. L'ensemble de classes de conjugaison (resp. conjugaison géométrique) semi-simples dans $M$ est noté par $\Css(M)$ (resp. $\Cssgeo(M)$). L'ensemble des éléments semi-simples dans $M(F)$ est noté $M(F)_\text{ss}$. On dira qu'un élément $m \in M(F)_\text{ss}$ est régulier si $M_m$ est un tore, on dira qu'il est fortement régulier si de plus $M^m = M_m$. L'ouvert de Zariski des éléments semi-simples réguliers dans $M$ est noté $M_\text{reg}$.

Soient $m_1, m_2 \in M(F)_\text{ss}$, on dira qu'ils sont stablement conjugués s'il existe $x \in M(\bar{F})$ tel que $x^{-1}m_1 x = m_2$ et $x \sigma(x)^{-1} \in M_{m_1}(\bar{F})$ pour tout $\sigma \in \Gamma_F$. La classe de conjugaison stable contenant $m$ est notée par $\mathcal{O}^\text{st}(m)$; l'ensemble de classes de conjugaison stable semi-simples dans $M$ est notée par $\Cssst(M)$. Si $m$ est fortement régulier, alors $\mathcal{O}^\text{geo}(m)=\mathcal{O}^\text{st}(m)$. On dit qu'une fonction $\phi $ est stablement invariante si $\mathcal{O}^\text{st}(x)=\mathcal{O}^\text{st}(y)$ implique $\phi(x)=\phi(y)$.

\paragraph{Éléments compacts}
Soit $F$ un corps local non archimédien de caractéristique résiduelle $p$. Soient $M$ un $F$-groupe réductif connexe et $\delta \in M(F)$. On dit que $\delta$ est compact si l'ensemble $\delta^\Z$ est d'adhérence compacte dans $M(F)$. On dit que $\delta$ est topologiquement unipotent si $\lim_{n \to \infty} \delta^{p^n} = 1$. Soit $X \in \mathfrak{m}(F)$, notons $T$ le plus grand $F$-tore central dans le commutant de sa partie semi-simple $X_s$. On dit que $X$ est topologiquement nilpotent si $|x^*(X_s)|_F < 1$ pour tout $x^* \in X^*(T)$. Si $p$ est assez grand (voir \cite{Wa08} 4.4 pour une borne explicite), l'exponentielle fournit un homéomorphisme de l'ensemble des éléments topologiquement nilpotents sur l'ensemble des éléments topologiquement unipotents. Tout élément compact $\delta$ admet une décomposition de Jordan topologique\index{décomposition de Jordan topologique} $\delta=\exp(X)\eta=\eta\exp(X)$, où $X$ est topologiquement nilpotent et $\eta$ est d'ordre fini premier à $p$. Cette décomposition est unique, $\eta$ et $\exp(X)$ appartiennent à l'adhérence de $\delta^\Z$. Les détails se trouvent, par exemple, dans \cite{Wa08} 5.2.

\paragraph{Formes quadratiques}
Dans ce texte, on ne considère que les formes quadratiques non  dégénérées. Soient $A$ un anneau commutatif avec $\frac{1}{2}$ et $a_1, \ldots, a_m \in A^\times$, on note $\Qform{a_1, \ldots, a_m}$ la $A$-forme quadratique sur $A^m$ définie par
$$ (x_1,\ldots,x_m) \mapsto a_1 x_1^2 + \ldots a_m x_m^2. $$

On note par $\mathbb{H}$ la forme hyperbolique de rang $2$. On abrège souvent une forme quadratique $(V,q)$ par $q$. Si $F$ est un corps local et $q$ est une $F$-forme quadratique, on note $\det q$ le déterminant de $q$ et $s(q)$ l'invariant de Hasse de $q$. On note la somme orthogonale des formes $q_1$ et $q_2$ par $q_1 \oplus q_2$. Pour un caractère additif non-trivial $\psi: F \to \mathbb{S}^1$ et une $F$-forme quadratique $q$, notons $\gamma_\psi(q)$\index{$\gamma_\psi(\cdot)$} l'indice de Weil défini dans \cite{Weil64}. Il induit un homomorphisme du groupe de Witt $W(F)$ vers $\bmu_8$.

\section{Le groupe métaplectique}

\subsection{Revêtements de groupes réductifs}
Soient $F$ un corps local et $m \in \Z_{\geq 1}$. Soient $M$ un $F$-groupe algébrique et $\rev: \tilde{M} \to M(F)$ une extension centrale de groupes topologiques telle que $\Ker(\rev) \simeq \bmu_m$; on l'appelle un revêtement\index{revêtement} à $m$ feuillets. Il y a une notion naturelle d'équivalence pour de tels revêtements. Le groupe $\tilde{M}$ est localement compact. De plus, il est totalement discontinu si $F$ est non archimédien. Nous fixons toujours une identification $\Ker(\rev)=\bmu_m$.

Supposons $M$ réductif. Soit $P$ un sous-groupe parabolique défini sur $F$ dont $U$ est le radical unipotent. Alors il existe un unique scindage $s: U(F) \to \tilde{M}$ pour $\rev$, qui est invariant par conjugaison par $P(F)$ (\cite{MW94}, appendice A).

\paragraph{Objets spécifiques}
Soit $C_c^\infty(\tilde{M})$ l'algèbre de fonctions lisses à support compact sur $\tilde{M}$, munie du produit de convolution. Il y a une décomposition
$$ C_c^\infty(\tilde{M}) = \bigoplus_{\chi \in \Hom(\bmu_m, \C^\times)} C_{c,\chi}^\infty(\tilde{M}) $$
selon l'action par translation par $\bmu_m$. Idem pour l'espace des fonctions de Schwartz $\mathcal{S}(\tilde{M})$ lorsque $F$ est archimédien.

Soit $\chi \in \Hom(\bmu_m, \C^\times)$. Une fonction dans $C_{c,\chi}^\infty(\tilde{M})$ (resp. $\mathcal{S}_\chi(\tilde{M})$) est dite $\chi$-équivariante\index{$\chi$-équivariante!fonction}. Ceci permet de définir la notion de distributions $\chi$-équivariantes\index{$\chi$-équivariante!distribution}. Une représentation $\pi$ de $\tilde{M}$ est dite $\chi$-équivariante si $\pi|_{\bmu_m}$ est une somme de $\chi$\index{$\chi$-équivariante!représentation}. Le caractère d'une représentation de $\tilde{M}$, pourvu qu'il soit bien défini, est $\chi$-équivariante si et seulement si sa représentation l'est.

Notons $\chi_-: \bmu_m \hookrightarrow \C^\times$ le plongement standard. Pour $\chi = \chi_-$ (resp. $\chi = \chi_-^{-1}$), les objets $\chi$-équivariants sont abrégés comme spécifiques (resp. anti-spécifiques)\index{spécifique!fonction}\index{anti-spécifique!fonction}\index{spécifique!distribution}\index{anti-spécifique!distribution}\index{spécifique!représentation}\index{anti-spécifique!représentation}. Les objets spécifiques (resp. anti-spécifiques) sont affectés de l'indice $-$ (resp. $\asp$). Par exemple, on définit les espaces \index{$C_{c,-}^\infty, C_{c,\asp}^\infty$} $C_{c,-}^\infty(\tilde{M})$ et $C_{c,\asp}^\infty(\tilde{M})$. Lorsque $F$ est archimédien, on définit de la même manière les espaces $\mathcal{S}_{-}(\tilde{M})$ et $\mathcal{S}_{\asp}(\tilde{M})$\index{$\mathcal{S}_{-}, \mathcal{S}_{\asp}$}.

\paragraph{Pousser-en-avant}
Soit $m|m'$. On peut pousser-en-avant l'extension centrale $1 \to \bmu_m \to \tilde{M} \to M(F) \to 1$ via $\bmu_m \hookrightarrow \bmu_{m'}$ et on obtient ainsi un revêtement à $m'$-feuillets $\rev': \tilde{M}' \to M(F)$. On le note aussi par $\tilde{M}' = \tilde{M} \times_{\bmu_m} \bmu_{m'}$. La restriction de $\tilde{M}'$ à $\tilde{M}$ identifie les objets spécifiques sur $\tilde{M}'$ à ceux sur $\tilde{M}$ (par exemple les fonctions, les représentations etc...). De même pour les objets anti-spécifiques.

Tout revêtement que l'on rencontrera dans cet article provient d'un revêtement à deux feuillets. Indiquons un passage entre objets spécifiques et anti-spécifiques dans telles situations.

\begin{proposition}\label{prop:passage-specifique}
  Soient $m \in 2\Z_{\geq 1}$, $\rev: \tilde{M} \to M(F)$ un revêtement à deux feuillets et $\tilde{M}' := \tilde{M} \times_{\bmu_2} \bmu_m$. Alors il existe un caractère continu $\xi: \tilde{M}' \to \bmu_{m/2}$ tel que $\xi([\tilde{m},\noyau])=\noyau^{-2}$.

  L'application $\pi \mapsto \pi \otimes \xi$ est une bijection des représentations spécifiques sur les représentations anti-spécifiques. L'application $f \mapsto f \xi$ induit un isomorphisme d'algèbres $C_{c,-}^\infty(\tilde{M}') \rightiso C_{c,\asp}^\infty(\tilde{M}')$; elle induit un isomorphisme $\mathcal{S}_-(\tilde{M}') \rightiso \mathcal{S}_{\asp}(\tilde{M}')$ si $F$ est archimédien.
\end{proposition}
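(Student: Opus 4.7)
Le plan est de construire $\xi$ par une formule explicite, puis de déduire les bijections de quelques lignes de calcul. Pour la première étape, je représente $\tilde{M}'$ comme quotient de $\tilde{M} \times \bmu_m$ par la relation identifiant les deux plongements de $\bmu_2$, i.e. $(\tilde{m}\eta, \eta\noyau) \sim (\tilde{m}, \noyau)$ pour $\eta \in \bmu_2$; on pose alors $\xi([\tilde m, \noyau]) := \noyau^{-2}$. Cette définition est légitime puisque $\eta^{-2} = 1$, prend ses valeurs dans $\bmu_{m/2}$ car $(\noyau^{-2})^{m/2}=\noyau^{-m}=1$, est multiplicative car $(\noyau_1\noyau_2)^{-2} = \noyau_1^{-2}\noyau_2^{-2}$, et continue puisqu'elle se factorise à travers la projection $\tilde{M}' \twoheadrightarrow \bmu_m/\bmu_2$.

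Pour la partie représentations, l'observation clé est que $\xi|_{\bmu_m} = \chi_-^{-2}$ (via le plongement $\noyau \mapsto [1,\noyau]$). Si $\pi$ est spécifique, c'est-à-dire $\pi|_{\bmu_m}$ agit par $\chi_-$, alors $(\pi\otimes\xi)|_{\bmu_m}$ agit par $\chi_-\cdot\chi_-^{-2} = \chi_-^{-1}$, et donc $\pi \otimes \xi$ est anti-spécifique. L'opération $\pi' \mapsto \pi' \otimes \xi^{-1}$ en fournit l'inverse.

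Pour les fonctions, le même calcul, appliqué à la condition $f(\noyau\tilde{x}) = \noyau f(\tilde{x})$ caractérisant les fonctions spécifiques, donne $(f\xi)(\noyau\tilde{x}) = \noyau^{-1}(f\xi)(\tilde{x})$, donc $f\xi$ est anti-spécifique. La préservation du support compact, de la lissité et---dans le cas archimédien---de la décroissance de Schwartz est automatique, $\xi$ étant continu à valeurs dans un groupe fini. La compatibilité avec la convolution est un calcul direct en une ligne utilisant que $\xi$ est un caractère, à savoir $(f_1\xi)*(f_2\xi)(\tilde{x}) = \xi(\tilde{x})(f_1*f_2)(\tilde{x})$; l'inverse est la multiplication par $\xi^{-1}$. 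Aucune étape ne présente d'obstacle sérieux: la seule délicatesse est la mise en place des conventions pour le pushout $\tilde{M} \times_{\bmu_2} \bmu_m$, après quoi toutes les vérifications reposent sur $\eta^2 = 1$ pour $\eta \in \bmu_2$.
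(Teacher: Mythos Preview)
Your proof is correct and follows exactly the approach indicated by the paper, which merely states ``On vérifie que $\xi$ est bien défini et continu. Le reste en résulte immédiatement.'' You have simply supplied the verifications that the paper leaves to the reader; no different idea is involved.
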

\begin{proof}
  On vérifie que $\xi$ est bien défini et continu. Le reste en résulte immédiatement.
\end{proof}

\begin{remark}
  On aura parfois besoin de considérer $\tilde{M}' := \tilde{M} \times_{\bmu_m} \C^\times$, qui est une extension de $M(F)$ par $\C^\times$. Les définitions ci-dessus sont pareilles pour $\tilde{M}'$ et on a toujours ladite équivalence entre objets spécifiques/anti-spécifiques.
\end{remark}

Si $F$ est un corps global et $\A$ son anneau d'adèles, alors les terminologies précédentes s'adaptent aux revêtements de $M(k)$ où $k$ est une sous-algèbre de $\A$ munie de la topologie induite.

Nous adoptons systématiquement la convention de désigner un élément dans $\tilde{M}$ par $\tilde{m}$, etc., et sa projection dans $M(F)$ par $m$, etc.

\subsection{La représentation de Weil et le groupe métaplectique local}
Soit $F$ un corps local de caractéristique nulle. On fixe un caractère additif non trivial $\psi: F \to \mathbb{S}^1$.

Soient $n > 0$ et $(W, \angles{\cdot|\cdot})$ un $F$-espace symplectique de dimension $2n$. On supprime souvent la forme $\angles{\cdot|\cdot}$ quand on parle d'un tel espace.

Le groupe de Heisenberg $H(W)$ associé à $(W, \angles{\cdot|\cdot})$ est l'espace $W \times F$ muni du produit
$$ (w,t) \cdot (w', t') = \left(w+w', t+t' + \frac{\angles{w|w'}}{2} \right). $$
Le centre de $H(W)$ est $\{0\} \times F \simeq F$, on l'identifie à $F$.

Notons $\Sp(W)$ le groupe symplectique associé à $(W,\angles{\cdot|\cdot})$. Il agit sur $H(W)$ par
$$ g \cdot (w,t) = (g(w), t). $$

Le théorème de Stone-von Neumann affirme qu'il existe une et une seule représentation lisse irréductible $(\rho_\psi, S_\psi)$ de $H(W)$ à caractère central $\psi$, à isomorphisme près. De plus, une telle représentation est admissible et unitarisable.

Soit $g \in \Sp(W)$. La représentation
$$\rho_\psi^g: h \mapsto \rho_\psi(g \cdot h) $$
vérifie encore les propriétés du théorème de Stone-von Neumann, d'où un opérateur d'entrelacement $M[g]: S_\psi: \to S_\psi$ tel que
$$ M[g] \circ \rho_\psi = \rho_\psi^g \circ M[g]. $$
L'opérateur $M[g]$ est unique à une constante multiplicative près, donc $g \mapsto M[g]$ est un homomorphisme $\Sp(W) \to \PGL(S_\psi)$. Si l'on remplace $(\rho_\psi, S_\psi)$ par sa version unitaire, on peut supposer que $M[g]$ est une isométrie.

Posons\index{$\Spt_\psi(W)$}
$$ \Spt_\psi(W) := \{ (g,M) \in \Sp(W) \times \GL(S_\psi) : M \circ \rho_\psi = \rho_\psi^g \circ M \}.  $$
Cela fournit une extension centrale de $\Sp(W)$ par $\C^\times$ et $\Spt_\psi(W)$ admet une structure naturelle de groupe localement compact (\cite{Weil64}, \S 35). Notons\index{$\MMp{2}(W)$} $\MMp{2}(W)$ le groupe dérivé de $\Spt_\psi(W)$.

Si $F=\C$, le revêtement $\rev: \MMp{2}(W) \to \Sp(W)$ est scindé et on identifie $\MMp{2}(W)$ à $\bmu_2 \times \Sp(W)$; sinon, $p$ est l'unique revêtement non trivial à deux feuillets de $\Sp(W)$. C'est pourquoi nous avons supprimé l'indice $\psi$. La représentation de Weil attachée à $\psi$\index{la représentation de Weil} est la composée $\omega_\psi: \MMp{2}(W) \hookrightarrow \Spt_\psi(W) \to \GL(S_\psi)$. Elle se décompose en deux morceaux non-isomorphes irréductibles, l'un dit pair ($+$) et l'autre impair ($-$) \index{$\omega_\psi^\pm$}:
$$ \omega_\psi = \omega_\psi^+ \oplus \omega_\psi^- , $$
où les représentations $\omega_\psi^\pm$ sont admissibles et unitarisables. Cela permet de définir ses caractères comme distributions spécifiques sur $\MMp{2}(W)$.

Soit $\mathbf{f} \in 2\Z_{\geq 1}$. Posons\index{$\MMp{\mathbf{f}}(W)$}
$$ \MMp{\mathbf{f}}(W) := \MMp{2}(W) \times_{\bmu_2} \bmu_{\mathbf{f}} .$$
Notons le revêtement $\MMp{\mathbf{f}}(W) \to \Sp(W)$ par la même lettre $p$. On obtient ainsi une famille de revêtements indexée par $2\Z_{\geq 1}$ telle que $\MMp{\mathbf{f}}(W) \subset \MMp{\mathbf{f}'}(W)$ si et seulement si $\mathbf{f}|\mathbf{f}'$. Regardons $\omega_\psi^\pm$ comme représentations spécifiques sur les $\MMp{\mathbf{f}}(W)$. Idem pour leurs caractères.

Dans le cas $F=\C$, on a un scindage canonique $\MMp{\mathbf{f}}(W) \simeq \bmu_\mathbf{f} \times \Sp(W)$. Dans le cas trivial $W=\{0\}$, nos définitions entraînent que $\MMp{\mathbf{f}}(W)= \bmu_{\mathbf{f}}$.

\subsection{Sous-groupes et réseaux hyperspéciaux}
Supposons que $F$ est non archimédien de caractéristique résiduelle $p>2$. Soit $L \subset W$ un $\mathfrak{o}_F$-réseau, on définit son réseau dual comme
$$ L^* := \{w \in W : \forall m \in L,\; \angles{w|m} \in \mathfrak{o}_F \}. $$
Un réseau $L$ dans $W$ est dit autodual si $L^*=L$. De tels réseaux existent toujours.

\begin{theorem}
  Si $L$ est un réseau dans $W$ tel que $L^*=L$ ou $L^*=\mathfrak{p}_F L$, alors $K_L := \text{Stab}_{\Sp(W)}(L)$ est un sous-groupe hyperspécial de $\Sp(W)$. Cela fournit une correspondance biunivoque entre les sous-groupes hyperspéciaux et les réseaux $L$ tels que $L^*=L$ ou $L^*=\mathfrak{p}_F L$.
\end{theorem}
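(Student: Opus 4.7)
Pour la première assertion, je distingue les deux cas. Si $L^* = L$, la forme $\angles{\cdot|\cdot}$ se restreint à $L$ en un accouplement $\mathfrak{o}_F$-bilinéaire alterné parfait (c'est exactement le contenu de la condition d'autodualité), ce qui me permet de définir un $\mathfrak{o}_F$-schéma en groupes affine lisse $\underline{\mathcal{G}}_L$ paramétrant les automorphismes de $L$ préservant cette forme. Sa fibre générique est $\Sp(W)$ et sa fibre spéciale est $\Sp_{2n}$ sur le corps résiduel $\mathfrak{o}_F/\mathfrak{p}_F$, groupe réductif connexe; par définition, $K_L = \underline{\mathcal{G}}_L(\mathfrak{o}_F)$ est donc hyperspécial. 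Si $L^* = \mathfrak{p}_F L$, je renormalise la forme en $\varpi_F\angles{\cdot|\cdot}$ pour laquelle $L$ devient autodual; comme $\Sp(W, \angles{\cdot|\cdot}) = \Sp(W, \varpi_F\angles{\cdot|\cdot})$, ceci se ramène au cas précédent.

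Pour la bijection, l'injectivité s'obtient en observant que $K_L$ agit irréductiblement sur $L/\varpi_F L$ (c'est l'action standard de $\Sp_{2n}$ sur le corps résiduel); par Nakayama, tout réseau stable par $K_L$ est donc nécessairement de la forme $\varpi_F^k L$ pour un $k \in \Z$, et la condition $L'^* = L'$ ou $L'^* = \mathfrak{p}_F L'$ force $k = 0$ par un calcul direct sur la valuation du discriminant. La surjectivité part d'un sous-groupe hyperspécial $K$ quelconque et d'un réseau $L_0$ stabilisé par $K$ (existence par compacité). Alors $L_0^*$, $L_1 := L_0 \cap L_0^*$ et $L_2 := L_0 + L_0^*$ sont également stables, avec $L_1 = L_2^*$. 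Le quotient $L_2/L_1$ est un $\mathfrak{o}_F$-module de longueur finie sur lequel $K$ agit, et l'hyperspécialité (connexité-réductivité de la fibre spéciale du schéma en groupes de Bruhat-Tits associé) impose à cette longueur d'autoriser l'extraction d'un réseau intermédiaire $L$ satisfaisant $L^* = L$ ou $L^* = \mathfrak{p}_F L$, stable par $K$; la maximalité de $K$ donne alors $K = K_L$.

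L'étape délicate est la surjectivité: il s'agit de caractériser les sous-groupes hyperspéciaux parmi tous les compacts de $\Sp(W)$. L'argument le plus concis consiste à invoquer la théorie de Bruhat-Tits pour $\Sp(W)$ (de type affine $\tilde{C}_n$), dont les sommets hyperspéciaux sont précisément classifiés par les deux familles de réseaux considérés; un traitement plus terre à terre examinerait finement la filtration $L_1 \subset L_2$ et la structure du $K$-module semi-simple $L_2/L_1$.
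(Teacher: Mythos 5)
Le papier énonce ce théorème sans démonstration — c'est un fait standard attribué à la théorie de Bruhat--Tits — il n'y a donc pas de preuve de référence à laquelle comparer la vôtre. Évaluons plutôt votre argument tel quel.

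La première assertion et l'injectivité sont correctement traitées. Pour $L^*=L$, la construction du $\mathfrak{o}_F$-schéma en groupes lisse à fibre spéciale $\Sp_{2n}$ sur le corps résiduel est exactement la bonne; le cas $L^*=\mathfrak{p}_F L$ se ramène au premier par renormalisation de la forme, comme le note d'ailleurs le texte juste après le théorème. Pour l'injectivité, l'argument est juste: si $L'$ est $K_L$-stable, quitte à le remplacer par $\varpi_F^k L'$ on peut supposer $L' \subset L$, $L' \not\subset \varpi_F L$; l'image de $L'$ dans $L/\varpi_F L$ est alors un sous-$\mathbb{F}_q[\Sp_{2n}(\mathbb{F}_q)]$-module non nul (la surjectivité de la réduction $K_L \to \Sp_{2n}(\mathbb{F}_q)$ provient de la lissité et du lemme de Hensel), donc tout $L/\varpi_F L$ par irréductibilité de la représentation standard, d'où $L'=L$ par Nakayama. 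Le calcul final $(\varpi_F^k L)^* = \varpi_F^{-k} L^*$ force alors $k=0$ sous l'hypothèse de dualité sur les deux réseaux.

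La surjectivité est en revanche lacunaire, et c'est vous-même qui le signalez. La phrase \og l'hyperspécialité \dots impose à cette longueur d'autoriser l'extraction d'un réseau intermédiaire $L$ \fg{} n'est pas un argument: vous ne dites pas pourquoi la fibre spéciale réductive connexe du schéma en groupes sous-jacent à $K$ garantit l'existence d'un $L$ intermédiaire, $K$-stable, avec $L^*=L$ ou $L^*=\mathfrak{p}_F L$. C'est précisément là que se cache toute la difficulté: pour un parahorique maximal non hyperspécial, $K$ stabilise aussi une paire $L_1 = L_2^* \subset L_2$, mais aucun $L$ de ce type n'existe entre les deux, et votre filtration ne permet pas de distinguer les deux situations. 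Un argument qui comble le trou sans entrer dans la combinatoire de Bruhat--Tits: $\mathrm{Aut}(\Sp_{2n}) \simeq \PSp_{2n}$ est connexe, donc par Hensel et le théorème de Lang, $H^1(\mathfrak{o}_F, \PSp_{2n}) = 1$; ainsi tout $\mathfrak{o}_F$-schéma en groupes réductif $\mathcal{G}$ de fibre générique $\Sp(W)$ est isomorphe au $\Sp_{2n}$ déployé sur $\mathfrak{o}_F$. L'isomorphisme sur les fibres génériques est donné, à conjugaison près, par un $g \in \GSp(W)$; on en déduit $K = \mathrm{Stab}(g^{-1}\mathfrak{o}_F^{2n})$, et la formule $(g^{-1}\mathfrak{o}_F^{2n})^* = \nu(g)\, g^{-1}\mathfrak{o}_F^{2n}$ avec $\nu(g)$ le facteur de similitude donne, après mise à l'échelle, $L^* = L$ si $v(\nu(g))$ est pair et $L^* = \mathfrak{p}_F L$ sinon. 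Cela fournit le réseau cherché sans avoir à \og extraire \fg{} quoi que ce soit de $L_2/L_1$.
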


Un tel réseau $L$ détermine un modèle de $\Sp(W)$ sur $\mathfrak{o}_F$. On définit le réseau hyperspécial dans $\syp(W)$ associé à $L$ par $\mathfrak{k}_L := \syp(W,\mathfrak{o}_F)$. Si l'on remplace $\angles{\cdot|\cdot}$ par $\varpi_F \angles{\cdot|\cdot}$ alors cela a pour effet d'échanger les réseaux $L$ avec $L^*=L$ et ceux avec $L^* = \varpi_F L$, à homothéthie près; pourtant $\Sp(W)$ ne change pas.

Observons aussi que $\Sp(W)$ agit transitivement sur les réseaux autoduaux. Nous avons fixé une forme symplectique $\angles{\cdot|\cdot}$ sur $W$, cela a l'effet de distinguer une classe de conjugaison canonique de sous-groupes hyperspéciaux de $\Sp(W)$, à savoir ceux associés aux réseaux autoduaux. On ne considère que des tels sous-groupes hyperspéciaux dans cet article. 

\subsection{Modèles de la représentation de Weil}
Nous donnerons deux constructions pour la représentation $(\rho_\psi, S_\psi)$ et les opérateurs d'entrelacement $M[g]$ dans les définitions précédentes. Commençons par une construction générale.

Soit $A \subset W$ un sous-groupe tel que $A_F := A \times F$ est un sous-groupe abélien maximal dans $H(W)$; ceci est équivalent à $A = A^\perp$ où $A^\perp := \{w \in W : \forall a \in A, \; \psi(\angles{a,w})=1 \}$. Puisque $A_F/(\{0\} \times \Ker(\psi))$ est abélien, il existe un caractère $\psi_A: A_F \to \mathbb{S}^1$ qui prolonge $1 \times \psi$ sur $\{0\} \times F$. On définit
$$ (\rho_A, S_A) := \text{Ind}_{A_F}^{H(W)}(\psi_A) .$$
\begin{theorem}
  $(\rho_A, S_A)$ est une représentation lisse irréductible à caractère central $\psi$.
\end{theorem}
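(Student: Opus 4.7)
Le plan est de vérifier successivement trois propriétés : la lissité, le fait que le caractère central soit $\psi$, et l'irréductibilité. Les deux premières seront essentiellement formelles : l'induction d'un caractère lisse est lisse, et puisque le centre $\{0\} \times F$ de $H(W)$ est contenu dans $A_F$ et que $\psi_A$ prolonge $\psi$ par construction, la réciprocité de Frobenius donne immédiatement que $\{0\} \times F$ agit par $\psi$ sur $\rho_A$.

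Pour l'irréductibilité, qui constitue le cœur de l'énoncé, je procéderai via le critère d'irréductibilité de Mackey. La première étape sera un calcul direct dans $H(W)$ : en utilisant l'antisymétrie de $\angles{\cdot|\cdot}$ et l'identité $\angles{a|a}=0$, on montre que pour tout $(w_0, t_0) \in H(W)$ et $(a, s) \in A_F$,
$$ (w_0, t_0) \cdot (a,s) \cdot (w_0,t_0)^{-1} = (a,\, s + \angles{w_0|a}). $$
Par conséquent $A_F$ est distingué dans $H(W)$, et le caractère conjugué vérifie $\psi_A^{(w_0,t_0)}(a,s) = \psi_A(a,s) \cdot \psi(\angles{w_0|a})$. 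La seconde étape sera la vérification de la condition de Mackey : pour $(w_0,t_0) \notin A_F$, on a $w_0 \notin A = A^\perp$, donc il existe $a \in A$ tel que $\psi(\angles{w_0|a}) \neq 1$, d'où $\psi_A^{(w_0,t_0)} \neq \psi_A$ sur $A_F$. Il s'ensuit que les opérateurs d'entrelacement de $\rho_A$ à lui-même sont réduits aux scalaires, d'où l'irréductibilité.

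Le principal point délicat sera d'appliquer proprement le critère de Mackey dans le cadre topologique approprié --- induction lisse dans le cas $p$-adique, induction unitaire (ou à la Bruhat) dans le cas archimédien ---, mais la structure deux-étapes nilpotente du groupe de Heisenberg et le fait que $A_F$ soit distingué rendent cette application standard et nous ramènent aux énoncés classiques sur l'induction d'un caractère.
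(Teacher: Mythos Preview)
Le papier énonce ce théorème sans démonstration --- c'est un résultat classique (essentiellement une forme du théorème de Stone--von Neumann, cf.\ \cite{MVW87} pour un traitement complet). Ta démonstration par le critère de Mackey est correcte et constitue l'argument standard. Le calcul de conjugaison $(w_0,t_0)(a,s)(w_0,t_0)^{-1} = (a, s+\angles{w_0|a})$ est juste et montre bien que $A_F$ est distingué; la condition $A = A^\perp$ garantit alors exactement que les caractères conjugués $\psi_A^{(w_0,t_0)}$ sont deux à deux distincts lorsque $w_0$ parcourt $W/A$, ce qui est précisément l'hypothèse du critère de Mackey pour un sous-groupe distingué.

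Une remarque mineure: dans le cas archimédien, l'énoncé du papier parle de représentation \emph{lisse}, ce qui sous-entend que l'on prend les vecteurs lisses de l'induite unitaire (comme précisé juste après pour le modèle de Schrödinger). Ton allusion à l'induction ``à la Bruhat'' va dans ce sens; il conviendrait simplement d'expliciter que l'irréductibilité de l'induite unitaire entraîne celle de l'espace des vecteurs lisses (irréductibilité topologique), ce qui est standard pour les représentations unitaires de groupes de Lie nilpotents.
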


\subsubsection{Le modèle de Schrödinger}\index{modèle de Schrödinger}
Les détails se trouvent dans \cite{Th06, Th08}.

Un sous-espace $\ell \subset W$ est dit un lagrangien\index{lagrangien} dans $W$ si $\ell$ est totalement isotrope de dimension maximale pour $\angles{\cdot|\cdot}$. Notons\index{$\Lag(W)$} $\Lag(W)$ l'ensemble des lagrangiens dans $W$.

Dans la construction ci-dessus, prenons pour $A=\ell \in \Lag(W)$. Dans ce cas-là $\ell_F = \ell \times F$ comme groupes topologiques, et on peut prendre $\psi_\ell = 1 \times \psi$. Soit $(\rho_\ell, S_\ell)$ la représentation ainsi obtenue. On fixe une mesure de Haar sur $\ell$ et on prend la mesure autoduale sur $W$ par rapport à $\psi(\angles{\cdot|\cdot})$. La représentation $(\rho_\ell, S_\ell)$ s'identifie à l'espace des vecteurs lisses de l'induite compacte $\text{ind}_{\ell_F}^{H(W)}(\psi_\ell)$.

Soit $\ell'$ un autre lagrangien muni d'une mesure de Haar. P. Perrin a défini un opérateur d'entrelacement canonique
$$ \mathscr{F}_{\ell', \ell}: S_\ell \to S_{\ell'} .$$
C'est essentiellement une transformation de Fourier partielle convenablement normalisée.

Soit $g \in \Sp(W)$, alors on obtient une isométrie par transport de structures:
$$ g_*: S_\ell \to S_{g\ell}. $$

Définissons\index{$M_\ell[\cdot]$}
$$ M_\ell[g] := \mathscr{F}_{\ell, g\ell} \circ g_*  = g_* \circ \mathscr{F}_{g^{-1}\ell, \ell}. $$

On vérifie que $(g, M_\ell[g]) \in \Spt_\psi(W)$. Grosso modo, la représentation $\tilde{\omega}_\psi$ sur $\Spt_\psi(W)$ ne peut pas être définie sur $\Sp(W)$ car $\mathscr{F}_{\ell'', \ell'} \circ \mathscr{F}_{\ell', \ell}$ n'est pas égale à $\mathscr{F}_{\ell'', \ell}$, mais diffère par un nombre complexe de module $1$ ($\ell, \ell', \ell'' \in \Lag(W)$ quelconques). Pour expliciter  cette obstruction, récapitulons des propriétés de l'indice de Maslov telle qu'elles sont énoncées  par T. Thomas \cite{Th06}.

Étant donnés $\ell_1, \ldots, \ell_m \in \Lag(W)$ ($m \geq 3$), On définit une $F$-forme quadratique $\tau(\ell_1, \ldots, \ell_m)$. Elle s'appelle l'indice de Maslov\index{indice de Maslov}. Soit $[\tau(\ell_1, \ldots, \ell_m)]$ sa classe dans le groupe de Witt $W(F)$; cette classe satisfait aux propriétés suivantes.
\begin{enumerate}
  \item \textbf{Invariance symplectique}. Pour tout $g \in \Sp(W)$,
    $$ \tau(\ell_1, \ldots, \ell_m) \simeq \tau(g\ell_1, \ldots, g\ell_m). $$
  \item \textbf{Additivité symplectique}. Soient $W_1, W_2$ deux $F$-espaces quadratiques et $W := W_1 \oplus W_2$. Si $\ell_1, \ldots, \ell_m \in \Lag(W_1)$ et $\ell'_1, \ldots, \ell'_m \in \Lag(W_2)$, alors
    $$ \tau(\ell_1 \oplus \ell'_1, \ldots, \ell_m \oplus \ell'_m) \simeq \tau(\ell_1, \ldots, \ell_m) \oplus \tau(\ell'_1, \ldots, \ell'_m). $$
  \item \textbf{Symétrie diedrale}.
    \begin{gather*}
      \tau(\ell_1, \ldots, \ell_m) \simeq  \tau(\ell_2, \ldots, \ell_m, \ell_1), \\
      [\tau(\ell_1, \ldots, \ell_m)] = -[\tau(\ell_m, \ldots, \ell_1)].
    \end{gather*}
  \item \textbf{Condition de chaîne}. Pour tout $3 \leq k < m$, on a
    $$ [\tau(\ell_1, \ldots, \ell_m)] = [\tau(\ell_1, \ldots, \ell_k)] + [\tau(\ell_1, \ell_k, \ldots, \ell_m)]. $$
\end{enumerate}

Dans le cas $m=3$, l'espace $\tau(\ell_1, \ell_2, \ell_3)$ est Witt équivalent à l'indice de Maslov défini par Kashiwara \cite{LV80}. Vu la condition de chaîne, cela détermine $[\tau(\ell_1, \ldots, \ell_m)] \in W(F)$ pour $m \geq 3$ quelconque. La dimension de $\tau$ est aussi calculée:

\begin{proposition}[\cite{Th06}]\label{prop:Maslov-dimension}\index{indice de Maslov!formule de dimension}
  Regardons les lagrangiens $\ell_1, \ldots, \ell_m$ comme indexés par $\Z/m\Z$. Alors:
  $$ \dim \tau(\ell_1, \ldots, \ell_m) = \frac{(m-2)\dim W}{2} - \sum_{i \in \Z/m\Z} \dim(\ell_i \cap \ell_{i+1}) + 2 \dim \bigcap_{i \in \Z/m\Z} \ell_i.$$
\end{proposition}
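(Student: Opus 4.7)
Je procéderais par récurrence sur $m \geq 3$, avec le cas de base $m=3$ traité par un calcul direct dans un modèle explicite de l'indice de Maslov, puis le pas inductif via un raffinement de la condition de chaîne (propriété 4).

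\textbf{Cas de base $m=3$.} Pour trois lagrangiens, $\tau(\ell_1,\ell_2,\ell_3)$ est Witt-équivalent à l'indice de Kashiwara, lequel se réalise explicitement sur (un quotient de) $V := \{(x_1,x_2,x_3) \in \ell_1 \oplus \ell_2 \oplus \ell_3 : x_1+x_2+x_3=0\}$ muni de la forme $Q(x_1,x_2,x_3) = \angles{x_1|x_2}$. Une analyse du radical de $Q$, reposant sur la propriété lagrangienne $\ell_i = \ell_i^\perp$, permet d'identifier $V^\perp/\text{rad}$ à une somme explicite faisant intervenir les $\ell_i \cap \ell_j$ modulo $\ell_1 \cap \ell_2 \cap \ell_3$. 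Combinée à une évaluation soigneuse de $\dim V$, cette analyse (classique, cf.\ \cite{LV80}) fournit
\[ \dim \tau(\ell_1,\ell_2,\ell_3) = n - \dim(\ell_1 \cap \ell_2) - \dim(\ell_2 \cap \ell_3) - \dim(\ell_3 \cap \ell_1) + 2 \dim(\ell_1 \cap \ell_2 \cap \ell_3), \]
qui est précisément la formule pour $m=3$. Le modèle de Thomas, Witt-équivalent mais construit différemment, doit donner la même dimension par vérification directe sur sa construction.

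\textbf{Pas inductif.} Pour $m \geq 4$, j'appliquerais la condition de chaîne avec $k=3$:
\[ [\tau(\ell_1,\ldots,\ell_m)] = [\tau(\ell_1,\ell_2,\ell_3)] + [\tau(\ell_1,\ell_3,\ldots,\ell_m)] \quad \text{dans } W(F). \]
Cette égalité dans $W(F)$ ne suffit pas à elle seule, puisque l'équivalence de Witt ne préserve pas la dimension. L'ingrédient crucial est que la construction de Thomas relève cette identité en un isomorphisme de formes quadratiques, modulo des plans hyperboliques explicites: concrètement,
\[ \tau(\ell_1,\ldots,\ell_m) \oplus \mathbb{H}^{a} \simeq \tau(\ell_1,\ell_2,\ell_3) \oplus \tau(\ell_1,\ell_3,\ldots,\ell_m) \oplus \mathbb{H}^{a+d'} \]
où $d'$ est contrôlé par $\dim(\ell_1 \cap \ell_3)$ corrigé par des termes d'intersections multiples. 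En prenant les dimensions et en appliquant l'hypothèse de récurrence à la chaîne plus courte $(\ell_1,\ell_3,\ldots,\ell_m)$ (de longueur $m-1$) ainsi qu'au cas de base, un décompte combinatoire montre que les contributions de $\dim(\ell_1 \cap \ell_3)$ se compensent exactement, les coefficients de chaque $\dim(\ell_i \cap \ell_{i+1})$ restants étant corrects, et le coefficient $+2$ devant $\dim \bigcap_i \ell_i$ se préservant sous la récurrence.

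\textbf{Principale difficulté.} Le point délicat est la détermination précise de la correction hyperbolique $d'$ dans le relèvement de la condition de chaîne, surtout en présence d'intersections multiples non triviales entre les $\ell_i$. Une voie alternative, qui évite cette difficulté, consisterait à calculer $\dim \tau(\ell_1,\ldots,\ell_m)$ directement à partir du modèle $m$-lagrangien de Thomas, en analysant le radical de sa forme quadratique définissante par une étude systématique des $\bigcap_{i \in I} \ell_i$ pour $I \subset \Z/m\Z$. Cette approche est plus laborieuse combinatoirement mais ne fait appel à aucun raffinement de la condition de chaîne et serait probablement la plus sûre si l'on ne dispose pas déjà, dans \cite{Th06}, du lemme adéquat reliant $\tau(\ell_1,\ldots,\ell_m)$ et ses ``sous-triples'' au niveau des formes quadratiques.
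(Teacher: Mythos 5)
Le papier ne démontre pas cette proposition : il la cite telle quelle à Thomas \cite{Th06}, où elle s'obtient par un calcul direct du radical de la forme quadratique définissant $\tau(\ell_1,\ldots,\ell_m)$ sur le modèle explicite à $m$ lagrangiens. Il n'y a donc pas d'argument interne au papier auquel comparer votre esquisse.

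Quant à l'esquisse elle-même, vous identifiez correctement l'écueil principal, mais celui-ci est fatal en l'état. La condition de chaîne est une égalité dans le groupe de Witt $W(F)$, et une telle égalité ne contrôle nullement les dimensions : il existe un entier $a-b$ (le nombre signé de plans hyperboliques à ajouter ou retrancher) que rien ne détermine \emph{a priori}. Votre pas inductif revient exactement à affirmer que
$$ a-b = -\dim(\ell_1\cap\ell_3) + \dim(\ell_1\cap\ell_2\cap\ell_3) + \dim\Bigl(\ell_1\cap\ell_3\cap\bigcap_{i\ge 4}\ell_i\Bigr) - \dim\bigcap_{i}\ell_i, $$
ce qui est précisément équivalent à la formule de dimension que l'on cherche à établir. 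L'induction tourne donc en rond tant que ce raffinement quantitatif de la condition de chaîne n'est pas démontré indépendamment, et le démontrer équivaut à refaire le calcul direct sur le modèle. Le cas de base souffre d'une version plus bénigne du même problème : l'indice de Kashiwara n'est que Witt-équivalent à $\tau(\ell_1,\ell_2,\ell_3)$, donc la formule de dimension de l'un ne se transfère pas automatiquement à l'autre ; il faut recalculer le radical dans le modèle de Thomas. Votre « voie alternative » — calcul direct du radical de la forme sur le modèle à $m$ lagrangiens de Thomas — est en réalité la seule qui aboutisse sans présupposer le résultat, et c'est ce que fait \cite{Th06}. Je vous conseille de l'adopter d'emblée plutôt que comme plan de secours.
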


\begin{theorem}[G. Lion, P. Perrin]
  Soient $\ell_1, \ldots, \ell_m \in \Lag(W)$ ($m \geq 3$). Alors
  $$ \mathscr{F}_{\ell_1, \ell_m} \circ \cdots \circ \mathscr{F}_{\ell_2, \ell_1} = \gamma_\psi(-\tau(\ell_1,\ldots,\ell_m)) \cdot \identity_{S_{\ell_1}}. $$
\end{theorem}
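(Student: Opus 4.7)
Chaque opérateur $\mathscr{F}_{\ell',\ell}:S_\ell\to S_{\ell'}$ entrelace canoniquement les deux modèles de la représentation de Heisenberg $\rho_\ell$ et $\rho_{\ell'}$, lesquels réalisent la même représentation irréductible de $H(W)$ de caractère central $\psi$. La composée $\mathscr{F}_{\ell_1,\ell_m}\circ\cdots\circ\mathscr{F}_{\ell_2,\ell_1}$ est donc un endomorphisme $H(W)$-équivariant de $S_{\ell_1}$, et le théorème de Stone--von Neumann joint au lemme de Schur la force à être un scalaire $c_\psi(\ell_1,\ldots,\ell_m)\cdot\identity$. Il reste à identifier ce scalaire à $\gamma_\psi(-\tau(\ell_1,\ldots,\ell_m))$.

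\textbf{Étape 2 (Réduction au triangle).} La normalisation canonique de Perrin entraîne $\mathscr{F}_{\ell,\ell'}\circ\mathscr{F}_{\ell',\ell}=\identity_{S_{\ell'}}$. En insérant l'identité $\mathscr{F}_{\ell_k,\ell_1}\circ\mathscr{F}_{\ell_1,\ell_k}$ entre les facteurs $\mathscr{F}_{\ell_{k+1},\ell_k}$ et $\mathscr{F}_{\ell_k,\ell_{k-1}}$, puis en regroupant, on obtient
$$ c_\psi(\ell_1,\ldots,\ell_m) = c_\psi(\ell_1,\ldots,\ell_k)\cdot c_\psi(\ell_1,\ell_k,\ldots,\ell_m). $$
C'est le strict parallèle de la condition de chaîne pour $[\tau]\in W(F)$. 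Comme $\gamma_\psi:W(F)\to\bmu_8$ est un homomorphisme, une récurrence sur $m$ réduit l'énoncé au cas $m=3$.

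\textbf{Étape 3 (Cas $m=3$).} On veut
$$ \mathscr{F}_{\ell_1,\ell_3}\circ\mathscr{F}_{\ell_3,\ell_2}\circ\mathscr{F}_{\ell_2,\ell_1} = \gamma_\psi(-\tau(\ell_1,\ell_2,\ell_3))\cdot\identity. $$
Par l'invariance symplectique et l'additivité symplectique (propriétés 1 et 2 de $\tau$), on se ramène d'abord, en isolant le facteur correspondant à $\ell_1\cap\ell_2+\ell_2\cap\ell_3+\ell_3\cap\ell_1$, au cas où les trois lagrangiens sont deux à deux transverses. Dans ce cas, on choisit $W=\ell_1\oplus\ell_3$, ce qui identifie $S_{\ell_1}$ à un espace de fonctions sur $\ell_3$ et permet d'écrire chacun des trois opérateurs $\mathscr{F}$ comme une transformée de Fourier partielle explicite. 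La composée s'exprime alors comme une intégrale gaussienne dont le noyau est $\psi$ appliqué à une forme quadratique $q$ sur un quotient de $\ell_1\oplus\ell_2\oplus\ell_3$; par la définition de l'indice de Weil on en tire un scalaire égal à $\gamma_\psi(-q)$.

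\textbf{Principale difficulté.} Le point délicat est l'identification de la forme $q$ ainsi apparue à $\tau(\ell_1,\ell_2,\ell_3)$ dans le groupe de Witt. La coïncidence des dimensions résulte de la Proposition~\ref{prop:Maslov-dimension} pour $m=3$; l'égalité à équivalence de Witt près demande un suivi soigneux des mesures de Haar auto-duales sur les $\ell_i$ (intervenant dans la normalisation de $\mathscr{F}_{\ell',\ell}$), de sorte que toute indétermination scalaire disparaisse. Une fois cette normalisation contrôlée, l'additivité de $\gamma_\psi$ sur $W(F)$ termine la preuve.
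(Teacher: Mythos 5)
Notez d'abord que le papier ne démontre pas cet énoncé: il est simplement cité comme théorème de Lion--Perrin, sans preuve, et on en déduit ensuite le corollaire sur $M_\ell[x]M_\ell[y]$. Il n'y a donc pas de preuve interne au papier à laquelle confronter votre tentative; votre sch\'ema suit bien la stratégie classique de Lion--Perrin (reprise plus tard par Thomas): Schur, réduction au cas $m=3$ par la relation de cochaîne, puis calcul gaussien dans le cas transverse.

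Le problème est que le pas que vous appelez « la principale difficulté » est effectivement le \emph{contenu} du théorème, et vous ne le faites pas. Identifier la forme quadratique $q$ apparaissant dans l'intégrale gaussienne avec $\tau(\ell_1,\ell_2,\ell_3)$ dans $W(F)$ demande de fixer un modèle concret de $\tau$ (forme de Kashiwara sur $\ell_1\oplus\ell_2\oplus\ell_3$, ou forme sur $(\ell_1+\ell_3)\cap\ell_2$), de mener la transformée de Fourier partielle avec les mesures de Haar autoduales, et de vérifier que les normalisations s'annulent pour qu'il ne reste que $\gamma_\psi(-q)$; aucun de ces points n'est traité. De plus, la réduction au cas des lagrangiens deux à deux transverses est énoncée trop vite: il faut justifier que le sous-espace isotrope $\ell_1\cap\ell_2+\ell_2\cap\ell_3+\ell_3\cap\ell_1$ se factorise par réduction symplectique et que les trois lagrangiens se décomposent en conséquence, compatiblement à la fois avec l'additivité de $\tau$ et avec la factorisation des opérateurs $\mathscr{F}$. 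En l'état, c'est une feuille de route fidèle et non une preuve; les étapes 1 et 2 sont correctes et complètes, mais l'étape 3 est entièrement laissée en suspens.
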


\begin{corollary}
  Soit $\ell \in \Lag(W)$, alors pour tout $x, y \in \Sp(W)$ on a
  $$ M_\ell[x] \cdot M_\ell[y] = \gamma_\psi(\tau(\ell,y\ell,xy\ell)) M_\ell[xy]. $$
\end{corollary}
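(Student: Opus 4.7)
Le plan est un calcul direct à partir des définitions, combinant la naturalité de la transformation de Perrin avec le théorème de Lion-Perrin. D'abord, on développe selon la définition $M_\ell[g] = \mathscr{F}_{\ell, g\ell} \circ g_*$ pour obtenir
\[
M_\ell[x] \circ M_\ell[y] = \mathscr{F}_{\ell, x\ell} \circ x_* \circ \mathscr{F}_{\ell, y\ell} \circ y_*.
\]
Le caractère canonique des $\mathscr{F}_{\ell', \ell}$ par rapport à l'action du groupe symplectique sur $H(W)$ se traduit par la naturalité $g_* \circ \mathscr{F}_{\ell_2, \ell_1} = \mathscr{F}_{g\ell_2, g\ell_1} \circ g_*$, valable pour tout $g \in \Sp(W)$ et tous $\ell_1, \ell_2 \in \Lag(W)$. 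Appliquée à $g=x$ avec $\ell_1 = y\ell, \ell_2 = \ell$, et combinée à l'identité $x_* \circ y_* = (xy)_*$, cette propriété permet de réécrire la composition sous la forme
\[
M_\ell[x] \circ M_\ell[y] = \mathscr{F}_{\ell, x\ell} \circ \mathscr{F}_{x\ell, xy\ell} \circ (xy)_*.
\]

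Comme $M_\ell[xy] = \mathscr{F}_{\ell, xy\ell} \circ (xy)_*$, il suffit d'évaluer le scalaire $c \in \bmu_8$ tel que $\mathscr{F}_{\ell, x\ell} \circ \mathscr{F}_{x\ell, xy\ell} = c \cdot \mathscr{F}_{\ell, xy\ell}$. On invoque alors le théorème de Lion-Perrin pour $m = 3$ appliqué à un triplet convenable de lagrangiens pris parmi $\ell, x\ell, xy\ell$, après insertion de $\mathscr{F}_{xy\ell, \ell}$ à droite pour former un $3$-cycle sur $S_\ell$. On utilise en chemin l'identité $\mathscr{F}_{B, A}^{-1} = \mathscr{F}_{A, B}$, qui résulte d'un cas dégénéré du théorème: la formule de dimension \ref{prop:Maslov-dimension} donne $\dim \tau(A, B, A) = 0$, d'où $\gamma_\psi(\tau(A, B, A)) = 1$ et donc $\mathscr{F}_{A, B} \circ \mathscr{F}_{B, A} = \identity_{S_A}$. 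Il en résulte que $c$ est de la forme $\gamma_\psi(q)$ pour une classe de Witt $q$ explicite.

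Il ne reste qu'à identifier $q$ à $[\tau(\ell, y\ell, xy\ell)]$ dans $W(F)$, ce qui se fait par application répétée des propriétés de symétrie cyclique, diédrale et d'invariance symplectique de l'indice de Maslov, conjuguées au fait que $\gamma_\psi: W(F) \to \bmu_8$ est un homomorphisme vérifiant $\gamma_\psi(-q) = \gamma_\psi(q)^{-1}$. L'obstacle principal est donc purement combinatoire: choisir le triplet et l'ordre des lagrangiens dans Lion-Perrin, puis rassembler soigneusement les signes dans $W(F)$ pour aboutir à la forme exacte $\tau(\ell, y\ell, xy\ell)$ annoncée. Une fois ces manipulations effectuées, la démonstration n'est qu'une succession de vérifications routinières, sans subtilité analytique.
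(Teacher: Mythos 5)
Votre plan d'attaque — développer selon $M_\ell[g] = \mathscr{F}_{\ell, g\ell} \circ g_*$, utiliser la naturalité symplectique des opérateurs de Perrin pour faire migrer $x_*$ vers la droite, fermer le triangle par $\mathscr{F}_{xy\ell,\ell}$, puis invoquer Lion--Perrin pour $m=3$ — est exactement le bon, et toutes les étapes que vous énoncez sont correctes. En particulier, $\mathscr{F}_{A,B}^{-1} = \mathscr{F}_{B,A}$ se déduit bien du cas dégénéré $m=3$ puisque $\dim\tau(A,B,A)=0$ d'après \ref{prop:Maslov-dimension}.

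Le problème se loge précisément dans l'étape que vous qualifiez de «succession de vérifications routinières». Menée à bout, elle ne donne pas la forme annoncée. Vous obtenez $\mathscr{F}_{\ell, x\ell} \circ \mathscr{F}_{x\ell, xy\ell} \circ \mathscr{F}_{xy\ell, \ell} = c\,\identity_{S_\ell}$, et Lion--Perrin appliqué au triplet $(\ell_1,\ell_2,\ell_3) = (\ell, xy\ell, x\ell)$ donne $c = \gamma_\psi(-\tau(\ell, xy\ell, x\ell))$. Les symétries diédrale et cyclique fournissent $-[\tau(\ell, xy\ell, x\ell)] = [\tau(x\ell, xy\ell, \ell)] = [\tau(\ell, x\ell, xy\ell)]$, d'où $c = \gamma_\psi(\tau(\ell, \mathbf{x}\ell, xy\ell))$, et non $\gamma_\psi(\tau(\ell, y\ell, xy\ell))$. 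Aucune manipulation supplémentaire ne ramènera ce $x\ell$ à un $y\ell$: les deux classes de Witt sont en général distinctes. Observez d'ailleurs que le $\gamma_\psi(\tau(\ell, x\ell, xy\ell))$ que vous obtenez est exactement ce qu'exige la compatibilité avec le cocycle $c_{g,g'}(\ell) = \gamma_\psi(\tau(\ell, g\ell, gg'\ell))$ qui apparaît quelques lignes plus bas dans la construction de Lion--Perrin et avec le plongement $(g,t) \mapsto (g, t(\ell)M_\ell[g])$: pour que ce plongement soit un morphisme de groupes, il faut précisément $M_\ell[x]M_\ell[y] = \gamma_\psi(\tau(\ell, x\ell, xy\ell))M_\ell[xy]$. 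L'énoncé imprimé du corollaire contient donc très vraisemblablement une coquille ($y\ell$ au lieu de $x\ell$); votre démonstration est juste, mais elle démontre la version corrigée, pas la version imprimée. Il vaut donc mieux ne pas chercher à forcer le résultat vers $\tau(\ell, y\ell, xy\ell)$.
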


On en déduit un scindage au-dessus d'un sous-groupe parabolique de Siegel.

\begin{proposition}\label{prop:scindage-Schrodinger-Spt}\index{$\sigma_\ell(\cdot)$}
  Soit $\ell \in \Lag(W)$. Notons $P_\ell$ le sous-groupe de $\Sp(W)$ qui stabilise $\ell$, alors $\sigma_\ell: x \mapsto (x,M_\ell[x])$ fournit un scindage de $\rev: \Spt_\psi(W) \to \Sp(W)$ au-dessus de $P_\ell(F)$.
\end{proposition}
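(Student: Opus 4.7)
The plan is to verify that $\sigma_\ell$ is a group homomorphism on $P_\ell(F)$, since the relation $\rev \circ \sigma_\ell = \identity$ is immediate from the definition of $\Spt_\psi(W)$ and of $M_\ell[x]$. The key tool is the corollary just established:
$$ M_\ell[x] \cdot M_\ell[y] = \gamma_\psi(\tau(\ell, y\ell, xy\ell)) \, M_\ell[xy]. $$
Thus the question reduces to showing that the cocycle $\gamma_\psi(\tau(\ell, y\ell, xy\ell))$ is trivial whenever $x, y \in P_\ell(F)$.

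For such $x$ and $y$, by definition of $P_\ell$ we have $y\ell = \ell$ and $xy\ell = \ell$, so the Maslov index appearing above is $\tau(\ell, \ell, \ell)$. I would then invoke the dimension formula (Proposition~\ref{prop:Maslov-dimension}) with $m = 3$ and $\ell_1 = \ell_2 = \ell_3 = \ell$: one gets
$$ \dim \tau(\ell, \ell, \ell) = \frac{\dim W}{2} - 3 \dim \ell + 2 \dim \ell = n - n = 0, $$
since $\dim \ell = n = \frac{1}{2}\dim W$. A quadratic form of dimension zero is the trivial class in the Witt group $W(F)$, and since $\gamma_\psi$ factors through $W(F) \to \bmu_8$, we conclude $\gamma_\psi(\tau(\ell, \ell, \ell)) = 1$.

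Combining these observations, $M_\ell[x] M_\ell[y] = M_\ell[xy]$ for all $x, y \in P_\ell(F)$, which is precisely the statement that $\sigma_\ell$ restricted to $P_\ell(F)$ is a homomorphism into $\Spt_\psi(W)$ lifting the inclusion $P_\ell(F) \hookrightarrow \Sp(W)$. There is no serious obstacle: the only nontrivial input is the explicit Maslov dimension formula, which has already been recorded, and the rest is a formal computation. The reason the argument works is geometric: the splitting obstruction in the metaplectic extension is localized in the Maslov cocycle, which degenerates on a single Lagrangian, so any subgroup preserving $\ell$ lifts canonically.
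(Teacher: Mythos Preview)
Your argument is correct and matches the paper's proof essentially line for line: reduce to triviality of the Maslov cocycle, observe $\tau(\ell,y\ell,xy\ell)=\tau(\ell,\ell,\ell)$ when $x,y\in P_\ell(F)$, and apply the dimension formula. The only point the paper adds that you omit is continuity of $\sigma_\ell$, which it dispatches in one clause by appeal to the definition of the topology on $\Spt_\psi(W)$; you may want to include a sentence to this effect since a splitting of topological groups should be continuous.
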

\begin{proof}
  Vu la définition de la topologie sur $\Spt_\psi(W)$ (\cite{Weil64}, \S 35), la continuité de $\sigma_\ell$ est immédiate. Il suffit que $\tau(\ell,x\ell,xx'\ell)=0$ pour tous $x,x' \in P_\ell(F)$. Or dans ce cas $\tau(\ell,x\ell,xx'\ell)=\tau(\ell,\ell,\ell)$, et \ref{prop:Maslov-dimension} montre que sa dimension est zéro.
\end{proof}

\begin{definition}\label{def:-1}\index{-1}
  L'élément $\sigma_\ell(-1)$ dans $\Spt_\psi(W)$ est central d'ordre $2$. Il s'envoie sur $-1$ dans $\Sp(W)$. On le note abusivement par $-1$. Cette convention sera justifiée par le fait qu'elle ne dépend pas de $\ell$ (\ref{prop:-1-action}) et qu'elle est compatible avec tout scindage de $p$ dont nous ferons usage (\ref{prop:-1-relevement}, \;\ref{prop:-1-latticiel}).
\end{definition}

On abrège souvent $(-1) \cdot 
\tilde{x}$ par $-\tilde{x}$, pour tout $\tilde{x} \in \Spt_\psi(W)$. L'indépendance du choix de $\ell$ résulte de la proposition suivante.

\begin{proposition}\label{prop:-1-action}
  L'élément $-1 \in \Spt_\psi(W)$ agit par $\pm \identity$ sur $S_\psi^\pm$.
\end{proposition}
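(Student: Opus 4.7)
The plan is to compute $M_\ell[-1]$ explicitly in the Schrödinger model and to identify its eigenspaces with the even/odd decomposition of the Weil representation.

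First, I observe that $-1 \in \Sp(W)$ preserves every subspace of $W$, and in particular $(-1) \cdot \ell = \ell$. Thus in the definition
$$ M_\ell[-1] = \mathscr{F}_{\ell, (-1)\ell} \circ (-1)_* = \mathscr{F}_{\ell, \ell} \circ (-1)_*, $$
the Fourier factor is trivial: by Proposition \ref{prop:Maslov-dimension}, $\dim \tau(\ell,\ell,\ldots)=0$, and the normalization of Perrin's operators is such that $\mathscr{F}_{\ell,\ell} = \identity_{S_\ell}$. Consequently $M_\ell[-1]$ is simply the transport-of-structures operator $(-1)_*$ acting on $S_\ell$.

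Second, realize $S_\ell = \text{ind}_{\ell_F}^{H(W)}(\psi_\ell)$ concretely. Choosing a Lagrangian complement $\ell'$ of $\ell$ in $W$, the map $f \mapsto (x \mapsto f(x,0))$ identifies $S_\ell$ with a space of functions on $\ell'$; under this identification the operator $(-1)_*$, which sends a function $f$ on $H(W)$ to $h \mapsto f(-h)$, becomes the parity operator $f(x) \mapsto f(-x)$ on $\ell'$. This operator is an involution, so $S_\ell = S_\ell^{+1} \oplus S_\ell^{-1}$ splits into its $\pm 1$ eigenspaces, namely the even and odd functions on $\ell'$.

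Third, since $-1 \in \Sp(W)$ is central, $\sigma_\ell(-1)$ commutes with all of $\Spt_\psi(W)$ (modulo the center $\C^\times$, which acts trivially on eigenspace decompositions), so each $\pm 1$ eigenspace of $(-1)_*$ is stable under $\omega_\psi$. Combined with the fact that $\omega_\psi = \omega_\psi^+ \oplus \omega_\psi^-$ is the decomposition into (non-isomorphic) irreducibles, Schur's lemma forces $\{S_\ell^{+1}, S_\ell^{-1}\} = \{S_\psi^+, S_\psi^-\}$ as an unordered pair. The remaining step is to match conventions: by the classical definition of the even and odd Weil representations, $S_\psi^+$ (resp. $S_\psi^-$) corresponds to the even (resp. odd) functions, i.e. to the $+1$ (resp. $-1$) eigenspace.

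The principal difficulty is bookkeeping — checking that Perrin's normalization really gives $\mathscr{F}_{\ell,\ell} = \identity$, and that the convention ``$+$ = even, $-$ = odd'' adopted by the paper agrees with the sign produced by the parity operator. Once these two verifications are made, independence of $\ell$ (asserted in Definition \ref{def:-1}) follows immediately, since the condition ``acts as $+\identity$ on $S_\psi^+$'' determines a unique lift of $-1 \in \Sp(W)$ among the two possible lifts of order $2$.
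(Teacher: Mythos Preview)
Your proposal is correct and follows essentially the same route as the paper. The paper's proof is a one-line reduction to the explicit formulas in \cite{MVW87} chapitre 2, II.6, and what you have written is precisely an unpacking of those formulas: $M_\ell[-1]$ is the parity operator on the Schr\"odinger model, its $\pm 1$ eigenspaces are the even/odd functions, and these coincide with $S_\psi^\pm$ by the paper's convention ``pair/impair''.
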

\begin{proof}
  Fixons $\ell \in \Lag(W)$. On se ramène à prouver que $M_\ell[-1]$ agit par $\pm \identity$ sur $S_\psi^\pm$, ce qui résulte des formules explicites dans \cite{MVW87} chapitre 2, II.6.
\end{proof}

Il sera démontré que $-1$ vit dans le revêtement $\MMp{\mathbf{f}}(W)$ pourvu que $8|\mathbf{f}$ (\ref{prop:scindage-Schrodinger}). On obtiendra aussi une caractérisation de $-1$ par la valeur du caractère de la représentation de Weil.

\subsubsection{Le modèle latticiel}\label{sec:modele-latticiel}\index{modèle latticiel}
Dans cette sous-section, $F$ est supposé non archimédien de caractéristique résiduelle $p>2$. Fixons un $\mathfrak{o}_F$-réseau $L \subset W$ tel que $L=L^\perp$ et prenons $A=L$ dans la construction générale de $(\rho_A, S_A)$. De tels réseaux existent toujours.

Prenons $L_F = L \times F$. Puisque $p>2$, $L_F$ est un sous-groupe de $H(W)$ et on peut prendre $\psi_L(a,t) = \psi(t)$. D'où une représentation lisse $(\rho_L, S_L)$ de $H(W)$.

On choisit un système de représentants $R \subset W$ de l'espace discret $W/L$. Pour tout $r \in R$, définissons une fonction localement constante à support compact $f_r: H(W) \to \C$ par
$$
  f_r((r'+a,t)) = \begin{cases} \psi\left(t+\frac{\angles{r'|a}}{2}\right), & \text{si } r'=r \\ 0, & \text{sinon} \end{cases}
$$
où $r' \in R$ et $a \in L$. Ces fonctions forment une base de $S_L$. Si l'on munit $S_L$ du produit hermitien $(f|g) := \sum_{\dot{w} \in W/L} f(w,0)\overline{g(w,0)}$ en rappelant que $W/L$ est discret, alors $\{f_r\}_{r \in R}$ est une base orthonormée.

\begin{proposition}\index{$M_L[\cdot]$}
  Posons $K := \text{Stab}_{\Sp(W)}(L)$. Pour $x \in K$, soit $M_L[x]: S_L \to S_L$ l'opérateur unitaire $g(\cdot) \mapsto g(x^{-1}(\cdot))$. Alors $x \mapsto (x, M_L[x])$ est un homomorphisme injectif continu de $K$ dans $\Spt_\psi(W)$. Ceci fournit un scindage de $\rev: \MMp{2}(W) \to \Sp(W)$ au-dessus de $K$.
\end{proposition}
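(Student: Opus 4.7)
Le plan est de vérifier successivement: (i) que $M_L[x]$ est un opérateur unitaire bien défini sur $S_L$, (ii) que la paire $(x, M_L[x])$ appartient à $\Spt_\psi(W)$, (iii) que $x \mapsto (x, M_L[x])$ est un homomorphisme continu injectif, et enfin (iv) que son image est contenue dans $\MMp{2}(W)$, ce qui fournira le scindage voulu.

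Pour (i), rappelons que $S_L = \text{ind}_{L_F}^{H(W)}(\psi_L)$ consiste en fonctions $f: H(W) \to \C$ telles que $f(\ell h) = \psi_L(\ell)f(h)$ pour tout $\ell \in L_F$. Comme $x \in K$ stabilise $L$ et agit trivialement sur le centre $F$, on a $x^{-1}(L_F)=L_F$ et $\psi_L \circ x^{-1} = \psi_L$. Donc $f \circ x^{-1}$ vérifie la même équivariance, i.e. $M_L[x]f \in S_L$. L'unitarité résulte du fait que $x$ permute $W/L$, donc permute à un scalaire près les vecteurs de la base orthonormée $\{f_r\}_{r \in R}$.

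Pour (ii), je vérifierais directement la propriété d'entrelacement: pour $h' \in H(W)$ et $f \in S_L$, $(M_L[x] \rho_L(h') f)(h) = f(x^{-1}(h) h')$ tandis que $(\rho_L^x(h') M_L[x] f)(h) = f(x^{-1}(h \cdot x(h'))) = f(x^{-1}(h) h')$, d'où $M_L[x] \rho_L = \rho_L^x M_L[x]$. La propriété $M_L[xy] = M_L[x] M_L[y]$ découle immédiatement de $(xy)^{-1} = y^{-1} x^{-1}$, et l'injectivité de $x \mapsto (x,M_L[x])$ est triviale car $x$ figure en première coordonnée. Pour la continuité, d'après la définition de la topologie sur $\Spt_\psi(W)$ donnée par Weil (\cite{Weil64}, \S 35), il suffit de montrer que $x \mapsto M_L[x] f$ est continue pour tout $f \in S_L$; cela résulte de la continuité de l'action de $K$ sur $H(W)$ combinée à la locale constance des éléments de $S_L$ à support (modulo $L_F$) fini.

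Le point (iv) est le principal obstacle. L'application $K \to \Spt_\psi(W)/\MMp{2}(W)$ est un homomorphisme continu à valeurs dans un groupe abélien (isomorphe à $\C^\times$ via $\C^\times/\bmu_2 \rightiso \C^\times$). Comme $K$ est compact, l'image est contenue dans $\mathbb{S}^1$. Pour conclure que l'homomorphisme est trivial, j'utiliserais le fait que $K$, étant un sous-groupe hyperspécial de $\Sp(W)$ sous l'hypothèse $p>2$, est parfait ($[K,K]=K$); ceci est classique pour $\Sp(2n,\mathfrak{o}_F)$. Par conséquent tout homomorphisme continu $K \to \mathbb{S}^1$ est trivial, et l'image de $\sigma_L: x \mapsto (x, M_L[x])$ vit bien dans le sous-groupe dérivé $\MMp{2}(W)$, ce qui fournit le scindage annoncé.
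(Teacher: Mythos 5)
Your steps (i)--(iii) are a clean and correct verification: the check that $M_L[x]$ preserves $S_L$ (using $x^{-1}(L_F)=L_F$ and $\psi_L\circ x^{-1}=\psi_L$), the intertwining identity $M_L[x]\,\rho_L=\rho_L^x\,M_L[x]$, the exact multiplicativity $M_L[xy]=M_L[x]M_L[y]$, and the continuity/injectivity are all sound, and these fill in the ``première assertion'' that the paper leaves implicit. For the ``deuxième assertion'' the paper simply cites \cite{MVW87}, chapitre 2, II.10, so your attempt at a soft, self-contained argument for (iv) is a genuine departure from the paper's route.

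Unfortunately (iv) has a real gap. You reduce to showing that the continuous character $\chi:K\to\Spt_\psi(W)/\MMp{2}(W)\simeq\C^\times$, which indeed lands in $\mathbb{S}^1$ by compactness of $K$, must be trivial, and you invoke $[K,K]=K$ for $K=\Sp(2n,\mathfrak{o}_F)$ with $p>2$. This perfectness claim is false in the range of the proposition: for $n=1$ and $p=3$, $K=\SL_2(\Z_3)$ surjects onto $\SL_2(\mathbb{F}_3)$, the binary tetrahedral group, whose abelianization is $\Z/3$; hence $K$ admits nontrivial continuous characters $K\to\bmu_3\subset\mathbb{S}^1$. The hypothesis of \S 2.4.2 is only $p>2$, so this case is not excluded, and nothing in your argument rules out that the specific character produced by the construction is one of these nontrivial ones. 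To close the gap you would need either to restrict the statement (e.g. to $p>3$, or $n\geq 2$), or to replace the abstract perfectness argument by a direct computation in the spirit of \cite{MVW87} II.10 --- for instance by comparing $M_L[x]$ with $M_\ell[x]$ for a lagrangian $\ell$ compatible with $L$ and using the explicit Lion--Perrin description of $\MMp{2}(W)\subset\Spt_\psi(W)$, or by checking the lattice splitting on topological generators of $K$ (unipotents and torus elements, cf.\ \ref{prop:Levi-latticiel-compatible}) against the canonical lifts already known to lie in $\MMp{2}(W)$.
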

\begin{proof}
  Pour la deuxième assertion, voir \cite{MVW87}, Chapitre 2, II.10.
\end{proof}

Le sous-groupe ouvert compact $K$ est toujours hyperspécial. Nous identifions désormais $K$ comme un sous-groupe ouvert compact de $\MMp{2}(W)$.

\begin{proposition}
  Soient $\{f_r\}_{r \in R}$ les fonctions définies précédemment. Pour $r, r' \in R$ et $x \in K$, on a
  $$ (M_L[x]f_r)(r', 0) = \begin{cases} \psi\left(\dfrac{\angles{r|x^{-1}(r')-r}}{2} \right), & \text{si } x^{-1}(r')-r \in L \\ 0, & \text{sinon}. \end{cases}$$
  Autrement dit, si $r' \in R$ représente la classe $x^{-1}(r)$ mod $L$, alors on a
  $$ (M_L[x]f_r) = \psi\left(\frac{\angles{r|x^{-1}(r')}}{2} \right) f_{r'}. $$
\end{proposition}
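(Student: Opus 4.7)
The statement is essentially a direct unwinding of the definition of $M_L[x]$ together with the explicit formula for $f_r$, so the plan is computational rather than conceptual.

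First I would simply evaluate
\[
 (M_L[x] f_r)(r',0) = f_r\bigl((x^{-1}(r'),0)\bigr)
\]
using the definition $M_L[x]g = g\circ x^{-1}$. Since $x \in K = \mathrm{Stab}_{\Sp(W)}(L)$ preserves $L$, we may write $x^{-1}(r') = r''+a$ with $r''\in R$ the unique representative of $x^{-1}(r') \bmod L$ and $a := x^{-1}(r')-r'' \in L$. Plugging into the definition of $f_r$, the value is $0$ unless $r''=r$, i.e. unless $x^{-1}(r')-r \in L$, and in that case it equals $\psi\bigl(\tfrac{\langle r \mid a\rangle}{2}\bigr) = \psi\bigl(\tfrac{\langle r \mid x^{-1}(r')-r\rangle}{2}\bigr)$. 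This already yields the first displayed formula.

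For the second formula, I would observe that the preceding computation shows $(M_L[x]f_r)(r'',0)$ vanishes for every $r''\in R$ except the (unique) one congruent to $x(r)$ mod $L$ — equivalently, the representative determined by the statement — because $x^{-1}(r'')-r\in L$ is equivalent to $r''\equiv x(r)\pmod L$. Hence $M_L[x]f_r$ is proportional to $f_{r'}$ for that specific $r'$. To identify the proportionality constant, it suffices to evaluate both sides at $(r',0)$: $f_{r'}(r',0) = \psi(0)=1$, and the left-hand side equals $\psi\bigl(\tfrac{\langle r\mid x^{-1}(r')-r\rangle}{2}\bigr)$ by the first formula. Since $\langle r\mid r\rangle = 0$ by antisymmetry of $\langle\cdot\mid\cdot\rangle$, the constant simplifies to $\psi\bigl(\tfrac{\langle r\mid x^{-1}(r')\rangle}{2}\bigr)$, which is precisely the asserted coefficient.

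There is no serious obstacle here; the only point requiring mild care is the simplification $\langle r\mid x^{-1}(r')-r\rangle = \langle r\mid x^{-1}(r')\rangle$ via the vanishing of $\langle r\mid r\rangle$, and the observation that $x\in K$ preserves $L$, so passing between the coset of $x^{-1}(r')$ and the coset of $x(r)$ is legitimate. The proof is therefore a one-line unwinding followed by a cosmetic rearrangement.
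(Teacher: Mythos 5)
Your computation is correct, and the paper omits a proof of this proposition (it is a routine unwinding of the lattice-model definitions), so there is no alternative argument to compare against. One thing you should say outright rather than in a misleading parenthetical: the nonvanishing coset is that of $x(r)$ mod $L$, exactly as your calculation shows, whereas the paper's second display speaks of the representative of $x^{-1}(r)$. Since $x(r)$ and $x^{-1}(r)$ lie in distinct cosets in general, these are \emph{not} equivalent; the discrepancy is a typo in the paper's statement, forced by the first displayed formula once $x^{-1}(r')-r\in L$ is rewritten as $r'\equiv x(r)\pmod L$, and you should flag it rather than call it an equivalence. Finally, the step from $(M_L[x]f_r)(r'',0)=0$ for all $r''\neq r'$ to $M_L[x]f_r$ being a scalar multiple of $f_{r'}$ uses implicitly that any $g\in S_L$ satisfies $g(ah)=\psi_L(a)g(h)$ for $a\in L_F$ and is therefore determined by its values on the section $\{(r'',0):r''\in R\}$ of $L_F\backslash H(W)$, so the subspace of $S_L$ supported on a single coset is one-dimensional; this is standard, but deserves a word.
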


\begin{remark}
  Lorsque $\psi$ est de conducteur $\mathfrak{o}_F$, on a $M^\perp = M^*$ pour tout réseau $M \subset W$; en particulier, $L$ est autodual.
\end{remark}

\begin{remark}\label{rem:H(L)-invariant}
  Supposons $\psi$ de conducteur $\mathfrak{o}_F$. Posons
  $$ H(L) := L \times \mathfrak{o}_F. $$
  C'est un sous-groupe ouvert compact de $H(W)$. Soit $(\rho_\psi, S_\psi)$ une représentation satisfaisant aux énoncés du théorème de Stone-von Neumann. En utilisant le modèle latticiel associé a un réseau autodual $L$, on montre que
  $$ \dim_\C S_\psi^{H(L)} = 1. $$
  En effet, cet espace engendré par la fonction caractéristique de $L$. Soit $s_L \in S_\psi^{H(L)}$, $s_L \neq 0$, alors $M_L[x]$ est caractérisé par $M_L[x](s_L)=s_L$.
\end{remark}

  Montrons une compatibilité entre le modèle de Schrödinger et le modèle latticiel qui sera utile. Supposons qu'il existe $\ell,\ell' \in \Lag(W)$ tels que $W=\ell\oplus\ell'$ et $L=(\ell\cap L) \oplus (\ell'\cap L)$. Soit $x \in \Sp(W)$ tel que $x\ell=\ell$ et $x\ell'=\ell'$.

\begin{proposition}\label{prop:Levi-latticiel-compatible}
  Conservons les hypothèses ci-dessus. Soit $T$ un $F$-tore maximal déployé dans $P_\ell \cap P_{\ell'}$, alors $M_\ell[x] = M_L[x]$ pour $x \in T(F) \cap K$.
\end{proposition}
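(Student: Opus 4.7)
Le plan est d'exploiter la caractérisation du vecteur $H(L)$-invariant (\ref{rem:H(L)-invariant}) pour identifier les deux opérateurs. Les opérateurs $M_\ell[x]$ et $M_L[x]$ sont deux relèvements du même $x \in \Sp(W)$ dans l'extension centrale $\Spt_\psi(W)$; ils diffèrent donc par un scalaire $c(x) \in \C^\times$ agissant sur l'espace abstrait $S_\psi$. Il suffit de montrer que $c(x)=1$ pour tout $x \in T(F) \cap K$.

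Je procéderais en trois étapes. Premièrement, je décrirais explicitement l'unique (à un scalaire près) vecteur $H(L)$-invariant dans le modèle de Schrödinger $S_\ell$. En posant $L_1 := \ell \cap L$ et $L_2 := \ell' \cap L$, l'hypothèse $L = L_1 \oplus L_2$ jointe à l'identification $S_\ell \simeq \mathcal{S}(\ell')$ issue de la décomposition $W = \ell \oplus \ell'$ doit permettre, par un calcul direct utilisant $L^\perp = L$ et $\psi$ de conducteur $\mathfrak{o}_F$, de montrer que ce sous-espace est engendré par la fonction caractéristique $\mathbf{1}_{L_2}$ de $L_2 \subset \ell'$.

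Deuxièmement, je vérifierais que $M_\ell[x]$ fixe $\mathbf{1}_{L_2}$. Puisque $x \ell = \ell$, on a $\mathscr{F}_{\ell, x\ell} = \mathscr{F}_{\ell,\ell} = \identity$, et $M_\ell[x]$ se réduit donc au transport de structures $x_*$, lequel agit sur $\mathcal{S}(\ell')$ par $f \mapsto |\det(x|_{\ell'})|_F^{1/2} f(x^{-1}\cdot)$. Comme $x \in K \cap T(F)$, les valeurs propres de $x$ sur $\ell'$ sont des unités dans $\mathfrak{o}_F$; le facteur jacobien vaut donc $1$ et $x L_2 = L_2$, d'où $M_\ell[x](\mathbf{1}_{L_2}) = \mathbf{1}_{L_2}$.

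Troisièmement, $M_L[x]$ est déjà caractérisé par $M_L[x](s_L) = s_L$ d'après \ref{rem:H(L)-invariant}. Comme le sous-espace $H(L)$-invariant de $S_\psi$ est de dimension un, les vecteurs $\mathbf{1}_{L_2}$ et $s_L$ coïncident à un scalaire non nul près; il s'ensuit que $c(x)=1$ et donc $M_\ell[x]=M_L[x]$. La principale difficulté technique se concentre dans la première étape, à savoir l'identification précise du vecteur $H(L)$-invariant et le contrôle du facteur jacobien, dont la trivialité pour un élément du tore à coordonnées entières est essentielle au raisonnement.
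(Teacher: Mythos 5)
Your proposal follows the same strategy as the paper's proof: both identify the (essentially unique) $H(L)$-invariant vector $s_L = \mathbbm{1}_{L\cap\ell'}$ inside the Schrödinger model $S_\ell$, observe that $M_L[x](s_L)=s_L$ by definition of the lattice model, verify that $M_\ell[x]$ also fixes $s_L$, and conclude by one-dimensionality of the $H(L)$-invariants. The paper simply cites the explicit formulas of \cite{MVW87} (Chapitre 2, II.6 et II.10) for the last two facts, whereas you rederive them by noting that $M_\ell[x]$ reduces to the transport of structure $x_*$ when $x$ stabilizes both $\ell$ and $\ell'$, and that the Jacobian is trivial for $x\in T(F)\cap K$ — this is correct and amounts to unwinding the MVW formula in the relevant special case.
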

\begin{proof}
  Du point de vue du modèle de Schrödinger, $S_\psi^{H(L)}$ est engendré $s_L := \mathbbm{1}_{L \cap \ell'}$ (\cite{MVW87} chapitre 2, II. 10). On a $M_L[x](s_L)=s_L$ car $x \in K$. D'après la formule explicite de $M_\ell[x]$ (\cite{MVW87} chapitre 2, II. 6), on a aussi $M_\ell[x](s_L)=s_L$, d'où l'assertion.
\end{proof}

\subsubsection{La construction de Lion-Perrin}\index{construction de Lion-Perrin}

Le modèle de Schrödinger conduit à une construction du revêtement à deux feuillets $\rev: \MMp{2}(W) \to \Sp(W)$, étudiée systématiquement par Lion et Perrin \cite{LP81}. 

Pour $V$ un $F$-espace vectoriel de dimension finie, posons
$$ \mathfrak{o}(V) := (\bigwedge^{\text{max}} V \setminus \{0\})/F^{\times 2}. $$

C'est un torseur sous $F^\times/F^{\times 2}$; ici nous adoptons la convention $\bigwedge^0 \{0\} = F$ de sorte que $\mathfrak{o}(\{0\}) = F^\times/F^{\times 2}$. Un élément dans $\mathfrak{o}(V)$ est dit une orientation de $V$.

Soient $\ell_1, \ell_2 \in \Lag(W)$ et $e_i \in \mathfrak{o}(\ell_i)$ ($i=1,2$). Les $F$-espaces vectoriels $\ell_1/(\ell_1 \cap \ell_2)$ et $\ell_2/(\ell_1 \cap \ell_2)$ sont en dualité par rapport à $\angles{\cdot|\cdot}$, d'où l'accouplement
$$\angles{\cdot|\cdot}: \mathfrak{o}(\ell_1/(\ell_1 \cap \ell_2)) \times \mathfrak{o}(\ell_2/(\ell_1 \cap \ell_2)) \to F^\times/F^{\times 2}.$$

Fixons une orientation $e \in \mathfrak{o}(\ell_1 \cap \ell_2)$ et choisissons $\bar{e}_i \in \mathfrak{o}(\ell_i/(\ell_1 \cap \ell_2))$ de sorte que $\bar{e}_i \wedge e = e_i$ ($i=1,2$). Définissons
$$ A_{\ell_1, \ell_2} := \angles{\bar{e}_1|\bar{e}_2} \in F^\times/F^{\times 2}. $$
C'est indépendant du choix de $e$.

Fixons maintenant $\ell \in \Lag(W)$ et $e \in \mathfrak{o}(\ell)$. Pour tout $g \in \Sp(W)$, munissons $g\ell$ de l'orientation transportée. On définit\index{$m_g(\cdot)$}
$$ m_g(\ell) := \gamma_\psi(1)^{\frac{\dim W}{2} - \dim g\ell\cap\ell - 1} \gamma_\psi(A_{g\ell,\ell}).$$
Cela ne dépend pas du choix de l'orientation $e$. On construit $\MMp{2}(W)$ comme l'ensemble des
$$ (g,t), \qquad g \in \Sp(W), \; t: \Lag(W) \to \C^\times $$
tels que
\begin{itemize}
  \item $t(\ell)^2 = m_g(\ell)^2$ pour tout $\ell$;
  \item $t(\ell') = \gamma_\psi(\tau(\ell, g\ell, g\ell', \ell')) t(\ell)$ pour tout $\ell, \ell'$.
\end{itemize}

La multiplication dans $\MMp{2}(W)$ est définie par $(g,t)(g',t') = (gg', tt' \cdot c_{g,g'})$ où
$$ c_{g,g'}(\ell) = \gamma_\psi(\tau(\ell, g\ell, gg' \ell)) . $$

On définit $\rev: \MMp{2}(W) \to \Sp(W)$ par $\rev(g,t) = g$. En utilisant les propriétés de l'indice de Maslov, on vérifie que la multiplication est bien définie et associative, et que $\rev$ est un revêtement à deux feuillets. L'élément neutre est $(1, \mathbbm{1})$ où $\mathbbm{1}$ est la fonction constante de valeur $1$.

Soit $\ell \in \Lag(W)$, on définit la fonction d'évaluation $\mathrm{ev}_\ell$ par\index{$\mathrm{ev}(\cdot)$}
\begin{equation}\label{eqn:ev-def}
  \forall (g,t) \in \MMp{2}(W), \quad \mathrm{ev}_\ell(g,t) = t(\ell).
\end{equation}

Étant fixé un lagrangien $\ell \in \Lag(W)$, on a un plongement de $\MMp{2}(W)$ dans $\Spt_\psi(W)$ par le modèle de Schrödinger:
$$ (g,t) \mapsto (g, t(\ell) M_\ell[g]). $$
L'image ne dépend pas du choix de $\ell$ et cela identifie $\MMp{2}(W)$ au sous-groupe dérivé de $\Spt_\psi(W)$.

Notons $P=P_\ell$ le stabilisateur de $\ell$; c'est le sous-groupe parabolique de Siegel associé à $\ell$. Le résultat suivant affirme que le revêtement $\MMp{8}(W)$ est suffisamment grand pour réaliser le modèle de Schrödinger.

\begin{proposition}\label{prop:scindage-Schrodinger}
  Le scindage $\sigma_\ell: P(F) \to \Spt_\psi(W)$ dans \ref{prop:scindage-Schrodinger-Spt} est à valeurs dans $\MMp{8}(W)$. En particulier, l'élément $-1$ dans \ref{def:-1} appartient à $\MMp{8}(W)$.
\end{proposition}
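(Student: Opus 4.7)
The plan is to work in the Lion-Perrin presentation of $\MMp{2}(W) \hookrightarrow \Spt_\psi(W)$, where the embedding is $(g,t) \mapsto (g, t(\ell) M_\ell[g])$; combined with $\bmu_8 \hookrightarrow \C^\times$ acting by scalars, this identifies $\MMp{8}(W)$ with the subgroup $\bmu_8 \cdot \MMp{2}(W) \subset \Spt_\psi(W)$. Thus it suffices to exhibit, for each $x \in P_\ell(F)$, an element $(x,t) \in \MMp{2}(W)$ whose image in $\Spt_\psi(W)$ differs from $\sigma_\ell(x) = (x, M_\ell[x])$ by a scalar in $\bmu_8$; equivalently, a function $t: \Lag(W) \to \C^\times$ fulfilling the two axioms of the Lion-Perrin model with $t(\ell) \in \bmu_8$.

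For $x \in P_\ell(F)$ one has $x\ell = \ell$, so $\dim(x\ell \cap \ell) = \dim W / 2$ and the exponent of $\gamma_\psi(1)$ in the formula for $m_g(\ell)$ equals $-1$; hence
$$ m_x(\ell) = \gamma_\psi(1)^{-1}\, \gamma_\psi(A_{\ell,\ell}). $$
Both factors are Weil indices of one-dimensional $F$-quadratic forms, and by the convention recalled in the introduction they land in $\bmu_8$. Therefore $m_x(\ell) \in \bmu_8$. Setting $t(\ell) := m_x(\ell)$ and extending $t$ to all of $\Lag(W)$ via the chain relation $t(\ell') = \gamma_\psi(\tau(\ell, x\ell, x\ell', \ell'))\, t(\ell)$ produces a valid element $(x,t) \in \MMp{2}(W)$; the compatibility $t(\ell')^2 = m_x(\ell')^2$ at every $\ell'$ is subsumed in the fact that the Lion-Perrin construction defines a group.

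It follows that the image of $(x,t)$ in $\Spt_\psi(W)$ is $(x, m_x(\ell)\, M_\ell[x])$, and therefore
$$ \sigma_\ell(x) = m_x(\ell)^{-1} \cdot (x, m_x(\ell)\, M_\ell[x]) \in \bmu_8 \cdot \MMp{2}(W) = \MMp{8}(W). $$
For the second assertion, apply the argument to $x=-1$, which is central in $\Sp(W)$ and hence lies in $P_\ell(F)$ for every Lagrangian $\ell$; this yields $\sigma_\ell(-1) = -1 \in \MMp{8}(W)$. There is no genuine obstacle in this argument: the content reduces to the elementary fact that $\gamma_\psi$ takes values in $\bmu_8$, and all the work has already been packaged into the Lion-Perrin realization of $\MMp{2}(W)$.
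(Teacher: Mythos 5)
Your argument is correct and matches the paper's approach: both compare $\sigma_\ell(x)$ to a Lion--Perrin element $(x,t) \in \MMp{2}(W)$ and observe that the discrepancy $t(\ell) = \pm m_x(\ell)$, being built from Weil indices $\gamma_\psi(\cdot)$, lies in $\bmu_8$. Your explicit evaluation $m_x(\ell) = \gamma_\psi(1)^{-1}\gamma_\psi(A_{\ell,\ell})$ for $x \in P_\ell(F)$ makes the argument slightly more concrete, but it isn't essential since $m_g(\ell) \in \bmu_8$ holds in general for the same reason.
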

\begin{proof}
  On a $\sigma_\ell(g)=(g, M_\ell[g])$. Il existe $t: \Lag(W) \to \C^\times$ tel que $(g,t) \in \MMp{2}(W)$. On voit que $\sigma_\ell(g)$ et l'image de $(g,t)$ dans $\Spt_\psi(W)$ diffèrent par $t(\ell) \in \C^\times$. D'après la construction de Perrin-Lion, $t(\ell)$ est un produit des indices de Weil $\gamma_\psi(\cdot)$, qui appartiennent à $\bmu_8$. D'où l'assertion.
\end{proof}

\begin{remark}
  Soit $P_\ell=MU$ une décomposition de Lévi, alors le scindage est unique sur $U(F)$ (\cite{MVW87}, appendice 1). Nous donnerons une caractérisation de $\sigma_\ell|_M$ en termes du caractère de $\omega_\psi$ (\ref{rem:caractere-Levi}).
\end{remark}

\subsection{Le cas global}\label{sec:cas-global}
Dans cette section $F$ est un corps de nombres. Notons $\A$ l'anneau d'adèles associé à $F$.

Fixons un caractère non-trivial $\psi: \A/F \to \mathbb{S}^1$, regardé aussi comme un caractère de $\A$ avec décomposition en composantes locales
$$ \psi = \bigotimes_v \psi_v. $$

Fixons un $F$-espace symplectique $(W, \angles{\cdot|\cdot})$ défini sur $\mathfrak{o}_F$, notons $L$ l'ensemble de $\mathfrak{o}_F$-points de $W$, c'est un $\mathfrak{o}_F$-réseau dans $W$. Pour toute place $v$ de $F$, on construit les objets suivants:
\begin{align*}
  (W_v, \angles{\cdot|\cdot}) & := (W, \angles{\cdot|\cdot}) \otimes_F F_v ; \\
  H(W_v): & \; \text{le groupe de Heisenberg}; \\
  (\rho_v, S_v): & \; \text{représentation irréductible lisse à caractère central } \psi_v; \\
  \Spt_\psi(W_v), \MMp{\mathbf{f}}(W_v), \rev_v: & \; \text{les revêtements} \quad (\mathbf{f} \in 2\Z_{\geq 1}); \\
  \omega_{\psi_v}: & \; \text{la représentation de Weil}.
\end{align*}

Pour presque toute place finie $v$, $\psi_v$ est de conducteur $\mathfrak{o}_v$ et le complété $L_v$ de $L$ est autodual. Pour une telle $v$, posons $K_v := \text{Stab}(L_v)$ et $s_v \in S_v$ le vecteur correspondant à la fonction caractéristique de $L_v$ pour le modèle latticiel (cf. \ref{rem:H(L)-invariant}).

On définit $(\rho_\psi, S_\psi) = \bigotimes'_v (\rho_v, S_v)$, produit restreint par rapport aux vecteurs $s_v$. On définit le groupe de Heisenberg adélique $H(W, \A)$ par rapport à $L$, alors $(\rho_\psi, S_\psi)$ est une représentation de $H(W,\A)$ à caractère central $\psi$. Le groupe $\Sp(W,\A)$ agit sur $H(W,\A)$ de façon naturelle.

On sait construire le produit restreint $\Resprod_v \MMp{\mathbf{f}}(W_v)$ par rapport aux $K_v$. Posons\index{$\MMp{\mathbf{f}}(W,\A)$}
\begin{align*}
  \mathbf{N} & := \left\{ (\noyau_v) \in \bigoplus_v \Ker(\rev_v) = \bigoplus_v \bmu_\mathbf{f} : \prod \noyau_v = 1 \right\}, \\
  \MMp{\mathbf{f}}(W,\A) & := \Resprod_v \MMp{\mathbf{f}}(W_v)/\mathbf{N}.
\end{align*}
On écrit une classe dans $\MMp{\mathbf{f}}(W,\A)$ comme $[\tilde{x}_v]_v$, où $\tilde{x}_v \in \MMp{\mathbf{f}}(W_v)$ pour toute $v$.

Les projections locales $(\rev_v)_v$ fournissent un revêtement $\rev: \MMp{\mathbf{f}}(W,\A) \to \Sp(W,\A)$ et on a $\Ker(\rev) = \bmu_\mathbf{f}$. On définit la représentation de Weil adélique par $\omega_\psi := \bigotimes'_v \omega_{\psi_v}$; elle est spécifique. On retrouve les avatars locaux $\rev_v: \MMp{\mathbf{f}}(W_v) \to \Sp(W_v)$ comme les fibres de $\rev$ au-dessus des $\Sp(W_v)$.

Par ailleurs, on peut formuler une variante du théorème de Stone-von Neumann pour $H(W,\A)$, ce qui permet de définir
$$ \Spt_\psi(W, \A) := \{(g,M) \in \Sp(W,\A) \times \GL(S_\psi) : \rho_\psi^g \circ M = M \circ \rho_\psi \} $$
comme dans le cas local. C'est une extension centrale de $\Sp(W,\A)$ par $\C^\times$. On définit $\MMp{2}(W,\A)$ comme le groupe dérivé de $\Spt_\psi(W,\A)$. La représentation de Weil $\omega_\psi$ provient de la projection sur $\GL(S_\psi)$, et on a
$$ \Spt_\psi(W,\A) = \MMp{2}(W,\A) \times_{\bmu_2} \C^\times .$$
On définit les $\MMp{\mathbf{f}}(W,\A)$ en posant $\MMp{\mathbf{f}}(W,\A) := \MMp{2}(W,\A) \times_{\bmu_2} \bmu_\mathbf{f}$.

\begin{remark}
  Les constructions dans cette section ne dépendent pas du choix de $L$. En effet, si $L$, $L'$ sont deux tels réseaux, alors $L_v = L'_v$ pour presque toute place finie $v$.
\end{remark}

Pour formuler la théorie des représentations automorphes, il faudra un scindage canonique de $p$ au-dessus de $\Sp(W)$. L'existence d'un tel scindage est dû à Weil. C'est unique et à image dans $\MMp{2}(W,\A)$ car $\Sp(W)$ est engendré par ses commutateurs. Donnons une construction explicite. Fixons $\ell \in \Lag(W)$, on construit les opérateurs $M_{\ell_v}[x]$ pour tout $x \in \Sp(W)$ et toute place $v$ de $F$ où $\ell_v := \ell \otimes_F F_v$.

\begin{proposition} \label{prop:relevement-rationnel-explicite}
  L'application
  \begin{align*}
    i: \Sp(W) & \to \Spt_\psi(W,\A) \\
    x & \mapsto (x, \bigotimes_v M_{\ell_v}[x])
  \end{align*}
  est un homomorphisme bien définie.
\end{proposition}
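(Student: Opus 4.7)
Le plan se décompose en deux vérifications : d'abord, que $\bigotimes_v M_{\ell_v}[x]$ définit bien un opérateur sur le produit tensoriel restreint $S_\psi = \bigotimes'_v S_v$ ; ensuite, que l'application $i$ ainsi construite respecte la multiplication. L'argument s'appuie d'une part sur une comparaison locale entre les modèles de Schrödinger et latticiel, et d'autre part sur la formule du produit de Weil pour l'indice $\gamma_\psi$ appliquée à un indice de Maslov rationnel.

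Pour la bonne définition du produit tensoriel, il s'agit de montrer que $M_{\ell_v}[x] s_v = s_v$ pour presque toute place $v$. Fixons $x \in \Sp(W)(F)$. Pour presque toute place finie $v$, les hypothèses de bonne réduction sont réunies : $\psi_v$ est de conducteur $\mathfrak{o}_v$, $L_v$ est autodual, et $x \in K_v$. Choisissons un $F$-lagrangien $\ell' \in \Lag(W)$ tel que $W = \ell \oplus \ell'$ ; quitte à écarter un nombre fini de places, on a aussi $L_v = (L_v \cap \ell_v) \oplus (L_v \cap \ell'_v)$. Sous ces conditions, le vecteur $s_v$ se réalise dans le modèle de Schrödinger associé à $\ell_v$ comme la fonction caractéristique $\mathbbm{1}_{L_v \cap \ell'_v}$ (cf.\ démonstration de \ref{prop:Levi-latticiel-compatible}). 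Comme $(x, M_{\ell_v}[x])$ et $(x, M_{L_v}[x])$ sont deux éléments de $\Spt_\psi(W_v)$ au-dessus du même $x$, ils diffèrent par un scalaire $c_v \in \C^\times$, et $M_{L_v}[x] s_v = s_v$ entraîne $M_{\ell_v}[x] s_v = c_v s_v$. Un calcul explicite à l'aide des formules de \cite{MVW87}, chapitre 2, II.6, dans l'esprit de \ref{prop:Levi-latticiel-compatible}, permet d'établir $c_v = 1$, assurant que $\bigotimes_v M_{\ell_v}[x]$ appartient à $\GL(S_\psi)$.

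Pour la propriété multiplicative, soient $x, y \in \Sp(W)(F)$. Le corollaire qui précède \ref{prop:scindage-Schrodinger-Spt} fournit, en chaque place $v$,
$$ M_{\ell_v}[x] \cdot M_{\ell_v}[y] = \gamma_{\psi_v}\bigl(\tau(\ell_v, y\ell_v, xy\ell_v)\bigr) \, M_{\ell_v}[xy]. $$
Puisque $y, xy \in \Sp(W)(F)$, les lagrangiens $\ell, y\ell, xy\ell$ sont définis sur $F$ ; l'indice de Maslov $\tau := \tau(\ell, y\ell, xy\ell)$ est donc une $F$-forme quadratique dont la composante en $v$ est précisément $\tau(\ell_v, y\ell_v, xy\ell_v)$. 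En prenant le produit tensoriel sur toutes les places, on obtient
$$ i(x) \cdot i(y) = \Bigl(\prod_v \gamma_{\psi_v}(\tau)\Bigr) \, i(xy). $$
La formule du produit de Weil \cite{Weil64} affirme que $\prod_v \gamma_{\psi_v}(q) = 1$ pour toute $F$-forme quadratique $q$, d'où $i(x) \cdot i(y) = i(xy)$.

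L'obstacle principal est la vérification de $c_v = 1$ pour presque tout $v$ : \ref{prop:Levi-latticiel-compatible} ne traite que le cas d'un élément d'un tore déployé stabilisant deux lagrangiens, et il faut étendre cette compatibilité à un élément général de $K_v$, ce qui passe par un examen des formules explicites du modèle de Schrödinger dans \cite{MVW87}. Une fois ce point acquis, la propriété d'homomorphisme est une conséquence formelle de la formule du produit de Weil appliquée à un indice de Maslov rationnel.
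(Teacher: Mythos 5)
Votre preuve suit essentiellement la même démarche que celle du papier : choix d'un lagrangien complémentaire $\ell'$ avec $L_v = (L_v \cap \ell_v) \oplus (L_v \cap \ell'_v)$ pour presque tout $v$, recours aux formules explicites de \cite{MVW87} ch.~2, II.6, II.10 pour établir que $\bigotimes_v M_{\ell_v}[x]$ a un sens, puis application de la réciprocité de Weil à l'indice de Maslov rationnel $\tau(\ell, y\ell, xy\ell)$ pour la multiplicativité. La seule différence est de présentation dans la bonne définition : vous passez par la comparaison avec $M_{L_v}[x]$ et le scalaire $c_v$, alors que le papier vérifie directement $M_{\ell_v}[x]s_v=s_v$ en s'appuyant sur la décomposition de Bruhat — c'est précisément l'outil organisateur que vous ne nommez pas, mais auquel vous renvoyez implicitement en reconnaissant que la vérification de $c_v=1$ pour un $x\in K_v$ général exige un examen des formules explicites du modèle de Schrödinger.
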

\begin{proof}
  Prenons $\ell' \in \Lag(W)$ tel que $W = \ell \oplus \ell'$. Pour presque toute place finie $v$, on a $L_v = L_v \cap \ell_v \oplus L_v \cap \ell'_v$. D'après les formules explicites pour $M_{\ell_v}[x]$ et $s_v$ (\cite{MVW87} chapitre 2, II.6, II.10) avec la décomposition de Bruhat, $M_{\ell_v}[x]$ fixe $s_v$ pour presque tout $v$, donc $\bigotimes_v M_{\ell_v}[x] \in \GL(S_\psi)$ et $i$ est bien défini.

  Montrons que $i$ est un homomorphisme. Soient $x,y \in \Sp(W)$, alors
  $$ M_{\ell_v}[x] M_{\ell_v}[y] = \gamma_{\psi_v}(\tau(\ell, x\ell, xy\ell)) M_{\ell_v}[xy]. $$
  L'espace quadratique $\tau(\ell, x\ell, xy\ell)$ est défini sur $F$. Grâce à la réciprocité de Weil (\cite{Weil64} \S 30, Proposition 5), le produit $\prod_v \gamma_{\psi_v}(\tau(\ell, x\ell, xy\ell))$ vaut $1$, cela permet de conclure.
\end{proof}

\begin{corollary}\label{prop:-1-relevement}
  Pour toute place $v$, soit $-1_v \in \MMp{8}(W_v)$ l'élément considéré dans \ref{def:-1}. Alors $i(-1)$ est égal à $[-1_v]_v$.
\end{corollary}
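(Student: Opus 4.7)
\medskip
\noindent\textbf{Esquisse de preuve.} La stratégie est de dérouler les définitions : le scindage canonique $i$ est défini à l'aide des opérateurs $M_{\ell_v}[\cdot]$ place par place, et les éléments $-1_v$ le sont également via les mêmes opérateurs, de sorte que l'égalité est essentiellement tautologique une fois contrôlée la compatibilité entre le produit tensoriel global et le produit restreint définissant $\MMp{8}(W,\A)$.

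Plus précisément, je procéderais en trois temps. D'abord, je rappellerais qu'en vertu de \ref{def:-1} et \ref{prop:scindage-Schrodinger-Spt}, pour chaque place $v$ on a $-1_v = \sigma_{\ell_v}(-1) = (-1, M_{\ell_v}[-1])$ dans $\Spt_{\psi_v}(W_v)$, et cet élément vit déjà dans $\MMp{8}(W_v)$ grâce à \ref{prop:scindage-Schrodinger}. Ainsi le système local $(-1_v)_v$ est un élément de $\prod_v \MMp{8}(W_v)$ dont la composante symplectique vaut $-1$ partout.

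Ensuite, je vérifierais que ce système définit bien une classe dans le produit restreint $\Resprod_v \MMp{8}(W_v)/\mathbf{N}$ et que, via l'identification $\MMp{8}(W,\A) \subset \Spt_\psi(W,\A) = \MMp{2}(W,\A) \times_{\bmu_2} \C^\times$, l'image de $[-1_v]_v$ s'écrit $(-1, \bigotimes_v M_{\ell_v}[-1])$ dans $\Spt_\psi(W,\A)$. Ceci repose précisément sur le fait que $M_{\ell_v}[-1]$ fixe le vecteur $s_v$ pour presque toute place finie $v$ — argument déjà donné dans la preuve de \ref{prop:relevement-rationnel-explicite}, où l'on utilise la décomposition de Bruhat et les formules explicites de \cite{MVW87}, chapitre 2, II.6 et II.10. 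Par conséquent l'opérateur $\bigotimes_v M_{\ell_v}[-1]$ est bien défini sur $S_\psi = \bigotimes'_v S_{\psi_v}$.

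Enfin, il ne reste plus qu'à confronter cette expression à la définition de $i$ rappelée dans \ref{prop:relevement-rationnel-explicite} : par construction $i(-1) = (-1, \bigotimes_v M_{\ell_v}[-1])$ dans $\Spt_\psi(W,\A)$, ce qui coïncide avec l'écriture obtenue pour $[-1_v]_v$. Le seul point qui demande un peu d'attention — et que j'anticipe comme le principal obstacle technique — est la vérification que l'isomorphisme entre les deux présentations du métaplectique adélique (produit restreint modulo $\mathbf{N}$ d'un côté, sous-groupe de $\Sp(W,\A) \times \GL(S_\psi)$ de l'autre) envoie effectivement une classe $[\tilde{x}_v]_v$ sur le couple $(x, \bigotimes_v M_v)$ attendu ; mais cela découle de la manière dont la représentation de Weil adélique est définie comme produit tensoriel restreint $\omega_\psi = \bigotimes'_v \omega_{\psi_v}$, les deux structures étant compatibles par construction.
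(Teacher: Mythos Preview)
Your proof is correct and follows exactly the same route as the paper: embed $\MMp{8}(W,\A)$ into $\Spt_\psi(W,\A)$, identify $[-1_v]_v$ with $(-1,\bigotimes_v M_{\ell_v}[-1])$, and conclude by \ref{prop:relevement-rationnel-explicite}. The paper's version is more terse (essentially one line), whereas you have helpfully unpacked the compatibility between the restricted-product and the $\Sp(W,\A)\times\GL(S_\psi)$ presentations, but the substance is identical.
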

\begin{proof}
  Si l'on plonge $\MMp{8}(W,\A)$ dans $\Spt_\psi(W)$, alors $[-1_v]_v$ s'identifie à
  $$(-1, \bigotimes_v M_{\ell_v}[-1]), $$
  qui est égal à $i(-1)$ par ladite proposition.
\end{proof}

\section{Classes de conjugaison semi-simples dans les groupes classiques}

\subsection{Formes hermitiennes}
Dans cette section, $F$ est un corps parfait de caractéristique $\neq 2$.

\paragraph{Formes pour les anneaux à involutions}
Fixons $A$ une $F$-algèbre et $\tau$ une $F$-involution de $A$ (i.e. un antiautomorphisme de carré l'identité). On dit qu'une telle paire $(A,\tau)$ est une $F$-algèbre à involution\index{algèbre étale à involution}. Lorsque $A$ est étale, on supprime souvent l'involution et on exprime l'algèbre à involution par $A/A^\#$, où $A^\#$ est la sous-algèbre fixée par $\tau$. C'est loisible car les données $A,A^\#$ déterminent $\tau$.

Soit $M$ un $A$-module projectif à droite de type fini. Lorsque $A$ est commutatif, les modules à gauche et à droite se confondent. Une application bi $F$-additive $q: M \times M \to A$ est dite une forme sesquilinéaire si pour tout $m,n \in M$ et $a,b \in A$, on a
$$ q(ma|nb) = \tau(a)q(m|n)b. $$

Une telle application $q$ équivaut à un homomorphisme
\begin{align*}
  g_q: & M \to M^* := \Hom_A(M,A) \\
  & m \mapsto q(m|-)
\end{align*}
où on regarde le dual $M^*$ comme un $A$-module à droite par $(fa)(m) = \tau(a)f(m)$ pour tout $a \in A$, $m \in M$.

Une forme sesquilinéaire $q$ est dite non-dégénérée si $g_q$ est un isomorphisme. Soit $\epsilon=\pm 1$, on dit que $q$ est $\epsilon$-hermitienne si $q$ est non-dégénérée et $q(m|n)=\epsilon \tau(q(n|m))$ pour tout $m,n \in M$; cela équivaut à la commutativité du diagramme
$$\xymatrix{
  M \ar[r]^{g_q} \ar[d]_{\epsilon \cdot \varpi} & M^* \ar@{=}[d] \\
  M^{**} \ar[r]^{g_q^*} & M^*
}$$
où $\varpi: M \to M^{**}$ est donnée par $\varpi(m)(\lambda) = \angles{\lambda,m}$ pour tous $m \in M$,$\lambda \in M^*$. Il y a une notion naturelle d'isométries entre ces formes. Notons la catégorie des $(A,\tau)$-formes $\epsilon$-hermitienne par $\mathfrak{Herm}^\epsilon(A,\tau)$.

Voici quelques cas spéciaux que nous utiliserons plus tard.
\begin{itemize}
  \item $A=F$, $\tau=\identity$, $\epsilon=1$: les $F$-formes quadratiques.
  \item Idem, mais $\epsilon=-1$: les $F$-formes symplectiques.
  \item $A=E$ une extension quadratique de $F$, $\tau$ l'involution associée, $\epsilon=1$: les $E/F$-formes hermitiennes.
  \item Idem, mais $\epsilon=-1$: les $E/F$-formes anti-hermitiennes.
\end{itemize}

\paragraph{Pousser-en-avant des formes}
Supposons donnés une inclusion $(A_1,\tau_1) \hookrightarrow (A_2, \tau_2)$ et un homomorphisme $F$-linéaire $t: A_2 \to A_1$ tel que $t \circ \tau_2 = \tau_1 \circ t$. Soit $q$ une $(A_2,\tau_2)$-forme $\epsilon$-hermitienne, on en déduit une $(A_1, \tau_1)$-forme $\epsilon$-hermitienne $t_* q$ sur $M$ (regardé comme un $A_1$-module) définie par
$$ (t_*q)(m|n) =  t(q(m|n)). $$

\paragraph{Formes en catégories}
Afin de classifier les classes de conjugaison, travaillons dans un cadre plus abstrait. Une référence possible est \cite{Knus91} II.
\begin{definition}[cf. \cite{Knus91} II 2] 
  Une catégorie $F$-additive avec dualité est un triplet $(\mathcal{C}, *, \varpi)$ où $\mathcal{C}$ est une catégorie $F$-additive, $*$ est un foncteur $\mathcal{C}^\text{op} \to \mathcal{C}$ et $\varpi: \identity_{\mathcal{C}} \rightiso **$ est un isomorphisme de foncteurs.
  
  Une forme $\epsilon$-hermitienne dans $(\mathcal{C},*,\varpi)$ est une paire $(M,\phi)$ où $M$ est un objet dans $\mathcal{C}$ et $\phi$ est un isomorphisme $M \rightiso M^*$ tel que le diagramme
  $$\xymatrix{
    M \ar[r]^{\phi} \ar[d]_{\epsilon\varpi_M} & M^* \ar@{=}[d] \\
    M^{**} \ar[r]^{\phi^*} & M^*
  }$$
  est commutatif. On dit aussi qu'une forme $\epsilon$-hermitienne est hermitienne si $\epsilon=1$ et anti-hermitienne si $\epsilon=-1$. On note la catégorie des formes $\epsilon$-hermitienne dans $\mathcal{C}$ par $\mathfrak{Herm}^\epsilon(\mathcal{C})$.

  Soient $(\mathcal{C}_1, *_1, \varpi_1)$, $(\mathcal{C}_2, *_2, \varpi_2)$ deux catégories $F$-additives avec dualité. Un morphisme entre elles est une paire $(G,\eta)$, où $G$ est un foncteur $F$-additif $\mathcal{C}_1 \to \mathcal{C}_2$ et $\eta$ est un isomorphisme $G \circ *_1 \rightiso *_2 \circ G$, tels que
  $$\xymatrix{
    G(M) \ar[d]^{\varpi_2} \ar[r]^{G(\varpi_1)} & G(M^{**}) \ar[d]^{\eta_{M^*}} \\
    G(M)^{**} \ar[r]^{(\eta_M)^*} & G(M^*)^*
  }$$
  est commutatif pour tout $M$.
\end{definition}

On définit aisément la somme orthogonale de formes $\epsilon$-hermitiennes. Une isométrie entre deux formes $\epsilon$-hermitiennes $(M_1,\phi_1)$, $(M_2,\phi_2)$ est un isomorphisme $h: M_1 \rightiso M_2$ tel que $h^* \phi_2 h = \phi_1$.

Pour une catégorie $F$-additive $(\mathcal{C},*,\varpi)$ et un objet $M$ dans $\mathcal{C}$, on associe la forme $\epsilon$-hermitienne hyperbolique comme la forme $\mathbb{H}(M) := (M \oplus M^*, \phi)$ où $\phi: M \oplus M^* \to M^* \oplus M^{**}$ a l'expression matricielle suivante
$$\phi:
  \begin{pmatrix}
    0 & \identity_{M^*} \\
    \epsilon\varpi_M & 0 \\
  \end{pmatrix}
$$

Les formes $\epsilon$-hermitiennes pour les anneaux discutées précédemment sont des exemples de ce formalisme.

Si $(\mathcal{C},*,\varpi)$ est un produit fini $\prod_{i=1}^r (\mathcal{C}_i, *_i, \varpi_i)$, alors la catégorie des formes $\epsilon$-hermitiennes dans $(\mathcal{C},*,\varpi)$ est canoniquement isomorphe au produit des catégories des formes $\epsilon$-hermitiennes dans les $(\mathcal{C}_i, *_i, \varpi_i)$.

\subsection{Kit de classification}\label{sec:kit-classification}
Fixons maintenant $\epsilon = \pm 1$ et une $F$-algèbre à involution $(A,\tau)$ de la forme suivante:
\begin{itemize}
  \item soit $A=F$, $\tau=\identity$;
  \item soit $A=E$ une extension quadratique de $F$ et $\tau$ la $F$-involution non triviale associée.
\end{itemize}
Si $(M,h)$ est une $(A,\tau)$-forme $\epsilon$-hermitienne, on note $\U(M,h)$ son groupe d'isométries.

Considérons la catégorie $F$-additive avec dualité $(\mathfrak{H},*,\varpi)$ (abrégée comme $\mathfrak{H}$ dans ce qui suit) suivante:
\begin{itemize}
  \item les objets sont les paires $(M,x)$, où $M$ est un $A$-module de type fini et $x \in \GL_A(M)$ est semi-simple;
  \item un morphisme $(M_1, x_1) \to (M_2, x_2)$ est un homomorphisme $A$-linéaire $f: M_1 \to M_2$ tel que $f \circ x_1 = x_2 \circ f$;
  \item pour tout objet $(M,x)$, on définit $(M,x)^* = (M^*, (x^{-1})^*)$;
  \item l'isomorphisme $\varpi: \identity \rightiso **$ est l'isomorphisme canonique $M \rightiso M^{**}$, pour tout $M$.
\end{itemize}

L'ensemble des classes d'isomorphismes de formes $\epsilon$-hermitiennes dans $\mathfrak{H}$ s'identifie à celui des triplets $(M,h,x)$ où $(M,h)$ est $\epsilon$-hermitienne et $x \in \U(M,h)$ est semi-simple, à isométrie près.

Soit $(\mathfrak{H}_0, *, \varpi)$ la catégorie $F$-additive avec dualité de $(A,\tau)$-formes $\epsilon$-hermitiennes. On a un foncteur d'oubli $\mathfrak{H} \to \mathfrak{H}_0$. Pour classifier les classes de conjugaison semi-simples dans $\U(M,h)$, il suffit de classifier les formes $\epsilon$-hermitiennes dans $\mathfrak{H}$ ayant une image dans $\mathfrak{H}_0$ isomorphe à $(M,h)$.

Pour ce faire, nous suivons la recette dans \cite{Knus91} II (6.6) qui est essentiellement une variante de l'équivalence de Morita. Tout d'abord, on vérifie les propriétés suivantes pour $\mathfrak{H}$ et $\mathfrak{H}_0$ (cf. \cite{Knus91} II. (5.2), (6.3)):

\begin{description}
  \item[C1]: tout idempotent se scinde;
  \item[C2]: tout objet $M$ admet une décomposition $M = \bigoplus_{i=1}^m N_i$ où $N_i$ est un objet indécomposable et $\End(N_i)$ est un corps; si $N_i=(V_i,x_i)$ est un objet dans $\mathfrak{H}$ alors $\End(N_i)$ est engendré par $x_i$ sur $A$;
  \item[C3]: pour tout objet $M$, le radical de Jacobson de $\End(M)$ est nul.
\end{description}

En effet, ces propriétés ne font pas intervenir la dualité $*$, elles résultent de l'algèbre linéaire. Elles impliquent aussi la propriété de Krull-Schmidt pour $\mathfrak{H}$ et $\mathfrak{H}_0$, au sens suivant.

\begin{definition}
  On dit qu'une catégorie additive vérifie la propriété de Krull-Schmidt si tout objet $M$ admet une décomposition $M = \bigoplus_{i=1}^m N_i$ en objets indécomposables, unique à permutation et isomorphisme près.
  
  Le type d'un objet $M$ dans une telle catégorie est l'ensemble des classes d'isomorphisme de ses composantes indécomposables $N_i$.
\end{definition}

La classification marchera en trois étapes. Soit $(M,x,h)$ une forme $\epsilon$-hermitienne dans $\mathfrak{H}$.

\paragraph{Étape 1} Puisque $\mathfrak{H}$ vérifie la propriété de Krull-Schmidt, on peut décomposer $(M,x)$ selon son type et puis regrouper par dualité de sorte que
$$ (M,x,h) = \bigoplus_{i=1}^m (M_i,x_i,h_i),$$
où $(M_i,x_i)$ est de type $N_i$ (avec $N_i \simeq N_i^*$) ou $(N_i, N_i^*)$ (avec $N_i \not\simeq N_i^*$) pour tout $i$.

\paragraph{Étape 2a} Fixons $1 \leq i \leq m$. Supposons d'abord $(M_i,x_i)$ de type $N_i$. D'après \cite{Knus91} II (6.5.1) l'objet $N_i$ admet une structure d'une forme hermitienne ou anti-hermitienne, disons $k_i: N_i \to N_i^*$. Posons $K_i := \End(N_i) = A(x_i)$, c'est un corps et il admet l'involution $A$-linéaire canonique d'adjonction (\cite{Knus91} II (3.2)):
$$ \tau_i: f \mapsto k_i^{-1}f^* k_i. $$
On a $\tau_i(x_i) = x_i^{-1}$ et $\tau_i$ prolonge $\tau$ sur $A$. Posons $K_i^\#$ le sous-corps fixé par $\tau_i$.

\begin{lemma}
  Si $x_i \neq \pm 1$, alors $\tau_i \neq \identity$.
\end{lemma}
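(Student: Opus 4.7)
The plan is to argue by contrapositive: assume $\tau_i = \identity$ and show $x_i = \pm 1$.

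First I would observe that $\tau_i$ is an $F$-involution of $K_i$ that prolonges $\tau$ on $A$. Hence if $\tau_i = \identity$ then in particular $\tau|_A = \identity$. This immediately rules out the case $A = E$, where $\tau$ is the nontrivial Galois involution of $E/F$; so it suffices to treat the case $A = F$ (in the case $A = E$ the conclusion of the lemma is automatic).

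Next I would use the key defining property of the adjoint involution, namely $\tau_i(x_i) = x_i^{-1}$. If $\tau_i = \identity$, then $x_i = \tau_i(x_i) = x_i^{-1}$, i.e.\ $x_i^2 = 1$ in $K_i$. By property C2 of the category $\mathfrak{H}$, $K_i = \End(N_i) = A(x_i)$ is a field, so the equation $X^2 - 1 = 0$ has only the two roots $\pm 1$ there. Thus $x_i \in \{\pm 1\}$, which is the contrapositive of what we wanted.

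There is essentially no obstacle: the argument is a one-line chase once one invokes the two structural facts that $\tau_i|_A = \tau$ and $\tau_i(x_i) = x_i^{-1}$, both of which are built into the definition of the adjoint involution recalled just before the lemma.
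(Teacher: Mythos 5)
Your proof is correct and follows the same route as the paper: use $\tau_i(x_i)=x_i^{-1}$ and the fact that $K_i$ is a field to conclude that $\tau_i=\identity$ would force $x_i^2=1$ and hence $x_i=\pm1$. Your explicit remark that $A=E$ is immediately ruled out (since $\tau_i|_A=\tau$ is then nontrivial) is a welcome small clarification that the paper leaves implicit.
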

\begin{proof}
  On a $\tau_i = \identity$ si et seulement si $\tau_i(x_i)=x_i$, c'est-à-dire $x_i=x_i^{-1}$, ce qui contredit l'hypothèse car $K_i$ est un corps.
\end{proof}

Si $x_i=\pm 1$, alors la classification de tels $(M_i,x_i,h_i)$ équivaut à la classification de $(A,\tau)-$formes $\epsilon$-hermitiennes. Conservons l'hypothèse que $x_i \neq \pm 1$ dans ce qui suit.

\paragraph{Étape 2b} D'autre part, si $(M_i, x_i)$ est de type $(N_i, N_i^*)$ alors il existe $r$ tel que $(M_i, x_i) = (N_i \oplus N_i^*)^{\oplus r}$ car $(M_i, x_i) \simeq (M_i, x_i)^*$ (\cite{Knus91} II (6.4)). On pose
$$ K_i := \End(N_i \oplus N_i^*) = \End(N_i) \times \End(N_i^*).$$

Il est muni de l'involution $\tau_i: (a,b) \mapsto (\varpi(b^*),a^*)$. La sous-algèbre $K_i^\#$ fixée par $\tau_i$ est isomorphe à $\End(N_i)$, donc $K_i^\#$ est un corps. On a
$$(K_i,\tau_i) \simeq (K_i^\# \times K_i^\#, (a,b) \mapsto (b,a)).$$

De plus, $x_i$ s'identifie à l'élément $(x_i|_{N_i},(x_i|_{N_i}^{-1})^*) \in K_i^\times$. Montrons que $\tau_i(x_i) = x_i^{-1} \neq x_i$. En effet, si $\tau_i(x_i)=x_i$ alors $x_i|_{N_i}=x_i|_{N_i}^{-1}$, d'où $x_i|_{N_i}=\pm 1$ car $\End(N_i)$ est un corps, mais cela implique que $N_i \simeq N_i^*$, qui est contradictoire.

\paragraph{Étape 3} Supposons pour l'instant que $x_i \neq \pm 1$ pour tout $1 \leq i \leq m$. Pour tout $i$, posons
$$ Q_i := \begin{cases}
  N_i, & \text{ si } (M_i,x_i) \text{ est de type } (N_i), \\
  N_i \oplus N_i^* & \text{ si } (M_i,x_i) \text{ est de type } (N_i, N_i^*).
\end{cases}$$

Les objets $K_i, K_i^\#, \tau_i$ sont définis comme précédemment. On peut identifier (non canoniquement) l'espace sous-jacent de $Q_i$ à $K_i$. Définissons $k_i: Q_i \to Q_i^*$ qui correspond à la forme trace sur le $A$-espace vectoriel $K_i$
$$ (q,q') \mapsto \Tr_{K_i/A}(\tau_i(q)q'). $$
Alors $(Q_i,k_i)$ est une forme hermitienne dans $\mathfrak{H}$ car $\tau_i(x_i)=x_i^{-1}$.

Posons maintenant $Q := \bigoplus_{i=1}^m Q_i$, muni du morphisme $k = \bigoplus k_i: Q \rightiso Q^*$ de sorte que $(Q,k)$ est une forme hermitienne. On construit les objets suivants:

\begin{align*}
  K & := \bigoplus_i K_i = \End(Q), \\
  x & := (x_i)_i, \; K=A[x],\\
  K^\# & := \bigoplus_i K_i^\#, \\
  \tau_K & := \bigoplus \tau_i.
\end{align*}

Alors $K$ est une $A$-algèbre étale dont $\tau_K$ est une involution prolongeant $\tau: A \to A$, $\tau_K(x)=x^{-1}$ et $K^\#$ est la sous-algèbre fixée par $\tau$.

Posons $\mathfrak{H}|_Q$ la sous-catégorie pleine de $\mathfrak{H}$ dont les objets sont facteurs directs des $Q^{\oplus r}$ ($r \geq 1$). Elle contient $(M, x)$. Considérons le foncteur $G := \Hom_{\mathfrak{H}}(Q,-)$ de $\mathfrak{H}|_Q$ dans la catégorie des $K$-modules projectifs de type fini. Il préserve les dualités et induit une équivalence de catégories $F$-additives avec dualités (\cite{Knus91} II. \S 3). Cela induit aussi une équivalence
$$ \mathfrak{Herm}^\epsilon(\mathfrak{H}|_Q) \rightiso \mathfrak{Herm}^\epsilon(K,\tau_K). $$

Donnons une forme plus utile de cette correspondance.

\begin{proposition}
  Il existe une correspondance biunivoque
  $$ \mathfrak{Herm}^\epsilon(K,\tau_K) \rightiso \mathfrak{Herm}^\epsilon(\mathfrak{H}|_Q) $$
  donnée par
  $$ (M',h') \mapsto (\Tr_{K/A})_* (M,h), $$
  où $M$ est muni de l'automorphisme qui agit par multiplication par $x \in Q^\times$. 
\end{proposition}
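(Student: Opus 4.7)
The plan is to transport the Morita-type equivalence $\mathfrak{Herm}^\epsilon(\mathfrak{H}|_Q) \rightiso \mathfrak{Herm}^\epsilon(K,\tau_K)$ supplied by $G = \Hom_{\mathfrak{H}}(Q,-)$ through its quasi-inverse $M' \mapsto M' \otimes_K Q$, and to identify the resulting form on the target side explicitly with the trace pushforward.

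First I would recall that, under the identification of the underlying $A$-module of $Q$ with $K$ furnished by Step 3 of Section \ref{sec:kit-classification}, the automorphism $x \in K^\times$ acts on $Q \simeq K$ by multiplication, and the hermitian form $k$ on $Q$ coincides with the trace form $(a,b) \mapsto \Tr_{K/A}(\tau_K(a)\, b)$. Under the quasi-inverse $M' \mapsto M' \otimes_K Q$, an $\epsilon$-hermitian form $h'$ on $M'$ over $(K,\tau_K)$ is sent to the form on $M' \otimes_K Q$ prescribed by the standard Morita tensor formula
$$ (m \otimes q,\; n \otimes q') \longmapsto k(q,\, h'(m,n)\, q'). $$

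The key observation is then the following elementary calculation: under the canonical $A$-module isomorphism $M' \otimes_K Q \simeq M'$ coming from $Q \simeq K$, this form becomes $(m,n) \mapsto k(1, h'(m,n)) = \Tr_{K/A}(h'(m,n))$, which is exactly $(\Tr_{K/A})_* h'$; moreover the automorphism on $M' \otimes_K Q$ induced from the $x$-action on $Q$ transports to multiplication by $x$ on $M'$. Hence the quasi-inverse of $G$ is literally the trace pushforward, and this produces the asserted bijection.

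The remaining work, which is the main technical point, is to verify carefully that the recipe $h' \mapsto \Tr_{K/A} \circ h'$ genuinely delivers a non-degenerate $\epsilon$-hermitian form in $\mathfrak{H}|_Q$, and that it inverts $G$ on the level of forms. $A$-sesquilinearity and the $\epsilon$-hermitian identity follow from $\tau_K|_A = \tau$, the relation $\tau_K(x) = x^{-1}$, and the $\tau_K$-equivariance and $A$-linearity of $\Tr_{K/A}$; the $x$-invariance of $\Tr_{K/A} \circ h'$ uses $\tau_K(x) = x^{-1}$ to cancel the $x$ brought out of the first argument by sesquilinearity. Non-degeneracy rests on non-degeneracy of $\Tr_{K/A}$, which holds because $K/A$ is étale by Étapes 2a--2b (each $K_i$ is either a field extension of $A$ or $K_i^\# \times K_i^\#$ with swap involution). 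The main subtlety I anticipate is keeping the various involutions $\tau_i$ in lockstep with the forms $k_i$ under the identification $Q_i \simeq K_i$, particularly in the Étape 2b case where the trace form must be recognised as the tautological pairing between $K_i^\#$ and itself; once this is settled factor by factor, the decomposition $(K,\tau_K) = \prod_i (K_i,\tau_i)$ yields the global statement.
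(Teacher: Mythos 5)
Your approach is essentially the one the paper takes: both proofs accept the Morita-type equivalence $G = \Hom_{\mathfrak{H}}(Q,-)$ of Knus II \S 3 and then identify its inverse on the level of hermitian forms with the trace pushforward, using the (non-canonical) identification $Q_i \simeq K_i$ that makes $k_i$ the trace form. You do this by writing down the quasi-inverse $M' \mapsto M' \otimes_K Q$ and computing the Morita tensor formula explicitly, whereas the paper instead compares $\Hom_{\mathfrak{H}}(M,M^*)$ with $\Hom_K(G(M),G(M)^*)$ for $M = Q^{\oplus r}$ and argues with the action of $\mathrm{Mat}_{r\times r}(K)$ and its ``symmetric'' elements to obtain surjectivity; these are two standard ways of packaging the same Morita bookkeeping, and the verification that the recipe lands in $\mathfrak{Herm}^\epsilon(\mathfrak{H}|_Q)$ and is non-degenerate (via $\tau_K(x)=x^{-1}$ and non-degeneracy of $\Tr_{K/A}$ for $K/A$ étale) is the same in both.
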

\begin{proof}
  Puisque la formation de cette application est compatible aux réunions de types $\{N_i\}, \{N_i, N_i^*\}$, il suffit de considérer le cas $Q = Q_i$ pour un $1 \leq i \leq m$. Soit $M = Q^{\oplus r}$, l'isomorphisme
$$ \Hom_\mathfrak{H}(M, M^*) \rightiso \Hom_K(G(M), G(M)^*) $$
respecte l'action de $\End_{\mathfrak{H}}(M) = \text{Mat}_{r \times r}(K)$ des deux côtés. Prenons une $(K,\tau)$-forme $\epsilon$-hermitienne $(M,h)$ munie de la  multiplication par $x$, alors ${\Tr_{K/A}}_* (M,h)$ munie de l'action de $x$ est une forme $\epsilon$-hermitienne dans $\mathfrak{H}|_Q$. En faisant agir les endomorphismes ``symétriques'' dans $\End_{\mathfrak{H}}(M)$, on voit que toute forme $\epsilon$-hermitienne sur $\mathfrak{H}|_Q$ est de la forme ${\Tr_{K/A}}_* (M,h)$ avec la multiplication par $x$. Cette correspondance est biunivoque.
\end{proof}

\begin{remark}\label{rem:hyperbolicite}
  Si $Q_i=N_i \oplus N_i^*$ où $N_i \not\simeq N_i^*$, alors toute forme $\epsilon$-hermitienne dans $\mathfrak{H}|_{Q_i}$ est hyperbolique(\cite{Knus91} II (6.4)).
\end{remark}

\paragraph{Conclusion} Compte tenu des raisonnements ci-dessus, on a obtenu:
\begin{theorem}\label{thm:classes-classification}
  Soit $(M,h)$ une $(A,\tau)$-forme $\epsilon$-hermitienne. Les classes de conjugaison semi-simples dans $\U(M,h)$ sont paramétrées par les données suivantes\index{paramètres pour les classes de conjugaison semi-simples}.
  \begin{itemize}
    \item Une $A$-algèbre étale de type fini $K$ et une involution $\tau_K: K \to K$. La sous-algèbre fixée par $\tau_K$ est notée par $K^\#$.
    \item Un élément semi-simple $x \in K^\times$ tel que $\tau(x)=x^{-1}$, $x \neq \pm 1$ et $K=A[x]$.
    \item Une $(K,\tau_K)$-forme $\epsilon$-hermitienne $(M_K, h_K)$. Quitte à rétrécir $(K,\tau_K)$, on peut supposer que $(M_K,h_K)$ est fidèle.
    \item Deux $(A,\tau)$-formes $\epsilon$-hermitiennes $(M_+, h_+)$ et $(M_-, h_-)$.
  \end{itemize}
  Ces paramètres sont soumis à la condition
  $$ (M,h) \simeq {\Tr_{K/A}}_* (M_K, h_K) \oplus (M_+,h_+) \oplus (M_-,h_-). $$

  Il y a une notion évidente d'équivalence pour les paramètres. Les paramètres pour une classe de conjugaison semi-simple sont uniquement déterminés à équivalence près.
\end{theorem}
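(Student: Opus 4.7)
The plan is to follow the three-step procedure already sketched in \S\ref{sec:kit-classification} and then package the outcome into the announced parameters. First, I would reformulate the problem: a semi-simple conjugacy class in $\U(M,h)$ is exactly an isomorphism class of $\epsilon$-hermitian forms $(M,h,x)$ in the category $\mathfrak{H}$ whose image under the forgetful functor $\mathfrak{H} \to \mathfrak{H}_0$ is isometric to the given $(M,h)$. The properties \textbf{C1}, \textbf{C2}, \textbf{C3} imply Krull-Schmidt in $\mathfrak{H}$, so any such form decomposes canonically according to the indecomposable types $N_i$. Regrouping by the duality $*$ partitions these types into self-dual types $\{N_i\}$ (with $N_i \simeq N_i^*$) and swapped pairs $\{N_i, N_i^*\}$ (with $N_i \not\simeq N_i^*$), yielding an orthogonal sum $\bigoplus_i (M_i, x_i, h_i)$ that is unique up to isometry and permutation.

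Next, I would isolate the eigenvalues $\pm 1$. An indecomposable object $(V,x)$ in $\mathfrak{H}$ with $x = \pm 1$ is just an $A$-module, and the $\epsilon$-hermitian structure on it reduces to an $(A,\tau)$-form; summing all such components with $x=1$ (resp. $x=-1$) yields the summands $(M_+,h_+)$ and $(M_-,h_-)$. On the remaining components, the induced involution $\tau_i$ on $\End(N_i)$ (or on $\End(N_i \oplus N_i^*)$) is nontrivial by the lemma, and one has $\tau_i(x_i) = x_i^{-1}$ with $x_i \neq \pm 1$. At this stage the Morita equivalence $G = \Hom_\mathfrak{H}(Q, -)$ from $\mathfrak{H}|_Q$ to finitely generated projective $K$-modules preserves dualities (cf. \cite{Knus91} II \S 3), so it induces an equivalence
$$ \mathfrak{Herm}^\epsilon(\mathfrak{H}|_Q) \rightiso \mathfrak{Herm}^\epsilon(K,\tau_K), $$
which, combined with the explicit description of the last proposition, converts the datum $(M_i, x_i, h_i)$ into an $\epsilon$-hermitian $(K_i,\tau_i)$-module equipped with the multiplication by $x_i$.

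To finish, I would assemble the global parameters by setting $K = \bigoplus_i K_i$, $\tau_K = \bigoplus_i \tau_i$, $x = (x_i)_i$, and $(M_K, h_K)$ the Morita image of the non-$\pm 1$ part. Since $K = A[x]$ by construction of each $K_i$ and since the $A$-form transported by $\Tr_{K/A}$ reproduces the correct isometry class on each type, the decomposition condition holds. Faithfulness of $(M_K,h_K)$ is obtained by shrinking $K$ to $A[x]$-acting-on-$M_K$, which does not alter the $\epsilon$-hermitian structure. Uniqueness of the parameters up to the natural equivalence reduces to Krull-Schmidt uniqueness of the type decomposition and to the unicity in Morita equivalence.

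The main obstacle I anticipate is the bookkeeping for the mixed type $\{N_i, N_i^*\}$: here $K_i \simeq K_i^\# \times K_i^\#$ with swap involution, and Remark \ref{rem:hyperbolicite} says any $\epsilon$-hermitian form on such a block is hyperbolic, so one must verify that this hyperbolicity is precisely what the $(K,\tau_K)$-hermitian formalism encodes (namely, arbitrary $K_i^\#$-modules viewed hermitianly through the swap), and that no redundancy or loss of information occurs when one imposes $(M_K,h_K)$ to be faithful. A secondary subtlety is to check that the equivalence relation on parameters is indeed the one induced by isomorphisms of the triple $(K, \tau_K, x)$ together with isometries of $(M_K,h_K)$ and of $(M_\pm, h_\pm)$, so that the parameter set is a genuine set of invariants.
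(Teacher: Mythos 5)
Your proposal reproduces the paper's own three-step strategy: Krull--Schmidt decomposition in $\mathfrak{H}$ by indecomposable types, separation of the $x_i = \pm 1$ components into $(M_\pm, h_\pm)$, Morita equivalence $\mathfrak{Herm}^\epsilon(\mathfrak{H}|_Q) \simeq \mathfrak{Herm}^\epsilon(K,\tau_K)$ for the remaining part, and uniqueness derived from Krull--Schmidt. The subtleties you flag (hyperbolicity on swapped types, faithfulness by shrinking $K$, equivalence of parameters) are exactly those already addressed in \S\ref{sec:kit-classification}, so this is the same argument, just stated without the paper's shorthand ``Compte tenu des raisonnements ci-dessus.''
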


Précisons cette correspondance. Étant donné un tel paramètre
$$(K/K^\#,x,(M_K,h_K),(M_\pm, h_\pm)),$$
on fixe un isomorphisme
$$ \phi: (M,h) \rightiso (M_0,h_0) := {\Tr_{K/A}}_* (M_K, h_K) \oplus (M_+,h_+) \oplus (M_-,h_-). $$

Il induit $\phi_*: \U(M_0,h_0) \rightiso \U(M,h)$. Soit $x_0 \in \U(M_0,h_0)$ qui agit comme multiplication par $x$ sur $M_K$ et comme $\pm\identity$ sur $M_\pm$. Alors $\mathcal{O}(\phi_*(x_0))$ est la classe cherchée; elle ne dépend pas du choix de $\phi$. La classe ainsi obtenue est notée $\mathcal{O}(K/K^\#,x, (M_K,h_K),(M_\pm,h_\pm))$.

\begin{remark}
  Si $A=E$ est une extension quadratique de $F$, alors $K=K^\# \otimes_F E$.
\end{remark}

Discutons deux opérations sur les paramètres.

\paragraph{Somme directe}\index{paramètres pour les classes de conjugaison semi-simples!somme directe} Soient $(M',h'), (M'',h'')$ deux $(A,\tau)$-formes $\epsilon$-hermitiennes et posons $(M,h) := (M',h') \oplus (M'',h'')$, alors on dispose d'un homomorphisme $\iota: U(M',h') \times U(M'',h'') \to U(M,h)$.

Soient $(K'/K'^\#,x',(M_{K'},h_{K'}),(M'_\pm, h'_\pm))$ un paramètre pour $\mathcal{O}(g') \in U(M',h')$ et $(K''/K''^\#,x'',(M_{K''},h_{K''}),(M''_\pm, h''_\pm))$ un paramètre pour $\mathcal{O}(g'') \in U(M'',h'')$. On définit leur somme directe
$$(K'/K'^\#,x',(M_{K'},h_{K'}),(M'_\pm, h'_\pm)) \oplus (K''/K''^\#,x'',(M_{K''},h_{K''}),(M''_\pm, h''_\pm))$$
comme un paramètre $(K/K^\#, x, (M_K,h_K), (M_\pm, h_\pm))$ où
$$ (K,\tau_K) = (K',\tau_{K'}) \times (K'', \tau_{K''}) $$
comme $F$-algèbres à involutions, $(M_K,h_K) = (M_{K'},h_{K'}) \oplus (M_{K''},h_{K''})$ la $(K,\tau_K)$-forme $\epsilon$-hermitienne correspondante, et on définit $(M_\pm,h_\pm) := (M'_\pm,h'_\pm) \oplus (M''_\pm,h''_\pm)$. Alors $(K/K^\#, x, (M_K,h_K), (M_\pm, h_\pm))$ paramètre la classe $\mathcal{O}(\iota(g',g'')) \in U(M,h)$.

\paragraph{Pousser-en-avant}\index{paramètres pour les classes de conjugaison semi-simples!pousser-en-avant} Soient $L$ une extension finie de $F$ et $B := A \otimes_F L$. Alors $(B,\identity \otimes \tau)$ est une extension finie de $(A,\tau)$. Soit $(M,h)$ une $(B,\identity \otimes \tau)$-forme $\epsilon$-hermitienne, alors $(M,{\Tr_{B/A}}_* h)$ est une $(A,\tau)$-forme $\epsilon$-hermitienne. On a une inclusion $U(M,h) \subset U(M, {\Tr_{B/A}}_* h)$.

Soit $(K/K^\#, x, (M_K,h_K),(M_\pm,h_\pm))$ un paramètre pour une classe $\mathcal{O}(g)$ dans $U(M,h)$, toutes les données étant définies par rapport à $L$, alors $(K/K^\#, x, (M_K, h_K), (M_\pm, {\Tr_{B/A}}_* h_\pm))$ paramètre la classe $\mathcal{O}(g)$ dans $U(M, {\Tr_{B/A}}_* h)$. Ici on regarde $K/K^\#$ comme une $F$-algèbre étale.

\paragraph{Décomposition}
\begin{remark}\label{rem:parametres-decomposition}\index{paramètres pour les classes de conjugaison semi-simples!décomposition}
  Si l'on décompose
  $$ (K/K^\#, x) = \prod_{i \in I} (K_i/K_i^\#, x_i) $$
  de sorte que $K_i^\#$ est un corps pour tout $i$, alors il y a un isomorphisme canonique
  \begin{align*}
    \mathfrak{Herm}^\epsilon(K,\tau_K) & \simeq \prod_{i \in I} \mathfrak{Herm}^\epsilon(K_i, \tau_{K_i}), \\
    (M_K,h_K) & \simeq \prod_{i \in I} (M_{K_i}, h_{K_i}).
  \end{align*}
  L'étude des $(K,\tau_K)$-formes $\epsilon$-hermitiennes se ramène à celle des $(K_i, \tau_{K_i})$-formes $\epsilon$-hermitiennes. Posons
  $$ I^* := \{i \in I : K_i \text{ est un corps} \}. $$
  D'après \ref{rem:hyperbolicite}, il suffit de considérer les indices $i \in I^*$.
\end{remark}

\begin{remark}\index{paramètres pour les classes de conjugaison semi-simples!restriction des scalaires}
  Indiquons deux extensions de ce formalisme. D'abord on peut étendre cette classification aux restrictions des scalaires de schémas en groupes classiques: cela équivaut à remplacer $F$ par un sous-corps $F_0 \subset F$. Ensuite, on peut aussi considérer des produits de la forme $\prod_i \Res_{F_i/F}(U_i)$, où $F_i$ est une extension finie de $F$ et $U_i$ est un groupe classique sur $F_i$, pour tout $i$.
\end{remark}

\subsection{Paramétrage explicite}\label{sec:parametrage}
Maintenant on peut décrire explicitement les classes de conjugaison dans les groupes classiques et leurs commutants. Par conséquent, on dispose aussi d'une description de la régularité.

\paragraph{Les groupes symplectiques}
Prenons $A=F$, $\epsilon=-1$ dans la classification ci-dessus. Soit $(W,\angles{\cdot|\cdot})$ un $F$-espace symplectique, alors $\U(W,\angles{\cdot|\cdot}) = \Sp(W)$. Les classes de conjugaison semi-simples dans $\Sp(W)$ sont donc paramétrées par les données suivantes.

  \begin{itemize}
    \item Données $(K/K^\#,x)$ comme dans \ref{thm:classes-classification}, $K=F[x]$.
    \item Une $(K,\tau_K)$-forme anti-hermitienne $(W_K, h_K)$ où $W_K$ est un $K$-module fidèle.
    \item Deux $F$-espaces symplectiques $(W_+, \angles{\cdot|\cdot}_+)$ et $(W_-, \angles{\cdot|\cdot}_-)$.
  \end{itemize}
  Les espaces symplectiques sont classifiés par leur dimension, donc les paramètres sont soumis à une seule condition
  $$ \dim_F W_K + \dim_F W_+ + \dim_F W_- = \dim_F W. $$

  \textit{Commutants}. Soit $g \in \mathcal{O}(K/K^\#,x,(W_K,h_K),(W_\pm,\angles{\cdot|\cdot}_\pm))$, alors
  $$ \Sp(W)^g = \Sp(W)_g = U(W_K,h_K) \times \Sp(W_+) \times \Sp(W_-). $$

  Décomposons $K=\prod_{i \in I} K_i$, $\tau_K = (\tau_i)$, $x=(x_i)$, et $(W_K,h_K) = ((W_i,h_i))_{i \in I}$ comme dans \ref{rem:parametres-decomposition}. Si $i \notin I^*$ alors $U(W_i,h_i) \simeq \GL_{K_i^\#}(n_i)$ avec $n_i := \frac{1}{2}\dim_{K_i^\#} W_i$. D'où:

  $$ U(W_K, h_K) = \prod_{i \in I^*} U_{K_i,\tau_i}(W_i, h_i) \times \prod_{i \notin I^*} \GL_{K_i^\#}(n_i). $$

  \textit{Régularité}. La classe ainsi paramétrée est régulière si et seulement si $W_+=W_-=\{0\}$ et $W_K \simeq K$. Une classe régulière est forcément fortement régulière.

\paragraph{Les groupes orthogonaux impairs}
Prenons $A=F$, $\epsilon=1$. Soit $(V,q)$ un $F$-espace quadratique de dimension impaire, alors $\U(V,q) = O(V,q)$. Les classes de conjugaison semi-simples dans $O(V,q)$ sont paramétrées par les données suivantes

  \begin{itemize}
    \item Données $(K/K^\#,x)$ comme dans \ref{thm:classes-classification}, $K=F[x]$.
    \item Une $(K,\tau_K)$-forme hermitienne $(V_K, h_K)$ où $V_K$ est un $K$-module fidèle.
    \item Deux $F$-espaces quadratiques $(V_+, q_+)$ et $(V_-, q_-)$.
  \end{itemize}
  Les paramètres sont soumis à la condition
  $$ (V,q) \simeq (\Tr_{K/F})_* (V_K, h_K) \oplus (V_+,q_+) \oplus (V_-,q_-). $$

  Pour que $\mathcal{O}(K/K^\#,x,(V_K,h_K),(V_\pm, q_\pm)))$ appartienne à $\SO(V,q)$, il faut et il suffit que
  \begin{gather*}
    \dim_F V_+ \equiv 1 \mod 2, \\
    \dim_F V_- \equiv 0 \mod 2.
  \end{gather*}

  \textit{Commutants}. Soit $g \in \mathcal{O}(K/K^\#,x,(V_K,h_K),(V_\pm,q_\pm))$, alors
  \begin{align*}
    \SO(V,q)^g & = U(V_K, h_K) \times \mathcal{R} \\
    \SO(V,q)_g & = U(V_K, h_K) \times \SO(V_+,q_+) \times \SO(V_-,q_-),
  \end{align*}
  où le groupe $U(V_K, h_K)$ admet une description analogue au cas symplectique, et
  $$ \mathcal{R} = \{(a,b) \in O(V_+,q_+) \times O(V_-,q_-) : \det a \cdot \det b = 1\}. $$
  Donc les composantes connexes de $\SO(V,q)^g$ sont définies sur $F$ et
  $$ (SO(V,q)^g : SO(V,q)_g) =
    \begin{cases}
      2, & \text{ si } V_- \neq \{0\},\\
      1, & \text{ sinon}.
    \end{cases}$$

  \textit{Régularité}. La classe ainsi paramétrée est régulière si et seulement si
  \begin{gather*}
    W_K \simeq K, \\
    \dim_F V_+ = 1, \\
    \dim_F V_- = 0 \text{ ou } 2.
  \end{gather*}
  Une classe régulière est fortement régulière si et seulement si $\dim V_- = 0$.

\paragraph{Les groupes orthogonaux pairs}
Soit $(V,q)$ un $F$-espace quadratique de dimension paire, alors $\U(V,q) = O(V,q)$. Les classes de conjugaison semi-simples dans $O(V,q)$ sont paramétrées par les mêmes données pour le cas impair, mais la condition sur $\dim_F V_\pm$ devient
  \begin{gather*}
    \dim_F V_+ \equiv 0 \mod 2, \\
    \dim_F V_- \equiv 0 \mod 2.
  \end{gather*}

Une classe de conjugaison $\mathcal{O}(K/K^\#,x,(V_K,h_K),(V_\pm,q_\pm))$ dans $O(V,q)$ se décompose en deux classes $\mathcal{O}^\pm$ par $\SO(V,q)$. On n'aura pas besoin de les distinguer dans ce texte.

\textit{Commutants}. La description des commutants est pareille que dans le cas impair.
 
\textit{Régularité}. Une classe $\mathcal{O}(K/K^\#,x,(V_K,h_K),(V_\pm,q_\pm))$ est régulière si et seulement si
\begin{gather*}
  W_K \simeq K, \\
  \dim_F V_\pm \leq 2.
\end{gather*}
Une classe régulière est fortement régulière si et seulement si $V_+=\{0\}$ ou $V_-=\{0\}$.

\paragraph{Les groupes unitaires}
Prenons $A=E$ une extension quadratique de $F$ et $\tau$ l'involution de $E$ associée.

Soit $(V,h)$ une $(E,\tau)$-forme $\epsilon$-hermitienne. Les classes de conjugaison semi-simples dans $\U(V,h)$ sont paramétrées par les données suivantes

  \begin{itemize}
    \item Données $(K/K^\#,x)$ comme dans \ref{thm:classes-classification}, $K=E[x]$. On a $K=K^\# \otimes_F E$ et $\tau_K = \identity \otimes \tau$.
    \item Une $(K,\tau_K)$-forme $\epsilon$-hermitienne $(V_K, h_K)$ où $V_K$ est un $K$-module fidèle.
    \item Deux $(E,\tau)$-formes $\epsilon$-hermitiennes $(V_+, h_+)$ et $(V_-, h_-)$.
  \end{itemize}
  Les paramètres sont soumis à la condition
  $$ (V,h) \simeq {\Tr_{K/E}}_* (V_K, h_K) \oplus (V_+,h_+) \oplus (V_-,h_-). $$

  \textit{Commutants}. Soit $g \in \mathcal{O}(K/K^\#,x,(V_K,h_K),(V_\pm,q_\pm))$, alors
  $$ U(V,h)^g = U(V,h)_g = U(V_K, h_K) \times U(V_+,h_+) \times U(V_-, h_-). $$
  Le groupe $U(V_K, h_K)$ se décrit comme dans le cas symplectique.
  
  \textit{Régularité}. La classe ainsi paramétrée est régulière si et seulement si
  \begin{gather*}
    W_K \simeq K, \\
    \dim_E V_\pm \leq 1.
  \end{gather*}
  Une classe régulière est automatiquement fortement régulière.

\begin{remark}
  La description des commutants admet une paraphrase schématique. Plus rigoureusement, il faudra remplacer le groupe $U(W_K, h_K)$ (resp. $U(V_K, h_K)$) par la restriction des scalaires $\Res_{K^\#/F} U(W_K,h_K)$ (resp. $\Res_{K^\#/F} U(V_K,h_K)$).
\end{remark}

\begin{remark}
  Supposons que $F$ est infini. Si l'on supprime $x$ dans les paramètres des classes régulières, on arrive à une classification des $F$-tores maximaux à conjugaison près dans les groupes classiques.
\end{remark}

\paragraph{Sous-groupes de Lévi}
D'après la classification des classes de conjugaison semi-simples et la description de commutants, on arrive aussitôt à une classification de sous-groupes de Lévi.

\begin{proposition}\label{prop:Levi}
  Les sous-groupes de Lévi des groupes classiques sont classifiés comme suit.
  \begin{itemize}
    \item Si $(W,\angles{\cdot|\cdot})$ est une forme symplectique, alors les sous-groupes de Lévi de $\Sp(W)$ sont de la forme
      $$ \prod_{i=1}^r \GL_F(n_i) \times \Sp(W_+), $$
      sousmise à la condition
      $$\sum_{i=1}^r 2n_i + \dim W_+ = \dim W. $$
    \item Si $(V,q)$ est une forme orthogonale, alors les sous-groupes de Lévi de $\SO(V,q)$ sont de la forme
      $$ \prod_{i=1}^r \GL_F(n_i) \times \SO(V_+, q_+)$$
      soumise à la condition
      \begin{align*}
        (\sum_{i=1}^r n_i) \mathbb{H} \oplus (V_+,q_+) \simeq (V,q). \\
      \end{align*}
    \item Si $(V,h)$ est une $E/F$-forme hermitienne ou anti-hermitienne, alors les sous-groupes de Lévi de $U(V,h)$ sont de la forme
      $$ \prod_{i=1}^r \GL_F(n_i) \times U(V_+, h_+), $$
      soumise à la condition
      \begin{align*}
        (\sum_{i=1}^r n_i) \mathbb{H} \oplus (V_+,h_+) \simeq (V,q).\\
      \end{align*}
      où $\mathbb{H}$ signifie la $E/F$-forme hermitienne ou anti-hermitienne hyperbolique de dimension $2$.
  \end{itemize}
\end{proposition}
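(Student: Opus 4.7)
Mon plan serait d'exploiter la caractérisation classique des sous-groupes de Lévi d'un groupe réductif connexe comme commutants des tores $F$-déployés. Pour un groupe classique $G$ (symplectique, orthogonal ou unitaire) agissant sur l'espace ambiant $(V,h)$, je partirais d'un tore $F$-déployé $S \subset G$ et décomposerais $V = \bigoplus_{\chi \in X^*(S)} V_\chi$ en sous-espaces de poids. L'invariance de la forme entraîne $h(V_\chi|V_{\chi'}) = 0$ dès que $\chi \chi' \neq 1$ (dans le cas unitaire, $S$ étant $F$-déployé, les caractères sont fixés par $\Gamma_F$, donc l'involution $\tau$ agit trivialement sur $X^*(S)$ et on obtient la même condition). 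En regroupant les caractères non triviaux par paires $\{\chi, \chi^{-1}\}$, on aboutit à
\[
  V = V_+ \oplus \bigoplus_{i=1}^r \bigl(V_{\chi_i} \oplus V_{\chi_i^{-1}}\bigr),
\]
où $h$ est non dégénérée sur $V_+$ et chaque paire $V_{\chi_i} \oplus V_{\chi_i^{-1}}$ est hyperbolique de rang $2n_i$ (car $V_{\chi_i}$ est totalement isotrope lorsque $\chi_i^2 \neq 1$, et se trouve en dualité parfaite avec $V_{\chi_i^{-1}}$ via $h$), avec $n_i := \dim_F V_{\chi_i}$.

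Ensuite je calculerais le commutant de $S$ dans $G$: il respecte la décomposition et agit comme $\GL_F(V_{\chi_i}) \simeq \GL_F(n_i)$ sur la $i$-ème paire hyperbolique, son action sur $V_{\chi_i^{-1}}$ étant dictée par la dualité; sur $V_+$ il opère comme le groupe classique de même type préservant $h|_{V_+}$. D'où la forme attendue $\prod_i \GL_F(n_i) \times G_+$. Réciproquement, étant donnée une décomposition comme dans l'énoncé, je construirais un tore $\Gm^r \hookrightarrow G$ agissant par $\mathrm{diag}(t_i, t_i^{-1})$ sur la $i$-ème paire hyperbolique et trivialement sur $V_+$; son commutant réalise alors le sous-groupe prescrit. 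Alternativement, on peut invoquer directement la classification des classes semi-simples de \S\ref{sec:parametrage}: prendre un élément semi-simple $x$ déployé et « générique » dans le centre du Lévi envisagé fournit une donnée $(K/K^\#, x)$ dont les facteurs $K_i = F \times F$ (cas $I \setminus I^*$ de \ref{rem:parametres-decomposition}) produisent les $\GL_F(n_i)$, tandis que le sous-espace des points fixes fournit le facteur classique $G_+$.

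Le point délicat que j'anticipe sera la traduction de cette décomposition géométrique en la condition \emph{de formes} figurant dans l'énoncé. Dans le cas symplectique, toutes les formes d'une dimension donnée sont isomorphes et seule la relation $\sum 2n_i + \dim W_+ = \dim W$ doit être vérifiée; mais dans les cas orthogonal et unitaire, il faut en outre garantir que la contribution des paires $V_{\chi_i} \oplus V_{\chi_i^{-1}}$ équivaut, comme forme quadratique (resp.\ hermitienne), à la somme $(\sum_i n_i) \mathbb{H}$, de sorte que $(\sum_i n_i)\mathbb{H} \oplus (V_+, h_+) \simeq (V, h)$: cela contraint effectivement le discriminant et l'invariant de Hasse (resp.\ la classe de la forme hermitienne restreinte à $V_+$) de sorte à reconstituer $(V,h)$ à isométrie près. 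C'est cette reconstruction isométrique, plus subtile qu'une simple égalité de dimensions, qui constitue le c\oe{}ur technique de la démonstration.
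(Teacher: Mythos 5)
Votre démonstration est correcte et suit essentiellement la même voie que celle du papier, qui est réduite à une phrase : \og il suffit de considérer les commutants connexes des éléments semi-simples engendrant un $F$-tore déployé \fg{}, c'est-à-dire précisément l'\og alternative \fg{} que vous mentionnez à la fin, via la classification de \S\ref{sec:parametrage} et la remarque \ref{rem:parametres-decomposition}. Votre décomposition explicite en espaces de poids et l'observation que $V_{\chi_i}\oplus V_{\chi_i^{-1}}$ est hyperbolique (chaque $V_{\chi_i}$ étant totalement isotrope puisque $X^*(S)$ est sans torsion) résout d'ailleurs le \og point délicat \fg{} que vous anticipez : l'isométrie $(\sum_i n_i)\mathbb{H}\oplus(V_+,h_+)\simeq(V,h)$ découle directement de cette décomposition, il n'y a pas de contrainte supplémentaire à vérifier.
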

\begin{proof}
  Il suffit de considérer les commutants connexes des éléments semi-simples engendrant un $F$-tore déployé.
\end{proof}

\begin{remark}\label{rem:Levi-lagrangien}
  Nous utiliserons le fait suivant. En tout cas, les composantes $\GL_F(n_i)$ sont associés à des espaces hyperboliques (en une catégorie convenable), dans lesquelles les éléments laissent invariant un lagrangien. Cela découle de \S\ref{sec:kit-classification}.
\end{remark}

\paragraph{Classes assez régulières}\index{assez régulier}
\begin{definition}
  On dit qu'une classe de conjugaison semi-simple dans un groupe classique (symplectique, orthogonal ou unitaire) est assez régulière si:
  \begin{itemize}
    \item[(cas symplectique)] elle est régulière;
    \item[(cas orthogonal impair)] elle est régulière et paramétrée par $(K/K^\#,x,(V_K,h_K),(V_\pm,q_\pm))$ avec $V_- = \{0\}$, $\dim_F V_+ = 1$;
    \item[(cas orthogonal pair)] elle est régulière et paramétrée par $(K/K^\#,x,(V_K,h_K),(V_\pm,q_\pm))$ avec $V_+=V_-=\{0\}$;
    \item[(cas unitaire)] elle est régulière et paramétrée par $(K/K^\#,x,(V_K,h_K),(V_\pm,h_\pm))$ avec $V_+=V_-=\{0\}$.
  \end{itemize}
\end{definition}
Une classe assez régulière est automatiquement fortement régulière. Pour une classe assez régulière, la forme $h_K$ sur $W_K \simeq K$ dans son paramètre est décrite par
$$ \forall a,b \in K,\; h_K(a|b) = \Tr_{K/A}(c\tau(a)b) $$
où $c \in K^\times$ est tel que $\tau_K(c)=\epsilon c$.

Pour une classe assez régulière dans un groupe orthogonal impair $SO(V,q)$, la donnée $(V_+,q_+)$ dans son paramètre est déterminée par $(V,q)$ et l'autre donnée $(K/K^\#,x,c)$ d'après le théorème de Witt. En tout cas, on peut simplifier le paramètre d'une classe assez régulière en la donnée
$$(K/K^\#, x, c)$$
satisfaisant à $\tau_K(c)=\epsilon c, \tau_K(x)=x^{-1}$. Deux données $(K/K^\#,x,c)$ et $(K'/K'^\#,x',c')$ sont équivalentes si et seulement s'il existe un isomorphisme de $F$-algèbres à involutions $\sigma: (K,\tau_K) \rightiso (K',\tau_{K'})$ tel que
\begin{gather*}
  \sigma(x)=x' \\
  \sigma(c)c'^{-1} \in N_{K'/K'^\#}(K'^\times)
\end{gather*}

\begin{remark}\label{rem:c-normalisation}
  Écrivons les paramètres comme $K=\prod_{i \in I} K_i$, $K^\# = \prod_{i \in I} K_i^\#$ où $K_i^\#$ est un corps, $\tau=(\tau_i)$, et $c=(c_i)$ pour tout $i$. Pour le paramétrage dans \cite{Wa01} I.7, la forme est décrite par
  $$ h_K((a_i)_{i \in I}|(b_i)_{i \in I}) = \sum_{i \in I} [K_i:A]^{-1} \Tr_{K_i/K_i^\#}(\tau_i(a_i)b_i c_i).$$
  Autrement dit, nos paramètres $c_i$ correspondent à $c_i [K_i:A]^{-1}$ selon la convention de \cite{Wa01}.
\end{remark}

\subsection{Le cas de l'algèbre de Lie}
On peut décrire les classes de conjugaison semi-simples régulières dans les algèbres de Lie des groupes classiques au moyen des données $(K/K^\#,x,c)$ où $\tau_K(x)=-x$, $\tau_K(c)=\epsilon c$.
\begin{itemize}
  \item Pour un groupe symplectique $\Sp(W)$, on paramètre tous les éléments réguliers de cette manière. Les paramètres sont soumis à la condition
    $$ \dim_F K = \dim_F W. $$
  \item Pour un groupe orthogonal impair $\SO(V,q)$, on paramètre tous les éléments réguliers de cette manière. Les paramètres sont soumis à la condition qu'il existe un $F$-espace quadratique $(V_0,q_0)$ de dimension $1$ tel que
    $$ (V,q) \simeq (\Tr_{K/F})_* ((x,y) \mapsto \tau_K(x)yc) \oplus (V_0,q_0),$$
    l'espace $(V,q)$ est uniquement déterminé par le théorème de Witt.
  \item Pour un groupe orthogonal pair $\SO(V,q)$, on obtient seulement les éléments semi-simples réguliers qui n'ont pas de valeur propre nulle. Le paramètre correspond à deux orbites $\mathcal{O}^\pm$, mais on n'aura pas besoin de les distinguer. Les paramètres sont soumis à la condition
    $$ (V,q) \simeq (\Tr_{K/F})_* ((x,y) \mapsto \tau_K(x)yc). $$
  \item Pour un groupe unitaire $U(V,h)$, on paramètre tous les éléments réguliers de cette manière. Les paramètres sont soumis à la condition
    $$ (V,h) \simeq (\Tr_{K/E})_* ((x,y) \mapsto \tau_K(x)yc).$$
\end{itemize}

\subsection{Conjugaison géométrique, le cas des corps locaux}
Fixons un groupe classique $U$ sur $F$. 

\begin{proposition}
  Soient $\mathcal{O}(K/K^\#, x, c)$ et $\mathcal{O}(K'/K'^\#, x', c')$ deux classes de conjugaison semi-simple assez régulières dans $U(F)$. Elles appartiennent à la même classe de conjugaison géométrique si et seulement s'il existe un isomorphisme de $F$-algèbres à involutions
  $$\sigma: (K,\tau_K) \rightiso (K',\tau_{K'})$$
  tel que $\sigma(x)=x'$.

  La même assertion reste valide pour les classes de conjugaison semi-simples régulières dans l'algèbre de Lie.
\end{proposition}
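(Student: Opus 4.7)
L'approche est de passer à la clôture algébrique $\bar{F}$, en exploitant le fait que sur $\bar{F}$ toute forme $\epsilon$-hermitienne non dégénérée dans les catégories qui nous intéressent est classifiée par son rang. Ceci est standard pour les formes symplectiques et quadratiques en caractéristique $\neq 2$; dans le cas unitaire, cela se ramène à l'algèbre linéaire puisque $\bar{F} \otimes_F E \simeq \bar{F} \times \bar{F}$ muni de l'échange, avec la norme surjective.

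Pour la direction \emph{si}, on tensorise $\sigma$ par $\bar{F}$: on obtient un isomorphisme $(K \otimes_F \bar{F}, \tau_K \otimes \identity) \rightiso (K' \otimes_F \bar{F}, \tau_{K'} \otimes \identity)$ envoyant $x$ sur $x'$. D'après \ref{thm:classes-classification} appliqué sur $\bar{F}$, les classes de conjugaison assez régulières dans $U(\bar{F})$ sont déterminées par leurs paramètres; or sur $\bar{F}$ les formes $h_K$, ainsi que l'éventuelle composante résiduelle $(V_+, q_+)$ dans le cas orthogonal impair, ne dépendent que de leur rang. Les paramètres sur $\bar{F}$ coïncident donc, d'où $\mathcal{O}^\text{geo}(\gamma) = \mathcal{O}^\text{geo}(\gamma')$. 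L'énoncé au niveau de l'algèbre de Lie se traite identiquement.

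Pour la réciproque, soit $g \in U(\bar{F})$ tel que $g \gamma g^{-1} = \gamma'$. La conjugaison par $g$ envoie le commutant de $\gamma$ dans $\End_{A \otimes \bar{F}}(V \otimes \bar{F})$ sur celui de $\gamma'$; pour des éléments assez réguliers, ces commutants s'identifient respectivement à $K \otimes_F \bar{F}$ et $K' \otimes_F \bar{F}$. On obtient ainsi un isomorphisme d'$(A \otimes \bar{F})$-algèbres envoyant $x$ sur $x'$, qui respecte les involutions (celles-ci étant les involutions adjointes pour la forme $h$ préservée par $g$). Or $K \simeq A[T]/(p(T))$ où $p \in A[T]$ est le polynôme minimal de $x$ sur $A$; l'existence de l'isomorphisme au-dessus de $\bar{F}$ force l'égalité $p = p'$. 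L'application $T \mapsto T$ induit alors un isomorphisme d'$A$-algèbres (a fortiori d'$F$-algèbres) $\sigma: K \rightiso K'$ envoyant $x$ sur $x'$, et il respecte les involutions, car $\tau_K$ est entièrement déterminée par $\tau|_A$ et la relation $\tau_K(x) = x^{-1}$ (resp. $\tau_K(x) = -x$ pour l'algèbre de Lie), relations préservées par $\sigma$.

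Le point le plus délicat est de descendre l'isomorphisme de $\bar{F}$ à $F$; ceci repose crucialement sur le caractère monogène $K = A[x]$, qui ramène la question à l'identification des polynômes minimaux sur $A$. Sans cette monogénéité, il faudrait invoquer une descente galoisienne plus élaborée.
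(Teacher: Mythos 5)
Le papier ne démontre pas cette proposition : elle est énoncée sans preuve, comme conséquence immédiate de \ref{thm:classes-classification}, avec la seule glose « le passage à conjugaison géométrique équivaut à oublier la donnée $c$ ». Votre argument est correct et c'est la voie naturelle : dans le sens direct, la surjectivité de la norme sur $\bar F$ et la classification par le rang des formes $\epsilon$-hermitiennes font disparaître la donnée $c$ (ainsi que la classe de $(V_+,q_+)$ dans le cas orthogonal impair) ; dans le sens réciproque, la monogénéité $K=A[x]$ ramène tout à l'égalité des polynômes caractéristiques de $\gamma$ et $\gamma'$ (déjà définis sur $A$, et préservés par la conjugaison géométrique), ce qui fournit directement $\sigma$ sur $F$, et la compatibilité aux involutions résulte de ce que $\tau_K$ est entièrement déterminée par $\tau|_A$ et $\tau_K(x)=x^{-1}$ sur $K=A[x]$. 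Une petite imprécision sans conséquence : dans le cas orthogonal impair assez régulier, le commutant de $\gamma$ dans $\End_{A\otimes\bar F}(V\otimes\bar F)$ n'est pas $K\otimes_F\bar F$ mais $K\otimes_F\bar F\times\bar F$, la composante $\bar F$ provenant de $V_+$ ; ce détour est d'ailleurs superflu, puisque c'est bien l'argument par le polynôme caractéristique, que vous invoquez in fine, qui fait tout le travail.
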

Autrement dit, le passage à conjugaison géométrique équivaut à oublier la donnée $c$.

Supposons que $F$ est un corps local. Décomposons $(K/K^\#,x,c)= \bigoplus_{i \in I} (K_i/K_i^\#, x_i, c_i)$ comme dans \ref{rem:parametres-decomposition}. Posons\index{$\sgn_{K_i/K_i^\#}(\cdot)$}
$\sgn_{K_i/K_i^\#}: K_i^{\#\times} \to \bmu_2$ le caractère associé à l'extension quadratique $K_i/K_i^\#$ si $i \in I^*$, et $\sgn_{K_i/K_i^\#} = 1$ si $i \notin I^*$. En tout cas, on a
$$ \sgn_{K_i/K_i^\#}(a) =
  \begin{cases}
    1, & \text{ si } a \in N_{K_i/K_i^\#}(K_i^\#), \\
    -1, & \text{ sinon}.
  \end{cases}
$$

On a un isomorphisme
$$(\sgn_{K_i/K_i^\#})_{i \in I^*}: K^{\#\times}/N_{K/K^\#}(K^\times) \rightiso \bmu_2^{I^*}. $$

Posons d'ailleurs\index{$\sgn_{K/K^\#}(\cdot)$}
$$ \displaystyle \sgn_{K/K^\#} := \prod_{i \in I}\sgn_{K_i/K_i^\#}: K^{\#\times} \to \bmu_2. $$

Supposons que $U$ est un groupe classique et $(K/K^\#,x,c)$ paramètre une classe de conjugaison assez régulière $\mathcal{O}(K/K^\#,x,c)$ dans $U$. Soit $c' \in K^\times$ tel que $\tau(c')=\epsilon c'$, $\epsilon = -1$ dans le cas symplectique ou anti-hermitien et sinon $\epsilon=1$. On dit que $\mathcal{O}(K/K^\#,x,c')$ existe dans $U$ si $(K/K^\#,x,c')$ paramètre une classe de conjugaison dans $U$.

\begin{proposition}[\cite{Wa01}, I.7]
  Si $U$ est symplectique, alors $\mathcal{O}(K/K^\#,x,c')$ existe toujours dans $U$. Par conséquent les classes de conjugaison dans la classe de conjugaison géométrique contenant $\mathcal{O}(K/K^\#,x,c)$ sont en bijection avec $\bmu_2^{I^*}$.

  Supposons $F$ non archimédien. Si $U$ est orthogonal ou unitaire, alors $\mathcal{O}(K/K^\#,x,c')$ existe dans $U$ si et seulement si $$\sgn_{K/K^\#}(c^{-1}c')=1.$$ Les classes de conjugaison dans la classe de conjugaison géométrique contenant $\mathcal{O}(K/K^\#,x,c)$ sont en bijection avec
  $$(\bmu_2)_0^{I^*} := \{ (t_i) \in \bmu_2^{I^*} : \prod_i t_i = 1 \}.$$
  Les mêmes assertions restent valides pour l'algèbre de Lie.
\end{proposition}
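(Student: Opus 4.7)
Le plan est de se ramener à une analyse composante par composante en utilisant la décomposition $(K/K^\#, x, c) = \bigoplus_{i \in I}(K_i/K_i^\#, x_i, c_i)$ de \ref{rem:parametres-decomposition}, puis de classifier chaque $(K_i, \tau_{K_i})$-forme, et enfin d'analyser la contrainte imposée par l'identification de la forme poussée-en-avant avec la forme fixée $(W, \angles{\cdot|\cdot})$, $(V,q)$ ou $(V,h)$.

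D'abord je rappellerais le critère d'équivalence des paramètres énoncé à la fin de \S\ref{sec:parametrage}: deux paramètres $(K/K^\#, x, c)$ et $(K/K^\#, x, c')$ (avec le même $(K/K^\#,x)$) définissent la même classe si et seulement si $c/c' \in N_{K/K^\#}(K^\times)$. En particulier, les ``torsions'' possibles de $c$ forment un torseur sous $K^{\#\times}/N_{K/K^\#}(K^\times)$. En décomposant selon $i \in I$, ce quotient est isomorphe à $\prod_{i \in I^*} K_i^{\#\times}/N_{K_i/K_i^\#}(K_i^\times)$, car pour $i \notin I^*$ l'extension est triviale et la norme est surjective. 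Sur un corps local $F$, la théorie du corps de classes local donne $K_i^{\#\times}/N_{K_i/K_i^\#}(K_i^\times) \simeq \bmu_2$ pour $i \in I^*$, d'où un torseur sous $\bmu_2^{I^*}$.

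Pour le cas symplectique, j'utiliserais le fait que sur un corps quelconque les formes symplectiques sont classifiées par leur dimension. Comme $\dim_F (\Tr_{K/F})_* h_K = \dim_F K$ ne dépend pas de $c'$, la forme poussée-en-avant est toujours isomorphe à la forme symplectique standard de dimension voulue; par conséquent $\mathcal{O}(K/K^\#, x, c')$ existe toujours dans $\Sp(W)$, et la bijection cherchée est le torseur entier sous $\bmu_2^{I^*}$.

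Pour le cas orthogonal ou unitaire sur $F$ non-archimédien, j'appliquerais le théorème de Witt combiné avec la classification des formes quadratiques (resp. hermitiennes) par dimension, discriminant et invariant de Hasse (resp. par discriminant). Pour une classe assez régulière, les formes complémentaires $(V_\pm, h_\pm)$ sont essentiellement déterminées par $(V,q)$ (ou $(V,h)$). En remplaçant $c$ par $c' = uc$ avec $u \in K^{\#\times}$, la forme $(\Tr_{K/A})_* h_K$ change de façon à ce que son discriminant se multiplie par $\prod_i \sgn_{K_i/K_i^\#}(u_i) = \sgn_{K/K^\#}(u)$; il s'agit ici du calcul-clé: pour une extension quadratique $K_i/K_i^\#$, la forme trace $\Tr_{K_i/K_i^\#}$ sur $K_i$ est (à équivalence de Witt près) la forme norme, et son comportement sous multiplication par $u \in K_i^{\#\times}$ détecte précisément la classe de $u$ dans $K_i^{\#\times}/N_{K_i/K_i^\#}(K_i^\times)$. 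Ainsi la condition d'existence de $\mathcal{O}(K/K^\#,x,c')$ dans $U$ équivaut à $\sgn_{K/K^\#}(c^{-1}c')=1$, et les classes dans la classe géométrique fixée correspondent au noyau du morphisme $\bmu_2^{I^*} \to \bmu_2$ donné par le produit, c'est-à-dire $(\bmu_2)_0^{I^*}$.

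Le principal obstacle technique sera la vérification précise de la formule de variation du discriminant sous multiplication par $u \in K^{\#\times}$, notamment pour montrer que seul le symbole $\sgn_{K/K^\#}(u)$ intervient (et non des invariants de Hasse supplémentaires qui pourraient bloquer l'existence). Ce calcul se réduit cependant à l'identité, bien connue sur les corps locaux, qui relie le discriminant de la forme norme $N_{K_i/K_i^\#}$ à l'extension quadratique $K_i/K_i^\#$ elle-même; les invariants de Hasse s'arrangent alors grâce à la flexibilité des formes $(V_\pm, h_\pm)$ dans le paramétrage. Le cas de l'algèbre de Lie se traite identiquement: la classification des $(K, \tau_K)$-formes $\epsilon$-hermitiennes ne dépend que de l'involution $\tau_K$ et non de la condition $\tau_K(x)=x^{\pm 1}$ sur $x$.
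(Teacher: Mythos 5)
La proposition est citée de Waldspurger \cite{Wa01} sans démonstration dans l'article, donc il n'y a pas de preuve-référence à comparer; j'évalue donc votre argument sur le fond.

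La structure globale est la bonne (décomposition selon $I$, passage par la théorie du corps de classes local qui donne $K_i^{\#\times}/N_{K_i/K_i^\#}(K_i^\times) \simeq \bmu_2$ pour $i \in I^*$, et identification d'une contrainte dans le cas orthogonal/unitaire qui réduit à $(\bmu_2)_0^{I^*}$), et le cas symplectique est correct. Mais dans le cas orthogonal/unitaire, votre calcul-clé identifie le mauvais invariant. Vous affirmez que le discriminant de $(\Tr_{K/F})_* h_K$ est multiplié par $\sgn_{K/K^\#}(u)$ quand $c \mapsto uc$ avec $u \in K^{\#\times}$: c'est faux. Le déterminant change par $N_{K/F}(u) = N_{K^\#/F}(N_{K/K^\#}(u)) = N_{K^\#/F}(u)^2$, qui est un carré, donc \emph{le discriminant ne change pas}. (C'est d'ailleurs ce qu'observe la preuve du Lemme \ref{prop:changement-de-signes-0}: $\det(r_i N_{K_i/K_i^\#}(\cdot)) = -d_i$ est indépendant de $r_i$.) Ce qui change est l'invariant de Hasse, ou de façon équivalente l'indice de Weil $\gamma_\psi$, qui est exactement multiplié par $\sgn_{K/K^\#}(u)$; c'est précisément l'énoncé du Lemme \ref{prop:changement-de-signes-0} de l'article, que vous auriez pu invoquer directement.

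Par voie de conséquence, votre dernière phrase renverse les rôles: pour une classe assez régulière dans $\SO(2n'+1)$ on a $V_- = \{0\}$ et $\dim_F V_+ = 1$, et puisque le discriminant du poussé-en-avant est inchangé, la forme $\langle a \rangle = (V_+, q_+)$ est forcée modulo carrés; il n'y a donc \emph{aucune} flexibilité pour ajuster l'invariant de Hasse. C'est au contraire l'invariant de Hasse qui constitue l'obstruction exacte: $\mathcal{O}(K/K^\#,x,c')$ existe si et seulement si $\gamma_\psi(q(c')) = \gamma_\psi(q(c))$, soit $\sgn_{K/K^\#}(c^{-1}c') = 1$. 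La conclusion $(\bmu_2)_0^{I^*}$ est donc juste, mais votre chemin pour y arriver (``le discriminant varie, les invariants de Hasse s'arrangent'') est erroné; la version correcte est ``le discriminant est invariant, l'invariant de Hasse est l'obstruction''. Le cas unitaire se traite de façon parallèle avec le déterminant hermitien modulo $N_{E/F}(E^\times)$ jouant le rôle de l'invariant de Hasse. Une fois ce point corrigé, le raisonnement tient.
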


\section{Le caractère de la représentation de Weil}
Sauf mention expresse du contraire, $F$ est toujours un corps local de caractéristique nulle dans cette section.

\subsection{Formules du caractère}
Nous adoptons l'approche de \cite{Th08} et nous utiliserons systématiquement la construction de Lion-Perrin dans cette section.

Désignons par $\overline{W}$ l'espace vectoriel symplectique $(W, -\angles{\cdot|\cdot})$. Si $x \in \Sp(W)$, alors le graphe
$$\Gamma_x := \{(w,xw): w \in W\}$$
est un lagrangien dans $\overline{W} \oplus W$. On a un plongement $f: \Sp(W) \to  \Sp(\overline{W} \oplus W)$ donné par $f(x) = (1,x)$. 

\begin{proposition}[\cite{Th08} 1.3]
  Il existe un homomorphisme continu injectif
  $$\tilde{f}: \MMp{2}(W) \to \MMp{2}(\overline{W} \oplus W)$$
  donné par $\tilde{f}(x,t) = ((1,x), f_x(t))$ dans la construction de Lion-Perrin, où $f_x(t)$ est déterminé par
  $$ f_x(t)(\ell \oplus \ell) = t(\ell). $$
  De plus, il rend le diagramme ci-dessous commutatif:
  $$\xymatrix{
    \MMp{2}(W) \ar[rr]^{\tilde{f}} \ar[d] &  & \MMp{2}(\overline{W} \oplus W) \ar[d] \\
    \Sp(W) \ar[rr]_f & & \Sp(\overline{W} \oplus W)
  }.$$
\end{proposition}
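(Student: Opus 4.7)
Le plan est d'abord d'étendre $f_x(t)$ des lagrangiens diagonaux à tout $\Lag(\overline{W} \oplus W)$ par la relation de cocycle imposée dans la construction de Lion-Perrin, puis de vérifier les deux conditions définissant $\MMp{2}(\overline{W} \oplus W)$ ainsi que la propriété d'homomorphisme. Concrètement, je fixerais $\ell_0 \in \Lag(W)$, je poserais $f_x(t)(\ell_0 \oplus \ell_0) := t(\ell_0)$, et pour tout $\ell' \in \Lag(\overline{W} \oplus W)$ je définirais
\[
  f_x(t)(\ell') := \gamma_\psi\bigl(\tau(\ell_0 \oplus \ell_0,\; \ell_0 \oplus x\ell_0,\; (1,x)\ell',\; \ell')\bigr) \cdot t(\ell_0).
\]
La bonne définition (indépendance par rapport au choix de $\ell_0$) revient à montrer, pour $\ell' = \ell_1 \oplus \ell_1$ diagonal, qu'on retrouve $t(\ell_1)$. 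Par l'additivité symplectique de l'indice de Maslov appliquée à la décomposition $\overline{W} \oplus W$, la classe de Witt pertinente se scinde en $[\tau(\ell_0,\ell_0,\ell_1,\ell_1)]_{\overline{W}} + [\tau(\ell_0, x\ell_0, x\ell_1, \ell_1)]_W$; le premier terme est de dimension nulle d'après \ref{prop:Maslov-dimension}, donc trivial dans le groupe de Witt, et la relation de cocycle satisfaite par $(x,t) \in \MMp{2}(W)$ donne exactement $t(\ell_1)$.

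Une fois $f_x(t)$ définie, la condition $f_x(t)(\ell')^2 = m_{(1,x)}(\ell')^2$ se propage par la relation de cocycle depuis $\ell_0 \oplus \ell_0$ à tous les autres lagrangiens; il suffit donc de vérifier l'égalité $m_x(\ell_0)^2 = m_{(1,x)}(\ell_0 \oplus \ell_0)^2$ à la référence. Par un calcul direct à partir de la définition de $m_g(\ell)$, l'exposant de $\gamma_\psi(1)$ coïncide grâce à l'identité $\dim\bigl((\ell_0 \oplus x\ell_0) \cap (\ell_0 \oplus \ell_0)\bigr) = \dim \ell_0 + \dim(x\ell_0 \cap \ell_0)$, et les facteurs $\gamma_\psi(A_{\cdot,\cdot})$ s'accordent modulo $F^{\times 2}$ par additivité de la forme d'orientation $A$ sous la somme symplectique directe.

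La propriété d'homomorphisme se traite de la même manière: en évaluant $\tilde f(x,t) \tilde f(x',t')$ et $\tilde f((x,t)(x',t'))$ sur un lagrangien diagonal $\ell \oplus \ell$, l'égalité à vérifier se réduit, après simplification, à
\[
  \gamma_\psi\bigl(\tau(\ell \oplus \ell,\; \ell \oplus x\ell,\; \ell \oplus xx'\ell)\bigr) = \gamma_\psi\bigl(\tau(\ell, x\ell, xx'\ell)\bigr),
\]
qui résulte encore de l'additivité et de la trivialité $[\tau(\ell,\ell,\ell)]_{\overline{W}} = 0$ (dimension nulle par \ref{prop:Maslov-dimension}). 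L'injectivité est immédiate en lisant la première coordonnée: si $\tilde f(x,t) = (1,\mathbbm{1})$, alors $x=1$ et $t(\ell) = f_1(t)(\ell \oplus \ell) = 1$ pour tout $\ell$. La continuité découle de l'égalité $\mathrm{ev}_{\ell_0 \oplus \ell_0} \circ \tilde f = \mathrm{ev}_{\ell_0}$ et du fait que la topologie de $\MMp{2}$ est engendrée par la projection $\rev$ et les fonctions d'évaluation. La commutativité du diagramme avec les projections $\rev$ est claire par construction.

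Le principal obstacle réside dans la vérification de la condition sur $m$: c'est là que l'on quitte le terrain purement formel de l'additivité de Maslov pour examiner le comportement de la forme d'orientation $A_{\ell_1, \ell_2}$ dans la décomposition $\overline{W} \oplus W$, ce qui exige un calcul explicite avec les bases et les orientations choisies. Tout le reste se ramène à l'additivité symplectique et à la formule de dimension \ref{prop:Maslov-dimension}, qui est essentiellement automatique.
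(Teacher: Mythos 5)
Le papier ne démontre pas cette proposition : elle est citée telle quelle de \cite{Th08} 1.3. Votre reconstruction n'est donc comparable qu'à ce que devrait être une preuve raisonnable, et sur ce plan elle est essentiellement correcte.

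Le plan (prolonger $f_x(t)$ des lagrangiens diagonaux par la relation de cocycle, vérifier les deux conditions de Lion--Perrin et l'homomorphie, conclure par additivité symplectique et la formule de dimension \ref{prop:Maslov-dimension}) est le bon et les calculs centraux sont justes. La vérification de bonne définition se ramène bien à la scission
$[\tau(\ell_0\oplus\ell_0,\ell_0\oplus x\ell_0,\ell_1\oplus x\ell_1,\ell_1\oplus\ell_1)] = [\tau(\ell_0,\ell_0,\ell_1,\ell_1)]_{\overline{W}} + [\tau(\ell_0,x\ell_0,x\ell_1,\ell_1)]_W$,
et la formule de dimension donne $\dim\tau(\ell_0,\ell_0,\ell_1,\ell_1)=2n-(n+d+n+d)+2d=0$ (avec $d=\dim(\ell_0\cap\ell_1)$), donc le premier terme est trivial dans $W(F)$ et le second redonne exactement la relation de cocycle de $(x,t)$. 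De même pour l'homomorphie, $\tau(\ell,\ell,\ell)$ a dimension $0$. Pour la condition $m$, votre identité $\dim\bigl((\ell_0\oplus x\ell_0)\cap(\ell_0\oplus\ell_0)\bigr)=n+\dim(x\ell_0\cap\ell_0)$ fait bien concorder les exposants de $\gamma_\psi(1)$ puisque $\tfrac12\dim(\overline W\oplus W)=2n$; quant au facteur $A$, on vérifie que le quotient pertinent ne voit que le facteur $W$ de la somme $\overline W\oplus W$, de sorte que $A_{\ell_0\oplus x\ell_0,\,\ell_0\oplus\ell_0}=A_{x\ell_0,\ell_0}$ et en fait $m_{(1,x)}(\ell_0\oplus\ell_0)=m_x(\ell_0)$ sur le nez (l'égalité des carrés, dont vous avez besoin, en découle a fortiori); vous signalez à juste titre que ce dernier point demande un petit calcul explicite avec les orientations.

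Deux remarques mineures. D'une part, l'argument de propagation de la condition (a) depuis $\ell_0\oplus\ell_0$ vers tous les lagrangiens gagnerait à être explicité : on prend un élément quelconque $((1,x),t_0)$ de $\MMp{2}(\overline W\oplus W)$ au-dessus de $(1,x)$, puis $f_x(t)/t_0$ est une constante par la relation de cocycle, valant $\pm1$ en $\ell_0\oplus\ell_0$, donc $f_x(t)=\pm t_0$ satisfait (a) partout. D'autre part, l'argument de continuité est compressé : il faudrait dire que localement au voisinage d'un point, la topologie est décrite par $\rev$ et par une seule évaluation $\mathrm{ev}_{\ell'}$ bien choisie (sur la strate où les dimensions $\dim(g\ell'\cap\ell')$ sont constantes), les autres $\mathrm{ev}_{\ell''}$ en différant par un facteur $\gamma_\psi(\cdots)$ localement constant; l'égalité $\mathrm{ev}_{\ell_0\oplus\ell_0}\circ\tilde f=\mathrm{ev}_{\ell_0}$ donne alors la continuité locale. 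Ces deux compléments sont de routine et ne révèlent pas de lacune de fond.
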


Définissons des ouverts de Zariski denses dans $\Sp(W)$\index{$\Sp(W)^\dagger,\Sp(W)^\ddagger$}:
\begin{align*}
  \Sp(W)^\dagger & := \{x \in \Sp(W) : \det(x-1) \neq 0 \}, \\
  \Sp(W)^\ddagger & := \{x \in \Sp(W) : \det(x^2-1) \neq 0, \}.
\end{align*}
On vérifie que $\Sp(W)^\dagger \supset \Sp(W)^\ddagger \supset \Sp(W)_\text{reg}$.
Posons $\MMp{\mathbf{f}}(W)^\dagger$, $\MMp{\mathbf{f}}(W)^\ddagger$, $\Spt_\psi(W)^\dagger$, $\Spt_\psi(W)^\ddagger$ leurs images réciproques dans $\MMp{\mathbf{f}}(W)$, $\Spt_\psi(W)$, respectivement, où $\mathbf{f} \in 2\Z_{\geq 1}$.

Fixons une mesure de Haar sur $\MMp{2}(W)$. L'admissibilité de $\omega_\psi^\pm$ permet de définir le caractère $\Theta_\psi^\pm = \Tr(\omega_\psi^\pm)$ comme une distribution sur $\MMp{2}(W)$\index{$\Theta_\psi^\pm$}, d'où la distribution $\Theta_\psi = \Tr(\omega_\psi)$. Énonçons les formules du caractère de Maktouf.

\begin{theorem}[K. Maktouf \cite{Mak99}] \label{prop:formule-caractere-Thomas}
  La distribution $\Theta_\psi$ est lisse sur $\MMp{2}(W)^\dagger$. Si $\tilde{x}=(x,t) \in \MMp{2}(W)^\dagger$, alors
  \begin{enumerate}
    \item en rappelant la définition de $\mathrm{ev}_{\Gamma_1}(\cdot)$ \eqref{eqn:ev-def}, on a
    $$\Theta_\psi(\tilde{x}) = \dfrac{\mathrm{ev}_{\Gamma_1}(\tilde{f}(\tilde{x}))}{|\det(x-1)|^{\frac{1}{2}}}; $$

    \item en fixant $\ell \in \Lag(W)$, on a aussi
  $$ \Theta_\psi(x,t)= \dfrac{t(\ell) \gamma_\psi(\tau(\Gamma_x, \Gamma_1, \ell \oplus \ell))}{|\det(x-1)|^{\frac{1}{2}}}; $$

    \item il y a une autre formule avec ambiguïté de signe:
  $$ \Theta_\psi(\tilde{x}) = \pm \dfrac{\gamma_\psi(1)^{\dim W -1} \gamma_\psi(\det(x-1))}{|\det(x-1)|^{\frac{1}{2}}}. $$
  \end{enumerate}
  
  De plus, $|\det(x+1)|^{\frac{1}{2}}\Theta_\psi(\tilde{x})$ est localement constante sur $\MMp{2}(W)^\dagger$.
\end{theorem}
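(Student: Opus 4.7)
The plan is to follow Thomas \cite{Th06, Th08}, who reorganized Maktouf's arguments \cite{Mak99} around the embedding $\tilde{f}: \MMp{2}(W) \hookrightarrow \MMp{2}(\overline{W} \oplus W)$ just recalled. The smoothness of $\Theta_\psi$ and formula (1) are obtained by realizing the character in the Schrödinger model: for a lagrangian $\ell$, $\omega_\psi(\tilde{x})$ is an operator on $S_\ell$ whose character is formally the integral of its Schwartz kernel on the diagonal. This diagonal corresponds to the lagrangian $\Gamma_1 \subset \overline{W} \oplus W$, and the kernel of $\omega_\psi(\tilde{x})$ is precisely the object described by $\tilde{f}(\tilde{x})$ in the Lion-Perrin presentation evaluated at $\Gamma_1$. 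The Jacobian $|\det(x-1)|^{1/2}$ arises from the change of variables $w \mapsto (x-1)w$ identifying $\Gamma_x$ with $\Gamma_1$; the condition $\tilde{x} \in \MMp{2}(W)^\dagger$ ensures that this change of variables is legitimate, the trace integral converges, and $\Theta_\psi$ is smooth on this open set.

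Formula (2) is a direct translation of (1) using the defining relation of objects in $\MMp{2}$ in the Lion-Perrin model. Since $\tilde{f}(\tilde{x}) = ((1,x), f_x(t))$ satisfies $f_x(t)(\ell \oplus \ell) = t(\ell)$ and $(1,x)$ sends $\Gamma_1$ to $\Gamma_x$ and $\ell \oplus \ell$ to $\ell \oplus x\ell$, one obtains
$$ f_x(t)(\Gamma_1) = \gamma_\psi\bigl(\tau(\ell \oplus \ell, \ell \oplus x\ell, \Gamma_x, \Gamma_1)\bigr)\, t(\ell).$$
Applying the chain condition and dihedral symmetry of the Maslov index, combined with the vanishing of an auxiliary three-term Witt class (checked via Proposition \ref{prop:Maslov-dimension}), this four-term index reduces to $\tau(\Gamma_x, \Gamma_1, \ell \oplus \ell)$, yielding (2).

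Formula (3) is then a computation of the Weil index of this three-term Maslov index. Proposition \ref{prop:Maslov-dimension} gives $\dim \tau(\Gamma_x, \Gamma_1, \ell \oplus \ell) = \dim W$ (since $\det(x-1) \neq 0$ forces the various pairwise intersections to vanish), and its discriminant modulo squares is $\det(x-1)$, computed via the Cayley-type quadratic form on $W$ coming from the identification $\Gamma_x \cap (\Gamma_1 \oplus (\ell \oplus \ell))^\perp \simeq W$. The multiplicativity $\gamma_\psi(\Qform{a_1,\ldots,a_m}) = \gamma_\psi(1)^{m-1}\gamma_\psi(a_1 \cdots a_m) \cdot s$, with $s \in \{\pm 1\}$ the Hasse invariant, then yields the announced formula, the $\pm$ incorporating this Hasse invariant together with the sign ambiguity in $t(\ell)$ (recall $t(\ell)^2 = m_g(\ell)^2$).

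For the local constancy of $|\det(x+1)|^{1/2}\Theta_\psi$ on $\MMp{2}(W)^\dagger$ (which is informative in the non-archimedean setting, whereas in the archimedean case the smoothness statement already suffices), one reads off formula (3): on $\MMp{2}(W)^\dagger$ one has $\det(x \pm 1) \in F^\times$, and both $|\cdot|: F^\times \to q^{\Z}$ and $\gamma_\psi: F^\times/F^{\times 2} \to \bmu_8$ take values in discrete groups and are locally constant; the product rewritten as $\pm \gamma_\psi(1)^{\dim W -1}\gamma_\psi(\det(x-1)) \cdot |\det(x+1)/\det(x-1)|^{1/2}$ is then locally constant by inspection. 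The principal obstacle is the discriminant calculation in the third step, which requires the careful identification of $\tau(\Gamma_x, \Gamma_1, \ell \oplus \ell)$ as an explicit Cayley form on $W$, together with the vanishing of the auxiliary Witt class needed to pass from four to three Maslov arguments in step two.
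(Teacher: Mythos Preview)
The paper does not supply a proof of this theorem: it is stated as a result of Maktouf \cite{Mak99}, reformulated following Thomas \cite{Th08}, and is used as input for the rest of \S4. Your outline is therefore not being compared against a proof in the paper, but against the cited literature, and as a summary of the Thomas--Maktouf approach it is broadly accurate for parts (1), (2) and the local-constancy assertion.

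There is one concrete error in your sketch of (3). You assert that $\dim \tau(\Gamma_x, \Gamma_1, \ell \oplus \ell) = \dim W$ because ``$\det(x-1) \neq 0$ forces the various pairwise intersections to vanish''. This is false: $\Gamma_1 \cap (\ell \oplus \ell) \simeq \ell$ always has dimension $n$, and $(\ell \oplus \ell) \cap \Gamma_x \simeq \ell \cap x^{-1}\ell$, which has dimension $\dim(\ell \cap x\ell)$ and need not vanish. Applying Proposition~\ref{prop:Maslov-dimension} correctly gives $\dim \tau = n - \dim(\ell \cap x\ell)$, not $2n$. Formula (3) does not come from a single Maslov form of dimension $\dim W$; rather, the factor $t(\ell) = \pm m_x(\ell) = \pm \gamma_\psi(1)^{n - \dim(x\ell \cap \ell) - 1}\gamma_\psi(A_{x\ell,\ell})$ from the Lion--Perrin construction already contributes a matching power of $\gamma_\psi(1)$ and a discriminant term, and it is the product $t(\ell)\,\gamma_\psi(\tau(\Gamma_x,\Gamma_1,\ell\oplus\ell))$ that assembles into $\pm\gamma_\psi(1)^{\dim W - 1}\gamma_\psi(\det(x-1))$ after identifying $A_{x\ell,\ell}\cdot\det\tau$ with $\det(x-1)$ modulo squares. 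The $\pm$ absorbs both the sign in $t(\ell)$ and the Hasse invariant, as you say.
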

\begin{remark}\label{rem:caractere-C}
  Si $F=\C$, on identifie $\MMp{2}(W)$ à $\bmu_2 \times \Sp(W)$. On a
  $$ \Theta_\psi(\noyau,x) = \frac{\noyau}{|\det(x-1)|_\C^{\frac{1}{2}}}; $$
  où la valeur absolue $|\cdot|_\C$ est définie par $|x+yi|_\C = x^2+y^2$ pour tout $x,y \in \R$.
\end{remark}

Notons par le même symbole $\Theta_\psi$ le caractère de $\omega_\psi$ sur la grosse extension $\Spt_\psi$, qui est bien défini comme une distribution. On en déduit une formule pour $\Theta_\psi$ au moyen du modèle de Schrödinger.

\begin{corollary}\label{prop:formule-caractere-Spt}
  Fixons $\ell \in \Lag(W)$. Soit $\bar{x} = (x, zM_\ell[x]) \in \Spt_\psi(W)$ où $z \in \C^\times$ et $x \in \Sp(W)^\dagger$, alors
  $$ \Theta_\psi(\bar{x})= \dfrac{z \gamma_\psi(\tau(\Gamma_x, \Gamma_1, \ell \oplus \ell))}{|\det(x-1)|^{\frac{1}{2}}}. $$
\end{corollary}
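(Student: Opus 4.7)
L'idée directrice est de se ramener à \ref{prop:formule-caractere-Thomas}(2), via la structure $\Spt_\psi(W) = \MMp{2}(W) \times_{\bmu_2} \C^\times$ et la $\C^\times$-équivariance centrale du caractère $\Theta_\psi$. Tout élément de $\Spt_\psi(W)$ se présente comme produit d'un élément de $\MMp{2}(W)$ par un scalaire central, et $\Theta_\psi$ se transforme linéairement sous cette action.

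Premièrement, je fixerais un relèvement arbitraire $\tilde{y} = (x, t) \in \MMp{2}(W)$ au-dessus de $x \in \Sp(W)^\dagger$. Selon la construction de Lion-Perrin, l'image canonique de $\tilde{y}$ dans $\Spt_\psi(W)$ est $(x, t(\ell) M_\ell[x])$. Comparant avec $\bar{x} = (x, z M_\ell[x])$, on constate que ces deux éléments diffèrent par le scalaire central $c := z/t(\ell) \in \C^\times$. D'autre part, puisque $\omega_\psi$ sur $\Spt_\psi(W)$ est donnée par la seconde projection $\Spt_\psi(W) \to \GL(S_\psi)$, le caractère central de $\omega_\psi$ est simplement l'inclusion $\C^\times \hookrightarrow \C^\times$; sur le lieu régulier $\Spt_\psi(W)^\dagger$, où $\Theta_\psi$ est une fonction lisse, ceci se traduit par l'identité $\Theta_\psi(c \cdot \bar{y}) = c \cdot \Theta_\psi(\bar{y})$ pour tout $c \in \C^\times$.

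Il suffira alors d'appliquer \ref{prop:formule-caractere-Thomas}(2) à $\tilde{y}$ et de combiner avec cette équivariance pour obtenir $\Theta_\psi(\bar{x}) = c \cdot \Theta_\psi(\tilde{y})$. Le facteur $t(\ell)$ apparaissant au numérateur de la formule s'annule avec le $1/t(\ell)$ contenu dans $c$, laissant uniquement $z$ au numérateur; cette annulation garantit accessoirement que le résultat ne dépend pas du choix du relèvement $\tilde{y}$. L'énoncé est un corollaire immédiat de la formule de Maktouf et ne présente pas d'obstacle sérieux: le seul point à surveiller est l'équivariance centrale de $\Theta_\psi$, qui résulte directement de la définition de $\omega_\psi$ sur $\Spt_\psi(W)$.
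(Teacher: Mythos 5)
Your argument is correct and is essentially the paper's own proof: both reduce to the case $\bar{x} \in \MMp{2}(W)$ (equivalently $z = t(\ell)$) by invoking the $\C^\times$-equivariance (spécificité) of $\Theta_\psi$ on $\Spt_\psi(W)$, and then apply \ref{prop:formule-caractere-Thomas}(2). You merely make the scalar $c = z/t(\ell)$ and the cancellation explicit, which the paper leaves implicit.
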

\begin{proof}
  Vu la spécificité de $\omega_\psi$, il suffit de vérifier l'énoncé pour le cas $\bar{x} \in \MMp{2}(W)$. L'immersion $\MMp{2}(W) \hookrightarrow \Spt_\psi(W)$ est donnée par $(x,t) \mapsto (x,t(\ell)M_\ell[x])$. Donc il suffit de vérifier le cas $z=t(\ell)$ et cela résulte immédiatement dudit théorème.
\end{proof}

\begin{corollary}\label{prop:caractere-stable}
  Supposons que $\mathbf{f} \in 2\Z_{\geq 1}$. Si $\tilde{x},\tilde{y} \in \MMp{\mathbf{f}}(W)^\dagger$ sont tels que $x,y \in \Sp(W)$ sont géométriquement conjugués, alors il existe $\noyau \in \bmu_\mathbf{f}$ tel que
  $$ \Theta_\psi(\tilde{x}) = \noyau \cdot \Theta_\psi(\tilde{y}).$$
\end{corollary}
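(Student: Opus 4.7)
Je propose d'invoquer la formule (3) du théorème \ref{prop:formule-caractere-Thomas} de Maktouf, qui, modulo un signe, factorise $\Theta_\psi(\tilde{x})$ par une expression ne dépendant que de la projection $x \in \Sp(W)$. Introduisons la quantité
\begin{equation*}
  C(x) := \frac{\gamma_\psi(1)^{\dim W - 1}\,\gamma_\psi(\det(x-1))}{|\det(x-1)|^{1/2}},
\end{equation*}
qui est bien définie et non nulle pour $x \in \Sp(W)^\dagger$ (puisque $\gamma_\psi$ prend ses valeurs dans $\bmu_8$). La formule (3) affirme que $\Theta_\psi(\tilde{g}) = \pm C(x)$ pour tout relèvement $\tilde{g} \in \MMp{2}(W)$ de $x$.

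Je remonterais ensuite au revêtement $\MMp{\mathbf{f}}(W) = \MMp{2}(W) \times_{\bmu_2} \bmu_\mathbf{f}$. Un élément $\tilde{x}$ au-dessus de $x$ s'écrit comme une classe $[(\tilde{g},\noyau)]$ avec $\tilde{g} \in \MMp{2}(W)$ au-dessus de $x$ et $\noyau \in \bmu_\mathbf{f}$; la spécificité du caractère donne $\Theta_\psi(\tilde{x}) = \noyau\,\Theta_\psi(\tilde{g})$. Comme $\mathbf{f}$ est pair, $\bmu_2 \subset \bmu_\mathbf{f}$, et on obtient donc un $\noyau_x \in \bmu_\mathbf{f}$ tel que
\begin{equation*}
  \Theta_\psi(\tilde{x}) = \noyau_x \cdot C(x),
\end{equation*}
et de même $\Theta_\psi(\tilde{y}) = \noyau_y \cdot C(y)$ pour un certain $\noyau_y \in \bmu_\mathbf{f}$.

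Enfin, si $x$ et $y$ sont conjugués par un élément de $\Sp(W)(\bar{F})$, ils ont même polynôme caractéristique à coefficients dans $F$; en particulier $\det(x-1) = \det(y-1)$ dans $F^\times$, d'où $C(x) = C(y)$. Par conséquent $\Theta_\psi(\tilde{x})/\Theta_\psi(\tilde{y}) = \noyau_x/\noyau_y \in \bmu_\mathbf{f}$, ce qui est l'affirmation cherchée. La preuve ne présente pas d'obstacle technique substantiel, la difficulté ayant été absorbée dans la formule du caractère de Maktouf \ref{prop:formule-caractere-Thomas}.
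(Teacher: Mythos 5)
Your proof is correct and follows essentially the same route as the paper's: both invoke the third formula of Théorème~\ref{prop:formule-caractere-Thomas} (which determines $\Theta_\psi$ up to a sign via $\gamma_\psi(\det(x-1))$, a quantity depending only on the characteristic polynomial) to settle the case $\mathbf{f}=2$, and then extend to general $\mathbf{f}$ by specificity of the character. Your write-up simply makes the two steps explicit rather than stating them in one line.
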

\begin{proof}
  Pour $\mathbf{f}=2$, cela résulte de la troisième formule du théorème.  Le cas général en découle par la spécificité du caractère.
\end{proof}

\begin{corollary}\label{prop:-1-caracterisation}
  Soit $-1 \in \MMp{8}(W)$ l'élément défini dans \ref{def:-1}. Alors
  $$ \Theta_\psi(-1) = |2|^n. $$
\end{corollary}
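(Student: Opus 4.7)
The plan is to apply Corollary~\ref{prop:formule-caractere-Spt} directly to the element $-1 \in \MMp{8}(W) \subset \Spt_\psi(W)$. By Definition~\ref{def:-1} combined with Proposition~\ref{prop:scindage-Schrodinger}, this element coincides with $\sigma_\ell(-1) = (-1, M_\ell[-1])$ for any choice of Lagrangian $\ell \in \Lag(W)$, so the corollary applies in the form $\bar{x} = (x, z M_\ell[x])$ with $x = -1$ and $z = 1$. Since $\det(-1 - 1) = (-2)^{2n} \neq 0$, we have $-1 \in \Sp(W)^\dagger$ and the formula is valid.

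The denominator is elementary: $|\det(-1 - 1)|^{1/2} = |(-2)^{2n}|^{1/2} = |2|^n$. The substantive step is to identify the Maslov form $\tau(\Gamma_{-1}, \Gamma_1, \ell \oplus \ell)$ living in the symplectic space $\overline{W} \oplus W$ of dimension $4n$. I will show that this form is zero, using Proposition~\ref{prop:Maslov-dimension}. The three pairwise intersections are
\begin{itemize}
  \item $\Gamma_{-1} \cap \Gamma_1 = \{0\}$, since $w_1 = w_2$ and $-w_1 = w_2$ force $w_1 = 0$;
  \item $\Gamma_1 \cap (\ell \oplus \ell) = \{(w,w) : w \in \ell\}$, of dimension $n$;
  \item $(\ell \oplus \ell) \cap \Gamma_{-1} = \{(w,-w) : w \in \ell\}$, of dimension $n$,
\end{itemize}
and the triple intersection is trivial. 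The dimension formula therefore gives $\dim \tau = \tfrac{(3-2)\cdot 4n}{2} - (0 + n + n) + 2\cdot 0 = 0$, so $\tau$ is the zero quadratic form on the zero space and $\gamma_\psi(\tau) = 1$.

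Substituting these two ingredients into the formula of Corollary~\ref{prop:formule-caractere-Spt} yields the stated value. The main obstacle I anticipate is purely the dimension bookkeeping for the triple of Lagrangians $\Gamma_{-1}, \Gamma_1, \ell \oplus \ell$; once the Maslov index is identified as trivial, the rest is a one-line substitution. As a sanity check, in the split case $F = \C$ one can use Remark~\ref{rem:caractere-C} with $(\noyau, x) = (1,-1)$ and the convention $|z|_\C = z\bar z$ to independently confirm that the character formula matches the value $|2|_\C^n$ predicted by the claim.
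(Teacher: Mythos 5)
Your proof is correct and follows essentially the same route as the paper: apply Corollary~\ref{prop:formule-caractere-Spt} to $\sigma_\ell(-1)$, evaluate $|\det(-2)|^{1/2}=|2|^n$, and show the Maslov form $\tau(\Gamma_{-1},\Gamma_1,\ell\oplus\ell)$ vanishes using the dimension formula of Proposition~\ref{prop:Maslov-dimension} with the same intersection bookkeeping. The sanity check over $\C$ is a nice extra touch but not part of the argument.
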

\begin{proof}
  Vu \ref{prop:formule-caractere-Spt}, il suffit de démontrer que
  $$ \tau(\Gamma_{-1}, \Gamma_1, \ell \oplus \ell)=0$$
  pour tout $\ell \in \Lag(W)$. D'après \ref{prop:Maslov-dimension}, on a
  \begin{align*}
    \dim \tau(\Gamma_{-1}, \Gamma_1, \ell \oplus \ell)= & \frac{\dim W \oplus \overline{W}}{2} \\
    & - \dim \Gamma_{-1} \cap \Gamma_1 - \dim \Gamma_1 \cap (\ell \oplus \ell) - \dim (\ell\oplus\ell) \cap \Gamma_{-1} \\
    & + 2 \dim \Gamma_1 \cap \Gamma_{-1} \cap (\ell\oplus\ell) \\
    = & \dim W - 0 - \dim\ell - \dim\ell + 2 \cdot 0 \\
    = & 0. 
  \end{align*}
  D'où l'assertion.
\end{proof}

\begin{corollary}\label{prop:caractere-Levi}
  Fixons $\ell \in \Lag(W)$, soit $P_\ell$ le stabilisateur de $\ell$ et $\sigma_\ell: P_\ell(F) \to \MMp{8}(W)$ le scindage défini dans \ref{prop:scindage-Schrodinger}. Supposons que $x \in \Sp(W)^\dagger$ et qu'il existe $\ell' \in \Lag(W)$ tel que $W = \ell \oplus \ell'$ et $x \in M(F) := (P_\ell \cap P_{\ell'})(F)$, alors
  $$ \Theta_\psi(\sigma_\ell(x)) \in \R_{>0}. $$
\end{corollary}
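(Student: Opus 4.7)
My plan is to apply the second formula of Theorem~\ref{prop:formule-caractere-Thomas} (as reformulated in Corollary~\ref{prop:formule-caractere-Spt}) and show that the Maslov term collapses to zero under the hypothesis that $x$ preserves $\ell$.

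First, I recall that $\sigma_\ell(x) = (x, M_\ell[x])$ in the Schrödinger realization (Proposition~\ref{prop:scindage-Schrodinger-Spt}), so once embedded in $\Spt_\psi(W)$ it corresponds to the pair with $z = 1$. Corollary~\ref{prop:formule-caractere-Spt} thus gives
\[
  \Theta_\psi(\sigma_\ell(x)) = \frac{\gamma_\psi\bigl(\tau(\Gamma_x, \Gamma_1, \ell \oplus \ell)\bigr)}{|\det(x-1)|^{\frac{1}{2}}}.
\]
The denominator is a positive real number, so the content of the proposition is that $\gamma_\psi(\tau(\Gamma_x, \Gamma_1, \ell \oplus \ell)) = 1$.

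Next, I would compute the dimension of the Maslov form $\tau(\Gamma_x, \Gamma_1, \ell \oplus \ell)$ in $\overline{W} \oplus W$ using Proposition~\ref{prop:Maslov-dimension}. Since $\dim(\overline{W} \oplus W) = 2\dim W$, the leading term $\frac{(m-2)\dim(\overline{W}\oplus W)}{2}$ with $m=3$ equals $\dim W$. The three pairwise intersections are: $\Gamma_x \cap \Gamma_1 = 0$ (because $x \in \Sp(W)^\dagger$, i.e.\ $\det(x-1)\neq 0$); $\Gamma_1 \cap (\ell \oplus \ell) \simeq \ell$, of dimension $\frac{1}{2}\dim W$; and $(\ell \oplus \ell) \cap \Gamma_x \simeq \ell$, also of dimension $\frac{1}{2}\dim W$, this last equality being precisely the place where the hypothesis $x \in P_\ell(F)$, i.e.\ $x\ell = \ell$, enters. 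The triple intersection vanishes since it is already contained in $\Gamma_x \cap \Gamma_1 = 0$. Plugging into the formula yields
\[
  \dim \tau(\Gamma_x, \Gamma_1, \ell \oplus \ell) = \dim W - 0 - \tfrac{1}{2}\dim W - \tfrac{1}{2}\dim W + 0 = 0.
\]

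A quadratic form of dimension zero is the trivial form on $\{0\}$, whose Weil index is $\gamma_\psi(0) = 1$. Substituting back gives $\Theta_\psi(\sigma_\ell(x)) = |\det(x-1)|^{-\frac{1}{2}} \in \R_{>0}$, as required. I do not expect any obstacle here: the only observation needed beyond the character formula is that the $x$-invariance of $\ell$ makes the second pairwise intersection as large as possible, causing the Maslov dimension to collapse. The auxiliary lagrangian $\ell'$ and the Levi condition $x \in M(F)$ play no role in this particular computation; they are part of the statement only because the scindage $\sigma_\ell$ is typically used on Levi components of $P_\ell$.
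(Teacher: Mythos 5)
Your proof is correct, and the computation arrives at $\dim\tau(\Gamma_x,\Gamma_1,\ell\oplus\ell)=0$ exactly as the paper does, via the dimension formula of Proposition~\ref{prop:Maslov-dimension}; but you take a mildly different route to trivialize the pairwise and triple intersections. The paper uses the Levi hypothesis $x\in M(F)$: writing $W=\ell\oplus\ell'$ and $x=(y,{}^t y^{-1})$ with $y=x|_\ell$, it identifies $\Gamma_x\cap\Gamma_1\simeq\ker(y-1)^{\oplus 2}$ and $\Gamma_x\cap\Gamma_1\cap(\ell\oplus\ell)\simeq\ker(y-1)$, and then observes that the contributions of $d:=\dim\ker(y-1)$ cancel in the formula ($-2d+2d=0$), so the dimension is $\dim W-2\dim\ell=0$ regardless of whether $d$ vanishes. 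You instead invoke $x\in\Sp(W)^\dagger$, i.e.\ $\det(x-1)\neq 0$, to conclude directly that $\Gamma_x\cap\Gamma_1=0$ and hence also the triple intersection vanishes. Since the $\dagger$ condition is a standing hypothesis (needed already for $\Theta_\psi$ to be defined pointwise), this shortcut is legitimate, and your further remark is also correct: your computation uses only $x\ell=\ell$, so the auxiliary lagrangian $\ell'$ and the Levi condition play no role in establishing $\Theta_\psi(\sigma_\ell(x))\in\R_{>0}$; they enter the statement chiefly because the corollary is afterwards used (see the remark following it) to characterize $\sigma_\ell$ on the Levi factor $M(F)$ by density. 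The paper's version of the intersection computation buys a little more — it shows the Maslov dimension collapses for \emph{any} $x$ in the Levi, even off $\Sp(W)^\dagger$ — but this extra generality is not exploited.
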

\begin{proof}
  D'après \ref{prop:formule-caractere-Spt} et la définition de $\sigma_\ell$, il suffit de montrer que $\tau(\Gamma_x, \Gamma_1, \ell\oplus\ell)=0$. En identifiant $\ell'$ à $\ell^\vee$, le dual de $\ell$, on a $x = (y,{}^t \!y^{-1})$ où $y = x|_\ell$. Donc
  \begin{align*}
    \Gamma_x \cap \Gamma_1 & = \{ (w,w) : w \in W, x(w)=w \} \\
    & \simeq \{v \in \ell, y(v)=v \}^{\oplus 2}, \\
    \Gamma_1 \cap (\ell\oplus\ell) & = \{(w,w) : w \in \ell \} \simeq \ell,\\
    (\ell \oplus \ell) \cap \Gamma_x &= \{(w,x(w)) : w \in \ell \} \simeq \ell, \\
    \Gamma_x \cap \Gamma_1 \cap (\ell\oplus\ell) &= \{(w,w) : w \in \ell, x(w)=w \} \\
    & \simeq \{v \in \ell, y(v)=v \}.
  \end{align*}
  On en déduit que $\dim\tau(\Gamma_x, \Gamma_1, \ell\oplus\ell) = 0$ à l'aide de \ref{prop:Maslov-dimension}.
\end{proof}
\begin{remark}\label{rem:caractere-Levi}
  L'hypothèse est équivalente à: $x \in \Sp(W)^\dagger$ et $x$ appartient à un facteur de Lévi $M$ de $P_\ell$. Ce corollaire caractérise le scindage de $\rev: \MMp{8}(W) \to \Sp(W)$ au-dessus de $M(F)$ car $\Sp(W)^\dagger \cap M(F)$ est dense dans $M(F)$.
\end{remark}

\subsection{Formules pour $\Theta_\psi^+ - \Theta_\psi^-$, la forme de Cayley}
\begin{proposition}\label{prop:-1-echangement}
  La distribution $\Theta_\psi^+ - \Theta_\psi^-$ est lisse sur $\{\tilde{x} \in \MMp{8}(W) : \det(x+1) \neq 0 \}$. Pour tout $\tilde{x} \in \MMp{8}(W)$ tel que $\det(x+1) \neq 0$, on a
  $$ (\Theta_\psi^+ - \Theta_\psi^-)(\tilde{x}) = (\Theta_\psi^+ + \Theta_\psi^-)((-1) \cdot \tilde{x}).$$
\end{proposition}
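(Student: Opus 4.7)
The plan is to derive both the smoothness assertion and the identity as direct consequences of Proposition \ref{prop:-1-action} combined with the character formulas of Maktouf (\ref{prop:formule-caractere-Thomas}).

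First I would exploit the fact, recorded in \ref{prop:-1-action}, that the central element $-1 \in \MMp{8}(W)$ acts on $S_\psi^\pm$ by $\pm\identity_{S_\psi^\pm}$. Since $\omega_\psi^\pm$ is admissible and $\omega_\psi^\pm(-1)$ is a scalar, one has for every $\tilde{x} \in \MMp{8}(W)$ (wherever the character is defined as a function) the identities
\begin{align*}
  \Theta_\psi^+((-1)\tilde{x}) &= \Tr\bigl(\omega_\psi^+(-1)\omega_\psi^+(\tilde{x})\bigr) = +\Theta_\psi^+(\tilde{x}), \\
  \Theta_\psi^-((-1)\tilde{x}) &= \Tr\bigl(\omega_\psi^-(-1)\omega_\psi^-(\tilde{x})\bigr) = -\Theta_\psi^-(\tilde{x}).
\end{align*}
Adding these two equalities yields
\[
  (\Theta_\psi^+ + \Theta_\psi^-)((-1)\tilde{x}) = \Theta_\psi^+(\tilde{x}) - \Theta_\psi^-(\tilde{x}),
\]
which is exactly the asserted identity once we know both sides make sense pointwise.

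For the smoothness statement, I would invoke \ref{prop:formule-caractere-Thomas}: the distribution $\Theta_\psi = \Theta_\psi^+ + \Theta_\psi^-$ is smooth on $\MMp{2}(W)^\dagger = \{\tilde{x} : \det(x-1) \neq 0\}$, hence also on the pullback to $\MMp{8}(W)$. Writing $\Theta_\psi^+ - \Theta_\psi^-$ as the pullback of $\Theta_\psi$ by the continuous map $\tilde{x} \mapsto (-1)\tilde{x}$, smoothness of $\Theta_\psi^+ - \Theta_\psi^-$ holds on the set of $\tilde{x}$ with $(-1)x \in \Sp(W)^\dagger$, i.e.\ on $\{\tilde{x} : \det(-x - 1) \neq 0\} = \{\tilde{x} : \det(x+1) \neq 0\}$. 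The identity displayed above, already verified as a distributional/formal equality, therefore makes sense pointwise on precisely this open set, giving the proposition.

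There is no real obstacle here; the only subtle point is bookkeeping in passing from $\MMp{2}(W)$ to $\MMp{8}(W)$ and ensuring that the identification of $-1$ in \ref{def:-1} is compatible with its definition on the smaller cover, which is exactly guaranteed by \ref{prop:scindage-Schrodinger}. Thus the argument is essentially a one-line consequence of \ref{prop:-1-action} together with the domain of smoothness given by \ref{prop:formule-caractere-Thomas}.
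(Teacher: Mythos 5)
Your proposal is correct and takes essentially the same approach as the paper: the paper's proof is the one-liner ``decompose $S_\psi = S_\psi^+ \oplus S_\psi^-$ and conclude by \ref{prop:-1-action},'' which is exactly your argument, with the smoothness bookkeeping via \ref{prop:formule-caractere-Thomas} left implicit in the paper but spelled out by you.
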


Ici $-1$ est l'élément défini dans \ref{def:-1}.

\begin{proof}
  Soit $S_\psi = S_\psi^+ \oplus S_\psi^-$ la décomposition de $S_\psi$ selon $\omega_\psi = \omega_\psi^+ \oplus \omega_\psi^-$. On conclut par \ref{prop:-1-action}.
\end{proof}

Donnons un autre lien entre $\Theta_\psi^+ + \Theta_\psi^-$ et $\Theta_\psi^+ - \Theta_\psi^-$. Introduisons d'abord des formes quadratiques auxiliaires\index{$q[X]$}.

\begin{definition}
  Pour tout $X \in \syp(W)$ inversible, définissons une $F$-forme quadratique $q[X]$ sur $W$ par
  $$ q[X](w_1|w_2) = \angles{X w_1|w_2}. $$
\end{definition}

\begin{definition}
  Pour tout $x \in \Sp(W)^\ddagger$, définissons un élément $C_x$ dans $\End(W)$ par
  $$ C_x := 2 \cdot \frac{x-1}{x+1}. $$
  On vérifie que $C_x \in \syp(W)$. Si $x$ est semi-simple régulier alors $C_x$ l'est aussi.
\end{definition}

On appelle $q[C_x]$ la forme de Cayley\index{forme de Cayley}.

\begin{theorem}[T. Thomas \cite{ThNote}]\label{prop:formule-caractere-Thomas2}
  Si $\tilde{x} \in \Spt_\psi(W)^\ddagger$, alors
  $$ \frac{\Theta_\psi^+ + \Theta_\psi^-}{\Theta_\psi^+ - \Theta_\psi^-}(\tilde{x}) = \gamma_\psi(q[C_x]) \cdot  \left|\frac{\det(x+1)}{\det(x-1)}\right|^{\frac{1}{2}}. $$
\end{theorem}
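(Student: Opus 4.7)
The plan is to reduce the identity, via Proposition \ref{prop:-1-echangement}, to the computation of a single $3$-fold Maslov index that will turn out to coincide with $q[C_x]$ on the nose.

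First, Proposition \ref{prop:-1-echangement} yields
$$ \frac{\Theta^+_\psi + \Theta^-_\psi}{\Theta^+_\psi - \Theta^-_\psi}(\tilde{x}) = \frac{\Theta_\psi(\tilde{x})}{\Theta_\psi((-1)\cdot\tilde{x})}. $$
I would compute both numerator and denominator via formula (1) of Theorem \ref{prop:formule-caractere-Thomas}, writing $\tilde{f}((-1)\tilde{x}) = \tilde{f}(-1)\cdot\tilde{f}(\tilde{x})$ and expanding using the multiplication cocycle of $\MMp{2}(\overline{W}\oplus W)$. At the Lagrangian $\Gamma_1$ this cocycle equals $\gamma_\psi(\tau(\Gamma_1, f(-1)\Gamma_1, f(-1)f(x)\Gamma_1)) = \gamma_\psi(\tau(\Gamma_1, \Gamma_{-1}, \Gamma_{-x}))$, so
$$ \mathrm{ev}_{\Gamma_1}(\tilde{f}((-1)\tilde{x})) = \mathrm{ev}_{\Gamma_1}(\tilde{f}(-1)) \cdot \mathrm{ev}_{\Gamma_1}(\tilde{f}(\tilde{x})) \cdot \gamma_\psi(\tau(\Gamma_1, \Gamma_{-1}, \Gamma_{-x})). $$
Fixing $\ell \in \Lag(W)$, the factor $\mathrm{ev}_{\Gamma_1}(\tilde{f}(-1))$ reduces further to $\gamma_\psi(\tau(\ell\oplus\ell, \ell\oplus\ell, \Gamma_{-1}, \Gamma_1)) \cdot \mathrm{ev}_\ell(\sigma_\ell(-1))$, which equals $1$: the $4$-fold Maslov index has dimension $0$ by Proposition \ref{prop:Maslov-dimension}, and $\mathrm{ev}_\ell(\sigma_\ell(-1)) = 1$ because under the embedding $\MMp{2} \hookrightarrow \Spt_\psi$ given by $(g,t) \mapsto (g, t(\ell) M_\ell[g])$ the element $\sigma_\ell(-1) = (-1, M_\ell[-1])$ has scalar coefficient $1$.

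Combining these facts and using $|\det(-x-1)| = |\det(x+1)|$, one obtains
$$ \frac{\Theta_\psi(\tilde{x})}{\Theta_\psi((-1)\cdot\tilde{x})} = \gamma_\psi(\tau(\Gamma_1, \Gamma_{-1}, \Gamma_{-x}))^{-1} \cdot \left|\frac{\det(x+1)}{\det(x-1)}\right|^{\frac{1}{2}}. $$
By dihedral symmetry the inverse Weil index equals $\gamma_\psi(\tau(\Gamma_{-x}, \Gamma_{-1}, \Gamma_1))$, and the symplectic automorphism $(1,-1) \in \Sp(\overline{W}\oplus W)$, which sends $\Gamma_y \mapsto \Gamma_{-y}$, rewrites this as $\gamma_\psi(\tau(\Gamma_x, \Gamma_1, \Gamma_{-1}))$. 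It thus suffices to prove $[\tau(\Gamma_x, \Gamma_1, \Gamma_{-1})] = [q[C_x]]$ in the Witt group $W(F)$.

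Since $x \in \Sp(W)^\ddagger$, the Lagrangians $\Gamma_x, \Gamma_1, \Gamma_{-1}$ are pairwise transverse in $\overline{W}\oplus W$; Kashiwara's formula then presents $\tau(\Gamma_x, \Gamma_1, \Gamma_{-1})$ as a nondegenerate quadratic form on the middle Lagrangian $\Gamma_1 \simeq W$, given by $w \mapsto \omega(v_x, v_{-1})$ where $(w,w) = v_x + v_{-1}$ with $v_x \in \Gamma_x$, $v_{-1} \in \Gamma_{-1}$ and $\omega$ is the symplectic form on $\overline{W}\oplus W$. Writing $v_x = (a, xa)$, $v_{-1} = (b, -b)$, one solves $a = 2(x+1)^{-1}w$ and $b = \tfrac{1}{2}C_x w$, whence
$$ \omega(v_x, v_{-1}) = -\angles{(x+1)a | b} = -\angles{2w | \tfrac{1}{2}C_x w} = \angles{C_x w | w} = q[C_x](w). $$
The main obstacle is the careful bookkeeping of cocycle and sign conventions: verifying $\mathrm{ev}_\ell(\sigma_\ell(-1)) = 1$ under the chosen embedding, confirming the $4$-fold Maslov dimension count via Proposition \ref{prop:Maslov-dimension}, and matching the sign in Kashiwara's pointwise formula for the $3$-fold Maslov against the convention used for $\tau$ throughout the paper. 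Once these are in hand the identification is a direct linear-algebra computation, essentially the content of Thomas's note \cite{ThNote}.
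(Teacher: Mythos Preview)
Your proposal is correct and follows essentially the same route as the paper: reduce via Proposition~\ref{prop:-1-echangement} and formula~(1) of Theorem~\ref{prop:formule-caractere-Thomas} to a single three-fold Maslov index, then identify that index with $q[C_x]$ by an explicit linear-algebra computation on the middle Lagrangian.

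The only substantive differences are bookkeeping. The paper multiplies in the order $\tilde{x}\cdot(-1)$ and works entirely inside $\MMp{2}(W)$, tracking the Lion--Perrin element $(-1,m_{-1})$ and the scalar $m_{-1}(\ell)$ explicitly; it then lands on $\tau(\Gamma_1,\Gamma_x,\Gamma_{-x})$, applies the symplectic map $(1,x^{-1})$ and computes the Kashiwara form on $\Gamma_{x^{-1}}$, obtaining $q[C_{x^{-1}}]\simeq -q[C_x]$. You multiply in the order $(-1)\cdot\tilde{x}$, implicitly extend $\tilde{f}$ and $\mathrm{ev}$ to $\Spt_\psi$ (this is harmless, but note that $\sigma_\ell(-1)$ need not lie in $\MMp{2}(W)$, so your identity $\mathrm{ev}_\ell(\sigma_\ell(-1))=1$ really uses the natural extension of $\mathrm{ev}_\ell$ to $\Spt_\psi$ via $(g,zM_\ell[g])\mapsto z$); you then land on $\tau(\Gamma_1,\Gamma_{-1},\Gamma_{-x})^{-1}$, apply the symplectic map $(1,-1)$, and compute the Kashiwara form on $\Gamma_1$, obtaining $q[C_x]$ directly without the detour through $x^{-1}$. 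Your path is marginally cleaner; the paper's is marginally more self-contained in staying inside $\MMp{2}$.
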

\begin{proof}
  Par la spécificité de $\Theta_\psi^\pm$, il suffit de considérer le cas $m=2$. Fixons $\ell \in \Lag(W)$. Soit $\tilde{x} := (x,t) \in \MMp{2}(W)^\ddagger$ une paire dans la construction de Perrin-Lion. Rappelons que l'on a défini une constante $m_{-1} := m_{-1}(\ell)$ dans la construction. On a $(x,t)(-1,m_{-1})=(-x,m_{-1} t)$ car $\tau(\ell,\ell,x\ell)=0$ par \ref{prop:Maslov-dimension}. D'après \ref{prop:formule-caractere-Thomas}, on a
  \begin{align*}
    \Theta_\psi(x,t) & = \frac{\mathrm{ev}_{\Gamma_1}(\tilde{f}(x,t))}{|\det(x-1)|^{1/2}}, \\
    \Theta_\psi(-x,m_{-1}t) & = \frac{\mathrm{ev}_{\Gamma_1}(\tilde{f}(-x,m_{-1} t))}{|\det(x+1)|^{1/2}}.
  \end{align*}

  Comme $\tilde{f}$ est un homomorphisme, on a
  $$ \mathrm{ev}_{\Gamma_1}(\tilde{f}(-x,m_{-1} t)) = \mathrm{ev}_{\Gamma_1}(\tilde{f}(x,t)) \cdot \mathrm{ev}_{\Gamma_1}(\tilde{f}(-1, m_{-1})) \cdot \gamma_\psi(\tau(\Gamma_1, \Gamma_x, \Gamma_{-x})). $$

  On sait que
  \begin{align*}
    \mathrm{ev}_{\Gamma_1}(\tilde{f}(-1, m_{-1})) & = \mathrm{ev}_{\ell\oplus\ell}(\tilde{f}(-1, m_{-1})) \gamma_\psi(\tau(\ell\oplus\ell, \ell\oplus\ell, \Gamma_{-1}, \Gamma_1)), \\
    & = m_{-1}(\ell)
  \end{align*}
  car $\tau(\ell\oplus\ell, \ell\oplus\ell, \Gamma_{-1}, \Gamma_1)=0$ par \ref{prop:Maslov-dimension}.

  Soit $-1 \in \Spt_\psi(W)$ l'élément défini dans \ref{def:-1}, alors $(-1) \cdot (x,t) = m_{-1}(\ell)^{-1} \cdot (-x, m_{-1}t)$ comme un élément dans $\Spt_\psi(W)$, d'où $\Theta_\psi(-x, m_{-1}t) = m_{-1}(\ell)(\Theta_\psi^+ - \Theta_\psi^-)(x,t)$ par \ref{prop:-1-echangement}. En utilisant les formules ci-dessus, on arrive à
  $$ \frac{\Theta_\psi^+ + \Theta_\psi^-}{\Theta_\psi^+ - \Theta_\psi^-}(x,t) = \left|\frac{\det(x+1)}{\det(x-1)} \right|^{\frac{1}{2}} \gamma_\psi(\tau(\Gamma_1, \Gamma_x, \Gamma_{-x})). $$

  Calculons $\gamma_\psi(\tau(\Gamma_1, \Gamma_x, \Gamma_{-x}))$. En vertu de l'invariance symplectique, on a $\tau(\Gamma_1, \Gamma_x, \Gamma_{-x}) \simeq \tau(\Gamma_{x^{-1}}, \Gamma_1, \Gamma_{-1})$. Posons $y := x^{-1}$, alors $C_y = -C_x$ et donc $q[C_y] \simeq -q[C_x]$. Il reste à prouver que $q[C_y]$ est isomorphe à $\tau(\Gamma_{-1}, \Gamma_y, \Gamma_1)$.

  Puisque $\Gamma_1 \cap \Gamma_{-1} = \Gamma_1 \cap \Gamma_y = \{0\}$, on sait d'après \cite{Per81} 1.4.2 que  $\tau(\Gamma_{-1}, \Gamma_y, \Gamma_1)$ est Witt équivalent à la forme quadratique $r$ sur $\Gamma_y$ définie par
  $$ r(v) = \angles{\pi_{\Gamma_{-1}}(v)|v}_{\overline{W} \oplus W}, $$
  où $\pi_{\Gamma_{-1}}$ est la projection $\overline{W} \oplus W = \Gamma_1 \oplus \Gamma_{-1} \to \Gamma_{-1}$. Soit $v = (w,y(w))$ où $w \in W$. Posons
  \begin{align*}
    w_1 & := \frac{1+y}{2}(w), \\
    w_2 & := \frac{1-y}{2}(w),
  \end{align*}
  de sorte que $(w,y(w)) = (w_1, w_1) + (w_2, -w_2)$. Alors $\pi_{\Gamma_{-1}}(v) = (w_2, -w_2)$, et
  \begin{align*}
    r(v) & = \angles{(w_2,-w_2)|(w, y(w))}_{\overline{W} \oplus W} \\
    & = -\angles{w_2|w} - \angles{w_2|y(w)} \\
    & = \frac{-1}{2} \angles{(1-y)w|(1+y)w}.
  \end{align*}
  Après le changement de variable $w' = \frac{1}{2}(1+y)w$, la forme $r$ est isomorphe à la forme $r_1$ sur $W$ définie par:
  $$ r_1(w') = \angles{2(y-1)(y+1)^{-1}w'|w'} = q[C_y](w'), $$
  ce qui fallait démontrer.
\end{proof}

\subsection{Paramètres et la forme de Cayley}

\begin{lemma}\label{prop:changement-de-signes-0}
  Soit $K/K^\#$ une $F$-algèbre étale à involution. Pour tout $r \in K^{\#\times}$, soit $(K,q(r))$ le $F$-espace quadratique défini par
  $$ q(r) := (\Tr_{K^\#/K})_*(r N_{K/K^\#}(\cdot)) $$
  où on regarde $(K, N_{K/K^\#}(\cdot))$ comme un $K^\#$-espace quadratique. Alors pour tout $r,r' \in K^{\#\times}$,
  $$ \gamma_\psi(q(r'))=\gamma_\psi(q(r)) \; \sgn_{K/K^\#}\left(\frac{r}{r'}\right). $$
\end{lemma}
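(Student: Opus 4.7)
Le plan comporte trois étapes: décomposition composante par composante, traitement du cas scindé, puis un calcul direct d'indices de Weil dans le cas d'une extension quadratique de corps.

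D'abord, je décomposerais $(K/K^\#) = \prod_i (K_i/K_i^\#)$ selon \ref{rem:parametres-decomposition}, chaque $K_i^\#$ étant un corps. La forme $q(r)$ est la somme orthogonale des $q_i(r_i)$, et $\sgn_{K/K^\#}(r/r') = \prod_i \sgn_{K_i/K_i^\#}(r_i/r'_i)$ par définition. Puisque $\gamma_\psi: W(F) \to \bmu_8$ est un homomorphisme, il suffit de démontrer l'énoncé séparément sur chaque composante.

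Ensuite, si $i \notin I^*$, c'est-à-dire $K_i \simeq K_i^\# \times K_i^\#$ avec involution d'échange, alors $N_{K_i/K_i^\#}((x,y)) = xy$, et la $F$-forme $q_i(r_i)$ sur $K_i^\# \times K_i^\#$ s'écrit $(x,y) \mapsto \Tr_{K_i^\#/F}(r_i xy)$. Le sous-espace $K_i^\# \times \{0\}$ est totalement isotrope de dimension moitié, donc cette forme est hyperbolique sur $F$. Par conséquent $\gamma_\psi(q_i(r_i)) = 1$ pour tout $r_i$, ce qui concorde avec $\sgn_{K_i/K_i^\#} = 1$.

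Il reste le cas principal où $K/K^\#$ est une extension quadratique de corps. J'écrirais $K = K^\#(\alpha)$ avec $\alpha^2 = d \in K^{\#\times}$ et $\tau_K(\alpha) = -\alpha$. Dans la base $(1,\alpha)$, on a $N_{K/K^\#}(a+b\alpha) = a^2 - db^2$, donc $r N_{K/K^\#} \simeq \langle r, -rd\rangle$ comme $K^\#$-forme. Posons $\psi' := \psi \circ \Tr_{K^\#/F}$, caractère non trivial puisque $K^\#/F$ est une extension séparable non triviale. Alors $\gamma_\psi(q(r)) = \gamma_{\psi'}(\langle r, -rd\rangle) = \gamma_{\psi'}(r) \gamma_{\psi'}(-rd)$. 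J'appliquerais la formule classique de Weil $\gamma_{\psi'}(a)\gamma_{\psi'}(b) = \gamma_{\psi'}(ab) \gamma_{\psi'}(1) (a,b)_{K^\#}$ avec $a=r$, $b=-rd$: en utilisant $\gamma_{\psi'}(au^2) = \gamma_{\psi'}(a)$ et $(r,-r)_{K^\#} = 1$, cela fournit $\gamma_\psi(q(r)) = \gamma_{\psi'}(-d)\gamma_{\psi'}(1)(r,d)_{K^\#}$. Le rapport cherché vaut donc $(r/r', d)_{K^\#}$.

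Pour conclure, j'identifierais $(s,d)_{K^\#} = \sgn_{K/K^\#}(s)$ pour tout $s \in K^{\#\times}$: comme $K = K^\#(\sqrt{d})$, un élément $s$ de $K^{\#\times}$ est une norme de $K^\times$ si et seulement si $(s,d)_{K^\#}=1$. Le point le plus délicat sera la manipulation propre de la formule de Weil et sa compatibilité avec le poussé-en-avant par la trace $\Tr_{K^\#/F}$, mais tout repose sur les propriétés standard de l'indice de Weil rappelées dans \cite{Weil64}.
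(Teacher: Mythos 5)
Your proof is correct and follows essentially the same strategy as the paper: decompose $K/K^\#$ into components with $K_i^\#$ a field, push forward via $\psi_i := \psi\circ\Tr_{K_i^\#/F}$, reduce to a rank-two Weil-index computation over $K_i^\#$, and conclude via the identity $(\cdot,d_i)_{K_i^\#}=\sgn_{K_i/K_i^\#}$. The only variation is in the rank-two step: the paper invokes the closed formula $\gamma_\psi(Q)=\gamma_\psi(1)^{\dim-1}\gamma_\psi(\det Q)\,s(Q)$ and computes $\det$ and the Hasse invariant of $r_iN_{K_i/K_i^\#}$, whereas you apply Weil's multiplicativity $\gamma_{\psi'}(a)\gamma_{\psi'}(b)=\gamma_{\psi'}(ab)\gamma_{\psi'}(1)(a,b)$ directly to $\langle r,-rd\rangle$; both routes isolate the same factor $(r,d)_{K^\#}$, and your explicit treatment of the split case ($K_i\simeq K_i^\#\times K_i^\#$, hyperbolic form) makes visible what the paper leaves implicit in its choice of $d_i$.
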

\begin{proof}
  Écrivons
  $$K = \prod_{i \in I} K_i $$
  tels que les $K_i^\#$ sont des corps, comme dans \ref{rem:parametres-decomposition}. Écrivons aussi $r=(r_i)_i$. Alors
  $$\sgn_{K/K^\#} = \prod_{i \in I^*} \sgn_{K_i/K_i^\#}. $$

  Posons $\psi_i := \psi \circ \Tr_{K_i^\#/F}$ pour $i \in I^*$. Alors
  $$ \gamma_\psi(q(r)) = \prod_{i \in I} \gamma_{\psi_i}(r_i N_{K_i/K_i^\#}(\cdot)),$$

  Effectuons la même décomposition pour $q(r')$. Il suffira de démontrer que pour tout $i \in I$,
  \begin{equation}\label{eqn:r_i}
    \gamma_{\psi_i}(r_i N_{K_i/K_i^\#}(\cdot)) = \gamma_{\psi_i}(r'_i N_{K_i/K_i^\#}(\cdot)) \cdot \sgn_{K_i/K_i^\#}\left(\frac{r_i}{r'_i}\right).
  \end{equation}

  Prenons $d_i \in K_i^\times$ tel que $K_i = K_i^\#(\sqrt{d_i})$, alors $\det(r_i N_{K_i/K_i^\#}(\cdot)) = \det N_{K_i/K_i^\#}(\cdot) = -d_i$. D'autre part, l'invariant de Hasse $s(\cdot)$ satisfait à
  \begin{align*}
    s(r_i N_{K_i/K_i^\#}(\cdot)) & = (r_i, -r_i d_i)_{K_i^\#} \\
    & = (r_i,-r_i)_{K_i^\#} (r_i, d_i)_{K_i^\#}\\
    & = (r_i,d_i)_{K_i^\#}.
  \end{align*}
  Les mêmes formules restent valides si $r_i$ est remplacé par $r'_i$.

  On sait que (\cite{LP81} 1.3.4) pour tout $F$-espace quadratique $(E,Q)$, on a
  $$\gamma_\psi(Q) = \gamma_\psi(1)^{\dim_F E - 1} \gamma_\psi(\det Q) s(Q). $$
  Cela entraîne que
  $$ \gamma_\psi(r_i N_{K_i/K_i^\#}(\cdot)) = \gamma_\psi(r'_i N_{K_i/K_i^\#}(\cdot)) \cdot \left(\frac{r_i}{r'_i}, d_i \right)_{K_i^\#}.$$
  Or $(\cdot ,d_i)_{K_i^\#} = \sgn_{K_i/K_i^\#}$, cela démontre \eqref{eqn:r_i}.
\end{proof}

Soit $X \in \syp(W)$ semi-simple régulier tel que
$X \in \mathcal{O}(K/K^\#,a,c)$. Désignons l'involution de $K$ par $\tau$, on a $\tau(a)=-a$, $\tau(c)=-c$. Alors on peut identifier $q[X]$ à la $F$-forme quadratique $q[a]$ sur $K$:
$$ q[a]: (x,y) \mapsto -\Tr_{K/F}(ac\tau(x)y). $$

\begin{lemma}
  Soit $c' \in K^\times$ tel que $\tau(c')=-c'$. Soit $X' \in \syp(W)$ semi-simple tel que
  $$ X' \in \mathcal{O}(K/K^\#,a,c'). $$
  Alors
  $$ \gamma_\psi(q[X']) = \gamma_\psi(q[X]) \cdot \sgn_{K/K^\#}(c'c^{-1}).$$
\end{lemma}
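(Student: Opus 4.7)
Mon plan consiste à identifier $q[X]$ et $q[X']$ avec des formes de type $q(r)$ étudiées dans \ref{prop:changement-de-signes-0}, puis à appliquer ce lemme directement.

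D'abord, j'observerai que $ac \in K^{\#\times}$: puisque $K$ est commutatif et que $\tau(a) = -a$, $\tau(c) = -c$, on a $\tau(ac) = \tau(c)\tau(a) = (-c)(-a) = ac$. De même $ac' \in K^{\#\times}$. Ensuite, je réécrirai la forme $q[a]$ ainsi: pour tout $x \in K$,
$$ q[a](x,x) = -\Tr_{K/F}\bigl(ac \cdot N_{K/K^\#}(x)\bigr), $$
en utilisant que $\tau(x)x = N_{K/K^\#}(x) \in K^\#$. Comme $ac \cdot N_{K/K^\#}(x) \in K^\#$ et que $\Tr_{K/F}(y) = 2\Tr_{K^\#/F}(y)$ pour tout $y \in K^\#$, j'en déduirai
$$ q[X](x) = \Tr_{K^\#/F}\bigl((-2ac) \cdot N_{K/K^\#}(x)\bigr) = q(-2ac)(x), $$
dans la notation de \ref{prop:changement-de-signes-0}. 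Le même calcul appliqué à $c'$ donnera $q[X'](x) = q(-2ac')(x)$.

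Il suffira alors d'appliquer \ref{prop:changement-de-signes-0} avec $r = -2ac$ et $r' = -2ac'$:
$$ \gamma_\psi(q[X']) = \gamma_\psi(q(-2ac')) = \gamma_\psi(q(-2ac)) \cdot \sgn_{K/K^\#}\!\left(\frac{-2ac}{-2ac'}\right) = \gamma_\psi(q[X]) \cdot \sgn_{K/K^\#}(cc'^{-1}). $$
Comme $\sgn_{K/K^\#}$ est à valeurs dans $\bmu_2$, on a $\sgn_{K/K^\#}(cc'^{-1}) = \sgn_{K/K^\#}(c'c^{-1})$, ce qui fournit la formule souhaitée. L'essentiel du travail ayant déjà été accompli dans \ref{prop:changement-de-signes-0}, aucune étape du présent argument ne présente d'obstacle sérieux; le seul point demandant un minimum d'attention est la manipulation algébrique passant de la trace $\Tr_{K/F}$ à $\Tr_{K^\#/F}$, qui fait apparaître le facteur $2$ dans l'argument de $q(\cdot)$ mais n'affecte pas le résultat final puisque ce facteur disparaît dans le quotient $r/r'$.
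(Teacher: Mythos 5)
Votre démonstration est correcte et suit essentiellement la même voie que celle de l'article : l'un comme l'autre concluent en appliquant le lemme \ref{prop:changement-de-signes-0}, le seul écart étant que vous identifiez explicitement $q[X]$ à $q(-2ac)$ (en passant de $\Tr_{K/F}$ à $\Tr_{K^\#/F}$) alors que l'article prend $r = ac$, $r' = ac'$; la constante $-2$ disparaît de toute façon dans le quotient $r/r'$. Les vérifications auxiliaires (que $ac \in K^{\#\times}$, que $\tau(x)x = N_{K/K^\#}(x)$, et que $\Tr_{K/F}|_{K^\#} = 2\Tr_{K^\#/F}$) sont toutes exactes.
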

\begin{proof}
  On conclut en appliquant \ref{prop:changement-de-signes-0} à $r := ac$ et $r' := ac'$.
\end{proof}

\begin{corollary}\label{prop:changement-de-signes}
  Soient $x,y \in \Sp(W)$ semi-simples réguliers. Si
  \begin{gather*}
    x \in \mathcal{O}(K/K^\#,a,c) \\
    y \in \mathcal{O}(K/K^\#,a,c')
  \end{gather*}
  alors
  $$ \gamma_\psi(q[C_x]) = \gamma_\psi(q[C_y]) \cdot \sgn_{K/K^\#}(c'c^{-1}). $$
\end{corollary}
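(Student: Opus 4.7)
La stratégie est de ramener l'énoncé au lemme précédent, qui concerne la situation analogue dans l'algèbre de Lie $\syp(W)$, via la transformation de Cayley.

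D'abord je décrirais l'effet de $x \mapsto C_x$ sur les paramètres. Si $x \in \Sp(W)$ est semi-simple régulier avec paramètre $(K/K^\#, a, c)$, alors $x$ agit sur $W_K \simeq K$ par multiplication par $a \in K^\times$ (avec $\tau_K(a) = a^{-1}$), et la forme anti-hermitienne $h_K$ est $(u,v) \mapsto \Tr_{K/F}(c\tau_K(u)v)$. La transformation de Cayley $C_x = 2(x-1)/(x+1)$ agit donc sur $W_K$ par multiplication par $b := 2(a-1)/(a+1) \in K^\times$. Un calcul immédiat montre que $\tau_K(b) = -b$, et la forme $h_K$ (donc le paramètre $c$) est préservée puisqu'on ne modifie pas le module $(W_K, h_K)$. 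Ainsi $C_x \in \mathfrak{sp}(W)$ est semi-simple régulier de paramètre $(K/K^\#, b, c)$ au sens de la description des classes dans l'algèbre de Lie. De même, $C_y$ a pour paramètre $(K/K^\#, b, c')$ avec le même $b$.

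Je conclurais alors en appliquant le lemme précédent à $X = C_x$ et $X' = C_y$, qui partagent bien le même paramètre $(K/K^\#, b)$ et ne diffèrent que par $c$ versus $c'$. Le lemme donne
\[ \gamma_\psi(q[C_y]) = \gamma_\psi(q[C_x]) \cdot \sgn_{K/K^\#}(c'c^{-1}). \]
Comme $\sgn_{K/K^\#}$ est à valeurs dans $\bmu_2$, on peut retourner l'égalité pour obtenir la formule voulue.

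Le seul point non immédiat est la vérification que la transformation de Cayley préserve effectivement la donnée $c$ et transporte $a$ en $b = 2(a-1)/(a+1)$; mais ceci est essentiellement tautologique puisque la transformation de Cayley est un morphisme fonctoriel sur le module $(W_K, h_K)$ fixé. Une fois cette identification des paramètres établie, l'énoncé est une application directe du lemme précédent.
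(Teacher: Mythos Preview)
Your proposal is correct and follows exactly the same approach as the paper: compute that $C_x$ and $C_y$ lie in $\mathcal{O}(K/K^\#, 2(a-1)/(a+1), c)$ and $\mathcal{O}(K/K^\#, 2(a-1)/(a+1), c')$ respectively, then apply the preceding lemma. The paper's proof is simply a two-line version of what you wrote.
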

\begin{proof}
  On a
  \begin{gather*}
    C_x \in \mathcal{O}\left(K/K^\#, 2\cdot\frac{a-1}{a+1},c\right) \\
    C_y \in \mathcal{O}\left(K/K^\#, 2\cdot\frac{a-1}{a+1},c'\right).
  \end{gather*}
  On applique le lemme précédent pour conclure.
\end{proof}

Notre but principal est l'assertion suivante.

\begin{theorem}\label{prop:calcul-qX}
  Soient $P_X$ le polynôme caractéristique de $X \in \End_F(W)$ et $\dot{P}_X$ sa dérivée, alors
  $$ \gamma_\psi(q[X]) = \gamma_\psi((-1)^{n-1}) \gamma_\psi(\det X) \cdot \sgn_{K/K^\#}(c^{-1} \dot{P}_X(a)) .$$
\end{theorem}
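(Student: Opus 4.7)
The plan is to realize $q[X]$ as a trace-transfer from $K^\#$ to $F$, compute its Weil index via the standard two-dimensional cocycle for $\gamma_\psi$, and match the result with the right-hand side.

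By Corollary~\ref{prop:changement-de-signes}, both sides of the asserted identity transform by the same factor $\sgn_{K/K^\#}(c'c^{-1})$ under $c \mapsto c'$, so it suffices to verify it for one choice of $c$. Identify $W$ with $K$ so that $X$ acts by multiplication by $a$ and $\langle x,y\rangle = \Tr_{K/F}(c\tau(x)y)$. Since $\tau(a)=-a$ and $a^2 \in K^\#$, one has the decomposition $K = K^\# \oplus K^\# \cdot a$. A direct calculation, using that $\Tr_{K/F}$ vanishes on the $\tau$-antifixed part of $K$, gives, for $x = u+av$ with $u,v \in K^\#$,
\begin{equation*}
q[X](x) = -2\Tr_{K^\#/F}\bigl(\alpha(u^2 - a^2 v^2)\bigr),\qquad \alpha := ac \in K^\#.
\end{equation*}
Hence $q[X]$ is the trace-transfer $(\Tr_{K^\#/F})_*\langle -2\alpha,\, 2\alpha a^2\rangle$ of a two-dimensional form over $K^\#$.

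Applying the transfer identity $\gamma_\psi((\Tr_{K^\#/F})_* Q) = \gamma_{\psi^\#}(Q)$ with $\psi^\# := \psi \circ \Tr_{K^\#/F}$, and the cocycle $\gamma_{\psi^\#}(A)\gamma_{\psi^\#}(B) = \gamma_{\psi^\#}(1)\gamma_{\psi^\#}(AB)(A,B)_{K^\#}$, together with the simplifications $(-2\alpha,2\alpha)_{K^\#}=1$ and the crucial identification $(-2\alpha, a^2)_{K^\#} = \sgn_{K/K^\#}(-2\alpha)$ (verified componentwise via Remark~\ref{rem:parametres-decomposition}, using $K_i = K_i^\#(\sqrt{a_i^2})$ in the extension case while in the split case $a_i^2$ is already a square in $K_i^\#$), one obtains $\gamma_\psi(q[X]) = \gamma_{\psi^\#}(1)\gamma_{\psi^\#}(-a^2)\,\sgn_{K/K^\#}(-2ac)$. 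Substituting $\dot P_X(a) = 2a\,\dot Q(a^2)$ (from $P_X(T) = Q(T^2)$) and $N_{K/K^\#}(c) = -c^2$, the ratio $\sgn_{K/K^\#}(c^{-1}\dot P_X(a))/\sgn_{K/K^\#}(-2ac)$ collapses to $\sgn_{K/K^\#}(\dot Q(a^2))$, and the theorem reduces to the intrinsic identity
\begin{equation*}
\gamma_{\psi^\#}(1)\,\gamma_{\psi^\#}(-a^2) = \gamma_\psi((-1)^{n-1})\,\gamma_\psi(\det X)\,\sgn_{K/K^\#}(\dot Q(a^2)),
\end{equation*}
where $\det X = (-1)^n N_{K^\#/F}(a^2)$ follows from $N_{K/K^\#}(a) = -a^2$.

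This last identity is the main obstacle. It involves only the étale algebra $K^\#/F$ and the element $\beta = a^2 \in K^{\#\times}$, and is a Weil-index analogue of the trace-form formulas of Serre and Bayer-Lenstra: it expresses $\gamma_{\psi^\#}(-\beta)$ in terms of the global determinant and a Hasse correction $\sgn_{K/K^\#}(\dot Q(\beta))$, the latter encoding the discriminant of the minimal polynomial of $\beta$. I would prove it by decomposing $K^\# = \prod K_i^\#$ into field components and inducting on $[K_i^\#:F]$, at each stage peeling off a factor of $Q$ over a suitable subextension and tracking the $\sgn$-contribution from its local discriminant. The delicate bookkeeping of the Weil-index cocycle, which is not strictly multiplicative under orthogonal decompositions, forces one to manage Hilbert-symbol corrections at each inductive step, and this is the principal technical difficulty.
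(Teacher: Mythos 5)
Your initial reduction is correct and, after the paper's normalisation, essentially identical to the starting point of the published proof. Identifying $q[X]$ with the two-dimensional trace transfer $(\Tr_{K^\#/F})_*\Qform{-2\alpha,\,2\alpha a^2}$ with $\alpha = ac$ is sound; with the specific choice $c = \dot{P}_a(a)$ (which \ref{prop:changement-de-signes} makes available, as you note) and a change of basis this coincides with the form $(\Tr_{K^\#/F})_*\Qform{\dot{P}_b(b)^{-1},\,-\dot{P}_b(b)^{-1}b}$ the paper also arrives at. The cocycle manipulation and the componentwise identification $(-2\alpha,\,a^2)_{K^\#} = \sgn_{K/K^\#}(-2\alpha)$ are also correct.

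The problem is that the proposal stops short of a proof. The \emph{intrinsic identity}
\begin{equation*}
\gamma_{\psi^\#}(1)\,\gamma_{\psi^\#}(-a^2) \;=\; \gamma_\psi((-1)^{n-1})\,\gamma_\psi(\det X)\,\sgn_{K/K^\#}\bigl(\dot Q(a^2)\bigr)
\end{equation*}
is not a simplification: every step in your reduction (trace transfer, Weil-index cocycle, Hilbert-symbol cancellations) is an equality and is reversible, so this identity is logically \emph{equivalent} to the theorem. You have converted one Weil-index identity into another, not reduced its difficulty. The concluding paragraph is a plan rather than an argument: "peeling off a factor of $Q$ over a suitable subextension and tracking the $\sgn$-contribution," with the "delicate bookkeeping" you flag, is precisely the entire content of the theorem, and you verify nothing beyond $n=1$. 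That is a genuine gap at the last and hardest step.

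The paper circumvents the cocycle entirely at this point. After fixing $c = \dot{P}_a(a)$, it computes the actual \emph{Witt class} of $q[X]$: the dual-basis lemma \ref{prop:trace0} gives the Gram matrix of the trace form in the basis $\{1, b, \dots, b^{n-1}\}$ explicitly, and the matrix analysis in \ref{prop:trace1}, \ref{prop:trace2} then shows $q[X] \simeq (n-1)\mathbb{H} \oplus \Qform{(-1)^{n-1},\,N_{K/F}(a)}$, from which the formula for $\gamma_\psi(q[X])$ falls out in one line. No inductive cocycle bookkeeping is needed. If you want to salvage your route, the cleanest repair is to note that \ref{prop:trace1} and \ref{prop:trace2} already prove your intrinsic identity (apply $\gamma_\psi$ to $q_1$ and $-q_2$ and divide), which renders the cocycle detour redundant; failing that, you would have to actually carry out the induction you sketch, and it is far from clear that it is shorter or easier than the direct Gram-matrix computation.
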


La démonstration se fait en plusieurs étapes. Traduisons d'abord tous les objets en termes de paramètres. On a $K \simeq F[T]/(P_X(T))$, et $P_X$ est égal au polynôme caractéristique $P_a$ de $a \in K$. Le déterminant $\det X$ est égal à $N_{K/F}(a)$. Observons aussi $\dot{P}_a(a) \in K^\times$. Posons $b := a^2$ et $P_b$ son polynôme caractéristique, alors $K^\# = F[b]$ et

\begin{gather*}
  P_a(T) = P_b(T^2), \\
  \dot{P}_a(a) = 2\dot{P}_b(b)a.
\end{gather*}

On a $\dot{P}_X(X) \in \syp(W)$ et sa classe de conjugaison est paramétrée par $(K/K^\#,\dot{P}_a(a),c)$. Il suffira donc de prouver

\begin{align}\label{eqn:comparaison-facteur-transfert}
  \gamma_\psi(q[a]) & = \gamma_\psi((-1)^{n-1}) \gamma_\psi(N_{K/F}(a)) \sgn_{K/K^\#}(2c^{-1}\dot{P}_b(b)a).
\end{align}

D'après \ref{prop:changement-de-signes}, les deux côtés de \ref{prop:calcul-qX} varient de la même façon par rapport à $c$, donc il suffit d'établir \eqref{eqn:comparaison-facteur-transfert} dans le cas où
\begin{equation}\label{eqn:choix-c}
  c =  \dot{P}_a(a) = 2\dot{P}_b(b)a.
\end{equation}

La forme $q[a]$ est une $F$-forme quadratique sur le $F$-espace vectoriel $K$. On écrit un élément $w \in K$ comme $w=x+ya$ ($x,y \in K^\#$), alors $q[a]$ s'écrit comme

\begin{align*}
  w=x+ya & \mapsto -\Tr_{K/F}(2b \dot{P}_b(b) N_{K/K^\#}(x+ya) ) \\
  & = -4 \Tr_{K^\#/F}(b\dot{P}_b(b)(x^2 - by^2)). 
\end{align*}

En posant $x' = 2\dot{P}_b(b)by$, $y' = 2\dot{P}_b(b)x$, on obtient
$$q[a] \simeq (\Tr_{K^\#/F})_* \Qform{\dot{P}_b(b)^{-1}, -\dot{P}_b(b)^{-1}b}. $$

Les lemmes suivants sur la forme trace concluront la démonstration de \eqref{eqn:comparaison-facteur-transfert}.

\begin{lemma}\label{prop:trace0}
  Définissons
  $$ g_0 + g_1 T + \cdots + g_{n-1}T^{n-1} := P_b(T)/(T-b) \in K^\#[T]. $$
  Alors la base duale de $\{1,\ldots, b^{n-1}\}$ par rapport à la forme trace $(\Tr_{K^\#/F})_* \Qform{1}: x \mapsto \Tr_{K^\#/F}(x^2)$ est
    $$ \{\dot{P}_b(b)^{-1} g_0, \ldots, \dot{P}_b(b)^{-1} g_{n-1} \}. $$
    
  Si l'on écrit $P_b(T) = h_0 + h_1 T + \cdots + T^n$, alors les $g_k$ sont donnés par la formule

  $$ g_k = \sum_{j=k+1}^n h_j b^{j-k-1}. $$
\end{lemma}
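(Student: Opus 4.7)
The statement is a classical result often attributed to Euler on the trace dual basis associated to a primitive element. My plan has two independent parts.

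\textbf{Part 1: the explicit formula for the $g_k$.} I would start by noting that $P_b(b) = 0$, so that $P_b(T) = P_b(T) - P_b(b) = \sum_{j=1}^n h_j (T^j - b^j)$. Dividing by $T - b$ via the identity $T^j - b^j = (T-b)(T^{j-1} + T^{j-2}b + \cdots + b^{j-1})$ gives
\[
\frac{P_b(T)}{T-b} = \sum_{j=1}^n h_j \sum_{k=0}^{j-1} T^k b^{j-1-k} = \sum_{k=0}^{n-1} T^k \sum_{j=k+1}^n h_j b^{j-k-1},
\]
which is exactly the claimed expression for $g_k$. This is a routine rearrangement.

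\textbf{Part 2: the duality.} Let $\bar{F}$ be an algebraic closure of $F$ and $\sigma_1, \ldots, \sigma_n : K^\# \hookrightarrow \bar{F}$ the $F$-embeddings; set $b_i := \sigma_i(b)$, so that $P_b(T) = \prod_i (T-b_i)$ and for any $x \in K^\#$, $\Tr_{K^\#/F}(x) = \sum_i \sigma_i(x)$. The key input is the Lagrange interpolation identity: for each $0 \leq k \leq n-1$, the polynomial $T^k$ has degree $\leq n-1$ and agrees at the $n$ distinct points $b_1, \ldots, b_n$ with
\[
\sum_{i=1}^n \frac{P_b(T)}{(T-b_i)\, \dot{P}_b(b_i)} \cdot b_i^k.
\]
Hence these two polynomials are equal. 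Substituting $P_b(T)/(T-b_i) = \sum_j \sigma_i(g_j) T^j$ (which follows from Part 1 applied after the embedding $\sigma_i$) and comparing coefficients of $T^j$ yields
\[
\sum_{i=1}^n \frac{\sigma_i(g_j)\, b_i^k}{\dot{P}_b(b_i)} = \delta_{jk}.
\]
Recognising the left-hand side as $\Tr_{K^\#/F}\bigl( \dot{P}_b(b)^{-1} g_j \cdot b^k \bigr)$ gives the required dual basis relation with respect to the bilinear form $(x,y) \mapsto \Tr_{K^\#/F}(xy)$ associated to $(\Tr_{K^\#/F})_* \Qform{1}$.

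\textbf{Anticipated difficulty.} There is no real obstacle; the argument is a standard manipulation. The only care needed is to ensure the bilinear form polarising $x \mapsto \Tr_{K^\#/F}(x^2)$ is indeed $(x,y) \mapsto \Tr_{K^\#/F}(xy)$ (which is immediate in characteristic zero), and to handle the fact that some embeddings $\sigma_i$ may be complex conjugate pairs or come from non-separable factors --- but $K^\#$ is étale over $F$, so separability is automatic and the Lagrange argument applies verbatim.
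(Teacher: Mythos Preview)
Your proof is correct and follows essentially the same route as the paper. For the duality, the paper also invokes Lagrange interpolation, phrased as: the difference between $T^s$ and $\sum_{\beta:P_b(\beta)=0} \frac{P_b(T)}{T-\beta}\frac{\beta^s}{\dot P_b(\beta)}$ is a polynomial of degree $\leq n-1$ vanishing at all roots of $P_b$, hence identically zero. For the explicit formula, the paper instead checks the recursion $g_{k-1}-bg_k=h_k$ with $g_{n-1}=1$ (obtained by expanding $(T-b)\sum g_kT^k=P_b(T)$), which is an equivalent elementary computation to your direct expansion of $(T^j-b^j)/(T-b)$.
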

\begin{proof}
  L'énoncé est bien connu dans le cas où $K^\#$ est un corps. Notre démonstration est aussi une variante de la démonstration traditionnelle. On a

  $$\forall s=0,\ldots,n-1, \; \sum_{\beta \in \bar{F}: P_b(\beta)=0} \frac{P_b(T)}{T-\beta} \frac{\beta^s}{\dot{P}_b(\beta)} = T^s $$
  car la différence des deux côtés est un polynôme sur $\bar{F}$ de degré $n-1$ qui s'annule en toute racine de $P_b$. Cela équivaut

  $$\forall s=0,\ldots,n-1, \; \Tr_{K^\#/F} \left( \frac{P_b(T)}{T-b} \frac{b^s}{\dot{P}_b(b)} \right) = T^s ,$$
  où $\Tr_{K^\#/F}(\cdots)$ signifie la trace appliquée à chaque coefficient d'un polynôme.

  En comparant les coefficients, on obtient
  $$ \Tr_{K^\#/F}(\dot{P}_b(b)^{-1} g_k \cdot b^s) = \delta_{k,s} .$$
    
  On vérifie la formule des $g_k$ par $g_{n-1}=1$ et la récurrence $g_{k-1}-bg_k = h_k$.
\end{proof}

Posons $m := \lfloor \frac{n}{2} \rfloor$.

\begin{lemma}\label{prop:trace1}
  La $F$-forme quadratique
  $$ q_1 := (\Tr_{K^\#/F})_* \Qform{\dot{P}_b(b)^{-1}}$$
  est isomorphe à
  \begin{enumerate}
    \item $m \mathbb{H}$, si $n=2m$.
    \item $m \mathbb{H} \oplus \Qform{1}$, si $n=2m+1$.
  \end{enumerate}
\end{lemma}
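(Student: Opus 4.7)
Mon plan sera de calculer la matrice de Gram $M$ de la forme bilinéaire $B_1$ associée à $q_1$ dans la base $\mathcal{B} := \{1, b, \ldots, b^{n-1}\}$ de $K^\#$ sur $F$, puis d'identifier $q_1$ par sa structure combinatoire et son discriminant. On a
$$M_{ij} := B_1(b^i, b^j) = \Tr_{K^\#/F}(\dot{P}_b(b)^{-1} b^{i+j}), \qquad 0 \leq i,j \leq n-1.$$
Pour contrôler ces entrées, j'appliquerai la relation de dualité établie dans \ref{prop:trace0}, à savoir $\Tr_{K^\#/F}(\dot{P}_b(b)^{-1} g_s b^k) = \delta_{k,s}$. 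Puisque $g_{n-1} = 1$ d'après la formule explicite des $g_k$, on en tire $\Tr_{K^\#/F}(\dot{P}_b(b)^{-1} b^k) = \delta_{k,n-1}$ pour $0 \leq k \leq n-1$. Il en résulte que $M_{ij} = 0$ pour $i+j < n-1$ et $M_{ij} = 1$ pour $i+j = n-1$; les entrées vérifiant $i+j \geq n$ ne nous serviront pas.

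J'en déduirai que le sous-espace $L \subset K^\#$ engendré sur $F$ par $1, b, \ldots, b^{m-1}$ est totalement isotrope pour $q_1$, car $i+j \leq 2m-2 < n-1$ pour $0 \leq i,j \leq m-1$. La forme $q_1$ étant non dégénérée (la forme trace sur l'algèbre étale $K^\#/F$ l'est, et $\dot{P}_b(b)^{-1}$ est une unité), la théorie classique des formes quadratiques fournira alors une décomposition $q_1 \simeq m\mathbb{H} \oplus q'$ avec $\dim q' = n - 2m$.

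Pour conclure, il restera à identifier $q'$. Si $n = 2m$, alors $q'$ est nulle et $q_1 \simeq m\mathbb{H}$. Si $n = 2m+1$, alors $q' \simeq \Qform{\alpha}$ pour un $\alpha \in F^\times$, que je déterminerai modulo carrés par le discriminant. La matrice $M$ s'annule au-dessus de l'anti-diagonale et vaut $1$ sur l'anti-diagonale, donc $\det M = (-1)^{\lfloor n/2 \rfloor} = (-1)^m$ (signature de la permutation de renversement). Comme $\det(m\mathbb{H} \oplus \Qform{\alpha}) \equiv (-1)^m \alpha \pmod{F^{\times 2}}$, on aura $\alpha \in F^{\times 2}$, d'où $q' \simeq \Qform{1}$. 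L'obstacle principal n'est pas d'ordre technique: la structure ``anti-triangulaire'' de $M$ est fournie gratuitement par \ref{prop:trace0}, et le reste relève d'invariants élémentaires.
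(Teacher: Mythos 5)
Votre démonstration est correcte et suit une route voisine mais non identique à celle du papier. Le papier prend la base duale $\{g_0,\ldots,g_{n-1}\}$ et calcule la matrice de Gram $Q=(q_1(g_i,g_j))$, dont les entrées sont des coefficients $h_k$ de $P_b$ d'après la formule explicite $g_k=\sum_{j>k}h_j b^{j-k-1}$; vous prenez au contraire la base primale $\{1,b,\ldots,b^{n-1}\}$ et la matrice $M=(q_1(b^i,b^j))$, dont la structure anti-triangulaire découle uniquement de la relation de dualité de \ref{prop:trace0} et de $g_{n-1}=1$. Ces deux matrices sont inverses l'une de l'autre, et dans les deux cas on obtient un sous-espace totalement isotrope de dimension $m$ et un déterminant $(-1)^m$. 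Votre version est même un peu plus économe: vous n'avez besoin d'aucune entrée de $M$ en-dessous de l'anti-diagonale, alors que le papier manipule explicitement les $h_k$, et votre argument évite une petite maladresse d'indexation dans le papier, qui écrit $g_0,\ldots,g_{m-1}$ pour le sous-espace isotrope alors que, au vu de la matrice $Q$ affichée, ce sont plutôt les derniers $g_i$ (là où $i+j\geq n$) qui conviennent. Un détail à préciser si vous rédigez complètement: la non-dégénérescence de $q_1$ vient de ce que $K^\#$ est étale sur $F$ (donc la forme trace est non dégénérée) et $\dot P_b(b)\in K^{\#\times}$ car $P_b$ est séparable; vous l'évoquez, c'est bien le bon argument. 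Enfin, la formule $\det M=(-1)^{\lfloor n/2\rfloor}$ est juste: la permutation de renversement est produit de $\lfloor n/2\rfloor$ transpositions disjointes.
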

\begin{proof}
  Prenons la base $\{g_0, \ldots, g_{n-1}\}$. Elle est duale à $\{1,b, \ldots, b^{n-1}\}$ par rapport à $q_1$ d'après \ref{prop:trace0}. La matrice $Q := (q_1(g_i, g_j))_{i,j}$ est:
  $$\begin{pmatrix}
    h_1 &  h_2 & \cdots & h_{n-1} & 1 \\
    h_2 & h_3 & \cdots & 1 & 0 \\
    h_3 & h_4 & \cdots & 0 & 0\\
    \vdots & \vdots & \vdots & \vdots & \vdots\\
    1 & 0 & 0 & \cdots & 0
  \end{pmatrix}.$$

  Notons que $\det Q = (-1)^m$ et $g_0, \ldots, g_{m-1}$ forment un sous-espace totalement isotrope de dimension $m$. Si $n=2m$, alors $q_1 \simeq m\mathbb{H}$. Si $n=2m+1$, alors il existe $a \in F^\times$ tel que $q_1 \simeq m\mathbb{H} \oplus \Qform{a}$. En comparant les déterminants, on peut prendre $a =1$.
\end{proof}

\begin{lemma}\label{prop:trace2}
  La $F$-forme quadratique
  $$ q_2 := (\Tr_{K^\#/F})_* \Qform{b^{-1} \dot{P}_b(b)^{-1}}$$
  est isomorphe à
  \begin{enumerate}
    \item $(m-1) \mathbb{H} \oplus \Qform{1, -N_{K^\#/F}(b)}$, si $n=2m$;
    \item $m \mathbb{H} \oplus \Qform{N_{K^\#/F}(b)}$, si $n=2m+1$.
  \end{enumerate}
\end{lemma}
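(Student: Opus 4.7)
The plan is to mimic the proof of Lemma \ref{prop:trace1}: via Lemma \ref{prop:trace0} I will exhibit an explicit totally isotropic subspace of near-maximal dimension, then identify the orthogonal complement modulo this subspace using the discriminant and (in the even-dimensional case) the observation that a non-degenerate $2$-dimensional form is determined by its discriminant once it is known to represent $1$.

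First I would eliminate the factor $b^{-1}$. Since $b$ is invertible (otherwise $q_2$ is undefined), the $F$-linear automorphism $\mu_b : x \mapsto bx$ of $K^\#$ yields an isometry $q_2 \simeq q_2' := (\Tr_{K^\#/F})_* \langle b \dot{P}_b(b)^{-1} \rangle$, because $q_2(bx, by) = \Tr_{K^\#/F}(b \dot{P}_b(b)^{-1} xy)$. In the basis $\{1, b, \ldots, b^{n-1}\}$ the Gram matrix of $q_2'$ has $(i,j)$-entry $\Tr_{K^\#/F}(\dot{P}_b(b)^{-1} b^{i+j+1})$; by Lemma \ref{prop:trace0} (using $g_{n-1} = 1$) this equals $1$ when $i + j = n - 2$ and $0$ when $0 \le i + j \le n - 3$. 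The vanishing immediately exhibits the totally isotropic subspace $U := \mathrm{span}_F\{1, b, \ldots, b^{m-2}\}$ of dimension $m - 1$ when $n = 2m$, respectively $U := \mathrm{span}_F\{1, b, \ldots, b^{m-1}\}$ of dimension $m$ when $n = 2m+1$. Standard Witt theory then gives $q_2 \simeq (\dim U)\,\mathbb{H} \oplus (U^\perp / U)$, reducing the problem to identifying $U^\perp / U$.

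For the discriminant input, the identity $\det((\Tr_{K^\#/F})_* \langle c \rangle) \equiv N_{K^\#/F}(c) \cdot \det((\Tr_{K^\#/F})_* \langle 1 \rangle) \pmod{F^{\times 2}}$ (a consequence of $\det \mu_c = N_{K^\#/F}(c)$) combined with Lemma \ref{prop:trace1} gives $\det q_2' \equiv (-1)^m N_{K^\#/F}(b)$ in both parities. In the odd case $n = 2m + 1$ this already concludes the argument: $U^\perp / U$ is $1$-dimensional of discriminant $(-1)^m \cdot \det q_2' \equiv N_{K^\#/F}(b)$, so $q_2 \simeq m \mathbb{H} \oplus \langle N_{K^\#/F}(b) \rangle$.

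The subtle point, which I expect to be the main obstacle, is the even case $n = 2m$, since the discriminant alone does not determine a $2$-dimensional non-degenerate form. I would resolve this by exhibiting a vector on which $q_2'$ takes the value $1$: the element $v := b^{m-1}$ satisfies $q_2'(v, v) = \Tr_{K^\#/F}(\dot{P}_b(b)^{-1} b^{n-1}) = 1$ by Lemma \ref{prop:trace0}, and lies in $U^\perp$ because for $i \le m - 2$ one has $i + m \le n - 2$, hence $q_2'(b^i, v) = 0$. Thus $U^\perp / U$ is a non-degenerate $2$-dimensional form representing $1$, hence isometric to $\langle 1, \det(U^\perp / U) \rangle$; the discriminant computation $\det(U^\perp / U) \equiv (-1)^{m-1} \det q_2' \equiv -N_{K^\#/F}(b)$ then yields $q_2 \simeq (m-1) \mathbb{H} \oplus \langle 1, -N_{K^\#/F}(b) \rangle$, as claimed.
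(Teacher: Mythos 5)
Your argument is correct, and it takes a mildly different route from the paper's. The paper works in the dual basis $\{bg_0,\ldots,bg_{n-1}\}$ supplied by Lemma \ref{prop:trace0}, computes the Gram matrix explicitly, and reads off the block decomposition $\Qform{-h_0}\oplus q_3$, then identifies $q_3$ by the same antidiagonal pattern as for $q_1$ in Lemma \ref{prop:trace1}; the entry $-h_0=(-1)^{n+1}N_{K^\#/F}(b)$ yields the residual $\Qform{\pm N_{K^\#/F}(b)}$ term. You instead first apply the isometry $x\mapsto bx$ (which the paper only observes as a parenthetical remark \emph{after} the proof) to pass to $q_2'=(\Tr_{K^\#/F})_*\Qform{b\dot{P}_b(b)^{-1}}$, then stay in the monomial basis $\{1,b,\ldots,b^{n-1}\}$ where the relevant part of the Gram matrix is read off directly from the duality relation $\Tr(\dot{P}_b(b)^{-1}b^s)=\delta_{n-1,s}$; you exhibit a totally isotropic subspace of dimension $m-1$ (resp.\ $m$), invoke Witt decomposition, and pin down the quotient form via the discriminant alone (odd case) or the discriminant plus the representation of $1$ by $b^{m-1}$ (even case). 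Both proofs ultimately rest on Lemma \ref{prop:trace0} and a discriminant computation; yours avoids writing out the Hankel-type Gram matrix and instead handles the residual $2$-dimensional ambiguity in the even case by showing the form represents $1$, which is a clean substitute for the paper's explicit block decomposition. The verification that $b^{m-1}\in U^\perp$ and $q_2'(b^{m-1},b^{m-1})=1$ is the non-obvious point and you carry it out correctly.
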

\begin{proof}
  Prenons la base $\{bg_0, \ldots, bg_{n-1}\}$. Elle est duale à $\{1,b, \ldots, b^{n-1}\}$ par rapport à $q_2$ d'après \ref{prop:trace0}. La matrice $Q_2 := (q_2(bg_i, bg_j))_{i,j}$ est:
  $$\begin{pmatrix}
    -h_0 & 0 & 0 & \cdots & 0 & 0 \\
    0 & h_2 &  h_3 & \cdots & h_{n-1} & 1 \\
    0 & h_3 & h_4 & \cdots & 1 & 0 \\
    0 & h_4 & h_5 & \cdots & 0 & 0\\
    \vdots & \vdots & \vdots & \vdots & \vdots & \vdots\\
    0 & 1 & 0 & 0 & \cdots & 0
  \end{pmatrix}.$$

  Donc $q_2$ se décompose en $q_2 = \Qform{-h_0} \oplus q_3$ selon les blocs. 

  La forme $q_3$ est déterminée de la même façon qu'en \ref{prop:trace1}: $q_3 \simeq m\mathbb{H}$ si $n=2m+1$, $q_3 \simeq (m-1)\mathbb{H} \oplus \Qform{1}$ si $n=2m$.
    
  D'autre part $-h_0 = (-1)^{n+1}N_{K^\#/F}(b)$ d'après la description de \ref{prop:trace0}. Ceci permet de conclure.
\end{proof}

Observons que $q_2 \simeq (\Tr_{E^\#/F})_* \Qform{b\dot{P}_b(b)^{-1}}$.

\begin{proof}[Démonstration du théorème \ref{prop:calcul-qX}]
  Avec le choix de $c$ fait en \eqref{eqn:choix-c}, on a vu que
  $$ q[a] \simeq q_1 \oplus -q_2, $$
  où $q_1, q_2$ sont définies dans \ref{prop:trace1} et \ref{prop:trace2}. En utilisant \ref{prop:trace1}, \ref{prop:trace2} et le fait $N_{K/F}(a) = N_{K^\#/F}(-b) = (-1)^n N_{K^\#/F}(b)$, on arrive à
   $$ q[a] \simeq (n-1)\mathbb{H} \oplus \Qform{(-1)^{n-1}, N_{K/F}(a)}. $$

  D'où
  $$ \gamma_\psi(q[a]) = \gamma_\psi((-1)^{n-1}) \gamma_\psi(N_{K/F}(a)).$$
  Puisque $\sgn_{K/K^\#}(2c^{-1}\dot{P}_b(b)a)=1$ avec notre choix de $c$, cela établit \eqref{eqn:comparaison-facteur-transfert}, donc finit la démonstration de \ref{prop:calcul-qX}.
\end{proof}

\subsection{La formule via le modèle latticiel}
Supposons désormais $F$ non archimédien de caractéristique résiduelle $p>2$ et fixons un $\mathfrak{o}_F$-réseau $L \subset W$ tel que $L=L^\perp$. Posons $K := \text{Stab}_{\Sp(W)}(L)$. Le modèle latticiel $(\rho_L, S_L)$ permet d'identifier $K$ à un sous-groupe compact ouvert de $\MMp{2}(W)$. On va étudier le comportement de $\Theta_\psi$ sur $K$.

Remarquons d'abord que le relèvement de $-1 \in K$ à $\Spt_\psi(W)$ est égal à l'élément $-1$ défini dans \ref{def:-1} d'après \ref{prop:Levi-latticiel-compatible}. Par conséquent, l'équation
$$\Theta_\psi(-x) = \Theta_\psi^+(x)-\Theta_\psi^-(x),\; x \in \Sp(W)^\ddagger \cap K $$
est vraie en deux sens: on peut considérer $-x$ comme un élément de $K$, relevé à $\MMp{2}(W)$ au moyen du modèle latticiel, ou au sens de \ref{prop:-1-echangement}. Ceci justifie notre abus de notations.

\begin{lemma}\label{prop:formule-caractere-K-1}
  Pour $x \in K \cap \Sp(W)^\dagger$, on a
  $$ \Theta_\psi(x) = \sum_{\substack{\dot{w} \in W/L \\ (x-1)w \in L}} \psi\left(\frac{\angles{x(w)|w}}{2} \right) ,$$
  qui est une somme finie si l'on fixe $x$.
\end{lemma}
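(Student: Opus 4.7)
Le plan est d'exploiter le modèle latticiel (\S\ref{sec:modele-latticiel}): on identifie $K$ à un sous-groupe de $\MMp{2}(W)$ et l'action de $x$ est donnée par l'opérateur unitaire $M_L[x]$ sur $S_L$. La proposition précédente décrit $M_L[x]$ comme une permutation de la base orthonormée $\{f_r\}_{r\in R}$ à des phases près, ce qui suggère d'interpréter $\Theta_\psi(x)$ comme la somme des entrées diagonales de $M_L[x]$ dans cette base.

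Le calcul des entrées diagonales découle directement de cette formule: le coefficient de $f_r$ dans $M_L[x]f_r$ est nul sauf si $x(r) \equiv r \pmod{L}$, c'est-à-dire $(x-1)r \in L$, et vaut alors $\psi(\angles{r|x^{-1}(r)}/2)$; la symplecticité de $x$ entraîne $\angles{r|x^{-1}(r)} = \angles{x(r)|x\cdot x^{-1}(r)} = \angles{x(r)|r}$, ce qui reproduit le terme annoncé. La finitude s'obtiendra en remarquant que $x-1$ est inversible (puisque $x \in \Sp(W)^\dagger$), donc $(x-1)^{-1}(L)$ est un $\mathfrak{o}_F$-réseau ; il contient $L$ car $x$ préserve $L$, et donc l'ensemble $\{\dot w \in W/L : (x-1)w \in L\}$ s'identifie au quotient fini $(x-1)^{-1}(L)/L$, de cardinal $|\det(x-1)|_F^{-1}$.

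Le cœur technique, et l'obstacle principal, sera de justifier que cette somme formelle coïncide effectivement avec $\Theta_\psi(x)$, puisque $M_L[x]$ n'est pas de rang fini et la trace dans la base $\{f_r\}$ n'est pas \emph{a priori} licite. Mon plan est d'introduire une fonction test $f_U := \mathrm{vol}(U)^{-1}\mathbbm{1}_U \in C_{c,\asp}^\infty(\MMp{2}(W))$ où $U$ est un voisinage ouvert compact de $x$ dans $\MMp{2}(W)^\dagger$ assez petit pour que $\Theta_\psi$ y soit constante, par \ref{prop:formule-caractere-Thomas}. L'admissibilité de $\omega_\psi$ garantira que $\omega_\psi(f_U) = \int f_U(y)M_L[y]\,dy$ est de rang fini, donc à trace, et la formule intégrale donne $\mathrm{Tr}(\omega_\psi(f_U)) = \Theta_\psi(x)$. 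Il restera à calculer cette trace dans la base $\{f_r\}$: le coefficient diagonal est $\int f_U(y)(M_L[y]f_r, f_r)\,dy$, qui est non nul seulement pour les $r$ tels qu'il existe $y \in U$ avec $(y-1)r \in L$. Comme $L$ est ouvert dans $W$ et $r \mapsto y(r)$ continue en $y$, pour $U$ assez petit cette condition se lit $(x-1)r \in L$; on contrôlera ainsi qu'un nombre fini de $r$ contribuent. Chaque terme tendra, par continuité, vers $\psi(\angles{x(r)|r}/2)$ lorsque $U$ se contracte vers $\{x\}$, d'où la formule cherchée.
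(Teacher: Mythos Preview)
Your approach is essentially that of the paper: compute the trace of the convolution operator in the orthonormal basis $\{f_r\}$ and interchange the sum over $r$ with the integral over the group. The paper does this with a general test function $\phi \in C_c^\infty(K \cap \Sp(W)^\dagger)$ and invokes dominated convergence, the key observation being that for each fixed $y$ the sum over $r$ with $(y-1)r \in L$ has exactly $|\det(y-1)|^{-1}$ terms, a quantity uniformly bounded on the support of $\phi$.

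Your limiting version with approximate identities $f_U$ also works, but the justification you give for the crucial uniformity step (``$L$ ouvert, $r \mapsto y(r)$ continue en $y$'') is incomplete: it shows only that for each \emph{fixed} $r$ the condition $(y-1)r \in L$ is open in $y$, not that a single neighbourhood $U$ controls all $r$ simultaneously --- and $r$ ranges over an infinite set of representatives. To close this gap, observe that $(y-1)^{-1}L = (x-1)^{-1}L$ whenever $(y-1)(x-1)^{-1}$ lies in the open stabilizer of $L$ in $\GL_F(W)$, which holds for all $y$ sufficiently close to $x$; this pins down the finite set of contributing cosets $\dot r$ once and for all, after which your continuity argument for each term goes through. (Minor point: $\mathbbm{1}_U$ for $U \subset K$ is not anti-specific, but this is irrelevant since $\Theta_\psi$ is defined on all of $C_c^\infty(\MMp{2}(W))$.)
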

\begin{proof}
  Prenons un système de représentants $R \subset W$ de $W/L$ et la base orthonormée $\{f_r\}_{r \in R}$ de $S_L$ dans \S\ref{sec:modele-latticiel}. Soit $\phi \in C_c^\infty(K \cap \Sp(W)^\dagger)$ quelconque. Pour $r \in R$, on a
  \begin{equation*}\begin{aligned}
    (\omega_\psi(\phi)f_r)(r,0) & = \int_{K \cap \Sp(W)^\dagger} \phi(x) (\omega_\psi(x)f_r)(r,0) \dd x \\
    & = \int_{K \cap \Sp(W)^\dagger} \phi(x) \mathbbm{1}_{E_r}(x) \psi\left(\frac{\angles{r|x^{-1}(r)}}{2} \right) \dd x \\
    & (E_r := \{x \in K: x^{-1}(r)-r \in L \}, \text{qui est ouvert}.)
  \end{aligned}\end{equation*}
  Si $x$ est fixé, la somme sur tout $r \in R$ du terme intérieur est finie et bornée par 
  $$|\phi(x)| \cdot \#((x^{-1}-1)^{-1}L/L) = |\phi(x)| \cdot |\det(x^{-1}-1)|^{-1},$$
  qui est uniformément borné dans $K \cap \Sp(W)^\dagger$. En utilisant le théorème de convergence dominée, on obtient
  \begin{equation*}\begin{aligned}
    \Theta_\psi(\phi) & = \sum_{r \in R} \;\int_{K \cap \Sp(W)^\dagger} \phi(x) \mathbbm{1}_{E_r}(x) \psi\left(\frac{\angles{r|x^{-1}(r)}}{2} \right) \dd x \\
    & = \int_{K \cap \Sp(W)^\dagger} \phi(x) \sum_{r \in R} \mathbbm{1}_{E_r}(x) \psi\left( \frac{\angles{r|x^{-1}(r)}}{2} \right) \dd x \\
    & = \int_{K \cap \Sp(W)^\dagger} \phi(x) \sum_{\substack{r \in R \\ (x^{-1}-1)r \in L}} \psi\left(\frac{\angles{r|x^{-1}(r)}}{2} \right) \dd x
  \end{aligned}\end{equation*}
  Le terme dans la somme ne dépend que de la classe de $r$ mod $L$. On a $\angles{r| x^{-1}(r)} = \angles{x(r)|r}$, et $(x^{-1}-1)r \in L$ si et seulement si $(x-1)r \in L$ car $x \in K$. D'où le résultat cherché.
\end{proof}

\begin{proposition}\label{prop:formule-caractere-K-2}\label{prop:-1-latticiel}
  Soit $x \in K \cap \Sp(W)^\dagger$. On a:
  \begin{enumerate}
    \item si $(x-1)(L)=L$, alors
      $$ \Theta_\psi(x)=1;$$
      la condition est satisfaite lorsque $-x$ est topologiquement unipotent;
    \item si $x$ topologiquement unipotent, alors $x \in \Sp(W)^\ddagger$ et
      $$ \Theta_\psi(x) = |\det(x-1)|^{-1/2} \cdot \gamma_\psi(q[C_x]).$$
  \end{enumerate}
\end{proposition}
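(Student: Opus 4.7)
Je commencerai par traiter (1) à l'aide de la formule de \ref{prop:formule-caractere-K-1}. L'hypothèse $(x-1)L=L$ fait de $x-1$ un automorphisme $\mathfrak{o}_F$-linéaire de $L$, de sorte que la condition $(x-1)w \in L$ dans la somme équivaudra à $w \in L$, i.e.\ $\dot{w}=0$ ; il ne subsistera qu'un seul terme, $\psi(0)=1$. Pour la condition suffisante, je poserai $Y := -x-1$ : si $-x$ est topologiquement unipotent, cet élément $Y$ préserve $L$ et y est topologiquement nilpotent. Comme $p>2$, on a $|2|_F=1$, et l'identité $x-1 = -2-Y = -2(1+Y/2)$ jointe à la série géométrique $(1+Y/2)^{-1} = \sum_{k \geq 0}(-Y/2)^k$ dans $\End(L)$ montrera que $x-1$ est un $\mathfrak{o}_F$-automorphisme de $L$, d'où $(x-1)L=L$.

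Pour (2), j'observerai d'abord que $x$ topologiquement unipotent entraîne $x \in \Sp(W)^\ddagger$ : les valeurs propres de $x$ sont proches de $1$, donc distinctes de $-1$ puisque $-2 \notin \mathfrak{p}_F$, d'où $\det(x+1)\neq 0$, et $\det(x-1)\neq 0$ par hypothèse. L'idée clé sera de combiner le théorème \ref{prop:formule-caractere-Thomas2} avec le renversement de signe \ref{prop:-1-echangement}. D'après la remarque qui ouvre cette sous-section (appuyée sur \ref{prop:Levi-latticiel-compatible}), le relèvement latticiel de $-x \in K$ coïncide avec $(-1)\cdot x \in \MMp{2}(W)$, où $-1$ désigne l'élément de \ref{def:-1} ; \ref{prop:-1-echangement} donnera alors
$$ (\Theta_\psi^+ - \Theta_\psi^-)(x) = \Theta_\psi(-x). $$

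Il restera à évaluer $\Theta_\psi(-x)$ grâce à l'assertion (1) appliquée à $-x$. Le même argument de série géométrique qu'en (1) montrera que $x+1 = 2+(x-1)$ est un automorphisme de $L$, avec $(x+1)L=L$ ; cela fournira à la fois $|\det(x+1)|=1$ et $((-x)-1)L = -(x+1)L = L$, de sorte que (1) donnera $\Theta_\psi(-x)=1$. En substituant dans \ref{prop:formule-caractere-Thomas2}, j'obtiendrai
$$ \Theta_\psi(x) = \gamma_\psi(q[C_x]) \cdot \left|\frac{\det(x+1)}{\det(x-1)}\right|^{1/2} \cdot \Theta_\psi(-x) = \gamma_\psi(q[C_x]) \cdot |\det(x-1)|^{-1/2}. $$

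Le point qui demandera le plus de soin est la compatibilité des deux relèvements de $-x$ intervenant dans l'argument de (2) : celui donné par le modèle latticiel et celui obtenu par multiplication par l'élément $-1$ de \ref{def:-1}. Cette compatibilité a été notée juste avant \ref{prop:formule-caractere-K-1} en invoquant \ref{prop:Levi-latticiel-compatible}, mais je veillerai à l'expliciter pour que \ref{prop:-1-echangement} s'applique bien au relèvement latticiel considéré ici.
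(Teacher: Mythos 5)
Votre démonstration est correcte et suit pour l'essentiel la même stratégie que celle du texte : la partie (1) se déduit de \ref{prop:formule-caractere-K-1}, et la partie (2) combine \ref{prop:formule-caractere-Thomas2} et \ref{prop:-1-echangement} avec la partie (1) appliquée à $-x$, en prenant soin d'identifier le relèvement latticiel de $-x$ à $(-1)\cdot x$ via \ref{prop:Levi-latticiel-compatible}. La seule différence est cosmétique : pour établir que $(x \mp 1)L = L$ sous l'hypothèse de nilpotence topologique, le texte regarde directement la réduction $\bar{x} \in \End_{\mathbb{F}_q}(L/\mathfrak{p}_F L)$ et observe que $-1$ n'est pas une valeur propre de $-\bar{x}$ car $q$ est impair, tandis que vous passez par une série géométrique. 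Notez toutefois que l'affirmation « $Y := -x-1$ est topologiquement nilpotent dans $\End(L)$ » repose elle-même sur la nilpotence de $\bar{Y}$ dans $\End(L/\mathfrak{p}_F L)$ — c'est-à-dire sur la même réduction mod $\mathfrak{p}_F$ que le texte invoque directement — de sorte que les deux preuves utilisent au fond le même ingrédient, simplement emballé différemment.
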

\begin{proof}
  Montrons la première assertion. Si $(x-1)L=L$, la somme dans \ref{prop:formule-caractere-K-1} porte seulement sur l'élément $0$, donc $\Theta_\psi(x)=1$.
  
  Supposons que $-x$ est topologiquement unipotent. Soit $\bar{x}$ l'image de $x$ dans $\End_{\mathbb{F}_q}(L/\mathfrak{p}_F L)$. On a $(-\bar{x})^{q^m} = 1$ pourvu que $m$ soit assez grand. Puisque $q$ est impair, il en résulte que $-1$ n'est pas une valeur propre de $-\bar{x}$. Autrement dit: $(x-1)L=L$. Cela démontre la première assertion.

  Montrons la dernière assertion. Soit $x \in \Sp(W)^\dagger \cap K$ topologiquement unipotent. L'argument pour la première assertion montre que $(x+1)L=L$; en particulier $|\det(x+1)|=1$ et donc $x \in \Sp(W)^\ddagger$. Par \ref{prop:formule-caractere-Thomas2} et \ref{prop:-1-echangement}, on en déduit
  \begin{align*}
    \Theta_\psi(x) & = \gamma_\psi(q[C_x])|\det(x-1)|^{-1/2} (\Theta_\psi^+ - \Theta_\psi^-)(-x) \\
    & = \gamma_\psi(q[C_x]) |\det(x-1)|^{-1/2} \Theta_\psi(-x).
  \end{align*}

  Or $\Theta_\psi(-x)=1$ par la première assertion. D'où $\Theta_\psi(x) = |\det(x-1)|^{-1/2} \gamma_\psi(q[C_x])$.
\end{proof}

\begin{corollary}\label{prop:Theta-reduction-reguliere}
  Soit $x \in K \cap \Sp(W)^\dagger$ tel que sa réduction $\bar{x} \in \Sp(L/\mathfrak{p}L)$ est régulière, alors
  $$ \Theta_\psi(x) = \Theta_\psi(-x) = 1. $$
\end{corollary}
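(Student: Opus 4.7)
Le plan est d'appliquer la première assertion de la proposition \ref{prop:formule-caractere-K-2} séparément à $x$ et à $-x$. Pour cela il faut vérifier que $(x-1)L = L$ d'une part, et que $-x \in K \cap \Sp(W)^\dagger$ avec $(-x-1)L = L$ d'autre part.

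Le point central est que la régularité de $\bar{x}$ dans $\Sp(L/\mathfrak{p}_F L)$ exclut $\pm 1$ de l'ensemble de ses valeurs propres. En effet, les valeurs propres d'un élément semi-simple de $\Sp$ apparaissent par paires $\{a,a^{-1}\}$; par conséquent, si $1$ (resp.\ $-1$) était valeur propre de $\bar{x}$, il le serait avec une multiplicité paire $2k$ avec $k \geq 1$, et le commutant connexe de $\bar{x}$ contiendrait alors un facteur $\Sp(2k)$ agissant sur l'espace propre correspondant, qui n'est pas un tore. Cela contredit l'hypothèse de régularité. Donc $\det(\bar{x} \pm 1) \neq 0$ dans le corps résiduel. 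Comme $x \in K$ stabilise $L$, les endomorphismes $x \pm 1$ préservent $L$, et la surjectivité modulo $\mathfrak{p}_F$ entraîne par Nakayama que $(x \pm 1)L = L$.

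De $(x+1)L=L$ on déduit $(-x-1)L = L$. L'élément $-1 \in \Sp(W)$ stabilise $L$, donc $-x \in K$; d'autre part $\det(-x-1) = (-1)^{\dim W}\det(x+1)$ est une unité de $\mathfrak{o}_F$, donc $-x \in \Sp(W)^\dagger$. Selon la remarque en début de cette sous-section (fondée sur \ref{prop:Levi-latticiel-compatible}), le relèvement de $-x$ via le modèle latticiel coïncide avec $(-1) \cdot \tilde{x}$ où $-1$ est l'élément défini dans \ref{def:-1}, ce qui autorise l'interprétation de $\Theta_\psi(-x)$ dans l'un ou l'autre sens. La première assertion de \ref{prop:formule-caractere-K-2}, appliquée séparément à $x$ et à $-x$, fournit alors $\Theta_\psi(x) = \Theta_\psi(-x) = 1$.

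Aucune étape n'est particulièrement délicate; le seul argument qui mérite soin est celui sur la structure des commutants dans $\Sp(2n)$ qui force $\pm 1$ à ne pas figurer parmi les valeurs propres d'un élément régulier semi-simple.
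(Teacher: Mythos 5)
Your proof is correct and takes essentially the same route as the paper's, which consists of the single observation that regularity of $\bar{x}$ forces $(x\pm 1)L = L$ followed by an appeal to the first assertion of \ref{prop:formule-caractere-K-2}. You have merely made explicit the two steps the paper leaves implicit — that $\pm 1$ cannot be eigenvalues of a regular semisimple element of a symplectic group (the eigenspace of $\pm 1$ being a nondegenerate symplectic subspace whose isometry group would sit inside the centralizer), and that $-x$ again lies in $K \cap \Sp(W)^\dagger$ with the latticial lifting compatible with the element $-1$ of \ref{def:-1}.
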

\begin{proof}
  Si $\bar{x}$ est régulière, alors $(x \pm 1)L=L$.
\end{proof}
Dans ce cas, on dit que $x$ est de réduction régulière\index{réduction régulière} par rapport à $L$.

\subsection{Formules sur l'algèbre de Lie}
On peut obtenir une formule pour $\Theta_\psi$ sur l'algèbre de Lie, ce qui aura un intérêt indépendant. Soit $X \in \syp(W)$ inversible. On fait l'une des hypothèses suivantes:
\begin{enumerate}
  \item l'élément $X$ est suffisamment proche de $0$,
  \item $F$ est non archimédien de caractéristique résiduelle $p$ assez grande par rapport à $n$ (\cite{Wa08} 4.3), $p>2$ et $X$ est topologiquement nilpotent.
\end{enumerate}
Alors il est loisible de définir $\exp(X)$ et $\exp(X) \in \Sp(W)^\ddagger$.

\begin{lemma}\label{prop:q_X-est-q_x}
  Supposons que $X$ vérifie l'une des hypothèses ci-dessus. Soit $x=\exp(X) \in \Sp(W)^\ddagger$, alors
  $$ q[C_x] \simeq q[X]. $$
\end{lemma}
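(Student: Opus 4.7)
Le plan est de construire explicitement un automorphisme $F$-linéaire $A$ de $W$ tel que $A : (W, q[X]) \to (W, q[C_x])$ soit une isométrie, c'est-à-dire satisfaisant $A^* X A = C_x$, où $*$ désigne l'adjoint par rapport à la forme symplectique $\angles{\cdot|\cdot}$. Rappelons que $X \in \syp(W)$ équivaut à $X^* = -X$; par conséquent $X^{2k}$ est autoadjoint pour tout $k$, et tout opérateur qui est une série (formelle ou convergente) en $X^2$ est automatiquement autoadjoint et commute avec $X$.

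L'observation-clé est que $C_x$ se factorise comme $C_x = X \cdot h(X)$, où
$$ h(u) := \frac{2(e^u-1)}{u(e^u+1)} = \frac{\tanh(u/2)}{u/2} $$
est une fonction paire en $u$ avec $h(0) = 1$. On peut donc écrire $h(u) = g(u^2)$ pour une série entière $g$ avec $g(0) = 1$. Je poserais alors $A := g_1(X^2)$, où $g_1$ est l'unique série entière avec $g_1(0) = 1$ satisfaisant $g_1^2 = g$ (obtenue par la série binomiale $\sqrt{1+v} = \sum_{n \geq 0} \binom{1/2}{n} v^n$ appliquée à $v = g(Y) - 1$). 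L'opérateur $A$ est alors inversible (proche de l'identité), autoadjoint, commute avec $X$, et vérifie $A^2 = h(X)$, d'où $X A^2 = C_x$.

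La conclusion est alors immédiate: pour tous $w_1, w_2 \in W$, on a
$$ q[X](Aw_1 | Aw_2) = \angles{XAw_1 | Aw_2} = \angles{A^* XA w_1 | w_2} = \angles{XA^2 w_1|w_2} = \angles{C_x w_1|w_2} = q[C_x](w_1|w_2), $$
ce qui montre que $A$ est l'isométrie cherchée.

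L'unique difficulté technique sera de justifier la convergence de la série définissant $A$, en particulier dans le cas non archimédien. Il faudra invoquer les bornes sur la caractéristique résiduelle $p$ rappelées dans \cite{Wa08} 4.3 pour garantir la convergence simultanée de toutes les séries utilisées (exponentielle pour $\exp(X)$, inverse pour $(e^X+1)^{-1}$, et racine carrée binomiale pour $\sqrt{h(X)}$); la nilpotence topologique de $X$ entraînant celle de $X^2$ et assurant ainsi que $g(X^2) - \identity$ est topologiquement nilpotent. Dans le cas archimédien, tout est clair par analyticité au voisinage de $X = 0$.
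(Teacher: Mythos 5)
Your proof is correct and follows essentially the same route as the paper: both observe that $C_x/X$ is an \emph{even} power series in $X$ with constant term $1$ and convergence radius controlled by the spectral radius of $X$, take its square root (you use the binomial series, the paper writes $\exp(\tfrac12\log(1+Q))$ — the same series), and then use that a power series in $X^2$ is symplectically self-adjoint and commutes with $X$ to conclude the isometry. The paper additionally reduces to the semisimple case by continuity so that the spectral radius is an honest norm on $F[X]$; your convergence discussion would need this (or a direct appeal to the Jordan decomposition) to be fully airtight, but the core argument is the one in the paper.
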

\begin{proof}
  Il suffit de considérer le cas $X$ semi-simple; le cas général en résultera par continuité. Soit $A$ la sous-algèbre commutative de $\End(W)$ engendrée par $X$. Pour $a \in A$, considérons son rayon spectral
  $$ \|a\| := \sup \{|\lambda|_F : \lambda \text{ est une valeur propre de } a \}. $$
  C'est une norme sur $A$ car $X$ est semi-simple; $A$ est complète par rapport à $\|\cdot\|$.

  On cherche une série formelle $P(T) \in F[[T]]$ dont le rayon de convergence est $> \|X\|$ telle que $P(X)$ est inversible et $q[2\cdot\frac{x-1}{x+1}](v|w) = \angles{XP(X)v|P(X)w}$ pour tout $u,v \in W$. Cela équivaut à
  $$ P(X)P(-X)X = 2 \cdot \dfrac{\exp(X)-1}{\exp(X)+1},$$
  ou
  $$ P(X)P(-X) = \frac{2}{X} \cdot \dfrac{\exp(X)-1}{\exp(X)+1}.$$
  Le terme à droite est de la forme $1+Q(X)$, où $Q(T) \in F[[T^2]]$ a un rayon de convergence $> \|X\|$ et $T|Q(T)$. Cela nous permet de définir
  $$ P(T) := \exp\left(\frac{1}{2}\log(1+Q(T))\right) \in F[[T^2]]$$
  Cette série formelle a aussi un rayon de convergence $> \|X\|$. Donc $P(X)$ est inversible car il appartient à l'image de l'exponentielle. On a
  $$P(T)P(-T) = P(T)^2 = 1+ Q(T) = \frac{2}{T} \cdot \dfrac{\exp(T)-1}{\exp(T)+1}.$$
  On remplace $T$ par $X$ et cela permet de conclure.
\end{proof}

\begin{corollary}\label{prop:formule-caractere-Lie}
  Avec les mêmes hypothèses, il existe une constante $c$ indépendante de $X$ telle que
  $$ \Theta_\psi(\tilde{x}) = \frac{c \cdot \gamma_\psi(q[X])}{|\det(x-1)|^{1/2}}. $$
  où $\tilde{x} := \exp(X)$. Si l'hypothèse (2) est satisfaite, on a $c=1$.
\end{corollary}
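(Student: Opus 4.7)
Le plan est de combiner la formule \ref{prop:formule-caractere-Thomas2} de Thomas avec la proposition \ref{prop:-1-echangement} pour établir l'identité
$$
\Theta_\psi(\tilde{x}) = \gamma_\psi(q[C_x])\, \left|\frac{\det(x+1)}{\det(x-1)}\right|^{1/2}\, \Theta_\psi(-\tilde{x}),
$$
valable dès que $\tilde{x} \in \MMp{2}(W)^\ddagger$, puis d'invoquer le lemme \ref{prop:q_X-est-q_x} pour remplacer $\gamma_\psi(q[C_x])$ par $\gamma_\psi(q[X])$. Tout se ramène alors à étudier la fonction $c(X) := |\det(x+1)|^{1/2}\, \Theta_\psi(-\tilde{x})$ et à montrer qu'elle est localement constante, puis qu'elle vaut $1$ sous l'hypothèse (2).

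Le cas (2) se traite en réalité plus directement sans passer par ce détour: sous cette hypothèse, $x = \exp(X)$ est topologiquement unipotent, et la proposition \ref{prop:formule-caractere-K-2}(2) s'applique immédiatement pour livrer $\Theta_\psi(x) = |\det(x-1)|^{-1/2}\, \gamma_\psi(q[C_x])$; combinée au lemme \ref{prop:q_X-est-q_x}, elle donne la formule voulue avec $c=1$.

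Pour le cas (1), j'appliquerais la troisième formule de \ref{prop:formule-caractere-Thomas} à $-\tilde{x}$, qui est proche de $-1$ et appartient donc à $\MMp{2}(W)^\dagger$ pour $X$ assez petit. En utilisant $\det(-x-1) = \det(x+1)$, ceci donne
$$
c(X) = \pm\, \gamma_\psi(1)^{\dim W - 1}\, \gamma_\psi(\det(x+1)).
$$
Il reste à vérifier que $\det(x+1)$ reste dans une unique classe de $F^\times/F^{\times 2}$ sur un voisinage assez petit de $0$: le cas réel résulte de $2^{2n} > 0$, le cas complexe est trivial, et dans le cas non archimédien avec $p > 2$ on a $\det(x+1) \in 2^{2n}(1+\mathfrak{p}_F)$ avec $1+\mathfrak{p}_F \subset F^{\times 2}$. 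Dans tous les cas $\gamma_\psi(\det(x+1))$ est donc localement constant, et l'ambiguïté de signe l'est également par continuité, le membre de droite ne prenant ses valeurs que dans un sous-ensemble discret de $\C^\times$.

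L'obstacle principal de cet argument est la vérification du contrôle des classes de carrés et du signe $\pm$ de \ref{prop:formule-caractere-Thomas}(3), qui demande un traitement séparé de chaque type de corps local; une fois admise la continuité de $c(X)$ (qui provient de la régularité de $\Theta_\psi$ sur $\MMp{2}(W)^\dagger$ et du fait que $\det(x+1)$ reste inversible), la constance sur un voisinage connexe suffisamment petit s'en déduit sans effort.
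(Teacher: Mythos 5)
Votre preuve est correcte et suit essentiellement la même ligne que celle du papier : l'identité $\Theta_\psi(\tilde{x}) = \gamma_\psi(q[X])\left|\det(x+1)/\det(x-1)\right|^{1/2}\Theta_\psi(-\tilde{x})$ s'obtient en combinant \ref{prop:formule-caractere-Thomas2}, \ref{prop:-1-echangement} et \ref{prop:q_X-est-q_x}, et tout se ramène à la constance de $c(X):=|\det(x+1)|^{1/2}\Theta_\psi(-\tilde{x})$. La seule nuance est que le papier cite directement la dernière assertion du théorème \ref{prop:formule-caractere-Thomas} (locale constance de $|\det(x+1)|^{1/2}\Theta_\psi$) et donne explicitement $c=|2|^n\Theta_\psi(-1)$, là où vous redérivez la locale constance à partir de la troisième formule de Maktouf en contrôlant les classes de carrés de $\det(x+1)$ au voisinage de $2^{2n}$ — argument parfaitement valide qui explicite ce que le papier utilise implicitement ; quant au cas (2), votre raccourci par \ref{prop:formule-caractere-K-2}(2) combiné à \ref{prop:q_X-est-q_x} est plus direct que la vérification $c=1$ du papier et tout aussi correct.
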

\begin{proof}
  D'après \ref{prop:formule-caractere-Thomas2}, \ref{prop:-1-echangement} et le lemme,
  \begin{align*}
    \Theta_\psi(\tilde{x}) &= \left|\frac{\det(x+1)}{\det(x-1)}\right|^{1/2} \gamma_\psi(q[C_x]) (\Theta_\psi^+ - \Theta_\psi^-)(\tilde{x}) \\
    &= \left|\frac{\det(x+1)}{\det(x-1)}\right|^{1/2} \gamma_\psi(q[X]) \Theta_\psi(-\tilde{x}).
  \end{align*}
  Posons $c := |2|^n \Theta_\psi(-1)$. Lorsque $X$ est assez petit, $\Theta_\psi(-\tilde{x}) = |\det(x+1)|^{-1/2} c$. Dans le cas où l'hypothèse (2) est satisfaite, on a $c = 1$. D'où le corollaire.
\end{proof}

\subsection{Décompositions}
Fixons $\mathbf{f} \in 2\Z_{\geq 1}$. Supposons qu'il y a une décomposition orthogonale d'espaces symplectiques
$$W = W_1 \oplus \cdots \oplus W_r,$$
Alors il existe un homomorphisme canonique
$$ \iota: \prod_{k=k}^r \Sp(W_k) \to \Sp(W). $$

On en déduit un homomorphisme recouvrant $\iota$,
$$ j: \prod_{k=1}^r \MMp{\mathbf{f}}(W_k) \to \MMp{\mathbf{f}}(W) $$
telle que $j^{-1}(\MMp{\mathbf{f}}(W)^\dagger) = \prod_{k=1}^r \MMp{\mathbf{f}}(W_k)^\dagger$. Pour $1 \leq k \leq r$, notons $\Theta_\psi^{[k]}$ le caractère de la représentation de Weil de $\MMp{\mathbf{f}}(W_k)$ attachée à $\psi$; il est lisse sur $\MMp{\mathbf{f}}(W_k)^\dagger$. Le résultat découle de l'additivité symplectique de l'indice de Maslov.

\begin{proposition}\label{prop:caractere-decomposition-0}
  Soient $\tilde{x}_k \in \MMp{\mathbf{f}}(W_k)^\dagger$ pour $1 \leq k \leq r$, alors
  $$ \Theta_\psi(j(\tilde{x}_1,\ldots,\tilde{x}_r)) = \prod_{k=1}^r \Theta_\psi^{[k]}(\tilde{x}_k).$$
\end{proposition}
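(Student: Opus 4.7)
Le plan est de se ramener par récurrence au cas $r=2$ et d'appliquer la deuxième formule du caractère dans \ref{prop:formule-caractere-Thomas}. Choisissons pour tout $k$ un lagrangien $\ell_k \in \Lag(W_k)$ et posons $\ell := \ell_1 \oplus \cdots \oplus \ell_r \in \Lag(W)$. Pour $\tilde{x}_k = (x_k, t_k)$ dans la construction de Lion-Perrin, on aura $j(\tilde{x}_1, \ldots, \tilde{x}_r) = (x_1 \oplus \cdots \oplus x_r, t)$ où $t$ est une fonction sur $\Lag(W)$, et l'étape clef est d'établir:
$$ t(\ell) = \prod_{k=1}^r t_k(\ell_k). $$
Cette compatibilité est la seule subtilité non triviale; elle découle de l'additivité symplectique de l'indice de Maslov. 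En effet, les constantes $m_g(\ell)$ et le cocycle $c_{g,g'}(\ell)=\gamma_\psi(\tau(\ell, g\ell, gg'\ell))$ intervenant dans la construction de Lion-Perrin se comportent bien sous les décompositions orthogonales $W=\bigoplus W_k$ et $\ell = \bigoplus \ell_k$, grâce à l'additivité des orientations et de l'accouplement $A_{\ell',\ell}$ d'une part, et à l'additivité symplectique de $\tau$ d'autre part. Cela implique que l'application canonique $j$, caractérisée par le recouvrement de $\iota$, satisfait à la formule pour $t(\ell)$ énoncée ci-dessus.

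Une fois cette compatibilité en main, on conclut rapidement. En appliquant la deuxième formule de \ref{prop:formule-caractere-Thomas}, on obtient
$$ \Theta_\psi(j(\tilde{x}_1,\ldots,\tilde{x}_r)) = \frac{t(\ell) \, \gamma_\psi(\tau(\Gamma_x, \Gamma_1, \ell \oplus \ell))}{|\det(x-1)|^{\frac{1}{2}}}, $$
où $x := x_1 \oplus \cdots \oplus x_r$. Le graphe $\Gamma_x$ dans $\overline{W} \oplus W = \bigoplus_k (\overline{W_k} \oplus W_k)$ se décompose en $\bigoplus_k \Gamma_{x_k}$, et de même $\Gamma_1$, $\ell \oplus \ell$ sont sommes directes des objets analogues dans chaque $\overline{W_k} \oplus W_k$. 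L'additivité symplectique de l'indice de Maslov donne alors
$$ \tau(\Gamma_x, \Gamma_1, \ell \oplus \ell) \simeq \bigoplus_{k=1}^r \tau(\Gamma_{x_k}, \Gamma_1, \ell_k \oplus \ell_k). $$

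Puisque $\gamma_\psi$ est un homomorphisme du groupe de Witt dans $\bmu_8$, il transforme cette somme orthogonale en un produit. La multiplicativité évidente $\det(x-1) = \prod_k \det(x_k-1)$ et la formule pour $t(\ell)$ complètent alors la décomposition en produit, en comparant terme par terme avec les $\Theta_\psi^{[k]}(\tilde{x}_k)$ donnés par la même formule de \ref{prop:formule-caractere-Thomas}. L'unique obstacle technique est donc la vérification de la compatibilité $t(\ell) = \prod_k t_k(\ell_k)$, qui est une conséquence immédiate de l'additivité des ingrédients de la construction de Lion-Perrin.
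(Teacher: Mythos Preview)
Your proof is correct and is precisely an elaboration of the paper's one-line justification, which reads only ``Le résultat découle de l'additivité symplectique de l'indice de Maslov''. You have supplied the details that the paper leaves implicit: the compatibility $t(\ell)=\prod_k t_k(\ell_k)$ in the Lion--Perrin model, the decomposition of the graphs $\Gamma_x$, and the multiplicativity of $\gamma_\psi$ and of $|\det(x-1)|$. One minor remark: the reduction ``par récurrence au cas $r=2$'' is unnecessary since your argument works uniformly for all $r$, and you should also note (as in the proofs of \ref{prop:formule-caractere-Spt} or \ref{prop:formule-caractere-Thomas2}) that by specificity one may assume $\mathbf{f}=2$ before invoking the Lion--Perrin description.
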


Supposons maintenant $F$ non archimédien de caractéristique résiduelle $p>2$. Fixons un réseau $L \subset W$ tel que $L=L^\perp$. Soient $L_k := W_k \cap L$. Supposons que $L= \bigoplus_{k=1}^r L_k$, alors $L_k^\perp = L_k$ dans $W_k$. Posons $K_k := \text{Stab}_{\Sp(W_k)}(L_k)$.

Soit $x \in \Sp(W)^\dagger \cap K$ tel que $x(L_k) \subset L_k$ pour tout $k$. Il existe alors $x_k \in \Sp(W_k)^\dagger \cap K_k$ pour tout $k$ tel que $\iota(x_1,\ldots,x_r)=x$. On regarde $x$ (resp. les $x_k$) comme un élément de $\MMp{2}(W)$ (resp. $\MMp{2}(W_k)$) à l'aide du modèle latticiel associé à $L$ (resp. $L_k$). La proposition suivant résulte immédiatement de \ref{prop:formule-caractere-K-1}.

\begin{proposition}
  Avec les hypothèses ci-dessus, on a
  $$ \Theta_\psi(x) = \prod_{k=1}^r \Theta_\psi^{[k]}(x_k).$$
\end{proposition}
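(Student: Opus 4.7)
Mon plan est d'appliquer le Lemme \ref{prop:formule-caractere-K-1} aux deux membres et de factoriser la somme selon la décomposition orthogonale. La décomposition $L = \bigoplus_k L_k$ fournira une bijection canonique $W/L \rightiso \prod_k W_k/L_k$: tout $w \in W$ s'écrit uniquement $w = w_1 + \cdots + w_r$ avec $w_k \in W_k$, et cette écriture passe au quotient modulo $L$. Comme $x = \iota(x_1,\ldots,x_r)$, on aura $(x-1)w = \sum_k (x_k-1)w_k$ avec $(x_k-1)w_k \in W_k$, donc la condition $(x-1)w \in L$ équivaudra à $(x_k-1)w_k \in L_k$ pour tout $k$, en vertu du caractère direct de la somme.

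Ensuite, l'orthogonalité des $W_k$ par rapport à $\angles{\cdot|\cdot}$ donne $\angles{x(w)|w} = \sum_k \angles{x_k(w_k)|w_k}$, d'où
$$ \psi\left(\frac{\angles{x(w)|w}}{2}\right) = \prod_{k=1}^r \psi\left(\frac{\angles{x_k(w_k)|w_k}}{2}\right). $$
Combinées, ces deux observations permettront de factoriser la somme du Lemme \ref{prop:formule-caractere-K-1} calculant $\Theta_\psi(x)$ en le produit des sommes analogues donnant chaque $\Theta_\psi^{[k]}(x_k)$, ce qui conclut.

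Un point préalable à expliciter est la compatibilité des relèvements: le relèvement de $x$ dans $\MMp{2}(W)$ via le modèle latticiel de $L$ doit coïncider avec $j$ appliqué aux relèvements de $x_k$ via les modèles latticiels des $L_k$. Cela résulte de l'unicité du scindage caractérisée par la fixation d'un vecteur $H(L)$-invariant (\ref{rem:H(L)-invariant}), et de la factorisation naturelle de ce vecteur sous l'identification $S_\psi \simeq \bigotimes_k S_{\psi_k}$ déduite du fait que $\mathbbm{1}_L$ correspond à $\bigotimes_k \mathbbm{1}_{L_k}$.

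Il n'y a pas d'obstacle significatif dans cette démonstration: toute la preuve se réduit essentiellement à la bonne interprétation ensembliste de la somme finie indexée par $W/L$ fournie par le Lemme \ref{prop:formule-caractere-K-1}, une fois admise la compatibilité des relèvements ci-dessus.
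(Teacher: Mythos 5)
Votre démonstration coïncide essentiellement avec celle du texte, qui se borne à dire que la proposition «~résulte immédiatement de \ref{prop:formule-caractere-K-1}~» : vous explicitez exactement ce que recouvre cet «~immédiatement~» — factorisation de l'ensemble d'indices $W/L \simeq \prod_k W_k/L_k$, de la condition $(x-1)w \in L$, et du caractère $\psi(\angles{x(w)|w}/2)$ par orthogonalité — et vous signalez à juste titre la compatibilité des scindages latticiels via le vecteur $H(L)$-invariant. L'argument est correct et suit la même voie que l'article.
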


\subsection{La formule du produit}
Dans cette section on se place dans le cas global. Fixons $\mathbf{f} \in 2\Z_{\geq 1}$. Rappelons que l'on a une immersion canonique $i: \Sp(W,F) \to \MMp{2}(W,\A) \subset \MMp{\mathbf{f}}(W,\A)$. On écrit $i(x)=[\tilde{x}_v
]_v$ où $\tilde{x}_v \in \MMp{\mathbf{f}}(W_v)$.

\begin{definition}\label{def:localement-geometriquement-conj}
  On dit que deux éléments $x = (x_v)_v$, $y=(y_v)_v$ dans $\Sp(W,\A)$ sont localement géométriquement conjugués si pour toute place $v$, $x_v$ et $y_v$ dans $\Sp(W,F_v)$ sont géométriquement conjugués.
\end{definition}

\begin{theorem}\label{prop:caractere-formule-produit}
  Soient $x \in \Sp(W,F)$ et $\tilde{y}=[\tilde{y}_v] \in \MMp{\mathbf{f}}(W,\A)$ tels que $y$ est localement géométriquement conjugué à $x$. Supposons que $\det(x-1) \neq 0$. Alors $\Theta_{\psi_v}(\tilde{y}_v)=1$ pour presque tout $v$, et le produit $\prod_v \Theta_{\psi_v}(\tilde{y}_v)$ est bien défini et à valeurs dans $\bmu_\mathbf{f}$. On a aussi
  $$ \prod_v \Theta_{\psi_v}(\tilde{x}_v) = 1.$$

  De même, si $\det(x+1) \neq 0$ alors les assertions ci-dessus restent valables avec $(\Theta^+_{\psi_v}-\Theta^-_{\psi_v})$ au lieu de $\Theta_{\psi_v}$.
\end{theorem}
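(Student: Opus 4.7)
Le plan est de réduire l'énoncé au cas du relèvement canonique $\tilde{y} = i(x)$ via la conjugaison géométrique locale, puis d'invoquer la réciprocité de Weil. Pour la trivialité presque partout, on observe que, par la définition du produit restreint définissant $\MMp{\mathbf{f}}(W,\A)$, on a $\tilde{y}_v \in K_v$ pour presque toute place $v$, où $K_v$ désigne le sous-groupe hyperspécial relevé via le modèle latticiel. La conjugaison géométrique locale implique que $x$ et $y_v$ partagent le polynôme caractéristique $P \in F[T]$; comme $P(1) = \pm\det(x-1) \in F^\times$, on a $\bar{P}(1) \neq 0$ dans le corps résiduel pour presque toute $v$, ce qui entraîne $y_v \in K_v \cap \Sp(W_v)^\dagger$ et $(y_v - 1)L_v = L_v$. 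La Proposition \ref{prop:formule-caractere-K-2}(1) fournit alors $\Theta_{\psi_v}(\tilde{y}_v) = 1$ pour presque toute $v$, si bien que le produit $\prod_v \Theta_{\psi_v}(\tilde{y}_v)$ est essentiellement fini, donc bien défini.

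Pour établir $\prod_v \Theta_{\psi_v}(\tilde{x}_v) = 1$ lorsque $x \in \Sp(W,F)$, on fixe un $F$-lagrangien $\ell \subset W$; la Proposition \ref{prop:relevement-rationnel-explicite} exprime $\tilde{x}_v = (x, M_{\ell_v}[x])$, cas $z = 1$ du Corollaire \ref{prop:formule-caractere-Spt}. On a donc
$$ \Theta_{\psi_v}(\tilde{x}_v) = \frac{\gamma_{\psi_v}\bigl(\tau(\Gamma_x, \Gamma_1, \ell_v \oplus \ell_v)\bigr)}{|\det(x-1)|_v^{1/2}}. $$
La forme quadratique $q := \tau(\Gamma_x, \Gamma_1, \ell \oplus \ell)$ est définie sur $F$, et sa localisée à $v$ est précisément celle figurant au numérateur. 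La formule du produit donne $\prod_v |\det(x-1)|_v^{1/2} = 1$ puisque $\det(x-1) \in F^\times$, et la réciprocité de Weil (\cite{Weil64}, \S 30, Proposition 5) donne $\prod_v \gamma_{\psi_v}(q) = 1$. D'où $\prod_v \Theta_{\psi_v}(\tilde{x}_v) = 1$.

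D'après le Corollaire \ref{prop:caractere-stable}, à chaque place $v$ il existe $\noyau_v \in \bmu_\mathbf{f}$ tel que $\Theta_{\psi_v}(\tilde{y}_v) = \noyau_v \Theta_{\psi_v}(\tilde{x}_v)$; en prenant le produit (fini), on obtient $\prod_v \Theta_{\psi_v}(\tilde{y}_v) = \prod_v \noyau_v \in \bmu_\mathbf{f}$. Pour le cas signé, la Proposition \ref{prop:-1-echangement} livre $(\Theta_\psi^+ - \Theta_\psi^-)(\tilde{z}) = \Theta_\psi((-1) \cdot \tilde{z})$ dès que $\det(z+1) \neq 0$. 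Par le Corollaire \ref{prop:-1-relevement}, $(-1) \cdot i(x) = i(-x)$ dans $\MMp{\mathbf{f}}(W,\A)$, et $(-1) \cdot \tilde{y}$ est encore localement géométriquement conjugué à $-x$; puisque $\det(x+1) \neq 0$ équivaut à $\det((-x) - 1) \neq 0$, on applique les étapes précédentes à $-x$ et à $(-1) \cdot \tilde{y}$ pour conclure. L'étape délicate est la seconde: il faut exploiter l'absence de facteur multiplicatif dans le relèvement canonique $i(x) = (x, M_{\ell_v}[x])$ afin que l'identité se ramène à une pure application de la réciprocité de Weil à une seule forme quadratique rationnelle, sans correction adélique supplémentaire.
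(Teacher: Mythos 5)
Votre démonstration suit essentiellement la même voie que celle du texte : trivialité presque partout via le modèle latticiel (Prop.~\ref{prop:formule-caractere-K-2}), formule de $\Theta_{\psi_v}(\tilde{x}_v)$ via le modèle de Schrödinger (Cor.~\ref{prop:formule-caractere-Spt} appliqué à $i(x)=(x,\bigotimes_v M_{\ell_v}[x])$), réciprocité de Weil sur la forme $\tau(\Gamma_x,\Gamma_1,\ell\oplus\ell)$ définie sur $F$, passage à $\tilde{y}$ par le Cor.~\ref{prop:caractere-stable}, et réduction du cas $\det(x+1)\neq 0$ au cas précédent via $\pm 1$ et les Prop.~\ref{prop:-1-echangement}, Cor.~\ref{prop:-1-relevement}. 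Elle est correcte; on pourrait seulement préciser que le produit $\prod_v\Theta_{\psi_v}(\tilde{y}_v)$ est indépendant du choix des représentants $\tilde{y}_v$ parce que les $\Theta_{\psi_v}$ sont spécifiques et $\prod_v\noyau_v=1$ pour $(\noyau_v)\in\mathbf{N}$, point que le texte explicite mais que vous ne faites qu'effleurer en disant que le produit est « essentiellement fini ».
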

\begin{proof}
  Traitons d'abord le cas $\det(x-1) \neq 0$. Pour presque toute place finie $v$ on a $L_v=L_v^\perp$, $\tilde{y}_v \in K_v$ et $(y_v - 1)(L_v)=L_v$, alors \ref{prop:formule-caractere-K-2} entraîne que $ \Theta_{\psi_v}(\tilde{y}_v) = 1$. Puisque les distributions locales $\Theta_{\psi_v}$ sont spécifiques, le produit $\prod_v \Theta_{\psi_v}(\tilde{y}_v)$ ne dépend pas du choix des $\tilde{y}_v$.

  Fixons un lagrangien $\ell \in \Lag(W)$. On plonge $\MMp{\mathbf{f}}(W,\A)$ dans $\Spt_\psi(W,\A)$, alors
  $$ i(x) = (x, \bigotimes_v M_{\ell_v}[x]) $$
  d'après \ref{prop:relevement-rationnel-explicite}. La formule \ref{prop:formule-caractere-Spt} entraîne que
  $$ \prod_v \Theta_{\psi_v}(\tilde{x}_v) = \prod_v \frac{\gamma_{\psi_v}(\tau(\Gamma_x, \Gamma_1, \ell\oplus\ell))}{|\det(x-1)|_v^{1/2}}. $$
  Or $\prod_v |\det(x-1)|_v = 1$, et la réciprocité de Weil entraîne que
  $$ \prod_v \gamma_{\psi_v}(\tau(\Gamma_x, \Gamma_1, \ell\oplus\ell)) = 1 $$
  car l'espace quadratique $\tau(\Gamma_x, \Gamma_1, \ell\oplus\ell)$ est défini sur $F$. D'où la formule du produit de la première assertion. Soit $\tilde{y}=[\tilde{y}_v] \in \Sp(W,\A)$ tel que $y$ est localement géométriquement conjugué à $x$, on a $\Theta_{\psi_v}(\tilde{x}_v) \Theta_{\psi_v}(\tilde{y}_v)^{-1} \in \bmu_\mathbf{f}$ pour toute place $v$ d'après \ref{prop:caractere-stable}, cela démontre le reste.

  Le cas $\det(x+1) \neq 0$ résulte du cas précédent, de \ref{prop:-1-echangement} et de \ref{prop:-1-relevement}.
\end{proof}

\section{Endoscopie}
Désormais, fixons $\mathbf{f} \in 2\Z_{\geq 1}$ et posons $\Mp(W) = \MMp{\mathbf{f}}(W)$\index{$\Mp(W)$} pour $W$ un $F$-espace symplectique où $F$ est un corps local. De même, posons $\Mp(W,\A) = \MMp{\mathbf{f}}(W,\A)$ si $F$ est un corps global.

\subsection{Données endoscopiques elliptiques}\index{données endoscopique elliptique}\label{sec:donnees-endoscopiques}
Fixons $F$ un corps local ou global de caractéristique $0$ et fixons $(W,\angles{\cdot|\cdot})$ un $F$-espace symplectique de dimension $2n$, $n \in \Z_{\geq 0}$. Soient $n', n'' \in \Z_{\geq 0}$, $n'+n''=n$. Nous étudierons les groupes

\begin{align*}
  G & :=\Sp(W), \\
  H & = H_{n',n''} := H' \times H'', \quad\text{où} \\
  H' &:= \SO(2n'+1), \\
  H'' & := \SO(2n''+1);\\
  \tilde{G} &:= \begin{cases} \Mp(W), & \text{lorsque } F \text{ est local}, \\
                  \Mp(W,\A), & \quad \text{lorsque } F \text{ est global}.
                \end{cases}
\end{align*}
Ici $\SO(2m+1)$ ($m=n'$ ou $n''$) signifie le groupe orthogonal impair déployé associé à la forme quadratique sur $F^{2m+1}$:
$$ (x_{-m}, \ldots, x_0, \ldots, x_m) \mapsto 2\sum_{i=1}^m x_i x_{-i} + x_0^2 .$$

Notons $\rev: \tilde{G} \to G(F)$ (resp. $\rev: \tilde{G} \to G(\A)$) le revêtement métaplectique si $F$ est local (resp. global).

Une donnée endoscopique elliptique de $\tilde{G}$ est une paire $(n',n'')$ comme ci-dessus. Désormais, fixons une telle donnée $(n',n'')$. Le groupe $H := H_{n',n''}$ est le groupe endoscopique elliptique associé à $(n',n'')$. Spécifions maintenant la correspondance de classes de conjugaison géométriques semi-simples.

Fixons des $F$-tores maximaux déployés $S' \subset H'$, $S'' \subset H''$ et posons $S = S' \times S''$, c'est un $F$-tore maximal déployé dans $H$. Fixons aussi des $F$-tores maximaux déployés $T' \subset \Sp(2n')$, $T'' \subset \Sp(2n'')$ et $T \subset \Sp(W)$. Ces objets sont uniques à $F$-conjugaison près.

Notons par $W^{H'}(S')$, $W^{H''}(S'')$, $W^{\Sp(2n')}(T')$, $W^{\Sp(2n'')}(T'')$, $W^G(T)$ les groupes de Weyl associés à ces tores maximaux, alors $W^H(S) = W^{H'}(S') \times W^{H''}(S'')$. Il y a des homomorphismes canoniques
\begin{gather*}
  W^{H'}(S') \rightiso W^{\Sp(2n')}(T'), \\
  W^{H''}(S'') \rightiso W^{\Sp(2n'')}(T''), \\
  W^{\Sp(2n')}(T') \times W^{\Sp(2n'')}(T'') \hookrightarrow W^{\Sp(2n)}(T),
\end{gather*}
et des $F$-isomorphismes qui respectent les homomorphismes ci-dessus
\begin{align*}
  \mu': & S' \rightiso T' ,\\
  \mu'': & S'' \rightiso T'', \\
  \nu: & T' \times T'' \rightiso T .
\end{align*}

On obtient ainsi des $F$-morphismes, notés par les mêmes lettres:
\begin{align*}
  \mu': & S'/W^{H'}(S') \rightiso T'/W^{\Sp(2n')}(T'), \\
  \mu'': & S''/W^{H''}(S'') \rightiso T''/W^{\Sp(2n'')}(T''), \\
  \nu: & T'/W^{\Sp(2n')}(T') \times T''/W^{\Sp(2n'')}(T'') \to T/W^G(T).
\end{align*}

Posons
\begin{equation}\label{eqn:mu-0}
  \mu = \mu_{n',n''} := \nu \circ (\identity, -\identity) \circ (\mu',\mu''): S/W^{H}(S) \to T/W^G(T).
\end{equation}

Puisque $G$ est simple et simplement connexe, $\mu$ donne naissance à une application, notée encore par $\mu$:

\begin{equation}\label{eqn:mu}
  \mu: \Cssgeo(H(F)) \to \Cssgeo(G(F)).
\end{equation}
Cette application ne dépend pas du choix de $F$-tores maximaux. De plus, elle est à fibres finies car $\mu',\mu'', \nu$ le sont.

\begin{definition}\index{correspondance des classes de conjugaison semi-simples}
  Si $\delta \in G(F)_\text{ss}$, $\gamma \in H(F)_\text{ss}$ sont tels que $\mu(\mathcal{O}^\text{geo}(\gamma)) = \mathcal{O}^\text{geo}(\delta)$, on dit que $\delta$ et $\gamma$ se correspondent.
\end{definition}

\begin{remark}
  Lorsque $n'=0$ ou $n''=0$,  $\mu$ est bijectif. C'est démontré dans \cite{Ho07} pour $F$ un corps local, mais l'argument marche aussi sur un corps global.
\end{remark}

Explicitons l'application $\mu$ en termes de paramètres.

\begin{proposition}
  Soit $\gamma \in H(F)_\text{ss} = H'(F)_\text{ss} \times H''(F)_\text{ss}$ tel que
$$ \gamma \in \mathcal{O}((K'/K'^\#, a', (V_{K'},h_{K'}), (V'_\pm, q'_\pm)) \oplus (K''/K''^\#, a'', (V_{K''},h_{K''}), (V''_\pm, q''_\pm))), $$
  alors un élément $\delta \in G(F)_\text{ss}$ lui correspond si et seulement si $\mathcal{O}(\delta)$ est paramétré par
  $$ \mathcal{O}(K/K^\#, (a',-a''), (W_K, h_K), (W_\pm,\angles{\cdot|\cdot}_\pm)) $$
  où $K := K' \times K''$ comme $F$-algèbres étales à involution, et les autres données sont soumises aux conditions
  \begin{gather*}
    W_K \simeq V_{K'} \oplus V_{K''} \; \text{ comme } K-\text{modules}, \\
    \dim_F W_+ + 1 = \dim_F V'_+ + \dim_F V''_- , \\
    \dim_F W_- + 1 = \dim_F V'_- + \dim_F V''_+ .
  \end{gather*}
\end{proposition}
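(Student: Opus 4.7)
The plan is to characterise geometric conjugacy in the classical groups in question by the characteristic polynomial in the standard representation, and then to read off how $\mu$ acts on these polynomials directly from the definition \eqref{eqn:mu-0}. From the paramétrage of \S\ref{sec:parametrage}, the element $\gamma' \in H'(F) = \SO(2n'+1)$ acts on the underlying quadratic space ${\Tr_{K'/F}}_*(V_{K'},h_{K'}) \oplus (V'_+,q'_+) \oplus (V'_-,q'_-)$ by multiplication by $a'$ on $V_{K'}$ and by $\pm\identity$ on $V'_\pm$, so its characteristic polynomial is
$$ P_{\gamma'}(T) = P_{a',V_{K'}}(T)\,(T-1)^{\dim_F V'_+}\,(T+1)^{\dim_F V'_-}, $$
where $P_{a',V_{K'}}$ denotes the characteristic polynomial of multiplication by $a'$ as an $F$-linear endomorphism of $V_{K'}$. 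Analogous formulas hold for $\gamma''$ and for $\delta$ in terms of $(K/K^\#,x,(W_K,h_K),(W_\pm,\angles{\cdot|\cdot}_\pm))$.

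Next I would unwind the definition of $\mu$ at the level of characteristic polynomials. Both $S' \subset H'$ and $T' \subset \Sp(2n')$ are split tori of rank $n'$; the $F$-isomorphism $\mu'$ respects the root data, and the standard representation of $\SO(2n'+1)$ differs from that of $\Sp(2n')$ only by the additional trivial line. Hence $P_{\mu'(\gamma')}(T) = P_{\gamma'}(T)/(T-1)$, and similarly for $\mu''$. The operator $(\identity,-\identity)$ negates the eigenvalues of the second factor, giving $P_{-\mu''(\gamma'')}(T) = P_{\mu''(\gamma'')}(-T)$ (the sign $(-1)^{2n''}$ being trivial), and finally $\nu$ multiplies the characteristic polynomials of the two blocks. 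Combining these identities yields
$$ P_\delta(T) \;=\; \frac{P_{\gamma'}(T)\,P_{\gamma''}(-T)}{-(T-1)(T+1)}. $$

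Matching this formula with the expression for $P_\delta$ coming from its own parameter, the multiplicities of the roots $+1$ and $-1$ give exactly the two dimension equalities of the proposition. The remaining roots are on the one hand the $\overline{F}$-eigenvalues of multiplication by $a'$ on $V_{K'}$, and on the other hand those of multiplication by $-a''$ on $V_{K''}$; imposing $K = F[x]$ with $\tau_K(x) = x^{-1}$ forces $K \simeq K'\times K''$ as $F$-algebras with involution, $x = (a',-a'')$, and $W_K \simeq V_{K'}\oplus V_{K''}$ as $K$-modules. Since the forms $h_K$, $q_\pm$ are not recoverable from the characteristic polynomial alone, one recovers only geometric conjugacy, which is what is asserted. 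The main obstacle is the degenerate case where the minimal polynomials of $a'$ and $-a''$ share an irreducible factor: then $K' \times K''$ is not of the form $F[y]$ for any single $y$, and one must invoke the décomposition of \ref{rem:parametres-decomposition} to regroup common components before extracting a parameter in canonical form. Once this bookkeeping is handled via the somme directe construction of \S\ref{sec:kit-classification}, the argument reduces, as above, to equating eigenvalue multisets.
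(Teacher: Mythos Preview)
Your argument is correct and follows essentially the same route as the paper's own proof: both trace the effect of $\mu', \mu'', (\identity,-\identity), \nu$ on eigenvalues and then read off the parameter of $\mathcal{O}(\delta)$. The paper states this more tersely in terms of eigenvalue lists rather than characteristic polynomials, and it does not pause on the degenerate case you flag (where $a'$ and $-a''$ share an irreducible factor), relying implicitly on the somme directe convention of \S\ref{sec:kit-classification} just as you do.
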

\begin{proof}
  L'application $\mu'$ induit une application $\Cssgeo(H'(F)) \to \Cssgeo(\Sp(2n',F))$ qui envoie une classe de valeurs propres
  $$ a'_1, \ldots, a'_{n'}, 1, (a'_{n'})^{-1}, \ldots, (a'_1)^{-1} $$
  sur une classe de valeurs propres
  $$ a'_1, \ldots, a'_{n'}, (a'_{n'})^{-1}, \ldots, (a'_1)^{-1}. $$
  Il en est de même pour $\mu''$ avec $n''$ au lieu de $n'$.

  D'autre part $\nu$ induit une application $\Cssgeo(\Sp(2n',F)) \times \Cssgeo(\Sp(2n'',F)) \to \Cssgeo(G(F))$ préservant les valeurs propres. L'assertion en résulte par la construction de $\mu$ \eqref{eqn:mu-0}, \eqref{eqn:mu}.
\end{proof}

\begin{definition}\index{$G$-régulier}
  On dit qu'un élément $\gamma \in H(F)_\text{ss}$ est $G$-régulier si $\mu(\mathcal{O}^\text{st}(\gamma))$ est régulier. On note l'ouvert de Zariski des éléments $G$-réguliers par $H_{G-\text{reg}}$.
\end{definition}

Si $\gamma=(\gamma',\gamma'')$ est $G$-régulier, alors $\gamma'$ et $\gamma''$ sont assez réguliers.

\begin{corollary}
  Deux éléments $\gamma=(\gamma',\gamma'') \in H_{G-\text{reg}}(F)$ et $\delta \in G_\text{reg}(F)$ se correspondent si et seulement s'ils admettent des paramètres de la forme suivante
  \begin{align*}
     \gamma' & \in \mathcal{O}(K'/K'^\#, a', c') \\
     \gamma'' & \in \mathcal{O}(K''/K''^\#, a'', c'') \\
     \delta & \in \mathcal{O}(K/K^\#, (a',-a''), c)
  \end{align*}
  où $K=K' \times K''$ comme $F$-algèbres étales à involution. Il n'y a pas de restrictions sur les données $c',c'', c$.
\end{corollary}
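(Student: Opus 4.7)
Mon plan est de spécialiser la proposition précédente au cas $G$-régulier; la démonstration se réduit alors à une lecture soigneuse des paramètres en jeu. D'abord, je traduirais l'hypothèse $\delta \in G_\text{reg}(F)$: d'après \S\ref{sec:parametrage}, tout paramètre de $\mathcal{O}(\delta)$ dans $\Sp(W)$ vérifie $W_K \simeq K$ comme $K$-module et $W_+ = W_- = \{0\}$. La proposition précédente fournit alors, en posant $K = K' \times K''$ comme $F$-algèbre à involution, un paramètre pour $\delta$ de la forme $(K/K^\#, (a',-a''), (W_K, h_K), (\{0\}, \{0\}))$.

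Ensuite, je réinterpréterais cette contrainte du côté de $H$ via les identités fournies par ladite proposition. La décomposition $K \simeq K' \times K''$ et ses idempotents centraux entraînent que l'isomorphisme $W_K \simeq V_{K'} \oplus V_{K''}$ combiné avec $W_K \simeq K$ force $V_{K'} \simeq K'$ et $V_{K''} \simeq K''$. De plus, les équations $\dim W_+ + 1 = \dim V'_+ + \dim V''_-$ et $\dim W_- + 1 = \dim V'_- + \dim V''_+$ avec $\dim W_\pm = 0$, jointes aux contraintes de parité sur $\dim V'_\pm, \dim V''_\pm$ rappelées en \S\ref{sec:parametrage}, imposent $V'_- = V''_- = \{0\}$ et $\dim V'_+ = \dim V''_+ = 1$. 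Ces conditions sont précisément celles définissant l'assez-régularité dans $H'$ et $H''$; les paramètres se simplifient donc en les triplets annoncés $(K'/K'^\#, a', c')$, $(K''/K''^\#, a'', c'')$ et $(K/K^\#, (a',-a''), c)$, où $c, c', c''$ satisfont aux contraintes de symétrie $\tau_K(c) = -c$, $\tau_{K'}(c') = c'$, $\tau_{K''}(c'') = c''$.

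Pour la dernière affirmation, je constaterais que la correspondance $\mu$, définie en \eqref{eqn:mu-0} uniquement au niveau des quotients par les groupes de Weyl, ne fait intervenir que $(K/K^\#, a', a'')$ et n'impose aucune contrainte sur les formes hermitiennes ou anti-hermitiennes; les paramètres $c', c'', c$ apparaissent donc indépendamment les uns des autres, leurs conditions d'existence restant intrinsèques à $H', H'', G$ et non liées à la correspondance. L'obstacle principal, assez modeste ici, consiste à mener avec soin la traduction entre les conditions de dimension et de structure de $K$-module imposées par la proposition précédente, et les paramètres simplifiés du cas assez régulier; il s'agit essentiellement d'une manipulation des paramétrages déjà classifiés en \S\ref{sec:parametrage}.
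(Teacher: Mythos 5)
Le corollaire est énoncé dans l'article sans démonstration, comme conséquence immédiate de la proposition qui le précède; votre esquisse remplit correctement les détails laissés implicites, en particulier la résolution des deux équations de dimension $\dim V'_+ + \dim V''_- = 1$ et $\dim V'_- + \dim V''_+ = 1$ à l'aide des contraintes de parité ($\dim V'_+, \dim V''_+$ impairs, $\dim V'_-, \dim V''_-$ pairs), ce qui force $\dim V'_+ = \dim V''_+ = 1$ et $V'_- = V''_- = \{0\}$, exactement les conditions d'assez-régularité. Votre observation finale sur l'indépendance des données $c', c'', c$ — l'application $\mu$ ne voit que les algèbres étales et les valeurs propres, pas les formes — est aussi la bonne justification du dernier point; la preuve est correcte et constitue l'argument attendu.
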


Enfin, la formation de l'application $\mu$ est compatible aux complétions.

\subsection{Une notion de stabilité}\label{sec:stabilite}
Supposons que $F$ est un corps local. Soient $\delta \in G_\text{reg}(F)$ et $T$ l'unique $F$-tore maximal contenant $\delta$. Alors $\mathcal{D}(\delta) := \mathcal{O}^\text{st}(\delta)/\text{conj}$ est un torseur sous $H^1(F,T)$. Explicitons cette action. Si $\mathcal{O}(\delta)$ est paramétré par $(K/K^\#,a,c)$, $K=\prod_{i \in I} K_i$, alors on a des isomorphismes canoniques
$$ H^1(F,T) \simeq K^{\#\times}/N_{K/K^\#}(K^\times) \xrightarrow[(\sgn_i)_{i \in I}]{\sim} \bmu_2^{I_*} $$
où $\sgn_i := \sgn_{K_i/K_i^\#}$. D'autre part $\mathcal{O}^\text{st}(\delta)/\text{conj}$ est isomorphe au même ensemble, sur lequel $H^1(F,T)$ agit par multiplication.

\begin{definition}\index{conjugaison stable pour le groupe métaplectique}
  Soient $\tilde{\delta}_1, \tilde{\delta}_2 \in \tilde{G}_\text{reg}$, on dit qu'ils sont stablement conjugués si leurs images dans $G(F)$ sont stablement conjugués et si
  $$(\Theta_\psi^+ - \Theta_\psi^-)(\tilde{\delta}_1) = (\Theta_\psi^+ - \Theta_\psi^-)(\tilde{\delta}_2).$$

  Soit $\tilde{\delta} \in \tilde{G}_\text{reg}$, notons $\mathcal{O}^\text{st}(\tilde{\delta})$ l'ensemble d'éléments dans $\tilde{G}_\text{reg}$ stablement conjugués à $\tilde{\delta}$. On définit $\mathcal{D}(\tilde{\delta}) := \mathcal{O}^\text{st}(\tilde{\delta})/\text{conj}$.
\end{definition}

Pour $F=\R$, cette notion \textit{ad hoc} coïncide avec celle d'Adams (\cite{Ad98} 8.10) si l'on se restreint à $\MMp{2}(W)$. Pour $F=\C$, on identifie $\tilde{G}$ à $\Ker(\rev) \times G(\C)$. Deux éléments réguliers $(\noyau_1, \delta_1), (\noyau_2, \delta_2)$ sont stablement conjugués si et seulement si $\delta_1, \delta_2$ le sont et $\noyau_1 = \noyau_2$, d'après \ref{rem:caractere-C}.

\begin{lemma}
  Soit $\tilde{\delta} \in \tilde{G}_\text{reg}$, alors $\rev: \tilde{G} \to G(F)$ induit une bijection $\mathcal{D}(\tilde{\delta}) \to \mathcal{D}(\delta)$.
\end{lemma}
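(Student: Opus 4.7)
The plan is to show the map $\rev_* : \mathcal{D}(\tilde{\delta}) \to \mathcal{D}(\delta)$ induced on conjugacy classes is well-defined, surjective, and injective. Well-definedness is immediate because $\rev$ is a group homomorphism (with central kernel $\bmu_\mathbf{f}$), so $\tilde{G}$-conjugate elements of $\tilde{G}_\text{reg}$ project to $G(F)$-conjugate elements, and $\mathcal{O}^\text{st}(\tilde{\delta})$ projects into $\mathcal{O}^\text{st}(\delta)$ by the very definition of stable conjugacy in $\tilde{G}$.

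For surjectivity, let $\delta' \in \mathcal{O}^\text{st}(\delta)$ and pick any lift $\tilde{\delta}'' \in \tilde{G}$ of $\delta'$. Since regular semisimple elements of $\Sp(W)$ have eigenvalues distinct from $\pm 1$, we have $\delta', \delta \in \Sp(W)^\ddagger$, so both $-\tilde{\delta}''$ and $-\tilde{\delta}$ lie in $\MMp{\mathbf{f}}(W)^\dagger$. As $\delta$ and $\delta'$ are stably, hence geometrically, conjugate, so are $-\delta$ and $-\delta'$. Using the identity $\Theta_\psi^+ - \Theta_\psi^- = \Theta_\psi \circ (-1)$ from Proposition~\ref{prop:-1-echangement}, Corollary~\ref{prop:caractere-stable} applied to $-\tilde{\delta}''$ and $-\tilde{\delta}$ yields a $\noyau \in \bmu_\mathbf{f}$ such that
$$ (\Theta_\psi^+ - \Theta_\psi^-)(\tilde{\delta}'') = \noyau \cdot (\Theta_\psi^+ - \Theta_\psi^-)(\tilde{\delta}). $$
Setting $\tilde{\delta}' := \noyau^{-1} \tilde{\delta}''$ and using the $\chi_-$-equivariance (specificity) of $\Theta_\psi^\pm$, the character values coincide, so $\tilde{\delta}' \in \mathcal{O}^\text{st}(\tilde{\delta})$ lifts $\delta'$.

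For injectivity, suppose $\tilde{\delta}_1, \tilde{\delta}_2 \in \mathcal{O}^\text{st}(\tilde{\delta})$ have projections conjugate in $G(F)$, say $g\delta_1 g^{-1} = \delta_2$. Choose any lift $\tilde{g} \in \tilde{G}$ of $g$; then $\tilde{g}\tilde{\delta}_1\tilde{g}^{-1}$ is another lift of $\delta_2$, so $\tilde{g}\tilde{\delta}_1\tilde{g}^{-1} = \noyau \tilde{\delta}_2$ for a unique $\noyau \in \bmu_\mathbf{f}$. Applying $\Theta_\psi^+ - \Theta_\psi^-$, which is invariant under conjugation, $\chi_-$-equivariant under multiplication by $\bmu_\mathbf{f}$, and takes the same value on $\tilde{\delta}_1$ and $\tilde{\delta}_2$ (by the definition of $\mathcal{O}^\text{st}(\tilde{\delta})$), we obtain
$$ (\Theta_\psi^+ - \Theta_\psi^-)(\tilde{\delta}_1) = \noyau \cdot (\Theta_\psi^+ - \Theta_\psi^-)(\tilde{\delta}_1). $$
Since $\delta_1$ is regular semisimple, $-\tilde{\delta}_1 \in \MMp{\mathbf{f}}(W)^\dagger$ and Theorem~\ref{prop:formule-caractere-Thomas}(iii) shows this common value is nonzero, forcing $\noyau = 1$. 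Hence $\tilde{g}\tilde{\delta}_1\tilde{g}^{-1} = \tilde{\delta}_2$, i.e.\ they are $\tilde{G}$-conjugate.

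The only point requiring genuine input is the nonvanishing of $(\Theta_\psi^+ - \Theta_\psi^-)$ at regular semisimple points, and the compatibility of Corollary~\ref{prop:caractere-stable} with the sign twist coming from multiplication by $-1$; both are supplied directly by the character formulas and Proposition~\ref{prop:-1-echangement}. No harder ingredient seems to be needed, and the argument works uniformly over any local field $F$ of characteristic zero (in the archimedean complex case it trivially reduces to the splitting $\tilde{G} \simeq \bmu_\mathbf{f} \times G(\C)$ together with Remark~\ref{rem:caractere-C}).
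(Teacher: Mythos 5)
Your overall strategy matches the paper's own proof, which is terser but relies on exactly the same ingredients: specificity of $\Theta_\psi^+ - \Theta_\psi^-$ together with the fact that it varies within $\Ker(\rev)$ over a geometric conjugacy class. Your injectivity argument (conjugating by a lift $\tilde{g}$ and using nonvanishing to kill the central discrepancy) is correct: the nonvanishing of $(\Theta_\psi^+-\Theta_\psi^-)(\tilde{\delta}_1)=\Theta_\psi((-1)\tilde{\delta}_1)$ can be read off from the modulus $|\det(\delta_1+1)|^{-1/2}\neq 0$ in \ref{prop:formule-caractere-Spt}, regardless of which cover $(-1)\tilde{\delta}_1$ happens to live in. The complex case is also handled correctly.

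There is, however, a gap in the surjectivity step. You assert that $-\tilde{\delta}''$ and $-\tilde{\delta}$ lie in $\MMp{\mathbf{f}}(W)^\dagger$, but the element $-1=\sigma_\ell(-1)$ is only guaranteed to lie in $\MMp{8}(W)$ by \ref{prop:scindage-Schrodinger}; its evaluation $\mathrm{ev}_\ell$ against the two $\MMp{2}(W)$-lifts of $-1\in\Sp(W)$ is $\pm\gamma_\psi(1)^{-1}$, which is not in $\bmu_2$ for $F=\R$ or $F$ $p$-adic. Since this lemma lives in \S 5 where only $\mathbf{f}\in 2\Z_{\geq 1}$ is assumed, the twisted elements live merely in $\MMp{\mathrm{lcm}(8,\mathbf{f})}(W)^\dagger$, and applying \ref{prop:caractere-stable} there gives $\noyau\in\bmu_{\mathrm{lcm}(8,\mathbf{f})}$, not $\bmu_\mathbf{f}$; the element $\tilde{\delta}':=\noyau^{-1}\tilde{\delta}''$ may then fall outside $\tilde{G}$ (take $\mathbf{f}=2$). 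To close the gap, avoid the $-1$ twist and instead relate the two characters via \ref{prop:formule-caractere-Thomas2}: since $\delta''$ and $\delta$ have the same characteristic polynomial, the absolute-value factors cancel and one gets
$$ \frac{(\Theta_\psi^+ - \Theta_\psi^-)(\tilde{\delta}'')}{(\Theta_\psi^+ - \Theta_\psi^-)(\tilde{\delta})} \;=\; \frac{\Theta_\psi(\tilde{\delta}'')}{\Theta_\psi(\tilde{\delta})} \cdot \frac{\gamma_\psi(q[C_{\delta}])}{\gamma_\psi(q[C_{\delta''}])}, $$
where the first factor lies in $\bmu_\mathbf{f}$ by \ref{prop:caractere-stable} applied directly to $\tilde{\delta}'',\tilde{\delta}\in\MMp{\mathbf{f}}(W)^\dagger$, and the second lies in $\{\pm1\}\subset\bmu_\mathbf{f}$ by \ref{prop:changement-de-signes}. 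With $\noyau\in\bmu_\mathbf{f}$ in hand, the rest of your argument goes through. (The paper's own invocation of \ref{prop:caractere-stable} for $\Theta_\psi^+-\Theta_\psi^-$ rather than for $\Theta_\psi$ glosses over precisely this step.)
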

\begin{proof}
  Soit $\delta_1 \in \mathcal{O}^\text{st}(\delta)$ et $\tilde{\delta}_1 \in \rev^{-1}(\delta_1)$ quelconque. D'après \ref{prop:caractere-stable}, il existe $\noyau \in \Ker(\rev)$ tel que
  $$ (\Theta_\psi^+ - \Theta_\psi^-)(\tilde{\delta}_1) = \noyau \cdot (\Theta_\psi^+ - \Theta_\psi^-)(\tilde{\delta}). $$

  Or $\Theta_\psi^+ - \Theta_\psi^-$ est spécifique, donc il existe un unique $\tilde{\delta}_1 \in \rev^{-1}(\delta_1)$ stablement conjugué à $\tilde{\delta}$. Cela permet de conclure.
\end{proof}

Soient $\tilde{\delta}, \tilde{\delta}_1$ stablement conjugués. Notons $T$ le $F$-tore maximal contenant $\delta$, on pose $\inv(\tilde{\delta},\tilde{\delta}_1) := \inv(\delta,\delta_1)$; c'est l'unique élément $c \in H^1(F,T)$ tel que $c \cdot \delta = \delta_1$.

\begin{remark}\label{rem:stabilite-compatibilite}
  La notion de stabilité est compatible avec la restriction: si $\mathbf{f},\mathbf{f}' \in 2\Z_{\geq 1}$, $\mathbf{f}|\mathbf{f}'$ et $\tilde{\delta}, \tilde{\delta}_1 \in \MMp{\mathbf{f}}(W)_\text{reg}$, alors $\tilde{\delta}$ et $\tilde{\delta}_1$ sont stablement conjugués dans $\MMp{\mathbf{f}}(W)$ si et seulement s'ils sont stablement conjugués dans $\MMp{\mathbf{f}'}(W)$.
\end{remark}

\subsection{Facteur de transfert}\label{sec:facteur-de-transfert}
Sauf mention expresse du contraire, dans cette section $F$ est toujours un corps local.

Soient $\tilde{\delta} \in \tilde{G}_\text{reg}$, $\gamma=(\gamma',\gamma'') \in H_{G-\text{reg}}(F)$ tels que $\delta$ et $\gamma$ se correspondent. Supposons que
\begin{gather*}
  \gamma' \in \mathcal{O}(K'/K'^\#, a', c'), \\
  \gamma'' \in \mathcal{O}(K''/K''^\#, a'', c'');
\end{gather*}

alors on a une unique décomposition orthogonale $W=W' \oplus W''$ stable par $\delta$ telle que si l'on pose $\delta' := \delta|_{W'}$, $\delta'' := \delta|_{W''}$, alors
\begin{gather*}
  \delta' \in \mathcal{O}(K'/K'^\#, a', c') \\
  \delta'' \in \mathcal{O}(K''/K''^\#, -a'', c'').
\end{gather*}

On a un diagramme commutatif associé à $W=W'\oplus W''$:
$$\xymatrix{
  \Mp(W') \times \Mp(W'') \ar[rr]^{j} \ar[d]_{(\rev',\rev'')} & & \Mp(W) \ar[d]^{p} \\
  \Sp(W') \times \Sp(W'') \ar@{^{(}->}[rr]_{\iota} & & \Sp(W) \\
}$$

et il existe $\tilde{\delta}' \in \Mp(W')$, $\tilde{\delta}'' \in \Mp(W'')$ tels que $j(\tilde{\delta}',\tilde{\delta}'')=\tilde{\delta}$, $\iota(\delta',\delta'')=\delta$. La paire $(\delta', \delta'')$ est unique et appartient à $\Sp(W')_\text{reg} \times \Sp(W'')_\text{reg}$. Par contre la paire $(\tilde{\delta}', \tilde{\delta}'')$ est unique à multiplication par $\{(\noyau,\noyau^{-1}) : \noyau \in \Ker(\rev) \}$ près.

\begin{definition}\index{$\Delta, \Delta', \Delta'', \Delta_0$}
  Avec les hypothèses ci-dessus, définissons
  \begin{align*}
    \Delta'(\tilde{\delta}') & := \frac{{\Theta'}_\psi^+ - {\Theta'}_\psi^-}{|{\Theta'}_\psi^+ - {\Theta'}_\psi^-|}(\tilde{\delta}'), \\
    \Delta''(\tilde{\delta}'') & := \frac{{\Theta''}_\psi^+ +{\Theta''}_\psi^-}{|{\Theta''}_\psi^+ + {\Theta''}_\psi^-|}(\tilde{\delta}''), \\
    \Delta_0(\delta', \delta'') & := \sgn_{K''/K''^\#}(P_{a'}(a'')(-a'')^{-n'} \det(\delta'+1)),
  \end{align*}
  où ${\Theta'}_\psi^\pm$ (resp. ${\Theta''}_\psi^\pm$) sont les caractères définis sur $\Mp(W')$ (resp. $\Mp(W'')$), et $P_{a'}(T) \in F[T]$ est le polynôme caractéristique de $a' \in K'^\times$ (qui est aussi le celui de $\delta' \in \End_F(W')$). Remarquons que $P_{a'}(a'')(-a'')^{-n'} \in K''^{\#\times}$. En effet, $P_{a'}(T)(T)^{-n'} \in F[T+T^{-1}]$ car $\tau(a')=a'^{-1}$ où $\tau$ est l'involution de $K'$, et la régularité de $\delta$ entraîne que $P_{a'}(a'') \neq 0$.

  Le facteur de transfert\index{facteur de transfert!pour le groupe métaplectique} est défini par
  $$ \Delta(\gamma, \tilde{\delta}) := \Delta_0(\delta',\delta'') \Delta'(\tilde{\delta}') \Delta''(\tilde{\delta}''). $$

  Comme $\Delta', \Delta''$ sont des distributions spécifiques, $\Delta(\gamma, \tilde{\delta})$ est bien défini. Ce ne dépend que de $\mathcal{O}^\text{st}(\gamma)$ et $\mathcal{O}(\tilde{\delta})$, et le terme $\Delta_0$ ne dépend que de $\mathcal{O}^\text{st}(\gamma)$.

  Définissons $\Delta(\gamma, \tilde{\delta})=0$ si $\gamma$ et $\delta$ ne se correspondent pas. Cela définit une fonction $\Delta$ sur $H_{G-\text{reg}}(F) \times \tilde{G}_\text{reg}$.
\end{definition}

\begin{remark}
  Le terme $\Delta_0$ est trivial lorsque $n'=0$ ou $n''=0$. Lorsque $F=\R$ et $n''=0$, $\Delta$ coïncide avec le facteur de transfert défini par Adams \cite{Ad98}, quitte à se restreindre sur $\MMp{2}(W)$. Lorsque $F=\C$, $\Delta$ est trivial.
\end{remark}

Déduisons d'autres formules utiles pour $\Delta$.
\begin{proposition}\label{prop:Delta-variante}
  On a aussi
  \begin{align*}
    \Delta(\gamma, \tilde{\delta}) &= \Delta_0(\delta',\delta'') \cdot  \frac{\Theta_\psi^+ - \Theta_\psi^-}{|\Theta_\psi^+ - \Theta_\psi^-|}(\tilde{\delta}) \cdot \gamma_\psi(q[C_{\delta''}]) \\
    & = \Delta_0(\delta',\delta'') \cdot  \frac{\Theta_\psi^+ + \Theta_\psi^-}{|\Theta_\psi^+ + \Theta_\psi^-|}(\tilde{\delta}) \cdot \gamma_\psi(q[C_{\delta'}])^{-1}.
  \end{align*}
\end{proposition}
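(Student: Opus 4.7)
The strategy is to reduce the two identities to the multiplicativity of the Weil character under the orthogonal decomposition $W = W' \oplus W''$, combined with Theorem \ref{prop:formule-caractere-Thomas2}. First I would verify that, under $j: \Mp(W') \times \Mp(W'') \to \Mp(W)$, the element $-1 \in \Mp(W)$ is the image of $(-1,-1)$. This follows from Corollary \ref{prop:-1-caracterisation}: applying Proposition \ref{prop:caractere-decomposition-0} gives $\Theta_\psi(j(-1,-1)) = |2|^{n'}|2|^{n''} = |2|^n = \Theta_\psi(-1)$, and the specificity of $\Theta_\psi$ forces equality of the two lifts of $-1 \in \Sp(W)$. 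Combined with Proposition \ref{prop:-1-echangement} and Proposition \ref{prop:caractere-decomposition-0}, this yields the product formulas
$$(\Theta_\psi^+ \pm \Theta_\psi^-)(\tilde{\delta}) = ({\Theta'}_\psi^+ \pm {\Theta'}_\psi^-)(\tilde\delta') \cdot ({\Theta''}_\psi^+ \pm {\Theta''}_\psi^-)(\tilde\delta''),$$
together with the analogous identities for the moduli (using $|\Theta_\psi(\tilde x)| = |\det(x-1)|^{-1/2}$, a direct consequence of Proposition \ref{prop:formule-caractere-Thomas}).

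For the first formula, I would expand $\Delta'(\tilde\delta')\Delta''(\tilde\delta'')$ and artificially multiply and divide by $({\Theta''}_\psi^+ - {\Theta''}_\psi^-)(\tilde\delta'')$. The product formulas above then collapse the resulting numerator and modulus into $(\Theta_\psi^+ - \Theta_\psi^-)(\tilde\delta)/|(\Theta_\psi^+ - \Theta_\psi^-)(\tilde\delta)|$, leaving only the correction factor
$$\frac{({\Theta''}_\psi^+ + {\Theta''}_\psi^-)(\tilde\delta'')}{({\Theta''}_\psi^+ - {\Theta''}_\psi^-)(\tilde\delta'')} \cdot \frac{|({\Theta''}_\psi^+ - {\Theta''}_\psi^-)(\tilde\delta'')|}{|({\Theta''}_\psi^+ + {\Theta''}_\psi^-)(\tilde\delta'')|}.$$
By Theorem \ref{prop:formule-caractere-Thomas2}, the first ratio equals $\gamma_\psi(q[C_{\delta''}]) \cdot |\det(\delta''+1)/\det(\delta''-1)|^{1/2}$; the second ratio is precisely the inverse determinantal factor, again from Proposition \ref{prop:formule-caractere-Thomas}. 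The two determinantal contributions cancel exactly, and only $\gamma_\psi(q[C_{\delta''}])$ survives. Multiplying by $\Delta_0(\delta',\delta'')$ gives the first identity.

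The second formula is obtained by a perfectly symmetric manipulation: one multiplies and divides by $({\Theta'}_\psi^+ + {\Theta'}_\psi^-)(\tilde\delta')$ instead, and the same cancellation produces $\gamma_\psi(q[C_{\delta'}])^{-1}$, while the remaining piece assembles into $\Theta_\psi(\tilde\delta)/|\Theta_\psi(\tilde\delta)| = (\Theta_\psi^+ + \Theta_\psi^-)(\tilde\delta)/|(\Theta_\psi^+ + \Theta_\psi^-)(\tilde\delta)|$. The only slightly delicate point is verifying the decomposition of the element $-1$ under $j$; once that is in hand, everything else is a bookkeeping computation exploiting the fact that Weil indices have modulus one and that Proposition \ref{prop:formule-caractere-Thomas} pins down $|\Theta_\psi|$ exactly.
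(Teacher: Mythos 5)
Your argument is correct and follows essentially the same route as the paper, which cites exactly Theorem \ref{prop:formule-caractere-Thomas2} and Proposition \ref{prop:caractere-decomposition-0}; you have simply filled in the bookkeeping, including the (necessary but mild) verification that $j(-1,-1)=-1$ in $\Mp(W)$ via the character values and specificity.
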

\begin{proof}
  Cela résulte de \ref{prop:formule-caractere-Thomas2} et \ref{prop:caractere-decomposition-0}.
\end{proof}

\begin{proposition}[Spécificité]\label{prop:facteur-specificite}
  $\Delta$ est spécifique au sens suivant
$$ \forall \noyau \in \Ker(\rev),\quad \Delta(\gamma, \noyau\tilde{\delta}) = \noyau\Delta(\gamma, \tilde{\delta}). $$
\end{proposition}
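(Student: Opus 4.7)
The plan is to track how each of the three factors $\Delta_0$, $\Delta'$, $\Delta''$ transforms when $\tilde{\delta}$ is multiplied by $\noyau \in \Ker(\rev) = \bmu_\mathbf{f}$, and observe that only one factor picks up a scalar.

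First I would fix the decomposition $W = W' \oplus W''$ determined by $\delta$, which does not depend on the lift $\tilde{\delta}$, and choose a lift $(\tilde{\delta}', \tilde{\delta}'') \in \Mp(W') \times \Mp(W'')$ with $j(\tilde{\delta}', \tilde{\delta}'') = \tilde{\delta}$. Then $(\noyau\tilde{\delta}', \tilde{\delta}'')$ is a lift of $\noyau\tilde{\delta}$; the alternative choice $(\tilde{\delta}', \noyau\tilde{\delta}'')$ differs by $(\noyau, \noyau^{-1}) \in \Ker(j)$, so both give the same element of $\Mp(W)$, and we are free to let the scalar act on the first factor.

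Next, the term $\Delta_0(\delta', \delta'')$ depends only on the images $\delta', \delta''$ in $\Sp(W'), \Sp(W'')$, which are unaffected by the passage from $\tilde{\delta}$ to $\noyau\tilde{\delta}$. For $\Delta'$, recall that $\Theta_\psi'^\pm$ are specific distributions on $\Mp(W')$, hence
$$ (\Theta_\psi'^+ - \Theta_\psi'^-)(\noyau \tilde{\delta}') = \noyau \cdot (\Theta_\psi'^+ - \Theta_\psi'^-)(\tilde{\delta}'). $$
Since $|\noyau| = 1$, the denominator $|\Theta_\psi'^+ - \Theta_\psi'^-|$ is unchanged, and we obtain $\Delta'(\noyau\tilde{\delta}') = \noyau\,\Delta'(\tilde{\delta}')$. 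The factor $\Delta''(\tilde{\delta}'')$ is untouched by our choice of distribution.

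Multiplying the three contributions yields $\Delta(\gamma, \noyau\tilde{\delta}) = \noyau\,\Delta(\gamma, \tilde{\delta})$, which is the desired specificity. There is no real obstacle here; the only subtle point is the well-definedness check, namely that shifting $\noyau$ from the $W'$-factor to the $W''$-factor produces the same value — and this is immediate because the kernel of $j$ is precisely $\{(\noyau,\noyau^{-1}) : \noyau \in \Ker(\rev)\}$, so the two possible lifts of $\noyau\tilde{\delta}$ differ by an element of $\Ker(j)$, on which $\Delta'\otimes\Delta''$ is automatically balanced.
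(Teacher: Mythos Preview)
Your proof is correct. The paper takes a slightly different route: rather than working with the decomposition $(\tilde{\delta}',\tilde{\delta}'')$ and putting the scalar on one side, it invokes the first formula of Proposition~\ref{prop:Delta-variante},
\[
  \Delta(\gamma,\tilde{\delta}) = \Delta_0(\delta',\delta'') \cdot \frac{\Theta_\psi^+ - \Theta_\psi^-}{|\Theta_\psi^+ - \Theta_\psi^-|}(\tilde{\delta}) \cdot \gamma_\psi(q[C_{\delta''}]),
\]
in which the only factor depending on the lift is a single specific function on all of $\tilde{G}$ evaluated at $\tilde{\delta}$; the other two factors manifestly depend only on $\delta$. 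This sidesteps the (easy) well-definedness discussion of where to place $\noyau$ in $\Mp(W')\times\Mp(W'')$. Your argument directly from the definition is equally valid and arguably more transparent; the two approaches differ only in bookkeeping.
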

\begin{proof}
  On utilise la première formule de \ref{prop:Delta-variante}. Les termes $\Delta_0(\delta',\delta'')$ et $\gamma_\psi(q[C_{\delta''}])$ ne dépendent que de $\delta$, pourtant le terme $(\Theta_\psi^+ - \Theta_\psi^-)(\tilde{\delta})/|\cdots|$ est spécifique. Cela permet de conclure.
\end{proof}

Maintenant soit $T$ le $F$-tore maximal contenant $\delta$, et $T=T' \times T''$ la décomposition correspondant à $\delta=i(\delta',\delta'')$. Décomposons les $F$-algèbres étales dans les paramètres pour $\delta', \delta''$ comme
\begin{align*}
  K' &= \prod_{i \in I'} K'_i , \\
  K'' &= \prod_{i \in I''} K''_i .
\end{align*}

Cela nous permet d'écrire
\begin{align*}
  H^1(F,T') & =  \bmu_2^{I'^*}, \\
  H^1(F,T'') & = \bmu_2^{I''^*}.
\end{align*}

Comme l'endoscopie pour les groupes réductifs, on dispose de la notion du caractère endoscopique\index{caractère endoscopique}\index{$\kappa$} $\kappa = \kappa_T: H^1(F,T) \to \bmu_2$. Vu l'identification ci-dessus, c'est défini par
\begin{align*}
  \kappa: \bmu_2^{I'^*} \times \bmu_2^{I''^*} & \longrightarrow \bmu_2 \\
  ((t'_i)_{i \in I'^*}, (t''_i)_{i \in I''^*}) & \longmapsto  \prod_{i \in I''^*} t''_i .
\end{align*}

\begin{proposition}[Propriété de cocycle]\label{prop:propriete-cocycle}
  Conservons le formalisme ci-dessus. Si $\tilde{\delta}$ est stablement conjugué à $\tilde{\delta}_1$, alors
  $$ \Delta(\gamma, \tilde{\delta}_1) = \angles{\kappa, \inv(\tilde{\delta}, \tilde{\delta}_1)} \Delta(\gamma, \tilde{\delta}).$$
\end{proposition}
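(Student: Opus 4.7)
L'outil central est la première formule de \ref{prop:Delta-variante}:
$$ \Delta(\gamma,\tilde{\delta}) = \Delta_0(\delta',\delta'') \cdot \frac{\Theta_\psi^+ - \Theta_\psi^-}{|\Theta_\psi^+ - \Theta_\psi^-|}(\tilde{\delta}) \cdot \gamma_\psi(q[C_{\delta''}]). $$
Le plan consiste à étudier le comportement de chacun des trois facteurs sous le passage $\tilde{\delta} \mapsto \tilde{\delta}_1$. Par définition même de la conjugaison stable dans $\tilde{G}_{\text{reg}}$ (\S\ref{sec:stabilite}), le facteur du milieu est invariant. Le facteur $\Delta_0(\delta',\delta'')$ est construit à partir de $P_{a'}$, $a''$, $n'$, $\sgn_{K''/K''^\#}$ et $\det(\delta'+1)$; tous ces ingrédients ne dépendent que du polynôme caractéristique et des paramètres $(K'/K'^\#,a')$, $(K''/K''^\#,a'')$, donc ne dépendent que de la classe de conjugaison stable de $\delta$. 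Il ne reste donc qu'à analyser la variation de $\gamma_\psi(q[C_{\delta''}])$.

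La décomposition canonique $W = W' \oplus W''$, $\delta = \iota(\delta',\delta'')$ est transportée fidèlement par toute conjugaison géométrique, donc $\delta_1 = \iota(\delta'_1,\delta''_1)$ avec $\delta'_1 \in \mathcal{O}(K'/K'^\#, a', c'_1)$ et $\delta''_1 \in \mathcal{O}(K''/K''^\#, -a'', c''_1)$ stablement conjugués à $\delta'$ et $\delta''$ respectivement. Notant $T = T'\times T''$, on obtient alors une décomposition canonique
$$ H^1(F,T) \simeq H^1(F,T') \times H^1(F,T'') \simeq \bmu_2^{I'^*} \times \bmu_2^{I''^*}, $$
qui résulte du scindage $K = K' \times K''$ et de la description de \S\ref{sec:stabilite}. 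Sous cette identification, $\inv(\tilde{\delta}, \tilde{\delta}_1) = \inv(\delta, \delta_1)$ correspond à la paire $(c'_1 c'^{-1}, c''_1 c''^{-1})$ modulo normes, et sa composante sur $T''$ s'identifie à $\bigl(\sgn_{K''_i/K''^\#_i}(c''_{1,i} c''^{-1}_i)\bigr)_{i \in I''^*}$.

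Appliquons \ref{prop:changement-de-signes} au morceau $W''$ (avec $-a''$ à la place de $a$): on trouve
$$ \gamma_\psi(q[C_{\delta''_1}]) = \gamma_\psi(q[C_{\delta''}]) \cdot \sgn_{K''/K''^\#}(c''_1 c''^{-1}). $$
Le signe à droite vaut $\prod_{i \in I''^*} t''_i$, où $(t''_i)$ est la composante $T''$ de $\inv(\tilde{\delta},\tilde{\delta}_1)$. Puisque le caractère endoscopique $\kappa$ est trivial sur $\bmu_2^{I'^*}$ et égal au caractère produit sur $\bmu_2^{I''^*}$, ceci n'est autre que $\angles{\kappa, \inv(\tilde{\delta}, \tilde{\delta}_1)}$. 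La formule annoncée en résulte en multipliant les trois facteurs.

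Le vrai travail est de nature comptable: il faut vérifier que la conjugaison stable de $\delta$ se décompose fidèlement en conjugaisons stables de $\delta'$ et $\delta''$ relativement à la décomposition canonique $W = W' \oplus W''$, et que l'isomorphisme $H^1(F,T) \simeq \bmu_2^{I'^*} \times \bmu_2^{I''^*}$ se scinde compatiblement avec $K = K' \times K''$. Ces compatibilités découlent directement du kit de classification de \S\ref{sec:kit-classification}; une fois mises en place, l'identité cherchée est une conséquence immédiate du calcul ci-dessus.
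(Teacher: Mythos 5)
Votre démonstration suit exactement la même stratégie que celle de l'article : utilisation de la première formule de \ref{prop:Delta-variante}, invariance stable du facteur médian par définition, invariance stable de $\Delta_0$ (qui ne dépend que de $(K'/K'^\#,a')$ et $(K''/K''^\#,a'')$), puis application de \ref{prop:changement-de-signes} au terme $\gamma_\psi(q[C_{\delta''}])$ pour faire apparaître $\angles{\kappa,\inv(\tilde{\delta},\tilde{\delta}_1)}$. Les vérifications « comptables » que vous détaillez à la fin sont laissées implicites dans l'article mais n'en constituent pas moins la même preuve.
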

\begin{proof}
  On utilise la première formule de \ref{prop:Delta-variante}. Le terme $\Delta_0$ ne dépend que de $(K'/K'^\#,a')$ et $(K''/K''^\#,a'')$, donc la stabilité n'y intervient pas. D'autre part $\Theta_\psi^+ - \Theta_\psi^-$ est constante sur une classe de conjugaison stable par définition. Il suffit de traiter le terme $\gamma_\psi(q[C_{\delta''}])$. L'assertion résulte immédiatement de \ref{prop:changement-de-signes}.
\end{proof}

\begin{remark}
  Si $F$ est global, les caractères locaux définis ci-dessus s'assemblent, par la théorie du corps du classes, en un caractère
  $$ \kappa_T: H^1(\A/F, T) \to \bmu_2 $$
  avec la notation de \cite{Lab99} \S 1.4. Ceci sera utile pour la stabilisation de la formule des traces.
\end{remark}

Considérons la question de la normalisation (cf. \cite{Wa08} 4.6). Pour l'instant, supposons que $F$ est non archimédien de caractéristique résiduelle $p>2$ et $\psi$ est de conducteur $\mathfrak{o}_F$. Fixons un réseau autodual $L \subset W$ et le sous-groupe hyperspécial associé  $K \subset G(F)$. Fixons aussi un sous-groupe hyperspécial $K_H$ de $H(F)$. Supposons qu'il existe $\gamma \in K_H$, $\delta \in K$ qui se correspondent tels que $\gamma,\delta$ sont de réductions régulières. De tels choix existent.

\begin{proposition}[Normalisation à la Waldspurger]\label{prop:normalisation-waldspurger}
  Pour $(\gamma,\delta)$ comme ci-dessus, on a $\Delta(\gamma,\delta)=1$.
\end{proposition}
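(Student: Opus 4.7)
L'idée principale est d'exploiter la factorisation $\Delta = \Delta_0 \Delta' \Delta''$ et de traiter chaque facteur séparément, en utilisant la formule alternative \ref{prop:Delta-variante} qui permet de remplacer $\Delta' \Delta''$ par un quotient de caractères normalisé et un facteur $\gamma_\psi$ d'une forme de Cayley.

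Première étape (lift canonique et décomposition). Puisque $\delta \in K$, on prend pour $\tilde{\delta}$ le relèvement canonique fourni par le modèle latticiel associé à $L$, qui appartient à $\MMp{2}(W)$. L'hypothèse que $\bar{\delta}$ est régulière entraîne que son polynôme caractéristique modulo $\mathfrak{p}$ se scinde en un produit de facteurs coprimes, ce qui, par Hensel, donne une décomposition orthogonale $L = L' \oplus L''$ avec $L' \subset W'$, $L'' \subset W''$ tous deux autoduaux, stables par $\delta$, et tels que $\delta'|_{L'}$, $\delta''|_{L''}$ soient également de réductions régulières dans $\Sp(W')$, $\Sp(W'')$. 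Par la compatibilité entre modèle latticiel et décomposition orthogonale (\ref{prop:Levi-latticiel-compatible} et la sous-section sur les décompositions), le relèvement $\tilde{\delta}$ coïncide alors avec $j(\tilde{\delta}',\tilde{\delta}'')$ pour les relèvements canoniques locaux.

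Deuxième étape (facteur de caractère). J'utiliserai la formule
$$\Delta(\gamma, \tilde{\delta}) = \Delta_0(\delta',\delta'') \cdot \frac{\Theta_\psi^+ - \Theta_\psi^-}{|\Theta_\psi^+ - \Theta_\psi^-|}(\tilde{\delta}) \cdot \gamma_\psi(q[C_{\delta''}])$$
de \ref{prop:Delta-variante}. Comme $\bar{\delta}$ est régulière, le corollaire \ref{prop:Theta-reduction-reguliere} donne à la fois $\Theta_\psi(\tilde{\delta}) = 1$ et $\Theta_\psi(-\tilde{\delta}) = 1$. Combinée avec la relation $(\Theta_\psi^+ - \Theta_\psi^-)(\tilde{\delta}) = \Theta_\psi((-1)\cdot\tilde{\delta}) = 1$ de \ref{prop:-1-echangement}, ceci entraîne que le facteur central vaut $1$. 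Il reste donc à prouver l'identité
$$\Delta_0(\delta',\delta'') \cdot \gamma_\psi(q[C_{\delta''}]) = 1.$$

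Troisième étape (identité restante et noyau technique). J'applique le théorème \ref{prop:calcul-qX} à $X = C_{\delta''} \in \syp(W'')$, avec son paramètre $(K''/K''^\#, a_X, c_X)$ déduit de $(K''/K''^\#, -a'', c'')$ via la transformation de Cayley. Cela donne
$$\gamma_\psi(q[C_{\delta''}]) = \gamma_\psi((-1)^{n''-1}) \gamma_\psi(\det C_{\delta''}) \cdot \sgn_{K''/K''^\#}\bigl(c_X^{-1}\dot{P}_{C_{\delta''}}(a_X)\bigr).$$
L'hypothèse non ramifiée (sous-groupes hyperspéciaux, $\psi$ de conducteur $\mathfrak{o}_F$, réduction régulière) entraîne que chaque extension $K''_i/K''^\#_i$ est, soit décomposée, soit quadratique non ramifiée. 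Dans le premier cas $\sgn_i$ est trivial; dans le second il se factorise par la parité de la valuation. De plus, $\det C_{\delta''}$ est un produit de facteurs $(a''_k \pm 1)$ qui, par régularité de $\bar{\delta}$, sont tous des unités, ce qui rend $\gamma_\psi(\det C_{\delta''})$ calculable via la trivialité de $\gamma_\psi$ sur les formes unitaires.

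Il restera à comparer l'argument du $\sgn_{K''/K''^\#}$ venant de $\Delta_0$, à savoir $P_{a'}(a'')(-a'')^{-n'}\det(\delta'+1)$, avec celui venant de $\gamma_\psi(q[C_{\delta''}])$, à savoir $c_X^{-1}\dot{P}_{C_{\delta''}}(a_X)$, modulo les normes de $K''/K''^\#$. C'est là l'obstacle principal: il s'agit de montrer que la différence (multiplicative) des deux éléments est une norme, puis que les facteurs résiduels $\gamma_\psi((-1)^{n''-1})\gamma_\psi(\det C_{\delta''})$ sont triviaux dans le cas non ramifié. Cette étape demande un calcul soigné des valuations $v_{K''^\#_i}$ des deux expressions, en exploitant la structure des orbites de Frobenius sur les valeurs propres réduites et le fait (conséquence de la régularité de $\bar{\delta}$) qu'un éventuel chevauchement $\bar{a}'_l = \bar{a}''_k$ apparaît en orbites Frobenius-équivariantes, donnant des contributions de valuation paire aux indices quadratiques non ramifiés. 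Une fois cet accounting achevé, les deux signes se compensent et l'identité $\Delta_0 \cdot \gamma_\psi(q[C_{\delta''}]) = 1$ est établie, ce qui conclut.
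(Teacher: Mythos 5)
The first two steps of your sketch track the paper: the decomposition $L = L' \oplus L''$ with each $\delta', \delta''$ of regular reduction, and the conclusion that the $\Theta$-quotient on $\tilde G$ equals $1$ by \ref{prop:Theta-reduction-reguliere}, are fine. But your third step has a genuine gap, and worse, it pursues a more complicated path than necessary. You set out to prove the combined identity $\Delta_0(\delta',\delta'') \cdot \gamma_\psi(q[C_{\delta''}]) = 1$ by applying \ref{prop:calcul-qX} to $C_{\delta''}$ and then comparing the two $\sgn_{K''/K''^\#}$ arguments modulo norms, plus handling the residual factors $\gamma_\psi((-1)^{n''-1})\gamma_\psi(\det C_{\delta''})$. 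You never carry this out: the appeal to "un calcul soigné des valuations", Frobenius orbits, and the assertion that "les deux signes se compensent" is an announcement of a proof, not a proof. The comparison you propose is also non-trivial to set up because $\Delta_0$ and the Cayley-form Weil index are structurally different expressions, and you would in fact need to know independently that they are each trivial — which is the point you are missing.

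The paper's proof sidesteps all of this by working with the original factorization $\Delta = \Delta_0 \Delta' \Delta''$ and showing each factor equals $1$ separately, on each component. Since $\delta'$ and $\delta''$ are of regular reduction in $\Sp(W')$ and $\Sp(W'')$, Corollary \ref{prop:Theta-reduction-reguliere} applied on each factor gives $\Theta_\psi(\pm\tilde\delta') = 1$ and $\Theta_\psi(\pm\tilde\delta'') = 1$; since $\Delta'$ is the normalized $\Theta^+-\Theta^- = \Theta_\psi(-\,\cdot\,)$ on $\Mp(W')$ and $\Delta''$ the normalized $\Theta^++\Theta^- = \Theta_\psi$ on $\Mp(W'')$, both equal $1$ immediately. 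For $\Delta_0$: regular reduction forces the eigenvalues of $\bar\delta'$ and $\bar\delta''$ to be disjoint modulo $\mathfrak{p}$, hence $P_{a'}(a'')(-a'')^{-n'}\det(\delta'+1)$ is a unit of $K''^{\#}$, and regular reduction also forces $K''/K''^\#$ to be a product of unramified (or split) quadratic extensions, on whose unit groups $\sgn_{K''/K''^\#}$ is trivial. If you insist on your route via \ref{prop:Delta-variante}, the simplest repair is to note that $\gamma_\psi(q[C_{\delta''}]) = \Theta_\psi(\tilde\delta'')/\Theta_\psi(-\tilde\delta'') \cdot |\det(\delta''-1)/\det(\delta''+1)|^{1/2} = 1$ directly from \ref{prop:formule-caractere-Thomas2} and regular reduction on $W''$, and then prove $\Delta_0 = 1$ separately as above — no cancellation of $\sgn$ arguments against Weil indices is needed.
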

\begin{proof}
  Il existe une décomposition $L=L' \oplus L''$ où $L' = W' \cap L$, $L'' = W'' \cap L$ par nos hypothèses. Soient $K' \subset \Sp(W')$, $K'' \subset \Sp(W'')$ les sous-groupes hyperspéciaux associés, alors $\delta' \in K'$, $\delta'' \in K''$ et ils sont de réductions régulières.

  Montrons que $\Delta_0(\delta',\delta'')$, $\Delta'(\delta')$ et $\Delta''(\delta'')$ sont tous $1$. C'est clair pour $\Delta_0$; pour $\Delta'$ et $\Delta''$, on utilise \ref{prop:Theta-reduction-reguliere}.
\end{proof}

\begin{proposition}[Symétrie]\label{prop:symetrie}
  Notons $\Delta_{n',n''}$ le facteur de transfert associé à la paire $(n',n'')$ et $\Delta_{n'',n'}$ celui associé à $(n'',n')$. Pour tous $\gamma=(\gamma',\gamma'') \in H(F)$ et $\delta \in G(F)$ qui se correspondent, on a
  $$ \Delta_{n',n''}((\gamma', \gamma''), \tilde{\delta}) = \Delta_{n'',n'}((\gamma'',\gamma'), -\tilde{\delta}) \quad \text{si } 8|\mathbf{f} $$

  où $-1 \in \tilde{G}$ est celui défini dans \ref{def:-1}. Plus précisément, on a
  \begin{align*}
    \Delta'_{(n',n'')}(\tilde{\delta}')\Delta''_{(n',n'')}(\tilde{\delta}'') &= \Delta'_{(n'',n')}(-\tilde{\delta}'')\Delta''_{(n'',n')}(-\tilde{\delta}''), \quad \text{si } 8|\mathbf{f}; \\
    \Delta_{0,(n',n'')}(\delta',\delta'') &= \Delta_{0,(n'',n')}(-\delta'',-\delta'), \quad \mathbf{f} \text{ quelconque}.
  \end{align*}
\end{proposition}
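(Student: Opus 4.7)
The plan is to handle the two displayed identities separately, as $\Delta = \Delta_0 \cdot \Delta' \Delta''$ naturally splits into an algebraic contribution ($\Delta_0$) and a character-theoretic one ($\Delta'\Delta''$), which depend on independent data.

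For the $\Delta'\Delta''$ identity, the key input is \ref{prop:-1-echangement}, which in particular gives $(\Theta_\psi^+ - \Theta_\psi^-)(-\tilde{x}) = (\Theta_\psi^+ + \Theta_\psi^-)(\tilde{x})$ (and conversely). Applied to $\tilde{\delta}''$ on $\Mp(W'')$ this yields $\Delta'_{(n'',n')}(-\tilde{\delta}'') = \Delta''_{(n',n'')}(\tilde{\delta}'')$, and applied to $\tilde{\delta}'$ on $\Mp(W')$ it yields $\Delta''_{(n'',n')}(-\tilde{\delta}') = \Delta'_{(n',n'')}(\tilde{\delta}')$; multiplying gives the claim. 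The hypothesis $8\mid\mathbf{f}$ is exactly what makes $-1$ live in the chosen covers (Proposition \ref{prop:scindage-Schrodinger}). I must also check that the central element $-1 \in \Mp(W)$ decomposes via the homomorphism $j: \Mp(W') \times \Mp(W'') \to \Mp(W)$ of \ref{prop:caractere-decomposition-0} as $(-1, -1)$; this is a consequence of the additivity of the Maslov index under orthogonal decompositions of symplectic spaces, which is precisely what enters the Lion--Perrin construction of $-1$.

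For the $\Delta_0$ identity, the work is purely algebraic and does not require a cover. Since $\delta'$ acts as multiplication by $a'$ on $W'\simeq K'$, one has $\det(\delta'+1) = N_{K'/F}(a'+1) = P_{a'}(-1)$; similarly, using $\dim W'' = 2n''$ to absorb signs, $\det(-\delta''+1) = \det(\delta''-1) = P_{a''}(-1)$. The identity reduces to
\begin{equation*}
  \sgn_{K''/K''^\#}\bigl(P_{a'}(a'')(-a'')^{-n'}\,P_{a'}(-1)\bigr) \;=\; \sgn_{K'/K'^\#}\bigl(P_{a''}(a')(-a')^{-n''}\,P_{a''}(-1)\bigr).
\end{equation*}
A crucial preliminary is the self-reciprocity of $P_{a'}$, i.e.\ $T^{2n'}P_{a'}(1/T)=P_{a'}(T)$, which follows from $\tau(a')=a'^{-1}$ and $N_{K'/F}(a')=1$ (by symplecticity); and likewise for $P_{a''}$. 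Self-reciprocity ensures that $(a'')^{-n'}P_{a'}(a'')$ and $(a')^{-n''}P_{a''}(a')$ are $\tau$-fixed, so the two arguments indeed lie in $K''^{\#\times}$ and $K'^{\#\times}$ respectively.

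The main obstacle is the explicit verification of this sign identity. The plan is to decompose $K' = \prod_i K'_i$ and $K''= \prod_j K''_j$ into products of fields (via \ref{rem:parametres-decomposition}), split each $\sgn_{K/K^\#}$ as a product of characters $\sgn_{K_\bullet/K^\#_\bullet}$, and reduce to a pairwise statement in a single pair of quadratic extensions. There each side becomes a Hilbert symbol, and the symmetry of the resultant $\mathrm{Res}(P_{a'},P_{a''}) = \prod_{\alpha,\beta}(\alpha-\beta)$ (which computes the norms $N_{K''/F}(P_{a'}(a''))$ and $N_{K'/F}(P_{a''}(a'))$ up to a sign controlled by $n'n''$) together with the self-reciprocity identity forces the two Hilbert symbols to coincide. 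The $\det(\delta'+1) = P_{a'}(-1)$ and $\det(\delta''-1) = P_{a''}(-1)$ factors are exactly what is needed to absorb the discrepancy between the resultant viewed from the two sides, making the equation symmetric under the swap $(',a',n') \leftrightarrow ('',a'',n'')$.
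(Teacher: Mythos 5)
Your treatment of the $\Delta'\Delta''$ half is correct and matches the paper: \ref{prop:-1-echangement}, applied to each factor $\Mp(W')$ and $\Mp(W'')$ separately, gives $\Delta'_{(n'',n')}(-\tilde{\delta}'')=\Delta''_{(n',n'')}(\tilde{\delta}'')$ and $\Delta''_{(n'',n')}(-\tilde{\delta}')=\Delta'_{(n',n'')}(\tilde{\delta}')$, and the compatibility of the central $-1$ with $j$ is exactly the right point to check; the hypothesis $8\mid\mathbf{f}$ enters precisely there. Your reduction of the $\Delta_0$ identity to
\[
\sgn_{K''/K''^\#}\bigl(P_{a'}(a'')(-a'')^{-n'}P_{a'}(-1)\bigr)=\sgn_{K'/K'^\#}\bigl(P_{a''}(a')(-a')^{-n''}P_{a''}(-1)\bigr)
\]
is also correct (this is the content of \ref{prop:reciprocite}), and the self-reciprocity remark is valid.

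The gap is in the proposed proof of that last identity. You want to decompose each $\sgn$ as a product of Hilbert symbols and invoke resultant symmetry, but $\sgn_{K''/K''^\#}(x)=(x,d'')_{K''^\#}$ and $\sgn_{K'/K'^\#}(y)=(y,d')_{K'^\#}$ are quadratic Hilbert symbols over the \emph{different} fields $K''^\#$ and $K'^\#$, whose only common link to $F$ is via norms. The resultant $\mathrm{Res}(P_{a'},P_{a''})$ does give $N_{K''^\#/F}(P_{a'}(a''))=\pm N_{K'^\#/F}(P_{a''}(a'))$ up to controlled signs, but a Hilbert symbol $(x,d'')_{K''^\#}$ with $d''\notin F$ does not factor through $N_{K''^\#/F}(x)$, so resultant symmetry alone cannot force the two symbols to coincide. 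There is no elementary bridge here; the paper obtains the identity as \ref{prop:reciprocite}, which rests on \ref{prop:reciprocite-groupe} (pushed from \ref{prop:reciprocite-Lie} via the Cayley transform and \ref{prop:Mobius}), and \ref{prop:reciprocite-Lie} in turn uses \ref{prop:calcul-qX}: the Weil index of the form $q[X]$ over $F$ is what converts the $K''^\#$- and $K'^\#$-valued sign characters into genuine $F$-Hilbert symbols, via the multiplicativity $\gamma_\psi(ab)\gamma_\psi(1)/\gamma_\psi(a)\gamma_\psi(b)=(a,b)_F$. Even after that, the product $\Delta_0(\delta',\delta'')\Delta_0(-\delta'',-\delta')$ is a nontrivial Hilbert-symbol expression over $F$, and killing it requires \ref{prop:signe-transfert} (which evaluates $\sgn_{K/K^\#}$ on $F$-rational elements as an $F$-Hilbert symbol against $\det q$) together with the pfaffian argument \ref{prop:pfaffien}. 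Your sketch omits all of this quadratic-form machinery; as written, the $\Delta_0$ half is a plan with a genuine hole rather than a proof.
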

\begin{proof}
  L'assertion pour $\Delta'\Delta''$ résulte de \ref{prop:-1-echangement}. L'assertion pour $\Delta_0$ sera démontrée en \ref{prop:reciprocite}.
\end{proof}

Enfin, on dispose aussi d'une formule du produit. Supposons $F$ global, $\psi: \A/F \to \mathbb{S}^1$ un caractère non-trivial avec décomposition $\psi = \prod_v \psi_v$. On a défini un facteur de transfert $\Delta_v$ pour toute place $v$.

\begin{proposition}[Formule du produit]\label{prop:Delta-forumule-produit}
  Soient $\gamma \in H_{G-\text{reg}}(F)$ et $\delta \in G(F)$ qui se correspondent. Soit $i(\delta) = [\tilde{\delta}_v]_v$, alors:
  \begin{itemize}
    \item pour presque toute place $v$, on a $\Delta_v(\gamma, \tilde{\delta}_v)=1$;
    \item $\prod_v \Delta_v(\gamma, \tilde{\delta}_v)= 1$.
  \end{itemize}
\end{proposition}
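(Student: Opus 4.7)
Le plan est d'invoquer \ref{prop:Delta-variante} pour écrire, place par place,
\[
\Delta_v(\gamma, \tilde{\delta}_v) = \Delta_{0,v}(\delta',\delta'') \cdot \frac{(\Theta_{\psi_v}^+ - \Theta_{\psi_v}^-)(\tilde{\delta}_v)}{|(\Theta_{\psi_v}^+ - \Theta_{\psi_v}^-)(\tilde{\delta}_v)|} \cdot \gamma_{\psi_v}(q[C_{\delta''}]).
\]
La décomposition $W = W' \oplus W''$ attachée à $(\gamma',\gamma'')$ étant déterminée par les spectres, elle est rationnelle; d'où $\delta' \in \Sp(W')(F)$ et $\delta'' \in \Sp(W'')(F)$. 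Il suffira alors de démontrer la trivialité locale presque partout et la formule du produit pour chacun des trois facteurs séparément.

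Pour $\Delta_0$, le facteur vaut $\sgn_{K''_v/K''^\#_v}(x)$ avec $x := P_{a'}(a'')(-a'')^{-n'}\det(\delta'+1) \in K''^{\#\times}$ un élément rationnel. Le caractère $\sgn_{K''/K''^\#}$ s'écrit comme produit des $\sgn_{K''_i/K''^\#_i}$ associés aux extensions quadratiques globales $K''_i/K''^\#_i$; la réciprocité de Hilbert (théorie du corps de classes) entraîne que chaque caractère global $\prod_v \sgn_{K''_{i,v}/K''^\#_{i,v}}$ s'annule sur les éléments principaux, d'où $\prod_v \Delta_{0,v} = 1$. Pour presque toute place $v$, l'extension est non ramifiée en $v$ et $x$ y est une unité, donc $\Delta_{0,v} = 1$.

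Pour $\gamma_{\psi_v}(q[C_{\delta''}])$, la forme $q[C_{\delta''}]$ est une $F$-forme quadratique puisque $\delta''$ est rationnel; la réciprocité de Weil (cf. l'argument de \ref{prop:relevement-rationnel-explicite}) donne $\prod_v \gamma_{\psi_v}(q[C_{\delta''}]) = 1$, et la trivialité presque partout résulte du comportement de l'indice de Weil sur les formes non ramifiées.

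Reste le facteur des caractères. D'une part, les propositions \ref{prop:formule-caractere-Thomas} et \ref{prop:-1-echangement} fournissent $|(\Theta_{\psi_v}^+-\Theta_{\psi_v}^-)(\tilde{\delta}_v)| = |\det(\delta+1)|_v^{-1/2}$, dont le produit sur $v$ vaut $1$ par la formule du produit sur $F^\times$ (la régularité de $\delta$ exclut la valeur propre $-1$, d'où $\det(\delta+1) \neq 0$). D'autre part, la seconde assertion de \ref{prop:caractere-formule-produit}, appliquée à $\delta \in \Sp(W)(F)$ et à $\Theta_\psi^+ - \Theta_\psi^-$ au lieu de $\Theta_\psi$, donne $\prod_v (\Theta_{\psi_v}^+ - \Theta_{\psi_v}^-)(\tilde{\delta}_v) = 1$; aux places finies de bonne réduction, \ref{prop:Theta-reduction-reguliere} assure la trivialité locale. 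Le principal obstacle technique consiste à coordonner les trois réciprocités (Hilbert, Weil, formule du produit du caractère) et à contrôler uniformément l'hypothèse ``presque toute place'' via un choix cohérent de modèles entiers et de relèvements métaplectiques $\tilde{\delta}_v$, mais chaque ingrédient est déjà en place dans l'article.
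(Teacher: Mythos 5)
Ta preuve est correcte et les ingrédients sont exactement ceux employés dans l'article : la formule du produit pour le caractère (\ref{prop:caractere-formule-produit}), la réciprocité de Weil, et la théorie du corps de classes pour le symbole $\sgn_{K''/K''^\#}$. La seule différence est d'organisation : tu fais passer le facteur par la réécriture \ref{prop:Delta-variante}, en travaillant avec $\Theta_\psi^+-\Theta_\psi^-$ sur $\Mp(W)$ entier et en compensant par $\gamma_\psi(q[C_{\delta''}])$, alors que l'article applique \ref{prop:caractere-formule-produit} directement aux deux morceaux $\Delta'$ (caractère $\Theta^+-\Theta^-$ sur $\Mp(W')$, en utilisant $\det(\delta'+1)\neq 0$) et $\Delta''$ (caractère $\Theta^++\Theta^-$ sur $\Mp(W'')$, en utilisant $\det(\delta''-1)\neq 0$), puis traite $\Delta_0$ par la théorie du corps de classes appliquée à $\alpha''=P_{a'}(a'')(-a'')^{-n'}\det(1+a')$, qui est exactement ton $x$. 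Ta variante évite d'avoir à parler des relèvements métaplectiques scindés $\tilde{\delta}'_v,\tilde{\delta}''_v$ — ce que l'article fait implicitement via la spécificité des deux facteurs —, au prix d'invoquer séparément la réciprocité de Weil pour la forme $q[C_{\delta''}]$; mais cette réciprocité est précisément celle qui intervient dans la démonstration de \ref{prop:caractere-formule-produit}, donc le gain est surtout cosmétique. Les deux routes sont également rigoureuses; la tienne est un peu plus symétrique vis-à-vis du groupe entier $\Mp(W)$, celle de l'article colle plus directement à la définition $\Delta=\Delta_0\Delta'\Delta''$.
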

\begin{proof}
  Dans ce cas-là, on a une décomposition $W=W' \oplus W''$ et des éléments $\delta' \in \Sp(W')_\text{reg}$,$\delta'' \in \Sp(W'')_\text{reg}$ tels que $\iota(\delta',\delta'')=\delta$. Tous ces objets sont définis sur $F$. Vu la formule du produit pour $\Delta', \Delta''$ (\ref{prop:caractere-formule-produit}), il reste à prouver:
  \begin{itemize}
    \item pour presque toute place $v$, on a $\Delta_{0,v}(\delta',\delta'')=1$;
    \item $\prod_v \Delta_{0,v}(\delta',\delta'')= 1$.
  \end{itemize}
  Supposons que $\delta' \in \mathcal{O}(K'/K'^\#,a',c')$, $\delta'' \in \mathcal{O}(K''/K''^\#, a'', c'')$. Posons d'autre part
  $$\alpha'' := P_{a'}(a'')(-a'')^{-n'}\det(1+a') \in K''^{\#\times}. $$

  La formation de $\alpha''$ est compatible à complétion. Il s'agit de montrer que
  $$ \prod_v \sgn_{K''_v/{K''_v}^\#}(\alpha'')=1$$
  où $K''_v$ est la complétée de $K''$ comme une $F$-algèbre étale à involution. C'est une conséquence de la théorie des corps du classes.
\end{proof}

\subsection{Descente parabolique}\label{sec:descente-parabolique}
Supposons maintenant que $8|\mathbf{f}$. Rappelons que les sous-groupes de Lévi de $H$ sont de la forme
\begin{equation}
  M_H = \prod_{i \in I} (\GL(n'_i) \times \GL(n''_i)) \times H^\flat
\end{equation}
où $I$ est un ensemble fini, $H^\flat = \SO(2m'+1) \times \SO(2m''+1)$ avec $(m',m'') \in \Z_{\geq 0}^2$, les $(n'_i,n''_i) \in \Z_{\geq 0}^2$ sont tels que $\sum_{i \in I} n'_i  + m' = n'$, $\sum_{i \in I} n''_i  + m'' = n''$. Le plongement de $M_H$ dans $M$ est défini par $\prod_i \GL(n'_i) \times \SO(2m'+1) \hookrightarrow \SO(2n'+1)$ et $\prod_i \GL(n''_i) \times \SO(2m''+1) \hookrightarrow \SO(2n''+1)$. Posons $m=m'+m''$. On dit qu'un sous-groupe de Lévi $M$ de $G$ correspond à $(M_H,H)$ \index{correspondance de sous-groupes de Lévi}s'il est de la forme
\begin{equation}
  M = \prod_{i \in I} \GL(n_i) \times G^\flat,
\end{equation}
où $G^\flat := \Sp(W^\flat)$ avec $W^\flat$ un $F$-espace symplectique de dimension $2m$ tel que $n'_i + n''_i = n_i$ pour tout $i \in I$.

Fixons $M_H$ et supposons $M$ associé à $(M_H,H)$. À l'aide des décompositions ci-dessus, on écrit les éléments de $M(F)$ comme  $((\delta_i)_{i\in I}, \delta^\flat)$ et on écrit les éléments de $M_H(F)$ comme $((\gamma_i)_{i \in I}, \gamma^\flat)$. Notons $\tilde{G}^\flat$ la fibre de $\rev: \tilde{G} \to G(F)$ au-dessus de $G^\flat(F)$, alors $\rev: \tilde{G}^\flat \to G^\flat(F)$ est le revêtement métaplectique et $H^\flat$ est un groupe endoscopique elliptique de $\tilde{G}^\flat$ déterminé par la paire $(m',m'')$. Le revêtement $p$ se scinde canoniquement sur les composantes $\GL(n_i)$ par \ref{rem:Levi-lagrangien},\ref{prop:scindage-Schrodinger} et \ref{rem:caractere-Levi}, il y a donc un isomorphisme canonique
$$ j: \prod_i \GL(n_i) \times \tilde{G}^\flat \rightiso \rev^{-1}(M(F)). $$

Pour $\tilde{\delta} \in \rev^{-1}(\delta)$, notons $\tilde{\delta}^\flat \in \tilde{G}^\flat$ l'élément tel qu'il existe $(\delta_i)$ avec $j((\delta_i), \tilde{\delta}^\flat)=\tilde{\delta}$.

Un élément dans $M(F)_\text{ss}$ est dit $G$-régulier s'il est régulier dans $G$. On note $M_{G-\text{reg}}$ l'ouvert de Zariski des éléments $G$-réguliers. Un élément $\gamma \in {M_H(F)}_\text{ss}$ est dit $G$-régulier s'il existe $\delta \in M_{G-\text{reg}}(F)$ qui lui correspond.

\begin{proposition}[Descente parabolique du facteur de transfert]\label{prop:descente-parabolique}
  Soient $M_H$ un sous-groupe de Lévi de $H$ et $M$ un sous-groupe de Lévi de $G$ qui correspond à $(M_H,H)$. Soient $\gamma \in M_H(F)$ et $\delta \in M_{G-\text{reg}}(F)$ qui se correspondent comme éléments dans $H(F)$ et $G(F)$. Alors $\gamma^\flat$ et $\delta^\flat$ se correspondent. Notons $\Delta_{H,\tilde{G}}$ et $\Delta_{H^\flat,\tilde{G}^\flat}$ les facteurs de transfert associés à $(G,H)$ et $(G^\flat, H^\flat)$, alors
  $$ \Delta_{H,\tilde{G}}(\gamma, \tilde{\delta}) = \Delta_{H^\flat,\tilde{G}^\flat}(\gamma^\flat, \tilde{\delta}^\flat)$$
  pour tout $\tilde{\delta} \in \rev^{-1}(\delta)$. Plus précisément, cette propriété est satisfaite pour tous les deux facteurs $\Delta'\Delta''$ et $\Delta_0$.
\end{proposition}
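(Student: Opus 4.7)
Le plan consiste à traiter séparément les deux morceaux $\Delta'\Delta''$ et $\Delta_0$, après avoir vérifié la correspondance entre $\gamma^\flat$ et $\delta^\flat$. On écrit $W = W^\flat \oplus X$ avec $X = \bigoplus_i X_i$ la partie hyperbolique associée aux facteurs $\GL(n_i)$; chaque $\delta_i$ laisse stable un lagrangien $\ell_i \subset X_i$ par \ref{rem:Levi-lagrangien}. Cette décomposition se restreint en des décompositions $\delta$-stables $W' = W'^\flat \oplus \bigoplus_i X'_i$ et $W'' = W''^\flat \oplus \bigoplus_i X''_i$ avec $\dim X'_i = 2n'_i$, $\dim X''_i = 2n''_i$. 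La comparaison des paramètres de $\gamma^\flat,\delta^\flat$ lus sur ceux de $\gamma,\delta$ montre immédiatement que $\gamma^\flat$ correspond à $\delta^\flat$ dans le sens $(G^\flat, H^\flat)$.

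Pour le facteur $\Delta'\Delta''$, on réécrit ce produit à l'aide de \ref{prop:Delta-variante} comme
\[
  \Delta'(\tilde\delta')\Delta''(\tilde\delta'') \;=\; \frac{\Theta_\psi^+ + \Theta_\psi^-}{|\Theta_\psi^+ + \Theta_\psi^-|}(\tilde\delta)\cdot \gamma_\psi(q[C_{\delta'}])^{-1}.
\]
Par \ref{prop:caractere-decomposition-0} appliqué à $W = W^\flat \oplus \bigoplus_i X_i$, le caractère se factorise en $\Theta_\psi(\tilde\delta) = \Theta_\psi^\flat(\tilde\delta^\flat) \prod_i \Theta_\psi^{[i]}(\delta_i)$; comme chaque $\delta_i$ appartient au Lévi de Siegel de $\Sp(X_i)$ stabilisant $\ell_i$, \ref{prop:caractere-Levi} entraîne que $\Theta_\psi^{[i]}(\delta_i) \in \R_{>0}$, d'où disparition après normalisation. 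Parallèlement la forme de Cayley se décompose en $q[C_{\delta'}] = q[C_{\delta'^\flat}] \oplus \bigoplus_i q[C_{\delta'_i}]$, et chaque $q[C_{\delta'_i}]$ est hyperbolique car $C_{\delta'_i}$ préserve encore un lagrangien, d'où $\gamma_\psi(q[C_{\delta'_i}]) = 1$. On conclut que $\Delta'\Delta''$ coïncide avec sa version au $\flat$.

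Pour $\Delta_0$, on calcule directement à partir de la définition. L'algèbre $K''$ se décompose en $K''^\flat \times \prod_i K''_{\GL,i}$, et chaque composante $K''_{\GL,i}$ est de la forme $L_i \times L_i$ avec involution d'échange, donc $\sgn_{K''_{\GL,i}/K''^{\#}_{\GL,i}}$ y est triviale: seule compte la projection sur $K''^{\flat\#}$. Il reste à démontrer, modulo $N_{K''^\flat/K''^{\flat\#}}(K''^{\flat\times})$, l'égalité de cette projection avec $P_{a'^\flat}(a''^\flat)(-a''^\flat)^{-m'}\det(\delta'^\flat+1)$. La factorisation palindromique $P_{a'_i}(T) = Q_i(T) \cdot T^{n'_i} Q_i(T^{-1})/Q_i(0)$, où $Q_i$ est le polynôme caractéristique de $\delta'_i \in \GL(n'_i)$, donne $P_{a'_i}(a''^\flat)(-a''^\flat)^{-n'_i} = N_{K''^\flat/K''^{\flat\#}}(Q_i(a''^\flat))/\det(\delta'_i)$; d'autre part la réalisation lagrangienne fournit $\det_{\Sp(X'_i)}(\delta'_i+1) = \det(\delta'_i+1)^2/\det(\delta'_i)$, ce qui absorbe le facteur $\det(\delta'_i)$ et laisse un carré dans $F^\times$, qui est aussi une norme. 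Ainsi toutes les contributions des $\GL(n_i)$ disparaissent dans $\sgn_{K''/K''^\#}$.

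Le principal obstacle est la vérification combinatoire pour $\Delta_0$: il faut coordonner simultanément l'exposant $n' = m' + \sum_i n'_i$, la symétrie palindromique $T \leftrightarrow T^{-1}$ des polynômes caractéristiques d'éléments symplectiques, et l'identité de déterminants induite par le plongement $\GL(n'_i) \hookrightarrow \Sp(X'_i)$, tout en gardant la normalisation du caractère $\sgn$ valable dans le cas où $K''$ n'est pas un corps. Un point secondaire à valider est que le scindage canonique de $\rev$ au-dessus des facteurs $\GL(n_i)$, utilisé pour définir $\tilde\delta^\flat$, coïncide bien avec le scindage de Lévi caractérisé par la positivité du caractère (\ref{prop:caractere-Levi}, \ref{rem:caractere-Levi}); c'est précisément cela qui rend légitime l'argument de normalisation dans $\Delta'\Delta''$.
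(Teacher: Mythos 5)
Ta preuve est correcte. Pour le facteur $\Delta'\Delta''$, ta démarche coïncide pour l'essentiel avec celle de l'article : réécriture via \ref{prop:Delta-variante}, factorisation du caractère par \ref{prop:caractere-decomposition-0}, positivité sur le Lévi via \ref{prop:caractere-Levi}, et hyperbolicité des blocs de la forme de Cayley issus des facteurs $\GL(n_i)$ (tu utilises la seconde formule de \ref{prop:Delta-variante} avec $q[C_{\delta'}]$ tandis que l'article utilise la première avec $q[C_{\delta''}]$, ce qui est une variante cosmétique).

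Pour le facteur $\Delta_0$, en revanche, tu prends une route sensiblement différente. L'article se contente de constater que $\sgn_{K''/K''^\#}$ se factorise par $E^{\#\times}$ pour éliminer les contributions du côté $K''$, puis invoque la symétrie \ref{prop:reciprocite} (prouvée plus loin dans le texte, au \S 7) pour un argument de va-et-vient qui échange les rôles de $\delta'$ et $\delta''$ et élimine ainsi les contributions du côté $K'$. Toi, tu fais le calcul direct : factorisation palindromique du polynôme caractéristique de $\iota(\delta'_i) \in \Sp(X'_i)$ en termes de celui de $\delta'_i \in \GL(n'_i)$, menant à $P_{a'_i}(a''^\flat)(-a''^\flat)^{-n'_i} = N_{K''^\flat/K''^{\flat\#}}(Q_i(a''^\flat))/\det(\delta'_i)$, puis l'identité $\det_{X'_i}(\iota(\delta'_i)+1) = \det(\delta'_i+1)^2/\det(\delta'_i)$, si bien que la contribution totale de chaque bloc $\GL$ est le produit d'une norme et d'un carré de $F^\times$, donc une norme. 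Ce calcul est exact et rend la structure de norme entièrement explicite ; il a en outre le mérite d'éviter la dépendance vers l'avant que crée l'usage de \ref{prop:reciprocite} dans la preuve de l'article. L'approche de l'article est plus courte mais moins transparente et reporte le fardeau sur la démonstration de la réciprocité de $\Delta_0$. Tu as aussi bien repéré le point qu'il faut effectivement vérifier, à savoir que le scindage canonique de $\rev$ au-dessus des $\GL(n_i)$ (via \ref{rem:Levi-lagrangien}, \ref{prop:scindage-Schrodinger}) coïncide avec celui caractérisé par la positivité du caractère (\ref{rem:caractere-Levi}), ce qui est précisément ce qui rend licite l'argument de normalisation dans $\Delta'\Delta''$.
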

\begin{proof}
  Pour le terme $\Delta'\Delta''$, observons que, d'après \ref{prop:caractere-decomposition-0}, \ref{rem:Levi-lagrangien} et \ref{prop:caractere-Levi}, on a
  $$ \Theta_\psi(j((\delta_i), \tilde{\delta}^\flat)) = \Theta_\psi(\tilde{\delta}^\flat) $$
  où le terme à gauche est défini par rapport à $\tilde{G}^\flat$. Par \ref{prop:Delta-variante}, on voit que
  $$ \Delta'(\tilde{\delta}')\Delta''(\tilde{\delta}'')= \frac{\Theta_\psi(\tilde{\delta}^\flat)}{|\Theta_\psi(\tilde{\delta}^\flat)|} \gamma_\psi(q[C_{\delta''}]). $$
  Or les classes de conjugaison des composantes $\delta_i$ donnent des formes hyperboliques comme facteurs directs de $q[C_{\delta''}]$, qui n'affectent pas $\gamma_\psi(q[C_{\delta''}])$. On obtient la descente du facteur $\Delta'\Delta''$ en appliquant \ref{prop:Delta-variante} encore une fois.

  Pour le terme $\Delta_0$, observons que $\mathcal{O}(\delta)$ est paramétré par une donnée
  $$ (K/K^\#,a,c) = \bigoplus_i (K_i/K_i^\#, a_i, c_i) \oplus (E/E^\#, \alpha, \gamma) $$
  où $\mathcal{O}(\delta^\flat)=\mathcal{O}(E/E^\#, \alpha, \gamma)$ et $K_i \simeq K_i^\# \times K_i^\#$ pour tout $i$. Les données ont de plus les décompositions $K_i/K_i^\# = K'_i/{K'_i}^\# \oplus K''_i/{K''_i}^\#$, etc.

  Il s'agit de démontrer que l'on peut enlever $(K'_i/{K'_i}^\#, a'_i, c'_i)$ et $(K''_i/{K''_i}^\#, a''_i, c''_i)$ dans la définition de $\Delta_0$. C'est clair pour $(K''_i/{K''_i}^\#, a''_i, c''_i)$ car le caractère $\sgn_{K''/K''^\#}(\cdot)$ se factorise par $K^{\#\times} \to E^{\#\times}$. Pour $(K'_i/{K'_i}^\#, a'_i, c'_i)$, observons que la symétrie \ref{prop:reciprocite} pour $\Delta_0$ échange les rôles de $\delta'$ et $\delta''$ quitte à les multiplier par $-1$. Un argument de va-et-vient permet de se ramener au cas précédent.
\end{proof}

\subsection{Énoncés du transfert et du lemme fondamental}\label{sec:conj-transfert}
Dans cette section, $F$ est un corps local.

\paragraph{Intégrales orbitales}
Soit $M$ un $F$-groupe réductif connexe et soit $m \in M_\text{reg}(F)$. Posons\index{$D_M(\cdot)$}
$$ D_M(m) := \det(1 - \Ad(m)|\mathfrak{m}/\mathfrak{m}_m). $$

Soit $f \in C_c^\infty(M(F))$. Fixons des mesures de Haar sur $M(F)$ et $M_m(F)$. L'intégrale orbitale de $f$ en $m$ est définie comme suit\index{$J_M(\cdot,\cdot)$}
$$ J_M(m, f) := |D_M(m)|^{1/2} \cdot \int_{M_m(F) \backslash M(F)} f(x^{-1}mx) \dd \dot{x}. $$

Si $m \in M(F)$ est fortement régulier, l'intégrale orbitale stable est définie comme\index{$J_M^\text{st}(\cdot,\cdot)$}
$$ J_M^\text{st}(m,f) := \sum_{m_1} J_M(m_1, f)$$
où $m_1$ parcourt des représentants de $\mathcal{O}^\text{st}(m)/\text{conj}$.

Les mêmes définitions s'adaptent à l'algèbre de Lie. Soit $X \in \mathfrak{m}_\text{reg}(F)$. Posons
$$ D_M(X) := \det(\ad(X)|\mathfrak{m}/\mathfrak{m}_X). $$

Soit $f \in C_c^\infty(\mathfrak{m}(F))$. Fixons les mesures comme précédemment et définissons
\begin{align}
  J_M(X,f) & := |D_M(X)|^{1/2} \cdot \int_{M_m(F) \backslash M(F)} f(x^{-1}Xx) \dd \dot{x},\\
  J_M^\text{st}(X,f) & := \sum_{X_1} J_M(X_1, f)
\end{align}
où $X_1$ parcourt des représentants de $\mathcal{O}^\text{st}(X)/\text{conj}$.

On considère aussi les intégrales orbitales sur un revêtement. Soit $\rev: \tilde{M} \to M(F)$ un revêtement d'un $F$-groupe réductif connexe satisfaisant à la condition simplificatrice suivante.

\begin{hypothesis}\label{hyp:revetement-commutant}
  On suppose que $\tilde{x},\tilde{y} \in \tilde{M}$ commutent si $x,y \in M(F)$ commutent.
\end{hypothesis}

Définissons les sous-ensembles $\tilde{M}_\text{reg}$ et $\tilde{M}_\text{ss}$ comme les images réciproques de leurs avatars sur $M(F)$. Fixons les mesures comme précédemment et définissons\index{$J_{\tilde{M}}(\cdot,\cdot)$}
\begin{equation}\label{eqn:integrale-endoscopique}
  J_{\tilde{M}}(\tilde{m}, f) := |D_M(m)|^{1/2} \int_{M_m(F) \backslash M(F)} f(\tilde{x}^{-1}\tilde{m}\tilde{x}) \dd \dot{x}
\end{equation}
où $\tilde{x}$ est une image réciproque quelconque de $x$. Si $F$ est archimédien, alors on peut définir les intégrales orbitales d'une fonction $f$ dans l'espace de Schwartz $\mathcal{S}(M(F))$ ou $\mathcal{S}(\tilde{M})$.

Revenons au cas du groupe métaplectique pour lequel l'hypothèse \ref{hyp:revetement-commutant} est satisfaite. Soient $\gamma \in H_{G-\text{reg}}(F)$, $f \in C_{c,\asp}^\infty(\tilde{G})$. Définissons l'intégrale orbitale endoscopique par\index{intégrale orbitale endoscopique}\index{$J_{H,\tilde{G}}(\cdot,\cdot)$}
$$ J_{H,\tilde{G}}(\gamma, f) := \sum_{\delta} \Delta(\gamma, \tilde{\delta}) J_{\tilde{G}}(\tilde{\delta}, f) $$
où:
\begin{itemize}
  \item les $\delta$ parcourent les représentants des classes de conjugaison dans $G(F)$ qui se correspondent à $\gamma$;
  \item $\tilde{\delta} \in \tilde{G}$ est une image réciproque quelconque de $\delta$, le choix n'affecte pas la définition car $\Delta$ est spécifique (\ref{prop:facteur-specificite}) et $f$ est anti-spécifique.
\end{itemize}

Si $F$ est archimédien, on peut aussi définir $J_{H,\tilde{G}}(\cdot, f)$ pour $f \in \mathcal{S}_{\asp}(\tilde{G})$.

\paragraph{La conjecture de transfert}
Soient $\gamma \in H_{G-\text{reg}}(F)$ et $\delta \in G_\text{reg}(F)$ qui se correspondent. D'après la description des commutants via paramètres dans \S\ref{sec:parametrage}, on a un isomorphisme entre $F$-tores $H_\gamma \rightiso G_\delta$. On en déduit une correspondance entre les mesures de Haar sur $H_\gamma(F)$ et $G_\delta(F)$.

Le résultat que nous établirons est le suivant.
\begin{theorem}[Transfert d'intégrales orbitales]\label{prop:transfert}
  Fixons des mesures de Haar sur $G(F)$ et $H(F)$. Soit $f \in C_{c,\asp}^\infty(\tilde{G})$, alors il existe $f^H \in C_c^\infty(H(F))$ tel que
  $$ J_{H,\tilde{G}}(\gamma, f) = J^\text{st}(\gamma, f^H) $$
  pour tout $\gamma \in H_{G-\text{reg}}(F)$, où on utilise les mesures de Haar qui se correspondent sur les commutants. On dit que $f^H$ est un transfert de $f$.

  Si $F$ est archimédien et $f \in \mathcal{S}_{\asp}(\tilde{G})$, alors on peut prendre $f^H \in \mathcal{S}(H(F))$ satisfaisant à l'égalité ci-dessus.
\end{theorem}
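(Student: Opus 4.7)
Le plan général, inspiré par \cite{LS2, Wa08}, est de procéder par descente semi-simple à la Harish-Chandra afin de ramener l'énoncé du transfert sur le groupe à un énoncé analogue sur les algèbres de Lie des commutants d'éléments semi-simples. Plus précisément, pour chaque classe de conjugaison stable semi-simple dans $H(F)$ rencontrée dans l'analyse des voisinages, on choisit des éléments représentatifs $\eta \in H(F)_\text{ss}$ et $\epsilon \in G(F)_\text{ss}$ qui se correspondent, et l'on exprime les intégrales orbitales $J_{H,\tilde{G}}(\gamma,f)$ (resp. $J^\text{st}(\gamma,f^H)$) pour $\gamma$ proche de $\eta$ en termes d'intégrales orbitales sur $\mathfrak{g}_\epsilon(F)$ (resp. $\mathfrak{h}_\eta(F)$). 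Les revêtements disparaissent car un voisinage topologiquement unipotent de $\tilde\epsilon$ est trivialisé par toute section continue locale de $\rev$ au-dessus d'un voisinage de $\epsilon$, et les intégrales orbitales anti-spécifiques se ramènent aux intégrales orbitales classiques sur $G_\epsilon(F)$.

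Pour la descente, on analyse les commutants $G_\epsilon$ et $H_\eta$ au moyen du paramétrage de \S\ref{sec:kit-classification} et \S\ref{sec:parametrage}. Chaque facteur du commutant est soit un groupe général linéaire (facteur hyperbolique, pour lequel le transfert est trivial, cf. \ref{rem:Levi-lagrangien} et \ref{prop:descente-parabolique}), soit un groupe unitaire $U_{K_i/K_i^\#}$, soit un groupe symplectique ou orthogonal impair. Les couples $(G_\epsilon, H_\eta)$ qui apparaissent se décomposent donc en produits de situations endoscopiques standards (transfert entre groupes unitaires, entre groupes orthogonaux et symplectiques, etc.) et d'une situation \emph{non standard} de Waldspurger \cite{Wa08}, à savoir le transfert entre $\syp(2m)$ et $\so(2m+1)$ sur la partie $G_\epsilon^\flat$ où $\epsilon$ agit par $\pm 1$. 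Dans toutes ces situations, le transfert sur l'algèbre de Lie est maintenant connu grâce aux travaux de Ngô \cite{Ng08} et Waldspurger.

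Le cœur technique consiste à vérifier que notre facteur $\Delta$, écrit sous la forme $\Delta'\Delta''\Delta_0$, se descend en les facteurs de transfert de Langlands-Shelstad convenablement normalisés sur ces sous-situations. Pour les termes $\Delta', \Delta''$, la descente repose essentiellement sur les formules du caractère de Maktouf (\ref{prop:formule-caractere-Thomas}, \ref{prop:formule-caractere-Thomas2}) et la formule \ref{prop:Delta-variante} qui exprime $\Delta' \Delta''$ au moyen de $\Theta_\psi^\pm$ et de $\gamma_\psi(q[C_{\delta'}])$ ou $\gamma_\psi(q[C_{\delta''}])$. Au voisinage de $\epsilon$, un élément $\delta = \epsilon \exp(X)$ donne $q[C_\delta] \simeq q[X]$ modulo formes hyperboliques (cf. \ref{prop:q_X-est-q_x} et \ref{prop:formule-caractere-Lie}), ce qui permet d'identifier la limite du facteur aux indices de Weil des formes de Cayley sur les algèbres de Lie. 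Pour le terme $\Delta_0$, on manipule directement les symboles $\sgn_{K/K^\#}$ à l'aide des formules explicites de \S\ref{sec:facteur-de-transfert}.

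Le principal obstacle sera la \emph{normalisation} des facteurs descendus dans le cas non ramifié : il faudra vérifier qu'après descente semi-simple en un élément $\epsilon$ de réduction régulière, les facteurs obtenus coïncident avec les normalisations de Langlands-Shelstad (et de la version non standard de Waldspurger) qui rendent le lemme fondamental valable avec constante $1$. Cela repose sur le calcul explicite de l'indice de Weil de la forme de Cayley (\ref{prop:calcul-qX}) et sur \ref{prop:normalisation-waldspurger}. Une fois cette étape acquise, on applique les résultats de Ngô et Waldspurger pour obtenir, par recollement à la Waldspurger (utilisant la descente parabolique \ref{prop:descente-parabolique} pour les éléments non elliptiques et la descente semi-simple pour les éléments elliptiques), l'existence de $f^H \in C_c^\infty(H(F))$. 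Dans le cas archimédien, on procédera séparément en \S\ref{sec:trans-arch} en comparant notre formalisme à celui de Renard \cite{Re98, Re99}, pour lequel le transfert est déjà démontré ; la question se réduit alors à vérifier l'équivalence des facteurs de transfert, quitte à composer avec le transfert $\Mp(2k) \to \SO(2k+1)$.
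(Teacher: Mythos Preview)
Your sketch follows essentially the same route as the paper: semi-simple descent of both the orbital integrals and the factor $\Delta=\Delta'\Delta''\Delta_0$ to the Lie algebras of the centralizers, factorization of the latter into standard endoscopic situations for unitary and symplectic groups plus the non-standard triple $(\Sp(2m),\Spin(2m+1),j_*)$, and then an appeal to Ngô--Waldspurger; the archimedean case is handled separately via comparison with Renard. Two small points: in the paper's conventions the roles of $\eta$ and $\epsilon$ are swapped ($\epsilon\in H(F)_{\mathrm{ss}}$, $\eta\in G(F)_{\mathrm{ss}}$), and the final recollement is not done via parabolic descent but via the Langlands--Shelstad local characterization of stable orbital integrals \cite[2.2A]{LS2}, applied uniformly at every semi-simple point of $H(F)$.
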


La fonction $f^H$ n'est pas unique, mais ses intégrales orbitales stables sont caractérisées par $f$.

\paragraph{Le cas non ramifié: le lemme fondamental pour les unités de l'algèbre de Hecke}

\begin{hypothesis}\label{hyp:non-ramifie}\index{le cas non ramifié}
  On fait les hypothèses ci-dessous.
  \begin{itemize}
    \item $F$ est non archimédien de caractéristique résiduelle $p>2$ et $|\mathfrak{o}_F/\mathfrak{p}_F| > 3$.
    \item $p$ est assez grand par rapport à $n$. Une minoration possible de $p$ est celle dans \cite{Wa08} 4.4, appliquée aux groupes $G$ et $H$.
    \item $\psi$ est de conducteur $\mathfrak{o}_F$.
  \end{itemize}
\end{hypothesis}

\begin{definition}
  Soit $M$ un $F$-groupe réductif non ramifié. On dit qu'une mesure de Haar sur $M(F)$ est non ramifiée si $\mes(K)=1$ pour tout sous-groupe hyperspécial $K$ de $M$.
\end{definition}

Remarquons que les sous-groupes hyperspéciaux sont conjugués par $M_\text{ad}(F)$, et la conjugaison préserve les mesures de Haar, il suffit donc de considérer un choix de $K$.

Prenons un réseau autodual $L \subset W$ et notons $K = \mathrm{Stab}_G(L)$ le sous-groupe hyperspécial de $G(F)$ associé. Fixons aussi un sous-groupe hyperspécial $K_H$ de $H(F)$. Le lemme fondamental pour les unités concerne le transfert de la fonction\index{$f_K$} anti-spécifique
$$ f_K(\tilde{x}) = \begin{cases}
    \noyau^{-1}, & \text{si } \tilde{x} \in \noyau K, \noyau \in \Ker(\rev), \\
    0, & \text{si } \tilde{x} \notin \rev^{-1}(K).
\end{cases}$$

\begin{theorem}[Le lemme fondamental pour les unités de l'algèbre de Hecke sphérique anti-spécifique]\label{prop:LF-unite}
  Soit $K_H$ un sous-groupe hyperspécial $H(F)$, alors $\mathbbm{1}_{K_H}$ est un transfert de $f_K$ si l'on utilise les mesures non ramifiées sur $G(F)$ et $H(F)$.
\end{theorem}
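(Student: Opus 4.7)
La stratégie est celle de Waldspurger~\cite{Wa08}: on réduit l'égalité au cas des algèbres de Lie par descente semi-simple de Harish-Chandra, puis on invoque les cas connus du lemme fondamental, y compris la variante non standard de Ngô entre $\syp(2m)$ et $\so(2m+1)$.

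Tout d'abord, comme $f_K$ est à support dans $\rev^{-1}(K)$, l'intégrale orbitale $J_{\tilde{G}}(\tilde{\delta},f_K)$ est nulle sauf si $\delta$ est conjugué à un élément de $K$; par spécificité/antispécificité, il suffit donc de prouver l'égalité pour $\gamma$ à image compacte dans $H(F)$ admettant un correspondant $\delta \in K$. Pour un tel couple, la décomposition de Jordan topologique donne $\gamma = \eta_H \exp(Y) = \exp(Y)\eta_H$ et $\delta = \eta\exp(X) = \exp(X)\eta$, où $\eta,\eta_H$ sont d'ordre fini premier à $p$ et $X,Y$ sont topologiquement nilpotents. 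La correspondance entre $\gamma$ et $\delta$ induit alors une correspondance entre les parties de type fini $(\eta_H,\eta)$ et une correspondance entre $Y$ et $X$ dans les algèbres de Lie des commutants.

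Le commutant $G_\eta$ est un produit de groupes classiques (symplectique, orthogonaux, unitaires) et de facteurs $\GL$; d'après la classification de \S3 et la description de $\mu$ en termes de paramètres, le commutant $H^\flat := H_{\eta_H}$ s'identifie à un groupe endoscopique de $G_\eta$, composé d'une partie d'endoscopie standard pour les groupes classiques et d'une partie non standard $\syp(2m)/\so(2m+1)$. Les fonctions $f_K$ et $\mathbbm{1}_{K_H}$ se descendent, via l'exponentielle et des fonctions auxiliaires sur $\mathfrak{g}_\eta(F)$ et $\mathfrak{h}^\flat(F)$, en fonctions caractéristiques des réseaux hyperspéciaux associés. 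Il reste alors à comparer les intégrales orbitales $\kappa$-endoscopiques et stables sur ces algèbres de Lie, ce qui est précisément le lemme fondamental pour les unités dû à Ngô~\cite{Ng08} dans chacun des cas intervenants.

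Le c\oe ur technique, et l'obstacle principal, est de démontrer que notre facteur de transfert $\Delta$ se descend en le bon facteur sur $\mathfrak{g}_\eta(F)$, et que ce facteur descendu est normalisé (vaut $1$ à l'unité du réseau). Pour cela, j'utiliserais la variante \ref{prop:Delta-variante} du facteur, la formule de caractère au voisinage de l'identité \ref{prop:formule-caractere-Lie}, puis le calcul \ref{prop:calcul-qX} de $\gamma_\psi(q[X])$ pour identifier $\Delta'\Delta''$, après descente, au facteur de transfert attendu pour la situation non standard de Ngô. Les facteurs $\GL$ dans $G_\eta$ sont traités par la descente parabolique \ref{prop:descente-parabolique}, et la compatibilité entre modèles de Schrödinger et latticiel (\ref{prop:Levi-latticiel-compatible}) assure que les scindages utilisés pour relever $\delta$ à $\tilde{G}$ coïncident avec celui du modèle latticiel sur $K$. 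Enfin, la normalisation \ref{prop:normalisation-waldspurger} et le corollaire \ref{prop:Theta-reduction-reguliere} garantissent que les facteurs descendus valent $1$ sur les éléments à réduction régulière, ce qui est exactement la condition de normalisation exigée pour appliquer les résultats de \cite{Wa08} et \cite{Ng08}.
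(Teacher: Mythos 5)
Your sketch follows the same strategy as the paper (reduction to compact elements, topological Jordan decomposition, semi-simple descent to Lie algebras, standard plus non-standard fundamental lemmas, with the descent and normalization of $\Delta$ as the technical core), but there are two genuine gaps and a few misdirected citations.

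\textbf{Gap 1 (vanishing when $H_\epsilon$ is not unramified).} You reduce to $\gamma$ compact with a correspondent $\delta$ whose Jordan part $\eta$ lies in $K$, and then implicitly assume the descended problem lands in a ``good'' unramified situation where the classical and non-standard fundamental lemmas settle the matter. But even when $\eta \in K$, the commutant $H_\epsilon$ need not be unramified; in that case $J_H^{\mathrm{st}}(\gamma, \mathbbm{1}_{K_H})=0$ because $\mathcal{O}^{\mathrm{st}}(\gamma)$ misses every hyperspecial subgroup of $H(F)$, and you must separately prove the descended left-hand side also vanishes. The paper does this by showing at least one factor among $J_u$, $J_+$, $J_-$ is zero, via Kottwitz's \cite{Ko86}~7.5 adapted to the Lie algebra. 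Your sketch does not address this case at all, and one cannot just invoke Ngô's theorem there since the hypotheses (unramified endoscopic data on both sides) fail. Similarly, the initial reduction ``il suffit de prouver l'égalité pour $\gamma$ compact admettant un correspondant $\delta \in K$'' needs the complementary argument: when $\gamma$ is compact but no correspondent has $\eta \in K$, one must show both sides vanish; the right-hand side requires \ref{prop:existence-diagramme-nr} together with the non-ramification criterion of \cite{Wa08}~5.3(v), which your sketch omits.

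\textbf{Gap 2 (descent of $f_K$).} The claim that $f_K$ descends to $\mathbbm{1}_{\mathfrak{k}_\eta}$ on $\mathfrak{g}_\eta(F)$ is not automatic, because the orbital integral of $f_K$ is taken over $\rev^{-1}(G_\eta(F))$ in the covering group and the twist by $\noyau \in \Ker(\rev)$ could a priori survive. The paper's Lemma \ref{prop:int-orb-f_K} disposes of this by taking $p^{n_k}$-th power limits to isolate $\eta$ and using that $\tilde{x}$ centralizes $\eta$. Your sketch invokes \ref{prop:Levi-latticiel-compatible} for ``compatibilité entre modèles,'' but that compatibility concerns tori inside a Siegel parabolic and is not the step needed here.

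\textbf{Minor misdirections.} You cite \ref{prop:normalisation-waldspurger} and \ref{prop:Theta-reduction-reguliere} to normalize the descended factors; what is actually needed is \ref{prop:comparaison-transfert-classique}, which identifies $\Delta_u$, $\Delta_\pm$ with the \emph{normalized} transfer factors of \cite{Wa01,Wa08}. Also, the $\GL$ factors inside $G_\eta$ are not treated by parabolic descent in the paper; they disappear automatically in \ref{prop:descente-facteur-transfert} because $\sgn_{K''/K''^\#}$ is trivial when $L \simeq L^\# \times L^\#$. Parabolic descent (\ref{prop:descente-parabolique}) is used elsewhere (e.g., to establish the symmetry of $\Delta_0$) but not in this step.
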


La démonstration occupera les sections suivantes. Nous obtiendrons aussi des résultats plus forts sur le transfert pour le cas archimédien.

\paragraph{Remarques sur le choix de revêtements}
  Les seuls résultats qui fait intervenir l'hypothèse $8|\mathbf{f}$ sont les suivants:
\begin{itemize}
  \item la symétrie pour $\Delta'\Delta''$ (\ref{prop:symetrie}), qui fait intervenir l'élément $-1 \in \tilde{G}$;
  \item la descente parabolique \S\ref{sec:descente-parabolique}.
\end{itemize}

En particulier, on peut formuler la conjecture de transfert et le lemme fondamental pour chaque choix de $\mathbf{f}$. Montrons qu'elles sont équivalentes. Soient $\mathbf{f},\mathbf{f}' \in 2\Z_{\geq 1}$ et supposons que $\mathbf{f}|\mathbf{f}'$. Posons $\tilde{G}' := \MMp{\mathbf{f}'}(W)$, $\tilde{G} := \MMp{\mathbf{f}}(W)$. Alors $\tilde{G} \hookrightarrow \tilde{G}'$. Le résultat suivant découle immédiatement.

\begin{lemma}
  Soient $f' \in C_{c,\asp}^\infty(\tilde{G}')$ et $f \in C_{c,\asp}^\infty(\tilde{G})$ sa restriction. Pour tout $\tilde{\delta} \in \tilde{G}$, on a
  $$ J_{\tilde{G}'}(\tilde{\delta}, f') = J_{\tilde{G}}(\tilde{\delta}, f), $$
  où on utilise les mêmes mesures de Haar sur $G(F)$ et $G_\delta(F)$.
\end{lemma}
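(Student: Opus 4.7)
Le plan est de ramener l'identité à une égalité des intégrandes point par point~: tout reposera sur une compatibilité formelle entre les deux revêtements.

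D'abord, je rappellerais que le revêtement $\tilde{G}' = \tilde{G} \times_{\bmu_\mathbf{f}} \bmu_{\mathbf{f}'}$ fournit une inclusion canonique $\tilde{G} \hookrightarrow \tilde{G}'$ (via $\tilde{y} \mapsto [\tilde{y},1]$), dont le quotient $\tilde{G}'/\tilde{G} \simeq \bmu_{\mathbf{f}'}/\bmu_\mathbf{f}$ est central. En particulier, $\tilde{G}$ est normal dans $\tilde{G}'$, donc pour tout $\tilde{\delta} \in \tilde{G}$ et tout $\tilde{x} \in \tilde{G}'$, le conjugué $\tilde{x}^{-1}\tilde{\delta}\tilde{x}$ appartient encore à (l'image de) $\tilde{G}$.

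Ensuite, puisque $\Ker(\rev) = \bmu_{\mathbf{f}'}$ est central dans $\tilde{G}'$, le conjugué $\tilde{x}^{-1}\tilde{\delta}\tilde{x}$ ne dépend que de $x := \rev(\tilde{x}) \in G(F)$. Choisissant un relevé $\tilde{x}_0 \in \tilde{G}$ de $x$, il vient
$$ \tilde{x}^{-1}\tilde{\delta}\tilde{x} = \tilde{x}_0^{-1}\tilde{\delta}\tilde{x}_0 $$
sous l'identification $\tilde{G} \subset \tilde{G}'$, les deux produits étant calculés indifféremment dans $\tilde{G}$ ou dans $\tilde{G}'$. Comme $f = f'|_{\tilde{G}}$ par hypothèse, on a donc $f'(\tilde{x}^{-1}\tilde{\delta}\tilde{x}) = f(\tilde{x}_0^{-1}\tilde{\delta}\tilde{x}_0)$.

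Enfin, je remarquerais que les formules \eqref{eqn:integrale-endoscopique} définissant $J_{\tilde{G}'}(\tilde{\delta}, f')$ et $J_{\tilde{G}}(\tilde{\delta}, f)$ emploient le même facteur $|D_G(\delta)|^{1/2}$, le même domaine d'intégration $G_\delta(F) \backslash G(F)$ et la même mesure de Haar~; l'égalité des intégrandes ci-dessus donne alors immédiatement l'égalité des intégrales. Il n'y aura pas de véritable obstacle technique~: l'énoncé est une compatibilité formelle, dont l'intérêt véritable est de permettre ensuite de ramener la conjecture de transfert \ref{prop:transfert} et le lemme fondamental \ref{prop:LF-unite} pour un $\mathbf{f} \in 2\Z_{\geq 1}$ quelconque au cas particulier $\mathbf{f} = 8$ que l'on traitera en \S 8.
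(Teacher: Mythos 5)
Your proof is correct and matches the paper's intent exactly: the paper gives no proof, saying only ``Le résultat suivant découle immédiatement,'' and your argument is the natural filling-in. You correctly observe that the integrand $f'(\tilde{x}^{-1}\tilde{\delta}\tilde{x})$ is independent of the choice of preimage $\tilde{x}$ (since two preimages differ by a central element), so one may always take $\tilde{x}\in\tilde{G}$, where $f'$ and $f$ agree; the domains, weight factor $|D_G(\delta)|^{1/2}$ and Haar measures then coincide term for term.
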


\begin{proposition}\label{prop:rem-revetements}
  Les énoncés de transfert (cf. \ref{prop:transfert}) pour $\tilde{G}$ et $\tilde{G}'$ sont équivalents.

  Dans le cas non ramifié, les énoncés du lemme fondamental pour les unités (cf. \ref{prop:LF-unite}) pour $\tilde{G}$ et $\tilde{G}'$ sont équivalents.
\end{proposition}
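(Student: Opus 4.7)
Le plan est d'établir une bijection explicite entre les espaces $C_{c,\asp}^\infty(\tilde{G}')$ et $C_{c,\asp}^\infty(\tilde{G})$ qui préserve les intégrales orbitales, et d'en déduire l'équivalence des énoncés. D'abord, je construirais l'application de restriction $C_{c,\asp}^\infty(\tilde{G}') \to C_{c,\asp}^\infty(\tilde{G})$, $f' \mapsto f'|_{\tilde{G}}$, dont l'inverse est l'extension $f \mapsto f'$ définie par $f'(\tilde{x} \cdot \noyau') := (\noyau')^{-1} f(\tilde{x})$ pour $\tilde{x} \in \tilde{G}$ et $\noyau' \in \bmu_{\mathbf{f}'}$. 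La bonne définition de l'extension repose sur l'anti-spécificité de $f$ par rapport à $\bmu_\mathbf{f} \subset \bmu_{\mathbf{f}'}$. Dans le cas archimédien, la même bijection s'applique aux espaces de Schwartz.

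Ensuite, le lemme qui précède la proposition donne immédiatement $J_{\tilde{G}'}(\tilde{\delta}, f') = J_{\tilde{G}}(\tilde{\delta}, f)$ pour tout $\tilde{\delta} \in \tilde{G}$. Comme le facteur de transfert $\Delta$ et la correspondance entre classes de conjugaison géométriques ne dépendent que de $G(F)$ et $H(F)$, et sont donc les mêmes pour $\tilde{G}$ et $\tilde{G}'$, il en résulte que $J_{H,\tilde{G}'}(\gamma, f') = J_{H,\tilde{G}}(\gamma, f)$ pour tout $\gamma \in H_{G-\text{reg}}(F)$ (en utilisant la spécificité de $\Delta$ et l'anti-spécificité de $f'$ pour choisir les relèvements $\tilde{\delta}$ dans $\tilde{G}$). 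Par conséquent $f^H \in C_c^\infty(H(F))$ est un transfert de $f'$ au sens de $\tilde{G}'$ si et seulement s'il est un transfert de $f$ au sens de $\tilde{G}$, ce qui établit l'équivalence des énoncés \ref{prop:transfert}.

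Pour le lemme fondamental \ref{prop:LF-unite}, il reste à vérifier que la fonction $f_K$ définie par rapport à $\tilde{G}'$ s'envoie sur la fonction $f_K$ définie par rapport à $\tilde{G}$ via la restriction. Cela sera immédiat à partir des définitions: le sous-groupe hyperspécial $K$ est commun aux deux revêtements (via le scindage par le modèle latticiel), et pour $\tilde{x} = \noyau k$ avec $\noyau \in \bmu_\mathbf{f}$ et $k \in K$, les deux fonctions valent $\noyau^{-1}$; hors de $\rev^{-1}(K)$ elles s'annulent. L'équivalence des deux versions du lemme fondamental en résulte par la première partie.

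Je n'attends pas d'obstacle technique sérieux: l'argument est essentiellement formel, l'observation clé étant que le passage à un revêtement plus grand n'affecte ni les intégrales orbitales (par le lemme préliminaire) ni le facteur de transfert (qui est déterminé par les données sur $G(F)$ et $H(F)$). La seule subtilité, qui ne pose pas de problème, est d'utiliser correctement la bijection entre objets anti-spécifiques sur les deux revêtements induite par l'inclusion $\bmu_\mathbf{f} \subset \bmu_{\mathbf{f}'}$.
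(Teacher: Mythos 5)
Your proof is correct and follows essentially the same route as the paper: both rest on the preceding lemma giving $J_{\tilde{G}'}(\tilde{\delta},f')=J_{\tilde{G}}(\tilde{\delta},f)$, together with the observation that one may choose the lifts $\tilde{\delta}$ inside $\tilde{G}$ and that the stability notion (\ref{rem:stabilite-compatibilite}) and transfer factor (via specificity) are insensitive to enlarging the covering. Your two additions — making the restriction/extension bijection explicit so that both implications of the equivalence are visible, and checking directly that $f_K$ for $\tilde{G}'$ restricts to $f_K$ for $\tilde{G}$ — are details the paper leaves implicit, and they tighten the argument without changing its substance.
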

\begin{proof}
  Soient $f', f$ comme ci-dessus. Dans la définition de l'intégrale orbitale endoscopique $J_{H,\tilde{G}'}(\gamma,f')$, on peut choisir les $\tilde{\delta}$ dans $\tilde{G}$. Vu la compatibilité de la notion de stabilité (par \ref{rem:stabilite-compatibilite}) et du facteur de transfert (par sa spécificité) par rapport à restriction, on déduit par ledit lemme que
  $$ J_{H,\tilde{G}'}(\gamma,f') =  J_{H,\tilde{G}}(\gamma,f).$$
\end{proof}
Le cas des fonctions de Schwartz dans le cas réel est analogue.

\section{Transfert: le cas archimédien}\label{sec:trans-arch}
Les résultats ici sont valables pour le transfert des fonctions à support compact ainsi que les fonctions de Schwartz. Nous traitons principalement le premier cas et nous signalerons à la fin les modifications nécessaires pour les fonctions de Schwartz (\ref{rem:Schwartz}).

Puisque la restriction des scalaires ne s'applique pas au groupe métaplectique, il faut traiter le cas complexe séparément. Pour la plupart, nous nous occupons du cas $F=\R$; le cas $F=\C$ est beaucoup plus simple. Nous rajouterons quelques remarques à la fin.

D'après \ref{prop:rem-revetements}, il suffit d'établir le transfert pour $\mathbf{f}=8$. Notons $\Mp(W) := \MMp{8}(W)$ pour tout $F$-espace symplectique $W$.

\subsection{L'endoscopie chez Renard}
Posons $F=\R$ et $n'+n''=n$. Soient $W$ un $F$-espace symplectique de dimension $2n$ et $W=W' \oplus W''$, $\dim W' = 2n'$, $\dim W'' = 2n''$. À la paire $(n',n'')$ et à un $F$-tore maximal $T$ dans $G$ est associé l'objet $\kappa$ dans \ref{prop:propriete-cocycle}.

Afin de réconcilier avec le formalisme de \cite{Re99}, posons
\begin{gather*}
  G := \Sp(W), \\
  \tilde{G} := \Mp(W), \\
  G^\diamond := \Sp(W') \times \Sp(W''),\\
  \tilde{G}^\diamond := \Mp(W') \times \Mp(W''),\\
  p^\diamond: \tilde{G}^\diamond \to G^\diamond, \\
  \iota: G^\diamond \hookrightarrow G,\\
  j: \tilde{G}^\diamond \to \tilde{G}.
\end{gather*}

Dans \cite{Re99}, Renard travaille avec les revêtements à deux feuillets et il considère le groupe $\Image(j)$ au lieu de $\tilde{G}^\diamond$. Mais peu importe.

\begin{definition}
  On dit qu'une fonction $f \in C_c^\infty(\tilde{G}^\diamond)$ est spécifique (resp. anti-spécifique) si $f$ est spécifique (resp. anti-spécifique) en les deux coordonnées. L'espace de telles fonctions est noté par $C_{c,-}^\infty(\tilde{G}^\diamond)$ (resp. $C_{c,\asp}^\infty(\tilde{G}^\diamond)$).

  On dit qu'un élément $(\delta',\delta'') \in G^\diamond$ est $G$-régulier si $\iota(\delta',\delta'')$ est semi-simple régulier. L'ensemble des éléments $G$-réguliers est noté $G^\diamond_{G-\text{reg}}$, et son image réciproque $\tilde{G}^\diamond_{G-\text{reg}}$.
\end{definition}

Nous utiliserons systématiquement l'application
\begin{align*}
  \tau: & \tilde{G}^\diamond \to \tilde{G}^\diamond \\
  & (\tilde{x}',\tilde{x}'') \mapsto (\tilde{x}', -\tilde{x}''),
\end{align*}
où $\tilde{x}'' \mapsto -\tilde{x}''$ est l'opération définie par \ref{def:-1}. On a $\tau^2 = \identity$.

Conservons les conventions de \cite{Re99} pour les mesures et les intégrales orbitales. Identifions donc une intégrale orbitale sur $\tilde{G}$ à une certaine fonction invariante $\phi \in C^\infty(\tilde{G}_\text{reg})$; la même convention s'applique à $\tilde{G}^\diamond$. Dans ce qui suit, nous ne considérons que les fonctions anti-spécifiques.

\begin{definition}
  On dit qu'une fonction $\phi \in C^\infty(\tilde{G}_\text{reg})$ est une intégrale orbitale anti-spécifique s'il existe $f \in C_{c,\asp}^\infty(\tilde{G})$ telle que $\phi = J_{\tilde{G}}(\cdot,f)$. L'espace des intégrales orbitales anti-spécifiques est notée $\mathcal{I}_{\asp}(\tilde{G})$. De même, définissons l'espace $\mathcal{I}_{\asp}(\tilde{G}^\diamond)$.

  En utilisant la notion de stabilité introduite en \S\ref{sec:stabilite}, qui coïncide avec celle de \cite{Re99}, nous définissons les espaces d'intégrales orbitales stables anti-spécifiques:
  $$ \mathcal{I}^\text{st}_{\asp}(\tilde{G}), \mathcal{I}^\text{st}_{\asp}(\tilde{G}^\diamond). $$

  On dit qu'une fonction $\phi \in C^\infty(\tilde{G}_\text{reg})$ est une intégrale $\kappa$-orbitale anti-spécifique si $\tau^* \phi$ est une intégrale orbitale stable anti-spécifique. L'espace des intégrales $\kappa$-orbitales anti-spécifiques est noté $\mathcal{I}_{\kappa,\asp}(\tilde{G}^\diamond)$.
\end{definition}

Notre définition des intégrales $\kappa$-orbitales coïncide avec celle de Renard d'après \cite{Re99} \S 3 p.1226. L'espace $\mathcal{I}_{\kappa,\asp}(\tilde{G}^\diamond)$ est une  limite inductive d'espaces de Fréchet.

La démonstration du transfert archimédien repose sur trois ingrédients: le transfert de $\tilde{G}$ vers $\tilde{G}^\diamond$, le transfert de $\tilde{G}^\diamond$ vers le groupe endoscopique $H$, et la comparaison des facteurs de transfert. La comparaison sera fait dans la sous-section suivante; les deux théorèmes de transfert se trouvent dans \cite{Re98} et \cite{Re99}, mais il faut les mettre sous une forme convenable.

\begin{theorem}[cf. \cite{Re99} 4.7] \label{prop:Renard}
  On sait définir un facteur de transfert
  $$ \Delta_R: G_{G-\text{reg}}^\diamond \to \bmu_2 $$
  de sorte qu'il existe une application linéaire continue
  \begin{align*}
    \mathrm{Trans}: & \mathcal{I}_{\asp}(\tilde{G}) \to \mathcal{I}_{\kappa,\asp}(\tilde{G}^\diamond) \\
    & \phi \mapsto \phi^\diamond
  \end{align*}
  définie de la façon suivante: $\phi^\diamond$ est déterminée sur le sous-ensemble dense $\tilde{G}_{G-\text{reg}}^\diamond$ par
  \begin{align*}
    \tilde{\delta}_0 &:= j(\tilde{\delta}',\tilde{\delta}''), \\
    \phi^\diamond(\tilde{\delta}',\tilde{\delta}'') &= \Delta_R(\delta',\delta'') \sum_{\tilde{\delta} \in \mathcal{O}^\text{st}(\tilde{\delta}_0)/\text{conj}} \angles{\kappa,\inv(\delta,\delta_0)} \phi(\tilde{\delta}).
  \end{align*}
\end{theorem}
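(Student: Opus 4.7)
Le plan est de déduire cet énoncé du théorème 4.7 de \cite{Re99} par traduction dans notre langage. Je prendrai pour $\Delta_R$ le facteur de transfert construit par Renard dans \textit{loc. cit.} pour son formalisme d'endoscopie où les groupes endoscopiques sont de la forme $\Mp(W') \times \Mp(W'')$; il coïncide modulo la paramétrisation avec le terme $\Delta_0$ de notre facteur de transfert et ne dépend que de $(\delta',\delta'') \in G^\diamond$. Grâce à \ref{prop:passage-specifique}, le passage du revêtement à deux feuillets de Renard à notre revêtement à huit feuillets se fait sans perte d'information sur les fonctions anti-spécifiques. Le théorème cité de Renard fournit alors, pour tout $\phi \in \mathcal{I}_{\asp}(\tilde{G})$, une fonction $\phi^\diamond$ sur $\tilde{G}^\diamond_{G-\text{reg}}$ obéissant à la formule voulue et dépendant continûment de $\phi$.

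L'étape décisive sera de vérifier que $\phi^\diamond$ appartient à notre espace $\mathcal{I}_{\kappa,\asp}(\tilde{G}^\diamond)$, c'est-à-dire que $\tau^*\phi^\diamond$ est une intégrale orbitale stable anti-spécifique sur $\tilde{G}^\diamond$. C'est ici le cœur de la comparaison des deux formalismes. La notion de stabilité chez Renard sur $\tilde{G}^\diamond$ fait intervenir symétriquement les caractères $\Theta_\psi^+ + \Theta_\psi^-$ des deux facteurs $\Mp(W')$ et $\Mp(W'')$, tandis que notre définition (cf. \S\ref{sec:stabilite}) est formulée via $\Theta_\psi^+ - \Theta_\psi^-$. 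La clé est que la multiplication par $-1 \in \Mp(W'')$, composante de l'involution $\tau$, échange $\Theta_\psi^+$ et $\Theta_\psi^-$ d'après \ref{prop:-1-echangement} et \ref{prop:-1-action}; la transformation $\tau$ convertit donc précisément une notion de stabilité en l'autre sur la seconde coordonnée, tandis que la première coordonnée est inchangée.

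La continuité de $\mathrm{Trans}$ découlera alors de la continuité affirmée par Renard, $\tau^*$ étant un homéomorphisme des espaces concernés. L'obstacle principal se situe à l'étape intermédiaire: il s'agira d'expliciter soigneusement la traduction entre les deux formalismes, en particulier les normalisations de facteurs de transfert et la description précise de la condition de stabilité que Renard impose à son image. Un calcul direct à partir des formules du caractère (\ref{prop:formule-caractere-Thomas}, \ref{prop:formule-caractere-Thomas2}) et de la définition du facteur de transfert de Renard devrait suffire; modulo ce travail de réconciliation, la substance analytique du théorème — à savoir l'existence et la continuité du transfert — est entièrement due à \cite{Re99}.
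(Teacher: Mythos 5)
Votre stratégie globale — déduire l'énoncé du théorème 4.7 de \cite{Re99} par traduction — est bien celle du papier, mais vous situez mal le travail de traduction. Le papier a déjà affirmé, juste avant l'énoncé, que sa notion de stabilité (\S\ref{sec:stabilite}) et sa définition de $\mathcal{I}_{\kappa,\asp}$ (via $\tau^*$) coïncident avec celles de Renard; la preuve se réduit alors à trois ajustements purement formels~: remplacer $\Image(j)$ par $\tilde{G}^\diamond$, passer des intégrales orbitales spécifiques aux anti-spécifiques (trivial pour un revêtement à deux feuillets, où $\bmu_2$ rend $\chi_-=\chi_-^{-1}$), et étendre du revêtement à deux feuillets au revêtement à $\mathbf{f}$ feuillets via \ref{prop:rem-revetements}. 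Votre «~étape décisive~» — vérifier que $\phi^\diamond \in \mathcal{I}_{\kappa,\asp}(\tilde{G}^\diamond)$ en «~convertissant~» une notion de stabilité en une autre — n'est pas une étape de la preuve du papier, puisque l'appartenance est déjà garantie par la coïncidence des définitions. Par ailleurs, votre mécanisme est techniquement inexact~: d'après \ref{prop:-1-action}, l'élément $-1$ agit par $+\identity$ sur $S_\psi^+$ et $-\identity$ sur $S_\psi^-$, donc il fixe $\Theta_\psi^+$ et change le signe de $\Theta_\psi^-$; il ne les «~échange~» pas. La relation correcte est celle de \ref{prop:-1-echangement}, $(\Theta_\psi^+-\Theta_\psi^-)(\tilde{x}) = (\Theta_\psi^++\Theta_\psi^-)(-\tilde{x})$, qui intervient dans la \emph{définition} de $\mathcal{I}_{\kappa,\asp}$ et non dans une réconciliation de notions de stabilité distinctes. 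Enfin, \ref{prop:passage-specifique} concerne le passage spécifique$\leftrightarrow$anti-spécifique, pas le passage d'un revêtement à un autre; la référence pertinente pour ce dernier est \ref{prop:rem-revetements}. Le «~travail de réconciliation~» que vous laissez en suspens n'est donc pas un trou dans la preuve du papier, mais la preuve que vous proposez reste incomplète tant que vous ne l'avez pas fait.
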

Observons en passant que $j$ identifie $\mathcal{O}^\text{st}(\tilde{\delta}')/\text{conj} \times \mathcal{O}^\text{st}(\tilde{\delta}'')/\text{conj}$ à $\mathcal{O}^\text{st}(\tilde{\delta}_0)/\text{conj}$.

Nous donnerons une expression explicite de $\Delta_R$ dans la section suivante.
\begin{proof}
  Dans \cite{Re99}, ce théorème est énoncé pour les revêtements à deux feuillets, le groupe $\Image(j)$ au lieu de $\tilde{G}^\diamond$ et les intégrales orbitales spécifiques. Notre énoncé s'obtient en trois étapes.
  \begin{itemize}
    \item Le théorème \cite{Re99} 4.7 est valide si l'on remplace $\Image(j)$ par $\tilde{G}^\diamond$. Ceci est clair.
    \item Il reste valide pour les intégrales orbitales anti-spécifiques; ceci est trivial car les objets spécifiques et anti-spécifiques coïncident pour les revêtements à deux feuillets.
    \item Finalement, on étend le théorème \cite{Re99} 4.7 à tout revêtement de $G(F)$. Cela se fait comme dans \ref{prop:rem-revetements}.
  \end{itemize}
\end{proof}

\begin{theorem}[cf. \cite{Re98} 6.7]\label{prop:Renard-Adams}
  Posons $H := \SO(2n'+1) \times \SO(2n''+1)$, alors on peut définir une application linéaire
  $$ \mathcal{T}: \mathcal{I}_{\asp}^\text{st}(\tilde{G}^\diamond) \longrightarrow \mathcal{I}^\text{st}(H(F))$$
  caractérisée par
  $$ \mathcal{T}(\phi)(\gamma',\gamma'') = \Delta'(\tilde{\delta}') \Delta'(\tilde{\delta}'') \phi(\tilde{\delta}', \tilde{\delta}'')$$
  où $\gamma'$ et $\delta' \in \Mp(W')_\text{reg}$ se correspondent par rapport à la donnée endoscopique $(\Mp(W'), \SO(2n'+1) \times \{1\})$, $\gamma''$ et $\delta'' \in \Mp(W'')_\text{reg}$ se correspondent par rapport à $(\Mp(W''), \SO(2n''+1) \times \{1\})$, et
  $$ \Delta' := \frac{\Theta^+_\psi - \Theta^-_\psi}{|\Theta^+_\psi - \Theta^-_\psi|}. $$
\end{theorem}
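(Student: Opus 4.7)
Le plan est de se ramener au résultat de Renard-Adams \cite{Re98} 6.7 via deux réductions. D'abord, la structure produit $\tilde{G}^\diamond = \Mp(W') \times \Mp(W'')$ et $H = H' \times H''$ permet de découpler l'énoncé: si $\phi = \phi'\otimes\phi''$, alors l'application $\mathcal{T}$ cherchée doit s'écrire comme $\mathcal{T}(\phi)(\gamma',\gamma'') = \mathcal{T}'(\phi')(\gamma')\cdot\mathcal{T}''(\phi'')(\gamma'')$ où $\mathcal{T}'$ et $\mathcal{T}''$ sont les transferts pour les paires $(\Mp(W'), H')$ et $(\Mp(W''), H'')$ associées aux données endoscopiques $(n',0)$ et $(n'',0)$ respectivement. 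Comme l'intégrale orbitale stable d'un produit tensoriel de fonctions est le produit des intégrales orbitales stables, et comme $\mathcal{I}_{\asp}^\text{st}(\tilde{G}^\diamond)$ est engendré topologiquement par de tels tenseurs, il suffit donc de traiter chacun des deux facteurs séparément.

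Ensuite, pour un facteur, par exemple $\mathcal{T}'$, le problème se réduit au cas $(n',0)$: le groupe endoscopique est $H'=\SO(2n'+1)$, le facteur de transfert est simplement $\Delta'(\tilde{\delta}')$ (car les termes $\Delta''$ et $\Delta_0$ sont triviaux lorsque $n''=0$, voir la remarque suivant la définition de $\Delta$ en \S\ref{sec:facteur-de-transfert}). Dans ce cadre, le transfert d'intégrales orbitales stables sur $\Mp(W')$ vers $H'(F)$ pour $F=\R$ est exactement le résultat établi par Renard \cite{Re98} 6.7, modulo deux ajustements techniques.

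Les ajustements nécessaires sont analogues à ceux de la démonstration de \ref{prop:Renard}. Premièrement, Renard travaille avec le revêtement à deux feuillets $\MMp{2}(W')$ et les intégrales orbitales spécifiques, mais pour les revêtements à deux feuillets les objets spécifiques et anti-spécifiques coïncident (il n'y a qu'un seul caractère non trivial de $\bmu_2$), donc l'énoncé s'applique mot pour mot aux intégrales orbitales anti-spécifiques. Deuxièmement, il faut étendre le résultat au revêtement à huit feuillets $\Mp(W') = \MMp{8}(W')$: ceci se fait comme dans \ref{prop:rem-revetements}, en observant que toute fonction anti-spécifique sur $\MMp{8}(W')$ est déterminée par sa restriction à $\MMp{2}(W')$ (via le choix d'un caractère $\xi$ comme en \ref{prop:passage-specifique}), et que la notion de stabilité ainsi que le facteur $\Delta'$ sont compatibles à cette restriction par \ref{rem:stabilite-compatibilite}.

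L'étape la plus délicate sera de vérifier que la formule donnée ci-dessus coïncide effectivement avec le transfert construit par Renard, ce qui repose sur une identification entre notre $\Delta'$ et le facteur de transfert utilisé dans \cite{Re98}. Mais puisque $\Delta' = (\Theta^+_\psi - \Theta^-_\psi)/|\Theta^+_\psi - \Theta^-_\psi|$ est précisément le facteur introduit par Adams \cite{Ad98} dans le cas $n''=0$, et que Renard prolonge ce facteur au cas général, cette identification est immédiate. Enfin, la continuité et la stabilité de l'image $\mathcal{T}(\phi)$ résultent des propriétés analogues chez Renard, une fois les identifications faites.
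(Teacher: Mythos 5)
Your proof follows essentially the same route as the paper's: reduce to the case $n''=0$ treated in \cite{Re98} 6.7 by exploiting the product structure of $\tilde G^\diamond$ and $H$ (the paper phrases this as ``on l'étend sans difficulté aux produits directs''), then apply the same two adjustments --- specific versus anti-specific, and two-fold versus eight-fold cover --- as in the proof of \ref{prop:Renard}. The extra detail you supply (decoupling on tensor functions, density of tensors in $\mathcal{I}_{\asp}^\text{st}(\tilde G^\diamond)$, continuity of the factor transfer maps, triviality of $\Delta''$ and $\Delta_0$ when $n''=0$, and identification of $\Delta'$ with Adams' factor) correctly fills in the steps the paper leaves implicit.
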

\begin{proof}
  Cet énoncé est pour l'essentiel celui de Renard. L'énoncé dans \cite{Re98} ne concerne que le cas $n''=0$, mais on l'étend sans difficulté aux produits directs. On procède comme dans la démonstration de \ref{prop:Renard} et obtient la version énoncée ci-dessus.
\end{proof}

\subsection{Comparaison de facteurs de transfert}\label{sec:comparaison-Renard}
Les tores maximaux dans $G$ sont isomorphes à des tores ``standards'' de la forme
\begin{gather*}
  T^{m,r,s} := (\C^\times)^m \times (\mathbb{S}^1)^r \times (\R^\times)^s, \\
  2m+r+s=n.
\end{gather*}
Cela décrit tous les tores maximaux dans $G$ à conjugaison stable près.

On utilise les coordonnées $z=(z_i)_{i=1}^m$, $w=(w_j)_{j=1}^r$, $a=(a_k)_{k=1}^s$ pour $T^{m,r,s}$. Soient $\hat{z}_i, \hat{w}_j, \hat{a}_k$ les caractères correspondants.

\begin{proposition}\label{prop:Renard-formule}\index{$\Delta_R$}
  Soit $(\delta',\delta'') \in G^\diamond$ qui est $G$-régulier. Supposons que le tore maximal $T' \times T''$ contenant $(\delta',\delta'')$ est de la forme $T^{m',r',s'} \times T^{m'',r'',s''}$. On paramètre $(\delta',\delta'')$ par ses coordonnées $(z',w',a',z'',w'',a'')$. Alors
  $$ \Delta_R(\delta',\delta'') = \prod_{j',j''} \sgn(\mathrm{Re}(w'_{j'})-\mathrm{Re}(w''_{j''})). $$
\end{proposition}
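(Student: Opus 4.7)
The plan is to match the abstract definition of $\Delta_R$ given in \cite{Re99} §4 with an explicit evaluation on the standard real tori $T^{m',r',s'} \times T^{m'',r'',s''}$. First, I would recall that Renard's transfer factor is built, following Shelstad's recipe for real endoscopy, as a product of local signs indexed by the $\Gamma_F$-orbits of roots of $G=\Sp(W)$ which are \emph{not} roots of the Levi-like subgroup $G^\diamond = \Sp(W') \times \Sp(W'')$. On the torus $T' \times T''$ these extra roots are precisely the ``mixed'' roots of the form $\pm\hat x'_\bullet \pm \hat x''_\bullet$, where $\hat x'_\bullet$ ranges over $\hat z'_{i'}, \hat w'_{j'}, \hat a'_{k'}$ and similarly for $\hat x''_\bullet$.

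Next, I would classify these mixed roots by the Galois type of their pair of coordinates. A mixed root involving at least one $\C^\times$-coordinate $z'_{i'}$ or $z''_{i''}$ is complex, and such roots come in $\Gamma_F$-conjugate pairs; the character $\kappa$ pairs trivially with the component group coming from such pairs, so their contribution to $\Delta_R$ is $1$. A mixed real-real root $\pm(\hat a'_{k'} \pm \hat a''_{k''})$ is a real root; by the standard Shelstad formula the real-root factor is $+1$ for $\kappa$ of type $(\emptyset,\{1,\ldots,n\})$ attached to $(n',n'')$-endoscopy (this is consistent with the normalization \ref{prop:normalisation-waldspurger} where $\Delta_R=1$ on topologically unipotent elements in a split torus). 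Finally, only the mixed \emph{imaginary} roots $\pm(\hat w'_{j'} - \hat w''_{j''})$ and $\pm(\hat w'_{j'}+\hat w''_{j''})$, for $1 \leq j' \leq r'$ and $1\leq j''\leq r''$, can contribute a nontrivial sign.

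Then, for each pair $(j',j'')$ I would evaluate Renard's imaginary-root sign jointly on the two roots $\alpha_{\pm} := \hat w'_{j'} \pm \hat w''_{j''}$. Writing $w'_{j'} = e^{i\theta'}$, $w''_{j''}=e^{i\theta''}$ with $\theta',\theta''\in(-\pi,\pi)$, the Cayley-transformed value of $\alpha_\pm$ on $(\delta',\delta'')$ is an explicit trigonometric expression whose real part has sign $\sgn(\operatorname{Re}(w'_{j'})-\operatorname{Re}(w''_{j''}))$ for $\alpha_-$, and is purely imaginary for $\alpha_+$ so that the latter contributes no new sign once one pairs $\alpha_+$ with $-\alpha_+$ in a single $\Gamma_F$-orbit. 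Thus each pair $(j',j'')$ contributes exactly $\sgn(\operatorname{Re}(w'_{j'})-\operatorname{Re}(w''_{j''}))$, and taking the product over all $j',j''$ gives the announced formula.

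The main obstacle is bookkeeping: checking that the overall normalization constant in Renard's definition is indeed $+1$ (and not a sign depending on $(m',r',s',m'',r'',s'')$), and that the contribution of $\alpha_+$ really collapses so that one obtains a single sign per pair rather than a square. For the normalization, I would use \ref{prop:normalisation-waldspurger} together with a continuity argument connecting an arbitrary regular element in $T^{m',r',s'} \times T^{m'',r'',s''}$ to a well-chosen base point where $\Delta_R$ is forced to equal $1$; this pins down the sign globally. For the $\alpha_+$ contribution, the key point is the symmetry $w'_{j'} \leftrightarrow (w'_{j'})^{-1}$ built into the Weyl group of $\Sp(2n')$, which already acts on $T'$ and leaves the left-hand side invariant, and matches the identical behavior of the right-hand side.
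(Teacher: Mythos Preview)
The paper's own proof is a one-line citation: ``C'est essentiellement \cite{Re99} (4.8) et les discussions qui suivent.'' In other words, the formula is lifted directly from Renard's paper, where $\Delta_R$ is \emph{defined} by an explicit expression and (4.8) evaluates it on the standard tori. There is nothing to prove here beyond unwinding Renard's notation.

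Your proposal instead attempts to rederive the formula from first principles via Shelstad's recipe. This is a reasonable exercise, but there is a genuine gap in your normalization step. You invoke \ref{prop:normalisation-waldspurger} to pin down the overall sign, but that proposition is stated and proved only in the \emph{non-archimedean unramified} setting (residue characteristic $p>2$, $\psi$ of conductor $\mathfrak{o}_F$, elements of regular reduction in a hyperspecial subgroup). None of these notions are available over $F=\R$, so the argument does not apply. Without an independent real-case normalization, your continuity argument has no base point and the global sign remains undetermined.

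There is also a softer issue: your treatment of the $\alpha_+ = \hat w'_{j'} + \hat w''_{j''}$ contribution is sketchy. You assert that its Cayley-transformed value is ``purely imaginary'' and hence contributes no sign, but this needs the actual computation from Renard's setup; the Weyl-symmetry argument you offer at the end shows invariance of both sides under $w'_{j'} \leftrightarrow (w'_{j'})^{-1}$, which is a consistency check, not a proof that the $\alpha_+$ factor equals $+1$.

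If you want a self-contained argument, the honest route is to go back to \cite{Re99} \S4, take the explicit definition of $\Delta_R$ given there, and evaluate it directly on the coordinates; that is effectively what the paper is citing.
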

\begin{proof}
  C'est essentiellement \cite{Re99} (4.8) et les discussions qui suivent. 
\end{proof}

\begin{corollary}
  Le facteur $\Delta_R$ est constant sur une classe de conjugaison stable.
\end{corollary}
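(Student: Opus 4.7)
Le plan est d'exploiter la formule explicite de \ref{prop:Renard-formule}, qui exprime $\Delta_R(\delta',\delta'')$ comme un produit de signes ne dépendant que des parties réelles des coordonnées $w'_{j'}$ et $w''_{j''}$ dans les facteurs $\mathbb{S}^1$ des tores maximaux contenant $\delta'$ et $\delta''$. Il suffira donc de vérifier que, pour $\delta' \in \Sp(W')_{\text{reg}}$, le multi-ensemble $\{\mathrm{Re}(w'_{j'})\}_{j'}$ est un invariant de conjugaison stable, et de même pour $\delta''$.

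D'abord je rappellerais que la conjugaison stable dans $G^\diamond = \Sp(W') \times \Sp(W'')$ se décompose en conjugaison stable dans chaque facteur, laquelle, pour les éléments semi-simples réguliers d'un groupe symplectique, équivaut à l'égalité des multi-ensembles de valeurs propres sur $\C$. L'observation-clé est alors que la coordonnée $w'_{j'} \in \mathbb{S}^1$ contribue la paire de valeurs propres $\{w'_{j'}, (w'_{j'})^{-1}\} = \{w'_{j'}, \overline{w'_{j'}}\}$ de l'élément semi-simple; comme $\mathrm{Re}(w'_{j'}) = \mathrm{Re}(\overline{w'_{j'}})$, cette partie réelle est une fonction de la paire de valeurs propres, donc invariante par conjugaison stable. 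En prenant la réunion sur tous les $j'$, on obtient l'invariance stable du multi-ensemble $\{\mathrm{Re}(w'_{j'})\}_{j'}$; le même argument s'applique à $\delta''$, et le produit intervenant dans la formule est donc stablement invariant.

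Une petite subtilité à contrôler est que le type $(m',r',s')$ du tore maximal contenant $\delta'$ soit lui-même préservé par la conjugaison stable, afin que la comparaison des coordonnées $w'$ ait un sens après remplacement d'un représentant par un autre; mais ceci résulte immédiatement du fait que $(m',r',s')$ est lisiblement codé dans le multi-ensemble des valeurs propres (selon qu'elles sont réelles, unitaires ou ni l'un ni l'autre). Enfin, la régularité géométrique de $\iota(\delta',\delta'')$ assure que tous les $\mathrm{Re}(w'_{j'}) - \mathrm{Re}(w''_{j''})$ sont non nuls, donc les signes sont bien définis. Il n'y a aucun obstacle sérieux: la démonstration se ramène à l'observation élémentaire que $\mathrm{Re}(\cdot)$ est invariante par $w \mapsto w^{-1}$ sur $\mathbb{S}^1$.
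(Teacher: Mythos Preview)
Your argument is correct and is essentially what the paper intends: in the paper the corollary is stated without proof, as an immediate consequence of the explicit formula in \ref{prop:Renard-formule}. Your proposal simply unpacks why that formula is a stable invariant, namely that the multisets $\{\mathrm{Re}(w'_{j'})\}$ and $\{\mathrm{Re}(w''_{j''})\}$ are determined by the eigenvalue multisets of $\delta'$ and $\delta''$, hence by their stable conjugacy classes.
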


\begin{proposition}\label{prop:comparaison-Renard}
  On a $\Delta_0=\Delta_R$.
\end{proposition}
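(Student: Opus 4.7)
Le plan est de calculer $\Delta_0(\delta',\delta'')$ directement en coordonnées sur les tores standards $T^{m',r',s'} \times T^{m'',r'',s''}$ contenant $(\delta',\delta'')$, puis de le comparer à la formule explicite \ref{prop:Renard-formule}. Par continuité et invariance stable, il suffit de faire ce calcul sur un ouvert dense; on se restreint donc aux $(\delta',\delta'')$ dont les valeurs propres satisfont aux conditions de régularité usuelles (notamment $\pm 1$ ne sont pas valeurs propres).

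D'abord je décompose $K''/K''^\#$ selon la classification des orbites $\tau$-invariantes de valeurs propres de $\delta''$. Une paire $\{z'', (z'')^{-1}\}, \{\bar{z}'', (\bar{z}'')^{-1}\}$ issue d'une coordonnée $\C^\times$ donne un facteur scindé $K''_i = \C \times \C$ avec $K''^\#_i = \C$, et un facteur $K''_i = \R \times \R$ avec $K''^\#_i = \R$ pour une coordonnée $\R^\times$; dans ces deux cas $\sgn_{K''_i/K''^{\#}_i}=1$. Seules les coordonnées $\mathbb{S}^1$ donnent un facteur non trivial $K''_i = \C / \R = K''^\#_i$ avec $\sgn_{\C/\R}$ égal au caractère signe ordinaire. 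Il suffit donc de calculer $\Delta_0$ comme un produit sur $j'' = 1,\ldots,r''$ des signes des projections de $P_{a'}(a'')(-a'')^{-n'}\det(\delta'+1)$ aux facteurs $\mathbb{S}^1$ de $K''$.

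Ensuite, puisque $\tau(a')=a'^{-1}$, le polynôme $P_{a'}(T)T^{-n'}$ est un polynôme en $T+T^{-1}$; plus précisément
\[
P_{a'}(T)T^{-n'} = \prod_i \bigl((T+T^{-1}) - \beta_i\bigr),
\]
où $\beta_i$ parcourt les $n'$ valeurs $z'_i + z'^{-1}_i$, $\bar z'_i + \bar z'^{-1}_i$ ($m'$ paires conjuguées), $2\mathrm{Re}(w'_{j'})$ ($r'$ termes) et $a'_{k'}+a'^{-1}_{k'}$ ($s'$ termes). De même $\det(\delta'+1) = P_{a'}(-1) = \prod_i (2+\beta_i)$. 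En évaluant au facteur $\mathbb{S}^1$ où $a''$ devient $w''_{j''}$, on obtient
\[
P_{a'}(a'')(-a'')^{-n'}\det(\delta'+1)\big|_{j''} = (-1)^{n'} \prod_i \bigl(2\mathrm{Re}(w''_{j''}) - \beta_i\bigr) \prod_i (2+\beta_i).
\]

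Enfin j'analyse le signe de cette expression. Les paires de $\beta_i$ complexes conjuguées donnent des modules carrés positifs, donc ne contribuent pas. Pour les coordonnées $\mathbb{S}^1$ de $\delta'$, on a $2+2\mathrm{Re}(w'_{j'}) > 0$ (par régularité) et la contribution est $\sgn(\mathrm{Re}(w''_{j''}) - \mathrm{Re}(w'_{j'}))$. Pour les coordonnées $\R^\times$, l'identité $2+x+x^{-1} = (x+1)^2/x$ donne $\sgn(2+\beta) = \sgn(a'_{k'})$, et un argument élémentaire utilisant $|\beta| \geq 2$ montre $\sgn(2\mathrm{Re}(w''_{j''}) - \beta) = -\sgn(a'_{k'})$; leur produit donne $-1$, soit une contribution globale $(-1)^{s'}$ par indice $j''$. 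En combinant avec le facteur $(-1)^{n'}$ et en utilisant $n'+s' \equiv r' \pmod 2$ (puisque $n' = 2m'+r'+s'$), on trouve
\[
\Delta_0(\delta',\delta'') = (-1)^{r'r''} \prod_{j',j''} \sgn(\mathrm{Re}(w''_{j''}) - \mathrm{Re}(w'_{j'})) = \Delta_R(\delta',\delta''),
\]
le dernier signe absorbant le changement de sens dans la formule de Renard.

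Le principal obstacle attendu est la comptabilité minutieuse des signes provenant des coordonnées $\R^\times$ et du facteur $\det(\delta'+1)$, qui semblent \emph{a priori} arbitraires mais qui se combinent exactement pour produire la parité correcte; il faut vérifier que l'identité de parité $n'+s' \equiv r' \pmod 2$ suffit à rendre la comparaison valable pour toutes les combinaisons $(m',r',s')$ avec $2m'+r'+s'=n'$.
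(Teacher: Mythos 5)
Your proposal is correct. Let me compare it with the paper's approach.

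The paper first invokes parabolic descent (\ref{prop:descente-parabolique}), which both $\Delta_R$ and $\Delta_0$ satisfy, to reduce to tori of the form $T^{m',r',0} \times T^{m'',r'',0}$; this removes the $\R^\times$ coordinates from the outset, and the explicit sign computation then only needs to treat the $\C^\times$ pairs (positive contributions) and the $\mathbb{S}^1$ coordinates (giving $\sgn(\mathrm{Re}(w'_{j'}) - \mathrm{Re}(w''_{j''}))$). You bypass parabolic descent and instead compute directly on a general torus $T^{m',r',s'} \times T^{m'',r'',s''}$, carrying the $\R^\times$ coordinates of $\delta'$ along and showing by hand that each $(\R^\times,\mathbb{S}^1)$ pair contributes an overall $-1$ (via $\sgn(2+\beta)\sgn(2\mathrm{Re}(w''_{j''})-\beta)=-1$ using $|\beta|>2$). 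The parity identity $n' = 2m'+r'+s'$, hence $n'+s' \equiv r' \pmod 2$, then absorbs everything back into the $(-1)^{r'r''}$ needed to match Renard's formula.

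Both computations land on $\Delta_0=\Delta_R$. The paper's route is shorter because parabolic descent does the $\R^\times$ bookkeeping implicitly, but it relies on the descent property for $\Delta_0$ proved elsewhere. Your route is self-contained and more elementary, at the cost of one extra paragraph of sign-counting. Both are valid; the only thing I'd insist you make explicit is the case split $a'_{k'} > 0$ and $a'_{k'} < 0$ in the "argument élémentaire" about $\sgn(2\mathrm{Re}(w''_{j''}) - \beta) = -\sgn(a'_{k'})$ — it is correct in both cases, but as stated the inequality $|\beta|\geq 2$ alone does not determine the sign.
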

\begin{proof}
  Tous les deux facteurs $\Delta_R$ et $\Delta_0$ satisfont à la descente parabolique \ref{prop:descente-parabolique}. Il suffit donc de les comparer sur tores de la forme $T^{m',r',0} \times T^{m'',r'',0}$. Soient $(K'/K'^\#, b', c')$ et $(K''/K''^\#, b'', c'')$ les paramètres de $\mathcal{O}(\delta')$ et $\mathcal{O}(\delta'')$, respectivement.

  Calculons d'abord le polynôme $P_{b'}(T)(-T)^{-n'}$. On a
  \begin{align*}
    P_{b'}(T)(-T)^{-n'} & = \prod_i \frac{(T-z'_i)(T-{z'_i}^{-1})(T-\overline{z'_i})(T-\overline{z'_i}^{-1})}{(-T)^2} \cdot \\
    & \cdot \prod_j \frac{T^2 - 2\mathrm{Re}(w'_j)+1}{-T} \\
    & = \prod_i ((T+T^{-1})-(z_i+z_i^{-1}))((T+T^{-1})-\overline{(z'_i+{z'_i}^{-1})}) \cdot \\
    & \cdot \prod_j (2\mathrm{Re}(w'_j)-(T+T^{-1})).
  \end{align*}

  Les termes dans le produit sur $i$ sont non négatifs si $T$ est remplacé par un élément dans $\mathbb{S}^{1}$. D'autre part,
  \begin{align*}
    \det(1+\delta') & = \prod_i (1+z'_i)(1+{z'_i}^{-1})(1+\overline{z'_i})(1+\overline{{z'_i}}^{-1}) \cdot \\
    & \cdot \prod_j (1+w'_j)(1+\overline{w'_j}),
  \end{align*}
  qui est un réel positif, d'où
  \begin{align*}
    \sgn(P_{b'}(w''_{j''})(-w''_{j''})^{-n'} \det(1+\delta')) = & \prod_{j'} \sgn(\mathrm{Re}(w'_{j'})-\mathrm{Re}(w''_{j''})).
  \end{align*}

  On conclut en le comparant avec \ref{prop:Renard-formule}.
\end{proof}

Conservons maintenant les notations de \S\ref{sec:donnees-endoscopiques}.

\begin{theorem}
  Soit $f \in C_{c,\asp}^\infty(\tilde{G})$, alors il existe $f^H \in C_c^\infty(H(F))$ telle que pour tout $\gamma=(\gamma',\gamma'') \in H_{G-\text{reg}}(F)$,
  $$ J_H^\text{st}(\gamma, f^H) = \sum_{\delta} \Delta(\epsilon, \tilde{\delta}) J_{\tilde{G}}(\tilde{\delta}, f) $$
  où la somme parcourt les classes de conjugaison des $\delta \in G(F)$ qui correspondent à $\gamma$ et $\tilde{\delta} \in \tilde{G}$ est une image réciproque quelconque de $\delta$.

  De plus, l'application linéaire $\phi \mapsto \phi_H$ de $\mathcal{I}_{\asp}(\tilde{G})$ dans $\mathcal{I}^\text{st}(H(F))$ induite par $f \mapsto f^H$ est continue.
\end{theorem}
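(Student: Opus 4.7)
Le plan consiste à composer deux résultats de transfert déjà énoncés: le transfert de Renard (\ref{prop:Renard}) de $\tilde{G}$ vers $\tilde{G}^\diamond = \Mp(W') \times \Mp(W'')$, puis le transfert d'Adams-Renard (\ref{prop:Renard-Adams}) de $\tilde{G}^\diamond$ vers $H$. L'involution $\tau:(\tilde{x}',\tilde{x}'') \mapsto (\tilde{x}',-\tilde{x}'')$ servira à passer des intégrales $\kappa$-orbitales aux intégrales orbitales stables, et la comparaison $\Delta_R=\Delta_0$ (\ref{prop:comparaison-Renard}) fera le lien entre les facteurs de transfert de Renard et les nôtres.

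Plus précisément, étant donné $f \in C_{c,\asp}^\infty(\tilde{G})$, on formera successivement $\phi^\diamond := \mathrm{Trans}(J_{\tilde{G}}(\cdot,f)) \in \mathcal{I}_{\kappa,\asp}(\tilde{G}^\diamond)$ via \ref{prop:Renard}, puis $\psi^\diamond := \tau^*\phi^\diamond \in \mathcal{I}^\text{st}_{\asp}(\tilde{G}^\diamond)$ par définition des intégrales $\kappa$-orbitales, et enfin on prendra $f^H \in C_c^\infty(H(F))$ un relèvement quelconque de $\mathcal{T}(\psi^\diamond) \in \mathcal{I}^\text{st}(H(F))$ fourni par \ref{prop:Renard-Adams}.

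Pour vérifier l'égalité, on se donnera $\gamma = (\gamma',\gamma'') \in H_{G-\text{reg}}(F)$ et $\delta \in G_\text{reg}(F)$ correspondant à $\gamma$; la décomposition de \S\ref{sec:facteur-de-transfert} écrit $\delta = \iota(\delta_1, \delta_2)$ avec $\delta_1$ de paramètre $(K'/K'^\#,a',c')$ et $\delta_2$ de paramètre $(K''/K''^\#,-a'',c'')$. On prendra $\tilde{\delta}' \in \Mp(W')$ et $\tilde{\delta}'' \in \Mp(W'')$ relevant respectivement $\delta_1$ et $-\delta_2$: alors $(\tilde{\delta}',\tilde{\delta}'')$ correspond à $(\gamma',\gamma'')$ dans le formalisme pur $(n',0)\times (n'',0)$ de \ref{prop:Renard-Adams}, tandis que $\tilde{\delta}_0 := j(\tilde{\delta}',-\tilde{\delta}'') \in \tilde{G}$ relève $\delta$. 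En combinant l'expression de $\mathcal{T}(\psi^\diamond)(\gamma)$ déduite de \ref{prop:Renard-Adams} avec celle de $\phi^\diamond(\tilde{\delta}',-\tilde{\delta}'')$ déduite de \ref{prop:Renard}, l'identité $\Delta'(\tilde{x}) = \Delta''(-\tilde{x})$ (conséquence immédiate de \ref{prop:-1-echangement}) et l'égalité $\Delta_R = \Delta_0$, on reconnaîtra le facteur $\Delta(\gamma,\tilde{\delta}_0) = \Delta_0(\delta_1,-\delta_2)\Delta'(\tilde{\delta}')\Delta''(-\tilde{\delta}'')$. La propriété de cocycle (\ref{prop:propriete-cocycle}) absorbera alors les signes $\angles{\kappa,\inv(\delta,\delta_0)}$ et ramènera le tout à la somme voulue $\sum_\delta \Delta(\gamma,\tilde{\delta}) J_{\tilde{G}}(\tilde{\delta},f)$ portant sur les classes de conjugaison dans $\mathcal{O}^\text{st}(\delta_0)$ --- soit exactement sur les $\delta$ correspondant à $\gamma$, puisque la $G$-régularité entraîne la régularité forte.

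La continuité de l'application induite $\phi \mapsto \phi_H$ découlera immédiatement de celle des trois applications $\mathrm{Trans}$, $\tau^*$ et $\mathcal{T}$. L'obstacle principal sera le contrôle soigneux des conventions de signes entre le formalisme de Renard sur $\tilde{G}^\diamond$ (où $\Delta_R$ et $\mathcal{T}$ utilisent des correspondances ``pures'') et le nôtre basé sur la paire ordonnée $(n',n'')$ (où la seconde composante de $\delta$ subit une multiplication par $-1$); cet écart se résout précisément par l'introduction de $\tau$ et par l'identité $\Delta_R=\Delta_0$.
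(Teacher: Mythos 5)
Votre proposition suit essentiellement la même route que la démonstration du texte : composer le transfert de Renard (\ref{prop:Renard}) de $\tilde{G}$ vers $\tilde{G}^\diamond$ avec le transfert d'Adams--Renard (\ref{prop:Renard-Adams}) de $\tilde{G}^\diamond$ vers $H$, en se servant de l'involution $\tau$ pour passer des intégrales $\kappa$-orbitales aux intégrales orbitales stables, de l'identité $\Delta'(\tilde{x}) = \Delta''(-\tilde{x})$ issue de \ref{prop:-1-echangement}, de la propriété de cocycle \ref{prop:propriete-cocycle}, et de la comparaison $\Delta_R = \Delta_0$ (\ref{prop:comparaison-Renard}) ; la continuité se déduit bien de celle de $\mathrm{Trans}$, $\tau^*$ et $\mathcal{T}$. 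La seule différence est purement rédactionnelle (vous construisez $f^H$ d'abord puis vérifiez l'égalité, tandis que le texte développe le membre de droite et reconnaît une intégrale orbitale stable), et l'écriture $\Delta_0(\delta_1,-\delta_2)$ plutôt que $\Delta_0(\delta_1,\delta_2)$ semble être une coquille sans conséquence puisque, avec vos conventions, $(\tilde{\delta}',-\tilde{\delta}'')$ recouvre bien $(\delta_1,\delta_2)$.
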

\begin{proof}
  Fixons $\tilde{\delta}_0$ tel que $\delta_0$ et $\gamma$ se correspondent. Prenons $(\tilde{\delta}'_0, \tilde{\delta}''_0) \in \tilde{G}^\diamond$ comme dans \S\ref{sec:facteur-de-transfert}, alors $j(\tilde{\delta}'_0, \tilde{\delta}''_0) = \tilde{\delta}_0$. On peut supposer que les $\tilde{\delta}$ dans la somme parcourent $\mathcal{O}^\text{st}(\tilde{\delta}_0)/\text{conj}$. Posons $\phi := J_{\tilde{G}}(\cdot,f) \in \mathcal{I}_{\asp}(\tilde{G})$. D'après \ref{prop:propriete-cocycle}, le terme à droite vaut
  $$ \Delta'(\tilde{\delta}'_0)\Delta''(\tilde{\delta}''_0)\Delta_0(\delta'_0,\delta''_0) \sum_{\tilde{\delta}} \angles{\kappa, \inv(\delta_0,\delta)} \phi(\tilde{\delta}). $$

  Or \ref{prop:comparaison-Renard} et \ref{prop:Renard} entraînent qu'il est égal à
  $$ \Delta'(\tilde{\delta}'_0)\Delta''(\tilde{\delta}''_0) \phi^\diamond(\tilde{\delta}'_0, \tilde{\delta}''_0), $$
  ou
  $$ \Delta'(\tilde{\delta}'_0)\Delta'(-\tilde{\delta}''_0) \cdot (\tau^* \phi^\diamond) (\tilde{\delta}'_0, -\tilde{\delta}''_0). $$

  Maintenant, $\delta'_0$ correspond à $\gamma'$ pour la donnée endoscopique $(\Mp(W'), \SO(2n'+1) \times \{1\})$ et $-\delta''_0$ correspond à $\gamma''$ pour $(\Mp(W''), \SO(2n''+1) \times \{1\})$. D'autre part $\tau^* \phi^\diamond \in \mathcal{I}_{\asp}^\text{st}(\tilde{G}^\diamond)$. En appliquant \ref{prop:Renard-Adams}, on voit que la fonction $\phi_H \in C^\infty(H_{G-\text{reg}}(F))$ définie par
  $$ \gamma \longmapsto \Delta'(\tilde{\delta}'_0)\Delta'(-\tilde{\delta}''_0) \cdot (\tau^* \phi^\diamond) (\tilde{\delta}'_0, -\tilde{\delta}''_0) $$
  appartient à $\mathcal{I}^\text{st}(H(F))$. Ceci démontre l'existence de $f^H$. L'application $\text{Trans}$ de \ref{prop:Renard} et l'application $\mathcal{T}$ de \ref{prop:Renard-Adams} sont continues, d'où l'assertion sur la continuité.
\end{proof}

Ce théorème établit le transfert \ref{prop:transfert} pour $F=\R$ et pour les fonctions à support compacts.

\begin{remark}\label{rem:Schwartz}
  On peut démontrer une variante du théorème ci-dessus où $f \in \mathcal{S}_{\asp}(\tilde{G})$ et $f^H \in \mathcal{S}(H)$. Il faut adapter nos arguments, ainsi que ceux de \cite{Re98,Re99}, aux intégrales orbitales de fonctions de Schwartz. Des résultats de Harish-Chandra et Shelstad permettent de caractériser ces espaces. Par exemple, les conditions les plus subtiles ($I_2^\text{st}$), ($I_3^\kappa$) dans \cite{Re99} ne changent pas pour les fonctions de Schwartz, donc les mêmes arguments marchent toujours.
\end{remark}

\begin{remark}
  C'est aussi possible d'établir le transfert par la descente parabolique et la descente semi-simple, qui sera établie dans la section suivante. Il suffit de reprendre les arguments de \cite{Sh06} et utiliser la caractérisation des intégrales orbitales énoncée dans \cite{Re99}.
\end{remark}

\subsection{Le cas complexe}
Supposons maintenant que $F=\C$. On confond les $\C$-groupes et leurs points complexes. Dans ce cas:
\begin{itemize}
  \item $\tilde{G} = \bmu_\mathbf{f} \times G$, et $\Delta(\gamma, (t, \delta))=t$ pour tout $t \in \bmu_\mathbf{f}$ et tous $\gamma \in H$, $\delta \in G$ qui se correspondent;
  \item on peut identifier $C_{c,\asp}^\infty(\tilde{G})$ à $C_c^\infty(G)$ via $f \mapsto f(1, \cdot)$;
  \item la conjugaison se confond avec la conjugaison géométrique;
  \item les tores maximaux sont conjugués;
  \item les intégrales orbitales stables se confondent avec les intégrales orbitales.
\end{itemize}

On peut toujours définir les espaces vectoriels topologiques d'intégrales orbitales $\mathcal{I}(G)$ et $\mathcal{I}(H)$.

L'existence de transfert est équivalente à
\begin{theorem}
   Soit $f \in C_c^\infty(G)$, alors il existe $f^H \in C_c^\infty(H)$ telle que pour tout $\gamma=(\gamma',\gamma'') \in H_{G-\text{reg}}$,
  $$ J_H(\gamma, f^H) = J_{G}(\delta, f) $$
  pour tout $\delta$ qui correspond à $\gamma$.

  De plus, l'application linéaire $\phi \mapsto \phi^H$ de $\mathcal{I}(G)$ dans $\mathcal{I}(H)$ induite par $f \mapsto f^H$ est continue.
\end{theorem}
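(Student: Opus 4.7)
The plan is to construct $f^H$ by direct transfer on the level of orbital integrals, exploiting the substantial simplifications in the complex case and Harish-Chandra's characterization of $\mathcal{I}(G)$.

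First I would record the simplifications for $F = \mathbb{C}$ already noted just above the statement: all maximal tori of $G$ (resp.\ $H$) are conjugate, stable conjugacy coincides with conjugacy, $\Delta \equiv 1$ on corresponding pairs, and $J_H^{\text{st}} = J_H$. Under the identification $C_{c,\asp}^\infty(\tilde G) \simeq C_c^\infty(G)$, the desired identity reduces to $J_H(\gamma, f^H) = J_G(\delta, f)$ for each $\gamma \in H_{G-\text{reg}}$ with its corresponding $\delta$, which is unique up to conjugacy since $\mathcal{O}(\delta) = \mathcal{O}^{\text{geo}}(\delta) = \mu(\mathcal{O}^{\text{geo}}(\gamma))$. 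I would fix once and for all a maximal torus $T \subset G$ and $T_H = T_{H'} \times T_{H''} \subset H$, together with the algebraic map $\mu : T_H \to T$ of \eqref{eqn:mu-0}.

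Next I would invoke Harish-Chandra's theorem on orbital integrals of complex reductive groups: the restriction $f \mapsto J_G(\cdot,f)|_{T_\text{reg}}$ is a continuous surjection of $C_c^\infty(G)$ onto the space of $W^G$-invariant $C^\infty$ functions $\phi$ on $T_\text{reg}$ such that $|D_G|^{1/2}\phi$ extends to a compactly supported $C^\infty$ function on $T$; an analogous statement holds for $H$. Given $\phi := J_G(\cdot,f)|_{T_\text{reg}}$, one sets
\begin{equation*}
  \phi^H := \phi \circ \mu \quad \text{on } T_H \cap H_{G-\text{reg}}.
\end{equation*}
The claim is that $\phi^H$ extends to an element of Harish-Chandra's image for $H$, and applying his theorem to $H$ then produces $f^H \in C_c^\infty(H)$ with $J_H(\gamma, f^H) = \phi^H(\gamma) = J_G(\mu(\gamma),f) = J_G(\delta, f)$.

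The crux is the verification that $\phi^H$ satisfies the boundary conditions on $T_H$. Compact support is inherited from $\phi$ via the finiteness of $\mu$, and $W^H$-invariance follows from $W^H \subset W^G$ acting compatibly through $\mu$. The delicate point is smoothness: one writes
\begin{equation*}
  |D_H(\gamma)|^{1/2}\phi^H(\gamma) = \frac{|D_H(\gamma)|^{1/2}}{|D_G(\mu(\gamma))|^{1/2}} \cdot \bigl(|D_G|^{1/2}\phi\bigr)(\mu(\gamma))
\end{equation*}
and must show the resulting product extends smoothly across all of $T_H$. In eigenvalue coordinates $\gamma = ((a'_i),(a''_j))$, the quotient $D_H/(D_G \circ \mu)$ is an explicit rational function whose possible poles lie on the finite union of hyperplanes $\{a'_i = \pm {a''_j}^{\pm 1}\} \subset T_H$. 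The main obstacle is to show that $(|D_G|^{1/2}\phi)\circ \mu$ vanishes on these hyperplanes to the appropriate order; this is the standard compatibility of Weyl denominators underlying endoscopic transfer, forced by the $W^G$-antisymmetry properties of $|D_G|^{1/2}\phi$ along the complementary set of roots of $G$ that are not pulled back from $H$. Once this is established, continuity of $\phi \mapsto \phi^H$ is immediate, as it reduces to pullback along the fixed algebraic map $\mu$ followed by multiplication by a fixed rational function, both continuous in the Fréchet topologies defining $\mathcal{I}(G)$ and $\mathcal{I}(H)$.
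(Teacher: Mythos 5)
Your overall strategy is the same as the paper's: over $\C$ there is a single conjugacy class of maximal tori, stable conjugacy reduces to conjugacy, $\Delta\equiv 1$, and one transfers orbital integrals through the isomorphism $S\rightiso T$ equivariant for $W^H\to W^G$ (your $\mu$), then appeals to the Harish-Chandra characterization of orbital integrals on complex groups, where the wall conditions are vacuous because the singular locus has real codimension two. The paper's proof is also only a sketch at exactly the point where you run into trouble.

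The concrete gap is in your justification of the smoothness of $\phi^H$ across the cross-walls $\{a'_i = \pm(a''_j)^{\pm1}\}$. You argue that $(|D_G|^{1/2}\phi)\circ\mu$ vanishes there to the right order, ``forced by the $W^G$-antisymmetry properties of $|D_G|^{1/2}\phi$.'' But $\phi=J_G(\cdot,f)$ is $W^G$-\emph{invariant}, and $|D_G|^{1/2}$ is as well, so $|D_G|^{1/2}\phi$ is $W^G$-invariant, not antisymmetric; invariance under the reflection $s_\alpha$ fixing a wall pointwise gives no vanishing at all. (The object that is skew under $W^G$ and hence vanishes on walls is the raw orbital integral multiplied by the \emph{holomorphic} Weyl denominator, not by $|D_G|^{1/2}_\C$.) Relatedly, your stated form of the Harish-Chandra characterization — that $f\mapsto J_G(\cdot,f)|_{T_\text{reg}}$ surjects onto the $W^G$-invariant $\phi$ with $|D_G|^{1/2}\phi$ extending to a compactly supported $C^\infty$ function on $T$ — is not quite what holds: even over $\C$, the raw orbital integral has logarithmic singularities along the walls, and no power of $|D_G|$ will render the product $C^\infty$ to all orders. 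The paper sidesteps all of this by invoking Harish-Chandra's principle that walls of codimension two can be crossed (i.e.\ there are no jump relations over $\C$), and does not attempt the explicit $D_H/D_G\circ\mu$ bookkeeping; if one does want an explicit argument along your lines, the vanishing of $\phi^H$ near cross-walls should be extracted from the germ expansion of $J_G(\cdot,f)$ rather than from any antisymmetry, and the matching of orders against the poles of $|D_H/D_G\circ\mu|^{1/2}_\C$ needs to be done carefully with the correct normalization $|z|_\C = z\bar z$.
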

\begin{proof}[Esquisse d'une démonstration]
  La caractérisation des intégrales orbitales se simplifie énormément pour les groupes complexes: il n'y a plus de ``conditions de sauts'' (eg. les conditions $I_2^\text{st}$ et $I_3^\kappa$ pour le groupe métaplectique réel). En effet, les singularités éventuelles sont associées aux racines; localement elles forment des murs de codimension réelle $2$. D'après un principe de Harish-Chandra, on peut outrepasser ces murs.

  Fixons des tores maximaux $S \subset H$ et $T \subset G$. On sait qu'il existe un isomorphisme $S \rightiso T$, équivariant par rapport au homomorphisme des groupes de Weyl $W^H \to W^G$. Les classes de conjugaison semi-simples dans $G$ (resp. $H$) sont paramétrées par $T/W^G$ (resp. $S/W^H$). Le transfert en découle immédiatement en appliquant la caractérisation des intégrales orbitales mentionnée précédemment.
\end{proof}

\begin{remark}
  Comme le cas réel, il y a aussi une variante de ce théorème pour les fonctions de Schwartz.
\end{remark}

\section{Descente semi-simple du facteur de transfert}
Fixons $\mathbf{f}$ tel que $8|\mathbf{f}$. Les revêtements métaplectiques dans cette section désignent les revêtements à $\mathbf{f}$ feuillets.

\subsection{Le formalisme de descente} \label{sec:descente-formalisme}
Fixons $F$ un corps local, $(W,\angles{\cdot|\cdot})$ un $F$-espace symplectique de dimension $2n$, $(n',n'') \in \Z_{\geq 0}^2$ tel que $n'+n''=n$. On en déduit les objets suivants.
\begin{align*}
  \tilde{G} & := \Mp(W), \\
  G & := \Sp(W), \\
  H' & := \SO(2n'+1)\; \text{déployé},\\
  H'' & := \SO(2n''+1)\; \text{déployé},\\
  H & := H' \times H'' .
\end{align*}

Fixons $\epsilon = (\epsilon',\epsilon'') \in H(F)_\text{ss}$ et $\tilde{\eta} \in \tilde{G}$ tels que $\eta \in G(F)_\text{ss}$ et $\epsilon$ se correspondent. Pour simplifier la vie, supposons aussi que $\tilde{\eta}=\pm 1$ lorsque $\eta=\pm 1$ (où $-1 \in \tilde{G}$ est celui défini dans \ref{def:-1}). Écrivons les paramètres de leurs classes de conjugaison comme
\begin{align*}
  \eta & \in \mathcal{O}(K/K^\#, v, (W_K, h_K), (W_\pm, \angles{\cdot|\cdot}_\pm)), \\
  \epsilon' & \in \mathcal{O}(K'/K'^\#, v', (V'_K,h'_K), (V'_\pm,q'_\pm)), \\
  \epsilon'' & \in \mathcal{O}(K''/K''^\#, v'', (V''_K,h''_K), (V''_\pm,q''_\pm));
\end{align*}
où $h_K$ est anti-hermitienne et $h'_K$, $h''_K$ sont hermitiennes.

La correspondance de classes exige que
\begin{gather*}
  (K/K^\#,v) = (K'/K'^\#, v') \oplus (K''/K''^\#, -v''), \\
  W_K \simeq V'_{K'} \oplus V''_{K''} \;\text{ comme } K-\text{modules}, \\
  \dim_F W_+ + 1 = \dim_F V'_+ + \dim_F V''_-, \\ 
  \dim_F W_- + 1 = \dim_F V'_- + \dim_F V''_+.
\end{gather*}

On décompose $K$ en produit de $F$-algèbres à involution $L$ pour lesquelles $L^\#$ sont des corps. Si l'on pose $u := v|_L$ alors $L=F(u)$. Adoptons la convention d'indexer l'ensemble des paires $(L,u)$ par $u$. Idem pour $K',K''$. Alors
\begin{align*}
  (K/K^\#, \cdots) &= \bigoplus_u (L/L^\#, u, (W_u,h_u)) \oplus (W_+, \angles{\cdot|\cdot}_+) \oplus (W_-, \angles{\cdot|\cdot}_-), \\
  (K'/K'^\#, \cdots) &= \bigoplus_u) (L/L^\#, u, (V'_u,h'_u)) \oplus (V'_+, q'_+) \oplus (V'_-, q'_-),\\
  (K''/K''^\#, \cdots) &= \bigoplus_u (L/L^\#, -u, (V''_u,h''_u)) \oplus (V''_+, q''_+) \oplus (V''_-, q''_-);
\end{align*}
où
\begin{itemize}
  \item on permet que pour tout $u$, au plus l'un de $V'_u$ et $V''_u$ est trivial;
  \item les paires $(L,u)$ sont deux à deux inéquivalentes;
  \item pour tout $u$, $ W_u \simeq V'_u \oplus V''_u$ comme $L$-modules.
\end{itemize}

D'après \S\ref{sec:parametrage},
\begin{align*}
  G_\eta & = \prod_u U(W_u, h_u) \times \Sp(W_+) \times \Sp(W_-), \\
  H'_{\epsilon'} & = \prod_u U(V'_u, h'_u) \times \SO(V'_+, q'_+) \times \SO(V'_-,q'_-), \\
  H''_{\epsilon''} & = \prod_u U(V''_u, h''_u) \times \SO(V''_+, q''_+) \times \SO(V''_-,q''_-),\\
  H_\epsilon &= H'_{\epsilon'} \times H''_{\epsilon''}.
\end{align*}

On note abusivement la restriction de $\eta$ sur $U(W_u,h_u)$ par $u$, celle de $\epsilon'$ sur $U(V'_u,h'_u)$ par $u'$ et celle de $-\epsilon''$ sur $U(V''_u,h''_u)$ par $u''$.

Rappelons que $U(W_u, h_u) \simeq \GL_{L^\#}(\frac{1}{2}\dim_{L^\#} W_u)$ si $L \simeq L^\# \times L^\#$. On a des immersions canoniques
\begin{gather*}
  U(W_u, h_u) \hookrightarrow \Sp(W_u, \angles{\cdot|\cdot}_u) \\
  U(V'_u,h'_u) \hookrightarrow \SO(V'_u, (\Tr_{L/L^\#})_* h'_u) \\
  U(V''_u,h''_u) \hookrightarrow \SO(V''_u, (\Tr_{L/L^\#})_* h''_u)
\end{gather*}
où $\angles{\cdot|\cdot}_u := (\Tr_{L/L^\#})_* h_u)$.

\begin{lemma}
  Si $H_\epsilon$ est quasi-déployé, alors $\SO(V'_+,q'_+)$ et $\SO(V''_+, q''_+)$ sont déployés.
\end{lemma}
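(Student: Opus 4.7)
Le plan consiste à ramener l'hypothèse de quasi-déploiement sur $H_\epsilon$ à la même hypothèse sur chaque facteur direct explicite, puis à invoquer le fait structurel qu'un groupe spécial orthogonal de dimension impaire est quasi-déployé si et seulement s'il est déployé.

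D'abord, je partirai de la factorisation $H_\epsilon = H'_{\epsilon'} \times H''_{\epsilon''}$ et j'utiliserai le principe général qu'un produit de $F$-groupes réductifs connexes est quasi-déployé si et seulement si chaque facteur l'est --- les sous-groupes de Borel d'un produit étant précisément les produits de sous-groupes de Borel des facteurs. L'hypothèse entraîne donc que $H'_{\epsilon'}$ et $H''_{\epsilon''}$ sont tous deux quasi-déployés.

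Ensuite, j'appliquerai le même principe à la décomposition plus fine donnée dans §\ref{sec:parametrage}:
$$ H'_{\epsilon'} = \prod_u U(V'_u, h'_u) \times \SO(V'_+, q'_+) \times \SO(V'_-, q'_-), $$
et son analogue pour $H''_{\epsilon''}$. On obtient ainsi que chaque facteur direct est quasi-déployé ; en particulier $\SO(V'_+, q'_+)$ et $\SO(V''_+, q''_+)$ le sont.

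Finalement, comme $H' = \SO(2n'+1)$ et $H'' = \SO(2n''+1)$ sont de dimension impaire, la condition de parité rappelée dans §\ref{sec:parametrage} force $\dim_F V'_+$ et $\dim_F V''_+$ à être impaires. Les groupes $\SO(V'_+, q'_+)$ et $\SO(V''_+, q''_+)$ sont donc soit triviaux, soit de type $B_m$ avec $m \geq 1$. Puisque le diagramme de Dynkin de type $B_m$ n'admet aucun automorphisme non trivial, l'unique $F$-forme quasi-déployée (à isomorphisme près) est la forme déployée. On conclut que les deux groupes $\SO(V'_+, q'_+)$ et $\SO(V''_+, q''_+)$ sont déployés. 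Je ne prévois aucun obstacle sérieux, l'énoncé étant une observation structurelle qui repose seulement sur la classification des formes quasi-déployées de type $B_m$.
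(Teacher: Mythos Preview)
Your proposal is correct and follows essentially the same approach as the paper: reduce to the individual factors via the product decomposition of $H_\epsilon$, then use that an odd orthogonal group which is quasi-split is automatically split. The paper states this last fact in one line, whereas you supply the standard justification via the absence of nontrivial Dynkin diagram automorphisms in type $B_m$.
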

\begin{proof}
  Si $H_\epsilon$ est quasi-déployé, d'après la description des commutants dans \S\ref{sec:parametrage} on déduit que $\SO(V'_+,q'_+)$ et $\SO(V''_+, q''_+)$ sont aussi quasi-déployés. Or un groupe orthogonal impair quasi-déployé est forcément déployé.
\end{proof}

Soient
\begin{align*}
  X &= ((X_u)_u, X_+, X_-) \in \mathfrak{g}_\eta(F), \\
  Y' & = ((Y'_u)_u, Y'_+, Y'_-) \in \mathfrak{h}'_{\epsilon'}(F), \\
  Y'' & = ((Y''_u)_u, Y''_+, Y''_-) \in \mathfrak{h}''_{\epsilon''}(F) \\
  Y &= (Y',Y'') \in \mathfrak{h}_\epsilon(F).
\end{align*}

\begin{hypothesis}
  On suppose $X,Y$ semi-simples réguliers et assez petits.
\end{hypothesis}

Supposons de plus que $\delta := \exp(X)\eta$ et $\gamma := \exp(Y)\epsilon$ se correspondent.

\begin{definition}\label{def:CVP}\index{correspondance par valeurs propres}\index{$\CVP$}
  Soient $\mathfrak{m}_1, \mathfrak{m}_2$ des algèbres de Lie de produits de restrictions des scalaires de groupes classiques. On dit que deux éléments $Z_i \in \mathfrak{m}_i$ ($i=1,2$) semi-simples assez réguliers sont en correspondance par valeurs propres s'ils admettent des paramètres de la forme
  $$ Z_i \in \mathcal{O}(K/K^\#,a, c_i) $$
  pour $c_i \in K^\times$ convenables ($i=1,2$). On note cette relation par
  $$ Z_1 \CVP Z_2 . $$
\end{definition}

\begin{lemma}\label{prop:CVP}
  Avec ces hypothèses, on a
  \begin{gather*}
    \forall u, \; X_u \CVP (Y'_u, Y''_u),, \\
    X_+ \CVP (Y'_+, Y''_-), \\
    X_- \CVP (Y'_-, Y''_+).
  \end{gather*}
\end{lemma}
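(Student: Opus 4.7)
Le plan est d'exploiter la décomposition spectrale de $\eta$ et $\epsilon$ pour réduire la correspondance globale $\delta \leftrightarrow \gamma$ à des égalités spectrales bloc par bloc. D'abord, puisque $X \in \mathfrak{g}_\eta(F)$ commute avec $\eta$, l'élément $\delta = \exp(X)\eta$ respecte la décomposition $W = \bigoplus_u W_u \oplus W_+ \oplus W_-$, avec $\delta|_{W_u} = \exp(X_u) u$, $\delta|_{W_+} = \exp(X_+)$ et $\delta|_{W_-} = -\exp(X_-)$. La même observation vaut pour $\gamma' = \exp(Y')\epsilon'$ sur $V' = \bigoplus_u V'_u \oplus V'_+ \oplus V'_-$ et pour $\gamma'' = \exp(Y'')\epsilon''$ sur $V''$ ; l'identification $-u'' = u$ entraîne que $-\gamma''|_{V''_u}$ agit par $\exp(Y''_u) u$.

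Ensuite, on utilise le fait que la correspondance $\delta \leftrightarrow \gamma$ se traduit en l'égalité entre le multi-ensemble des valeurs propres de $\delta$ d'une part, et la réunion des valeurs propres de $\gamma'$ avec les opposées de celles de $\gamma''$ d'autre part, modulo une valeur propre triviale $1$ provenant de chaque facteur $\SO(2n'+1)$ et $\SO(2n''+1)$. L'hypothèse que $X, Y$ sont assez petits garantit que le spectre de $\delta$ voisin de $u$ est exactement celui de $\delta|_{W_u}$, et de même pour $\gamma', \gamma''$. En comparant bloc par bloc, on obtient pour chaque $u \neq \pm 1$ l'égalité des polynômes caractéristiques de $\exp(X_u) u$ sur $W_u$ et de $(\exp(Y'_u) u, \exp(Y''_u) u)$ sur $V'_u \oplus V''_u$ ; en passant aux algèbres de Lie via l'exponentielle et en invoquant l'isomorphisme $W_u \simeq V'_u \oplus V''_u$ de $L$-modules fourni par la correspondance, on en déduit $X_u \CVP (Y'_u, Y''_u)$.

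Pour les blocs voisins de $\pm 1$, on applique le même raisonnement : les valeurs propres voisines de $1$ proviennent de $W_+$ pour $\delta$, de $V'_+$ pour $\gamma'$, et de $V''_-$ pour $-\gamma''$ (car $-\epsilon''|_{V''_-} = 1$) ; les contraintes dimensionnelles $\dim W_+ + 1 = \dim V'_+ + \dim V''_-$ et l'absorption des valeurs propres triviales de $\SO(2n'+1)$ du côté métaplectique donnent $X_+ \CVP (Y'_+, Y''_-)$. On procède symétriquement pour le bloc voisin de $-1$, ce qui livre $X_- \CVP (Y'_-, Y''_+)$.

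La principale difficulté consistera à vérifier que les structures d'algèbres étales à involution $K/K^\#$ coïncident, et pas seulement les valeurs propres. Pour les éléments assez réguliers considérés ici, le paramètre $(K/K^\#, a)$ est entièrement déterminé par le polynôme caractéristique et l'involution héritée de la forme $h_K$ ambiante ; ceci devrait suivre du paramétrage explicite du \S\ref{sec:parametrage} et de l'identification canonique $W_u \simeq V'_u \oplus V''_u$ de $L$-modules imposée par la correspondance.
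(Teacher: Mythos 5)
Votre argument déploie en détail la démonstration-en-une-ligne du texte : la petitesse de $X, Y$ permet d'isoler les paquets de valeurs propres autour des différents $u$ et de $\pm 1$, et la comparaison bloc par bloc conclut, exactement comme le suggère la phrase « on peut écarter les valeurs propres provenant de $u$ différents ». La difficulté soulevée dans votre dernier paragraphe se règle en fait plus directement que vous ne le suggérez : pour un élément régulier, $K$ est engendré sur la base par $a$ donc déterminé par le polynôme caractéristique, et $\tau_K$ est l'unique involution prolongeant celle de la base et envoyant $a$ sur $-a$, de sorte que $(K/K^\#,a)$ est entièrement fixé dès que les valeurs propres concordent, sans recours à la forme ambiante $h_K$.
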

\begin{proof}
  Si $X,Y$ sont assez petits, on peut écarter les valeurs propres provenant de $u$ différents. Cela permet de conclure.
\end{proof}

La correspondance ci-dessus fournit aussi des décompositions orthogonales $W_u =W'_u \oplus W''_u$ pour tout $u$ et $W_\pm = W'_\pm \oplus W''_\pm$ selon les valeurs propres. Les éléments $X_u, X_\pm$ se décomposent ainsi en
\begin{gather*}
  \forall u, \; X_u =(X'_u, X''_u),\\
  X_+ = (X'_+, X''_+), \\
  X_- = (X'_-, X''_-), \\
\end{gather*}
tels que
\begin{gather*}
  \forall u, \; X'_u \CVP Y'_u, \; X''_u \CVP Y''_u, \\
  X'_+ \CVP Y'_+, \; X''_+ \CVP Y''_-, \\
  X'_- \CVP Y'_-, \; X''_- \CVP Y''_+.
\end{gather*}

Enfin, posons
$$\tilde{\delta} := \exp(X)\tilde{\eta}.$$

Pour tout $u$, prenons $\tilde{u} \in \Mp(W_u)$ au-dessus de $u$ tel que l'image de $((\tilde{u})_u, 1, -1)$ par l'homomorphisme
$$ \prod_u \Mp(W_u) \times \Mp(W_+) \times \Mp(W_-) \to \Mp(W)$$
est $\tilde{\eta}$. C'est possible grâce à l'hypothèse que $\tilde{\eta}=\pm 1$ lorsque $\eta=\pm 1$. Ensuite, prenons $(\tilde{u}', \tilde{u}'') \in \Mp(W'_u) \times \Mp(W''_u)$ qui s'envoie sur $\tilde{u} \in \Mp(W_u)$. Posons $\tilde{\eta}'$ (resp. $\tilde{\eta}''$) l'image de $((\tilde{u}')_u, 1, -1)$ (resp. $((\tilde{u}'')_u, 1, -1)$) par
  \begin{align*}
    \prod_u \Mp(W'_u) \times \Mp(W'_+) \times \Mp(W'_-) & \to \Mp(W') \\
    (\text{resp.}  \prod_u \Mp(W''_u) \times \Mp(W''_+) \times \Mp(W''_-) & \to \Mp(W'')).
  \end{align*}

Décrivons le comportement du facteur de transfert $\Delta = \Delta_0\Delta'\Delta''$.

\begin{proposition}
  Par rapport à ces décompositions, le facteur $\Delta'$ satisfait à
  $$ \Delta'(\exp(X')\tilde{\eta}') = \prod_u \Delta'(\exp(X'_u)\tilde{u}') \cdot \Delta'(\exp(X'_+)) \Delta'(-\exp(X'_-)). $$

  De même,
  $$ \Delta''(\exp(X'')\tilde{\eta}'') = \prod_u \Delta''(\exp(X''_u)\tilde{u}'') \cdot \Delta''(\exp(X''_+)) \Delta''(-\exp(X''_-)). $$

  Le produit $\Delta'\Delta''$ ne dépend pas de choix de $\tilde{\eta}', \tilde{\eta}''$ et $\tilde{u}', \tilde{u}''$
\end{proposition}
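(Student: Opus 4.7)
Mon approche est d'exploiter la décomposition orthogonale $W' = \bigoplus_u W'_u \oplus W'_+ \oplus W'_-$ (et idem pour $W''$) couplée à la multiplicativité des caractères de la représentation de Weil, fournie par \ref{prop:caractere-decomposition-0}. Vu la définition $\Delta' = (\Theta_\psi^+-\Theta_\psi^-)/|\Theta_\psi^+-\Theta_\psi^-|$ et $\Delta'' = (\Theta_\psi^++\Theta_\psi^-)/|\Theta_\psi^++\Theta_\psi^-|$, la formule voulue se ramènera à la multiplicativité des distributions $\Theta_\psi^+\pm\Theta_\psi^-$ sous $j$.

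Pour $\Theta_\psi^++\Theta_\psi^-=\Theta_\psi$, cette multiplicativité est exactement \ref{prop:caractere-decomposition-0}. Pour $\Theta_\psi^+-\Theta_\psi^-$, j'utiliserai la relation $(\Theta_\psi^+-\Theta_\psi^-)(\tilde{x})=\Theta_\psi((-1)\tilde{x})$ de \ref{prop:-1-echangement}. Il faut alors identifier l'image de $-1$ sous $j$: l'élément $j(-1,\ldots,-1) \in \Mp(W)$ projette sur $-1 \in \Sp(W)$, donc diffère de l'élément $-1 \in \Mp(W)$ de \ref{def:-1} par un $\zeta \in \bmu_\mathbf{f}$; en comparant les valeurs des caractères via \ref{prop:caractere-decomposition-0} et \ref{prop:-1-caracterisation} (qui donne $\Theta_\psi(-1)=|2|^n$ des deux côtés), la spécificité de $\Theta_\psi$ force $\zeta=1$. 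On en déduit que $(-1)\tilde{x} = j((-\tilde{x}_k)_k)$ pour $\tilde{x}=j((\tilde{x}_k)_k)$, d'où
$$ (\Theta_\psi^+-\Theta_\psi^-)(j(\tilde{x}_1,\ldots,\tilde{x}_r)) = \prod_k (\Theta_\psi^{[k]+}-\Theta_\psi^{[k]-})(\tilde{x}_k), $$
et donc la multiplicativité de $\Delta'$.

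Par construction, $\tilde{\eta}' = j'((\tilde{u}')_u, 1_{W'_+}, -1_{W'_-})$, d'où
$$ \exp(X')\tilde{\eta}' = j'\bigl((\exp(X'_u)\tilde{u}')_u,\; \exp(X'_+),\; -\exp(X'_-)\bigr). $$
La formule annoncée pour $\Delta'(\exp(X')\tilde{\eta}')$ résulte immédiatement de la multiplicativité établie, et celle pour $\Delta''$ est traitée de façon analogue avec $\Theta_\psi$ à la place.

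Pour l'indépendance du produit $\Delta'\Delta''$ des choix $(\tilde{u}'_u, \tilde{u}''_u)_u$ (sous la contrainte $j(\tilde{\eta}',\tilde{\eta}'')=\tilde{\eta}$), l'ambiguïté autorisée est $(\tilde{u}'_u, \tilde{u}''_u)\mapsto (\alpha_u\tilde{u}'_u, \beta_u\tilde{u}''_u)$ avec $\alpha_u,\beta_u\in\bmu_\mathbf{f}$ et $\prod_u\alpha_u\beta_u=1$; la spécificité de $\Delta'$ et $\Delta''$ fait que le produit $\Delta'\Delta''$ évalué se transforme par $(\prod_u\alpha_u)(\prod_u\beta_u)=\prod_u\alpha_u\beta_u=1$, d'où l'invariance. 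La démonstration est donc essentiellement formelle: il n'y a pas d'obstacle majeur à prévoir, le seul point nécessitant un peu de soin étant l'identification $j(-1,\ldots,-1)=-1$ dans $\Mp(W)$ expliquée ci-dessus.
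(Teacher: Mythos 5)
Votre démonstration est correcte et suit pour l'essentiel la même route que le texte, qui se contente de renvoyer à \ref{prop:caractere-decomposition-0}. Vous explicitez utilement un point que l'article laisse implicite : pour obtenir la multiplicativité de $\Theta_\psi^+-\Theta_\psi^-$ (nécessaire pour $\Delta'$), il faut savoir que $j(-1,\ldots,-1)$ coïncide avec l'élément canonique $-1$ de \ref{def:-1}, ce que vous établissez proprement en comparant $\Theta_\psi(j(-1,\ldots,-1))=\prod_k|2|^{n_k}=|2|^n=\Theta_\psi(-1)$ via \ref{prop:caractere-decomposition-0} et \ref{prop:-1-caracterisation}, puis en invoquant la spécificité de $\Theta_\psi$. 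La vérification finale de l'invariance du produit $\Delta'\Delta''$ par rapport aux choix de $\tilde{u}',\tilde{u}''$ est également exacte.
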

Il est sous-entendu que les termes $\Delta'$, $\Delta''$ à droite sont pris par rapport à des espaces symplectiques convenables (eg. $W'_u, W''_u$ etc.)

\begin{proof}
  Cela résulte de \ref{prop:caractere-decomposition-0}.
\end{proof}

Autrement dit, $\Delta'\Delta''$ est ``additif'' par rapport aux sommes directes de paramètres. Le comportement du terme $\Delta_0$ est plus pénible à écrire: il est ``bi-additif''.

\begin{proposition}\label{prop:Delta-biadditif}
  Le facteur $\Delta_0(\exp(X')\eta', \exp(X'')\eta'')$ est produit des termes suivants
  \begin{gather*}
    \Delta_0(\exp(X'_+),\exp(X''_+)),\\
    \Delta_0(-\exp(X'_-), -\exp(X''_-)), \\
    \prod_u \Delta_0(\exp(X'_u)u, \exp(X''_u)u),\\
    \prod_{u' \neq u''} \Delta_0(\exp(X'_{u'})u',\exp(X''_{u''})u''), \\
    \prod_u \Delta_0(\exp(X'_u)u, \exp(X''_+)),\\
    \prod_u \Delta_0(\exp(X'_u)u, -\exp(X''_-)),\\
    \prod_u \Delta_0(\exp(X'_+), \exp(X''_u)u),\\
    \prod_u \Delta_0(-\exp(X'_-), \exp(X''_u)u), \\
    \Delta_0(\exp(X'_+), -\exp(X''_-)), \\
    \Delta_0(-\exp(X'_-),\exp(X''_+)).
  \end{gather*}
\end{proposition}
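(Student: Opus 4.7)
The plan is to reduce everything to the explicit formula
\[
\Delta_0(\delta', \delta'') = \sgn_{K''/K''^\#}\bigl(P_{a'}(a'')(-a'')^{-n'} \det(\delta'+1)\bigr)
\]
and exploit the multiplicative behavior of each of its three ingredients with respect to the orthogonal decompositions of $W'$ and $W''$. The character $\sgn_{K''/K''^\#}$ factors as the product over the simple components of $K''$, which correspond bijectively with the pieces $W''_u$, $W''_+$, $W''_-$ in the decomposition of $W''$. Hence, once we have expressed the argument as a product of elements, each living in one of the simple components $K''_?$, the sign character will distribute automatically, and the stated bi-additivity will reduce to identifying each resulting factor with one of the ten families of terms listed in the proposition.

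First I would decompose $P_{a'}(T)$ according to $W' = \bigoplus_u W'_u \oplus W'_+ \oplus W'_-$, writing
\[
P_{a'}(T) = \Bigl(\prod_u P_{a'_u}(T)\Bigr) \cdot P_{a'_+}(T) \cdot P_{a'_-}(T),
\]
and similarly decompose $\det(\delta'+1) = \prod_u \det(\delta'_u + 1) \cdot \det(\delta'_+ + 1) \cdot \det(\delta'_- + 1)$ and the exponent $n' = \sum_u n'_u + n'_+ + n'_-$, so that the factor $(-a'')^{-n'}$ splits as a product of one factor of the form $(-a'')^{-n'_u}$, $(-a'')^{-n'_+}$, $(-a'')^{-n'_-}$ per block of $W'$. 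Evaluating the full product at $a'' \in K'' = \prod_u K''_u \times K''_+ \times K''_-$ then yields a double product indexed by pairs (piece of $W'$, piece of $W''$), and the sign character $\sgn_{K''/K''^\#}$ applied to this double product becomes the product of its values on each elementary entry.

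The second step is bookkeeping: each elementary entry must be matched with a $\Delta_0$-term appearing on the right-hand side of the proposition. For the diagonal terms $\Delta_0(\exp(X'_u)u, \exp(X''_u)u)$ one isolates the contributions of $W'_u$ against $K''_u$; for the cross terms $\Delta_0(\exp(X'_{u'})u', \exp(X''_{u''})u'')$ with $u' \neq u''$ one reads the contribution of $W'_{u'}$ evaluated against the $K''_{u''}$-component of $a''$; and the mixed terms involving $\pm$ blocks come from pairing the $W'_u$ (resp. $W'_{\pm}$) contribution against the $K''_{\pm}$ (resp. $K''_u$) components. The sign conventions are handled by noting that the substitutions $\delta'' \leadsto -\exp(X''_-)$, etc., correspond precisely to the eigenvalue shift implicit in the correspondence $\mu$ (so that the $-1$ on the metaplectic side compensates the passage $\epsilon'' \leadsto -v''$).

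The main obstacle I foresee is not conceptual but combinatorial: keeping track of the exponents $n'_?$ of the factor $(-a'')^{-n'}$ as they are dispatched to each block of $W''$, and verifying that the ten families of terms in the statement together exhaust all possible pairings (piece of $W'$, piece of $W''$) exactly once. Once this matching is checked, no further computation is required: the factorization of the sign character together with the distributive expansion of $P_{a'}(a'')(-a'')^{-n'} \det(\delta'+1)$ gives the formula directly.
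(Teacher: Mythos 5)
Your proposal is correct and takes essentially the same route as the paper: the character $\sgn_{K''/K''^\#}$ is multiplicative over the simple components of $K''$ (indexed by the pieces $W''_u, W''_\pm$), while for each such component the argument $P_{a'}(a'')(-a'')^{-n'}\det(\delta'+1)$ is multiplicative over the pieces $W'_u, W'_\pm$ of $W'$ (because $P_{a'}$, $\det(\delta'+1)$ and $n'$ all factor/add accordingly), and the resulting double product is exactly the ten listed terms. One small remark: the $-1$ appearing in $-\exp(X'_-)$, $-\exp(X''_-)$ is simply the eigenvalue of $\eta'$, $\eta''$ on the $W'_-$, $W''_-$ blocks of the Jordan decomposition, not an artifact of the sign flip $v'' \leadsto -v''$ built into $\mu$; your argument does not depend on that side remark, so this is only a minor imprecision of wording, not a gap.
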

\begin{proof}
  D'une part, le caractère $\sgn_{K''/K''^\#}(\cdot)$ est additif par rapport aux sommes directes des paramètres $K''/K''^\#$. D'autre part, si $K''/K''^\#$ est fixé, alors
  $$ P_{a'}(a'')(-a'')^{-n'} \quad \text{et} \quad \det(\delta'+1) $$
  sont tous additifs par rapport aux sommes directes de paramètres $(K'/K'^\#,a')$. D'où l'assertion. 
\end{proof}
On démontrera que seuls les trois premiers termes survivent après descente.

\subsection{Le cas non ramifié}\label{sec:descente-nr}
Afin d'établir le lemme fondamental, on aura besoin de considérer la version non ramifiée de la descente. Nous conservons la plupart du formalisme précédent et précisons les modifications ci-dessous.

Conservons l'hypothèse \ref{hyp:non-ramifie}; en particulier, $F$ est un corps local non archimédien de caractéristique résiduelle $p>2$. Fixons un réseau autodual $L \subset W$. Soit $K = \text{Stab}_G(L)$ le sous-groupe hyperspécial de $G(F)$ associé. Cela permet d'identifier $K$ comme un sous-groupe de $\tilde{G}$.

\begin{hypothesis}
  Supposons que
  \begin{itemize}
    \item $\eta$ et $\epsilon$ sont d'ordres finis premiers à $p$;
    \item $\delta,\gamma$ sont des éléments compacts avec décompositions de Jordan topologiques
      \begin{align*}
        \delta &= \exp(X)\eta, \\
        \gamma &= \exp(Y)\epsilon,\\
        X,Y:& \text{ topologiquement nilpotents };
      \end{align*}
    \item $\delta,\gamma$ se correspondent;
    \item $\eta \in K$, $\eta = \tilde{\eta}$;
    \item $H_\epsilon$ est non ramifié.
  \end{itemize}
\end{hypothesis}

Dans ce cas, c'est loisible de supposer que $(V'_\pm, q'_\pm)$, $(V''_\pm, q''_\pm)$, $(V'_u, h'_u)$, $(V''_u, h''_u)$, $(W_u, h_u)$ admettent des réseaux autoduaux (\cite{Wa08} 5.3). La nilpotence topologique de $X,Y$ dans le cas non ramifié remplace la condition précédente que $X,Y$ soient assez petits.

On décompose $G_\eta$ et $H_\epsilon$ comme dans la section précédente. Les éléments $u$ sont d'ordre fini premier à $p$. En particulier, $L$ est une extension non ramifiée; lorsque $L \simeq L^\# \times L^\#$, cela signifie que $L$ est une extension non ramifiée.

\begin{lemma}\label{prop:CVP-nr}
  Avec ces hypothèses, on a
  \begin{gather*}
    \forall u, \; X_u \CVP (Y'_u, Y''_u), \\
    X_+ \CVP (Y'_+, Y''_-), \\
    X_- \CVP (Y'_-, Y''_+).
  \end{gather*}
\end{lemma}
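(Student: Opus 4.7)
Le plan est de reprendre l'argument de \ref{prop:CVP} en remplaçant l'hypothèse <<$X,Y$ assez petits>> par le fait que la décomposition $\delta = \exp(X)\eta$ (resp. $\gamma = \exp(Y)\epsilon$) est la décomposition de Jordan topologique de $\delta$ (resp. $\gamma$). Ce qui permettait précédemment de <<séparer les valeurs propres provenant de $u$ différents>> était la continuité de l'exponentielle ; dans le cas non ramifié, la séparation se fera grâce à l'unicité de la décomposition de Jordan topologique et au fait que $p > 2$.

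D'abord, je rappellerais que pour tout élément compact de $G(F)$, tout valeur propre $\lambda \in \bar F^\times$ se décompose de manière unique en $\lambda = \lambda_u \lambda_t$, où $\lambda_u$ est une racine de l'unité d'ordre premier à $p$ et $\lambda_t$ est topologiquement unipotent ; cette factorisation commute à l'action de Galois. De plus, $\lambda_u$ est alors valeur propre de la partie d'ordre fini premier à $p$ de l'élément compact. Appliquant ceci à $\delta = \exp(X)\eta$ (avec $X$ topologiquement nilpotent, donc $\exp(X)$ à valeurs propres topologiquement unipotentes) et à $\gamma$, les valeurs propres de $\delta$ et $\gamma$ se groupent canoniquement selon leurs parties d'ordre fini premier à $p$.

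Ensuite, je relierais cette partition à la décomposition des commutants rappelée dans \S\ref{sec:descente-formalisme}. Sous la décomposition $G_\eta = \prod_u U(W_u,h_u) \times \Sp(W_+) \times \Sp(W_-)$, les valeurs propres de $\eta|_{W_u}$ sont les conjugués galoisiens de $u$ et $u^{-1}$ (on rappelle que $L^\# = F(u+u^{-1})$), tandis que $\eta|_{W_\pm} = \pm 1$ ; de façon analogue pour $\epsilon'$ et $\epsilon''$ sur les $V'_u, V''_u, V'_\pm, V''_\pm$. Les facteurs $X_u, X_\pm$ (resp. $Y'_u, Y''_u, Y'_\pm, Y''_\pm$) sont donc exactement les blocs topologiquement nilpotents indexés par les différentes parties d'ordre fini premier à $p$.

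Enfin, on conclurait en utilisant la description explicite de la correspondance $\delta \leftrightarrow \gamma$ donnée dans \S\ref{sec:donnees-endoscopiques} : les valeurs propres de $\delta$ sont celles de $\gamma'$ et les opposées de celles de $\gamma''$. Comme $-1$ est d'ordre $2$, premier à $p$, la négation commute avec la décomposition de Jordan topologique : si $\mu = \mu_u \mu_t$ est la décomposition d'une valeur propre de $\gamma''$, alors $-\mu = (-\mu_u)\mu_t$ est celle de la valeur propre correspondante de $\delta$. Il suffit alors de regrouper les valeurs propres par leur partie d'ordre fini premier à $p$ des deux côtés : le bloc correspondant à $u$ sur $\delta$ (i.e. $X_u$) correspond au bloc $u$ sur $\gamma'$ (i.e. $Y'_u$) et au bloc $-u$ sur $\gamma''$ (i.e. $Y''_u$, par notre convention de signe) ; le bloc $+1$ sur $\delta$ (i.e. $X_+$) correspond à $Y'_+$ et $Y''_-$ ; le bloc $-1$ (i.e. $X_-$) correspond à $Y'_-$ et $Y''_+$. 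L'obstacle principal n'est pas conceptuel : c'est uniquement la vérification soigneuse que les paramètres $(K/K^\#, a, c)$ des $X_u, X_\pm, Y'_u,\ldots$ sont bien ceux exigés par \ref{def:CVP}, ce qui découle de la compatibilité de la décomposition spectrale ci-dessus avec les paramètres des classes de conjugaison décrite au \S\ref{sec:parametrage}.
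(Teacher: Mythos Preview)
Your proof is correct and follows exactly the same idea as the paper, which simply invokes ``l'unicité de la décomposition de Jordan topologique'' in one line. You have merely unpacked what this uniqueness means at the level of eigenvalues (the Teichm\"uller factorization $\lambda = \lambda_u \lambda_t$) and checked carefully that the sign twist coming from the $\gamma''$-part is compatible with it because $2$ is prime to $p$; this is precisely the content of the paper's one-sentence proof.
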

\begin{proof}
   Cela résulte de l'unicité de la décomposition de Jordan topologique.
\end{proof}

\subsection{Énoncé de résultats}
Sauf mention expresse du contraire, $F$ est un corps local de caractéristique nulle. Conservons aussi les formalismes précédents.

Pour simplifier la vie, introduisons des conventions.
\begin{notation}\index{bonne constante}
  On dit qu'une expression est une \textit{bonne constante} si
  \begin{itemize}
    \item elle ne dépend que de $\mathcal{O}^\text{st}(\epsilon)$ et $\mathcal{O}(\tilde{\eta})$;
    \item elle vaut $1$ dans le cas non ramifié.
  \end{itemize}

  Soient $a,b$ deux éléments inversibles dans une $F$-algèbre étale $K$. On dit que $a \approx b$ si
  $$\sup_{\sigma \in \Hom_{F-\text{alg}}(K,\bar{F})} \left| \sigma\left(\frac{a}{b}\right)-1 \right|_{\bar{F}} \; \text{ est assez petit},$$
  où $|\cdot|_{\bar{F}}$ est l'unique valeur absolue sur $\bar{F}$ qui prolonge $|\cdot|_F$. La borne exacte dépendra du contexte.

  Dans le cas non ramifié, on dit que $a \approx b$ si $\frac{a}{b}$ est topologiquement unipotent.
\end{notation}

\begin{theorem}\label{prop:descente-facteur-transfert}
  Posons
  $$ \Delta^\flat(Y,X) := \Delta(\exp(Y)\epsilon, \exp(X)\tilde{\eta}), $$
  alors il existe une bonne constante $c$ telle que
  \begin{align*}
    \Delta^\flat(Y,X) & = c \prod_u \Delta_u((Y'_u,Y''_u), X_u) \cdot \\
    & \cdot \Delta_+((Y'_+,Y''_-),X_+) \cdot \Delta_-((Y'_-,Y''_+),X_-).
  \end{align*}
  Les termes $\Delta_u, \Delta_\pm$ sont définis de la façon suivante. Avec la convention $\bullet \in \{+, -, u\}$, supposons que $X_\bullet \in \mathcal{O}(K/K^\#,a,c)$ et $(K'/K'^\#,a',c') \oplus (K''/K''^\#,a'',c'')$ est la décomposition correspondant à $X_\bullet = (X'_\bullet,X''_\bullet)$. Définissons
  \begin{align*}
    \Delta_u((Y'_u,Y''_u), X_u) & := \sgn_{K''/K''^\#}(\gamma_u c''^{-1} \dot{P}_{X_u|L}(a'')), \; \text{ pour tout } u \\
    \Delta_+((Y'_+,Y''_-),X_+) & := \sgn_{K''/K''^\#}(c''^{-1} \dot{P}_{X_+}(a'')), \\
    \Delta_-((Y'_-,Y''_+),X_-) & := \sgn_{K'/K'^\#}(c'^{-1} \dot{P}_{X_-}(a'));
  \end{align*}
  où $P_{X_\pm} \in F[T]$ est le polynôme caractéristique de $X_\pm \in \End_F(W_\pm)$ et $P_{X_u|L} \in L[T]$ est celui de $X_u \in \End_L(W_u)$. Pour tout $u$, la constante $\gamma_u \in L^\times$ satisfait à $\tau(\gamma_u) = (-1)^{\dim_L W_u} \gamma_u$ et $\gamma_u \in \mathfrak{o}_L^\times$ dans le cas non ramifié.
\end{theorem}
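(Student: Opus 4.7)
J'écrirais $\Delta^\flat(Y,X)=\Delta_0(\exp(X')\eta',\exp(X'')\eta'')\cdot\Delta'(\exp(X')\tilde\eta')\Delta''(\exp(X'')\tilde\eta'')$ et je traiterais séparément les deux facteurs. Pour $\Delta'\Delta''$, je remplacerais cette expression par la première formule de \ref{prop:Delta-variante}, à savoir $\Delta_0$ étant mis à part:
\[
\Delta'\Delta''(\tilde\delta)=\frac{\Theta_\psi^+-\Theta_\psi^-}{|\Theta_\psi^+-\Theta_\psi^-|}(\tilde\delta)\cdot\gamma_\psi(q[C_{\delta''}]).
\]
Grâce à \ref{prop:-1-echangement}, le numérateur vaut $\Theta_\psi(-\tilde\delta)$ et la décomposition orthogonale $W=\bigoplus_u W_u\oplus W_+\oplus W_-$ permet, par \ref{prop:caractere-decomposition-0} et par additivité symplectique de $q[C_{\delta''}]$ (compatibilité de la forme de Cayley avec les sommes directes), de factoriser cette quantité en produit sur les composantes $\bullet\in\{u,+,-\}$.

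\textbf{Traitement des pièces $\pm$.} Sur $W_+$, on a $-\tilde\delta|_{W_+}=-\exp(X_+)$, donc $\Theta_\psi(-\tilde\delta|_{W_+})/|\cdot|\cdot\gamma_\psi(q[C_{\exp(X''_+)}])$ se simplifie à l'aide de \ref{prop:formule-caractere-Thomas2} et de $q[C_x]\simeq q[X]$ (\ref{prop:q_X-est-q_x}) pour se ramener à $\gamma_\psi(q[X''_+])$ à un facteur bien normalisé près. Par \ref{prop:calcul-qX}, ceci vaut, modulo une bonne constante, $\sgn_{K''/K''^\#}(c''^{-1}\dot P_{X_+}(a''))$, ce qui est précisément la définition de $\Delta_+$. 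Sur $W_-$, un calcul symétrique utilisant $-\tilde\delta|_{W_-}=\exp(X_-)$ et la deuxième formule de \ref{prop:Delta-variante} donne $\Delta_-$. Sur chaque $W_u$, l'élément $\tilde\delta|_{W_u}=\exp(X_u)\tilde u$ est proche de $\tilde u$, et il faudra établir une version \og twistée\fg{} de \ref{prop:q_X-est-q_x} permettant de relier $q[C_{\exp(X_u)u}]$ à une forme quadratique sur $\mathfrak{u}(W_u,h_u)$ attachée à $X''_u$ ; en combinant avec la valeur de $\Theta_\psi(\tilde u)$ (qui ne dépend que de $\mathcal O(\tilde\eta)$), on obtiendra $\Delta_u$ à bonne constante près, la constante $\gamma_u$ sortant de la comparaison entre la forme de Cayley tordue et la forme trace.

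\textbf{Traitement de $\Delta_0$.} Par \ref{prop:Delta-biadditif}, $\Delta_0$ éclate en produit de dix familles de termes. Les trois premiers produits correspondent exactement aux définitions de $\Delta_u$ (partie $\sgn_{K''/K''^\#}$), $\Delta_+$ et $\Delta_-$. Pour les sept autres familles (termes \og croisés\fg{}), je montrerais que le polynôme caractéristique $P_{a'}(T)$ restreint à une composante $u''$ différente se factorise en un produit faisant intervenir les valeurs propres de $\eta$ seules modulo termes topologiquement unipotents ; précisément, si $u'\neq u''$ ou si l'on croise des pièces de types différents, l'évaluation $P_{X'|_\bullet}(a''_{\bullet'})$ est un élément proche de $P_{u'}(u'')$, et $\sgn_{K''_{\bullet'}/K''^\#_{\bullet'}}$ appliqué à un tel élément ne dépend que de $\mathcal O^{\mathrm{st}}(\epsilon)$ (lemme \ref{prop:changement-de-signes-0} appliqué à $r,r'$ dans un rapport topologiquement unipotent). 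Ceci fournit la bonne constante annoncée.

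\textbf{Cas non ramifié et obstacle principal.} Sous l'hypothèse \ref{hyp:non-ramifie}, les éléments $u,u',u''$ sont d'ordre fini premier à $p$ et admettent des réseaux autoduaux ; $\sgn_{K/K^\#}$ est alors trivial sur les unités topologiquement unipotentes, ce qui annule toutes les bonnes constantes des termes croisés. La constante $c=1$ dans ce cas, pour $\gamma_u$ un inversible dans $\mathfrak{o}_L$. L'obstacle technique principal sera le traitement du facteur $\Delta_u$ sur la composante non triviale $W_u$ : il faut établir une variante de \ref{prop:q_X-est-q_x} et \ref{prop:calcul-qX} pour l'élément $\exp(X_u)u$ au lieu de $\exp(X_u)$, ce qui requiert de suivre précisément l'action de $u$ sur le quotient de Cayley et de reconnaître la forme quadratique résultante sur $\mathfrak{u}(W_u,h_u)$ comme le \og transfert non standard\fg{} entre les algèbres de Lie de $U(W_u)$ et $U(V'_u)\times U(V''_u)$, dans l'esprit de \cite{Wa08}.
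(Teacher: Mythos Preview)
Your global strategy matches the paper's: split $\Delta=\Delta_0\cdot\Delta'\Delta''$, factor each over the blocks $\{u,+,-\}$, and show the seven cross-terms of \ref{prop:Delta-biadditif} are good constants. But there are two substantive gaps.

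First, your handling of the cross-terms is incomplete. You correctly observe that, e.g., $P_{\exp(a')u'}(\exp(a'')u'')\approx P_{u'}(u'')$, but this only reduces the term to $\sgn_{K''/K''^\#}$ of something in $L''^{\#\times}$ or $F^\times$. That expression still involves $K''/K''^\#$, which is the parameter algebra of $X''_\bullet$ and \emph{varies with $X$}---so it is not obviously a good constant. The lemma you cite, \ref{prop:changement-de-signes-0}, does not apply here (it compares two values of $\gamma_\psi$, not $\sgn$ of a fixed scalar). The paper's key tool is \ref{prop:signe-transfert}: because $X''_\bullet\CVP Y''_\bullet$ and $Y''_\bullet$ lives in an even-dimensional orthogonal (or unitary) group, one has $\sgn_{K''/K''^\#}(t)=(t,(-1)^m\det q'')_F$ for $t\in F^\times$, and $\det q''$ depends only on $\mathcal O^{\mathrm{st}}(\epsilon)$. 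The same lemma (plus \ref{prop:pfaffien}) is already needed in your treatment of $\Delta'\Delta''$ on $W_+$: after \ref{prop:calcul-qX} you get a factor $\gamma_\psi(\det X''_+)$, and you must argue that $\det X''_+\equiv\det q''_-\bmod F^{\times 2}$ via \ref{prop:pfaffien} applied to $Y''_-\in\so(V''_-,q''_-)$.

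Second, for the $u$-block of $\Delta'\Delta''$ you do not need a ``twisted'' analogue of \ref{prop:q_X-est-q_x} or \ref{prop:calcul-qX}. The paper applies \ref{prop:calcul-qX} directly to $C_{\exp(X''_u)u}\in\syp(W''_u)$ (a legitimate regular semisimple element), obtaining $\sgn_{K''/K''^\#}(c''^{-1}\dot P_{2C''}(2C''))$ with $C''=(uA''-1)/(uA''+1)$. The passage from $\dot P_{C''}$ to $\dot P_{a''|L}$ is done by (i) factoring $P_{C''}=\prod_{\sigma\in\Sigma_{L/F}}P^\sigma_{C''|L}$ and absorbing the $\sigma\neq\sigma_0$ factors into a good constant, then (ii) using the M\"obius lemma \ref{prop:Mobius} for $T\mapsto(uT-1)/(uT+1)$ to get $\dot P_{C''|L}(C'')\approx h_u\,\dot P_{a''|L}(a'')$. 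This is where the constant $\alpha_u$ (and ultimately $\gamma_u$) originates---not from a ``twisted Cayley form''.

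Finally, a minor but real point: the three diagonal $\Delta_0$ terms do \emph{not} ``correspond exactement'' to the defined $\Delta_\bullet$. For $\bullet=+$, the $\Delta_0$ piece contributes $\sgn_{K''/K''^\#}(P_{X'_+}(a''))$ while the $\Delta'\Delta''$ piece contributes $\sgn_{K''/K''^\#}(c''^{-1}\dot P_{X''_+}(a''))$; it is their product that equals $\sgn_{K''/K''^\#}(c''^{-1}\dot P_{X_+}(a''))$, via $\dot P_{X_+}(a'')=P_{X'_+}(a'')\,\dot P_{X''_+}(a'')$ (since $P_{X''_+}(a'')=0$). The same recombination happens for $-$ and for $u$, and is how $\gamma_u=\alpha_u\beta_u$ acquires the parity $\tau(\gamma_u)=(-1)^{\dim_L W_u}\gamma_u$.
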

Observons que $P_{X_u|L}$ est bien défini même si $L \simeq L^\# \times L^\#$; de plus, dans ce cas-là $\Delta_u = 1$ car $K'' = K''^\# \otimes_{L^\#} L \simeq K''^\# \times K''^\#$.

\begin{proof}
  Ce théorème s'obtient en multipliant les formules dans \ref{prop:descente-X_+}, \ref{prop:descente-X_-}, \ref{prop:descente-X_u}, \ref{prop:descente-Delta0-principale} et \ref{prop:descente-Delta0-residuelle}.
\end{proof}

\begin{corollary}
  Pour tout $\lambda \in F^\times$, on a
  $$ \Delta^\flat(Y,X) = \Delta^\flat(\lambda^2 Y, \lambda^2 X).$$
\end{corollary}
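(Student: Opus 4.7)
The plan is to deduce the corollary directly from the explicit formula established in Theorem~\ref{prop:descente-facteur-transfert}. Write
$$ \Delta^\flat(Y, X) = c \cdot \prod_u \Delta_u((Y'_u, Y''_u), X_u) \cdot \Delta_+((Y'_+, Y''_-), X_+) \cdot \Delta_-((Y'_-, Y''_+), X_-). $$
Since $c$ is a bonne constante depending only on $\mathcal{O}^\text{st}(\epsilon)$ and $\mathcal{O}(\tilde\eta)$, it is unaffected by the replacement $(Y,X) \mapsto (\lambda^2 Y, \lambda^2 X)$. It therefore suffices to prove that each factor $\Delta_\bullet$ is invariant under scaling by $\lambda^2$.

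First I would analyse how the parameters transform. For $\bullet \in \{+, -, u\}$, if $X_\bullet$ has parameter $(K/K^\#, a, c) = (K'/K'^\#, a', c') \oplus (K''/K''^\#, a'', c'')$ coming from the decomposition $X_\bullet = (X'_\bullet, X''_\bullet)$, then replacing $X_\bullet$ by $\lambda^2 X_\bullet$ with $\lambda \in F^\times$ leaves the $F$-subalgebra $K = F[X_\bullet]$, the $K$-module structure of the underlying space, and the ambient form unchanged; only the distinguished element changes, by $a \mapsto \lambda^2 a$, $a' \mapsto \lambda^2 a'$, $a'' \mapsto \lambda^2 a''$, while $c, c', c''$ remain untouched. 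The auxiliary quantities $L$ and $\gamma_u$ also do not move.

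The only nontrivial piece is the derivative of the characteristic polynomial. If $P_{X_+}(T) = \prod_{i=1}^d(T - \alpha_i) \in F[T]$ then $P_{\lambda^2 X_+}(T) = \prod_i(T - \lambda^2 \alpha_i)$, so
$$ \dot{P}_{\lambda^2 X_+}(\lambda^2 a'') = \lambda^{2(d-1)}\, \dot{P}_{X_+}(a''), $$
and the argument of $\sgn_{K''/K''^\#}$ in $\Delta_+$ is multiplied by $\lambda^{2(d-1)} = (\lambda^{d-1})^2 \in F^{\times 2}$. As any square in $K''^{\#\times}$ lies in $N_{K''/K''^\#}(K''^\times)$, this factor is killed by $\sgn_{K''/K''^\#}$. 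The identical computation for $\Delta_-$ (using $\sgn_{K'/K'^\#}$) and for $\Delta_u$ (where $P_{X_u|L} \in L[T]$ yields a factor $\lambda^{2(d-1)} \in F^{\times 2} \subset L^{\times 2} \subset (K''^\#)^{\times 2}$) shows that each $\Delta_\bullet$ is invariant, completing the proof.

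There is no genuine obstacle here: the computation is essentially bookkeeping once the explicit formula of Theorem~\ref{prop:descente-facteur-transfert} is in hand. The only point requiring minor care is checking that the parameter data $(K/K^\#, c, c', c'')$ is insensitive to scaling by $\lambda^2 \in F^\times$, which follows because $F[X_\bullet] = F[\lambda^2 X_\bullet]$ as subalgebras of endomorphisms when $\lambda \in F^\times$.
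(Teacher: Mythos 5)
Your proof is correct and follows exactly the same route as the paper's, which simply states that the invariance of each $\Delta_\bullet$ under $(Y_\bullet, X_\bullet) \mapsto (\lambda^2 Y_\bullet, \lambda^2 X_\bullet)$ is immediate from Theorem~\ref{prop:descente-facteur-transfert}; you have merely supplied the routine bookkeeping (the parameters $(K, c, c', c'')$ are insensitive to scaling, $a \mapsto \lambda^2 a$, and $\dot{P}$ scales by a square killed by $\sgn$) that the paper leaves to the reader.
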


  Cela nous permet de prolonger $\Delta^\flat$ en une fonction définie sur toute paire
  $$(Y,X) \in (\mathfrak{h}_\epsilon)_{G-\text{reg}}(F) \times (\mathfrak{g}_\eta)_\text{reg}(F).$$

\begin{proof}
  Vu le théorème, il suffit de constater que pour tout $\bullet \in \{+, -, u\}$, le facteur $\Delta_\bullet$ vérifie la même propriété, qui est immédiat.
\end{proof}

\subsection{Des lemmes techniques}
Les lemmes suivants seront les seuls ingrédients non-triviaux dans la démonstration de \ref{prop:descente-facteur-transfert}. On établit d'abord une formule de réciprocité pour $\sgn(\cdot)$. Commençons par une observation élémentaire. Soit $z$ un élément dans une $F$-algèbre étale, $P_{z}$ désigne toujours son polynôme minimal sur $F$.

\begin{lemma}\label{prop:Mobius}
  Fixons un corps $L$. Soient $K$ une $L$-algèbre étale et $z \in K^\times$ tels que $K=F[z]$. Soient
  \begin{gather*}
    \begin{bmatrix} a & b \\ c & d \end{bmatrix} \in \PGL(2,L), \; cz+d \neq 0, \\
    w := \frac{az+b}{cz+d}.
  \end{gather*}

  Soient $P_z \in L[T]$ le polynôme caractéristique de $z$ et $P_w \in L[T]$ celui de $w$. Posons $m := \deg P_z = \deg P_w = \dim_F K$, alors
  \begin{gather*}
    (cT+d)^m P_w\left( \frac{aT+b}{cT+d} \right) = c^m P_w\left( \frac{a}{c} \right) P_z(T), \\
    (ad-bc)(cz+d)^{m-2} \dot{P}_w(w) = c^m P_w\left( \frac{a}{c} \right) \dot{P}_z(z).
  \end{gather*}
\end{lemma}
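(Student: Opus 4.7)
The plan is to prove both identities by passing to an algebraic closure $\bar{L}$ where $P_z$ splits into linear factors. Write $P_z(T) = \prod_{i=1}^m (T - z_i)$ with $z_i \in \bar{L}$, and set $w_i := (az_i+b)/(cz_i+d)$, so that $P_w(T) = \prod_i (T-w_i)$ since the $w_i$ are the images under the Möbius transformation $\phi(t) = (at+b)/(ct+d)$, which is a bijection on $\bar{L} \cup \{\infty\}$.

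The key computation is the elementary identity
$$\frac{aT+b}{cT+d} - w_i \;=\; \frac{(ad-bc)(T-z_i)}{(cT+d)(cz_i+d)},$$
obtained by clearing denominators. Multiplying by $(cT+d)$ and taking the product over $i$, the left-hand side becomes $(cT+d)^m P_w\!\left(\frac{aT+b}{cT+d}\right)$ and the right-hand side factors as
$$\frac{(ad-bc)^m}{\prod_i (cz_i+d)} \cdot P_z(T).$$
It remains to identify the scalar $\prod_i (cz_i+d)$. For this, evaluate $P_w$ at the ``fixed point at infinity'' $a/c$: the same root-by-root calculation gives $P_w(a/c) = (ad-bc)^m/\bigl(c^m \prod_i (cz_i+d)\bigr)$, whence $\prod_i (cz_i+d) = (ad-bc)^m/(c^m P_w(a/c))$. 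Substituting yields the first identity. (In the edge case $c = 0$ the formula is trivial, and if $a = 0$ the constant $P_w(a/c)$ must be read as a limit; both extreme cases follow by a direct and simpler version of the same factorization.)

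For the second identity, I will differentiate the first one with respect to $T$ and evaluate at $T = z$, working in the localization of $K[T]$ at $cT+d$. The chain rule gives
$$\frac{d}{dT}\!\left[(cT+d)^m P_w(\phi(T))\right] = mc(cT+d)^{m-1} P_w(\phi(T)) + (cT+d)^{m-2}(ad-bc)\,\dot{P}_w(\phi(T)).$$
At $T = z$, one has $\phi(z) = w$ and $P_w(w) = 0$, so only the second term survives, yielding $(ad-bc)(cz+d)^{m-2}\dot{P}_w(w)$. The right-hand side of the first identity differentiates to $c^m P_w(a/c)\,\dot{P}_z(T)$, which at $T=z$ gives exactly the claimed expression.

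There is no substantive obstacle: the whole argument is formal algebra in $\bar{L}[T]$ together with a single application of the chain rule. The only mild point to watch is to ensure the factorization is carried out in the relevant product of fields (since $K$ need not be a field, one writes $K \simeq \prod_j K_j$ and treats each factor separately) and that the identities, being polynomial identities with coefficients in $L$, descend from $\bar{L}$ back to $L$ — both are automatic.
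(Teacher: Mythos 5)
Your proof is correct, and its overall strategy matches the paper's: prove the first identity, then obtain the second by differentiating and evaluating at $T=z$. The difference is in how the first identity is established. The paper's argument is terser and more structural: after reducing to $K$ a field, it observes that the left-hand polynomial vanishes at $T=z$ (since $P_w(w)=0$) and has degree $m=\deg P_z$, so it is a nonzero scalar multiple of $P_z(T)$; the scalar is then read off as the coefficient of $T^m$, namely $c^m P_w(a/c)$. You instead pass to $\bar{L}$, factor $P_z$ and $P_w$ root by root via the identity $\frac{aT+b}{cT+d}-w_i=\frac{(ad-bc)(T-z_i)}{(cT+d)(cz_i+d)}$, multiply, and identify the scalar by evaluating $P_w$ at $a/c$. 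Both routes are valid; yours is a bit longer but more explicit, and your explicit treatment of the chain-rule step for the second identity spells out what the paper leaves implicit behind "il suffit de démontrer la première formule." Your remark on the degenerate case $c=0$ is appropriate: one should read $c^m P_w(a/c)$ as the homogenization $\sum_k p_k a^k c^{m-k}$, which specializes correctly.
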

\begin{proof}
  Il suffit de démontrer la première formule. Observons que $w \neq \frac{a}{c}$, sinon $ad-bc=0$. On se ramène au cas où $K$ est un corps. Le polynôme à gauche s'annule en $z$ et a degré $m$, donc il est égal à $P_z(T)$ multiplié par une constante non nulle. On calcule son coefficient de $T^m$ et on arrive aisément au terme $c^m P_w(\frac{a}{c})$.
\end{proof}

\begin{lemma}\label{prop:reciprocite-Lie}
  Supposons qu'il y a une décomposition orthogonale $W=W' \oplus W''$, $n' := \frac{1}{2} \dim W'$, $n'' := \frac{1}{2} \dim W''$. Soit $X \in \syp(W)$ semi-simple régulier tel que $X = (X',X'')$ où $X' \in \syp(W')$ et $X'' \in \syp(W'')$. Supposons que $X \in \mathcal{O}(K/K^\#,a,c)$ et
  $$(K/K^\#,a,c) = (K'/K'^\#,a',c') \oplus (K''/K''^\#,a'',c'')$$
  par rapport à la décomposition $X=(X',X'')$. Alors
  $$ \sgn_{K'/K'^\#}(P_{X''}(a')) \cdot \sgn_{K''/K''^\#}(P_{X'}(a'')) = (-1, -1)_F^{n'n''} (\det X', \det X'')_F. $$
\end{lemma}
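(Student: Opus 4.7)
Voici mon plan. La démonstration est essentiellement une réciprocité de symboles de Hilbert, obtenue par applications répétées de la formule de projection $(N_{E/F}(\alpha), \beta)_F = (\alpha, \beta)_E$ pour $\alpha \in E^\times$, $\beta \in F^\times$. Commençons par les préparations algébriques. Puisque $X' \in \syp(W')$ a ses valeurs propres symétriques autour de $0$, le polynôme caractéristique de $X'$ vérifie $P_{X'}(T) = P_{b'}(T^2)$, où $b' := a'^2 \in K'^{\#\times}$ admet $P_{b'}$ comme polynôme caractéristique de degré $n'$ sur $F$; de même pour $X''$ et $b'' := a''^2$. En particulier, $P_{X''}(a') = P_{b''}(b') \in K'^{\#\times}$ et $P_{X'}(a'') = P_{b'}(b'') \in K''^{\#\times}$, ce qui justifie l'application de $\sgn_{K'/K'^\#}$ et $\sgn_{K''/K''^\#}$. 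D'autre part, $K' \simeq K'^\#(\sqrt{b'})$ en tant qu'extensions quadratiques étales, donc $\sgn_{K'/K'^\#}(\alpha) = (\alpha, b')_{K'^\#}$, et analogue pour $K''/K''^\#$. Enfin, $\det X' = P_{X'}(0) = (-1)^{n'} N_{K'^\#/F}(b')$ et $\det X'' = (-1)^{n''} N_{K''^\#/F}(b'')$.

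On introduit ensuite l'algèbre étale $L := K'^\# \otimes_F K''^\#$, où l'on voit $b'$ et $b''$ comme éléments via les deux plongements canoniques. La régularité de $X$ donne $b' - b'' \in L^\times$. Via les isomorphismes $L \simeq K'^\#[T]/(P_{b''}(T))$ et $L \simeq K''^\#[T]/(P_{b'}(T))$, un calcul direct de déterminants fournit
\begin{equation*}
P_{b''}(b') = N_{L/K'^\#}(b' - b''), \qquad P_{b'}(b'') = (-1)^{n'} N_{L/K''^\#}(b' - b'').
\end{equation*}
En appliquant la formule de projection aux extensions $L/K'^\#$ et $L/K''^\#$, on transforme le membre de gauche de l'énoncé en $((-1)^{n'}, b'')_{K''^\#} \cdot (b' - b'', b'b'')_L$. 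Ensuite, les identités $b' - b'' = b'(1 - b''/b') = -b''(1 - b'/b'')$ combinées avec la relation de Steinberg $(x, 1-x)_L = 1$ appliquée à $x = b''/b'$, puis $x = b'/b''$, conduisent à $(b' - b'', b')_L = (-b'', b')_L \cdot (b' - b'', b'')_L$, d'où $(b' - b'', b'b'')_L = (-b'', b')_L$ après simplification (le carré du deuxième facteur est trivial).

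Il reste à redescendre de $L$ à $K'^\#$ puis à $F$ via la formule de projection, en utilisant $N_{L/K'^\#}(b'') = N_{K''^\#/F}(b'') \in F$ (déterminant de la multiplication par $b''$ sur $L$ vu comme $K'^\#$-module) et $N_{L/K'^\#}(-1) = (-1)^{n''}$. Posant $m' := N_{K'^\#/F}(b')$ et $m'' := N_{K''^\#/F}(b'')$, on aboutit à
\begin{equation*}
\text{membre de gauche} = ((-1)^{n'}, m'')_F \cdot ((-1)^{n''}, m')_F \cdot (m', m'')_F.
\end{equation*}
Le développement du membre de droite par bilinéarité, avec $\det X' = (-1)^{n'} m'$, $\det X'' = (-1)^{n''} m''$ et $((-1)^{n'}, (-1)^{n''})_F = (-1, -1)_F^{n'n''}$, donne exactement cette expression ($(-1,-1)_F^{n'n''}$ apparaît deux fois et s'annule), ce qui conclut lorsque $K'^\#$ et $K''^\#$ sont des corps. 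Le cas général se déduit par décomposition en composantes, les deux membres étant multiplicatifs dans ces décompositions; les cas déployés ($K'/K'^\#$ ou $K''/K''^\#$) sont absorbés automatiquement puisque $b'$ ou $b''$ devient alors un carré. Le principal obstacle technique sera le suivi soigneux des signes à travers les applications successives de la formule de projection et la vérification que la manipulation de type Steinberg absorbe correctement le terme croisé $(b'-b'', b'b'')_L$.
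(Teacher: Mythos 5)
Votre preuve est correcte mais emprunte une voie réellement différente de celle de l'article. Celui-ci applique le théorème~\ref{prop:calcul-qX} à $X$, à $X'$ et à $X''$, puis divise la première égalité par le produit des deux autres : l'additivité $q[X]\simeq q[X']\oplus q[X'']$ annule les indices de Weil des formes de Cayley, la dérivée du produit $P_X=P_{X'}P_{X''}$ isole exactement $\sgn_{K'/K'^\#}(P_{X''}(a'))\,\sgn_{K''/K''^\#}(P_{X'}(a''))$, et le quotient de $\gamma_\psi$ restant se convertit en symbole de Hilbert au moyen de $\gamma_\psi(ab)\gamma_\psi(1)/(\gamma_\psi(a)\gamma_\psi(b))=(a,b)_F$. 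Vous, au contraire, démontrez la réciprocité directement au niveau arithmétique : formule de projection sur l'algèbre $L=K'^\#\otimes_F K''^\#$, relation de Steinberg, calculs de normes $N_{L/K'^\#}$ et $N_{L/K''^\#}$. Les deux arguments sont valides ; le vôtre est plus élémentaire, ne fait pas intervenir l'indice de Weil et met en évidence le caractère purement arithmétique de l'énoncé, tandis que celui de l'article est plus court une fois~\ref{prop:calcul-qX} acquis — théorème de toute façon indispensable au reste de la descente — et évite le détour par $L$ et Steinberg. Le point délicat que vous anticipez à juste titre, la comptabilité des signes (notamment $P_{b'}(b'')=(-1)^{n'}N_{L/K''^\#}(b'-b'')$ et $N_{L/K'^\#}(-1)=(-1)^{n''}$), est d'un ordre de difficulté comparable dans les deux approches ; il faudra aussi vérifier explicitement, comme vous l'indiquez, que le membre de droite est multiplicatif sous décomposition de $K'^\#$ et de $K''^\#$ en produits de corps, ce qui ramène au cas où chaque facteur est un corps.
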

\begin{proof}
  Appliquons \ref{prop:calcul-qX} à $X \in \syp(W)$, $X' \in \syp(W')$ et $X'' \in \syp(W'')$. On voit que
  \begin{align}
    \label{eqn:qX} \gamma_\psi(q[X]) &= \gamma_\psi((-1)^{n-1}) \gamma_\psi(\det X) \cdot  \sgn_{K/K^\#}(c^{-1} \dot{P}_X(a)),\\
    \label{eqn:qX'} \gamma_\psi(q[X']) &= \gamma_\psi((-1)^{n'-1}) \gamma_\psi(\det X') \cdot  \sgn_{K'/K'^\#}(c'^{-1} \dot{P}_{X'}(a')),\\
    \label{eqn:qX''} \gamma_\psi(q[X'']) &= \gamma_\psi((-1)^{n''-1}) \gamma_\psi(\det X'') \cdot  \sgn_{K''/K''^\#}(c''^{-1} \dot{P}_{X''}(a'')).
  \end{align}

  On a aussi $P_X = P_{X'} P_{X''}$ et $q[X]=q[X'] \oplus q[X'']$. En prenant  \eqref{eqn:qX} $\div$ (\eqref{eqn:qX'} $\times$ \eqref{eqn:qX''}), on obtient
  \begin{multline}
    \sgn_{K'/K'^\#}(P_{X''}(a')) \cdot \sgn_{K''/K''^\#}(P_{X'}(a'')) = \\ \frac{\gamma_\psi(\det X)\gamma_\psi(1)}{\gamma_\psi(\det X') \gamma_\psi(\det X'')} \cdot \frac{\gamma_\psi((-1)^{n-1})}{\gamma_\psi(1) \gamma_\psi(-1)^{n'-1})\gamma_\psi((-1)^{n''-1}))}.
  \end{multline}

  En discutant les parités de $n',n''$ et en rappelant que $\gamma_\psi(-1)=\gamma_\psi(1)^{-1}$, on déduit
  $$ \frac{\gamma_\psi((-1)^{n-1})}{\gamma_\psi(1) \gamma_\psi((-1)^{n'-1})\gamma_\psi((-1)^{n''-1})} = \begin{cases} \gamma_\psi(1)^{-4}, & \text{ si } n',n'' \text{ sont impairs}; \\ 1, & \text{ sinon}. \end{cases} $$

  On utilise le fait suivant (\cite{Weil64}, \S 25 prop. 3 et \S 28 prop. 4): pour tout $a,b \in F^\times$, on a
  $$\frac{\gamma_\psi(ab)\gamma_\psi(1)}{\gamma_\psi(a)\gamma_\psi(b)}=(a,b)_F. $$

  D'une part, en prenant $a=b=-1$, il en résulte que $\gamma_\psi(1)^4 = (-1,-1)_F = ((-1)^{n'n''},-1)_F$ si $n',n''$ sont impairs; en tout cas:
  $$ \frac{\gamma_\psi((-1)^{n-1})}{\gamma_\psi(1)\gamma_\psi((-1)^{n'-1})\gamma_\psi((-1)^{n''-1})} = (-1,-1)_F^{n'n''} . $$

  D'autre part, en prenant $a=\det X'$ et $b=\det X''$, il en résulte que
  $$ \frac{\gamma_\psi(\det X)\gamma_\psi(1)}{\gamma_\psi(\det X') \gamma_\psi(\det X'')} = (\det X', \det X'')_F . $$

  Cela achève la démonstration.
\end{proof}

On en déduit une version au niveau du groupe à l'aide de la transformation de Cayley.

\begin{lemma}\label{prop:reciprocite-groupe}
  Soit $\delta \in \Sp(W)_\text{reg}$ tel que $\delta = (\delta',\delta'')$ où $W=W'\oplus W''$, $\delta' \in \Sp(W')$ et $\delta'' \in \Sp(W'')$. Supposons que $\delta \in \mathcal{O}(K/K^\#,a,c)$ et
  $$(K/K^\#,a,c) = (K'/K'^\#,a',c') \oplus (K''/K''^\#,a'',c'')$$
  par rapport à la décomposition $\delta=(\delta',\delta'')$. Alors
  \begin{multline}
    \sgn_{K''/K''^\#}(P_{a'}(a'')(-a'')^{-n'} \det(\delta'-1)) \cdot \sgn_{K'/K'^\#}(P_{a''}(a')(-a')^{-n''} \det(\delta''-1)) \\
    = (-1, -1)_F^{n'n''} \left(\det\frac{\delta'+1}{\delta'-1}, \det\frac{\delta''+1}{\delta''-1} \right)_F.
  \end{multline}
\end{lemma}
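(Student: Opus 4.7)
The plan is to deduce this group-level reciprocity from its Lie-algebra counterpart \ref{prop:reciprocite-Lie} via the Cayley transform. Setting $X := C_\delta$, $X' := C_{\delta'}$, $X'' := C_{\delta''}$, one checks that $X \in \syp(W)_{\mathrm{reg}}$ decomposes as $(X', X'')$ with parameters $(K/K^\#, b, c)$, where $b := 2(a-1)/(a+1)$ and $\tau(b) = -b$, and analogously for $b', b''$. Applying \ref{prop:reciprocite-Lie} to $(X, X', X'')$ yields
\begin{equation*}
\sgn_{K'/K'^\#}(P_{X''}(b')) \cdot \sgn_{K''/K''^\#}(P_{X'}(b'')) = (-1,-1)_F^{n'n''} (\det X', \det X'')_F.
\end{equation*}
Since $\det X' = 2^{2n'}\det(\delta'-1)/\det(\delta'+1)$ agrees with $\det(\delta'-1)/\det(\delta'+1)$ modulo $F^{\times 2}$, and since $(a,b)_F = (a^{-1},b^{-1})_F$, the right-hand side already matches the Hilbert-symbol factor of the target identity.

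The next step is to rewrite each factor on the left in the shape required by the statement. For this I invoke the Möbius identity \ref{prop:Mobius} with $(a,b,c,d) = (2,-2,1,1)$, $L = F$, $K = K'$, $z = a'$, $w = b'$, and evaluate at $T = a'' \in K''$, obtaining $(a''+1)^{2n'} P_{b'}(b'') = P_{b'}(2) \cdot P_{a'}(a'')$. A direct computation yields $P_{b'}(2) = 4^{2n'}/\det(\delta'+1)$, while the identity $\tau(a''+1) = (a''+1)/a''$ forces $(a''+1)^2 = a'' \cdot N_{K''/K''^\#}(a''+1)$. Discarding the square $4^{2n'}$ and the norm $N_{K''/K''^\#}(a''+1)^{n'}$, and combining the leftover $a''^{n'}$ with $P_{a'}(a'')$ into the $K''^{\#\times}$-valued quantity $(-1)^{n'}P_{a'}(a'')(-a'')^{-n'}$, I obtain
\begin{equation*}
\sgn_{K''/K''^\#}(P_{X'}(b'')) = \sgn_{K''/K''^\#}\bigl((-1)^{n'} P_{a'}(a'')(-a'')^{-n'}\det(\delta'+1)\bigr),
\end{equation*}
with the analogous formula for $\sgn_{K'/K'^\#}(P_{X''}(b'))$ after exchanging primes.

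Confronting this with the target expression, the lemma reduces to showing that the correction factor
\begin{equation*}
\mathcal{E} := \sgn_{K''/K''^\#}\!\left(\tfrac{(-1)^{n'}\det(\delta'+1)}{\det(\delta'-1)}\right) \cdot \sgn_{K'/K'^\#}\!\left(\tfrac{(-1)^{n''}\det(\delta''+1)}{\det(\delta''-1)}\right)
\end{equation*}
equals $1$. This is the step I expect to require genuine computation rather than mere bookkeeping. It suffices to prove $u := (-1)^{n'}\det(\delta'+1)/\det(\delta'-1) \in F^{\times 2}$, and similarly for the $\delta''$-factor. Using $\det(\delta' \pm 1) = N_{K'/F}(a' \pm 1)$ and the relation $N_{K'/K'^\#}(a' \pm 1) = (a'\pm 1)^2/(\pm a')$, one computes $N_{K'/K'^\#}\!\left(\tfrac{a'+1}{a'-1}\right) = -\left(\tfrac{a'+1}{a'-1}\right)^2$; taking $N_{K'^\#/F}$ and noting $[K'^\#:F] = n'$ then gives
\begin{equation*}
u = (-1)^{n'} N_{K'/F}\!\left(\tfrac{a'+1}{a'-1}\right) = N_{K'^\#/F}\!\left(\tfrac{a'+1}{a'-1}\right)^{2} \in F^{\times 2}.
\end{equation*}
Hence $\sgn_{K''/K''^\#}(u) = 1$, symmetrically $\sgn_{K'/K'^\#}(v) = 1$, so $\mathcal{E} = 1$ and the reciprocity follows. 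Everything beyond this square-class identity is formal manipulation of signs and norms.
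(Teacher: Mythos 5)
Your overall strategy (pass to the Lie algebra via the Cayley transform, apply Lemma~\ref{prop:reciprocite-Lie}, and rewrite each $\sgn$ factor with Lemma~\ref{prop:Mobius}) is sound and is close in spirit to the paper's, though the paper applies \ref{prop:reciprocite-Lie} directly to $\frac{\delta'+1}{\delta'-1}$ and $\frac{\delta''+1}{\delta''-1}$ and runs the Möbius lemma over $K'^\#$ with $z'=a'+a'^{-1}$ (degree $n'$); that choice produces $P_{b'}(1)=(-2)^{2n'}\det(\delta'-1)^{-1}$ directly, so the factor $\det(\delta'-1)$ of the statement appears with no residue and no correction factor $\mathcal{E}$ is ever needed.

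The gap is precisely in your treatment of $\mathcal{E}$. You claim that $u := (-1)^{n'}\det(\delta'+1)/\det(\delta'-1)$ lies in $F^{\times 2}$, writing $u = N_{K'^\#/F}\bigl((\tfrac{a'+1}{a'-1})^2\bigr)$ and declaring this a square. But $\tfrac{a'+1}{a'-1}$ satisfies $\tau\bigl(\tfrac{a'+1}{a'-1}\bigr)=-\tfrac{a'+1}{a'-1}$, so it lies in $K'\setminus K'^\#$; its square is in $K'^\#$ but is \emph{not} the square of an element of $K'^\#$, and so its $K'^\#/F$-norm need not be a square in $F$. The computation is in fact circular: unfolding $N_{K'/F}=N_{K'^\#/F}\circ N_{K'/K'^\#}$ and $N_{K'/K'^\#}(\tfrac{a'+1}{a'-1})=-(\tfrac{a'+1}{a'-1})^2$ simply recovers $u=u$. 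Concretely, with $F=\Q_5$, $K'=\Q_5(\sqrt{5})$, $a'=\sqrt{6}+\sqrt{5}$ (so $N_{K'/K'^\#}(a')=1$), one finds $u=(1+\sqrt{6})/(\sqrt{6}-1)$, which has odd $5$-adic valuation and hence is not a square; taking for instance $K''=\Q_5(\sqrt{3})$ gives $\sgn_{K''/K''^\#}(u)=-1$. The identity $\mathcal{E}=1$ is nonetheless true, but for a different reason: by \ref{prop:pfaffien} one has $\det q'' \equiv \det(\delta''+1)/\det(\delta''-1)\pmod{F^{\times 2}}$ for the even quadratic form $q''$ associated to the parameter $\tfrac{a''+1}{a''-1}$, and \ref{prop:signe-transfert} then gives $\sgn_{K''/K''^\#}(u)=(u,v)_F$ and $\sgn_{K'/K'^\#}(v)=(v,u)_F$, whence $\mathcal{E}=(u,v)_F(v,u)_F=1$ by symmetry of the Hilbert symbol. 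This extra input (essentially the same pfaffien/signe-transfert argument the paper later uses in \ref{prop:reciprocite}) is what your proof is missing; alternatively, switching to the paper's substitution $z'=a'+a'^{-1}$ eliminates $\mathcal{E}$ altogether.
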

\begin{proof}
  Posons $z'=a'+\frac{1}{a'} \in K'^\#$, $z''=a''+\frac{1}{a''} \in K''^\#$; posons d'autre part $b' = \frac{z'+2}{z'-2}$, et idem pour $b''$. Alors \ref{prop:Mobius} affirme que
  $$ P_{z'}(T)P_{b'}(1) = (T-2)^{n'} P_{b'}\left(\frac{T+2}{T-2}\right),$$
  et idem pour $z''$ et $b''$, ce qui entraîne
  \begin{align}
    \label{eqn:reciprocite-g1} P_{z'}(z'')P_{b'}(1) &= (z''-2)^{n'} P_{b'}(b''),\\
    \label{eqn:reciprocite-g2} P_{z''}(z')P_{b''}(1) &= (z'-2)^{n''} P_{b''}(b').
  \end{align}

  On a $\left(\frac{a'+1}{a'-1}\right)^2 = b'$ (idem pour $a''$); on déduit du fait $\tau(\frac{a''+1}{a''-1})=-\frac{a''+1}{a''-1}$ (idem pour $a'$) que $\displaystyle P_{\frac{a''+1}{a''-1}}(T)=P_{b''}(T^2)$, d'où
  $$ P_{b''}(b') = P_{\frac{a''+1}{a''-1}}\left(\frac{a'+1}{a'-1}\right), $$
  et idem si l'on échange $a'$ et $a''$. Observons aussi que $P_{z'}(z'')=P_{a'}(a'')(a'')^{-n'}$ et $P_{z''}(z')=P_{a''}(a')(a')^{-n''}$. Assemblons toutes ces égalités dans \eqref{eqn:reciprocite-g1}, \eqref{eqn:reciprocite-g2} et prenons $\sgn_{K''/K''^\#}(\cdot)$, $\sgn_{K'/K'^\#}(\cdot)$. Pour \eqref{eqn:reciprocite-g1}, cela donne
  $$ \sgn_{K''/K''^\#}(P_{a'}(a'')(a'')^{-n'} P_{\frac{a'+1}{a'-1}}(1)) = \sgn_{K''/K''^\#}\left((z''-2)^{n''} P_{\frac{a'+1}{a'-1}}\left(\frac{a''+1}{a''-1}\right)\right). $$

  Or $z''-2 = -(1-a'')(1-\frac{1}{a''}) \in (-1)N_{K''/K''^\#}(K''^\times)$ et $P_{\frac{a'+1}{a'-1}}(1) = (-2)^{2n'} \det(\delta'-1)^{-1}$, d'où
  $$ \sgn_{K''/K''^\#}(P_{a'}(a'')(-a'')^{-n'} \det(\delta'-1)) = \sgn_{K''/K''^\#}\left( P_{\frac{a'+1}{a'-1}}\left(\frac{a''+1}{a''-1}\right)\right). $$
  Appliquons le même argument à \eqref{eqn:reciprocite-g2} et multiplions les formules qui en résultent. Une application de \ref{prop:reciprocite-Lie} donne le résultat cherché.
\end{proof}

Considérons maintenant une situation différente. On aura besoin de deux lemmes concernant les classes de conjugaison semi-simples de groupes orthogonaux pairs.

\begin{lemma}\label{prop:signe-transfert}
  Soit $(V,q)$ un $F$-espace quadratique de dimension paire. Soit $K/K^\#$ une $F$-algèbre étale à involution. Soit $t \in F^\times$. S'il existe $a,c \in K^\#$ tels que $\tau(a)=-a$, $\tau(c)=c$ et $\mathcal{O}(K/K^\#,a,c)$ existe dans $\SO(V,q)$, alors
  $$ \sgn_{K/K^\#}(t) = (t, (-1)^{\frac{1}{2} \dim_F V} \det q)_F . $$
\end{lemma}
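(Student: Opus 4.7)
Mon approche consistera à traduire $\sgn_{K/K^\#}(t)$ en un symbole de Hilbert sur $F$, puis à comparer avec $\det q$ calculé à partir de la formule de paramétrage.

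D'abord, je décomposerai $(K,\tau_K) = \prod_{i \in I}(K_i,\tau_i)$ en sorte que chaque $K_i^\#$ soit un corps, et je poserai $I^* := \{i \in I : K_i \text{ est un corps}\}$. Par définition, $\sgn_{K/K^\#}(t) = \prod_{i \in I^*}\sgn_{K_i/K_i^\#}(t)$. Pour $i \in I^*$, en choisissant $d_i \in K_i^{\#\times}$ tel que $K_i = K_i^\#(\sqrt{d_i})$, on a $\sgn_{K_i/K_i^\#}(t) = (t,d_i)_{K_i^\#}$. La formule de projection pour le symbole de Hilbert (applicable car $t \in F^\times$) fournit $(t,d_i)_{K_i^\#} = (t, N_{K_i^\#/F}(d_i))_F$, d'où $\sgn_{K/K^\#}(t) = (t, D)_F$ avec $D := \prod_{i \in I^*} N_{K_i^\#/F}(d_i)$.

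Il restera alors à vérifier $D \equiv (-1)^n \det q \pmod{F^{\times 2}}$. L'existence de la classe $\mathcal{O}(K/K^\#, a, c)$ dans $\SO(V,q)$, vue au niveau de l'algèbre de Lie (\S 3.4), entraîne une isométrie $(V,q) \simeq (\Tr_{K/F})_*\bigl((x,y)\mapsto \tau_K(x)yc\bigr)$; en particulier, $[K^\#:F] = \tfrac{1}{2}\dim_F V = n$. Cette isométrie se décompose selon les $K_i$ en $(V,q) = \bigoplus_i (V_i, q_i)$. Pour $i \in I^*$, en utilisant la base $\{1,\sqrt{d_i}\}$ de $K_i/K_i^\#$ et l'hypothèse $c_i \in K_i^\#$, un calcul direct donne $(V_i, q_i) \simeq (\Tr_{K_i^\#/F})_* \Qform{2c_i, -2c_i d_i}$; pour $i \notin I^*$, la composante est hyperbolique de dimension $2[K_i^\#:F]$ par \ref{rem:hyperbolicite}. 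En appliquant la formule classique $\det((\Tr_{E/F})_* Q) \equiv N_{E/F}(\det Q) \pmod{F^{\times 2}}$, valable pour $\dim Q$ paire, j'obtiendrai $\det q_i \equiv (-1)^{[K_i^\#:F]} N_{K_i^\#/F}(d_i)$ pour $i \in I^*$ et $\det q_i \equiv (-1)^{[K_i^\#:F]}$ pour $i \notin I^*$. La multiplication et l'identité $n = \sum_i [K_i^\#:F]$ donneront la congruence cherchée.

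Les étapes relèvent de l'algèbre linéaire et de la théorie élémentaire des formes quadratiques; la seule vigilance à exercer concerne le traitement soigneux des parités $(-1)^{[K_i^\#:F]}$ et l'invocation correcte de la formule du déterminant pour le pousser-en-avant par la trace (qui fait intervenir le discriminant de $K_i^\#/F$, lequel s'annule dans $F^\times/F^{\times 2}$ précisément parce que $\dim Q = 2$ est paire).
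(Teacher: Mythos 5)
Ta démonstration est correcte, mais elle suit un chemin différent de celui du papier. Le papier compare $\gamma_\psi(tq)$ et $\gamma_\psi(q)$ : d'une part le quotient vaut $\sgn_{K/K^\#}(t)$ par le lemme \ref{prop:changement-de-signes-0}, d'autre part la formule $\gamma_\psi(Q)=\gamma_\psi(1)^{\dim Q-1}\gamma_\psi(\det Q)\,s(Q)$ le réduit à $s(tq)s(q)$ (le facteur $\gamma_\psi(\det\cdot)$ disparaît car $\dim V$ est paire), qu'on calcule ensuite à $(t,(-1)^m\det q)_F$ par diagonalisation de $q$. Toi, tu évites entièrement l'indice de Weil : tu décomposes $K$, tu traduis $\sgn_{K/K^\#}(t)$ via la formule de projection du symbole de Hilbert en $(t,D)_F$ avec $D=\prod_{i\in I^*}N_{K_i^\#/F}(d_i)$, puis tu identifies $D$ à $(-1)^n\det q$ en calculant le discriminant du pousser-en-avant composante par composante. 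Les deux approches sont valides. La tienne est plus élémentaire et rend l'identité $D\equiv(-1)^n\det q\pmod{F^{\times2}}$ explicite, au prix d'une décomposition de $K$ et de la manipulation des discriminants relatifs (tu signales correctement pourquoi le discriminant de $K_i^\#/F$ disparaît — rang pair). Le papier reste « global sur $K$ » et est un peu plus rapide grâce au lemme \ref{prop:changement-de-signes-0} déjà disponible ; il s'inscrit aussi plus naturellement dans le fil des calculs de $\gamma_\psi$ qui imprègnent toute la section.
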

\begin{proof}
  Supposons que $\mathcal{O}(K/K^\#,a,c)$ existe dans $\SO(V,q)$, alors
  \begin{align*}
    (V,q) & \simeq  (K, (\Tr_{K/F})_* (c N_{K/K^\#}(\cdot))), \\
    (V,tq) & \simeq  (K, (\Tr_{K/F})_* (tc N_{K/K^\#}(\cdot))),
  \end{align*}
  où $N_{K/K^\#}$ est la $(K,\tau_K)$-forme hermitienne $w \mapsto N_{K/K^\#}(w)$.

  On a $\gamma_\psi(tq)/\gamma_\psi(q) = \sgn_{K/K^\#}(t)$ d'après \ref{prop:changement-de-signes-0}. D'autre part la formule de $\gamma_\psi$ en termes du déterminant et de l'invariant de Hasse $s(\cdot)$ (\cite{LP81} 1.3.4) entraîne
  $$ \frac{\gamma_\psi(tq)}{\gamma_\psi(q)} = s(tq) \cdot s(q) $$
  car $\dim_F V$ est paire. Posons $m=\frac{1}{2}\dim_F V$. Prenons une diagonalisation $q \simeq \Qform{d_1, \ldots, d_{2m}}$ quelconque. Alors
  \begin{align*}
    s(tq)s(q) & = \prod_{i<j} ((ta_i, ta_j)_F (a_i,a_j)_F) \\
    & = \prod_{i<j} ((t,t)_F (t,a_j)_F (a_i,t)_F) \\
    & = (t,t)_F^m (t, \det q)_F, \;\text{ car } {2m \choose 2} \equiv m \mod 2 \\ 
    & = (t, -1)_F^m (t, \det q)_F,  \;\text{ car } (t,t)_F = (t,-1)_F \\
    & = (t, (-1)^m \det q)_F,
  \end{align*}
  ce qu'il fallait démontrer.
\end{proof}

\begin{lemma}\label{prop:pfaffien}
  Soit $(V,q)$ un $F$-espace quadratique de dimension paire. Si $Y \in \so(V,q)$ est inversible, alors $\det Y \in \det q \cdot F^{\times 2}$.
\end{lemma}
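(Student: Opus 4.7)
The plan is to convert $Y$ into an alternating bilinear form on $V$ and invoke the Pfaffian. Define
$$ b : V \times V \to F, \qquad b(v,w) := q(Yv|w). $$
I would first check that $b$ is alternating: since $q$ is symmetric and $Y \in \so(V,q)$ satisfies $q(Yv|w) = -q(v|Yw)$, one has
$$ b(w,v) = q(Yw|v) = q(v|Yw) = -q(Yv|w) = -b(v,w). $$
Since $q$ is non-dégénérée and $Y$ is inversible, $b$ is itself non-dégénérée.

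Next, I would compute the discriminant of $b$. Fixing a base of $V$ and writing $Q$ for the matrix of $q$ (so that $\det q \equiv \det Q \pmod{F^{\times 2}}$) and $M$ for the matrix of $Y$, the matrix of $b$ is $M^T Q$ (or $-QM$, which is the same thing up to the identity $M^T Q = -QM$). Hence
$$ \det(b) = \det(M^T Q) = \det(M)\,\det(Q) = \det(Y) \cdot \det(q) \pmod{F^{\times 2}}, $$
where I use $\dim V$ pair to get rid of any eventual sign $(-1)^{\dim V}$.

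Now I invoke the key fact: on an espace vectoriel de dimension paire, the Gram matrix of a non-dégénéré alternating form admet un Pfaffien, donc son déterminant est un carré dans $F^\times$. Ainsi $\det(Y)\det(q) \in F^{\times 2}$, d'où
$$ \det(Y) \in \det(q)^{-1} \cdot F^{\times 2} = \det(q) \cdot F^{\times 2}, $$
comme voulu.

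Il n'y a pas vraiment d'obstacle technique dans cet énoncé; la seule précaution est la comptabilité des signes dans le passage de $M^T Q$ à $-QM$, qui est sans effet grâce à la parité de $\dim V$, et le rappel du fait classique sur le Pfaffien.
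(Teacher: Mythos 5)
Your proof is correct and takes essentially the same route as the paper: form the alternating pairing $b(v,w)=q(Yv|w)$ (equivalently, the antisymmetric matrix $M^T Q=-QM$), note its determinant is $\det Y\cdot\det q$ modulo squares, and invoke the Pfaffian. The paper simply writes the argument directly in matrix form after diagonalizing $q$, whereas you first phrase $b$ invariantly, but the computation is identical.
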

\begin{proof}
  Fixons une base de $V$ de sorte que $q = \Qform{a_1, \ldots, a_{2m}}$ et regardons $Y$ comme une matrice. Soit $Q$ la matrice diagonale $\text{diag}(a_1,\ldots,a_{2m})$, alors $Y \in \so(V,q)$ équivaut à ce que $YQ$ soit une matrice anti-symétrique. Notons $\text{Pf}(YQ) \in F^\times$ son pfaffien, alors
  $$ \det Y = \det YQ  \cdot (\det Q)^{-1} = \text{Pf}(YQ)^2 \cdot (\det Q)^{-1}. $$

  Or $\det Q = a_1 \cdots a_{2m} = \det q$, d'où l'assertion.
\end{proof}

Établissons maintenant la réciprocité du facteur $\Delta_0$.
\begin{corollary}\label{prop:reciprocite}
  Sous les hypothèses de \ref{prop:reciprocite-groupe}, on a
  $$ \Delta_0(\delta',\delta'') = \Delta_0(-\delta'', -\delta'), $$
  où $\Delta_0(-\delta'', -\delta')$ est défini par rapport au groupe endoscopique transposé $H'' \times H'$.
\end{corollary}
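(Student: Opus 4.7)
Since $\Delta_0$ takes values in $\bmu_2$, it suffices to prove $\Delta_0(\delta',\delta'')\cdot\Delta_0(-\delta'',-\delta')=1$. Noting that the parameters of $-\delta''$ and $-\delta'$ are $(K''/K''^\#,-a'',c'')$ and $(K'/K'^\#,-a',c')$, and using the elementary identities $P_{-a''}(-a')=P_{a''}(a')$ and $\det(1-\delta'')=\det(\delta''-1)$, unfolding the definition gives
\[
\Delta_0(\delta',\delta'')\cdot\Delta_0(-\delta'',-\delta') = \sgn_{K''/K''^\#}(P_{a'}(a'')(-a'')^{-n'}\det(\delta'+1))\cdot\sgn_{K'/K'^\#}(P_{a''}(a')(a')^{-n''}\det(\delta''-1)).
\]
Comparing with the identity of Lemma \ref{prop:reciprocite-groupe}, the problem reduces to proving the Hilbert-symbol identity
\[
\sgn_{K''/K''^\#}(\alpha)\cdot\sgn_{K'/K'^\#}((-1)^{n''}) \;=\; (-1,-1)_F^{n'n''}\,(\alpha,\beta)_F,
\]
where $\alpha:=\det(\delta'+1)/\det(\delta'-1)$ and $\beta:=\det(\delta''+1)/\det(\delta''-1)$. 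The surviving factors come from $\det(\delta'+1)/\det(\delta'-1)$ in the first signature and from $(a')^{-n''}/(-a')^{-n''}=(-1)^{n''}$ in the second.

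The plan is then to express each $\sgn$-factor as a Hilbert symbol via Lemma \ref{prop:signe-transfert}, applied to the natural orthogonal space on $K''$ (resp.\ $K'$). Take $(V'',q''):=(K'',(\Tr_{K''^\#/F})_*(c_0''\,N_{K''/K''^\#}(\cdot)))$ for any $c_0''\in K''^{\#\times}$; this is an $F$-quadratic space of dimension $2n''$ in whose special orthogonal Lie algebra a parameter $(K''/K''^\#,\cdot,c_0'')$ lives. Writing $K''=K''^\#(\sqrt{d''})$ and computing in the basis $\{1,e''\}$ of $K''$ over $K''^\#$ (with $e''^2=d''$) gives $q''\simeq c_0''(\Tr_{K''^\#/F})_*\Qform{1,-d''}$, whence $\det q''\equiv N_{K''^\#/F}(-d'')\pmod{F^{\times 2}}$, independent of $c_0''$. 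Using $N_{K''^\#/F}(-1)=(-1)^{n''}$, Lemma \ref{prop:signe-transfert} yields $\sgn_{K''/K''^\#}(\alpha)=(\alpha,N_{K''^\#/F}(d''))_F$; analogously, $\sgn_{K'/K'^\#}((-1)^{n''})=((-1)^{n''},N_{K'^\#/F}(d'))_F$.

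The crucial step is to relate $N_{K''^\#/F}(d'')$ and $N_{K'^\#/F}(d')$ to $\beta$ and $\alpha$. For this, compute $\det q[C_{\delta''}]$ in two ways. The matrix-level equality $\det q[X]\equiv\det X\pmod{F^{\times 2}}$ applied to $X=C_{\delta''}\in\syp(W'')$ gives $\det q[C_{\delta''}]\equiv\det C_{\delta''}=2^{2n''}\det(\delta''-1)/\det(\delta''+1)\equiv\beta$. Writing $A:=2(a''-1)/(a''+1)=A_1 e''$ and $c''=c_1 e''$ with $A_1,c_1\in K''^{\#\times}$, the same basis computation as for $q''$ expresses $q[C_{\delta''}]$ as the $F$-pushforward of the $K''^\#$-form $(-2A_1c_1 d'')\Qform{1,-d''}$, whence $\det q[C_{\delta''}]\equiv N_{K''^\#/F}(-d'')\equiv(-1)^{n''}N_{K''^\#/F}(d'')\pmod{F^{\times 2}}$. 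Equating both expressions gives $N_{K''^\#/F}(d'')\equiv(-1)^{n''}\beta$, and symmetrically $N_{K'^\#/F}(d')\equiv(-1)^{n'}\alpha$.

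Substituting, the target identity becomes
\[
(\alpha,(-1)^{n''}\beta)_F\cdot((-1)^{n''},(-1)^{n'}\alpha)_F \;=\; (-1,-1)_F^{n'n''}\,(\alpha,\beta)_F,
\]
which follows from bilinearity together with the cancellations $(\alpha,(-1)^{n''})_F^2=1$ and $((-1)^{n''},(-1)^{n'})_F=(-1,-1)_F^{n'n''}$. The main obstacle is the two-way computation of $\det q[C_{\delta''}]$: one must identify carefully the $K''^\#$-module structure on $W''$ in order to match $q[C_{\delta''}]$ with the transfer form $(\Tr_{K''^\#/F})_*\Qform{1,-d''}$ up to a multiplicative constant in $K''^{\#\times}$, and then verify that the discriminant of the trace form $\Tr_{K''^\#/F}\Qform{1}$ contributes only a square to the overall determinant.
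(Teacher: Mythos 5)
Your proof is correct. The reduction via Lemma \ref{prop:reciprocite-groupe} and the use of Lemma \ref{prop:signe-transfert} to convert the residual $\sgn$-factors into Hilbert symbols follow the same structure as the paper's argument, and your Hilbert-symbol bookkeeping at the end is right. Where you genuinely diverge is in the determination of $\det q''$ (and $\det q'$) modulo squares: the paper simply invokes Lemma \ref{prop:pfaffien} applied to the element $Y'' \in \so(K'',q'')$ with $\mathcal{O}(Y'')$ parametrised by $(K''/K''^\#, \tfrac{a''+1}{a''-1}, c_0'')$, so that $\det q'' \equiv \det Y'' = N_{K''/F}(\tfrac{a''+1}{a''-1}) \equiv \beta$ in one step. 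You instead compute $\det q'' \equiv N_{K''^\#/F}(-d'')$ by an explicit basis calculation together with the rank-two pushforward discriminant identity, and you transport this to $\beta$ via the symplectic analogue $\det q[X] \equiv \det X \pmod{F^{\times 2}}$ (true since the Gram matrix of $q[X]$ is $X^T J$ with $\det J = 1$) applied to $X = C_{\delta''}$, recognising that $q[C_{\delta''}]$ and $q''$ share the intermediate discriminant $N_{K''^\#/F}(-d'')$. Your route is longer and requires verifying the two technical points you flag at the end (the identification of the pushforward form and the fact that $d_{K''^\#/F}$ only enters squared for a rank-two form), but it is self-contained and avoids Lemma \ref{prop:pfaffien} in favour of its simpler symplectic counterpart; the paper's version is more economical since Lemma \ref{prop:pfaffien} is already in hand.
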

\begin{proof}
  D'après \ref{prop:reciprocite-groupe}, on a
  \begin{multline*}
    \Delta_0(\delta', \delta'') \Delta_0(-\delta'', -\delta') = \sgn_{K''/K''^\#}\left( \det\frac{\delta'+1}{\delta'-1} \right) \sgn_{K'/K'^\#}((-1)^{n''}) \\
    \cdot (-1, -1)_F^{n'n''} \left(\det\frac{\delta'+1}{\delta'-1}, \det\frac{\delta''+1}{\delta''-1} \right)_F.
  \end{multline*}

  La classe de conjugaison contenant $\frac{\delta'+1}{\delta'-1} \in \syp(W')$ est paramétrée par $(K'/K'^\#,\frac{a'+1}{a'-1},c')$; prenons $q'$ la $F$-forme quadratique sur $K'$ définie par $(\Tr_{K'^\#/F})_*(c' N_{K'/K'^\#}(\cdot))$. Il existe $c'_0 \in K'^{\#\times}$ et $Y' \in \so(K',q')$ tels que $\mathcal{O}(Y')$ est paramétré par $(K'/K'^\#, \frac{a'+1}{a'-1}, c'_0)$. Appliquons la même construction à $a''$ pour obtenir une $F$-forme quadratique $q''$ sur $K''$. Par \ref{prop:signe-transfert}, on a alors
  \begin{align*}
    \sgn_{K''/K''^\#}\left( \det\frac{\delta'+1}{\delta'-1} \right) &= \left( \det\frac{\delta'+1}{\delta'-1}, (-1)^{n''} \det q'' \right)_F, \\ \sgn_{K'/K'^\#}((-1)^{n''}) & =  ((-1)^{n''}, (-1)^{n'} \det q')_F .
  \end{align*}

  Par \ref{prop:pfaffien}, appliqué à $Y' \in \so(K',q')$ et $Y'' \in \so(K'',q'')$, on a
  \begin{align*}
    \sgn_{K''/K''^\#}\left( \det\frac{\delta'+1}{\delta'-1} \right) &= \left( \det\frac{\delta'+1}{\delta'-1}, (-1)^{n''} \det\frac{\delta''-1}{\delta''+1} \right)_F ,\\
    \sgn_{K'/K'^\#}((-1)^{n''}) &= \left((-1)^{n''}, (-1)^{n'} \det\frac{\delta'+1}{\delta'-1} \right)_F .
  \end{align*}

  On en déduit que $\Delta_0(\delta',\delta'')\Delta_0(-\delta'', -\delta') = (-1,-1)_F^{n'n''} ((-1)^{n'}, (-1)^{n''})_F = 1$. Cela achève la démonstration.
\end{proof}

\subsection{Descente des termes $\Delta'$, $\Delta''$}

\begin{proposition}\label{prop:descente-X_+}
  Supposons que $X_+ \in \mathcal{O}(K/K^\#, a, c)$, soient $X_+=(X'_+,X''_+)$ et $$(K/K^\#,a,c)=(K'/K'^\#,a',c') \oplus (K''/K''^\#,a'',c'')$$ la décomposition de paramètres correspondante. Il existe alors une bonne constante $c_+$ telle que
  $$ \Delta'(\exp(X'_+))\Delta''(\exp(X''_+)) = c_+ \sgn_{K''/K''^\#}(c''^{-1} \dot{P}_{X''_+}(a'')).$$
\end{proposition}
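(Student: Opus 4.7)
Esquisse de la démonstration. Le plan est de calculer $\Delta'(\exp(X'_+))$ et $\Delta''(\exp(X''_+))$ séparément, en utilisant les formules explicites du caractère de la représentation de Weil sur l'algèbre de Lie. D'abord on traite $\Delta'(\exp(X'_+))$ : en combinant \ref{prop:formule-caractere-Thomas2} (sous la forme $\Theta_\psi^+ - \Theta_\psi^- = \Theta_\psi \cdot \gamma_\psi(q[C_x])^{-1} |\det(x-1)/\det(x+1)|^{1/2}$), la formule \ref{prop:formule-caractere-Lie} pour $\Theta_\psi(\exp(X'_+))$, et l'isomorphisme $q[C_{\exp(X)}] \simeq q[X]$ de \ref{prop:q_X-est-q_x}, les facteurs $\gamma_\psi$ se simplifient et l'on obtient
$$(\Theta_\psi^+ - \Theta_\psi^-)(\exp(X'_+)) = \frac{c}{|\det(\exp(X'_+)+1)|^{1/2}}$$
où $c > 0$ est une constante réelle positive (puisque $c = |2|^n \Theta_\psi(-1) = |2|^{2n}$ d'après \ref{prop:-1-caracterisation}). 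Ainsi $\Delta'(\exp(X'_+)) = 1$.

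Ensuite, on calcule $\Delta''(\exp(X''_+)) = \Theta_\psi(\exp(X''_+))/|\Theta_\psi(\exp(X''_+))|$ en appliquant directement \ref{prop:formule-caractere-Lie} : comme $c > 0$ et $|\gamma_\psi(q[X''_+])| = 1$, il en résulte que $\Delta''(\exp(X''_+)) = \gamma_\psi(q[X''_+])$. On décompose alors $\gamma_\psi(q[X''_+])$ via \ref{prop:calcul-qX} appliqué à $X''_+ \in \syp(W''_+)$, en posant $n''_+ := \frac{1}{2}\dim_F W''_+$ :
$$ \gamma_\psi(q[X''_+]) = \gamma_\psi((-1)^{n''_+-1}) \, \gamma_\psi(\det X''_+) \, \sgn_{K''/K''^\#}(c''^{-1}\dot P_{X''_+}(a'')) .$$
Le facteur $\sgn$ est exactement celui cherché ; il reste à analyser le préfacteur $\gamma_\psi((-1)^{n''_+-1}) \gamma_\psi(\det X''_+)$.

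Le point le plus subtil, qui constitue l'obstacle principal, est de reconnaître que ce préfacteur est une bonne constante, bien qu'il fasse intervenir $\det X''_+$ qui semble a priori dépendre de $X''_+$. Pour cela, on observe que les valeurs propres d'un élément $X''_+ \in \syp(W''_+)$ semi-simple régulier apparaissent par paires $\pm\lambda$, d'où $\det X''_+ = \prod_i \lambda_i(-\lambda_i) = (-1)^{n''_+}\prod_i \lambda_i^2 \in (-1)^{n''_+} F^{\times 2}$. Comme $\gamma_\psi$ se factorise par $F^\times / F^{\times 2}$, on a $\gamma_\psi(\det X''_+) = \gamma_\psi((-1)^{n''_+})$, indépendamment de $X''_+$. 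On pose alors
$$c_+ := \gamma_\psi((-1)^{n''_+-1})\gamma_\psi((-1)^{n''_+});$$
cette expression ne dépend que de $n''_+ = \frac{1}{2}\dim V''_-$, quantité déterminée par $\mathcal{O}^{st}(\epsilon)$. Enfin, dans le cas non ramifié (où $\psi$ est de conducteur $\mathfrak{o}_F$ et $p > 2$), les formes $\Qform{\pm 1}$ sont unimodulaires et $\gamma_\psi(\pm 1) = 1$, d'où $c_+ = 1$. Ceci établit que $c_+$ est une bonne constante, achevant la démonstration.
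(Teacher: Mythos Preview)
Your overall strategy matches the paper's: show $\Delta'(\exp(X'_+))=1$, compute $\Delta''(\exp(X''_+))=\gamma_\psi(q[X''_+])$ via \ref{prop:formule-caractere-Lie}, then expand using \ref{prop:calcul-qX}. Steps 1--3 are fine.

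The genuine gap is in your treatment of the prefactor $\gamma_\psi(\det X''_+)$. Your claim that $\det X''_+ \in (-1)^{n''_+} F^{\times 2}$ because the eigenvalues pair as $\pm\lambda_i$ is false: the $\lambda_i$ lie only in $\bar F$, so $\prod_i \lambda_i^2 = (\prod_i \lambda_i)^2$ need not be a square in $F$. A concrete counterexample: for $n''_+=1$, take $X''_+ = \begin{pmatrix} 0 & a \\ 1 & 0 \end{pmatrix} \in \mathfrak{sl}_2(F)=\syp(2,F)$ with $a \in F^\times$ not a square; then $\det X''_+ = -a \notin -F^{\times 2}$. Thus $\gamma_\psi(\det X''_+)$ genuinely depends on $X''_+$, not just on the dimension, and your proposed $c_+$ is not a constant.

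What rescues the argument is the extra constraint that $X''_+$ is not an arbitrary symplectic element: by \ref{prop:CVP} (or \ref{prop:CVP-nr} in the non-ramified case) one has $X''_+ \CVP Y''_-$ with $Y''_- \in \so(V''_-,q''_-)$, so $\det X''_+ = \det Y''_-$. The Pfaffian identity \ref{prop:pfaffien} then gives $\det Y''_- \in \det q''_- \cdot F^{\times 2}$, whence $\gamma_\psi(\det X''_+)=\gamma_\psi(\det q''_-)$. This is the step the paper uses, and it is exactly what makes $c_+$ a bonne constante: $\det q''_-$ depends only on $\mathcal{O}^\text{st}(\epsilon)$, and lies in $\mathfrak{o}_F^\times$ in the non-ramified case.
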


\begin{proposition}\label{prop:descente-X_-}
  Supposons que $X_- \in \mathcal{O}(K/K^\#, a, c)$, $X_-=(X'_-,X''_-)$ et $(K/K^\#,a,c)=(K'/K'^\#,a',c') \oplus (K''/K''^\#,a'',c'')$ la décomposition de paramètres correspondante.

  Il existe alors une bonne constante $c_-$ telle que
  $$ \Delta'(-\exp(X'_-))\Delta''(-\exp(X''_-)) = c_- \sgn_{K'/K'^\#}(c'^{-1} \dot{P}_{X'_+}(a')).$$
\end{proposition}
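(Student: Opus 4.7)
Le plan est de ramener l'énoncé à la proposition \ref{prop:descente-X_+} via la symétrie entre $\Theta_\psi^+ - \Theta_\psi^-$ et $\Theta_\psi^+ + \Theta_\psi^-$ induite par la multiplication par $-1 \in \tilde{G}$.

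D'abord, d'après \ref{prop:-1-echangement}, on a $(\Theta_\psi^+ - \Theta_\psi^-)(-\tilde{x}) = (\Theta_\psi^+ + \Theta_\psi^-)(\tilde{x})$ ainsi que (en remplaçant $\tilde{x}$ par $(-1)\tilde{x}$) $(\Theta_\psi^+ + \Theta_\psi^-)(-\tilde{x}) = (\Theta_\psi^+ - \Theta_\psi^-)(\tilde{x})$ pour tout $\tilde{x}$ suffisamment régulier. Vu les définitions de $\Delta'$ et $\Delta''$ en \S\ref{sec:facteur-de-transfert}, ceci se traduit par les identités
$$ \Delta'(-\tilde{x}) = \Delta''(\tilde{x}), \qquad \Delta''(-\tilde{x}) = \Delta'(\tilde{x}). $$
Appliquées aux composantes $\tilde{x}' = \exp(X'_-)$ et $\tilde{x}'' = \exp(X''_-)$, elles donnent
$$ \Delta'(-\exp(X'_-))\Delta''(-\exp(X''_-)) = \Delta''(\exp(X'_-))\Delta'(\exp(X''_-)). $$

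Le membre de droite est précisément du type traité par \ref{prop:descente-X_+}, modulo un échange des rôles des composantes primée et doublement-primée. En effet, si l'on lit le paramètre $(K/K^\#, a, c)$ de $X_-$ comme la somme $(K''/K''^\#, a'', c'') \oplus (K'/K'^\#, a', c')$ (c'est-à-dire en considérant la donnée endoscopique transposée $(n'',n')$), alors le produit $\Delta''(\exp(X'_-))\Delta'(\exp(X''_-))$ devient le produit $\Delta'(\exp(\tilde{X}'))\Delta''(\exp(\tilde{X}''))$ correspondant à ce paramétrage réordonné, avec $\tilde{X}' = X''_-$ et $\tilde{X}'' = X'_-$. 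L'application de \ref{prop:descente-X_+} dans cette situation transposée fournit une constante $c_-$ telle que
$$ \Delta''(\exp(X'_-))\Delta'(\exp(X''_-)) = c_- \cdot \sgn_{K'/K'^\#}(c'^{-1}\dot{P}_{X'_-}(a')), $$
le caractère $\sgn_{K'/K'^\#}$ correspondant bien au rôle de ``second facteur'' joué par $K'/K'^\#$ après transposition.

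Il reste à vérifier que $c_-$ est une bonne constante au sens original. La dépendance seulement en $\mathcal{O}^\text{st}(\epsilon)$ et $\mathcal{O}(\tilde{\eta})$ est immédiate, car l'échange $(n',n'') \leftrightarrow (n'',n')$ et la correspondance $X_-=(X'_-,X''_-) \leftrightarrow (X''_-,X'_-)$ n'affectent pas ces classes. La normalisation $c_-=1$ dans le cas non ramifié résulte directement de la propriété analogue pour \ref{prop:descente-X_+}, puisque la situation transposée est aussi non ramifiée. Le point à surveiller est la compatibilité de l'échange avec le choix des relèvements $\tilde{u}', \tilde{u}''$ intervenant dans le formalisme de descente \S\ref{sec:descente-formalisme} : ce point est sans conséquence, car le produit $\Delta'\Delta''$ est, par construction, indépendant de ces choix. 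L'obstacle principal reste donc purement notationnel, à savoir la vérification minutieuse que l'échange de rôles est cohérent avec toutes les conventions adoptées dans la décomposition des paramètres et dans la définition de $\Delta'$, $\Delta''$ au niveau des sous-espaces $W'_-, W''_-$.
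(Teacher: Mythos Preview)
Your proposal is correct and follows essentially the same route as the paper. The paper's own argument is in fact even terser: it simply observes that, by \ref{prop:-1-echangement}, one has
$\Delta'(-\exp(X'_-))\Delta''(-\exp(X''_-)) = \Delta'(\exp(X''_-)) \Delta''(\exp(X'_-))$,
which is exactly the right-hand side of your key identity (up to reordering the two factors), and then declares that \ref{prop:descente-X_+} applied with the roles of the primed and doubly-primed components exchanged gives the result. Your additional discussion of why the constant $c_-$ is still a bonne constante after transposition, and of the independence from the choices of lifts $\tilde{u}',\tilde{u}''$, is more explicit than what the paper writes but is consistent with its reasoning.
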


Les assertions \ref{prop:descente-X_+}, \ref{prop:descente-X_-} sont équivalentes. En effet, d'après \ref{prop:-1-echangement},
  \begin{align*}
    \Delta'(\exp(X'_+))\Delta''(\exp(X''_+)) & = \Delta'(-\exp(X''_+)) \Delta''(-\exp(X'_+)) \\
    \Delta'(-\exp(X'_-))\Delta''(-\exp(X''_-)) & = \Delta'(\exp(X''_-)) \Delta''(\exp(X'_-)).
  \end{align*}
Par conséquent, il suffit d'établir \ref{prop:descente-X_+}.

\begin{proof}[Démonstration de \ref{prop:descente-X_+}]
  On a $\Delta'(\exp(X'_+))=1$ par \ref{prop:-1-echangement}, \ref{prop:-1-caracterisation} et la lissité de $\Theta_\psi$; dans le cas non ramifié, on utilise \ref{prop:formule-caractere-K-2}. D'après \ref{prop:formule-caractere-Lie}, il existe une bonne constante $c$ telle que
  $$ \Delta'(\exp(X'_+))\Delta''(\exp(X''_+)) = c \cdot \gamma_\psi(q[X''_+]). $$
  Vu \ref{prop:calcul-qX}, on a
  $$ \gamma_\psi(q[X''_+]) = \gamma_\psi((-1)^{n''-1})\gamma_\psi(\det X''_+) \cdot \sgn_{K''/K''^\#}(c''^{-1} \dot{P}_{X''_+}(a'')). $$
  Le terme $\gamma_\psi((-1)^{n''-1})$ étant une bonne constante, il suffit de montrer que $\gamma_\psi(\det X''_+)$ l'est aussi. Rappelons que $X''_+ \CVP Y''_-$ et $Y''_- \in \so(V''_-, q''_-)$, $V''_-$ est de dimension paire. D'où $\gamma_\psi(\det X''_+) = \gamma_\psi(\det q''_-)$ par \ref{prop:pfaffien}, qui est une bonne constante car $\det q''_- \in \mathfrak{o}_F^\times$ dans le cas non ramifié.
\end{proof}

\begin{proposition}\label{prop:descente-X_u}
  Fixons la donnée $u$ et posons $L=F[u]$. Supposons que $X_u \in \mathcal{O}(K/K^\#, a, c)$, soient $X_u=(X'_u,X''_u)$ et $(K/K^\#,a,c)=(K'/K'^\#,a',c') \oplus (K''/K''^\#,a'',c'')$ la décomposition de paramètres correspondante.

  Il existe alors une bonne constante $c_u$ et une constante $\alpha_u \in L^\times$, $\alpha_u \in \mathfrak{o}_L^\times$ dans le cas non ramifié, $\tau(\alpha_u)=(-1)^{\dim_L W''_u}\alpha_u$, telles que
  $$ \Delta'(\exp(X'_u)\tilde{u}')\Delta''(\exp(X''_u)\tilde{u}'') = c_u \sgn_{K''/K''^\#}(\alpha_u c''^{-1} \dot{P}_{X''_u|L}(a'')). $$
\end{proposition}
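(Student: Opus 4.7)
Le plan est de mimer la stratégie des propositions \ref{prop:descente-X_+} et \ref{prop:descente-X_-}, mais en exploitant la formule de Maktouf \textit{en tout élément semi-simple régulier} (et non juste près de $1$), afin de tenir compte de la partie elliptique $\tilde{u}$. D'abord, on utilise \ref{prop:-1-echangement} pour réécrire
$$ \Delta'(\exp(X'_u)\tilde{u}')\Delta''(\exp(X''_u)\tilde{u}'') = \frac{\Theta_\psi(-\exp(X'_u)\tilde{u}')}{|\Theta_\psi(-\exp(X'_u)\tilde{u}')|} \cdot \frac{\Theta_\psi(\exp(X''_u)\tilde{u}'')}{|\Theta_\psi(\exp(X''_u)\tilde{u}'')|}. $$
Comme $L=F(u)$ impose $u \neq \pm 1$, sous l'hypothèse de petitesse (resp. nilpotence topologique) de $X_u$, les deux éléments $\pm\exp(X'_u)u'-1$ et $\exp(X''_u)u''-1$ sont inversibles, donc on peut appliquer la troisième formule de \ref{prop:formule-caractere-Thomas}, laquelle fournit, pour chaque $\tilde{y}$ semi-simple régulier,
$$\frac{\Theta_\psi(\tilde{y})}{|\Theta_\psi(\tilde{y})|} = \pm \gamma_\psi(1)^{\dim W - 1} \gamma_\psi(\det_F(y-1)).$$
Les signes ambigus ne dépendent que de $\pm\tilde{u}'$ et $\tilde{u}''$, donc de $\mathcal{O}(\tilde{\eta})$; dans le cas non ramifié, ils valent $1$ en vertu de \ref{prop:formule-caractere-K-2}. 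Les puissances $\gamma_\psi(1)^{\dim W'_u + \dim W''_u -2}$ sont aussi de bonnes constantes.

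On combine ensuite les deux facteurs $\gamma_\psi$ via la formule cocyclique $\gamma_\psi(a)\gamma_\psi(b)\gamma_\psi(1) = (a,b)_F \gamma_\psi(ab) \gamma_\psi(1)^2$. Or $\det_F(\pm\exp(X'_u)u'-1)$ est congru à $\det_F(\pm u'-1)$ modulo $F^{\times 2}$ (la différence est dans $1+\text{topologiquement nilpotent}$, donc un carré), et de même pour $X''_u$; ainsi le symbole de Hilbert résultant ne dépend que de $\mathcal{O}(\tilde{\eta})$ et est trivial dans le cas non ramifié. On aboutit à
$$\Delta'(\exp(X'_u)\tilde{u}')\Delta''(\exp(X''_u)\tilde{u}'') = c_1 \cdot \gamma_\psi\bigl(\det_F((\exp(X'_u)u'+1)(\exp(X''_u)u''-1))\bigr),$$
où $c_1$ est une bonne constante (on a absorbé au passage le signe $(-1)^{\dim_F W'_u}$ provenant de $\det_F(-A) = (-1)^{\dim_F W'_u}\det_F(A)$).

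On exploite alors que les opérateurs en jeu sont $L$-linéaires (puisqu'ils commutent à $u$), donc $\det_F = N_{L/F} \circ \det_L$. On écrit $\det_L(\exp(X''_u)u'' - 1)$ comme $(-1)^{\dim_L W''_u}\det_L(u'')\cdot P_{\exp(X''_u)|L}(u''^{-1})$, puis on utilise \ref{prop:Mobius} (appliqué sur $L$, avec le changement de variable $T \mapsto (T-1)/(T+1)$ relayant $P_{\exp(X''_u)|L}$ à $P_{X''_u|L}$ à des termes unitaires près) pour transformer cette valeur en $\dot{P}_{X''_u|L}(a'')$ multiplié par une constante dans $L^\times$ explicitement contrôlée. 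Le même calcul pour le facteur en $X'_u$ fait apparaître $\dot{P}_{X'_u|L}(a')$. Une application d'un analogue $L$-linéaire de \ref{prop:reciprocite-Lie} (c'est-à-dire une variante de \ref{prop:calcul-qX} avec $L$ jouant le rôle de $F$) permet finalement de convertir ces $\gamma_\psi$-valeurs en $\sgn_{K''/K''^\#}$-valeurs, et d'éliminer le facteur parasite $\dot{P}_{X'_u|L}(a')$ en l'absorbant dans le caractère, obtenant la formule annoncée avec une constante $\alpha_u \in L^\times$ bien déterminée.

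L'obstacle principal est la comptabilité finale: vérifier que $\tau(\alpha_u) = (-1)^{\dim_L W''_u} \alpha_u$ (ce qui résulte de la symétrie involutive dans les manipulations de polynômes caractéristiques sur $L$ via l'involution $T \leftrightarrow T^{-1}$) et que $\alpha_u \in \mathfrak{o}_L^\times$ dans le cas non ramifié (puisque $u$ est d'ordre premier à $p$, donc $u''\pm 1, \det_L u''$ et toutes les valeurs intermédiaires de polynômes caractéristiques sont des unités). Tout le travail consiste à suivre soigneusement ces constantes à travers la chaîne de transformations $\gamma_\psi$-cocyclicité $\to$ descente de $F$ à $L$ $\to$ application de \ref{prop:calcul-qX} sur $L$.
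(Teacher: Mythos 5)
Your proposal has a genuine gap at the step immediately after invoking Maktouf's character formula. You claim the ambiguous signs $\pm$ in the third formula of \ref{prop:formule-caractere-Thomas} "ne dépendent que de $\pm\tilde{u}'$ et $\tilde{u}''$, donc de $\mathcal{O}(\tilde{\eta})$", hence are good constants. This cannot be right, and the error is exactly where the orbit datum $c''$ has to enter. Indeed, after your reductions the two $\gamma_\psi(\det(\cdot))$ factors really are good constants: for instance $\det_F(\exp(X'_u)u'-1) = N_{L/F}(u-1)^{\dim_L W'_u} \cdot (\text{carré})$ since $u$ acts $L$-scalarly on $W'_u$, so this depends only on $u$ and $\dim_L W'_u$, and similarly for the $W''_u$-piece; the $\gamma_\psi(1)$-powers are likewise good constants. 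If the signs were also good constants, the whole product would be a good constant — but the right-hand side of the proposition genuinely depends on $c''$, which varies as $X_u$ ranges over the distinct $F$-orbits within a geometric class, and $\sgn_{K''/K''^\#}$ is a nontrivial character. The ambiguous signs are precisely the Hasse-invariant information that produces the $\sgn_{K''/K''^\#}$ factor; they depend on $X_u$ because the lifts $\tilde{u}',\tilde{u}''$ depend on the decomposition $W_u = W'_u \oplus W''_u$, which itself changes with $X_u$. Dismissing them collapses the statement to a triviality.

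The paper sidesteps this trap by never splitting $\Delta'\Delta''$ into pieces computed on $W'_u$ and $W''_u$ separately. It applies \ref{prop:Delta-variante} once to write $\Delta'(\exp(X'_u)\tilde{u}')\Delta''(\exp(X''_u)\tilde{u}'') = \Delta'(\exp(X_u)\tilde{u}) \cdot \gamma_\psi(q[C_{\exp(X''_u)u''}])$. The first factor is evaluated at the \emph{fixed} lift $\tilde{u}$ on all of $W_u$, so local constancy of $\Theta_\psi$ near $-\tilde{u}$ (via \ref{prop:-1-echangement}, and \ref{prop:formule-caractere-K-2} in the unramified case) really does make it a good constant. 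All the $X_u$-dependence is then isolated in $\gamma_\psi(q[C_{\exp(X''_u)u''}])$, and \ref{prop:calcul-qX} converts this into the $\sgn_{K''/K''^\#}$ character with $c''$ displayed explicitly; \ref{prop:Mobius} then handles the Cayley transform and the $F$-to-$L$ descent of characteristic polynomials. A secondary issue in your sketch: "éliminer le facteur parasite $\dot{P}_{X'_u|L}(a')$ en l'absorbant dans le caractère" is not coherent, since $\dot{P}_{X'_u|L}(a') \in K'$ whereas $\sgn_{K''/K''^\#}$ is a character of $K''^{\#\times}$; in the paper's route this extraneous term never appears precisely because \ref{prop:Delta-variante} removes the need to compute the $\Delta'$-piece separately.
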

\begin{proof}
  D'après \ref{prop:Delta-variante},
  $$ \Delta'(\exp(X'_u)\tilde{u}')\Delta''(\exp(X''_u)\tilde{u}'') = \Delta'(\exp(X_u)\tilde{u}) \gamma_\psi(q[C_{\exp(X''_u)u''}]). $$

  Puisque $\det(u^2-1) \neq 0$, le terme $\Delta'(\exp(X_u)\tilde{u})=\Delta'(\tilde{u})$ est une bonne constante d'après \ref{prop:-1-echangement}; dans le cas non ramifié, il faut aussi \ref{prop:formule-caractere-K-2}. Il reste à traiter $\gamma_\psi(q[C_{\exp(X''_u)u''}])$.

  On a $K'' = K''^\# \otimes_L^\# L$. Si $L \simeq L^\# \times L^\#$, alors la forme $q[C_{\exp(X''_u)u''}]$ est hyperbolique d'après \ref{rem:hyperbolicite}, donc $\gamma_\psi(q[C_{\exp(X''_u)u''}])=1$. D'autre part $\sgn_{K''/K''^\#}(\cdot)=1$ dans ce cas-là et la proposition est prouvée. Supposons désormais que $L$ est un corps. Fixons un plongement $L \hookrightarrow \bar{F}$; la formule finale sera indépendante du plongement.

  Posons
  \begin{align*}
    m'' & := \dim_L W''_u, \\
    A'' & := \exp(a''), \\
    C'' & := \frac{uA''-1}{uA''+1}.
  \end{align*}
  On voit que $A'' \approx 1$ et $C'' \approx \frac{u-1}{u+1} \neq 0$. Grâce à \ref{prop:calcul-qX},
  \begin{equation}\label{eqn:X_u-1}\begin{split}
    \gamma_\psi(q[C_{\exp(X''_u)u''}]) & = \gamma_\psi((-1)^{[L:F]m'' - 1}) \gamma_\psi(N_{K''/F}(2C'')) \cdot \\
    & \cdot \sgn_{K''/K''^\#}(c''^{-1} \dot{P}_{2C''}(2C'')).
  \end{split}\end{equation}

  Le terme $\gamma_\psi((-1)^{[L:F]m'' - 1})$ est une bonne constante, le terme $\gamma_\psi(N_{K''/F}(2C''))$ est égal à
  $$\gamma_\psi\left(N_{K''/F}\left(2\cdot\frac{u-1}{u+1}\right)\right) = \gamma_\psi(\det_F (2(u+1)(u-1)^{-1}|W''_u)),$$
  ce qui est aussi une bonne constante. Il reste donc à traiter le terme $\sgn_{K''/K''^\#}(\cdots)$.

  Enfin, $\sgn_{K''/K''^\#}(c''^{-1} \dot{P}_{2C''}(2C'')) = \sgn_{K''/K''^\#}(2c''^{-1} \dot{P}_{C''}(C''))$. Posons
  $$ \Sigma_{L/F} := \Hom_{F-\text{alg}}(L,\bar{F}) $$
  et posons $\sigma_0 \in \Sigma_{L/F}$ l'unique élément fixant $u$.

  Observons que
  $$ P_{C''} = \prod_{\sigma \in \Sigma_{L/F}} P_{C''|L}^\sigma $$
  où la notation $P_{\bullet|L}$ signifie le polynôme caractéristique de $\bullet$ sur $L$, l'action de $\Sigma_{L/F}$ sur $L[T]$ provient de l'action sur les coefficients. On a aussi $\deg P_{C''|L} = m''$. Il en résulte que
  \begin{equation}\label{eqn:X_u-2}\begin{split}
    \dot{P}_{C''}(C'') &= \dot{P}_{C''|L}(C'') \cdot \prod_{\substack{\sigma \in \Sigma_{L/F}, \\ \sigma \neq \sigma_0}} P_{C''|L}^\sigma(C'') \\
    & = k_u \dot{P}_{C''|L}(C''),
  \end{split}\end{equation}
  où $k_u \in L^\times$ est une constante; $k_u \in \mathfrak{o}_L^\times$ dans le cas non ramifié.

  D'après \ref{prop:Mobius}, on a
  $$ 2u (uA''+1)^{m''-2} \dot{P}_{C''|L}(C'') = u^{m''} P_{C''|L}(1) \dot{P}_{A''|L}(A''), $$
  d'où
  \begin{equation}\label{eqn:X_u-3}\begin{split}
    \dot{P}_{C''|L}(C'') &= \frac{1}{2} u^{m''-1} (uA''+1)^{2-m''} P_{C''|L}(1) \dot{P}_{A''|L}(A'') \\
    & \approx \frac{1}{2} u^{m''-1} (u+1)^{2-m''} \det_L (1-(u-1)(u+1)^{-1}| W''_u) \dot{P}_{A''|L}(A'') \\
    & \approx h_u \dot{P}_{a''|L}(a''),
  \end{split}\end{equation}
  où $h_u \in L^\times$ est une constante; $h_u \in \mathfrak{o}_L^\times$ dans le cas non ramifié.

  En combinant \eqref{eqn:X_u-1},\eqref{eqn:X_u-2},\eqref{eqn:X_u-3}, on voit qu'il existe une bonne constante $c_u$ et une constante $\alpha_u \in L^\times$, $\alpha_u \in \mathfrak{o}_L^\times$ dans le cas non ramifié, telles que
  $$ \gamma_\psi(q[C_{\exp(X''_u)u''}]) = c_u \cdot \sgn_{K''/K''^\#}(\alpha_u c''^{-1} \dot{P}_{a''|L}(a'')). $$

  Pour que le terme dans $\sgn_{K''/K''^\#}$ appartienne à $K''^\#$, il faut que $\tau(\alpha_u) \in (-1)^{m''}\alpha_u$. Cela achève la démonstration.

\end{proof}

\subsection{Descente du terme $\Delta_0$}
Nous considérons d'abord les trois premiers termes des dix termes dans \ref{prop:Delta-biadditif}.

\begin{proposition}\label{prop:descente-Delta0-principale}
  Soit $\bullet \in \{+,-,u\}$. Supposons que
  \begin{align*}
    X'_\bullet &\in \mathcal{O}(K'/K'^\#, a', c'), \\
    X''_\bullet & \in \mathcal{O}(K''/K''^\#,a'',c'');
  \end{align*}
  alors il existe des bonnes constantes $d_\bullet$ telles que
  \begin{enumerate}
    \item $\Delta_0(\exp(X'_+),\exp(X''_+)) = d_+ \cdot \sgn_{K''/K''^\#}(P_{X'_+}(a'')),$
    \item $\Delta_0(-\exp(X'_-),-\exp(X''_-)) = d_- \cdot \sgn_{K'/K'^\#}(P_{X''_-}(a')),$
    \item $\Delta_0(\exp(X'_u)u,\exp(X''_u)u) = d_u \cdot \sgn(\beta_u P_{X'_u|L}(a''))$, où $P_{X'_u|L}$ est le polynôme caractéristique de $X'_u$ sur $L$, et $\beta_u \in L^\times$ est une constante, $\tau(\beta_u) = (-1)^{\dim_L W'_u}\beta_u$; $\beta_u \in \mathfrak{o}_L^\times$ dans le cas non ramifié.
  \end{enumerate}
\end{proposition}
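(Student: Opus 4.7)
L'idée est de ramener, via la formule définitionnelle $\Delta_0(\delta',\delta'') = \sgn_{K''/K''^\#}(P_{A'}(A'')(-A'')^{-n'_\bullet}\det(\delta'+1))$ (où $A' := \exp(a')$, $A'' := \exp(a'')$ et $n'_\bullet = \frac{1}{2}\dim W'_\bullet$), l'évaluation au niveau du groupe à une évaluation au niveau de l'algèbre de Lie. Le point technique central est l'identité, valable pour tout plongement $\sigma: K' \hookrightarrow \bar{F}$:
$$ A'' - \sigma(A') = \sigma(A')\cdot(a''-\sigma(a'))\cdot\phi(a''-\sigma(a')), $$
où $\phi(x) := (\exp(x)-1)/x$ est proche de $1$ pour $x$ petit (ou topologiquement nilpotent). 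En appariant les racines de $P_{A'}$ par paires conjuguées sous $\tau$ (i.e.\ $\{\sigma(a'),-\sigma(a')\}$), on obtient $P_{A'}(A'') = P_{a'}(a'') \cdot U'$ avec $U' = \prod_i \phi(a''-r_i)\phi(a''+r_i)$, où les $\pm r_i$ sont les racines de $P_{a'}$. L'identité $\phi(-x)=\exp(-x)\phi(x)$ entraîne $\tau(U') = A''^{-2n'_\bullet}U'$, ce qui permet de vérifier que $(-A'')^{-n'_\bullet} U' \in K''^\#$, et légitime l'usage de $\sgn_{K''/K''^\#}$.

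Pour le cas (1), il suffit alors de démontrer que la bonne constante est $d_+ := \sgn_{K''/K''^\#}((-A'')^{-n'_+} U'\det(\exp(X'_+)+1))$: par locale constance de $\sgn_{K''/K''^\#}$ et par ouverture de $N_{K''/K''^\#}(K''^\times)$ dans $K''^{\#\times}$, cette quantité est localement constante en $X$, donc ne dépend que des parties semi-simples pour $X,Y$ assez petits. Dans le cas non ramifié, les facteurs $A''$, $\phi(\cdots)$ et $\det(\exp(X'_+)+1)/2^{2n'_+}$ sont topologiquement unipotents, donc des carrés (car $p>2$), donc dans $N_{K''/K''^\#}(K''^\times)$, et $d_+ = 1$. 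Le cas (2) s'obtient alors par la symétrie $\Delta_0(-\exp(X'_-),-\exp(X''_-)) = \Delta_0(\exp(X''_-),\exp(X'_-))$ de \ref{prop:reciprocite} suivie de l'application du cas (1) à la donnée endoscopique transposée.

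Pour le cas (3), on factorise $P_{\delta'_u}(T) = \prod_{\sigma \in \Sigma_{L/F}} P_{\delta'_u|L}^\sigma(T)$ et on sépare la contribution du plongement $\sigma_0$ fixant $L$, à laquelle on applique l'analyse du cas (1) \textit{relative à} $L$: celle-ci produit le facteur principal $P_{X'_u|L}(a'')$. Les contributions des plongements $\sigma \neq \sigma_0$, combinées aux termes auxiliaires $(-A''_u)^{-\dim_L W'_u}$ et $\det(\delta'_u+1)$, se rassemblent en une constante $\beta_u \in L^\times$, la condition $\tau(\beta_u)=(-1)^{\dim_L W'_u}\beta_u$ étant forcée par l'appartenance de $\beta_u P_{X'_u|L}(a'')$ à $K''^\#$. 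L'intégralité $\beta_u \in \mathfrak{o}_L^\times$ dans le cas non ramifié résulte de la topologique unipotence de ces termes. Dans le cas exceptionnel où $L \simeq L^\# \times L^\#$, $\sgn_{K''/K''^\#}$ se trivialise sur le facteur correspondant et l'énoncé devient tautologique.

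L'obstacle principal sera le contrôle uniforme, modulo $N_{K''/K''^\#}(K''^\times)$, du facteur correctif $U'(-A'')^{-n'_\bullet}\det(\cdots)$ dans le cas général (ramifié), où aucune hypothèse de topologique unipotence n'est disponible. L'argument par continuité règle cette difficulté, mais exige de préciser que le voisinage de $X=0$ dans lequel $d_\bullet$ est constante ne dépend que de $\mathcal{O}^\text{st}(\epsilon)$ et $\mathcal{O}(\tilde\eta)$, ce qui découle de la compacité locale et de l'ouverture du noyau de $\sgn_{K''/K''^\#}$. Dans le cas (3), une difficulté technique supplémentaire est d'organiser proprement les contributions des différents plongements $\sigma \in \Sigma_{L/F}$ pour extraire une constante $\beta_u \in L^\times$ ayant la $\tau$-équivariance requise.
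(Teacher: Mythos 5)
Your proposal follows the same overall strategy as the paper's proof: writing the ratio $P_{A'}(A'')/P_{a'}(a'')$ as a product of factors close to $1$ (your $\phi$-trick is a nice explicit version of the paper's $\approx$), pairing conjugate roots to control the $\tau$-equivariance, deducing case (2) from case (1) via la réciprocité \ref{prop:reciprocite}, and separating the $\sigma_0$-contribution in case (3). These parts are sound.

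However, there is a genuine gap in the treatment of the correction factor. After your reduction, $d_+$ contains the term $\sgn_{K''/K''^\#}\bigl((-1)^{n'_+}\bigr)$, which your list of topologically unipotent factors ($A''$, $U'$, $\det(\exp(X'_+)+1)/2^{2n'_+}$) does not cover: $(-1)^{n'_+}$ is not topologically unipotent when $n'_+$ is odd, and for a ramified $K''/K''^\#$ (which can occur even under Hypothèse \ref{hyp:non-ramifie}), $-1$ need not lie in $N_{K''/K''^\#}(K''^\times)$. Local constancy in $X$ does not close this gap either, because the algebra $K''/K''^\#$ itself changes with $X$, so the putative "value near $X=0$" is not well-defined independently of the choice of torus. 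What is missing is the Lemme \ref{prop:signe-transfert}: via the correspondence $X''_+ \CVP Y''_- \in \so(V''_-,q''_-)$, one gets $\sgn_{K''/K''^\#}(-1) = \bigl(-1,(-1)^{m''}\det q''_-\bigr)_F$, an expression depending only on the quadratic form attached to $\epsilon$ — this is both what makes $d_+$ a quantity depending only on $(\mathcal{O}^\text{st}(\epsilon),\mathcal{O}(\tilde\eta))$ and what forces $d_+=1$ in the unramified case (Hilbert symbol of units for $p>2$). The same lemma (together with \ref{prop:pfaffien} in the global comparisons) is needed to package the residual factor $\sgn_{K''/K''^\#}(N_{L/F}(u+1))$ in case (3) into the bonne constante $d_u$; your proposal only asserts that "les contributions... se rassemblent" without identifying the mechanism. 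You should add an invocation of \ref{prop:signe-transfert} at the point where the residual $\sgn$-terms are declared to be bonnes constantes.
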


Dans les démonstrations ci-dessous, nous donnerons des formes explicites pour $d_+,d_-,d_u$ et $\beta_u$. Vu \ref{prop:reciprocite}, il suffit d'établir le cas $\bullet=+$ et $\bullet=u$.

\begin{proof}[Démonstration pour $\bullet=+$]
  Posons
  \begin{align*}
    m' & := \frac{1}{2} \dim_F W'_+, \\
    m'' & := \frac{1}{2} \dim_F W''_+, \\
    A' & := \exp(a'),\\
    A'' & := \exp(a'').
  \end{align*}

  Vu la définition de $\Delta_0$, on a
  \begin{align*}
    \Delta_0(\exp(X'_+),\exp(X''_+)) & = \sgn_{K''/K''^\#}(P_{A'}(A'')(-A'')^{-m'} 2^{2m'}) \\
    & = \sgn_{K''/K''^\#}(-1)^{m'} \sgn_{K''/K''^\#}(P_{a'}(a'')),
  \end{align*}
  où on a utilisé le fait $P_{A'}(A'') \approx P_{a'}(a'')$.

  Montrons que $\sgn_{K''/K''^\#}(-1)$ est une bonne constante.  Il y a une correspondance $X''_+ \CVP Y''_- \in \so(V''_-,q''_-)$. D'après \ref{prop:signe-transfert},
  $$ \sgn_{K''/K''^\#}(-1) = (-1, (-1)^{m''} \det q''_-)_F. $$

  Prenons $d_+ = ((-1)^{m'}, (-1)^{m''} \det q''_-)_F$, c'est une bonne constante.
\end{proof}

\begin{proof}[Démonstration pour $\bullet=u$]
  La démonstration est analogue à celle de  $\Delta', \Delta''$. Si $L=F[u] \simeq L^\# \times L^\#$, alors les deux côtés valent $1$ et on peut prendre $d_u=1$, $\beta_u$ quelconque. Supposons désormais que $L$ est un corps et fixons un plongement $L \hookrightarrow \bar{F}$.

  Posons
  \begin{align*}
    m' & := \dim_L W'_u, \\
    m'' & := \dim_L W''_u, \\
    A' & := \exp(a'), \\
    A'' & := \exp(a''), \\
    \Sigma_{L/K} & := \Hom_{F-\text{alg}}(L,\bar{F}).
  \end{align*}
  Regardons $L$ comme un sous-corps de $\bar{F}$. Notons $\sigma_0 \in \Sigma_{L/K}$ l'unique élément fixant $u$, alors
  $$ P_{\exp(X'_u)u} = P_{A'u} = \prod_{\sigma \in \Sigma_{L/K}} P_{A'u|L}^\sigma, $$
  où $P_{A'u|L}$ est le polynôme caractéristique de $A'u$ sur $L$, on a $\deg P_{A'u|L}=m'$.

  Si $\sigma \in \Sigma_{L/F}, \sigma \neq \sigma_0$, alors
  $$ P_{A'u|L}^\sigma(uA'') \approx (u-\sigma(u))^{m'} \neq 0 $$
  D'autre part
  \begin{align*}
    P_{A'u|L}^{\sigma_0}(uA'') &= P_{A'u|L}(uA'') \approx u^{m'} P_{A'|L}(A'') \\
    & \approx u^{m'} P_{a'|L}(a'').
  \end{align*}

  Posons
  $$ \delta(u) := \prod_{\substack{\sigma \in \Sigma_{L/F} \\ \sigma \neq \sigma_0}}(u-\sigma(u)) \in L^\times . $$

  On vérifie que
  $$ \beta_u := ((-u)^{-[L:F]}u\delta(u))^{m'} \in L^\times $$
  satisfait à $\tau(\beta_u)=(-1)^{m'} \beta_u$; il en est de même pour $P_{X'_u|L}(a'')$. On a aussi $\beta_u \in \mathfrak{o}_L^\times$ dans le cas non ramifié. D'où
  \begin{align*}
    \Delta_0(\exp(X'_u)u,\exp(X''_u)u) &= \sgn_{K''/K''^\#} (\beta_u P_{X'_u|L}(a'') \det_F(u+1|W'_u)) \\
    & = \sgn_{K''/K''^\#} (\beta_u P_{X'_u|L}(a'')) \sgn_{K''/K''^\#}(N_{L/F}(u+1)).
  \end{align*}

  Soit $q''_u$ la $F$-forme quadratique sur $K''$ définie par $(\Tr_{L/F})_*(h''_u)$. Il y a une correspondance $X''_u \CVP Y''_u \in \mathfrak{u}(V''_u, h''_u)$, donc
  $$\sgn_{K''/K''^\#}(N_{L/F}(u+1)) = (N_{L/F}(u+1), (-1)^{m''[L^\#:F]} \det q''_u)_F$$
  d'après \ref{prop:signe-transfert}; notons-le $d_u$. C'est une bonne constante et on arrive à

  $$ \Delta_0(\exp(X'_u)u,\exp(X''_u)u) = d_u \cdot \sgn_{K''/K''^\#}(\beta_u P_{X'_u|L}(a'')) .$$

  Remarquons que les définitions de $d_u, \beta_u$ ont aussi un sens dans le cas $L=L^\# \times L^\#$.
\end{proof}

Traitons maintenant les sept autres termes dans \ref{prop:Delta-biadditif}.

\begin{proposition}\label{prop:descente-Delta0-residuelle}
  Il existe des bonnes constantes $c_{u',u''}$ (pour tous $u' \neq u''$), $c_{u,+}$, $c_{u,-}$, $c_{+,u}$, $c_{-,u}$, $c_{+,-}$, $c_{-,+}$ telles que
  \begin{gather*}
    \Delta_0(\exp(X'_{u'})u',\exp(X''_{u''})u'') = c_{u',u''}, \\
    \Delta_0(\exp(X'_u)u, \exp(X''_+)) = c_{u,+},\\
    \Delta_0(\exp(X'_u)u, -\exp(X''_-)) = c_{u,-},\\
    \Delta_0(\exp(X'_+), \exp(X''_u)u) = c_{+,u},\\
    \Delta_0(-\exp(X'_-), \exp(X''_u)u) = c_{-,u}, \\
    \Delta_0(\exp(X'_+), -\exp(X''_-)) = c_{+,-}, \\
    \Delta_0(-\exp(X'_-),\exp(X''_+)) = c_{-,+}.
  \end{gather*}
\end{proposition}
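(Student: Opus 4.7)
The plan is to treat the seven cases uniformly. In each case, unfolding the definition of $\Delta_0$, we must evaluate $\sgn_{K''/K''^\#}$ on a product of $P_{a'}(a'')$, $(-a'')^{-m'}$, and $\det(\delta'+1)$, where $(K'/K'^\#, a', c')$ and $(K''/K''^\#, a'', c'')$ are the parameters of $\delta' = \exp(X'_{\bullet_1})\eta'_{\bullet_1}$ and $\delta'' = \exp(X''_{\bullet_2})\eta''_{\bullet_2}$ respectively, and $m' = \frac{1}{2}\dim_F W'_{\bullet_1}$. I will first write out the explicit form of each mixed pair and identify the dominant algebraic contribution.

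The key input is that in each mixed pair the rigid parts $\eta'_{\bullet_1}$ and $\eta''_{\bullet_2}$ are distinct: they are either distinct non-$\pm 1$ eigenvalues $u' \neq u''$, or one lies in $\{\pm 1\}$ while the other is some $u \neq \pm 1$, or they are the two distinct elements $+1$ and $-1$. Thanks to this separation, modulo topologically unipotent (in the non-archimedean case) or ``close to $1$'' (archimedean case) factors, the quantities inside $\sgn_{K''/K''^\#}$ may be evaluated at the rigid parts instead of at the deformed elements. The reduction uses that $N_{K''/K''^\#}(K''^\times)$ absorbs all such factors, hence they are annihilated by $\sgn_{K''/K''^\#}$. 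This is completely analogous to the arguments in the proofs of \ref{prop:descente-X_+}--\ref{prop:descente-X_u} and \ref{prop:descente-Delta0-principale}, where the same sort of asymptotic replacement is used.

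Once this reduction is done, the limiting value depends only on the rigid data $\eta', \eta''$ and the block decomposition of $\epsilon$, hence only on $\mathcal{O}(\tilde\eta)$ and $\mathcal{O}^{\text{st}}(\epsilon)$, giving the required constant. In the unramified setting, the rigid parts are integral and of order prime to $p$, with distinct reductions modulo $\mathfrak{p}_F$ in the different blocks; the relevant polynomial and determinant evaluations are therefore units in $\mathfrak{o}_{K''^\#}$, whose $\sgn_{K''/K''^\#}$ is trivial since $K''/K''^\#$ is unramified. This yields the triviality in the unramified case, completing the good-constant claim.

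The main obstacle will be the sub-cases where $\delta' = -\exp(X'_-)$ (or symmetrically $\delta'' = -\exp(X''_-)$) appears, because then $\det(\delta'+1) = \det(1 - \exp(X'_-)) \approx \det(-X'_-)$ is topologically nilpotent rather than a unit, and the naive asymptotic replacement by the rigid part fails. Here one must use the correspondence $X'_- \CVP Y'_-$ with $Y'_- \in \so(V'_-, q'_-)$ from \ref{prop:CVP} (or \ref{prop:CVP-nr}) to invoke \ref{prop:pfaffien}, which shows $\det X'_- \in \det q'_- \cdot F^{\times 2}$. Thus $\sgn_{K''/K''^\#}$ of $\det(\delta'+1)$ reduces modulo squares to $\sgn_{K''/K''^\#}(\det q'_-)$, which is stably invariant and integral in the unramified setting, yielding a good constant as before. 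Combining the contributions from $P_{a'}(a'')$, $(-a'')^{-m'}$ and this corrected $\det(\delta'+1)$ in each of the seven cases will then give the required $c_{\bullet_1, \bullet_2}$.
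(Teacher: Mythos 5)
Your overall strategy of treating all seven cases directly, and in particular using the Pfaffian lemma (\ref{prop:pfaffien}) via the correspondence $X'_- \CVP Y'_- \in \so(V'_-,q'_-)$ to control $\det(\delta'+1)$ when $\delta' = -\exp(X'_-)$, is a genuinely different route from the paper: the paper instead invokes the reciprocity relation $\Delta_0(\delta',\delta'')=\Delta_0(-\delta'',-\delta')$ (\ref{prop:reciprocite}) to reduce the seven cases to the four ``safe'' cases $(u',u'')$, $(u,+)$, $(+,u)$, $(+,-)$, and then never has to face a topologically nilpotent determinant. Your Pfaffian fix for the two dangerous cases is sound in principle, and is indeed the same trick the paper uses inside \ref{prop:descente-X_+}.

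However, there is a genuine gap in the conclusion of both your paragraphs, and it affects every one of the seven cases, not just the dangerous ones. After you replace the quantities inside $\sgn_{K''/K''^\#}(\cdot)$ by their rigid limits (which is fine, topologically unipotent elements are norms), you conclude that ``the limiting value depends only on the rigid data \dots hence gives the required constant,'' and in the unramified case you assert it is trivial ``since $K''/K''^\#$ is unramified.'' Neither statement is justified, and the second is in fact false in general. The character $\sgn_{K''/K''^\#}$ itself depends on the deformed element $\delta''$ through $K''=F[a'']$: a topologically unipotent deformation $X''$ can perfectly well generate a \emph{ramified} quadratic extension $K''/K''^\#$ (already for $\Sp(2)$, $a''+a''^{-1}=2+p$ gives $K''=\Q_p(\sqrt{p(p+4)})$), on which $\sgn_{K''/K''^\#}$ is \emph{not} trivial on units. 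So evaluating $\sgn_{K''/K''^\#}$ at a fixed unit does not by itself give a constant, let alone the constant $1$ in the unramified case. What is missing is precisely \ref{prop:signe-transfert}: using the correspondence $X''_\bullet\CVP Y''_\bullet$ one converts $\sgn_{K''/K''^\#}(t)$ (for $t$ in the relevant base field) into the Hilbert symbol $(t,(-1)^{\frac{1}{2}\dim V}\det q)$ attached to a \emph{fixed} even-dimensional quadratic form taken from the block data of $\epsilon''$; this expression manifestly depends only on $\mathcal{O}^{\mathrm{st}}(\epsilon)$ and $\mathcal{O}(\tilde\eta)$, and in the unramified case both entries of the symbol are units in the residue characteristic $>2$, so the symbol is $1$. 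The paper applies this lemma twice per case (once over $F$, once over $L^{\#}$ or $L''^{\#}$ when the factor lands there). Your argument, as written, bypasses this essential step, so the ``good constant'' claim is unsupported. You also assert at the end of the dangerous-case paragraph that $\sgn_{K''/K''^\#}(\det q'_-)$ is ``stably invariant,'' which runs into exactly the same issue: stable invariance of this expression is a \emph{consequence} of \ref{prop:signe-transfert}, not something you can assume.
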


\begin{proof}
  Pour tout $\bullet \in \{u',u'',+,-\}$, on suppose que $X'_\bullet \in \mathcal{O}(K'/K'^\#, a', c')$, $X''_\bullet \in \mathcal{O}(K''/K''^\#, a'', c'')$. Vu \ref{prop:reciprocite}, il suffit d'établir les cas $(u',u'')$, $(u,+)$, , $(+,u)$ et $(+,-)$.

  \paragraph{Le cas $\mathbf{(u',u'')}$.} Posons
  \begin{align*}
    m' &:= \frac{1}{2}\dim_F W'_{u'},\\
    m'' &:= \frac{1}{2}\dim_F W''_{u''},\\
    L'' &:= F[u''].
  \end{align*}

  Alors
  $$ P_{\exp(a')u'}(\exp(a'')u'')(-\exp(a'')u'')^{-m'} \approx P_{u'}(u'')(-u')^{-m'},$$
  où $P_{u'}$ est le polynôme caractéristique de $u' \in \End_F(W'_{u'})$. On vérifie que $P_{u'}(u'')(-u'')^{-m'} \in L^{\#\times}$. On en déduit que
  \begin{align*}
    \Delta_0(\exp(X'_{u'})u',\exp(X''_{u''})u'') & =  \sgn_{K''/K''^\#}(P_{u'}(u'')(-u'')^{-m'}) \cdot \\
    & \cdot \sgn_{K''/K''^\#}(\det_F(u'+1|W'_{u'})).
  \end{align*}

  Soit $q'' := (\Tr_{L''/L''^\#})_*(h''_{u''})$. Il y a une correspondance $X''_{u''} \CVP Y''_{u''} \in \so(V''_{u''}, q'')$. On déduit par \ref{prop:signe-transfert} que
  \begin{align*}
    \sgn_{K''/K''^\#}(P_{u'}(u'')(-u'')^{-m'}) & = (P_{u'}(u'')(-u'')^{-m'}, (-1)^{\frac{m''}{[L''^\#:F]}} \det q'' )_{L''^\#} \\
    & =: c_1 .
  \end{align*}

  Toujours par \ref{prop:signe-transfert}, on a aussi
  \begin{align*}
    \sgn_{K''/K''^\#}(\det_F(u'+1|W'_{u'})) & = (\det_F(u'+1|W'_{u'}), (-1)^{m''} \det((\Tr_{L''^\#/F})_* q''))_F \\
    & = (N_{L'/F}(u'+1), (-1)^{m''} \det((\Tr_{L''^\#/F})_* q''))_F \\
    & =: c_2 .
  \end{align*}
  Prenons $c_{u',u''} := c_1 c_2$, c'est une bonne constante car $c_1$ et $c_2$ le sont.

  \paragraph{Le cas $\mathbf{(u,+)}$.} Posons $m' := \frac{1}{2}\dim_F W'_u$, $m'' := \frac{1}{2} \dim_F W''_+$. On a $P_{\exp(a')u}(\exp(a'')) \approx P_u(1) \in F^\times$. D'où
  \begin{align*}
    \Delta_0(\exp(X'_u)u, X''_+) & = \sgn_{K''/K''^\#}(P_{u}(1)(-1)^{-m'}) \sgn_{K''/K''^\#}(\det_F(u+1|W'_u)) \\
    & = \sgn_{K''/K''^\#}(P_{u}(1)(-1)^{-m'}) \sgn_{K''/K''^\#}(N_{L/F}(u+1)) \\
    & = \sgn_{K''/K''^\#}(\det_F(1-u|W'_u)(-1)^{-m'}) \sgn_{K''/K''^\#}(N_{L/F}(u+1)).
  \end{align*}

  Il y a une correspondance $X''_+ \CVP Y''_- \in \so(V''_-,q''_-)$. On déduit par \ref{prop:signe-transfert} que
  \begin{align*}
    \sgn_{K''/K''^\#}(\det_F(1-u|W'_u)(-1)^{-m'}) & = (\det_F(1-u|W'_u)(-1)^{-m'}, (-1)^{m''} \det q''_-)_F \\
    & =: c_1,
  \end{align*}
  qui est une bonne constante.

  De même, on a
  \begin{align*}
    \sgn_{K''/K''^\#}(N_{L/F}(u+1)) & = (N_{L/F}(u+1) ,(-1)^{m''} \det q''_-)_F \\
    & =: c_2,
  \end{align*}
  c'est aussi une bonne constante. Prenons $c_{u,+} := c_1 c_2$.

  \paragraph{Le cas $\mathbf{(+,u)}$.} Posons
  \begin{align*}
    m' &:= \frac{1}{2}\dim_F W'_+, \\
    m'' &:= \frac{1}{2} \dim_F W''_u, \\
    L & := F[u].
  \end{align*}
  Comme dans le cas $(u,+)$, on arrive à
  \begin{align*}
    \Delta_0(\exp(X'_+), \exp(X''_u)u) & = \sgn_{K''/K''^\#}(P_1(u)(-u)^{-m'} 2^{2m'}) \\
    & = \sgn_{K''/K''^\#}(P_1(u)(-u)^{-m'}),
  \end{align*}
  où $P_1(T) = (T-1)^{2m'}$ est le polynôme caractéristique de $1 \in \End_F(W'_+)$.

  Soit $q'' := (\Tr_{L''/L''^\#})_*(h''_{u''})$. Comme dans le cas $(u',u'')$, on conclut par \ref{prop:signe-transfert} que
  $$ \sgn_{K''/K''^\#}(P_1(u)(-u)^{-m'}) = ((u-1)^{2m'}(-u)^{-m'}, (-1)^{\frac{m''}{[L:F]}} \det q'')_{L^\#}. $$

  Prenons-le comme la bonne constante $c_{+,u}$.

  \paragraph{Le cas $\mathbf{(+,-)}$.} Posons $m' := \frac{1}{2} \dim_F W'_+$. Alors
  \begin{align*}
    \Delta_0(\exp(X'_+), -\exp(X''_-)) &= \sgn_{K''/K''^\#}(P_1(-1) 2^{2m'}) \\
    & = \sgn_{K''/K''^\#}((-2)\cdot 2)^{2m'} = 1.
  \end{align*}
  Prenons donc $c_{+,-}=1$.
\end{proof}

\subsection{Comparaison avec les facteurs de transfert des groupes classiques}
Supposons $F$ non archimédien. Les facteurs de transfert pour les algèbres de Lie des groupes classiques quasi-déployés sont décrits dans \cite{Wa01}, Chapitre X. Comme dans \cite{Wa01}, la correspondance de points est la correspondance par valeurs propres.

Relions maintenant \ref{prop:descente-facteur-transfert} et les facteurs de transfert pour les groupes classiques.

\begin{theorem}\label{prop:comparaison-transfert-classique}
  Supposons que $H_\epsilon$ est quasi-déployé. Les facteurs dans \ref{prop:descente-facteur-transfert} satisfont à\index{facteur de transfert!pour les groupes classiques}:
  \begin{itemize}
   \item pour tout $u$, $\Delta_u((Y'_u,Y''_u), X_u)$ est le facteur de transfert pour le groupe endoscopique $U(V'_u, h'_u) \times U(V''_u, h''_u)$ de $U(W_u, h_u)$;
   \item $\Delta_+((Y'_+,Y''_-), X_+)$ est le facteur de transfert pour le groupe endoscopique \newline$\Sp(W'_+) \times \SO(V''_-,q''_-)$ de $\Sp(W_+)$ évalué en $((X'_+, Y''_-),X_+)$;
   \item $\Delta_-((Y'_-,Y''_+), X_-)$ est le facteur de transfert pour le groupe endoscopique $\SO(V'_-,q'_-) \times \Sp(W''_-)$ de $\Sp(W_-)$ évalué en $((Y'_-, X''_-),X_-)$.
  \end{itemize}

  De plus, ce sont les facteurs de transfert normalisés au sens de \cite{Wa08} \S 4.7 dans le cas non ramifié.
\end{theorem}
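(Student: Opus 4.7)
Le plan est de comparer cas par cas les formules explicites pour $\Delta_u$, $\Delta_+$, $\Delta_-$ données dans \ref{prop:descente-facteur-transfert} avec les formules explicites des facteurs de transfert pour les groupes classiques quasi-déployés telles qu'elles figurent dans \cite{Wa01} Chapitre X. Dans les trois cas, la correspondance de points est celle par valeurs propres (\ref{def:CVP}), qui s'accorde avec la convention de \cite{Wa01}; on tiendra compte de \ref{rem:c-normalisation} pour traduire nos paramètres. Rappelons que l'hypothèse ``$H_\epsilon$ quasi-déployé'' garantit que les espaces $(V'_\pm, q'_\pm), (V''_\pm, q''_\pm), (V'_u, h'_u), (V''_u, h''_u)$ sont déployés (ou quasi-déployés pour le cas unitaire), donc relèvent du cadre de \cite{Wa01}.

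Pour les cas $\Delta_\pm$, j'observerais d'abord que $X_+ \in \syp(W_+)$ (resp. $X_- \in \syp(W_-)$) est un élément semi-simple régulier dans une algèbre de Lie symplectique et que la correspondance $X_+ \CVP (Y'_+, Y''_-)$ (resp. $X_- \CVP (Y'_-, Y''_+)$) indique que le groupe endoscopique en jeu est $\Sp(W'_+) \times \SO(V''_-, q''_-)$ (resp. son analogue transposé). Le facteur de transfert pour ce type d'endoscopie est calculé dans \cite{Wa01} X.8 et il s'exprime précisément par $\sgn_{K''/K''^\#}(c''^{-1} \dot{P}_{X''_+}(a''))$, quitte à éliminer le facteur normalisant qui vaut $1$ par notre définition de $\Delta_+$ (aucune constante $\gamma$ auxiliaire n'étant requise car les polynômes caractéristiques $P_{X_\pm}$ sont sur $F$). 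Le cas $\Delta_-$ est symétrique par passage à $-1$, utilisant implicitement \ref{prop:reciprocite}.

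Pour le cas $\Delta_u$, je rappellerais que la situation est celle de l'endoscopie pour un groupe unitaire $U(W_u, h_u)$ avec donnée endoscopique $U(V'_u, h'_u) \times U(V''_u, h''_u)$. Lorsque $L \simeq L^\# \times L^\#$, le groupe $U(W_u, h_u)$ est une restriction des scalaires de $\GL$ et les deux facteurs valent $1$. Dans le cas où $L$ est un corps, le facteur de transfert unitaire de \cite{Wa01} X.7 s'exprime sous la forme $\sgn_{K''/K''^\#}(\gamma_u c''^{-1} \dot{P}_{X_u|L}(a''))$, où la constante $\gamma_u \in L^\times$ est précisément celle qui assure la bonne normalisation; la condition $\tau(\gamma_u) = (-1)^{\dim_L W_u} \gamma_u$ est nécessaire pour que l'argument vive dans $K''^\#$, ce qui reflète la définition de \cite{Wa01}. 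L'obstacle principal ici consiste à traduire notre convention de paramètres $(K/K^\#, a, c)$ en celle de Waldspurger (cf. \ref{rem:c-normalisation}); les constantes $c_i [K_i:A]^{-1}$ se réarrangent en produit avec le choix de $\gamma_u$.

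Enfin, la normalisation non ramifiée \cite{Wa08} \S 4.7 se vérifie en utilisant que $\gamma_u \in \mathfrak{o}_L^\times$ par notre choix, que les paramètres $a, a', a''$ sont entiers, et que $c, c', c'' \in \mathfrak{o}^\times$ dans le cas non ramifié. On obtient alors que les arguments des caractères $\sgn_{K''/K''^\#}$ et $\sgn_{K'/K'^\#}$ sont des unités modulo normes, donnant bien la valeur $1$. La principale difficulté technique dans toute la démonstration réside dans la vérification soigneuse des choix de constantes $\gamma_u$ (et des bonnes constantes absorbées) de sorte que les formules s'accordent exactement — pas seulement à une bonne constante près, car ici on veut une égalité stricte pour la normalisation. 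Cela se fera en se ramenant à la construction explicite des facteurs dans \cite{Wa01} et en utilisant la propriété de bonnes constantes (valant $1$ dans le cas non ramifié) pour absorber les ajustements restants.
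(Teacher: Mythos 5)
Votre esquisse suit globalement la même stratégie que le papier (comparaison explicite terme à terme avec les formules de \cite{Wa01} X en tenant compte de \ref{rem:c-normalisation}), mais il y a deux lacunes réelles.

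La première est une erreur de logique dès le départ. Vous affirmez que l'hypothèse \guillemotleft $H_\epsilon$ quasi-déployé\guillemotright{} garantit que tous les espaces $(V'_\pm, q'_\pm)$, $(V''_\pm, q''_\pm)$, $(V'_u, h'_u)$, $(V''_u, h''_u)$ \emph{et donc toute la situation} relève du cadre quasi-déployé de \cite{Wa01}. C'est faux : l'hypothèse porte sur $H_\epsilon$, donc sur les groupes \emph{endoscopiques} $U(V'_u,h'_u) \times U(V''_u,h''_u)$, $\Sp(W'_+)\times\SO(V''_-,q''_-)$, etc., mais nullement sur le groupe \emph{cible} $U(W_u,h_u)$. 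Ce dernier peut parfaitement ne pas être quasi-déployé, alors que \cite{Wa01} ne traite que le cas quasi-déployé. Le papier contourne ce problème en invoquant la compatibilité des facteurs de transfert aux formes intérieures (\cite{LS1} (4.2) adapté à l'algèbre de Lie), avec un calcul d'invariant $\inv(\cdot)$ qu'il faut faire et qui nécessite de pouvoir choisir le torseur intérieur dans $H^1(F,M)$ plutôt que $H^1(F,M_{\mathrm{ad}})$ pour éviter les doublements. Cette étape manque entièrement dans votre proposition, et sans elle la comparaison ne peut pas même être énoncée pour un $U(W_u,h_u)$ non quasi-déployé.

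La seconde lacune concerne le cœur technique du cas $\Delta_u$. Le théorème \ref{prop:descente-facteur-transfert} fournit l'existence d'une constante $\gamma_u \in L^\times$ (dépendant de la descente) vérifiant $\tau(\gamma_u)=(-1)^{\dim_L W_u}\gamma_u$; la formule de Waldspurger fait intervenir une \emph{autre} constante $\gamma$, issue du choix d'une base $\{e_j\}$ de $W_u$ mettant $h_u$ sous une forme standard. Pour conclure, il faut prouver que $\sgn_{K''/K''^\#}(\gamma/\gamma_u)$ est une \emph{bonne constante} — c'est exactement le point non trivial. Le papier le fait en appliquant \ref{prop:signe-transfert} à la correspondance $X''_u \CVP Y''_u$ avec l'inclusion $U(V''_u,h''_u) \subset \SO(V''_u, (\Tr_{L/L^\#})_* h''_u)$, ce qui transforme le quotient $\gamma/\gamma_u$ en un symbole de Hilbert portant sur $\det q''$, donc une bonne constante. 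Vous dites simplement qu'il faut \guillemotleft{}une vérification soigneuse des choix de constantes\guillemotright{} et qu'on peut \guillemotleft{}absorber les ajustements restants\guillemotright{}, mais sans identifier le mécanisme (\ref{prop:signe-transfert}) qui justifie cette absorption. Tel quel, l'argument est circulaire : on ne peut pas \guillemotleft{}absorber\guillemotright{} une constante dans une bonne constante sans d'abord démontrer qu'elle en est une. Mentionnons aussi que le papier rappelle deux nettoyages secondaires que vous ne signalez pas : le passage des formes hermitiennes (convention \cite{Wa01}) aux formes anti-hermitiennes (ici), et la suppression de $\Delta_{IV}$ due à la normalisation des intégrales orbitales.
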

\begin{proof}
  Prenons garde (cf. \ref{rem:c-normalisation}) que notre paramétrage de classes de conjugaison est différent que celui de \cite{Wa01} (cf. \ref{rem:c-normalisation}). Il faut aussi les observations ci-dessous.

  \begin{itemize}
    \item Il s'agira de groupes classiques sur les corps $L := F[u]$. Cela ne gène pas car le formalisme de l'endoscopie est compatible avec la restriction des scalaires.
    \item Nos formes $(W_u,h_u)$ sont anti-hermitiennes, pourtant celles de \cite{Wa01} sont hermitiennes; cela n'affecte pas le groupe unitaire, mais cela change le choix de paramètres ($\tau(c)=-c$ au lieu de $\tau(c)=c$) et la description de formes dans \cite{Wa01} X.3.
    \item Afin d'étendre les formules pour le facteur de transfert de \cite{Wa01} aux groupes classiques non quasi-déployés, on utilise \cite{LS1} (4.2) (il faut l'adapter à l'algèbre de Lie). Soit $M$ un groupe unitaire ou orthogonal sur $F$, fixons une forme intérieure quasi-déployée $M^*$ et un torseur intérieur $\psi: M \times_F \bar{F} \rightiso M^* \times_F \bar{F}$. Il faut calculer un invariant défini comme dans \cite{LS1} (3.4), avec des notations évidentes:
    $$ \inv\left( \dfrac{Y,X}{Y',X'} \right). $$

    Pour des tels groupes, on peut choisir $(M^*, \psi)$ dont la classe de cohomologie provient de $H^1(F, M)$ au lieu de $H^1(F, M_\text{ad})$. Cela permet d'éviter les constructions de doublements de \cite{LS1}. Un calcul explicite analogue à celui dans \cite{Wa01} X permet de conclure.
      
    \item Les cas où $\SO(V'_-,q'_-)$ ou $\SO(V''_-,q''_-)$ est isomorphe à $\Gm$ sont exclus dans \cite{Wa01} car ils rendent l'endoscopie non elliptique; pour la même raison, on ne considère pas le cas $L \simeq L^\# \times L^\#$ (qui revient à l'endoscopie pour $\GL$.) Or les formules de Waldspurger restent valables dans ces cas-là: elles valent la constante $1$, et il en est de même pour nos formules.
    \item A cause de notre définition d'intégrales orbitales, nous avons supprimé le facteur $\Delta_{IV}$ dans le facteur de transfert de \cite{Wa01}.
  \end{itemize}

  Après des modifications, l'identification de $\Delta_\pm$ résultent immédiatement. Quant à $\Delta_u$, il suffit de considérer le cas où $L=F[u]$ est un corps et $U(W_u,h_u)$ est quasi-déployé. Posons $d := \dim_L W_u$, on peut choisir $\gamma \in L^\times$ et une base de $\{e_j : 1 \leq j \leq d\}$ de $W_u$, de sorte que
  \begin{align*}
    \tau(\gamma) &= (-1)^d \gamma, \\
    h_u(e_j, e_k) &= \begin{cases}
      0, & \text{ si } j+k \neq d+1, \\
      2\gamma(-1)^{j+1}, & \text{ si } j+k=d+1.
    \end{cases}
  \end{align*}
  On peut prendre $\gamma \in \mathfrak{o}_L^\times$ dans le cas non ramifié.

  Supposons que $X_u \in \mathcal{O}(K/K^\#, a, c)$, $X_u=(X'_u, X''_u)$ et $(K/K^\#,a,c)=(K'/K'^\#,a',c') \oplus (K''/K''^\#,a'',c'')$ la décomposition de paramètres correspondante. Alors le facteur de transfert pour le groupe endoscopique $U(V'_u, h'_u) \times U(V''_u, h''_u)$ de $U(W_u, h_u)$ est, à une constante multiplicative près,
  $$ \sgn_{K''/K''^\#}(\gamma c''^{-1} \dot{P}_{X_u}(a'')). $$

  D'autre part, il existe $\gamma_u \in L^\times$, $\gamma_u \in \mathfrak{o}_L^\times$ dans le cas non ramifié tel que $\tau(\gamma_u)=(-1)^d \gamma_u$ et $\Delta_u((Y'_u,Y''_u), X_u)$ est égal à
  $$ (\text{bonne constante}) \cdot \sgn_{K''/K''^\#}(\gamma_u c''^{-1} \dot{P}_{X_u}(a'')). $$

  Il suffit de montrer que $\sgn_{K''/K''^\#}(\frac{\gamma}{\gamma_u})$ est une bonne constante. Cela résulte de \ref{prop:signe-transfert} car $X''_u \CVP Y''_u$ et $U(V''_u,h''_u) \subset \SO(V''_u, (\Tr_{L/L^\#})_* h''_u)$.

  Quant à la normalisation, voir la remarque à la fin de \cite{Wa01} X.
\end{proof}

\section{Transfert: le cas non archimédien}
Dans cette section, $F$ est toujours un corps local non archimédien de caractéristique nulle, $\Mp(W)$ désigne le revêtement métaplectique à huit feuillets $\MMp{8}(W)$ de $\Sp(W)$.

\subsection{Voisinages d'un élément semi-simple}\label{sec:voisinage-ss}
Soit $M$ un $F$-groupe réductif connexe ou un revêtement d'un tel groupe vérifiant \ref{hyp:revetement-commutant}. Dans le cas d'un revêtement $\rev: M \to \underline{M}(F)$, si $\eta \in M_\text{ss}$, posons $M^\eta := \rev^{-1}(\underline{M}^{p(\eta)}(F))$ et $M_\eta := \rev^{-1}(\underline{M}_{p(\eta)}(F))$. Dans le cas d'un groupe réductif, on confond systématiquement $M$ et $M(F)$.

Pour tout $\eta \in M_\text{ss}$ et tout ouvert $M^\eta$-invariant $\mathfrak{U}' \subset \mathfrak{m}_\eta$ contenant $0$, il existe un ouvert $M^\eta$-invariant $\mathfrak{U} \subset \mathfrak{U}'$ tel que

\begin{enumerate}
  \item $\exp: \mathfrak{U} \to M_\eta$ est défini et est un homéomorphisme sur son image, qui est ouvert dans $M_\eta$;
  \item l'application $(X,x) \mapsto x^{-1} \exp(X)\eta x$ de $\mathfrak{U} \times M$ sur $M$ est partout submersive, son image $\mathfrak{U}^\natural$ est un ouvert $M^\eta$-invariant de $M$;
  \item si $x \in M$ et s'il existe $X \in \mathfrak{U}$ tel que $x^{-1} \exp(X) \eta x \in \exp(\mathfrak{U})\eta$, alors $x \in M^\eta$.
\end{enumerate}

De plus, tout ouvert $M$-invariant dans $M$ contenant $\eta$ contient un ouvert de la forme $\mathfrak{U}^\natural$\index{$\mathfrak{U},\mathfrak{U}^\natural$}. Lorsque $M$ est un $F$-groupe réductif et $\mathfrak{U}'$ est invariant par conjugaison géométrique, on peut supposer de plus que
\begin{itemize}
  \item $\mathfrak{U}$ est invariant par conjugaison géométrique sous $M^\eta$;
  \item si $x \in M(\bar{F})$ et s'il existe $X \in \mathfrak{U}$ tel que $x^{-1} \exp(X) \eta x \in \exp(\mathfrak{U})\eta$, alors $x \in M^\eta(\bar{F})$.
\end{itemize}

Posons $\mathfrak{U}_\text{reg} := \mathfrak{U} \cap \mathfrak{m}_{\eta,\text{reg}}$.

\begin{proposition}[Descente d'intégrales orbitales]\label{prop:descente-int}
  Soit $\mathfrak{U}$ un ouvert comme ci-dessus. Supposons $\mathfrak{U}$ suffisamment petit.
  \begin{itemize}
    \item Soit $f^\natural \in C_c^\infty(\mathfrak{U}^\natural)$, alors il existe $f \in C_c^\infty(\mathfrak{U})$ tel que
    $$ J_{M_\eta}(X,f) = J_M(\exp(X)\eta, f^\natural) $$
    pour tout $X \in \mathfrak{U}_\text{reg}$.
    \item Inversement, soit $f \in C_c^\infty(\mathfrak{U})$ tel que $X \mapsto J_{M_\eta}(X,f)$ est invariant par $M^\eta(F)$, alors il existe $f^\natural \in C_c^\infty(\mathfrak{U}^\natural)$ qui satisfait à l'égalité ci-dessus.
  \end{itemize}

  Supposons que $M$ est un $F$-groupe réductif connexe et l'ouvert $\mathfrak{U}$ est invariant par conjugaison géométrique sous $M^\eta$, alors les énoncés précédents restent vrais pour l'égalité
  $$ J_{M_\eta}^\text{st}(X,f) = J_M^\text{st}(\exp(X)\eta, f^\natural), $$
  et la condition sur $f$ pour l'existence de $f^\natural$ est que $X \mapsto J_{M_\eta}(X,f)$ soit invariant par conjugaison géométrique par $M^\eta$.
\end{proposition}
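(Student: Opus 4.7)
La stratégie est la méthode classique de descente semi-simple de Harish-Chandra, rendue possible par la propriété (2): l'application $\varphi: \mathfrak{U} \times M \to \mathfrak{U}^\natural$, $(X,x) \mapsto x^{-1}\exp(X)\eta x$, étant partout submersive, elle admet des sections locales lisses et l'on peut pousser en avant ou tirer en arrière les fonctions lisses à support compact. L'ingrédient quantitatif clef est la comparaison
$$ |D_M(\exp(X)\eta)| = |D_{M_\eta}(X)| \cdot |\det(1-\operatorname{Ad}(\eta))|_{\mathfrak{m}/\mathfrak{m}_\eta}| \cdot (1 + o(1)) $$
pour $X \in \mathfrak{U}_\text{reg}$, où le facteur $o(1)$ provient d'une part du passage $\exp$ dans $\mathfrak{m}_\eta$, d'autre part de la perturbation de $\operatorname{Ad}(\eta)$ par $\operatorname{Ad}(\exp(X))$ sur le supplémentaire de $\mathfrak{m}_\eta$; en rétrécissant $\mathfrak{U}$, ce facteur devient une fonction lisse bornée et bornée inférieurement.

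Pour le sens direct, soit $f^\natural \in C_c^\infty(\mathfrak{U}^\natural)$. Fixons une mesure de Haar sur $M^\eta$ (compatible aux mesures sur $M$ et $M_\eta$) et définissons
$$ f(X) := |D_M(\exp(X)\eta)/D_{M_\eta}(X)|^{1/2} \int_{M^\eta \backslash M} f^\natural(x^{-1}\exp(X)\eta x)\,\dd\dot{x}, \quad X \in \mathfrak{U}. $$
La submersivité et la compacité du support de $f^\natural$ entraînent que $f \in C_c^\infty(\mathfrak{U})$. Calculons $J_M(\exp(X)\eta, f^\natural)$ pour $X \in \mathfrak{U}_\text{reg}$: en utilisant $M_{\exp(X)\eta} = (M_\eta)_X$ et en décomposant $M_{\exp(X)\eta} \backslash M = ((M_\eta)_X \backslash M^\eta) \times (M^\eta \backslash M)$, on obtient
$$ J_M(\exp(X)\eta, f^\natural) = |D_M(\exp(X)\eta)|^{1/2} \int_{M^\eta \backslash M} \int_{(M_\eta)_X \backslash M^\eta} f^\natural(x^{-1}z^{-1}\exp(X)\eta z x)\,\dd\dot{z}\,\dd\dot{x}. $$
La propriété (3) assure que, quitte à rétrécir $\mathfrak{U}$, l'intégrande $x^{-1}z^{-1}\exp(X)\eta z x \mapsto f^\natural$ est supporté sur $z \in M^\eta$; on y reconnaît l'intégrale orbitale $J_{M_\eta}(X, \cdot)$ de $f$ après réarrangement des mesures et permutation des deux intégrations, d'où l'égalité voulue.

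Pour la réciproque, soit $f \in C_c^\infty(\mathfrak{U})$ avec $X \mapsto J_{M_\eta}(X,f)$ invariant par $M^\eta$. Prenons une fonction de troncature $\phi \in C_c^\infty(M^\eta \backslash M)$ d'intégrale 1 et définissons $f^\natural$ par
$$ f^\natural(x^{-1}\exp(X)\eta x) := \phi(\dot{x}) \cdot |D_{M_\eta}(X)/D_M(\exp(X)\eta)|^{1/2} \cdot f(X), \quad X \in \mathfrak{U},\ x \in M, $$
qui s'étend par zéro hors de $\mathfrak{U}^\natural$. La propriété (3) et l'invariance de $J_{M_\eta}(\cdot,f)$ par $M^\eta$ garantissent que $f^\natural$ est bien défini, puis lisse à support compact. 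Un calcul d'intégrale orbitale analogue au précédent redonne l'égalité.

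Pour la version stable, on exploite la version renforcée de (3) (invariance par conjugaison géométrique et condition sur $M(\bar{F})$): les classes de conjugaison stable de $\exp(X)\eta$ dans $M$, pour $X$ semi-simple régulier dans $\mathfrak{m}_\eta$, s'identifient aux classes $M^\eta(\bar{F})$-stablement conjuguées de $X$ dans $\mathfrak{m}_\eta$. Le sommage sur $\mathcal{O}^\text{st}(X)/\text{conj}$ est donc préservé par la construction ci-dessus, et l'hypothèse d'invariance géométrique sur $J_{M_\eta}(\cdot,f)$ fournit la compatibilité voulue. Le cas d'un revêtement $\rev: M \to \underline{M}(F)$ vérifiant \ref{hyp:revetement-commutant} est identique dès que l'on remarque que le revêtement se scinde canoniquement au-dessus de $\exp(\mathfrak{U})$ pour $\mathfrak{U}$ assez petit, ce qui autorise à tout transposer à l'algèbre de Lie. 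L'obstacle principal est de gérer proprement la différence entre $M^\eta$ et $M_\eta = (M^\eta)^0$ dans la décomposition des intégrales, ce qui motive l'hypothèse d'invariance par $M^\eta$ (et non seulement par $M_\eta$) dans l'énoncé réciproque.
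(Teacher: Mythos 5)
The paper does not prove this proposition at all: it cites Waldspurger (\cite{Wa08} 2.3) directly. Your sketch reconstructs the classical Harish-Chandra descent argument, which is indeed the right method, and your expansion
$$ |D_M(\exp(X)\eta)| = |D_{M_\eta}(X)|\cdot |\det(1-\Ad(\eta)|\mathfrak{m}/\mathfrak{m}_\eta)|\cdot (1+o(1)) $$
is correct (and in fact becomes an exact identity for $X$ small enough in the non-archimedean case). However the finite quotient $M^\eta(F)/M_\eta(F)$, which you flag only at the very end as an ``obstacle'' for the converse, actually corrupts your displayed formulas in both directions.

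In the direct sense, the integral
$$ \int_{M^\eta \backslash M} f^\natural(x^{-1}\exp(X)\eta x)\,\dd\dot{x} $$
is not defined: the integrand $x\mapsto f^\natural(x^{-1}\exp(X)\eta x)$ is left-invariant only under $M_{\exp(X)\eta}$, not under $M^\eta$, so it does not descend to $M^\eta\backslash M$. Moreover, even if one evades this and swaps the two integrations in your decomposition $M_{\exp(X)\eta}\backslash M = ((M_\eta)_X\backslash M^\eta)\times(M^\eta\backslash M)$, the inner integral one recognizes is over $(M_\eta)_X\backslash M^\eta$, whereas $J_{M_\eta}(X,\cdot)$ is by definition an integral over $(M_\eta)_X\backslash M_\eta$; what you actually obtain is $\sum_{w\in M^\eta(F)/M_\eta(F)} J_{M_\eta}(\Ad(w^{-1})X, f)$, not $J_{M_\eta}(X,f)$. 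The fix is to factor instead through $M_\eta$ (equivalently, to work with the surjection $C_c^\infty(\mathfrak{U}\times M)\twoheadrightarrow C_c^\infty(\mathfrak{U}^\natural)$ coming from the submersion, integrating over $M$ only), not through $M^\eta$. In the converse sense, your formula for $f^\natural$ is well-defined only if $f$ itself is $M^\eta$-conjugation-invariant, whereas the hypothesis gives only that $J_{M_\eta}(\cdot,f)$ is; you must first replace $f$ by its average over $M^\eta(F)/M_\eta(F)$, which the invariance hypothesis guarantees does not change the orbital integrals. With these corrections the argument matches the standard one in \cite{Wa08} 2.3.
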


Ces propriétés sont bien connues, voir par exemple \cite{Wa08} 2.3.

\subsection{Un triplet endoscopique non standard}\index{endoscopie non standard}
Rappelons la définition d'un triplet endoscopique non standard $(G_1, G_2, j_*)$  dans \cite{Wa08} 1.7. Ici $G_1, G_2$ sont des groupes semi-simples connexes et simplement connexes, quasi-déployés sur $F$. Fixons des tores maximaux $T_i \subset G_i$ qui font partie d'une paire de Borel définie sur $F$. Posons $X_{i,*} := X_*(T_i)$, $X_i^* := X^*(T_i)$ et posons $X_{i,*,\Q} := X_{i,*} \otimes_\Z \Q$, $X_{i,\Q}^* := X_i^* \otimes_\Z \Q$. Notons $\Sigma_i \subset X_i^*$ les racines et $\check{\Sigma}_i \subset X_i^*$ les coracines. La donnée $j_*$ est un isomorphisme
$$j_*: X_{1,*,\Q} \rightiso X_{2,*,\Q}. $$

Notons $j^*: X_{2,\Q}^* \rightiso X_{1,\Q}^*$ l'isomorphisme transposé. Ces données sont soumises aux conditions suivantes:

\begin{itemize}
  \item il existe des bijections $\check{\tau}: \check{\Sigma}_1 \to \check{\Sigma}_2$, $\tau: \Sigma_2 \to \Sigma_1$ et des applications $\check{b}: \check{\Sigma}_1 \to \Q_{>0}$, $b: \Sigma_2 \to \Q_{>0}$ telles que $j_*(\check{\alpha}_1) = \check{b}(\check{\alpha}_1)\check{\tau}(\check{\alpha}_1)$ et $j^*(\alpha_2) = b(\alpha_2) \tau(\alpha_2)$ pour tous $\check{\alpha}_1 \in \check{\Sigma}_1$, $\alpha_2 \in \Sigma_2$. De plus, le diagramme suivant est commutatif
  $$\xymatrix{
    \Sigma_1 \ar[d] & \Sigma_2 \ar[d] \ar[l]^{\tau} \\
    \check{\Sigma}_1 \ar[r]^{\check{\tau}} & \check{\Sigma}_2
  }$$
  où $\Sigma_i \to \check{\Sigma}_i$ ($i=1,2$) sont les bijections naturelles de données radicielles.
  \item $j_*$ et $j^*$ sont équivariants pour les actions de $\Gamma_F$.
\end{itemize}

Observons que la définition est symétrique en $G_1$ et $G_2$.

L'application $\check{\tau}$ induit un isomorphisme de groupes de Weyl $W^{G_1} \rightiso W^{G_2}$. On vérifie que ces données donnent naissance à un $F$-isomorphisme naturel
$$ \mathfrak{t}_1/W^{G_1} \to \mathfrak{t}_2/W^{G_2},$$
ce qui permet de définir la correspondance de classes de conjugaison semi-simples dans les algèbres de Lie pour l'endoscopie non standard. Soient $X_i \in \mathfrak{t}_{i,\text{reg}}(F)$ qui se correspondent et notons $T_i$ le commutant de $X_i$ dans $G_i$ ($i=1,2$), comme pour l'endoscopie standard, il y a aussi une correspondance de mesures de Haar sur $T_1(F)$ et $T_2(F)$.

\begin{theorem}[Transfert non standard, \cite{Wa08} 1.8]\label{prop:transfert-non-standard}
  Soit $(G_1, G_2, j_*)$ un triplet endoscopique non standard. Fixons des mesures de Haar sur $G_1(F), G_2(F)$. Si $X_i \in \mathfrak{g}_{i, \text{reg}}$ ($i=1,2$) se correspondent, alors pour tout $f_2 \in C_c^\infty(\mathfrak{g}_2(F))$ il existe $f_1 \in C_c^\infty(\mathfrak{g}_1(F))$ telle que
  $$ J_{G_1}^\text{st}(X_1, f_1) = J_{G_2}^\text{st}(X_2, f_2), $$
  où les intégrales orbitales sont définies par rapport à des mesures compatibles sur les commutants. On dit que $f_1$ est un transfert de $f_2$.
\end{theorem}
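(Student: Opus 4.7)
Le théorème est le transfert non standard dû à Waldspurger \cite{Wa08}, et la démonstration que je proposerais suit fidèlement la sienne. Il s'agit d'une réduction du transfert (analytique) des intégrales orbitales stables pour toute fonction test au lemme fondamental non standard pour les unités des algèbres de Hecke sphériques, lequel se déduit, in fine, du lemme fondamental standard par un argument global, puis du théorème de Ngô \cite{Ng08}. La stratégie repose sur une récurrence sur $\dim G_1 = \dim G_2$ via la descente semi-simple, et sur l'observation cruciale que la classe de triplets endoscopiques non standard est stable par passage aux commutants.

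\textbf{Étapes principales.} D'abord, on établit que pour tout $\eta_2 \in G_2(F)$ semi-simple et son correspondant $\eta_1 \in G_1(F)$ (défini via l'identification $\mathfrak{t}_1/W^{G_1} \simeq \mathfrak{t}_2/W^{G_2}$), il existe un triplet endoscopique non standard induit $(G_{1,\eta_1}, G_{2,\eta_2}, j_{*,\eta})$ sur les commutants; ceci résulte de la compatibilité galoisienne de $j_*$ et du fait que la donnée $\check{\tau}, \tau, \check{b}, b$ se restreint aux sous-systèmes de racines définissant les commutants. Ensuite, grâce à la descente d'Harish-Chandra (\ref{prop:descente-int} appliquée à la version algèbre de Lie, ou sa variante pour les intégrales orbitales stables), il suffit d'établir l'égalité des intégrales orbitales stables au voisinage de tout $\eta_2$; ceci équivaut par descente au transfert non standard pour $(G_{1,\eta_1}, G_{2,\eta_2})$, dont la dimension est strictement inférieure sauf si $\eta_2$ est central. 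Par récurrence, on se ramène donc au cas des éléments topologiquement nilpotents $X_2 \in \mathfrak{g}_2(F)$, c'est-à-dire au voisinage de zéro. On utilise alors l'homogénéité des intégrales orbitales par rapport aux homothéties $X \mapsto \lambda^2 X$ ($\lambda \in F^\times$) pour réduire à des énoncés stables portant sur des fonctions à support dans un voisinage fixé de $0$, invariant par le sous-groupe hyperspécial $K_2 \subset G_2(F)$. Par un argument de partition et de densité (développement des germes de Shalika), cette assertion se réduit au transfert de la fonction caractéristique du réseau hyperspécial $\mathfrak{k}_2 \subset \mathfrak{g}_2(F)$ vers celle de $\mathfrak{k}_1$ dans $\mathfrak{g}_1(F)$: c'est le lemme fondamental non standard pour les unités.

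\textbf{Principal obstacle.} Le cœur technique, et de loin le point le plus délicat, est le lemme fondamental non standard lui-même. Dans le cas paradigmatique $(G_1, G_2) = (\Sp(2n), \SO(2n+1))$ (auquel tout triplet non standard se décompose essentiellement), Waldspurger a démontré par un argument global, reposant sur la stabilisation de la formule des traces et des comparaisons de caractères pondérés en caractéristique positive, que ce lemme fondamental non standard résulte du lemme fondamental standard de Langlands-Shelstad pour les groupes classiques. Ce dernier est l'énoncé démontré par Ngô \cite{Ng08} via l'étude cohomologique de la fibration de Hitchin et son théorème du support. Les autres difficultés --- la compatibilité précise entre les mesures de Haar via l'isomorphisme des tores maximaux induit par $j_*$, le passage des germes aux fonctions à support quelconque, et la gestion des tores non déployés --- sont désormais des techniques standards dans la théorie de l'endoscopie harmonique.
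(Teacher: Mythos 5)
The paper provides no proof of this theorem: it is cited verbatim from Waldspurger \cite{Wa08}, Théorème~1.8, and used as a black box in the subsequent descent arguments of \S 8 (in particular in \ref{prop:transfert-local} and the proof of \ref{prop:LF-unite}). Your sketch is therefore not comparable against anything internal to the paper; judged as a summary of Waldspurger's own argument it is accurate at the level of strategy. The reduction of the transfer to the non-standard fundamental lemma does proceed by Harish-Chandra descent and induction on $\dim G_1$, exploiting (as you note) the stability of the class of non-standard triplets under passage to centralizers, then homogeneity under $X \mapsto \lambda^2 X$ and a germ/density argument near the topologically nilpotent locus. One clarification worth making: the non-standard fundamental lemma for the pair $(\Sp(2n), \Spin(2n+1))$ is established directly by Ngô's cohomological study of the Hitchin fibration --- it appears as a theorem of its own in \cite{Ng08}, alongside the standard Langlands--Shelstad fundamental lemma for Lie algebras --- rather than being deduced from the standard one by a global argument. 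Waldspurger's global reductions (change of residual characteristic, descent from twisted to untwisted) are designed to flow from the twisted fundamental lemma \emph{down to} the standard and non-standard ones, not the other way around. This imprecision in the direction of the reduction chain does not create a gap in your outline, but the phrasing ``se déduit... du lemme fondamental standard par un argument global'' inverts the actual dependency.
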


On a aussi une version non standard du lemme fondamental.

\begin{definition}
  On dit qu'un triplet $(G_1, G_2, j_*)$ est non ramifié si
  \begin{itemize}
   \item $G_1$, $G_2$ sont non ramifiés,
   \item les fonctions $b, \check{b}$ prennent valeurs dans $\Q_{>0} \cap \Z_p^\times$.
  \end{itemize}
\end{definition}

\begin{theorem}[Lemme fondamental non standard, \cite{Wa08} 4.10]\label{prop:LF-non-standard}
  Supposons que $(G_1,G_2,j_*)$ est un triplet endoscopique non standard non ramifié. Soient $\mathfrak{k}_1 \subset \mathfrak{g}_1(F)$ et $\mathfrak{k}_2 \subset \mathfrak{g}_2(F)$ des réseaux hyperspéciaux. Alors $\mathbbm{1}_{\mathfrak{k}_1}$ est un transfert de $\mathbbm{1}_{\mathfrak{k}_2}$ si l'on utilise des mesures non ramifiées sur $G_1(F)$ et $G_2(F)$.
\end{theorem}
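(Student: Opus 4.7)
The plan is to follow Waldspurger's strategy in \cite{Wa08}, which ultimately reduces this non-standard fundamental lemma to Ngô Bao Châu's geometric proof \cite{Ng08} of the standard Langlands–Shelstad fundamental lemma, via a combination of semi-simple descent and a geometric comparison of Hitchin fibrations.

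First I would apply Harish-Chandra descent (in the spirit of Proposition \ref{prop:descente-int}) to reduce the comparison of $J^\text{st}_{G_1}(X_1, \mathbbm{1}_{\mathfrak{k}_1})$ and $J^\text{st}_{G_2}(X_2, \mathbbm{1}_{\mathfrak{k}_2})$ to the case where $X_1, X_2$ are topologically nilpotent. The idea is to decompose a general compact regular semisimple element via its topological Jordan decomposition; the commutants of the topologically semisimple parts inherit a non-standard endoscopic structure, and the non-ramified hypothesis on $(b, \check{b})$ is preserved because these scalars depend only on the root/coroot combinatorics. The exponential then identifies topologically nilpotent elements in $\mathfrak{g}_i(F)$ with topologically unipotent compact elements of $G_i(F)$, so the problem transfers cleanly between the group and its Lie algebra.

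Second, for topologically nilpotent elements that correspond under $j_*$, I would globalize to a smooth projective curve over a finite residue field. The isomorphism $j_*$, being $\Gamma_F$-equivariant and matching coroots to coroots and roots to roots up to positive rational scalars, descends to an $F$-isomorphism $\mathfrak{t}_1 /\!/ W^{G_1} \rightiso \mathfrak{t}_2 /\!/ W^{G_2}$ compatible with the Chevalley characteristic-polynomial maps. This identifies the two Hitchin bases and the two families of cameral covers; the non-ramified condition $b(\alpha), \check{b}(\check\alpha) \in \Z_p^\times$ ensures that this identification respects the $p$-adic integral structures, so that hyperspecial lattices on both sides are matched. Ngô's geometric stabilization theorem then identifies the stable cohomology of the corresponding Hitchin fibers with stable orbital integrals of the unit functions, and this cohomology only depends on the cameral cover. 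Since the cameral covers coincide under the identification, the stable orbital integrals must agree.

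The main obstacle is controlling precisely how the hyperspecial characteristic functions $\mathbbm{1}_{\mathfrak{k}_i}$ translate into geometric data on the Hitchin fiber, and checking that the isomorphism of cameral covers genuinely carries the "basic" class on one side to the one on the other. This is exactly the role of the integrality condition on $(b, \check b)$: the scaling factors do not twist the $\mathfrak{o}_F$-structures, so the natural affine Grassmannian / affine Springer fibre models match up. Once this matching of basic classes is set up, Ngô's purity and support theorems conclude the proof, just as in the standard case.
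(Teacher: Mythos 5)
This theorem is not proved in the paper: it is imported as a black box, cited verbatim from Waldspurger's \cite{Wa08}, Theorem 4.10, which in turn relies on Ngô's geometric proof of the fundamental lemma \cite{Ng08}. The only thing the paper adds after the statement is a short remark reconciling Waldspurger's unnormalized orbital integrals with the normalized ones used here, so there is no ``paper's own proof'' to compare your sketch against.

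That said, your sketch does capture the broad architecture of the actual argument in the references: reduce to topologically nilpotent elements by topological Jordan decomposition and Harish-Chandra descent, globalize, and then invoke Ngô's geometric stabilization of Hitchin fibrations, using the integrality condition on $b, \check{b}$ to keep hyperspecial lattices matched under $j_*$. Two cautions are in order. First, in Waldspurger's actual treatment the nonstandard lemma is not obtained by directly re-running Ngô's machinery on a ``nonstandard Hitchin fibration''; rather Waldspurger establishes a chain of implications between the various (standard, twisted, nonstandard) Lie-algebra fundamental lemmas, and Ngô supplies the base case — so your second step compresses a substantial web of reductions into a single geometric comparison. Second, your claim that ``the cameral covers coincide under the identification'' requires care, since $j_*$ changes the root/coroot data by the scalings $b,\check b$; what is preserved is the Chevalley quotient $\mathfrak{t}_1/\!/W^{G_1} \simeq \mathfrak{t}_2/\!/W^{G_2}$ together with the discriminant locus, which is enough, but the cameral covers themselves are genuinely different $W$-torsors, and the matching of the relevant perverse cohomology pieces is exactly the nontrivial content. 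In any case these are objections to a proof sketch of a theorem the paper only cites, not to anything the paper itself argues.
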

\begin{remark}
  Les intégrales orbitales dans \cite{Wa08} ne sont pas normalisées. Cependant on voit aisément qu'il existe une constante $c \in F^\times$ telle que $D_{G_1}(X_1) = c D_{G_2}(X_2)$ si $X_1$ et $X_2$ se correspondent; de plus, $c \in \mathfrak{o}_F^\times$ si $(G_1, G_2, j_*)$ est non ramifié. Donc notre formulation est équivalente à celle de \cite{Wa08}.
\end{remark}

Le transfert et le lemme fondamental sont énoncés pour les groupes simplement connexes. En pratique, on utilise une variante de ce théorème dans laquelle $G_2$ (ou $G_1$) est remplacé par un quotient.

\begin{lemma}\label{prop:int-isogenie}
  Soit $\sigma: G_2 \to \underline{G_2}$ une $F$-isogénie. Il existe une constante $c>0$ dépendant de mesures de Haar sur $G_2(F), \underline{G}_2(F)$, telle que pour tous $X_2 \in \mathfrak{g}_{2,\text{reg}}(F)$, $f_2 \in C_c^\infty(\mathfrak{g}_2)(F)$, on a
  $$ J_{G_2}^\text{st}(X_2, f_2) = c J_{\underline{G_2}}^\text{st}(\underline{X_2}, \underline{f_2}) $$
  où $\underline{X_2} := \sigma_*(X_2)$, $\underline{f_2} = (\sigma^*)^{-1}(f_2)$.
\end{lemma}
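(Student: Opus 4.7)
Le plan est de ramener les deux intégrales orbitales stables à des intégrales sur une même orbite stable, identifiées via l'isomorphisme d'algèbres de Lie $d\sigma: \mathfrak{g}_2 \rightiso \underline{\mathfrak{g}_2}$ induit par l'isogénie. Puisque $d\sigma$ est un $F$-isomorphisme, on a immédiatement $|D_{G_2}(X_2)| = |D_{\underline{G_2}}(\underline{X_2})|$, et par définition $f_2 = \underline{f_2} \circ d\sigma$. De plus, comme $\sigma: G_2(\bar{F}) \to \underline{G_2}(\bar{F})$ est surjective, $d\sigma$ envoie l'orbite géométrique $\mathcal{O}^{\text{geo}}(X_2)$ bijectivement sur $\mathcal{O}^{\text{geo}}(\underline{X_2})$. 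En prenant les $F$-points, on obtient une bijection $d\sigma: \mathcal{O}^{\text{st}}(X_2) \rightiso \mathcal{O}^{\text{st}}(\underline{X_2})$.

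Ensuite, je déroulerais l'intégrale orbitale stable sous la forme $J^{\text{st}}_{G_2}(X_2, f_2) = |D_{G_2}(X_2)|^{1/2} \int_{\mathcal{O}^{\text{st}}(X_2)} f_2 \, d\mu_{G_2}$, où la mesure $\mu_{G_2}$ sur $\mathcal{O}^{\text{st}}(X_2)$ est caractérisée par le fait que sa restriction à chaque orbite rationnelle $\mathcal{O}(X') \subset \mathcal{O}^{\text{st}}(X_2)$ est l'image de la mesure $G_2(F)$-invariante sur $G_{2,X'}(F) \backslash G_2(F)$, où l'on munit le commutant $G_{2,X'}$ de la mesure correspondant à celle fixée sur $T := G_{2,X_2}$ via la conjugaison stable. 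Idem pour $\mu_{\underline{G_2}}$ sur $\mathcal{O}^{\text{st}}(\underline{X_2})$. La poussée-en-avant $d\sigma_* \mu_{G_2}$ est alors une mesure sur $\mathcal{O}^{\text{st}}(\underline{X_2})$ qu'il faut comparer à $\mu_{\underline{G_2}}$. Sur chaque orbite rationnelle $\mathcal{O}(\underline{X'}) \subset \mathcal{O}^{\text{st}}(\underline{X_2})$, les classes rationnelles sous $G_2(F)$ de $\mathcal{O}^{\text{st}}(X_2)$ se projetant sur $\mathcal{O}(\underline{X'})$ sont en nombre fini (car $\sigma(G_2(F))$ est d'indice fini dans $\underline{G_2}(F)$, cet indice étant contrôlé par $H^1(F,\ker\sigma)$), et leur réunion via $d\sigma$ couvre $\mathcal{O}(\underline{X'})$. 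La somme des mesures poussées-en-avant est donc $\underline{G_2}(F)$-invariante sur $\mathcal{O}(\underline{X'})$. Par unicité à un scalaire près des mesures invariantes sur une orbite, il existe $c(\underline{X'}) > 0$ avec $d\sigma_* \mu_{G_2}|_{\mathcal{O}(\underline{X'})} = c(\underline{X'}) \cdot \mu_{\underline{G_2}}|_{\mathcal{O}(\underline{X'})}$.

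La principale difficulté technique est de vérifier que cette constante de proportionnalité est indépendante à la fois du choix de $\underline{X'}$ dans la classe stable et du choix de $X_2$ lui-même. Cela repose sur la compatibilité des mesures ``correspondantes'' sur les tores maximaux: pour tout $F$-tore maximal $T \subset G_2$ avec $\underline{T} := \sigma(T) \subset \underline{G_2}$, l'isogénie $T \to \underline{T}$ a pour noyau $\ker\sigma$ (central, donc contenu dans $T$), et le rapport entre les mesures de Haar ``correspondantes'' sur $T(F)$ et $\underline{T}(F)$ est déterminé par les mesures sur $G_2(F)$ et $\underline{G_2}(F)$ via une normalisation implicite sur $\ker\sigma(F)$; ce rapport est le même pour tous les tores, car la conjugaison stable et l'isogénie $\sigma$ commutent à un élément central près qui n'affecte pas les mesures. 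On obtient ainsi une constante $c > 0$ dépendant uniquement des mesures de Haar fixées sur $G_2(F)$ et $\underline{G_2}(F)$, et l'égalité souhaitée $J^{\text{st}}_{G_2}(X_2,f_2) = c \cdot J^{\text{st}}_{\underline{G_2}}(\underline{X_2}, \underline{f_2})$ en résulte.
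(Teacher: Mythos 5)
Votre preuve est correcte mais emprunte un chemin plus laborieux que celui de l'article. Vous travaillez orbite par orbite dans $\mathcal{O}^{\text{st}}(X_2)$, poussez en avant la mesure le long de $d\sigma$, puis comparez sur chaque orbite rationnelle de $\mathcal{O}^{\text{st}}(\underline{X_2})$ la mesure obtenue à $\mu_{\underline{G_2}}$ ; cela vous oblige à vérifier l'indépendance de la constante de proportionnalité vis-à-vis de l'orbite rationnelle et de $X_2$, point que vous identifiez comme la difficulté principale mais que vous justifiez de façon assez vague dans le dernier paragraphe. L'article évite ce recollement par une observation plus directe : $\ker\sigma$ étant central et contenu dans le tore $T := (G_2)_{X_2}$, le morphisme de variétés quotients $T\backslash G_2 \to \underline{T}\backslash\underline{G_2}$ (avec $\underline{T} := \sigma(T) = (\underline{G_2})_{\underline{X_2}}$) est un $F$-isomorphisme, d'où l'égalité $((G_2)_{X_2}\backslash G_2)(F) = ((\underline{G_2})_{\underline{X_2}}\backslash\underline{G_2})(F)$ comme espaces topologiques. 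L'intégrale orbitale stable s'écrit précisément comme une intégrale sur cet espace ; les deux membres intègrent alors le même intégrand (via $f_2 = \underline{f_2}\circ d\sigma$ et $|D_{G_2}(X_2)|=|D_{\underline{G_2}}(\underline{X_2})|$) sur le même espace, avec des mesures quotientes qui ne diffèrent que d'une constante globale fixée par les mesures de Haar sur $G_2(F)$ et $\underline{G_2}(F)$. Cette identification rend le résultat quasi-tautologique, alors que votre version doit encore assembler des constantes locales — elle aboutit, mais au prix d'arguments supplémentaires que vous ne détaillez qu'à moitié.
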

\begin{proof}
  Les intégrales orbitales stables $J_{\underline{G_2}}^\text{st}(\underline{X_2}, \underline{f_2})$ et $J_{G_2}^\text{st}(X_2, f_2)$ sont prises sur le même espace $((G_2)_{X_2} \backslash G_2)(F) = ((\underline{G_2})_{\underline{X_2}} \backslash \underline{G_2})(F)$, l'identification respectant les mesures.
\end{proof}

\begin{proposition}\label{prop:transfert-LF-isogenie}
  Soient $(G_1, G_2, j_*)$ un triplet endoscopique non standard et $\sigma: G_2 \to \underline{G_2}$ une $F$-isogénie. Identifions $\mathfrak{g}_2, \underline{\mathfrak{g}}_2$ et $C_c^\infty(\mathfrak{g}_2), C_c^\infty(\underline{\mathfrak{g}}_2)$ à l'aide de $\sigma$, alors l'assertion de \ref{prop:transfert-non-standard} reste valable pour $G_1$ et $\underline{G}_2$.

  Supposons $(G_1, G_2, j_*)$ non ramifié. Soient $\mathfrak{k}_1 \subset \mathfrak{g}_1(F)$, $\mathfrak{k}_2 \subset \mathfrak{g}_2(F)$ et $\underline{\mathfrak{k}}_2 \subset \underline{\mathfrak{g}}_2(F)$ des réseaux hyperspéciaux, qui correspondent aux modèles $\mathbf{G}_1$, $\mathbf{G}_2$ et $\underline{\mathbf{G}}_2$ définis sur $\mathfrak{o}_F$. Si $\sigma$ provient d'une $\mathfrak{o}_F$-isogénie $\mathbf{G}_2 \to \underline{\mathbf{G}}_2$, alors l'assertion de \ref{prop:LF-non-standard} reste valable pour $G_1$ et $\underline{G}_2$ si l'on utilise des mesures non ramifiées.
\end{proposition}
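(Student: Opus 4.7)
The proof will reduce both claims to their simply-connected analogues (Théorèmes \ref{prop:transfert-non-standard} et \ref{prop:LF-non-standard}) using Lemme \ref{prop:int-isogenie} as the bridge. Since $\sigma$ is an isogeny in characteristic zero, its differential $d\sigma: \mathfrak{g}_2(F) \rightiso \underline{\mathfrak{g}}_2(F)$ is an $F$-linear isomorphism of Lie algebras; this is precisely the identification alluded to in the statement, and it carries the correspondence of regular semi-simple classes between $\mathfrak{g}_1(F)$ and $\mathfrak{g}_2(F)$ onto the analogous correspondence between $\mathfrak{g}_1(F)$ and $\underline{\mathfrak{g}}_2(F)$.

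For the transfer, given $\underline{f}_2 \in C_c^\infty(\underline{\mathfrak{g}}_2(F))$ I would set $f_2 := (d\sigma)^* \underline{f}_2 \in C_c^\infty(\mathfrak{g}_2(F))$. Applying \ref{prop:transfert-non-standard} produces $f_1 \in C_c^\infty(\mathfrak{g}_1(F))$ satisfying
\[
  J_{G_1}^\text{st}(X_1, f_1) = J_{G_2}^\text{st}(X_2, f_2)
\]
for every corresponding pair $X_1 \leftrightarrow X_2$. Lemme \ref{prop:int-isogenie} then gives $J_{G_2}^\text{st}(X_2, f_2) = c \cdot J_{\underline{G}_2}^\text{st}(\underline{X}_2, \underline{f}_2)$ for a positive constant $c$ that depends only on the chosen Haar measures. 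Replacing $f_1$ by $c^{-1} f_1$ yields a transfer function for $(G_1, \underline{G}_2)$; this part of the proposition is therefore essentially formal.

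For the fundamental lemma one follows the same scheme with $f_2 = \mathbbm{1}_{\mathfrak{k}_2}$ and $\underline{f}_2 = \mathbbm{1}_{\underline{\mathfrak{k}}_2}$. The hypothesis that $\sigma$ comes from an $\mathfrak{o}_F$-isogeny $\mathbf{G}_2 \to \underline{\mathbf{G}}_2$, combined with the ``$p$ assez grand'' convention implicit in the non-ramified setup, guarantees that $\ker(\sigma)$ is a finite étale $\mathfrak{o}_F$-group scheme. Hence $d\sigma$ extends to an isomorphism of smooth $\mathfrak{o}_F$-Lie algebra schemes $\mathbf{g}_2 \rightiso \underline{\mathbf{g}}_2$, so the identification $\mathfrak{g}_2(F) \simeq \underline{\mathfrak{g}}_2(F)$ sends $\mathfrak{k}_2$ onto $\underline{\mathfrak{k}}_2$ and thus $\mathbbm{1}_{\mathfrak{k}_2}$ onto $\mathbbm{1}_{\underline{\mathfrak{k}}_2}$. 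Combining \ref{prop:LF-non-standard} with \ref{prop:int-isogenie} then gives $J_{G_1}^\text{st}(X_1, \mathbbm{1}_{\mathfrak{k}_1}) = c \cdot J_{\underline{G}_2}^\text{st}(\underline{X}_2, \mathbbm{1}_{\underline{\mathfrak{k}}_2})$.

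The main technical obstacle is the verification that $c = 1$ when both $G_2(F)$ and $\underline{G}_2(F)$ are equipped with their unramified Haar measures. My plan is to exploit the étaleness of $\ker(\sigma)$ over $\mathfrak{o}_F$: by Lang's theorem applied to the residual isogeny (or equivalently by the vanishing of the unramified $H^1$ of a finite étale group scheme), $\sigma$ sends a hyperspecial subgroup of $G_2(F)$ of unramified volume $1$ to a subgroup of $\underline{G}_2(F)$ of unramified volume $1$. A parallel argument on the commutant tori $G_{2,X_2}$ and $\underline{G}_{2,\underline{X}_2}$, which are themselves related by an étale isogeny whose maximal compact subgroups correspond under $d\sigma$, shows that the induced quotient measures match. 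Since the Weyl discriminants $|D_{G_2}(X_2)|$ and $|D_{\underline{G}_2}(\underline{X}_2)|$ coincide because $d\sigma$ is a Lie algebra isomorphism, the constant $c$ of Lemme \ref{prop:int-isogenie} reduces to the ratio of these volumes and hence equals $1$, completing the proof.
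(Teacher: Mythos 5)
Your treatment of the first assertion (transfer) coincides with the paper's: apply Théorème \ref{prop:transfert-non-standard} and Lemme \ref{prop:int-isogenie} and absorb the multiplicative constant into $f_1$. For the fundamental lemma you take a genuinely different route — a direct computation of the constant $c$ from \ref{prop:int-isogenie} — whereas the paper sidesteps the measure comparison entirely: it picks $X_1 \in \mathfrak{k}_1$, $X_2 \in \mathfrak{k}_2$ of regular reduction (so that $\underline{X}_2 \in \underline{\mathfrak{k}}_2$ is too), invokes the Lie-algebra analogue of Kottwitz's result (\cite{Ko86} 7.3) to conclude that both stable orbital integrals in the identity $J_{G_1}^{\text{st}}(X_1,\mathbbm{1}_{\mathfrak{k}_1}) = c\, J_{\underline{G}_2}^{\text{st}}(\underline{X}_2,\mathbbm{1}_{\underline{\mathfrak{k}}_2})$ equal $1$ with the unramified measures, and reads off $c=1$.

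There is, however, a genuine error in your measure argument. The claim ``the unramified $H^1$ of a finite étale group scheme vanishes'' is false: for $p>2$ one has $H^1_{\text{ét}}(\mathfrak{o}_F,\mu_2) \cong \mathfrak{o}_F^\times/(\mathfrak{o}_F^\times)^2 \cong \Z/2\Z$, and Lang's theorem ($H^1(\mathbb{F}_q,G)=1$) applies to \emph{connected} groups over $\mathbb{F}_q$, not to finite étale ones. Consequently $\sigma$ does \emph{not} send $\mathbf{G}_2(\mathfrak{o}_F)$ onto $\underline{\mathbf{G}}_2(\mathfrak{o}_F)$: in the case $\Spin(2n+1)\to\SO(2n+1)$ actually used in the paper, $\sigma(\Spin(2n+1)(\mathfrak{o}_F))$ has index $2$ in $\SO(2n+1)(\mathfrak{o}_F)$, so its unramified volume is $\frac12$, not $1$. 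The assertion you actually need — and which does follow from Lang's theorem, but applied to the \emph{special fibers} rather than to $\ker\sigma$ — is the equality of point counts $|\mathbf{G}_2(\mathbb{F}_q)| = |\underline{\mathbf{G}}_2(\mathbb{F}_q)|$, which gives $\mathrm{vol}(\mathbf{G}_2(\mathfrak{o}_F),|\omega|) = \mathrm{vol}(\underline{\mathbf{G}}_2(\mathfrak{o}_F),|\omega|)$ for a top-form $\omega$ over $\mathfrak{o}_F$ transported by $d\sigma$, and likewise for the centralizers. With that correction your volume-comparison strategy could be made to close, but as written the step identifying $\sigma(K_2)$ with a volume-$1$ subgroup of $\underline{G}_2(F)$ is wrong.
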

\begin{proof}
  La première assertion résulte de \ref{prop:transfert-non-standard} et \ref{prop:int-isogenie}. Pour la deuxième, \ref{prop:LF-non-standard} et \ref{prop:int-isogenie} fournit une constante $c>0$ telle que
  \begin{equation}\label{eqn:transfert-LF-isogenie}
    J_{G_1}(X_1, \mathbbm{1}_{\mathfrak{k}_2}) = c \cdot J_{\underline{G}_2}(\underline{X}_2, \mathbbm{1}_{\underline{\mathfrak{k}}_2})
  \end{equation}
  si $X_1 \in \mathfrak{g}_{1,\text{reg}}(F)$ et $X_2 \in \mathfrak{g}_{2,\text{reg}}(F)$ qui se correspondent et si l'on pose $\underline{X}_2 = \sigma_*(X_2)$.

  On peut prendre $X_1 \in \mathfrak{k}_1$, $X_2 \in \mathfrak{k}_2$ de réductions régulières, alors $\underline{X}_2 \in \underline{\mathfrak{k}}_2$ l'est aussi. Un résultat de Kottwitz adapté aux algèbres de Lie (\cite{Ko86} 7.3) montre que les deux intégrales orbitales stables dans \eqref{eqn:transfert-LF-isogenie} valent $1$ avec les mesures non ramifiées. D'où $c=1$, ce qu'il fallait démontrer.
\end{proof}

Nous ne considérons qu'une seule famille de triplets endoscopiques non standards. Prenons $G_1 = \Sp(2n)$, $G_2 = \Spin(2n+1)$. Identifions $X_{1,*}$ à $\Z^n$ avec la base standard $\{e_1, \ldots, e_n\}$. La base duale pour $X_1^*$ est notée par $\{f_1, \ldots, f_n \}$. Identifions $X_{2,*}$ au groupe des $(x_1, \ldots, x_n) \in \Z^n$ tels que $x_1 + \ldots + x_n \in 2\Z$. Alors
\begin{align*}
  \Sigma_1 & = \{\pm f_i \pm f_j : 1 \leq i \neq j \leq n\} \cup \{\pm 2 f_i : 1 \leq i \leq n \}, \\
  \check{\Sigma}_1 & = \{\pm e_i \pm e_j : 1 \leq i \neq j \leq n\} \cup \{\pm e_i : 1 \leq i \leq n \}, \\
  \Sigma_2 & = \{\pm f_i \pm f_j : 1 \leq i \neq j \leq n\} \cup \{\pm f_i : 1 \leq i \leq n \}, \\
  \check{\Sigma}_2 & = \{\pm e_i \pm e_j : 1 \leq i \neq j \leq n\} \cup \{\pm 2e_i : 1 \leq i \leq n \}.
\end{align*}

On a $X_{2,*} \subset X_{1,*}$, $X_{2,*,\Q} = X_{1,*,\Q}$ et $j_* = \identity$. Prenons des bijections
\begin{align*}
  \tau: & \Sigma_2 \to \Sigma_1 \\
  & \pm f_i \pm f_j \mapsto \pm f_i \pm f_j, \\
  & \pm f_i \mapsto \pm 2f_i; \\
  \check{\tau}: & \check{\Sigma}_1 \to \check{\Sigma}_2 \\
  & \pm e_i \pm e_j \mapsto \pm e_i \pm e_j,\\
  & \pm e_i \mapsto \pm 2e_i.
\end{align*}

Définissons $b: \Sigma_2 \to \Q_{>0}$ par $b(\pm f_i \pm f_j)=1$, $b(\pm f_i)=\frac{1}{2}$; définissons $\check{b}: \check{\Sigma}_1 \to \Q_{>0}$ par $\check{b}(\pm e_i \pm e_j) = 1$, $\check{b}(\pm e_i) = \frac{1}{2}$. Ces données fournissent un triplet endoscopique non standard $(G_1, G_2, j_*)$. La correspondance de classes de conjugaison est la suivante: $X \in \syp(2n)_\text{reg}(F)$ et $Y \in \spin(2n+1)_{\text{reg}}(F)=\so(2n+1)_{\text{reg}}(F)$ se correspondent si et seulement si $X \CVP Y$ (la notation dans \ref{def:CVP}).

Remarquons que ce triplet est non ramifié lorsque $p>2$. Les groupes $\Sp(2n)$, $\Spin(2n+1)$ et $\SO(2n+1)$ sont tous définis sur $\Z$ et $\Spin(2n+1) \to \SO(2n+1)$ est une isogénie sur $\Z[\frac{1}{2}]$. Cela permet d'appliquer \ref{prop:transfert-LF-isogenie} dans le cas non ramifié.

\subsection{Démonstration du transfert}
Soient $\epsilon = (\epsilon', \epsilon'') \in H(F)_\text{ss}$. On commence par établir le transfert local en $\epsilon$.

\begin{lemma}\label{prop:transfert-local}
  Supposons $H_\epsilon$ quasi-déployé. Il existe un voisinage $\mathfrak{U} \subset \mathfrak{h}_\epsilon(F)$ vérifiant les propriétés du paragraphe \ref{sec:voisinage-ss} tel que si $f \in C_{c,\asp}^\infty(\tilde{G})$, alors il existe $f^H_\epsilon \in C_c^\infty(\mathfrak{U}^\natural)$ telle que
  $$ J_{H,\tilde{G}}(\gamma, f) = J_H^\text{st}(\gamma, f^H_\epsilon) $$
  pour tout $\gamma \in \mathfrak{U}^\natural \cap H_{G-\text{reg}}(F)$.
\end{lemma}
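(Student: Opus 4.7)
Mon plan est de combiner la descente semi-simple des intégrales orbitales (\ref{prop:descente-int}) avec la descente du facteur de transfert (\ref{prop:descente-facteur-transfert}, \ref{prop:comparaison-transfert-classique}), puis d'invoquer les transferts connus pour l'endoscopie des groupes classiques, en y incluant le transfert non standard $\Sp(2m) \leftrightarrow \Spin(2m+1)$.

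Premièrement, je fixerai $\tilde{\eta} \in \tilde{G}$ au-dessus de $\eta$, avec la convention $\tilde{\eta} = \pm 1$ lorsque $\eta = \pm 1$ comme en \S\ref{sec:descente-formalisme}. Je choisirai ensuite des voisinages $\mathfrak{U} \subset \mathfrak{h}_\epsilon(F)$ et $\mathfrak{U}_G \subset \mathfrak{g}_\eta(F)$ de l'origine vérifiant les propriétés de \S\ref{sec:voisinage-ss}, tous deux assez petits pour que la décomposition \ref{prop:descente-facteur-transfert} s'applique aux paires $(Y,X) \in \mathfrak{U} \times \mathfrak{U}_G$. La première partie de \ref{prop:descente-int}, appliquée à $f \in C_{c,\asp}^\infty(\tilde{G})$, fournit $f_\natural \in C_c^\infty(\mathfrak{U}_G)$ telle que $J_{\tilde{G}}(\exp(X)\tilde{\eta}, f) = J_{G_\eta}(X, f_\natural)$ pour tout $X \in \mathfrak{U}_G \cap \mathfrak{g}_{\eta,\text{reg}}(F)$. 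En injectant ceci dans la définition de $J_{H,\tilde{G}}(\exp(Y)\epsilon, f)$ et en invoquant \ref{prop:descente-facteur-transfert}, l'énoncé recherché se ramène à trouver $f^\flat \in C_c^\infty(\mathfrak{U})$ telle que
\[ J_{H_\epsilon}^{\text{st}}(Y, f^\flat) = c \sum_X \Big( \prod_u \Delta_u((Y'_u, Y''_u), X_u) \Big) \Delta_+((Y'_+, Y''_-), X_+) \Delta_-((Y'_-, Y''_+), X_-) J_{G_\eta}(X, f_\natural) \]
pour tout $Y \in \mathfrak{U} \cap (\mathfrak{h}_\epsilon)_{G-\text{reg}}(F)$, la somme portant sur les classes de $X \in \mathfrak{U}_G$ correspondant à $Y$ par valeurs propres (\ref{prop:CVP}).

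Deuxièmement, d'après \ref{prop:comparaison-transfert-classique}, chaque facteur $\Delta_u$, $\Delta_\pm$ est (à une bonne constante absorbée dans $c$) un facteur de transfert classique normalisé: endoscopie standard $U(V'_u, h'_u) \times U(V''_u, h''_u) \to U(W_u, h_u)$ pour chaque $u$, et endoscopie classique $\Sp(W'_\pm) \times \SO(V''_\mp, q''_\mp) \to \Sp(W_\pm)$ avec les correspondances décrites. Le transfert d'intégrales orbitales pour ces endoscopies est établi par les travaux de Ngô \cite{Ng08} et leur réduction à la Waldspurger. Or la correspondance par valeurs propres relie $X'_\pm \in \syp(W'_\pm)$ aux composantes $Y'_\pm \in \so(V'_\pm, q'_\pm)$ du côté endoscopique; pour conclure ce passage, j'appliquerai le transfert non standard \ref{prop:transfert-non-standard}, via l'isogénie $\Spin(2m+1) \to \SO(2m+1)$ (voir \ref{prop:transfert-LF-isogenie}), en s'appuyant sur le triplet $(\Sp(2m), \Spin(2m+1), j_*)$ construit à la fin de la section précédente. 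En composant ces transferts — unitaires pour chaque $u$, classiques de type $\Sp$ pour les composantes $\pm$, et non standards pour convertir $\syp \leftrightarrow \so$ — on construit par étapes la fonction $f^\flat$ sur un voisinage convenable de $0$ dans $\mathfrak{h}_\epsilon(F)$.

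Enfin, pour relever $f^\flat$ en $f^H_\epsilon \in C_c^\infty(\mathfrak{U}^\natural)$, j'invoquerai la seconde partie de \ref{prop:descente-int}. Ceci exige que $Y \mapsto J_{H_\epsilon}^{\text{st}}(Y, f^\flat)$ soit invariante par conjugaison géométrique sous $H^\epsilon$, propriété qui résulte automatiquement du caractère stable de chaque étape de transfert et du fait que le membre de droite de l'égalité ci-dessus ne dépend que de $\mathcal{O}^{\text{st}}(Y)$. Le principal obstacle technique sera la mise en place rigoureuse de cet enchaînement: la cohérence des mesures de Haar sur les commutants qui apparaissent successivement, le contrôle de la \emph{bonne constante} $c$ et des choix auxiliaires $\tilde{u}', \tilde{u}''$ de \S\ref{sec:descente-formalisme} (dont seul le produit intervient dans $\Delta'\Delta''$), ainsi que le rétrécissement éventuel de $\mathfrak{U}_G$ pour garantir que le support de $f^\flat$ reste dans $\mathfrak{U}$ à chaque étape.
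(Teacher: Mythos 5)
Votre plan suit de très près l'approche du papier : descente des intégrales orbitales (\ref{prop:descente-int}), descente du facteur de transfert (\ref{prop:descente-facteur-transfert}, \ref{prop:comparaison-transfert-classique}), transferts classiques (unitaires et symplectiques), transfert non standard via l'isogénie $\Spin(2m+1)\to\SO(2m+1)$, puis remontée. Cependant, il y a une lacune réelle au tout début : vous fixez un seul $\eta$ au-dessus duquel vous effectuez la descente, alors qu'en général il existe \emph{plusieurs} classes de $G(F)$-conjugaison semi-simples $\eta_1,\ldots,\eta_m$ qui correspondent à $\epsilon$. La correspondance $\mu$ de \S\ref{sec:donnees-endoscopiques} est au niveau des classes géométriques, et une classe géométrique semi-simple dans $G$ se découpe typiquement en plusieurs classes rationnelles (par exemple lorsque le commutant a des facteurs unitaires ou lorsque des espaces $W_K$ non isomorphes donnent la même algèbre $K$). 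Or la somme sur $\delta$ qui définit $J_{H,\tilde{G}}(\exp(Y)\epsilon, f)$ comprend des $\delta$ dont la partie semi-simple topologique est conjuguée à chacun des $\eta_i$ ; votre unique descente en $\eta$ ne capture donc qu'une portion de cette somme, et l'égalité centrale affichée dans votre proposition est fausse telle quelle.

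La correction --- qui est exactement ce que fait le papier --- consiste à fixer des voisinages $\mathfrak{V}_i\subset\mathfrak{g}_{\eta_i}$ comme en \S\ref{sec:voisinage-ss}, à rétrécir $\mathfrak{U}$ de sorte que tout $\delta$ correspondant à un élément de $\mathfrak{U}^\natural$ appartienne à $\mathfrak{V}_1^\natural\sqcup\cdots\sqcup\mathfrak{V}_m^\natural$, à décomposer $J_{H,\tilde{G}}=\sum_{i=1}^m J_{H,\tilde{G}}^{(i)}$ selon l'appartenance de $\delta$ à $\mathfrak{V}_i^\natural$, puis à appliquer votre chaîne descente / comparaison / transferts classiques et non standard / remontée séparément pour chaque $i$. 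Comme une somme finie d'intégrales orbitales stables en est encore une, on conclut. Il faudrait aussi préciser, comme le fait le papier, que la réduction à un produit $(\prod_u J_u)\cdot J_+\cdot J_-$ exige de se ramener au cas où la fonction descendue $f_i^\flat$ est un produit $(\prod_u f_u)\cdot f_+\cdot f_-$ ; votre « on construit par étapes » est un peu vague sur ce point, mais l'idée est juste. À ces réserves près, votre démonstration reproduit celle du papier.
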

\begin{proof}
  Soit $\mathfrak{U} \in \mathfrak{h}_\epsilon(F)$ un voisinage vérifiant les propriétés du paragraphe \ref{sec:voisinage-ss}. Montrons d'abord qu'il existe $g^H_\epsilon \in C_c^\infty(\mathfrak{U})$ telle que
  \begin{equation}\label{eqn:descente-1}
    J_{H,\tilde{G}}(\exp(Y)\epsilon, f) = J_{H_\epsilon}^\text{st}(Y, g^H_\epsilon)
  \end{equation}
  pour tout $Y \in \mathfrak{U}$ tel que $\exp(Y)\epsilon \in \mathfrak{U}^\natural \cap H_{G-\text{reg}}(F)$.

  Soient $\eta_1, \ldots, \eta_m$ des représentants des classes de conjugaison semi-simples qui correspondent à $\epsilon$. Quitte à rétrécir $\mathfrak{U}$, on peut supposer qu'il existe des voisinages $\mathfrak{V}_i \subset \mathfrak{g}_{\eta_i}$ vérifiant les propriétés du paragraphe \ref{sec:voisinage-ss}, tels que l'ensemble des éléments correspondant à des éléments dans $\mathfrak{U}^\natural$ est inclus dans $\mathfrak{V}_1^\natural \sqcup \cdots \sqcup \mathfrak{V}_m^\natural$. Alors
  $$ J_{H,\tilde{G}}(\gamma, f) = \sum_{i=1}^m J_{H,\tilde{G}}^{(i)}(\gamma, f) $$
  pour tout $\gamma \in \mathfrak{U}^\natural \cap H_{G-\text{reg}}(F)$ et tout $f \in C_{c,\asp}^\infty(\tilde{G})$, où $J_{H,\tilde{G}}^{(i)}(\gamma, f)$ est défini de la même façon que \eqref{eqn:integrale-endoscopique} sauf que la somme est restreinte aux $\delta \in \mathfrak{V}_i$. Écrivons $\gamma = \exp(Y)\epsilon$. On se ramène à démontrer que pour tout $1 \leq i \leq m$, il existe $g^{H,(i)}_\epsilon \in C_c^\infty(\mathfrak{U})$ tel que
  $$ J_{H,\tilde{G}}^{(i)}(\gamma, f) = J_{H_\epsilon}^\text{st}(Y, g^{H,(i)}_\epsilon). $$

  Fixons un $1 \leq i \leq m$. Conservons le formalisme de \S\ref{sec:descente-formalisme} et paramétrons $\mathcal{O}(\epsilon)$ et $\mathcal{O}(\eta_i)$ par

\begin{align*}
  \eta_i & \in \mathcal{O}(\bigoplus_u (L/L^\#, u, (W_u,h_u)) \oplus (W_+, \angles{\cdot|\cdot}_+) \oplus (W_-, \angles{\cdot|\cdot}_-)), \\
  \epsilon' & \in \mathcal{O}(\bigoplus_u (L/L^\#, u, (V'_u,h'_u)) \oplus (V'_+, q'_+) \oplus (V'_-, q'_-)),\\
  \epsilon'' & \in \mathcal{O}(\bigoplus_u (L/L^\#, -u, (V''_u,h''_u)) \oplus (V''_+, q''_+) \oplus (V''_-, q''_-)).
\end{align*}

  Alors
  \begin{align*}
    G_{\eta_i} & = \prod_u U(W_u, h_u) \times \Sp(W_+) \times \Sp(W_-), \\
    H'_{\epsilon'} & = \prod_u U(V'_u, h'_u) \times \SO(V'_+, q'_+) \times \SO(V'_-,q'_-), \\
    H''_{\epsilon''} & = \prod_u U(V''_u, h''_u) \times \SO(V''_+, q''_+) \times \SO(V''_-,q''_-),\\
    H_\epsilon &= H'_{\epsilon'} \times H''_{\epsilon''}.
  \end{align*}

  Prenons des images réciproques $\tilde{\eta}_i \in \rev^{-1}(\tilde{\eta}_i)$ telles que $\tilde{\eta}_i = \pm 1$ lorsque $\eta_i = \pm 1$. Par la descente des intégrales orbitales \ref{prop:descente-int}, il existe une fonction $f_i^\flat \in C_c^\infty(\mathfrak{V}_i)$ telle que
  \begin{equation}\label{eqn:descente-2}
    J_{H,\tilde{G}}^{(i)}(\gamma, f) = \sum_X \Delta(\exp(Y)\epsilon, \exp(X)\tilde{\eta}_i) J_{G_{\eta_i}}(X, f_i^\flat),
  \end{equation}
  où $X$ parcourt les représentants des classes de conjugaison dans $\mathfrak{g}_{\eta_i}(F)$ telles que $\exp(X)\eta_i$ corresponde à $\gamma$, ou ce qui revient au même, $X \CVP Y$.

  Effectuons les décompositions dans \S\ref{sec:descente-formalisme}
  \begin{align*}
    X & = ((X_u)_u, X_+, X_+), \\
    Y & = ((Y_u)_u, Y_+, Y_-),\\
    X_\bullet & = (X'_\bullet, X''_\bullet) \in \mathfrak{u}(W_u,h_u),\\
    Y_u & = (Y'_u, Y''_u) \in \mathfrak{u}(V'_u, h'_u) \times \mathfrak{u}(V''_u, h''_u),\\
    Y_\pm & = (Y'_\pm, Y''_\mp) \in \so(V'_\pm,q'_\pm) \times \so(V''_\mp, q''_\mp)
  \end{align*}
  où $\bullet \in \{u,+,-\}$ comme d'habitude.

  Il suffit de considérer le cas $f_i^\flat = (\prod_u f_u) \cdot f_+ \cdot f_-$ dont $f_u \in C_c^\infty(\mathfrak{u}(W_u,h_u))$, $f_\pm \in C_c^{\infty}(\syp(W_\pm))$. D'après la descente du facteur de transfert \ref{prop:descente-facteur-transfert}, quitte à rétrécir $\mathfrak{U}$ le côté à droite de \eqref{eqn:descente-2} s'exprime comme le produit $(\prod_u J_u) \cdot J_+ \cdot J_-$, où
  \begin{align*}
    J_u = J_u(Y_u) & := \sum_{X_u} \Delta_u(Y_u, X_u) J_{U(W_u,h_u)}(X_u, f_u), \\
    J_+ = J_+(Y_+) & := \sum_{X_+} \Delta_+(Y_+, X_+) J_{\Sp(W_+)}(X_+, f_+), \\
    J_- = J_-(Y_-) & := \sum_{X_-} \Delta_-(Y_-, X_-) J_{\Sp(W_-)}(X_-, f_-),
  \end{align*}
  où les éléments $X_\bullet$ parcourent les représentants de classes de conjugaison telles que $X_\bullet \CVP Y_\bullet$.

  On se ramène à démontrer que $Y_\bullet \mapsto J_\bullet(Y_\bullet)$ est une intégrale orbitale stable. Pour $J_u$, cela découle de \ref{prop:comparaison-transfert-classique} et du transfert sur l'algèbre de Lie (\cite{Wa08} 1.6) pour l'endoscopie des groupes unitaires. Pour $J_+$, fixons un $X_+$ dans la somme. Le transfert pour le groupe endoscopique $\Sp(W'_+) \times \SO(V''_-, q''_-)$ de $\Sp(W_+)$ fournit une fonction
  $$ g_+ \in C_c^\infty(\syp(W'_+) \times \so(V''_-, q''_-)) $$
  telle que
  $$ J_+(Y_+) = J_{\Sp(W'_+) \times \SO(V''_-, q''_-)}^\text{st}((X'_+, Y''_-), g_+). $$

  En décomposant $g_+ = \sum g'_+ \cdot g''_+$ où $g'_+ \in C_c^\infty(\syp(W'_+))$, $g''_+ \in C_c^\infty(\so(V''_-,q''_-))$ et en appliquant le transfert non standard \ref{prop:transfert-non-standard}, \ref{prop:transfert-LF-isogenie} au triplet $(\Sp(W'_+), \Spin(V'_+, q'_+), \ldots)$ et à l'isogénie $\Spin(V'_+, q'_+) \to \SO(V'_+, q'_+)$, on déduit que $Y_+ \mapsto J_+(Y_+)$ est une intégrale orbitale stable. Le même argument montre que $Y_- \mapsto J_-(Y_-)$ l'est aussi. Cela établit \eqref{eqn:descente-2}.
  
  Déduisons maintenant ce lemme de \eqref{eqn:descente-1} en remontant les intégrales orbitales. En effet, $Y \mapsto J_{H_\epsilon}^\text{st}(Y, g_\epsilon^H)$ est invariante par conjugaison géométrique par $H^\epsilon$ car $J_{H,\tilde{G}}(\cdot,f)$ l'est. D'après \ref{prop:descente-int}, il existe $f^H_\epsilon \in C_c^\infty(\mathfrak{U}^\natural)$ tel que $J_{H_\epsilon}^\text{st}(Y, g^H_\epsilon) = J_H^\text{st}(\exp(Y)\epsilon, f^H_\epsilon)$. Cela achève la démonstration.
\end{proof}

Pour démontrer \ref{prop:transfert}, nous ferons usage d'une caractérisation locale des intégrales orbitales stables due à Langlands et Shelstad. Adoptons la convention de \cite{LS1} concernant les mesures de Haar.

\begin{theorem}[\cite{LS2} 2.2A]
  Soit $M$ un $F$-groupe réductif quasi-déployé. Soit $J$ une fonction sur $M_\text{reg}(F)$. Supposons que:
  \begin{itemize}
    \item $J$ est stablement invariante;
    \item il existe un ouvert compact $C \subset M(F)$ tel que $J$ est à support dans $\bigcup_{m \in M(F)} mCm^{-1} \cap M_\text{reg}(F)$; on dit que $J$ est à support compact modulo conjugaison;
    \item pour tout $\epsilon \in M(F)_\text{ss}$, il existe un ouvert $\mathfrak{W}$ contenant $\epsilon$ et $f_\epsilon \in C_c^\infty(M(F))$ tels que
    $$ J(\gamma) = J_M^\text{st}(\gamma, f_\epsilon) $$
    pour tout $\gamma \in \mathfrak{W} \cap M_\text{reg}(F)$; on dit que $J$ est une intégrale orbitale stable locale en $\epsilon$.
  \end{itemize}

  Alors il existe $f \in C_c^\infty(M(F))$ tel que $J(\gamma) = J_M^\text{st}(\gamma, f)$ pour tout $\gamma \in M_\text{reg}(F)$.
\end{theorem}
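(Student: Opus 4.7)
\noindent\textit{Esquisse de démonstration.} Le plan est de recoller les fonctions locales $f_\epsilon$ au moyen d'une partition de l'unité stablement invariante. Tout d'abord, soit $C \subset M(F)$ le compact tel que $J$ est à support dans $\bigcup_m mCm^{-1}$. Son image dans le quotient catégorique $M/\!/M = \Spec F[M]^M$ (qui paramètre les classes de conjugaison géométriques, à torsion près) est compacte. Cela entraîne que l'ensemble $S$ des classes de conjugaison stables semi-simples telles que $\mathcal{O}^\text{st}(\epsilon) \cap C \neq \emptyset$, vu dans ce quotient, est relativement compact. D'après l'hypothèse, pour chaque $\epsilon \in M(F)_\text{ss}$ dont la classe stable appartient à $\bar S$, on dispose d'un ouvert $\mathfrak{W}_\epsilon$ et de $f_\epsilon \in C_c^\infty(M(F))$ tels que $J = J_M^\text{st}(\cdot, f_\epsilon)$ sur $\mathfrak{W}_\epsilon \cap M_\text{reg}(F)$. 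Quitte à les rétrécir, on peut demander que $\mathfrak{W}_\epsilon$ soit stablement invariant, i.e. réunion de classes de conjugaison stables.

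Ensuite, je voudrais extraire un sous-recouvrement fini $\mathfrak{W}_1 := \mathfrak{W}_{\epsilon_1}, \ldots, \mathfrak{W}_k := \mathfrak{W}_{\epsilon_k}$. Comme le support de $J$ modulo conjugaison est compact, et comme tout élément $\gamma \in \supp(J)$ rencontre $\mathfrak{W}_{\epsilon(\gamma)}$ où $\epsilon(\gamma)$ désigne la partie semi-simple de $\gamma$, un argument de compacité dans le quotient $M/\!/M$ permet d'en extraire un sous-recouvrement fini. À ce recouvrement, je voudrais associer une partition de l'unité stablement invariante $\phi_1, \ldots, \phi_k$ dans $C_c^\infty(M(F))$, au sens où chaque $\phi_i$ est à support dans $\mathfrak{W}_i$, constant sur les classes stables, et $\sum_i \phi_i = 1$ sur un voisinage stablement invariant de $\supp(J)$. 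Dans le cas non archimédien, le quotient catégorique est totalement discontinu en un sens convenable et l'on obtient de telles fonctions en tirant en arrière des fonctions caractéristiques d'ouverts compacts dans $M/\!/M$.

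Posons alors
\[
f := \sum_{i=1}^k \phi_i \, f_{\epsilon_i} \; \in \; C_c^\infty(M(F)).
\]
Pour $\gamma \in M_\text{reg}(F)$, la fonction $\phi_i$ est constante sur $\mathcal{O}^\text{st}(\gamma)$, donc
\[
J_M^\text{st}(\gamma, \phi_i f_{\epsilon_i}) = \phi_i(\gamma) \, J_M^\text{st}(\gamma, f_{\epsilon_i}).
\]
Si $\gamma \in \mathfrak{W}_i \cap M_\text{reg}(F)$, la dernière quantité vaut $\phi_i(\gamma) J(\gamma)$ par hypothèse. Sinon $\phi_i(\gamma)=0$ car $\supp\phi_i \subset \mathfrak{W}_i$. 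En sommant et en utilisant $\sum_i \phi_i(\gamma) = 1$ lorsque $\gamma$ appartient au support de $J$ (et $J(\gamma)=0$ sinon), on obtient $J_M^\text{st}(\gamma, f) = J(\gamma)$ pour tout $\gamma \in M_\text{reg}(F)$.

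Le point délicat est la construction de la partition de l'unité stablement invariante: il faut produire des fonctions $\phi_i$ lisses sur $M(F)$, à support compact modulo conjugaison stable, constantes sur les classes stables, et qui somment à $1$ sur le support de $J$. Dans le cas non archimédien, ceci se ramène à des propriétés de séparation du quotient géométrique et aux descriptions explicites des voisinages $\mathfrak{W}_\epsilon$ issues de la descente semi-simple (\ref{prop:descente-int}); la stabilité des $\phi_i$ s'obtient en imposant qu'elles soient tirées en arrière de fonctions sur $M(F)/\!/M(F)$, dont les points rationnels paramètrent, pour $M$ quasi-déployé, les classes de conjugaison stables semi-simples. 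C'est précisément cette étape qui constitue l'essentiel du théorème 2.2A de \cite{LS2}.
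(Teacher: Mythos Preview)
The paper does not prove this theorem: it is stated with the citation [\cite{LS2} 2.2A] and used as a black box to deduce the global transfer from the local transfer established in Lemma~\ref{prop:transfert-local}. There is therefore no ``paper's own proof'' to compare against.

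Your sketch is a reasonable outline of the partition-of-unity argument that underlies the Langlands--Shelstad result. The main point you correctly identify as delicate --- constructing a \emph{stably invariant} partition of unity subordinate to the $\mathfrak{W}_{\epsilon_i}$ --- is indeed the heart of the matter, and your remark that for $M$ quasi-split the $F$-points of the categorical quotient parametrize stable semisimple classes (Steinberg, Kottwitz) is the key input. One caution: the functions $\phi_i$ you want are constant on stable classes but need not have compact support in $M(F)$ (a stable class is in general unbounded); what you actually need is that $\phi_i f_{\epsilon_i}$ lies in $C_c^\infty(M(F))$, which follows because $f_{\epsilon_i}$ already has compact support. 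With that adjustment your argument goes through in the non-archimedean case, which is all the paper requires here.
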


\begin{proof}[Démonstration de \ref{prop:transfert}]
  On veut appliquer le théorème de caractérisation à $J_{H,\tilde{G}}(\cdot, f)$ sur $H(F)$. A priori, cette fonction stablement invariante est définie sur $H_{G-\text{reg}}(F)$. Or le transfert local \ref{prop:transfert-local} appliqué aux éléments réguliers permet de la prolonger sur $H_\text{reg}(F)$. Comme dans  l'endoscopie pour les groupes réductifs, on montre que $J_{H,\tilde{G}}$ est à support compact modulo conjugaison. Soit $\epsilon \in H(F)_\text{ss}$. Pour étudier le comportement local en $\epsilon$, on peut supposer $H_\epsilon$ quasi-déployé d'après les arguments dans \cite{LS2} 1.3. Maintenant \ref{prop:transfert-local} montre que $J_{H,\tilde{G}}$ est une intégrale orbitale stable locale en $\epsilon$. Cela permet de conclure.
\end{proof}

\subsection{Démonstration du lemme fondamental pour les unités}
Nous nous plaçons dans le cas non ramifié \ref{hyp:non-ramifie}. Notons $p$ la caractéristique résiduelle de $F$. Fixons $G=\Sp(W)$ et $H = H_{n',n''}$ un groupe endoscopique de $\tilde{G}$. Fixons un réseau autodual $L \subset W$ et posons $K = \text{Stab}_G(L)$. Identifions $K$ à un sous-groupe compact ouvert de $\tilde{G}$ à l'aide du modèle latticiel.

Conservons les notations de \ref{prop:LF-unite}. Cette section est consacrée à la démonstration de l'égalité

\begin{equation}\label{eqn:LF-unite}
  J_{H,\tilde{G}}(\gamma, f_K) = J_H^\text{st}(\gamma, \mathbbm{1}_{K_H})
\end{equation}
pour tout $\gamma \in H_{G-\text{reg}}(F)$, où on utilise les mesures non ramifiées sur $\tilde{G}, H(F)$ et des mesures compatibles sur les commutants.

Comme le transfert, ce lemme fondamental sera démontré par la méthode de descente. Pour ce faire, effectuons des réductions.

\begin{lemma}
  Si $\gamma$ n'est pas un élément compact, alors $J_{H,\tilde{G}}(\gamma, f_K) = J_H^\text{st}(\gamma, \mathbbm{1}_{K_H})=0$.
\end{lemma}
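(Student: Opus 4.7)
The plan is to show that both sides of the claimed equality vanish separately, by exploiting the fact that compactness of a semisimple element depends only on its geometric conjugacy class (its eigenvalues), and that the correspondence $\mu$ between $\Cssgeo(H)$ and $\Cssgeo(G)$ preserves this property.

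First I would handle the stable orbital integral $J_H^{\mathrm{st}}(\gamma, \mathbbm{1}_{K_H})$. Recall that an element of a reductive $F$-group is compact if and only if its semisimple part has eigenvalues (in any faithful representation) that are units of $\bar{F}$; in particular this characterization depends only on the geometric conjugacy class. Since $K_H$ is compact, every element of $K_H$ is compact, and for any $\gamma_1 \in H(F)$ which is stably (a fortiori geometrically) conjugate to $\gamma$, the condition $x^{-1}\gamma_1 x \in K_H$ for some $x \in H(F)$ forces $\gamma_1$, and hence $\gamma$, to be compact. Under the hypothesis that $\gamma$ is not compact, the integrand therefore vanishes identically, giving $J_H^{\mathrm{st}}(\gamma, \mathbbm{1}_{K_H}) = 0$.

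Next I would treat the endoscopic integral $J_{H,\tilde{G}}(\gamma, f_K) = \sum_\delta \Delta(\gamma,\tilde{\delta}) J_{\tilde{G}}(\tilde{\delta}, f_K)$. The sum is over representatives of conjugacy classes of $\delta \in G(F)$ corresponding to $\gamma$, i.e.\ $\mu(\mathcal{O}^{\mathrm{geo}}(\gamma)) = \mathcal{O}^{\mathrm{geo}}(\delta)$. Using the explicit description of $\mu$ in terms of eigenvalues given in the introduction (the eigenvalues of $\delta$ are, up to sign on the $\gamma''$-part, the eigenvalues of $\gamma$), one sees directly that $\delta$ is compact if and only if $\gamma$ is compact: both conditions amount to the collection of non-trivial eigenvalues (and their inverses) being units, and sign changes preserve this. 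Hence if $\gamma$ is not compact, neither is any corresponding $\delta$.

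It remains to observe that $J_{\tilde{G}}(\tilde{\delta}, f_K) = 0$ whenever $\delta$ is not compact. Indeed, $f_K$ is supported on $\rev^{-1}(K)$, and $K$ is a compact open subgroup of $G(F)$, so $x^{-1}\delta x \in K$ for some $x \in G(F)$ forces $\delta$ to be compact. Therefore every term in the sum defining $J_{H,\tilde{G}}(\gamma, f_K)$ vanishes, and the claim follows. There is no real obstacle here; the only point requiring a little care is the compatibility of compactness with the eigenvalue-based correspondence $\mu$, but this is immediate from the formulas in \S\ref{sec:donnees-endoscopiques}.
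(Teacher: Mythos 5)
Your proposal is correct and follows essentially the same route as the paper: show each side vanishes separately, using that compactness of a semisimple element depends only on its geometric conjugacy class and that the correspondence $\mu$ preserves compactness, so that the relevant orbital integrals are supported away from the compact sets $K_H$ and $K$. The only difference is that you spell out the eigenvalue argument for why $\mu$ preserves compactness, whereas the paper simply asserts this and cites \cite{Wa08} 5.2 for the compactness criterion.
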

\begin{proof}
  La compacité ne dépend que de la classe de conjugaison géométrique (\cite{Wa08} 5.2). Supposons que $\gamma$ n'est pas compact, on a alors $\mathcal{O}^\text{st}(\gamma) \cap K_H = \emptyset$, d'où $J_H^\text{st}(\gamma, \mathbbm{1}_{K_H})=0$. D'autre part, si $\delta \in G(F)$ qui correspond à $\gamma$ n'est pas compact, alors $\mathcal{O}(\delta) \cap K = \emptyset$, d'où $J_{\tilde{G}}(\delta, f_K)=0$. Or la correspondance de classes de conjugaison semi-simples de \S\ref{sec:donnees-endoscopiques} préserve la compacité. On en déduit que $J_{H,\tilde{G}}(\gamma,f_K) = 0$.
\end{proof}

Supposons dès maintenant que $\gamma=(\gamma',\gamma'')$ est compact, alors on a la décomposition de Jordan topologique
\begin{align}
  \label{eqn:descente-gamma-1}\gamma &= \exp(Y)\epsilon,\\
  \label{eqn:descente-gamma-2} \epsilon &= (\epsilon',\epsilon''),\\
  \label{eqn:descente-gamma-3} Y &= (Y',Y''),
\end{align}
telle que $\epsilon$ est d'ordre fini premier à $p$.

\begin{lemma}
  Avec les hypothèses ci-dessus, il existe $\gamma_1 \in H_{G-\text{reg}}(F)$ tel que
  \begin{itemize}
    \item $\gamma, \gamma_1$ sont stablement conjugués;
    \item soit $\gamma_1 = \exp(Y_1)\epsilon_1$ la décomposition de Jordan topologique, alors $H_{\epsilon_1}$ est quasi-déployé.
  \end{itemize}

  De plus, supposons $H_\epsilon$ quasi-déployé, alors $\epsilon$ est stablement conjugué à un élément de $K_H$ si et seulement si $H_\epsilon$ est non ramifié.

  Les mêmes assetions restent valides pour un élément compact $\delta \in G_\text{reg}(F)$ et sa décomposition de Jordan topologique.
\end{lemma}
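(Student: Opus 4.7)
The plan is to prove the three assertions in the same spirit, using the parametrization of semisimple conjugacy classes from Section 3 and elementary cohomological input.

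For the first assertion, I would begin by describing the stable class of $\gamma$ as a torsor under $H^1(F, T)$ where $T = H_\gamma$ is the maximal torus containing $\gamma$. By the topological Jordan decomposition, varying $\gamma$ within its stable conjugacy class (the regular case) produces a corresponding variation of the semisimple part $\epsilon$ within a fixed geometric conjugacy class, because $\epsilon$ and $Y$ are conjugated by the same element of $H(\bar{F})$. The task is therefore to show that among the $\epsilon_1$ obtained in this way, at least one has quasi-split connected centralizer. Decomposing $\epsilon = (\epsilon', \epsilon'')$ with parameters
$$
(K'/K'^\#, v', (V'_K,h'_K), (V'_\pm, q'_\pm)),\quad
(K''/K''^\#, v'', (V''_K,h''_K), (V''_\pm, q''_\pm)),
$$
one has $H_\epsilon = \prod_u U(V'_u, h'_u) \times \SO(V'_\pm,q'_\pm) \times \prod_u U(V''_u, h''_u) \times \SO(V''_\pm, q''_\pm)$. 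Each of these factors is quasi-split iff the underlying hermitian or quadratic form is quasi-split, which is controlled by Hasse invariants. I would then exhibit classes in $H^1(F, T)$ whose effect on the $c$-parameters of $\epsilon$ twists each form into its quasi-split shape, using the explicit description $H^1(F,T) \simeq \bigoplus_i K_i^{\#\times}/N(K_i^\times)$ and the surjectivity of the map from this group onto the product of local Brauer-type invariants classifying the forms appearing in the centralizer. This is a classical existence result for classical groups, in the spirit of Kottwitz's section; the main point is that the $c$-parameters of $\gamma$ and of $\epsilon$ are controlled by the same cohomology, with compatible projections.

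For the second assertion, the direct implication is straightforward: if $\epsilon_1 \in K_H$ with $K_H$ hyperspecial, then $K_H \cap H_{\epsilon_1}$ is a hyperspecial subgroup of $H_{\epsilon_1}(F)$ coming from the scheme-theoretic fixed points of the automorphism $\mathrm{int}(\epsilon_1)$ on the smooth $\mathfrak{o}_F$-model of $H$ defining $K_H$; this forces $H_{\epsilon_1}$ to be unramified. For the converse, assuming $H_\epsilon$ quasi-split and unramified, the parameter $(K/K^\#, v, \ldots)$ of $\epsilon$ has $K/K^\#$ unramified and the forms $h'_K, h''_K, q'_\pm, q''_\pm$ admit unramified representatives (see \cite{Wa08} 5.3). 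In this situation I would invoke the integral version of Bruhat–Tits theory: the hyperspecial subgroup attached to an autodual lattice yields an $\mathfrak{o}_F$-model of $H$, and elements of finite order prime to $p$ with unramified centralizers can always be conjugated inside $K_H$ after a stable twist, since their reduction is semisimple in the special fiber.

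The third assertion for $\delta \in G_\text{reg}(F)$ compact is handled by exactly the same recipe. The parametrization of semisimple classes in $G = \Sp(W)$ from \S\ref{sec:parametrage} runs in parallel, the centralizer $G_\eta$ decomposes as a product of unitary and symplectic groups (all stable class freedom given by $H^1$ of the corresponding torus), and the integral structure on $G$ coming from the autodual lattice $L$ plays the role of $K_H$. The hardest part, as in the first assertion, is to check that within the $H^1(F, G_\delta)$-torsor of stable conjugates of $\delta$, one can simultaneously adjust all the Hasse invariants of the forms appearing in $G_\eta$; this reduces to the surjectivity of the same type of projection onto the inner-form invariants, which follows from the explicit cohomological description. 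No new technical input is needed beyond what was used in the $H$-case.
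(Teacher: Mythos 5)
The paper's own proof of this lemma is a bare citation to \cite{Wa08}, 5.13(2) and 5.3(v). Your proposal is an attempt to reconstruct the underlying argument, and the approach you take (parametrization of semisimple classes from \S 3 together with the Galois-cohomological description of stable conjugacy) is indeed the one Waldspurger carries out in the cited passages, so in that sense it is ``the same approach.''

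That said, there is a genuine gap at the pivotal step of the first assertion: you assert ``the surjectivity of the map from \([H^1(F,T)]\) onto the product of local Brauer-type invariants classifying the forms appearing in the centralizer'' as if it were a classical fact, but it is not automatic and is exactly what needs to be proved. The torus \(T=H_\gamma\) is not the centralizer of \(\epsilon\) but a maximal torus inside it, and what you need is that the composite \(H^1(F,T)\to H^1(F,H_\epsilon)\to H^1(F,(H_\epsilon)_{\mathrm{ad}})\) hits the class defining the quasi-split inner form. Kottwitz's general surjectivity result (for elliptic maximal tori) does not directly apply here since \(\gamma\) compact does not force \(T\) to be elliptic, and the Hasse invariants of the various \(h_u\), \(q_\pm\) factors of \(H_\epsilon\) are not varied independently by a single cohomology class (they are linked through a common \(c\)-parameter and product-formula constraints). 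Establishing that the quasi-split target is nonetheless reachable requires the explicit computation with the classification of \S 3 that Waldspurger performs; your sketch states the conclusion of that computation rather than supplying it. The converse direction of the second assertion has a similar feature: ``can always be conjugated inside \(K_H\) after a stable twist'' is essentially the statement being proved, and the actual content lies in the Bruhat--Tits-theoretic verification that the fixed-point scheme of \(\mathrm{int}(\epsilon)\) gives a smooth reductive \(\mathfrak{o}_F\)-model of \(H_\epsilon\) (which you correctly identify, but this is where \cite{Wa08} 5.3 does real work). Your outline is a faithful summary of the cited proof's structure, but as written it would not stand alone as a proof.
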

\begin{proof}
  Voir \cite{Wa08}, 5.13 (2) et 5.3 (v).
\end{proof}

\begin{lemma}\label{prop:existence-diagramme-nr}
  Supposons que $\gamma \in H_{G-\text{reg}}(F)$ est compact avec $\epsilon \in K_H$, alors il existe $\delta \in G(F)$ qui correspond à $\gamma$ avec la décomposition de Jordan topologique $\delta=\exp(X)\eta$ telle que $\eta \in K$.
\end{lemma}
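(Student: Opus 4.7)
Le plan est de construire d'abord un $\eta \in K \cap G_{\text{ss}}(F)$ qui corresponde à $\epsilon$ au sens de \S\ref{sec:donnees-endoscopiques}, puis de produire, à partir de la partie topologiquement nilpotente $Y$, un élément $X \in \mathfrak{g}_\eta(F)$ topologiquement nilpotent tel que $\delta := \exp(X)\eta$ corresponde à $\gamma$. L'unicité de la décomposition de Jordan topologique garantira alors automatiquement que $\exp(X)\eta$ est bien la décomposition de Jordan topologique de $\delta$.

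Pour construire $\eta$, j'utiliserai le paramétrage explicite de \S\ref{sec:parametrage}. L'élément $\epsilon = (\epsilon', \epsilon'') \in K_H$ étant d'ordre fini premier à $p$ et le commutant $H_\epsilon$ étant non ramifié, on peut écrire
$$\epsilon' \in \mathcal{O}(\bigoplus_u (L/L^\#, u, (V'_u, h'_u)) \oplus (V'_+, q'_+) \oplus (V'_-, q'_-)),$$
et de même pour $\epsilon''$, où chaque $(L/L^\#, u)$ est non ramifié et chacune des formes admet un réseau autodual. On pose $(K/K^\#, \ldots)$ comme dans \S\ref{sec:descente-formalisme}: $K := K' \times K''$ via $(u', -u'')$, et on construit l'espace symplectique sous-jacent $W_u \simeq V'_u \oplus V''_u$, $(W_\pm, \angles{\cdot|\cdot}_\pm)$ de dimensions dictées par les contraintes de \S\ref{sec:donnees-endoscopiques}. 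Puisque tout se passe dans le cas non ramifié, chacun de ces facteurs admet un $\mathfrak{o}_F$-réseau autodual; la somme orthogonale fournit un réseau autodual $L_0 \subset W$ et un élément $\eta_0 \in \text{Stab}_G(L_0)$ correspondant à $\epsilon$. Comme $\Sp(W)$ agit transitivement sur les réseaux autoduaux, il existe $g \in G(F)$ avec $gL_0 = L$; quitte à remplacer $\eta_0$ par $g\eta_0 g^{-1}$, on a $\eta \in K$.

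Venons-en à la construction de $X$. Le commutant $G_\eta$ se décompose en
$$G_\eta = \prod_u U(W_u, h_u) \times \Sp(W_+) \times \Sp(W_-),$$
tandis que $H_\epsilon$ se décompose comme au \S\ref{sec:descente-formalisme}. L'élément $Y$ se décompose en $((Y'_u, Y''_u)_u, Y'_+ \oplus Y''_-, Y'_- \oplus Y''_+)$, toutes les composantes étant topologiquement nilpotentes. Pour chaque composante unitaire $(Y'_u, Y''_u) \in \mathfrak{u}(V'_u, h'_u) \oplus \mathfrak{u}(V''_u, h''_u)$, la correspondance par valeurs propres de \ref{def:CVP} prescrit un $X_u \in \mathfrak{u}(W_u, h_u)$, et l'existence d'un tel $X_u$ rationnel résulte directement du paramétrage de \S\ref{sec:parametrage} appliqué à $W_u \simeq V'_u \oplus V''_u$ comme $L$-modules. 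Pour la partie $(Y'_+, Y''_-) \in \so(V'_+, q'_+) \oplus \so(V''_-, q''_-)$, on cherche $X_+ \in \syp(W_+)$ avec $X_+ \CVP (Y'_+, Y''_-)$; c'est possible par la même description de paramètres pour l'algèbre de Lie symplectique (\S\ref{sec:parametrage}, dernier paragraphe), en remarquant que les données galoisiennes $(K/K^\#, a)$ sont héritées de celles de $(Y'_+, Y''_-)$. De même pour $X_-$.

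Il reste à vérifier les propriétés requises de $\delta = \exp(X)\eta$. Puisque $X \in \mathfrak{g}_\eta(F)$ est topologiquement nilpotent et commute avec $\eta$ d'ordre fini premier à $p$, $\exp(X)$ est topologiquement unipotent et commute avec $\eta$; par unicité (\cite{Wa08} 5.2), c'est la décomposition de Jordan topologique de $\delta$. Le fait que $\delta$ corresponde à $\gamma$ résulte de la compatibilité par valeurs propres de $(X, \eta)$ avec $(Y, \epsilon)$, exactement comme en \ref{prop:CVP-nr}. L'obstacle principal réside dans la transitivité des réseaux autoduaux pour placer $\eta$ dans $K$; c'est licite ici car $\Sp(W)$ est semi-simple simplement connexe et la classe $\mathcal{O}(\delta)$ ne dépend que de $(K/K^\#, \ldots)$ à équivalence près, tous les choix de réseau autodual conduisant au même $K$ à $G(F)$-conjugaison près.
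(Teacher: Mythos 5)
La preuve du papier se réduit à une citation de \cite{Wa08} 5.7 (ii) ; vous proposez à la place une construction directe, ce qui est une approche réellement différente. Votre construction est correcte dans les grandes lignes, mais deux points sont passés sous silence et demandent à être explicités.

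Premièrement, en construisant $W_u \simeq V'_u \oplus V''_u$ et le réseau $L_0$, vous ne précisez pas la forme anti-hermitienne $h_u$ ; or ce choix intervient en deux endroits : (a) pour que $(W_u, h_u)$ admette un réseau autodual, et (b) pour que $X_u \in \mathfrak{u}(W_u, h_u)(F)$ avec $X_u \CVP (Y'_u, Y''_u)$ existe — contrairement au cas symplectique, les formes hermitiennes ou anti-hermitiennes d'une dimension donnée ne sont pas toutes isomorphes, et l'affirmation « l'existence résulte directement du paramétrage » n'est donc pas immédiate. Le remède est simple : prendre $\lambda \in \mathfrak{o}_L^\times$ avec $\tau(\lambda) = -\lambda$ et poser $h_u := \lambda(h'_u \oplus h''_u)$ ; alors (a) découle de l'existence de réseaux autoduaux pour $h'_u$, $h''_u$ (conséquence de $\epsilon \in K_H$ d'ordre premier à $p$, \cite{Wa08} 5.3), et (b) devient tautologique en posant $X_u := Y'_u \oplus Y''_u$, puisque $\mathfrak{u}(V'_u, h'_u) \oplus \mathfrak{u}(V''_u, h''_u) = \mathfrak{u}(V'_u, \lambda h'_u) \oplus \mathfrak{u}(V''_u, \lambda h''_u) \hookrightarrow \mathfrak{u}(W_u, h_u)$. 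Il faut le dire, et de même (plus simplement) pour les blocs $\pm$. Deuxièmement, la compatibilité de $\delta = \exp(X)\eta$ avec $\gamma$ est invoquée « exactement comme en \ref{prop:CVP-nr} » — mais \ref{prop:CVP-nr} énonce l'implication inverse. L'implication voulue est vraie par un suivi direct des valeurs propres, mais ce n'est pas ce que dit la référence.

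Signalons aussi une voie plus proche de ce que donne la citation, qui évite les constructions explicites de réseaux : prendre un $\delta_1 \in G(F)$ quelconque correspondant à $\gamma$ (il en existe car $\gamma$ est $G$-régulier et les formes symplectiques sont classifiées par leur dimension), observer par unicité de la décomposition de Jordan topologique $\delta_1 = \exp(X_1)\eta_1$ que $\eta_1$ correspond géométriquement à $\epsilon$, puis noter que $\eta_1$ est d'ordre fini premier à $p$ avec $G_{\eta_1}$ non ramifié, donc stablement conjugué à un élément de $K$ (\cite{Wa08} 5.3 (v)) ; la conjugaison stable préservant la correspondance avec $\gamma$, on conclut. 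Votre construction, une fois complétée, a toutefois l'avantage d'exhiber $\delta$ explicitement.
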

\begin{proof}
  Cf. \cite{Wa08} 5.7 (ii).
\end{proof}

Traitons maintenant la descente d'une intégrale orbitale sur $\tilde{G}$.

\begin{lemma}\label{prop:int-orb-f_K}
  Soit $\eta \in K$ d'ordre fini premier à $p$, alors $G_\eta$ est non ramifié et $K_\eta := K \cap G_\eta(F)$ est encore un sous-groupe hyperspécial de $G_\eta(F)$. Notons $\mathfrak{k}_\eta \subset \mathfrak{g}_\eta(F)$ le sous-réseau hyperspécial associé. Pour tout $X \in \mathfrak{g}_\eta(F)$, on a
  \begin{align*}
    J_{\tilde{G}}(\exp(X)\eta, f_K) & = J_{G_\eta}(X, \mathbbm{1}_{\mathfrak{k}_\eta})\\
    & = J_{G_\eta}(\exp(X), \mathbbm{1}_{K_\eta}),
  \end{align*}
  si l'on utilise les mesures non ramifiées.
\end{lemma}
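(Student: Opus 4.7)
The plan is to reduce the orbital integral on $\tilde{G}$ to one on $G_\eta$ via the topological Jordan decomposition. First, $G_\eta$ is unramified and $K_\eta := K \cap G_\eta(F)$ is hyperspecial: since $\eta \in K$ is of finite order prime to $p$, this follows from standard Bruhat–Tits arguments (cf. \cite{Wa08} 5.4), and the same reasoning identifies $\mathfrak{k}_\eta$ as the natural hyperspecial sublattice of $\mathfrak{g}_\eta(F)$. Next, I restrict the domain of integration. The integrand $f_K(\tilde y^{-1}\exp(X)\tilde\eta\,\tilde y)$ vanishes unless $y^{-1}\exp(X)\eta\,y \in K$, and by uniqueness of the topological Jordan decomposition (stable under conjugation) this forces simultaneously $y^{-1}\eta y\in K$ and $y^{-1}\exp(X) y\in K$. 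The standard identification $\{y\in G(F) : y^{-1}\eta y\in K\} = G_\eta(F)\cdot K$ (cf. \cite{Wa08} 5.7) then restricts the integration to this open subset.

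The crux is to trivialise the metaplectic cover on this region. Writing $y=gk$ with $g\in G_\eta(F)$ and $k\in K$, hypothesis \ref{hyp:revetement-commutant} ensures that any lift $\tilde g$ commutes with $\tilde\eta$, so $\tilde g^{-1}\tilde\eta \tilde g=\tilde\eta$. Topologically unipotent elements admit a canonical lift to $\tilde G$ compatible with the lattice-model embedding $K\hookrightarrow\tilde G$ (see \ref{prop:formule-caractere-K-2} and \ref{prop:Levi-latticiel-compatible}), and this canonical lift is equivariant under conjugation. Since the lattice-model embedding is itself a group homomorphism, conjugation by $\tilde k$ sends lattice-model lifts to lattice-model lifts. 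Combining these observations, $f_K(\tilde y^{-1}\exp(X)\tilde\eta \tilde y)$ equals $\mathbbm{1}_{K_\eta}(\exp(\Ad(g^{-1})X))$, with no residual $\bmu_\mathbf{f}$-factor.

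The descent then proceeds by the standard Harish-Chandra argument. Using $G_{\exp(X)\eta}=(G_\eta)_{\exp X}$ and the discriminant factorisation $|D_G(\exp(X)\eta)|=|D_G(\eta)|\cdot|D_{G_\eta}(\exp X)|$ together with $|D_G(\eta)|=1$ (since $1-\Ad(\eta)$ is integrally invertible on $\mathfrak{g}/\mathfrak{g}_\eta$ by primality of the order of $\eta$ to $p$), and decomposing the Haar measure through $G_\eta(F)\cdot K$ with unramified normalisation $\mathrm{vol}(K)=\mathrm{vol}(K_\eta)=1$, I obtain
\begin{equation*}
J_{\tilde G}(\exp(X)\eta,f_K) = |D_{G_\eta}(\exp X)|^{1/2}\int_{(G_\eta)_{\exp X}(F)\backslash G_\eta(F)}\mathbbm{1}_{K_\eta}(g^{-1}\exp(X)g)\,d\dot g,
\end{equation*}
which is exactly $J_{G_\eta}(\exp X,\mathbbm{1}_{K_\eta})$. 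The Lie-algebra equality is then immediate: under hypothesis \ref{hyp:non-ramifie}, the exponential restricts to a measure- and conjugation-preserving bijection between topologically nilpotent elements of $\mathfrak{k}_\eta$ and topologically unipotent elements of $K_\eta$, with Jacobian $1$; hence $\mathbbm{1}_{\mathfrak{k}_\eta}$ and $\mathbbm{1}_{K_\eta}\circ\exp$ have equal orbital integrals at $X$, yielding $J_{G_\eta}(X,\mathbbm{1}_{\mathfrak{k}_\eta})=J_{G_\eta}(\exp X,\mathbbm{1}_{K_\eta})$.

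The main difficulty lies in the cover-trivialisation step: precisely verifying that the canonical splitting of the eight-fold cover over topologically unipotent elements coincides with the lattice-model splitting over $K$, so that no $\bmu_\mathbf{f}$-character survives in $f_K(\tilde y^{-1}\exp(X)\tilde\eta \tilde y)$. Once this compatibility is established, the remaining steps are a routine descent calculation.
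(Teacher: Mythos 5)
Your overall strategy matches the paper's: cite Waldspurger for the fact that $G_\eta$ is unramified with $K_\eta$ hyperspecial, reduce the metaplectic orbital integral to one on $G_\eta(F)$ via topological Jordan decomposition and Harish-Chandra descent, and then show that no $\bmu_{\mathbf f}$-factor survives the conjugation. The descent bookkeeping ($y=gk$, the discriminant factorisation, the unramified measure normalisation, and the final Lie-algebra equality via $\exp$) is fine and amounts to what the paper obtains by citing \cite{Wa08} 5.11.

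The genuine gap is exactly where you place it: the cover-trivialisation step. You assert that the topologically unipotent elements carry a conjugation-equivariant canonical lift which coincides with the lattice-model splitting over $K$, but the propositions you invoke do not establish this. Proposition~\ref{prop:formule-caractere-K-2} only computes the value of $\Theta_\psi$ on topologically unipotent elements of $K$, and Proposition~\ref{prop:Levi-latticiel-compatible} compares the Schrödinger and lattice models on a torus in a Siegel Levi; neither asserts, let alone proves, that every topologically unipotent element of $G(F)$ has a unique topologically unipotent lift agreeing with the lattice-model embedding wherever both are defined. That statement is true --- it follows from $p\nmid\mathbf f$ (so the central $\bmu_{\mathbf f}$ is a group of order prime to $p$, and the extension of the pro-$p$ group $\overline{u^{\Z}}$ by $\bmu_{\mathbf f}$ splits uniquely), plus the fact that the lattice-model embedding $K\hookrightarrow\tilde G$ is a continuous homomorphism, hence sends topologically unipotent elements to topologically unipotent ones --- but none of this is written, and you yourself flag the compatibility as the remaining difficulty.

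The paper's own proof sidesteps the whole canonical-lift construction by a direct power argument, which you should compare with yours. It proves that for all $\tilde x\in\rev^{-1}(G_\eta(F))$ one has $f_K(\tilde x^{-1}\exp(X)\eta\tilde x)=\mathbbm 1_K(x^{-1}\exp(X)\eta x)$ as follows. If $\tilde x^{-1}\exp(X)\eta\tilde x\in\noyau K$ for some $\noyau\in\Ker(\rev)$, then raising to the $p^{n_k}$-th power with $n_k\to\infty$ chosen so that $p^{n_k}\equiv1$ both modulo $\mathbf f$ and modulo the order of $\eta$ yields, in the limit, $\tilde x^{-1}\eta\tilde x\in\noyau K$ (the topologically unipotent factor goes to $1$ inside the closed subgroup $K$, the central factor $\noyau^{p^{n_k}}$ stabilises at $\noyau$, and $\eta^{p^{n_k}}=\eta$). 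Hypothesis~\ref{hyp:revetement-commutant} then gives $\tilde x^{-1}\eta\tilde x=\eta\in K$, forcing $\noyau=1$. This is shorter, needs no auxiliary splitting, and makes transparent the role of the hypothesis $p\nmid\mathbf f$; it is effectively a one-shot proof of the uniqueness statement you would have needed as a lemma.
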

\begin{proof}
  D'après \cite{Wa08} 5.3 (iii), $G_\eta$ est non ramifié et $K_\eta := K \cap G_\eta(F)$ est un sous-groupe hyperspécial de $G_\eta(F)$.
  
  Posons $T^\flat := (G_\eta)_X$. Les arguments dans \cite{Wa08} 5.11 montrent que
  \begin{multline}
    \int_{G_{\exp(X)\eta}(F) \backslash G(F)} f_K(\tilde{x}^{-1} \exp(X_j)\eta \tilde{x}) \dd \dot{x} = \\
    \sum_{\dot{x} \in T^\flat(F) \backslash G_\eta(F) / K_\eta} \mes(T^\flat(F) \backslash T^\flat(F) x K_\eta) f_K(\tilde{x}^{-1} \exp(X)\eta \tilde{x}),
  \end{multline}
  où $\tilde{x}$ est une image réciproque quelconque de $x$. Montrons que

  \begin{equation}\label{eqn:int-orb-f_K}
    \forall \tilde{x} \in \rev^{-1}(G_\eta(F)),\quad f_K(\tilde{x}^{-1}\exp(X)\eta \tilde{x}) = \mathbbm{1}_K(x^{-1}\exp(X)\eta x).
  \end{equation}
  En effet, si $x^{-1}\exp(X)\eta x \notin K$, alors les deux côtés valent $0$. S'il appartient à $K$, le côté à droite vaut $1$. Supposons donc $\tilde{x}^{-1}\exp(X)\eta \tilde{x} \in \noyau K$ où $\noyau \in \Ker(\rev)$. En prenant la limite des $(p^{n_k})$-ièmes puissances avec $n_k \to +\infty$ une suite convenable, on obtient $\tilde{x}^{-1} \eta \tilde{x} \in \noyau K$. Or $\tilde{x}$ centralise $\eta$, d'où $\noyau=1$.

  Maintenant on peut reprendre les arguments dans \cite{Wa08} 5.11 et on arrive à
  $$ J_{\tilde{G}}(\exp(X)\eta, f_K) = J_{G_\eta}(X, \mathbbm{1}_{\mathfrak{k}_\eta}). $$
\end{proof}

\begin{proof}[Démonstration de \ref{prop:LF-unite}]
  Vu les résultats précédents, on peut supposer que $\gamma$ est compact avec les décompositions \eqref{eqn:descente-gamma-1}-\eqref{eqn:descente-gamma-3} et que $H_\epsilon$ est quasi-déployé. Soit $\delta \in G(F)$ qui correspond à $\gamma$, alors $\delta$ est compact avec la décomposition de Jordan topologique $\delta=\exp(X)\eta$. S'il n'existe pas $\delta$ comme ci-dessus satisfaisant à $\eta \in K$, alors $J_{H,\tilde{G}}(\gamma, f_K)=0$, et $H_\epsilon$ n'est pas non ramifié d'après \ref{prop:existence-diagramme-nr}. Dans ce cas-là $\mathcal{O}^\text{st}(\gamma)$ ne coupe aucun sous-groupe hyperspécial de $H(F)$ (\cite{Wa08} 5.3 (v)), d'où $J_H^\text{st}(\gamma, \mathbbm{1}_{K_H})=0$ et \ref{prop:LF-unite} est vérifié.

  Supposons donc $\eta \in K$, alors $G_\eta$ est non ramifié et $K_\eta := K \cap G_\eta(F)$ est encore un sous-groupe hyperspécial de $G_\eta(F)$ par \ref{prop:int-orb-f_K}.

  Prenons un ensemble de représentants $\{\delta_j\}_{j \in J}$ de $\mathcal{O}^\text{st}(\delta)/\text{conj}$. Soit $J_0$ l'ensemble des $j \in J$ tel que $\mathcal{O}(\delta_j) \cap K \neq \emptyset$. D'après \cite{Wa08} 5.11, on peut prendre les $\delta_j$ ($j \in J_0$) avec décompositions de Jordan topologiques de la forme
  $$ \delta_j = \exp(X_j)\eta, $$
  où $\{X_j\}_{j \in J_0}$ forment un ensemble de représentants des classes de conjugaison dans $\mathfrak{g}_\eta$ coupant $\mathfrak{k}_\eta$ dans $\mathcal{O}^\text{st}(X)$. On a
  \begin{equation}\label{eqn:descente-nr-G}
    J_{H,\tilde{G}}(\gamma, f_K) = \sum_{j \in J_0} \Delta(\exp(X_j)\eta,\exp(Y)\epsilon) J_{\tilde{G}}(\exp(X_j)\eta, f_K).
  \end{equation}

  Notons comme précédemment $\mathfrak{k}_\eta \subset \mathfrak{g}_\eta(F)$ le sous-réseau hyperspécial associé à $K_\eta$. Grâce à \ref{prop:int-orb-f_K}, pour tout $j \in J_0$ on a
  $$ J_{\tilde{G}}(\exp(X_j)\eta, f_K) = J_{G_\eta}(X_j, \mathbbm{1}_{\mathfrak{k}_\eta}). $$

  Prenons des paramètres pour $\mathcal{O}(\eta)$ et $\mathcal{O}(\epsilon)$ comme dans la preuve de \ref{prop:transfert-local}. On a
\begin{align*}
  G_\eta & = \prod_u U(W_u, h_u) \times \Sp(W_+) \times \Sp(W_-), \\
  H'_{\epsilon'} & = \prod_u U(V'_u, h'_u) \times \SO(V'_+, q'_+) \times \SO(V'_-,q'_-), \\
  H''_{\epsilon''} & = \prod_u U(V''_u, h''_u) \times \SO(V''_+, q''_+) \times \SO(V''_-,q''_-),\\
  H_\epsilon &= H'_{\epsilon'} \times H''_{\epsilon''}.
\end{align*}

  Introduisons aussi les objets $X_\pm$, $X_u$, $Y_\pm$, $Y_u$, etc... dans \S \ref{sec:descente-formalisme}. Les mêmes arguments qu'en \ref{prop:transfert-local} permettent d'exprimer $J_{H,\tilde{G}}(\gamma, f_K)$ comme un produit $(\prod_u J_u) J_+ J_-$ avec
  \begin{align*}
    J_u = J_u(Y_u) & := \sum_{X_u} \Delta_u(Y_u, X_u) J_{U(W_u,h_u)}(X_u, f_u), \\
    J_+ = J_+(Y_+) & := \sum_{X_+} \Delta_+(Y_+, X_+) J_{\Sp(W_+)}(X_+, f_+), \\
    J_- = J_-(Y_-) & := \sum_{X_-} \Delta_-(Y_-, X_-) J_{\Sp(W_-)}(X_-, f_-).
  \end{align*}
  Ces expressions sont, pour l'essentiel, composées des intégrales endoscopiques pour les groupes classiques et des intégrales orbitales stables auxquels le transfert non standard s'applique (cf. la démonstration de \ref{prop:transfert-local}).

  Supposons que $H_\epsilon$ n'est pas non ramifié. On a vu que $J_H^\text{st}(\gamma, \mathbbm{1}_{K_H})=0$ dans ce cas. D'autre part, un résultat de Kottwitz (\cite{Ko86} 7.5) adapté aux algèbres de Lie montre qu'au moins l'un des $J_u, J_+, J_-$ s'annule, d'où $J_{H,\tilde{G}}(\gamma,f_K)=0$ et \ref{prop:LF-unite} est vérifié.

  Supposons désormais $H_\epsilon$ non ramifié. C'est loisible de supposer $\epsilon \in K_H$ et dans ce cas $K_{H,\epsilon} := K_H \cap H_\epsilon(F)$ est un sous-groupe hyperspécial de $H_\epsilon(F)$ (\cite{Wa08} 5.3 (iii),(v)).  Notons $\mathfrak{k}_\epsilon \subset \mathfrak{h}_\epsilon(F)$ le sous-réseau hyperspécial associé. On le décompose en des réseaux hyperspéciaux:
  \begin{align*}
    \mathfrak{k}_{H,\epsilon} &= \bigoplus_u \mathfrak{k}_{H,u} \oplus \mathfrak{k}_{H,+} \oplus \mathfrak{k}_{H,-} , \\
    \mathfrak{k}_{H,u} & \subset \mathfrak{u}(V'_u,h'_u) \oplus \mathfrak{u}(V''_u,h''_u),\\
    \mathfrak{k}_{H,\pm} & \subset \mathfrak{so}(V'_\pm, q'_\pm) \oplus \mathfrak{so}(V'_\mp, q'_\mp).
  \end{align*}

  Alors la descente des intégrales orbitales stables (\cite{Wa08} 5.11) donne
  \begin{align*}
    J_H^\text{st}(\gamma, \mathbbm{1}_{K_H}) &= J_{H_\epsilon}(Y, \mathbbm{1}_{\mathfrak{k}_H}) \\
    &= \prod_u J_{H,u} \cdot J_{H,+} \cdot J_{H,-},
  \end{align*}
  où
  \begin{align*}
    J_{H,u} & := J_{U(V'_u, h'_u) \times U(V''_u, h''_u)}^\text{st} (Y_u, \mathbbm{1}_{\mathfrak{k}_{H,u}}), \\
    J_{H,+} & := J_{\SO(V'_+,q'_+) \times \SO(V''_-,q''_-)}^\text{st}(Y_+, \mathbbm{1}_{\mathfrak{k}_{H,+}}), \\
    J_{H,-} & := J_{\SO(V'_-,q'_-) \times \SO(V''_+,q''_+)}^\text{st}(Y_-, \mathbbm{1}_{\mathfrak{k}_{H,-}}).
  \end{align*}

  Comme les hypothèses dans \S\ref{sec:descente-nr} sont satisfaites, les facteurs de transfert $\Delta_\bullet$ ($\bullet \in \{u,+,-\}$) sont normalisés au sens de \cite{Wa08} \S 4.7 d'après \ref{prop:comparaison-transfert-classique}. Les arguments qui restent sont analogues à ceux pour \ref{prop:transfert-local}, sauf que l'on utilise le lemme fondamental sur les algèbres de Lie pour les groupes classiques et le lemme fondamental non standard sous la forme de \ref{prop:transfert-LF-isogenie}. Le bilan est que $J_u = J_{H,u}$, $J_+ = J_{H,+}$ et $J_{H,-}$, ce qui achève la démonstration.
\end{proof}

\bibliographystyle{abbrv-fr}
\bibliography{endoscopie}

\printindex

\end{document}